\newcommand{\poleset}{\mathcal{P}}
\newcommand{\corej}{\mathfrak{D}}
\newcommand{\statmargin}{\pi_\rho}
\newcommand{\hmue}{\hat{\mu}_\ep}
\newcommand{\hlambdae}{\hat{\lambda}_\ep}
\newcommand{\kpzt}{\widetilde{\kpz}}
\newcommand{\A}{{\fontfamily{qpl}\selectfont\textbf{(A)}}}
\newcommand{\B}{{\fontfamily{qpl}\selectfont\textbf{(B)}}}
\acrodef{KPZ}{Kardar--Parisi--Zhang}
\acrodef{SHE}{Stochastic Heat Equation}
\acrodef{SHS6V}{stochastic higher spin six vertex}
\acrodef{ASEP}{Asymmetric Simple Exclusion Process}
\newcommand{\etapart}[2]{\eta_{#2}(#1)}
\newcommand{\etapartt}[2]{\etat_{#2}(#1)}
\newcommand{\errr}{\text{ error term }}
\newcommand{\mut}[2]{#1(#2)}
\newcommand{\alphat}[2]{#1(#2)}
\newcommand{\lambdat}[2]{#1(#2)}
\newcommand{\rmatrix}{L_{\alpha}^{(1)}}
\newcommand{\rem}{\mathfrak{R}}
\newcommand{\reme}{\mathfrak{R}_\ep}
\newcommand{\remt}{\widetilde{\mathfrak{R}}}
\newcommand{\N}{N^{\mathsf{f}}}
\newcommand{\n}{N^{\mathsf{uf}}}
\newcommand{\rj}{L_{\alpha}^{(J)}}
\newcommand{\anglebrack}[1]{\langle #1 \rangle}
\newcommand{\floor}[1]{\lfloor #1 \rfloor}
\newcommand{\ZZ}{\mathbb{Z}}
\newcommand{\ZZn}{\mathbb{Z}_{\geq 0}}
\newcommand{\PP}{\mathbb{P}}
\renewcommand{\AA}{\mathcal{A}}
\newcommand{\independent}{\text{ is independent with }}
\newcommand{\EE}{\mathbb{E}}
\newcommand{\RR}{\mathbb{R}}
\newcommand{\hlambda}{\hat{\lambda}}
\newcommand{\hmu}{\hat{\mu}}
\newcommand{\M}{\mathcal{M}}
\newcommand{\Q}{\mathcal{Q}}
\newcommand{\du}{\widetilde{D}}
\newcommand{\NN}{\ZZ_{\geq 1}}
\newcommand{\C}{\mathcal{C}}
\newcommand{\CC}{\mathbb{C}}
\newcommand{\FF}{\mathcal{F}}
\newcommand{\G}{\mathcal{G}}
\newcommand{\GG}{\mathbb{G}}
\newcommand{\OO}{\mathcal{O}}
\newcommand{\MM}{\mathbb{M}}
\newcommand{\HH}{\mathcal{H}}
\newcommand{\HHe}{\mathcal{H}_\ep}
\newcommand{\YY}{\mathbb{Y}}
\newcommand{\im}{\mathbf{i}}
\newcommand{\lc}{\C_R}
\newcommand{\pset}{\mathcal{P}_{23}}
\newcommand{\ue}{u_\epsilon}
\newcommand{\rad}{r^*}
\newcommand{\wwr}{\widetilde{r}}
\newcommand\numberthis{\addtocounter{equation}{1}\tag{\theequation}}
\newcommand{\LL}{L}
\newcommand{\wc}{\Rightarrow}
\newcommand{\iset}{\{0, 1, \dots, I\}}
\newcommand{\jset}{\{0, 1, \dots, J\}}
\newcommand{\occupc}{\vec{\eta}}
\newcommand{\occupv}[1]{\eta_{#1}}
\newcommand{\dist}{\text{dist\,}}
\newcommand{\vv}{\big|}
\newcommand{\vvv}{\bigg|}
\newcommand{\occuppr}{\vec{\eta}(t)}
\newcommand{\rlocc}{\vec{y}}
\renewcommand{\mod}{\mathrm{mod}_J}
\newcommand{\var}{\text{Var}}
\newcommand{\rlocpr}{\vec{y}(t)}
\newcommand{\occupvr}[2]{\eta_{#1} (#2)}
\newcommand{\bivr}[2]{\etapart{#1}{#2}}
\newcommand{\tonepart}{\mathsf{p}}
\newcommand{\toneparte}{\mathsf{p}_\epsilon}
\newcommand{\rhzr}{\mathbf{P}_{\overleftarrow{\text{SHS6V}}}}
\newcommand{\rhzrt}{\mathbf{V}}
\newcommand{\rhzrte}{\mathbf{V}_\ep}
\newcommand{\rhzrfr}{\rhzrte^{\text{fr}}}
\newcommand{\rhzrin}{\rhzrte^{\text{in}}}
\newcommand{\rhzrb}{\rhzrt^{\text{blk}}_\ep}
\newcommand{\rhzrr}{\rhzrt^{\text{res}}_\ep}
\newcommand{\height}[2]{N(#1, #2)}
\newcommand{\weyl}[1]{\mathbb{W}_I^{#1}}
\newcommand{\x}{\vec{x}}
\newcommand{\idc}{\mathbf{1}}
\newcommand{\mqv}[1]{m_{q,v}(#1)}
\newcommand{\qhalfint}[1]{[#1]_{q^{\frac{1}{2}}}}
\newcommand{\etat}{\widetilde{\eta}}
\newcommand{\ep}{\epsilon}
\newcommand{\eph}{\epsilon^{\frac{1}{2}}}
\newcommand{\bracket}[1]{\langle #1 \rangle}
\newcommand{\semigr}{\widetilde{\mathcal{B}}}
\newcommand{\semigrst}[2]{\widetilde{\mathcal{B}}_{#1} \cdots \widetilde{\mathcal{B}}_{#2}}
\newcommand{\ZZZ}{\mathcal{Z}}
\newcommand{\ber}{\text{Ber\,}}
\newcommand{\rfunc}{\mathsf{u}}
\newcommand{\ppradw}{r_{\ep, w}}
\newcommand{\pprad}{r_{\ep, t-s}}
\newcommand{\pprads}{r_{\ep, t-s}}
\newcommand{\ppradt}{\wwr_{\ep, t-s}}
\newcommand{\stat}{\bigotimes \pi_\rho}
\newcommand{\norm}[1]{\big\Vert #1 \big\Vert }
\newcommand{\bignorm}[1]{\bigg\Vert #1  \bigg\Vert}
\newcommand{\xstar}{x^\star}
\newcommand{\convol}{\tonepart(t+1, t) * Z(t)}
\newcommand{\res}{\mathsf{Res}}
\newcommand{\na}{\nabla}
\newcommand{\polt}{\widetilde{\mathfrak{s}}}
\newcommand{\pol}{\mathfrak{s}}
\newcommand{\pole}{\mathfrak{s}_\ep}
\newcommand{\ppole}{\mathfrak{p}_\ep}
\newcommand{\pstar}{\mathfrak{p}_{*}}
\newcommand{\sstar}{\mathfrak{s}_{*}}
\newcommand{\kpz}{\mathcal{H}}
\newcommand{\she}{\mathcal{Z}}
\newcommand{\noise}{\xi(t, x)}
\newcommand{\sign}{\text{sgn}}
\newcommand{\test}{C_c^\infty (\RR)}
\newcommand{\hk}{p}
\newcommand{\bb}{\mathcal{B}_\epsilon}
\newcommand{\err}{\mathcal{E}_\ep}
\newcommand{\funcspace}{C([0, \infty), C(\RR)) }
\renewcommand{\Re}{\text{Re}\,}
\newcommand{\lambdae}{\lambda_\epsilon}
\newcommand{\mue}{\mu_\epsilon}
\newcommand{\alphae}{\alpha_\epsilon}
\newcommand{\qe}{q_\epsilon}
\newcommand{\zt}{Z^\na}
\newcommand{\leig}{\Psi_{\vec{w}}^\ell}
\newcommand{\reig}{\Psi_{\vec{w}}^r}
\newcommand{\coret}{\widetilde{\mathfrak{D}}}
\newcommand{\interactt}{\widetilde{\mathfrak{F}}}
\newcommand{\planm}{dm_\lambda^{q}}
\newcommand{\planmm}[1]{dm_{#1}^q}
\newcommand{\core}{\mathfrak{D}}
\newcommand{\coreh}{\mathfrak{D}}
\newcommand{\coree}{\mathfrak{D}_\epsilon}
\newcommand{\coreej}{\mathfrak{D}_\epsilon}
\newcommand{\corelim}{\mathfrak{D}_{*}}
\newcommand{\interact}{\mathfrak{F}}
\newcommand{\interacte}{\mathfrak{F}_\ep}
\newcommand{\pcoree}{\mathfrak{H}_\ep}
\newcommand{\pcorelim}{\mathfrak{H}_{*}}
\newcommand{\jprod}{\mathfrak{J}_\ep}
\newcommand{\contone}{\Gamma(t-s, \ep)}
\newcommand{\contonelim}{\Gamma_{*}}
\newcommand{\mcont}{\mathcal{M}}
\newcommand{\mcontu}{\mathcal{M}(u)}
\newcommand{\mcontp}{\mathcal{M}'}
\newcommand{\qint}[1]{[#1]_q}
\newcommand{\qbinom}[2]{\binom{#1}{#2}}
\newcommand{\scontone}{\gamma}
\renewcommand{\a}{a}
\newcommand{\onegrad}[1]{\mathcal{Y}_{\nabla} (#1)}
\newcommand{\twograd}[1]{\mathcal{Y}_{\nabla, \nabla} (#1)}
\newcommand{\gradsquare}[1]{\widetilde{\mathcal{Y}}(#1)}
\newcommand{\onegradz}[1]{Z_{\nabla} (#1)}
\newcommand{\twogradz}[1]{Z_{\na, \na}(#1)}
\newcommand{\cat}{\tau(t)}
\newcommand{\cas}{\tau(s)}
\newcommand{\pa}{\partial}
\theoremstyle{plain}
\newtheorem{theorem}{Theorem}[section]
\newtheorem{cond}[theorem]{Condition}
\newtheorem{defin}[theorem]{Definition}
\newtheorem{prop}[theorem]{Proposition}
\newtheorem{cor}[theorem]{Corollary}
\newtheorem{lemma}[theorem]{Lemma}
\newtheorem{remark}[theorem]{Remark}
\author{Yier Lin}
\thanks{Department of Mathematics, Columbia University, Email: yl3609@columbia.edu}
\begin{document}
\numberwithin{equation}{section}
\title{KPZ equation limit of stochastic higher spin six vertex model}

\begin{abstract}
We consider the  \ac{SHS6V} model introduced by Corwin and Petrov \cite{CP16} with general integer spin parameters $I, J$. Starting from \emph{near stationary initial condition}, we prove that the \ac{SHS6V} model converges to the \ac*{KPZ} equation under \emph{weakly asymmetric scaling}. This generalizes the result in \cite[Theorem 1.1]{CGST18} from $I = J =1$ to general $I, J$.   
\end{abstract}
\maketitle
\setcounter{tocdepth}{1}
\tableofcontents
\section{Introduction}
%*************************************
\subsection{KPZ equation and weak KPZ universality}\label{sec:kpzequation}
The \ac{KPZ} equation is the following non-linear stochastic partial differential equation (SPDE) introduced 
%Mehran Kardar, Giorgio Parisi, and Yi-Cheng Zhang 
in the seminal work \cite{KPZ86}, which describes the random evolution of an interface that has the property of relaxation and lateral growth
\begin{equation}\label{eq:kpz}
\partial_t \kpz(t, x) = \frac{\delta}{2} \pa_x^2 \kpz(t, x) + \frac{\kappa}{2} \big(\pa_x \kpz(t, x) \big)^2 + \sqrt{D} \xi(t, x).
\end{equation}
Here $\xi(t, x)$ is the \emph{space time white noise}, which could be formally understood as a Gaussian field with covariance function $\EE\big[\xi(t, x) \xi(s, y)\big] = \delta(t-s) \delta(x-y)$, where  $\delta$ is the Dirac delta function.
\bigskip
\\
Care is needed to make sense of \eqref{eq:kpz} due to the nonlinearity $(\pa_x \HH(t, x))^2$. The Hopf-Cole solution to the \ac{KPZ} equation is defined by
\begin{equation}\label{eq:hftransform}
\kpz(t, x) = \frac{\delta}{\kappa} \log \she(t, x),
\end{equation}
where $\she(t, x)$ is the \emph{mild solution} of the \ac{SHE}
\begin{equation*}
\partial_t \she(t, x) = \frac{\delta}{2} \pa_x^2 \she(t, x) + \frac{\kappa \sqrt{D}}{\delta} \she(t, x) \xi(t, x). 
\end{equation*}
So long as $\mathcal{Z}(0, x)$ is (almost surely) positive, \cite{mueller91} proved that $\mathcal{Z}(t, x)$ remains positive for all $t > 0$ and $x$. 
This justifies the well-definedness of \eqref{eq:hftransform}. Other equivalent definitions of the solution are given by  regularity structure \cite{Hai14}, paracontrolled distribution \cite{GP17} or the notion of energy solution \cite{GJ14, GMP18}.  
\bigskip
\\
It is well-known that there is no non-trivial scaling under which the \ac{KPZ} equation is invariant in law. More precisely, if we define $\mathcal{H}_\ep (t, x) =  \ep^{z} \mathcal{H}(\ep^{-b} t, \ep^{-1} x)$, using the scaling of space-time white noise $\xi(\ep^{-b} t, \ep^{-1} x) = \ep^{\frac{b+1}{2}} \xi(t, x)$ (in law), then
\begin{equation}\label{eq:scaledkpz}
\pa_t \HHe(t, x) = \frac{\delta}{2} \ep^{2 - b} \pa_{x}^2 \HHe(t, x) + \frac{\kappa}{2} \ep^{-z+ 2 - b} (\pa_{x} \mathcal{H}_\ep (t, x))^2 + \ep^{z + \frac{1}{2} - \frac{b}{2}} \sqrt{D} \xi(t, x).
\end{equation}
It is clear that there is no $b, z$ such that the coefficients in the above equation match with those in \eqref{eq:kpz}. However, if we simultaneously scale some of the parameters $\delta$, $\kappa$, $D$, it is possible that the \ac{KPZ} equation remains unchanged: such scaling is called \emph{weak scaling}. 
It is thus natural to believe that the KPZ equation is the weak scaling limit of microscopic models with similar properties such as relaxation and lateral growth. Roughly speaking, this is the  \emph{weak universality of the \ac{KPZ} equation}, see \cite{Cor12, Qua11} for an extensive survey.  We emphasize that the weak universality of the \ac{KPZ} equation should be distinguished from \emph{\ac{KPZ} universality}, which says that without tuning of the parameter of the model, the microscopic system converges to a universal limit called \emph{\ac{KPZ} fixed point} under $[1:2:3]$ scaling, see \cite{MQR16, DOV18, BL19} for some recent progress and breakthroughs in identifying the \ac{KPZ} fixed point.
\bigskip
\\
The weak universality of the \ac{KPZ} equation has been verified for a number of interacting particle systems.  The first result was given in the work of \cite{BG97}, for \ac{ASEP}. For more results of the weak universality of \ac{KPZ} equation, see Section 1.5.3 of \cite{CGST18} for a brief review.
%
%includes \cite{ACQ11, DT16, Gho17, Lab17,  CS18,  CST18, Yang18, Par19}. 
\bigskip
\\
Recently \cite[Theorem 1.1]{CGST18} proved that under weak asymmetric scaling (which corresponds to taking $b = 2, z = \frac{1}{2}$ and $\kappa \to \sqrt{\ep} \kappa$ in \eqref{eq:scaledkpz}), the stochastic six vertex model converges to the \ac{KPZ} equation. In this paper, we consider stochastic higher spin six vertex model (SHS6V) model introduced in \cite{CP16}\footnote{The \ac{SHS6V} model has vertical and horizontal spin parameters $I, J \in \ZZ_{\geq 1}$. The stochastic six vertex model is a degeneration of it by taking $I = J = 1$.}. We prove that under similar weak asymmetric scaling, the \ac{SHS6V} model converges to the \ac{KPZ} equation. This extends the result of \cite[Theorem 1.1]{CGST18} to the full generality. We like to emphasize that there are some significant new complications in our case compared with \cite{CGST18}, see Section \ref{sec:method} for discussion.
\bigskip
\\
Before ending this section, we remark that there might be other SPDEs (besides the \ac{KPZ} equation) arising from the vertex model. For instance, it was shown in \cite{BG18, ST18} that under a different scaling, the stochastic six vertex model converges to the solution of the stochastic telegraph equation. It is interesting to ask whether the \ac{SHS6V} model converges to other SPDEs, this question is left for future work.
\subsection{The \ac{SHS6V} model}\label{sec:model}
The SHS6V model introduced in \cite{CP16}  (also see \cite{Bor17}) belongs to the family of vertex models which themselves are examples of quantum integrable systems. 
In general, the $R$-matrix (which can be thought of as the weights associated to the vertex) are not stochastic.
\cite{GS92, BCG16} studied the stochastic six vertex model, which is a stochastic version of the six vertex model introduced by \cite{pauling35}. The authors of \cite{CP16} worked with the $L$-matrices, which is a  stochastic version of the $R$-matrices\footnote{See \cite[Remark 2.2]{CP16} for more discussion of the relation between $L$-matrices and $R$-matrices.} and they defined the \ac{SHS6V} model. The stochasticity allows us to define the vertex model on the entire line as an interacting particle system which follows sequential Markov update rule. Moreover, the $L$-matrices in \cite{CP16} satisfy the Yang-Baxter equation which implies the integrability of the model. In particular, the transfer matrices are diagonalizable by a complete set of Bethe ansatz eigenfunctions \cite{BCPS15, CP16}. The model also enjoys Markov duality. The stochastic $R$-matrices of the \ac{SHS6V} model have four parameters, by specifying which the \ac{SHS6V} model degenerates to known integrable systems such as stochastic six vertex model, \ac{ASEP}, q-Hahn TASEP, q-TASEP. Indeed, it is on top of a hierarchy of \ac{KPZ} class integrable probabilistic systems. Recent studies of the \ac{SHS6V} model and its dynamical version include \cite{OP17, Agg18, Bor18, BP18, IMS19}. 
\bigskip
\\
Let us recall the definition of the \ac{SHS6V} model from \cite{CP16}. Fix $I, J \in \ZZ_{\geq 1}, \alpha, q\in \RR$,  we define the $L$-matrix $\rj:\ZZn^4 \to \RR$ via
\begin{align*}
\rj(i_1, j_1; i_2, j_2) = &\idc_{\{i_1 + j_1 = i_2 + j_2\}} q^{\frac{2j_1 - j_1^2}{4} - \frac{2j_2 - j_2^2}{4} + \frac{i_2^2 + i_1^2}{4} + \frac{i_2 (j_2 - 1) + i_1 j_1}{2}}\\
\numberthis \label{eq:rjmatrix}
&\times \frac{\nu^{j_1 - i_2} \alpha^{j_2 - j_1 + i_2} (-\alpha\nu^{-1}; q)_{j_2 - i_1}}{(q;q)_{i_2} (-\alpha; q)_{i_2 + j_2} (q^{J+1 - j_1 }; q)_{j_1 - j_2}}  { }_{4} \bar{\phi}_3 \bigg(\begin{matrix}
q^{-i_2};q^{-i_1}, -\alpha q^J, -q\nu \alpha^{-1}\\ \nu, q^{1 + j_2 - i_1}, q^{J + 1 - i_2 - j_2} 
\end{matrix} \bigg| q, q\bigg).
\end{align*} 
Here, $\nu = q^{-I}$ and ${}_4 \bar{\phi}_3$ is the regularized terminating basic hyper-geometric series defined by 
\begin{align*}
 _{r+1} \bar{\phi}_r \bigg(\begin{matrix}
 q^{-n}, a_1, \dots, a_r\\ b, \dots, b_r 
\end{matrix}
\bigg|q, z \bigg) &= \sum_{k=0}^n z^k \frac{(q^{-n}; q)_k}{(q; q)_k} \prod_{i=1}^r (a_i; q)_k (b_i q^k; q)_{n-k},
\end{align*}
where we recall the $q$-Pochhammer symbols $(a, q)_n$ (here $n$ is allowed to be negative) are defined by
\begin{equation*}
(a; q)_n := 
\begin{cases}
\prod_{i=1}^n (1 - a q^{i-1}), & n > 0,
\\
1, & n = 0,\\
\prod_{k=0}^{-n-1} (1 - a q^{n+k})^{-1}, & n < 0.
\end{cases}
\end{equation*}
We view $\rj$ as a matrix with row indexed by $(i_1, j_1) \in \ZZn^2$ and column indexed by $(i_2, j_2) \in \ZZn^2$.
Note that the $L$-matrix in \eqref{eq:rjmatrix} actually depends on four generic parameters $\alpha, q, I, J$, we suppress the dependence on $q, I$ in the notation of $\rj$ to simplify the notation.  
\bigskip
\\
It is straightforward by definition that for $(i_1, j_1) \in \iset \times \jset$ (using $\nu = q^{-I}$) 
\begin{equation*}
\rj(i_1, j_1; i_2, j_2) = 0, \qquad \text{ for all } (i_2, j_2) \in \ZZ_{\geq 0}^2 \backslash \iset \times \jset,
\end{equation*}
%checkpoint
which means there is no way to transition out of $\iset \times \jset$ from itself. Therefore, in the following we restrict ourselves to the block  with $(i_1, j_1), (i_2, j_2) \in \iset \times \jset$.
\bigskip
\\
When $J=1$, by straightforward calculation, the $L$-matrix defined above simplifies to
\begin{equation}\label{eq:temp33}
\begin{split}
\rmatrix(m, 0; m, 0) &= \frac{1+\alpha q^m}{1+\alpha}, \hspace{2em}\rmatrix(m, 0; m-1, 1) = \frac{\alpha(1-q^m)}{1+\alpha}, \\
\rmatrix(m, 1; m+1, 0) &= \frac{1 - \nu q^m}{1+\alpha}, \hspace{3.8em} \rmatrix(m, 1; m, 1) = \frac{\alpha+\nu q^m}{1+\alpha}.
\end{split}
\end{equation}
For the history of the expression \eqref{eq:rjmatrix}, we remark that more intricate expressions for a quantity similar to the $\rj$ had been known in the context of quantum integrable systems since the work of \cite{KR87}. Relatively compact expressions of $\rj$ became available only in more recent times after the work of \cite{Man14}. \cite{CP16} also provides a probabilistic proof for this expression.
\bigskip
\\ 
From our perspective, we will think of  $\rj(i_1, j_1; i_2, j_2)$ as the weight associated to a vertex configuration with $i_1$ input lines from south, $j_1$ input lines from west, $i_2$ output lines to the north and $j_2$ output lines to the east see Figure \ref{fig:vertex}. Since we have restricted $\rj(i_1, j_1; i_2, j_2)$ to $(i_1, j_1), (i_2, j_2) \in \iset \times \jset$, we can have at most $I$ vertical lines and $J$ horizontal lines in the vertex configuration. Note that due to the indicator in \eqref{eq:rjmatrix}, all non-zero vertex weights $\rj(i_1, j_1; i_2, j_2)$ satisfy $i_1 + j_1 = i_2 + j_2$, a property that we consider as conservation of lines.
%\begin{remark}
\bigskip
\\
In this paper, we always assume the following condition. 
\begin{cond}\label{cond:1} 
We take $
q > 1, \alpha < -q^{-(I + J - 1)}
$ and as we noted before, $\nu = q^{-I}$. 
\end{cond} 
It follows from \cite{CP16} that under Condition \ref{cond:1}, $\rj$ is a stochastic matrix on  $\iset \times \jset$. In other words, for any fixed $(i_1, j_1) \in \iset \times \jset$, $\rj(i_1, j_1; \cdot, \cdot)$ defines a probability measure on $\iset \times \jset$. Although in this paper we will not investigate the range of parameters out of Condition \ref{cond:1}, it is worth remarking that there are other choices of parameters which make $\rj$ stochastic, a few of them are provided in \cite[Proposition 2.3]{CP16}.
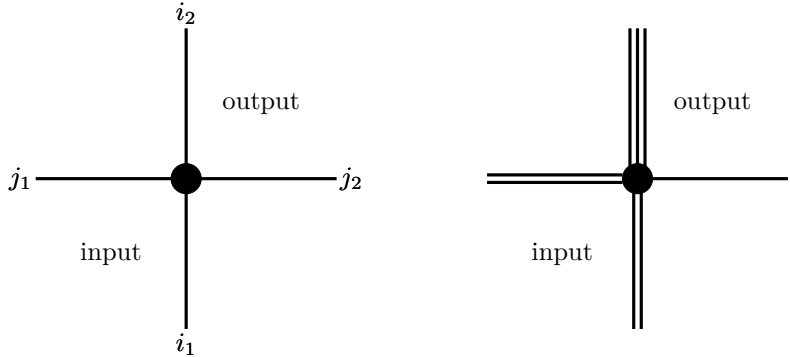
\begin{figure}[ht]\label{fig:rmatrix}
\begin{tikzpicture}
%\draw[help lines, step=1] grid (10,4);
\draw[fill] (2, 2) circle (0.2);
\draw[very thick] (2, 0) -- (2, 4);
\draw[very thick] (0, 2) -- (4, 2);
\node at (2, -0.2) {$i_1$};
\node at (2, 4.2) {$i_2$};
\node at (-0.2, 2) {$j_1$};
\node at (4.2, 2) {$j_2$};
%%%%%%%%%%%%%%%%%%%%%%%%%%%%%%%%
\draw[fill] (8, 2) circle (0.2);
\draw[very thick] (7.95, 0) -- (7.95, 1.9);
\draw[very thick] (8.05, 0) -- (8.05, 1.9);
\draw[very thick] (8, 2.1) -- (8, 4);
\draw[very thick] (7.9, 2.1) -- (7.9, 4);
\draw[very thick] (8.1, 2.1) -- (8.1, 4);
\draw[very thick] (6, 1.95) -- (7.8, 1.95);
\draw[very thick](6, 2.05) -- (7.8, 2.05);
\draw[very thick] (8.2, 2) -- (10, 2);
\node at (2, -0.2) {$i_1$};
\node at (2, 4.2) {$i_2$};
\node at (-0.2, 2) {$j_1$};
\node at (4.2, 2) {$j_2$};
\node at (1, 1) {input};
\node at (3, 3) {output};
\node at (7, 1) {input};
\node at (9, 3) {output};
\end{tikzpicture}
\caption{\textbf{Left:} The vertex configuration labeled by four tuples of integer $(i_1, j_1; i_2, j_2) \in \ZZn^4$ (from bottom and then in the clockwise order) has weight $\rj(i_1, j_1; i_2, j_2)$,  which takes $i_1$ vertical input lines and $j_1$ horizontal input lines,  and produce $i_2$ vertical output lines and $j_2$ horizontal output lines.  \textbf{Right:} The representation of the vertex configuration $(i_1, j_1; i_2, j_2) = (2, 2; 3, 1)$ in terms of lines.}
\label{fig:vertex}
\end{figure}
\bigskip
\\
There are several equivalent ways to define the \ac{SHS6V} model. In this paper, we view the \ac{SHS6V} model as a one-dimensional interacting particle system, which follows a sequential update rule. We proceed to give a precise definition of it. Denote by the space of left-finite particle configuration
\begin{equation}\label{eq:leftfinitestate}
\GG = \{\vec{g} = (\dots, g_{-1}, g_0, g_1 \dots): \text{ all } g_i \in \iset \text{ and there exists } x \in \ZZ \text{ such that } g_i = 0 \text{ for all } i < x.\},
\end{equation} 
where $g_x$ should be understood as the number of particles at position $x$. We define a discrete time Markov process $\vec{g}(t) = (g_x(t))_{x \in \ZZ} \in\GG$ as follows. 
\begin{defin}[left-finite fused \ac{SHS6V} model]\label{def:fused}
For any state $\vec{g} = (g_x)_{x\in \ZZ} \in \GG$, we specify the update rule from state $\vec{g}$ to $\vec{g}'$ as follows:
Assume the leftmost particle in the configuration $\vec{g}$ is at $x$ (i.e. $g_x > 0$ and $g_z = 0$ for all $z < x$). Starting from $x$, we update $g_x$ to $g'_x$ by setting $h_x = 0$ and randomly choosing $g'_x$ according to the probability $\rj(g_x, h_x = 0; g'_x, h_{x+1})$ where $h_{x+1} := g_x - g'_x$. Proceeding sequentially, we update $g_{x+1}$ to $g'_{x+1}$ according to the probability $\rj(g_{x+1}, h_{x+1}; g'_{x+1}, h_{x+2})$ where $h_{x+2} := g_{x+1} + h_{x+1} - g'_{x+1}$. Continuing for $g_{x+2}, g_{x+3}, \dots$, we have defined the update rule from $\vec{g}$ to $\vec{g}' = (g'_x)_{x \in \ZZ}$, see Figure \ref{fig:jmodel} for visualization of the update procedure. We call the discrete \textbf{time-homogeneous} Markov process  $\vec{g}(t) \in \GG$ with the update rule defined above
\textbf{the left-finite fused \ac{SHS6V} model}.\footnote{Note that in Definition \ref{def:fused}, although the update from $\vec{g}$ to $\vec{g}'$ may never stop as it goes to the right, the process is well-defined since we only care about the sigma algebra generated by $(g_x)_{x \leq z, x\in \mathbb{Z}}$ for all $z \in \mathbb{Z}$.}
\end{defin}
\begin{figure}[ht]
\begin{tikzpicture}
%\draw[help lines, step = 1.5] grid (12,6);
\draw[very thick] (0, 4.5) -- (12, 4.5);
\draw[very thick] (1.5, 3) -- (1.5, 6);
\draw[very thick] (4.5, 3) -- (4.5, 6);
\draw[very thick] (7.5, 3) -- (7.5, 6);
\draw[very thick] (10.5, 3) -- (10.5, 6);
\node at (1.5, 2.7)  {$g_x = 3$};
\node at (4.5, 2.7)  {$g_{x+1} = 2$};
\node at (7.5, 2.7)  {$g_{x+2} = 1$};
%\node at (10.5, 2.7) {$g_{x+3}$};
\node at (1.5, 6.3)  {$g'_x = 1$};
\node at (4.5, 6.3)  {$g'_{x+1} = 3$};
\node at (7.5, 6.3)  {$g'_{x+2} =2 $};
%\node at (10.5, 6.3) {$g'_{x+3}$};
\node at (0, 4.8) {$h_x = 0$};
\node at (3, 4.8) {$h_{x+1} = 2$};
\node at (6, 4.8) {$h_{x+2} = 1$};
\node at (9, 4.8) {$h_{x+3} = 0$};
\node at (10.5, 2.7)  {$g_{x+3} = 3$};
\node at (10.5, 6.3)  {$g'_{x+3} = 2$};
\node at (12, 4.8) {$h_{x+4} = 1$};
\node at (12.5, 4.5) {\dots};
%%%%%%%%%%%%%%%%%%%%%%%%%%%%%%%%%%%%%%%
\draw[very thick] (0, -1) -- (12, -1);
\
\draw[very thick] (0, 4.5) -- (12, 4.5);

\draw[fill]  (1.5, -0.8) circle (0.2);
\draw[fill = gray]  (1.5, -0.4) circle (0.2);
\draw[fill = gray]  (1.5, 0) circle (0.2);
\draw[fill]  (4.5, -0.8) circle (0.2);

\draw[fill = gray]  (4.5, -0.4) circle (0.2);
\draw[fill]  (7.5, -0.8) circle (0.2);
\draw[fill]  (10.5, -0.8) circle (0.2);
\draw[fill]  (10.5, -0.4) circle (0.2);
\draw[fill = gray]  (10.5, 0) circle (0.2);
\node at (1.5, -1.3) {$x$};
\node at (4.5, -1.3) {$x+1$};
\node at (7.5, -1.3) {$x+2$};
\node at (10.5, -1.3) {$x+3$};
%\begin{scope}[thick,decoration={
%markings,
%mark=at position 0.5 with {\arrow{>}}}] 
%\draw[postaction={decorate}] (1.5, 0.5) .. controls (1.5, 1.5) and (4.5, 1) .. (4.5,0);
%\draw[postaction={decorate}] (4.5, 0) .. controls (4.5, 1) and (7.5, 0.6) .. (7.5, -0.4);
%\draw[postaction={decorate}] (4.5, 0) .. controls (4.5, 1) and (7.5, 0.6) .. (7.5, -0.4);
%\draw[postaction={decorate}] (7.5, -0.4) .. controls (7.5, 1) and (10.5, 1) .. (10.5, 0.4);
%\draw[postaction={decorate}] (10.5, 0.4) .. controls (10.5, 1) and (12, 0.5) .. (12, 0);
%\end{scope}
\node at (1.5, 1.3) {$\rj(3, 0; 1, 2)$};
\node at (4.5, 1.3) {$\rj(2, 2; 3, 1)$};
\node at (7.5, 1.3) {$\rj(1, 1; 2, 0)$};
\node at (10.5, 1.3) {$\rj(3, 0; 2, 1)$};
%\node at (0.5, -0.5) {\dots};
\node at (12, -0.5) {\dots};
\draw[bend left = 60, very thick, ->]  (1.5, 0.3) to (4.5, 0.3);
\draw[bend left = 60, very thick, ->]  (4.5, -0.1) to (7.5, -0.1);
\draw[bend left = 60, very thick, ->]  (7.5, -0.4) to (10.5, 0.4);
\draw[bend left = 60, very thick, ->]  (10.5, 0.3) to (13.5, 0.3);
\end{tikzpicture}
\caption{The visualization of the sequential update rule for the left-finite fused \ac{SHS6V} model in Definition \ref{def:fused}.  Assuming $x$ is the location of the leftmost particle, we update sequentially for positions $x, x+1, x+2, \dots$ according to the stochastic matrix $\rj$, the gray particles in the picture above will move one step to the right.}
\label{fig:jmodel}
\end{figure}
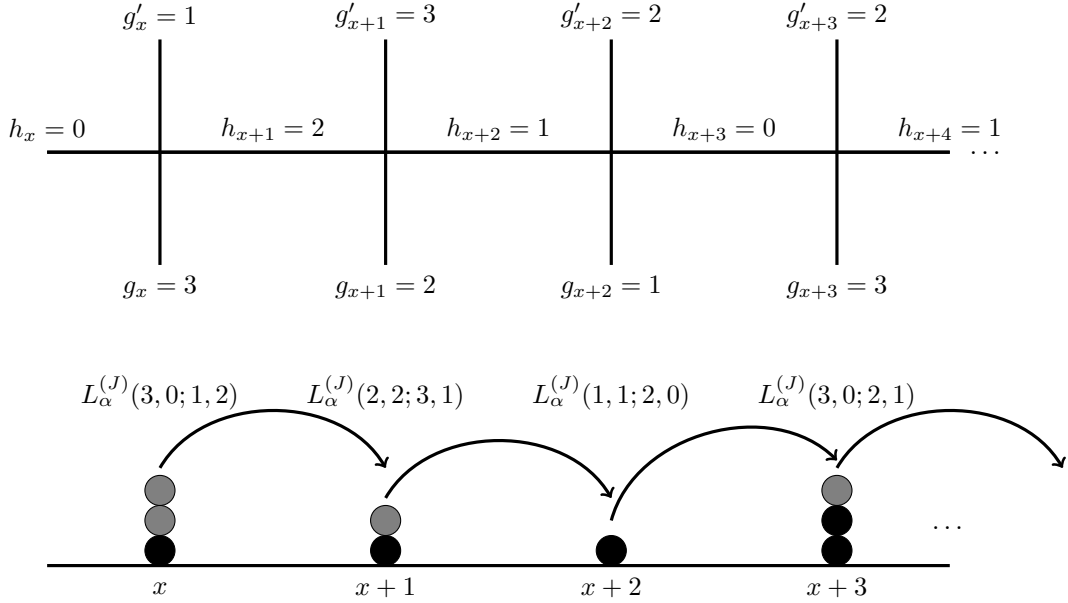
For $s \in \ZZ_{\geq 0}$, we define $\mod(s) := s - J\floor{s/J}$. For instance, $$\big(\mod(0), \mod(1), \dots, \mod(J-1), \mod(J), \mod(J+1), \dots\big) = \big(0, 1, \dots, J-1, 0, 1, \dots\big).$$
We further define $\alphat{\alpha}{t} = \alpha q^{\mod(t)}$ for $t \in \ZZn$.
%We proceed to define the left-finite unfused \ac{SHS6V} model. 
\begin{defin}[left-finite unfused \ac{SHS6V} model]\label{def:unfused}
For all state $\vec{\eta} \in \GG$, we specify the update rule at time $t$ from state $\vec{\eta}$ to $\vec{\eta}' \in \GG$ as follows.
Assume the leftmost particle in the configuration $\vec{\eta}$ is at $x$. Starting from $x$, we update $\eta_x$ to $\eta'_x$ by setting $h_x = 0$ and randomly choosing $\eta'_x$ according to the probability $L_{\alpha(t)}^{(1)} (\eta_x, h_x; \eta'_x, h_{x+1})$ where $h_{x+1} := \eta_x + h_x - \eta'_x$. Proceeding sequentially, we update $\eta_{x+1}$ to $\eta_{x+1}$ according to the probability $L_{\alpha(t)}^{(1)}(\eta_{x+1}, h_{x+1}; \eta'_{x+1}, h_{x+2})$ where $h_{x+2} := \eta_{x+1} + h_{x+1} - \eta'_{x+1}$. Continuing for $\eta_{x+2}, \eta_{x+3}, \dots$, we have defined the update rule from $\vec{\eta}$ to $\vec{\eta}' = (\eta'_x)_{x \in \ZZ}$.
We call the discrete \textbf{time-inhomogeneous} Markov process $\vec{\eta}(t) \in \GG$ with the update rule defined above \textbf{the left-finite unfused \ac{SHS6V} model}. 
\end{defin}
\begin{remark}
It is straightforward to check that under Condition \ref{cond:1}, for all $t \in \ZZn$, $L_{\alpha(t)}^{(1)}$ in \eqref{eq:temp33} is a stochastic matrix which transfers $\iset \times \{0, 1\}$ to itself.
\end{remark}
In this paper, as a notational convention, we always use $\vec{g}(t)$ to denote the fused \ac{SHS6V} model and $\vec{\eta}(t)$ to denote the unfused one. The connection between them is specified in the following proposition.
\begin{prop}[\cite{CP16}, Theorem 3.15]
\label{prop:fusion}
Consider the left-finite fused \ac{SHS6V} model $\vec{g}(t)$ and the left-finite unfused \ac{SHS6V} model $\vec{\eta}(t)$. If $\vec{g}(0) = \vec{\eta}(0)$ in law, then
$$(\vec{g}(t), t \geq 0) = (\vec{\eta}(Jt), t \geq 0) \quad \text{ in law }.$$
\end{prop}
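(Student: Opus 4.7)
The plan is to reduce Proposition \ref{prop:fusion} to a \emph{stochastic fusion} identity at the level of a single vertex: namely, that $J$ consecutive applications of the rank-one matrices $L^{(1)}_\alpha, L^{(1)}_{\alpha q}, \dots, L^{(1)}_{\alpha q^{J-1}}$, sharing a common vertical edge, reproduce the higher-horizontal-spin matrix $L^{(J)}_\alpha$ once the $J$ horizontal input/output bundles are projected onto their occupation counts. This is the probabilistic incarnation of the Kirillov--Reshetikhin fusion procedure, and the content of \eqref{eq:rjmatrix} is precisely the closed form of the resulting averaged weight.

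I would organize the argument in three steps. \emph{First}, I would establish the vertex-level identity: given $i_1$ vertical inputs and $j_1$ horizontal inputs distributed across $J$ edges according to a specific $q$-exchangeable measure (a $q$-binomial weighted arrangement of $j_1$ occupied edges out of $J$), the pushforward under the sequential composition $L^{(1)}_{\alpha q^{J-1}} \circ \cdots \circ L^{(1)}_{\alpha}$ onto the occupation counts $(i_2, j_2)$ equals $L^{(J)}_\alpha(i_1, j_1; i_2, j_2)$, and the output across the $J$ horizontal edges is again $q$-exchangeable. I would verify this by induction on $J$ via a recursion of the form $L^{(J+1)}_\alpha \simeq L^{(J)}_\alpha \star L^{(1)}_{\alpha q^{J}}$ in the spectral parameter, reducing the bookkeeping to a terminating basic hypergeometric summation that produces the ${}_4\bar{\phi}_3$ appearing in \eqref{eq:rjmatrix}. \emph{Second}, I would propagate this identity along the sweep on $\ZZ$: since the unfused sequential update of Definition \ref{def:unfused} processes one site at a time using the horizontal flux emitted from the previous site, the preserved $q$-exchangeability of the outgoing bundle supplies exactly the hypothesis needed for the next vertex-level application. \emph{Third}, running the unfused model for $J$ steps $t = kJ, kJ{+}1, \dots, kJ{+}J{-}1$ with the schedule $\alphat{\alpha}{t} = \alpha q^{\mathrm{mod}_J(t)}$ and summing out the internal horizontal occupations yields a single step of the fused model; iterating over $k$ produces the claimed equality in law of $(\vec{g}(t))_{t \geq 0}$ and $(\vec{\eta}(Jt))_{t \geq 0}$.

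The technical heart is Step 1: identifying the compact form \eqref{eq:rjmatrix} as the correct marginal of the $J$-fold product of $L^{(1)}$ weights after $q$-symmetrization. Matching coefficients with the ${}_4\bar{\phi}_3$ series, and verifying that $q$-exchangeability is indeed preserved by each rank-one move, is the main obstacle and mirrors the historical difficulty reflected in the remark following \eqref{eq:temp33} that compact closed forms for $\rj$ became available only recently. By contrast, once the vertex-level identity is in hand, Steps 2 and 3 are essentially combinatorial bookkeeping, with left-finiteness of the configuration \eqref{eq:leftfinitestate} ensuring that the rightward sequential sweeps are well-defined at every time.
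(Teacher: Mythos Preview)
The paper does not supply its own proof of Proposition~\ref{prop:fusion}; it is stated as a direct citation of \cite[Theorem~3.15]{CP16}, with the surrounding text only remarking that this is the fusion procedure going back to \cite{KR87}. Your outline is a faithful sketch of precisely that argument in \cite{CP16}: the single-vertex fusion identity (that $J$ stacked $L^{(1)}$-moves with shifted spectral parameters, fed a $q$-exchangeable horizontal input, output a $q$-exchangeable bundle whose total count is governed by $L^{(J)}_\alpha$), followed by propagation along the sequential sweep and iteration in time.

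One point worth making explicit in your write-up is the seed for the $q$-exchangeability induction in Step~2. In the left-finite setting of Definition~\ref{def:unfused}, at the leftmost occupied site one has $h_x=0$ in each of the $J$ consecutive time layers, so the initial horizontal bundle has $j_1=0$ and is trivially $q$-exchangeable; this is what allows the vertex-level identity of Step~1 to kick in at the first site and then propagate rightward. You allude to left-finiteness only as guaranteeing well-definedness of the sweep, but it is doing more: it provides the deterministic empty boundary that initializes the $q$-exchangeable structure. Apart from this, there is no gap in your plan, and the ``technical heart'' you identify (matching the $J$-fold convolution to the ${}_4\bar\phi_3$ form \eqref{eq:rjmatrix}) is exactly where the work in \cite{CP16} lies.
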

By Proposition \ref{prop:fusion}, we can construct the \ac{SHS6V} model with higher horizontal spin ($J \in \NN$) from those with horizontal spin $J=1$. This procedure is called \emph{fusion}, which goes back to the work of \cite{KR87}.
Thanks to Proposition \ref{prop:fusion}, for any left-finite \ac{SHS6V} model $\vec{g}(t)$, we can couple it with a left-finite unfused \ac{SHS6V} model $\vec{\eta}(t)$ so that $\vec{g}(t) = \vec{\eta}(Jt)$. We will extend the definition of unfused \ac{SHS6V} model $\vec{\eta}(t)$ in Lemma \ref{lem:biinfinite} so that it takes value in a larger space of bi-infinite particle configuration $\iset^\ZZ$ (thus  extend as well the definition of the fused \ac{SHS6V} model using the relation $\vec{g}(t) = \vec{\eta}(Jt)$).
\bigskip
\\
For the particle configuration $\vec{g} \in \GG$, define 
\begin{equation}\label{eq:temp13}
N_x (\vec{g})  = \sum_{y \leq x} g_y.
\end{equation}
For the left-finite unfused \ac{SHS6V} model $\vec{\eta}(t) \in \GG$, we define the \emph{unfused height function} as 
\begin{equation}\label{eq:temp14}
\n(t, x ) = N_{x}(\vec{\eta}(t)) - N_0 (\vec{\eta}(0)).
\end{equation}
Note that in the notation of unfused height function, we suppress the underlying process $\vec{\eta}(t)$.
Similarly, we define the \emph{fused height function} $\N(t, x)$ for the left-finite fused \ac{SHS6V} model $\vec{g}(t) \in \GG$ as
$$\N(t, x) = N_{x}(\vec{g}(t)) - N_0 (\vec{g}(0)).$$
Since $\vec{g}(t) = \vec{\eta}(Jt)$, certainly one  has for all $t \in \ZZn$ and $x \in \ZZ$,
$\N(t, x) =  \n(Jt, x).$
\bigskip
\\
We will state our result for the fused height function $\N(t, x)$ though we will mainly work with the unfused height function $\n(t, x)$ in our proof. In the future, the notation of $\n (t, x)$ will often be shortened to $N(t, x)$.
\bigskip
\\ 
Having defined $\N(t, x)$ (respectively, $\n(t, x)$) on the lattice, we  linearly interpolate it first in space variable $x$ then in time variable $t$, which makes $\N(t, x)$ (respectively, $\n(t, x)$) a $C([0, \infty), C(\RR))$-valued process. 
For construction of height functions of the bi-infinite version of the fused or unfused \ac{SHS6V} model, see Lemma \ref{lem:biinfinite}.
\subsection{Result}
The main result
of our paper shows that the fluctuation of the fused height function $\N(t, x)$ converges weakly to the solution of the \ac{KPZ} equation. Fix $\rho \in (0, I)$, define 
\begin{equation}\label{eq:temp1}
\lambda =\frac{1 + \alpha - q^\rho (\alpha + \nu)}{1 + \alpha q^J - q^\rho (\alpha q^J + \nu)},\qquad \mu = \frac{\alpha q^\rho (1-q^J) (1-\nu)}{(1 +\alpha q^J - q^\rho (\alpha q^J + \nu))(1+\alpha - q^\rho (\alpha + \nu))}.
\end{equation}
As a matter of convention, we endow the space $C(\RR)$ and $C([0, \infty), C(\RR))$ with the topology of uniform convergence over compact subsets, and write $``\Rightarrow"$ for the weak convergence of probability laws. We present our main theorem.
\begin{theorem}\label{thm:main}
Fix $b \in \big(\frac{I+J-2}{I+J-1}, 1\big)$, $I \geq 2$ and $J \geq 1$, for small $\ep > 0$, wet $q = e^{\sqrt{\ep}}$ and define $\alpha$ via $b = \frac{1 + \alpha q}{1 + \alpha}$. We call this weakly asymmetric scaling. %Define the scaled height function (here we put on the subscript $\ep$ to address the dependence on $\ep$)
%\begin{equation}\label{eq:h}
%H_\epsilon (t, x) 
%%= \log \big( \lambda_\epsilon^t e^{-\sqrt{\epsilon} \big(N_\epsilon (\epsilon^{-2} t, \epsilon^{-1} x + \epsilon^{-2} \mu_\epsilon t) - \epsilon^{-1} x - \epsilon^{-2} \mu_\epsilon t   \big) }\big)\\
%= \sqrt{\epsilon} \big(N_\epsilon (\epsilon^{-2} t, \epsilon^{-1} x + \epsilon^{-2} t \mu_\epsilon) - \rho (\epsilon^{-1} x + \epsilon^{-2} \mu_\epsilon t )  - t \log \lambda_\epsilon \big).
%\end{equation}
Assume that $\{\N_\epsilon (0, x)\}_{\epsilon > 0}$ is nearly stationary with density $\rho$ (see Definition \ref{def:nearstationary}) and 
\begin{equation*}
\sqrt{\epsilon} \Big(\N_\epsilon (0, \epsilon^{-1} x) - \rho \epsilon^{-1} x\Big) \wc \HH^{ic}  (x) \text{ in } C(\RR) \text{ as } \epsilon \downarrow 0,
\end{equation*}
then
\begin{align*}\numberthis \label{convergence}
\sqrt{\epsilon} \Big(\N_\epsilon (\epsilon^{-2} t, \epsilon^{-1} x + \epsilon^{-2} t \mu_\epsilon) - \rho (\epsilon^{-1} x + \epsilon^{-2} \mu_\epsilon  t )\Big)  - t \log \lambda_\epsilon  \wc \HH(t, x)&
\\
&\text{ in } C([0, \infty), C(\RR)) \text{ as } \ep \downarrow 0,
\end{align*}
where $\HH(t, x)$ is the Hopf-Cole solution of the  \ac{KPZ} equation 
\begin{equation}\label{eq:KPZ equation}
\partial_t \kpz(t, x) = \frac{J V_*}{2} \pa_x^2 \kpz(t, x) - \frac{J V_*}{2} \big(\pa_x \kpz(t, x) \big)^2 + \sqrt{J D_*} \xi(t, x),
\end{equation}
with initial condition $\HH^{ic} (x)$, where the coefficients are given by
\begin{align*}
\numberthis\label{eq:vstar}
V_* %&= \frac{1}{J} \sum_{k = 0}^{J-1}  \frac{(I+2k+1)b - (I + 2k -1)}{I^2 (1-b)} 
&= \frac{(I + J) b - (I+ J-2)}{I^2 (1-b)},\\
\numberthis \label{eq:dstar}
D_* 
%&= \frac{1}{J} \sum_{k=0}^{J-1} \frac{\rho(I-\rho)}{I} \cdot \frac{b (I +2 k +1) - (I+2 k - 1)}{I^2 (1-b)} 
&= \frac{\rho (I -\rho)}{I} \frac{(I + J) b - (I+ J-2)}{I^2 (1-b)}.
\end{align*} 
\end{theorem}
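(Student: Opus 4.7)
The plan is to follow the microscopic Hopf--Cole strategy of Bertini--Giacomin, suitably adapted for the higher-spin setting as in \cite{CGST18}. By Proposition \ref{prop:fusion} it suffices to prove the corresponding statement for the unfused height function $\n(\cdot, \cdot)$ with time rescaled by $J$, and I would lift to bi-infinite configurations via Lemma \ref{lem:biinfinite} so that the product-Bernoulli equilibrium $\stat$ with density $\rho$ is available as a reference measure. The central object is the microscopic Hopf--Cole transform
\begin{equation*}
Z_\ep(t, x) := \lambdae^{-t}\, \nue^{\n(t, x) - \rho x - t \mue},
\end{equation*}
where $\nue$ is a power of $\qe$ tuned so that Markov duality of the \ac{SHS6V} model from \cite{CP16} makes this exponential a martingale at leading order, and $\lambdae, \mue$ defined in \eqref{eq:temp1} absorb the deterministic time- and space-drift. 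Under the scaling $(t, x) \mapsto (\ep^{-2} t,\, \ep^{-1} x + \ep^{-2} t \mue)$, the centering prescribed by Theorem \ref{thm:main} turns into a multiplicative centering for $Z_\ep$, and the stated convergence becomes equivalent to $Z_\ep \wc \she$, where $\she$ solves the \ac{SHE} with initial data $\exp(\HH^{ic})$.

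The first step is to derive a discrete SHE of the form
\begin{equation*}
Z_\ep(t+1, x) - \bigl(\toneparte \ast Z_\ep(t, \cdot)\bigr)(x) = Z_\ep(t, x)\, dM_\ep(t, x),
\end{equation*}
where $\toneparte$ is a signed kernel on $\ZZ$ whose generator converges, under diffusive rescaling, to $\tfrac{J V_*}{2}\pa_x^2$ (explaining $V_*$ in \eqref{eq:vstar}), and $M_\ep(t, x)$ is a martingale in $t$ with respect to the natural filtration. This identity is essentially algebraic: one applies the single-site update rule governed by $L_\alpha^{(1)}$, expands in $\sqrt\ep$, and uses the duality relation to separate the deterministic discrete-Laplacian contribution from the mean-zero fluctuation.

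The next step is tightness and convergence. Positive moment bounds for $Z_\ep$ are obtained from Markov duality with $m$-particle dual processes, while strict positivity and the lower moment controls needed for the Hopf--Cole transform to make sense in the limit come from the near-stationarity assumption, under which $Z_\ep$ enjoys non-degenerate moments uniformly in $\ep$. Kolmogorov-type H\"older estimates on the spatial and temporal increments of $Z_\ep$ then yield tightness in $C([0, \infty), C(\RR))$. Any subsequential limit is identified with the Hopf--Cole solution of \eqref{eq:KPZ equation} through a martingale-problem characterization, by showing that the predictable quadratic variation of $M_\ep$ converges to $J D_*\, Z^2\, dt\, dx$. The $\statmargin$-variance of a single occupation variable $\eta_x \in \iset$ is exactly $\tfrac{\rho(I - \rho)}{I}$, which accounts for the corresponding factor in \eqref{eq:dstar}.

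The main obstacle, genuinely new compared with \cite{CGST18}, is the Boltzmann--Gibbs/gradient replacement required for $I \geq 2$. When $I = J = 1$ the martingale increment depends on a single occupation variable and the discrete SHE is effectively closed. For general $I \geq 2$ and $J \geq 1$, however, the increment $dM_\ep$ contains local \emph{gradient} functionals of $\vec\eta$ that do not simplify pointwise; these must be replaced, in a time-integrated $L^2$ sense, by discrete derivatives of $Z_\ep$ plus $\statmargin$-expectations, via a second-order local replacement argument that leverages the near-stationarity of the initial law. Controlling the error of this replacement and carefully computing the $\statmargin$-averaged quadratic variation are what produce the explicit coefficients $V_*$ of \eqref{eq:vstar} and $D_*$ of \eqref{eq:dstar}, and constitute the bulk of the technical work.
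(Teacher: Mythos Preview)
Your overall architecture is right and matches the paper: reduce to the unfused model via Proposition~\ref{prop:fusion}, define a microscopic Hopf--Cole transform $Z_\ep$ (the paper's \eqref{eq:microshe1}, with time-inhomogeneous normalization $\hlambda(t)=\prod_{k<t}\lambda(k)$ reflecting $J$-periodicity), derive a discrete SHE (Proposition~\ref{prop:microscopicshe}), prove tightness via moment bounds, and identify the limit through the martingale problem of Definition~\ref{def:martingalep}. The reduction of Theorem~\ref{thm:main} to Theorem~\ref{thm:main1} is then exactly the short argument you sketch, replacing $t$ by $Jt$ to pass from unfused to fused.

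Where your proposal diverges substantively is the mechanism for the quadratic martingale problem, i.e.\ showing that the conditional quadratic variation self-averages to $D_* Z^2$. You propose a Boltzmann--Gibbs / second-order local replacement argument leaning on near-stationarity. The paper does \emph{not} do this, and it is unclear such an argument is available here without spectral-gap or mixing inputs for the \ac{SHS6V} dynamics. Instead the paper exploits integrability in a way you do not mention: it uses \emph{two} Markov dualities (Proposition~\ref{prop:firstdual} and a new two-particle degeneration of Kuan's multi-species duality, Proposition~\ref{prop:secdual}) together with an \emph{exact contour-integral formula} for the two-particle transition probability (Theorem~\ref{thm:intformulap}, via Bethe ansatz and an analytic continuation that only works for $I\ge 2$), and then a careful steepest-descent analysis (Section~\ref{sec:asymptotic analysis}, Proposition~\ref{prop:semiestimate}) yielding sharp space-time decay of $\rhzrt$ and its gradients. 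The point is that for $I=1$ the identity $\eta_x^2=\eta_x$ closes the quadratic variation algebraically, but for $I\ge 2$ this fails; the Kuan duality functional $D(t,y_1,y_2)$ of \eqref{eq:dfunc}, whose form differs on and off the diagonal $y_1=y_2$, is precisely what lets one rewrite $(\ep^{-1/2}\nabla Z)^2-\tfrac{\rho(I-\rho)}{I}Z^2$ as gradient terms to which the two-particle estimates apply (Section~\ref{sec:timedecorr}). This integrability-based route is the paper's main technical contribution, and your proposal does not contain it. Two smaller corrections: the stationary measure $\statmargin$ is not product-Bernoulli but the $q$-deformed law on $\iset$ of Appendix~\ref{sec:stat}; and the exponent in the Hopf--Cole transform should be $q^{-(\cdot)}$, not $\nu_\ep^{(\cdot)}$.
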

Note that the restriction of $b \in (\frac{I+J-2}{I+J-1}, 1)$ in Theorem \ref{thm:main} is necessary and sufficient to ensure that Condition \ref{cond:1} holds for $\ep$ small enough. In Appendix \ref{app:kpzscaling}, we will demonstrate how our theorem agrees with the non-rigorous \ac{KPZ} scaling theory used in  physics\footnote{The KPZ scaling theory is a non-rigorous physics method used to compute the constants (the coefficients of the \ac{KPZ} equation \eqref{eq:KPZ equation} as in our case) arising in limit theorems for the models in the \ac{KPZ}
universality class \cite{krug92, Spohn12}, which has been confirmed in a few cases such as \cite{DT16, Gho17}.}. 
\begin{remark}
In a different setting where $0 < q, \nu < 1$ (in contrast to Condition \ref{cond:1}, there is no $I \in \NN$ such that $\nu = q^{-I}$) and $\alpha \geq 0$, one can show that $\rj$ is a stochastic matrix on $\ZZ_{\geq 0} \times \jset$ (instead of $\iset \times \jset$ for our case). 
%What is different from our choice is that the vertex configuration now allow infinite number of vertical lines. 
In this regime, the \ac{SHS6V} model allows arbitrary number of particles at each site (instead of at most $I$ particles for our case). \cite{CT17} proves the weak universality of the \ac{SHS6V} model\,\footnote{In the context of \cite{CT17},  the authors prove the weak universality for the higher spin exclusion process defined in \cite[Definition 2.10]{CP16}, which is equivalent to the \ac{SHS6V} model after a gap-particle transform. We describe their result in the language of the \ac{SHS6V} model here.} under a different type of weak scaling that corresponds to taking $b=3, z=1, \delta \to \ep \delta, \kappa \to \ep^2 \kappa $ in \eqref{eq:scaledkpz}. Under this scaling, the number of particles at each site diverges to infinity with rate $\ep^{-1}$. This simplifies considerably the control of the quadratic variation of the martingale in the discrete \ac{SHE} \eqref{eq:temp12}, which is the main complexity in our analysis.
\end{remark}
%\begin{remark}
%The restriction of $b \in (\frac{I+ J-2}{I+ J-1}, 1)$ is  necessary and sufficient to ensure Condition \ref{cond:1} holds for $\ep$ small enough. 
%\end{remark}
\begin{remark}
Taking $I = J = 1$, Theorem \ref{thm:main}  recovers \cite[Theorem 1.1]{CGST18}.
%, which proves that the S6V model with parameter $b = \frac{1 + \alpha q}{1 + \alpha}$ and $b_2 = \frac{\alpha  + \nu}{1 + \alpha}$ con. 
We assume $I \geq 2$ in Theorem \ref{thm:main} merely due to some technical subtleties we met in  Section \ref{sec:asymptotic analysis}. The proof for $I = 1$ needs particular modification and we do not pursue it here.
\end{remark}
%\begin{remark}
%In the Appendix \ref{app:kpzscaling}, we use the KPZ scaling theory \cite{krug92}, which is a physics approach to predict the non-universal coefficient of the \ac{KPZ} equation. We will show that when $J = 1$ the coefficients of \eqref{eq:KPZ equation} coincides with the \ac{KPZ} scaling theory.
%\end{remark}
%We will show that when $J = 1$ the coefficients of \eqref{eq:KPZ equation} coincides with the \ac{KPZ} scaling theory.%
The proof of Theorem \ref{thm:main} will be given in the end of Section \ref{sec:microshe}, as a corollary of Theorem \ref{thm:main1}.
\subsection{Method}\label{sec:method}
%We list the key tools used for proving the main theorem. 
In this section, we explain the method used in proving Theorem \ref{thm:main}. Although initially our methods follow \cite{CGST18}, rather quickly, we encounter novel complexities that are not present in \cite{CGST18} which require new ideas. 
%In particular, we will discuss the similarities and differences between our method and that in \cite{CGST18}.  
%\subsubsection{Fusion} 
%Despite the explicit form of $L$-matrix \eqref{eq:rjmatrix}, it is difficulty to work with the hyper-geometric function directly due to its complicated expression. In Section \ref{sec:sec2}, we provide an equivalent definition of the \ac{SHS6V} model via \emph{fusion}, which is a representation theoretic mechanism introduced in \cite{KR87} to construct $R$-matrices with higher horizontal spin (i.e. $J \in \ZZ_{\geq 1}$) from those with $J = 1$. Roughly speaking, when $J=1$,  $R_\alpha^{(1)}$ takes a simple form (Lemma \ref{lem:j=1rmatrix}) and we can reconstruct $\rj$ using $R_\alpha^{(1)}, R_{\alpha q}^{(1)}, \dots, R^{(1)}_{\alpha q^{J-1}}$. As a consequence, we can define a version of unfused \ac{SHS6V} model $\vec{\eta}(t)$ with a simpler update rule and connect it to the fused \ac{SHS6V} model $\vec{g}(t)$ via the relation $\vec{g}(t) = \vec{\eta}(Jt)$, see Proposition \ref{prop:fusion}.
%%In term of SHS6V model, we are capable of breaking down one step transition probability relevant to $L$-matrix $\rj$ into $J$ step transition probability, each is related to a $J=1$ version $L$-matrix $R_\alpha^{(1)}, R_{\alpha q}^{(1)}, \dots, R^{(1)}_{\alpha q^{J-1}}$. The advantage of this is the $J=1$ $L$-matrix takes a much simpler form and is relatively easy to work with.
\bigskip
\\
As illustrated in Section \ref{sec:model}, via fusion, to study the fused \ac{SHS6V} model, it suffices to work with the unfused version. Similar to \cite{CGST18}, the first step is to perform a microscopic Hopf-Cole transform of the \ac{SHS6V} model \eqref{eq:microshe1}. The existence of the microscopic Hopf-Cole transform is guaranteed by one particle version of the duality \eqref{eq:fextenddual1}  (which goes back to \cite[Theorem 2.21]{CP16}). The microscopic Hopf-Cole transform $Z(t, x)$, which is essentially an exponential version of the unfused height function $N(t, x)$, satisfies 
 a discrete version of \ac{SHE}
\begin{equation}\label{eq:temp12}
dZ = \mathcal{L} Z dt + dM.
\end{equation}
Here $\mathcal{L}$ is an operator which approximates the Laplacian and $M$ is a martingale. 
Owing to the definition of the Hopf-Cole solution to the \ac{KPZ} equation, Theorem \ref{thm:main} is equivalent to showing that the above discrete \ac{SHE} converges to its continuum version (Theorem \ref{thm:main1}). The proof of Theorem \ref{thm:main1} reduces to three steps: 
\bigskip
\\
1). Showing tightness.
\\ 
2). Identifying the limit of the linear martingale problem.
\\
3). Identifying the limit of the quadratic martingale problem.
\bigskip
\\
Steps 1) and 2) follow from a similar approach as in \cite{CGST18}. Step 3) is the difficult part; Proposition \ref{prop:timedecorr}
does this by proving a form of self-averaging for the quadratic variation of the martingale $M$. We will focus on discussing the method for proving this self-averaging result in the rest of the section. We remark that other recent \ac{KPZ} equation convergence results using the Hopf-Cole transform include ASEP-$(q, j)$ \cite{CST18}, Hall-Littlewood
PushTASEP \cite{Gho17}, weakly asymmetric bridges \cite{Lab17}, open ASEP \cite{CS18, Par19}.
\bigskip
\\
We will explain what is self-averaging in a moment, but first introduce two tools used in proving it. The first tool is the \emph{Markov duality} and the second is the \emph{exact formula of two particle transition probability} of the \ac{SHS6V} model. 
\bigskip
\\ 
The stochastic six vertex model enjoys two Markov dualities \cite[Theorem 2.21]{CP16} and \cite[Theorem 1.5]{Lin19}\footnote{The Markov duality proved in \cite{Lin19} first appears in \cite[Theorem 2.23]{CP16}. In fact \cite[Theorem 2.23]{CP16} claims a more general Markov duality for the \ac{SHS6V} model. In discussions with the authors of [CP16], we recognized a gap in that proof as well as a counter-example to the result when $I>1$, see \cite{CP19} for detail.}, which are exploited in proving the self-averaging \cite[Proposition 5.6]{CGST18}.
The Markov duality in \cite[Theorem 2.21]{CP16} also works for the \ac{SHS6V} model (Proposition \ref{prop:firstdual} in our paper), yet it is unknown whether there exists a generalization of  \cite[Theorem 1.5]{Lin19} for the \ac{SHS6V} model. \cite[Theorem 4.10]{kuan18} discovers a general duality for the multi-species \ac{SHS6V} model using the algebraic machinery\footnote{As a remark, the functional in \cite[Theorem 4.10]{kuan18} also serves as the duality functional for a multi-species version of ASEP(q, j), see \cite{CGRS16, kuan17}.}. At first glance, the duality functional written in \cite[Theorem 4.10]{kuan18} takes a rather complicated form, but we only need a two particle version of this duality, in which case the duality functional simplifies greatly (Proposition \ref{prop:secdual} in our paper) and is applicable for proving the desired self-averaging. 
%checkpoint %Jeff
We remark that this is the first application of \cite[Theorem 4.10]{kuan18} as far as we know.
\bigskip
\\
In \cite[Theorem 3.6]{BCG16}, an integral formula was obtained for general $k$ particle transition probability of the stochastic six vertex model via a generalized Fourier theory (Bethe ansatz), using a complete set of eigenfunction of the stochastic six vertex model transition matrix obtained in \cite[Theorem 3.4]{BCG16} together with the Plancherel identity  \cite[Theorem 2.1]{TW08}. \cite{CGST18} applies the steepest descent analysis to a two particle version of this formula to extract a space-time bound, which is the key to control the quadratic variation of the martingale in \eqref{eq:temp12}.  
\bigskip
\\
%checkpoint footnote
For the \ac{SHS6V} model, it is natural to expect that the similar method should apply, since we also have a set of eigenfunctions from \cite[Proposiiton 2.12]{CP16} and a generalized Plancherel identity from \cite[Corollary 3.13]{BCPS15}. However, the Plancherel identity was originally designed only for $0 < q, \nu < 1$ and there is a technical issue in extending this identity to $q > 1, \nu = q^{-I}$ which has not been addressed in the existing literatures\footnote{\cite[Proposition A.3]{CP16} claims the Plancherel identity for $\nu = q^{-I}$ can be obtained by analytic continuation of \cite[Corollary 3.13]{BCPS15}. After discussions with the authors of \cite{CP16}, they agree that there is an issue in this analytic continuation (and the resulting identity) due to poles encountered along the way \cite{CP19}.} (see Remark \ref{rmk:ana}). 
Fortunately, we find that when $I \geq 2$ and there are only two particles, such analytic continuation does work, which produces an integral formula for the two particle \ac{SHS6V} model transition probability (Theorem \ref{thm:intformulap}). In terms of large contours, the integral formula consists of two double contour integrals and one single contour integral. We find that the single contour integral can be expressed as a residue of one of the double contour integrals. This simplifies our analysis since via certain contour deformation, the single contour integral will be canceled out.
\bigskip
\\
We will analyze (a tilted version of) this integral formula (Corollary \ref{cor:intformulav}) in Section \ref{sec:asymptotic analysis} using steepest descent analysis and obtain a very precise estimate of the (tilted) two particle transition probability $\rhzrt$ defined in \eqref{eq:four:tilttransition}. Compared with the analysis for stochastic six vertex model in \cite[Section 6]{CGST18}, one difficulty is to find (and justify) the contours for different $I, J$ such that the steepest descent analysis applies. Also in certain cases (Section \ref{sec:pp}) the steepest descent contour can only be implicitly defined (compared with \cite[Section 6]{CGST18} where all the steepest descent contour are circles), which complicates our analysis.
\bigskip
\\
Now let us explain what is self-averaging and how these two tools can be applied to prove it. 
Denote the discrete gradient by $\na f(x): = f(x+1) -  f(x)$. Roughly speaking, the terminology ``self-averaging" refers to the phenomena that as $\ep \downarrow 0$
\\
{\fontfamily{qpl}\selectfont\textbf{(A)}  For $x_1 \neq x_2$,  the average of $\ep^{-1} \na Z(t, x_1)  \na Z(t, x_2)$  over a long time interval of length $O(\ep^{-2})$ will vanish.\\
\textbf{(B) } There exists a positive constant $\lambda$ such that the average of $(\ep^{-\frac{1}{2}} \na Z(t, x))^2 - \lambda Z(t, x)^2$ over a long time interval of length $\OO(\ep^{-2})$ will vanish.}
\bigskip
\\
The proofs of \A\  and \B\   are given in Lemma \ref{lem:timedecorr1} and Lemma \ref{lem:timedecorr2} respectively, let us make a brief discussion about our strategy here. As we will see in %Section \ref{sec:timedecorr} 
\eqref{eq:seven1:temp2}, 
under weakly asymmetric scaling, 
\begin{equation}\label{eq:temp10}
\ep^{-\frac{1}{2}} \na Z(t, x) = (\rho - \etat_{x+1} (t)) Z(t, x) + \errr.
\end{equation}
where $\rho \in (0, I)$ is the density, $\etat_x (t) = \eta_{x + \hmu(t)} (t)$ and $\hmu(t)$ is some constant defined in \eqref{eq:temp11}. Pointwisely, $\ep^{-\frac{1}{2}} \na Z(t, x)$ is of the same order as $Z(t, x)$. But {\fontfamily{qpl}\selectfont\textbf{(A)}} tells that after averaging over a long time interval (we will just say "averaging" afterwards for short), $\ep^{-1} \na Z(t, x_1) \na Z(t, x_2)$ vanishes for $x_1 \neq x_2$, this explains the terminology of ``self-averaging". To prove \A, by the first duality in Lemma \ref{lem:dualt} (which goes back to Proposition \ref{prop:firstdual}), one is able to write down the conditional quadratic variation in terms of the summation of (a tilted version of) two particle transition  probability $\rhzrt$, i.e. for $x_1 \leq x_2$ 
\begin{equation}\label{eq:temp9}
\EE\big[Z(t, x_1) Z(t, x_2) \vv \FF(s)\big] = \sum_{y_1 \leq y_2} \rhzrt\big((x_1, x_2), (y_1, y_2), t, s\big) Z(s, y_1) Z(s, y_2)
\end{equation} 
%where $\rhzrt$ is defined in \eqref{eq:four:tilttransition}. 
This allows us to move the gradients from $Z(t, x_1)$ and $Z(t, x_2)$ to $\rhzrt$. We proceed by using a very precise estimate of $\rhzrt$ from Proposition \ref{prop:semiestimate}
(which is proved by making use of the steepest descent analysis of the integral formula of $\rhzrt$). Referring to Proposition \ref{prop:semiestimate}, each gradient on $\rhzrt\big((x_1, x_2), (y_1, y_2), t, s\big)$ gives an extra decay of $\frac{1}{\sqrt{t-s+1}}$, which helps us to conclude \A.  We remark that for demonstrating {\fontfamily{qpl}\selectfont\textbf{(A)}}, our argument is actually simpler than  that of \cite{CGST18}. Since we assume $I \geq 2$, \eqref{eq:temp9} holds for all $x_1 \leq x_2$, while in the situation of the stochastic six vertex model $(I=1)$, \eqref{eq:temp9} holds only for $x_1 < x_2$, due to the exclusion restriction (i.e. two particles can not stay in the same site). In fact, \cite{CGST18} needs both of the duality \cite[Theorem 2.21]{CP16} and \cite[Theorem 1.5]{Lin19} to prove {\fontfamily{qpl}\selectfont\textbf{(A)}}. 
%when $|x_1 - x_2| =1 $.
\bigskip
\\
For {\fontfamily{qpl}\selectfont\textbf{(B)}}, there are two tasks: Identifying $\lambda$ and proving the self-averaging. These were done simultaneously for the stochastic six vertex model \cite{CGST18}: Note that by \eqref{eq:temp10}, 
%The key to the quadratic martingale problem is the following time de-correlation. (Proposition 5.6 in \cite{CGST18} for $I= J = 1$ and Proposition 6.8 in this paper for general $I, J$). More precisely, we need to show $(\ep^{-\frac{1}{2}} \na Z(t, x)) (\ep^{-\frac{1}{2}} Z(t, y))$ (when $x \neq y$) is negligible after time averaging as well as figure out the constant $\kappa$ such that $(\ep^{-\frac{1}{2}}\na Z(t, x))^2 - \kappa Z(t, x)^2$. 
%The proof of \cite[Proposition 5.6]{CGST18} uses the estimate of two point transition probability of the stochastic six vertex model and the Markov duality. The first duality is \cite[Theorem 2.21]{CP16} and the second duality is \cite[Theorem 2.23]{CP16}. For the first bullet point, if $|x- y| \geq 2$, only the first duality is used. Yet when $|x - y| = 1$, both dualities are needed. 
\begin{equation}\label{eq:temp15}
(\ep^{-\frac{1}{2}} \na Z(t, x))^2 = (\etat_{x+1} (t)- \rho)^2 Z(t, x)^2 + \errr.
\end{equation}
For the stochastic six vertex model, $\etat_{x} (t) \in \{0, 1\}$ for all $t, x$, hence $\etat_x (t)^2 = \etat_x (t)$. \cite[Lemma 7.1]{CGST18} uses this crucial observation  to obtain
\begin{align*}
(\etat_{x+1} (t) - \rho)^2 Z(t, x)^2 &= \rho^2 Z(t, x)^2 + (1-2\rho) \etat_{x+1} (t) Z(t, x)\\
&=\rho(1-\rho) Z(t, x)^2 + \ep^{-\frac{1}{2}} (2\rho - 1) \na Z(t, x) Z(t, x) + \errr.
\end{align*} 
By similar method used in demonstrating \A, it is not hard to prove that  $\ep^{-\frac{1}{2}} \na Z(t, x) Z(t, x)$ vanishes after averaging, implying that $\lambda = \rho(1-\rho)$.
\bigskip
\\
For our case, first we note that $\etat_x (t) \in \iset$ with $I \geq 2$, so the $\etat_x (t)^2  = \etat_x (t)$ identity obviously fails. We need to find another way to determine $\lambda$ and prove the self-averaging. We proceed by first guessing the $\lambda$. Via \eqref{eq:temp15}, the average of $\ep^{-1} (\na Z(t, x))^2$ over a long time interval can be approximated by the average of $(\etat_x (t) - \rho)^2 Z(t, x)^2$. In Appendix \ref{sec:stat}, we derive a family of stationary distribution of the \ac{SHS6V} model, which is a product measure $\stat$, where $\pi_\rho$ is a probability measure on $\iset$ indexed by its mean $\rho \in (0, I)$. 
Starting the \ac{SHS6V} model $\vec{\eta}(t)$ from $\vec{\eta}(0) \sim \stat$, 
%(which is a special case of near stationary initial condition with density $\rho$ as defined in Definition \ref{def:nearstationary}), 
it is clear that $\etat_x (t) \sim \pi_\rho$ for all $t, x$. In a heuristic level, one can approximate the average of $(\etat_{x+1} (t) - \rho)^2 Z(t, x)^2$ by that of the $\EE_{\pi_\rho} \big[(\etat_{x+1} (t) - \rho)^2\big] Z(t, x)^2$. Under weakly asymmetric scaling, one computes that  $$\lim_{\ep \downarrow 0} \EE_{\pi_\rho}\big[(\etat_{x+1} (t) - \rho)^2\big] =  \frac{\rho(I-\rho)}{I},$$
which suggests $\lambda = \frac{\rho(I-\rho)}{I}$.
\bigskip
\\ 
To prove \B\  with $\lambda = \frac{\rho(I-\rho)}{I}$, note that the second duality in Lemma \ref{lem:dualt} (which goes back to Proposition \ref{prop:secdual}) implies 
\begin{equation}\label{eq:temp32}
\EE\big[D(t, x, x) \vv \FF(s)\big]  = \sum_{y_1 \leq y_2} D(s, y_1, y_2) \rhzrt\big((x, x), (y_1, y_2), t, s\big)
\end{equation}
where approximately\footnote{In fact, the functional $D(s, y_1, y_2)$ below is only an approximate version of the duality functional defined in \eqref{eq:dfunc}, we use this approximate version here to avoid extra  notations and make our argument more intuitive.}
\begin{equation}\label{eq:temp30}
D(s, y_1, y_2) = 
\begin{cases}
Z(s,y_1)^2   \big(I - \etapartt{s}{y_1}\big)  \big(I - 1 - \etapartt{s}{y_1}\big)  \qquad &\text{if}\  y_1 = y_2,\\
\frac{I-1}{I} Z(s, y_1) Z(s, y_2) \big(I - \etapartt{s}{y_1}\big)  \big(I - \etapartt{s}{y_2}\big)   \qquad  &\text{if}\  y_1 < y_2
\end{cases}
\end{equation}
Note that the expression of $D(s, y_1, y_2)$ is  different depending on whether $y_1 = y_2$, which is crucial to our proof. Rewriting $(\ep^{-\frac{1}{2}} \na Z(t, x))^2 - \frac{\rho(I-\rho)}{I} Z(t, x)^2$ in terms of the two duality functionals in \eqref{eq:temp9} and  \eqref{eq:temp30}
\begin{align*}
&(\ep^{-\frac{1}{2}} \na Z(t, x))^2 - \frac{\rho(I - \rho)}{I} Z(t, x)^2 = \bigg((\etat_{x+1} (t) - \rho)^2 - \frac{\rho(I - \rho)}{I}\bigg) Z(t, x)^2 + \errr  \\ 
%&= (I - \etatp(t, x)) (I-1 -\etatp(t, x)) Z(t, x)^2 - (2\rho + 1- 2I)  (\rho - \etatp(t, x)) Z(t, x)^2 - \frac{(I-1)(I-\rho)^2}{I} Z(t, x)^2 + \epsilon^{\frac{1}{2}} \bb(t, x) Z(t, x)^2,\\
%&= \bigg((I - \etat_{x+1}(t)) (I - 1- \etat_{x+1}(t)) - \frac{(I-1)(I-\rho)^2}{I}\bigg) Z(t, x)^2 - (2\rho +1 - 2I ) \onegradz{t, x,x} + \epsilon^{\frac{1}{2}} \bb(t, x) Z(t, x)^2\\
%\numberthis \label{eq:73temp3}
&=  \bigg((I - \etat_{x+1}(t)) (I - 1- \etat_{x+1}(t)) - \frac{(I-1)(I-\rho)^2}{I}\bigg) Z(t, x+1)^2 - (2\rho +1 - 2I ) \ep^{-\frac{1}{2}} \na Z(t, x) Z(t, x) + \errr,\\
&= \bigg(D(t, x+1, x+1) - \frac{(I-1)(I-\rho)^2}{I}Z(t, x+1)^2\bigg) - (2\rho +1 - 2I ) \ep^{-\frac{1}{2}} \na Z(t, x) Z(t, x) + \errr.
\end{align*}
It is not hard to show that the second term  $\ep^{-\frac{1}{2}} \na Z(t, x) Z(t, x)$ vanishes after averaging. For the first term above, we combine both of the dualities \eqref{eq:temp9},  \eqref{eq:temp32} and get
\begin{align*}
&\EE\bigg[D(t, x+1, x+1) - \frac{(I-1)(I-\rho)^2}{I} Z(t, x+1)^2 \vvv \FF(s)\bigg] \\ \numberthis \label{eq:temp31}
&= \sum_{y_1 \leq y_2} \rhzrt\big(x+1, x+1, y_1, y_2, t, s\big) \bigg(D(s, y_1, y_2) - \frac{(I-1)(I-\rho)^2}{I} Z(s, y_1) Z(s, y_2)\bigg).  
\end{align*}
The number of pairs $(y_1, y_2)$ such that $y_1 = y_2$ compared with $y_1 < y_2$ is negligible in the summation above so it suffices to study for $y_1 < y_2$ 
\begin{align*}
&D(s, y_1, y_2) - \frac{(I-1)(I-\rho)^2}{I} Z(s, y_1) Z(s, y_2) \\
&= \bigg(\frac{I-1}{I}  (I - \etapartt{s}{y_1})  (I - \etapartt{s}{y_2}) - \frac{(I-1)(I-\rho)^2}{I}\bigg) Z(s, y_1) Z(s, y_2)\\
&=(I - \etat_{y_2}(s)) (\ep^{-\frac{1}{2}} \nabla Z(s, y_1))  Z(s, y_2) + (I-\rho) (\ep^{-\frac{1}{2}} \na Z(s, y_2)) Z(s, y_1) + \errr.
\end{align*}
Inserting this expression into the RHS of \eqref{eq:temp31} and using the summation by part formula (see \eqref{eq:sumbypart1}), we can move the gradient from $Z$ to $\rhzrt$. Similar to the argument for \A, applying the estimate in Proposition \ref{prop:semiestimate}  completes the proof of \B.
%is accomplished via a careful combination of both of the dualities in Lemma \ref{lem:dualt} 
%(which trace back to \cite[Theorem 2.21]{CP16} and \cite[Theorem 4.10]{kuan18} respectively), see Section \ref{sec:timedecorr2} for more details.
%Let us explain  the technique used in the work of \cite{CGST18} do not fully apply to the general $I, J$. Some new challenge we met is listed as belows:
%\begin{itemize}
%\item In the work of \cite{CGST18}, the stochastic six vertex model enjoys two duality given by \cite[Theorem 2.21]{CP16} and \cite[Theorem 2.23]{CP16}. These two dualities are combined together to deduce the time de-correlation (\textbf{too short}). However, 
%Despite that \cite{CP16} claims that both dualities work not only for stochastic six vertex model but for SHS6V model with arbitrary $I, J$. However, we find a counterexample for \cite[Theorem 2.23]{CP16} and in fact this duality only holds only when $I= 1$.
%\item The $k$ point stochastic six vertex model transition probability has a integral formula provided by the work of \cite{BCG16}. Moreover, this integral formula is amenable to steepest descent analysis in Section 6 of \cite{CGST18}. Nevertheless, as far as we know, such integral formula(even two point) for SHS6V with general $I, J$.  is not recorded in literature.  
%\end{itemize}
%\subsection{Further literature}\label{sec:literature}
\subsection{Outline}
The paper will be organized as follows. In Section \ref{sec:sec2} we give an equivalent definition of SHS6V model through fusion. At the beginning, we require the existence of a leftmost particle. After that we extend the definition to a bi-infinite version of the \ac{SHS6V} model (Lemma \ref{lem:biinfinite}), which is the object that we study for the rest of the paper. 
In Section \ref{sec:duality}, we introduce two Markov dualities enjoyed by the model. The first one is taken directly from the \cite[Theorem 2.21]{CP16}. The second one is a certain degeneration from a general duality in  \cite[Theorem 4.10]{kuan18}. Section \ref{sec:exactformula} contains the derivation of integral formula for the two point transition probability of the \ac{SHS6V} model. In Section 5, we define the microscopic Hopf-Cole transform and prove that it satisfies a discrete version of \ac{SHE}. Due to the definition of the Hopf-Cole solution to the KPZ equation, it suffices to prove that the discrete \ac{SHE} converges to its continuum version. In Section \ref{sec:tightness}, we prove this result in two steps. First, we establish the tightness of the discrete \ac{SHE}.  Second, we show that any limit point is the solution to the \ac{SHE} in continuum, assuming the self-averaging property (Proposition \ref{prop:timedecorr}). The last two sections are devoted to the proof of Proposition \ref{prop:timedecorr}.
In Section \ref{sec:asymptotic analysis}, we obtain a very precise estimate for the two point transition probability by applying steepest descent analysis to the integral formula that we obtain in Section \ref{sec:exactformula}. In Section \ref{sec:timedecorr}, we prove Proposition \ref{prop:timedecorr} using the Markov duality and our estimate of the two point transition probability. 
%\subsection{Notation}\label{subsec:notation} 
%We define a few notation that we will use in the rest of the paper, some of them has already been explained in the previous sections. We denote by $[n] := \{0, 1, \dots, n\}$ for $n \in \ZZ_{\geq 1}$. The $q$-Pochhammer symbols $(a, q)_n$ where $n$ is allowed to be negative are defined by
%\begin{equation*}
%(a; q)_n := 
%\begin{cases}
%\prod_{i=1}^n (1 - a q^{i-1}), & n > 0,
%\\
%1, & n = 0,\\
%\prod_{k=0}^{-n-1} (1 - a q^{n+k}), & n < 0.
%\end{cases}
%\end{equation*}
%In particular $(a; q)_\infty = \prod_{i=1}^\infty (1 - a q^{i-1})$.
%We always write $\C_r$ to be a circular contour centered at $0$ with radius $r$. Furthermore, when we write $x \leq y \in \ZZ$, we mean $x \leq y$ and $x, y \in \ZZ$. We let $S_k$ be the permutation group of $\{1, \dots, k\}$. We adopt the standard notation $\OO(a)$ to denote a generic quantity such that $\OO(a) a^{-1}$ is bounded for all $0 < a < 1$. 
\subsection{Notation} In this paper, we denote $\ZZ_{\geq i} = \{n\in \ZZ: n \geq i\}$. $\idc_E$ denotes the indicator function of an event $E$. We use $\EE$ (respectively, $\PP$) to denote the expectation (respectively, probability) with respect to the process or random variable that follow. The symbol $\C_r$ represents a circular contour centered at the origin with radius $r$. All contours, unless otherwise specified, are counterclockwise.  
\subsection{Acknowledgment}
The author heartily thanks his advisor Ivan Corwin for suggesting this problem, his encouragement and helpful discussion, as well as reading part of the manuscript. We thank Jeffrey Kuan for the helpful discussion over the degeneration of \cite[Theorem 4.10]{kuan18} to the stochastic higher spin six vertex model. We also wish to thank Promit Ghosal and Li-Cheng Tsai, who provided very useful discussions about the technical part of the paper. We are also grateful to two anonymous referees for their valuable suggestions. The author was partially supported by the Fernholz Foundation's
``Summer Minerva Fellow" program and also received summer support from Ivan Corwin's
NSF grant DMS:1811143.
\section{The bi-infinite \ac{SHS6V} model}
\label{sec:sec2}
% In this section, we give an alternative way to define the \ac{SHS6V} model, using fusion.
The main goal of this section is to 
%We first define the model in the case that there exists a leftmost particle. 
extend the definition of the left-finite unfused (fused) \ac{SHS6V} model in Definition \ref{def:unfused} (Definition \ref{def:fused}) to the space of bi-infinite configurations $\iset^\ZZ$. The motivation of such extension is to include one important class of initial condition called \emph{near stationary initial condition} as in \cite{BG97}. We will proceed following the idea of \cite{CGST18}, which goes back to \cite{CT17}. By fusion (Proposition \ref{prop:fusion}), it suffices to extend the left-finite unfused \ac{SHS6V} model $\vec{\eta}(t)$. The extension of the fused version $\vec{g}(t)$ follows easily by taking $\vec{g}(t) = \vec{\eta}(Jt)$. 
\bigskip
\\
For the extension, the first step is to restate the \ac{SHS6V} model in a parallel update rule. To this end, we equip the probability space with a family of independent Bernoulli random variables $B(t, y, \eta), B'(t, y, \eta)$ such that  
\begin{equation}\label{eq:randomenvir}
B(t, y, \eta) \sim \ber\bigg(\frac{\alphat{\alpha}{t}(1- q^{\eta})}{1 + \alphat{\alpha}{t}}\bigg), \qquad B'(t, y, \eta) \sim \ber\bigg(\frac{\alphat{\alpha}{t} + \nu q^{\eta}}{1 + \alphat{\alpha}{t}}\bigg),
\end{equation}
for $t \in \ZZ_{\geq 0}$, $y \in \ZZ$ and $\eta \in \iset$, recall that $\alpha(t) = \alpha q^{\mod(t)}$.
\bigskip
\\
Treating these Bernoulli random variables as a random environment, we find an equivalent way to define the left-finite unfused \ac{SHS6V} model, through recursion. Given initial state $\vec{\eta}(0) \in \GG$, define $N(0, x) := N_x (\vec{\eta}(0)) - N_0 (\vec{\eta}(0))$ (recall the notation from \eqref{eq:temp13}) and recursively for $t = 0, 1, \dots,$
\begin{align*}
\numberthis \label{eq:leftfrecur1}
 N(t+1, y) &:=
\begin{cases}
N(t, y) - B(t, y, \occupvr{y}{t})   \qquad \text{ if } N(t, y-1) - N(t+1, y-1) = 0,\\
N(t, y) - B'(t, y, \occupvr{y}{t})  \qquad  \text{ if } N(t, y-1) - N(t+1, y-1) = 1.
\end{cases}
\\
\occupvr{y}{t+1} &:= N(t+1, y)  - N(t+1, y-1).
\end{align*}
%Thinking of $N(t, y) - N(t, y+1)$ as the $h_y$ in the update procedure of Definition \ref{def:unfused},
It is straightforward to see that $\vec{\eta}(t) = (\eta_y (t))_{y \in \ZZ}$ is a left-finite unfused \ac{SHS6V} model and $N(t, x)$ is indeed its height function defined by \eqref{eq:temp14}.
\bigskip
\\
The recursion  \eqref{eq:leftfrecur1} is equivalent to 
\begin{equation}\label{eq:two:recur1}
N(t, y) - N(t+1, y) = \Big(N(t, y-1) - N(t+1, y-1)\Big) \Big(B'(t, y, \eta_y(t)) - B(t, y, \eta_y(t))\Big) + B(t, y, \eta_y (t)).
\end{equation}
Iterating \eqref{eq:two:recur1} implies
\begin{align}\label{eq:two:recur}
N(t, y) - N(t+1, y) 
%&=   B(t, y, g_y(t)) + (B'(t, y, g_y(t)) - B(t, y, g_y(t))) B(t, y-1, g_{y-1}(t)) + \dots \\
&= \sum_{y' = -\infty}^y \prod_{z = y'+1}^y \Big(B'(t, z, \occupvr{z}{t}) - B(t, z, \occupvr{z}{t})\Big) B(t, z, \occupvr{z}{t}).
\end{align}
Note that the summation above is finite. The reason is that since  $\vec{\eta}(t) \in \GG$, there exists $w$ such that $\eta_z(t) = 0$ for all $z < w$, which implies $B(t, z, \eta_z(t)) = 0$ for all $z < w$.
\bigskip
\\
In light of \eqref{eq:two:recur}, we extend the Definition \ref{def:unfused} to the space of  bi-infinite particle configuration $\iset^\ZZ$.
%beyond the condition that the leftmost particle exists. The idea of the extension is analogous to  \cite[Lemma 2.4]{CGST18}, which traced back to \cite[Lemma 2.3]{CT17}.
\begin{lemma}\label{lem:biinfinite}
For any bi-infinite particle configuration $\vec{\eta}(0) \in \iset^\ZZ$, define the initial height function 
\begin{align*}
N(0, x) 
%&= \#\{\text{ net number of particles between 0 and x}\} \\ 
&= \idc_{\{x > 0\}} \sum_{i=1}^x \eta_i (0) -  \idc_{\{x < 0\}} \sum_{i=1}^{-x} \eta_{-i} (0).
%\text{ if } x \geq 1; \quad =0 \  \text{ if } x = 0; \quad = \  \text{ if } x \leq -1.
\end{align*}
Note that if $\vec{\eta}(0) \in \GG$, $N(0, x)$ defined above coincides with that defined in \eqref{eq:temp14}. 
%particles in $(0, x]$ when $x > 0$ and the number of particles in $(0, -x]$ with a minus sign when $x < 0$, which is the same for the height function \eqref{eq:height} defined from left finite configuration. as an extension of the initial height function of the left-finite configuration.
We inductively define the $\vec{\eta}(t)$ and $N(t, x)$ for $t = 0, 1, \dots$ via the recursion
\begin{align}
\label{eq:birecur1}
N(t, y) - N(t+1, y) &:= \sum_{y' = -\infty}^y \prod_{z = y'+1}^y \Big(B'(t, z, \occupvr{z}{t}) - B(t, z, \occupvr{z}{t})\Big) B(t, z, \occupvr{z}{t}),
\\
\label{eq:birecur2}
\eta_y (t+1) &:= N(t+1, y) - N(t+1, y-1).
\end{align}
For $p \geq 1$, the infinite sum in \eqref{eq:birecur1} converges almost surely and in $\LL^p$  to a $\{0, 1\}$-valued random variable. Furthermore, consider left-finite initial configuration $\vec{\eta}^w (0) = (\eta_i (0) \idc_{\{i \geq w\}})_{i \in \ZZ}$ and the height function $N^w (t, y)$ inductively defined by \eqref{eq:birecur1} and \eqref{eq:birecur2}, then  for all $t \in \ZZ_{\geq 0}$ and $y \in \ZZ$
$$\lim_{w \to -\infty} N^w (t, y) = N(t, y) \text{ in } \LL^p.$$
\end{lemma}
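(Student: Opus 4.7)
The plan is to view the recursion \eqref{eq:two:recur1}, $D(t,y) := N(t,y) - N(t+1,y) = D(t,y-1)\Delta(t,y) + B(t,y,\eta_y(t))$ with $\Delta(t,z) := B'(t,z,\eta_z(t)) - B(t,z,\eta_z(t))$, as an affine dynamical system in the spatial variable $y$ at fixed $t$, and seek a bi-infinite $\{0,1\}$-valued solution. Since $\Delta + B = B'$, a case check shows the recursion maps $\{0,1\}$ into $\{0,1\}$: the update is $B(t,y,\eta_y(t))$ when $D(t,y-1) = 0$ and $B'(t,y,\eta_y(t))$ when $D(t,y-1) = 1$. For each $w \in \ZZ$ and $a \in \{0,1\}$, let $D^{w,a}(t,y)$ denote the solution started from $D^{w,a}(t,w-1) = a$; iteration gives the closed form
\[
D^{w,a}(t,y) = a \prod_{z=w}^y \Delta(t,z) + \sum_{y'=w}^y B(t,y',\eta_{y'}(t)) \prod_{z=y'+1}^y \Delta(t,z),
\]
together with the telescoping identity $D^{w_1,a_1}(t,y) - D^{w_2,a_2}(t,y) = (a_1 - D^{w_1,a_1}(t,w_2-1)) \prod_{z=w_2}^y \Delta(t,z)$ for $w_1 \leq w_2$. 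I would define $D(t,y) := \lim_{w \to -\infty} D^{w,0}(t,y)$, verify that it equals the sum on the right of \eqref{eq:birecur1}, and iterate in $t$ to construct $(\vec{\eta}(t), N(t,\cdot))$ through \eqref{eq:birecur2}.

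The key analytic input is the uniform bound $\EE[|\Delta(t,z)| \mid \eta_z(t)] \leq \rho^\star < 1$. I would check it case-by-case: under Condition \ref{cond:1}, the Bernoulli parameters $p_1(\eta), p_2(\eta) \in [0,1]$ never realize the extreme pairs $(0,1)$ or $(1,0)$, since $p_1(0) = 0$ is paired with $p_2(0) = (\alpha+\nu)/(1+\alpha) < 1$ (as $\nu < 1$), $p_2(I) = 1$ is paired with $p_1(I) = \alpha(1-q^I)/(1+\alpha) > 0$, and both lie strictly in $(0,1)$ for $0 < \eta < I$. Hence $\EE[|\Delta| \mid \eta] = 1 - (1-p_1)(1-p_2) - p_1 p_2$ is bounded by some $\rho^\star < 1$ uniformly in $\eta \in \iset$. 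Conditional on $\vec{\eta}(t)$, the $\{\Delta(t,z)\}_z$ are independent and $\{-1,0,1\}$-valued, so using $|\Delta|^p = |\Delta|$ for $p \geq 1$,
\[
\EE\Big[\prod_{z=w}^y |\Delta(t,z)|^p\Big] \leq (\rho^\star)^{y-w+1}.
\]
Combined with the telescoping identity this makes $\{D^{w,0}(t,y)\}_w$ an $L^p$-Cauchy sequence with geometric rate, and Borel--Cantelli along $w = -n$ upgrades the convergence to almost sure. The a.s.\ limit $D(t,y)$ is $\{0,1\}$-valued, and iteration in $t$ defines the process consistently.

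The main obstacle is the second claim, $\lim_{w \to -\infty} N^w(t,y) = N(t,y)$ in $L^p$, which I would prove by induction on $t$. The base case $t=0$ is immediate since $N^w(0,y) = N(0,y)$ whenever $w \leq y$. For the inductive step, couple $N^w$ and $N$ via the same Bernoulli family and write $N^w(t+1,y) - N(t+1,y) = (N^w(t,y) - N(t,y)) - (D^w(t,y) - D(t,y))$. The delicate piece is $\|D^w(t,y) - D(t,y)\|_{L^p}$: because $B(t,z,\eta)$ and $B(t,z,\eta')$ are independent for $\eta \neq \eta'$, the two series use genuinely different noise at every $z$ where $\eta^w_z(t) \neq \eta_z(t)$, so terms do not cancel pointwise. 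My plan is a two-scale split with cutoff $w_\star := -\lfloor\sqrt{|w|}\rfloor$ and the event $\mathcal{E}_w := \{\eta^w_z(t) = \eta_z(t) \text{ for all } z \in [w_\star, y]\}$. On $\mathcal{E}_w$, the portions of the two series with indices $y' \in [w_\star,y]$ use identical noise and cancel, leaving only tail contributions $y' < w_\star$ that are geometrically small in $L^p$ by the estimate above; on $\mathcal{E}_w^c$, the trivial bound $|D^w - D| \leq 1$ together with $\PP(\mathcal{E}_w^c) \to 0$ suffices. The control $\PP(\mathcal{E}_w^c) \to 0$ requires strengthening the inductive hypothesis to an explicit geometric-in-$|z-w|$ $L^p$ rate $\|\eta^w_z(t) - \eta_z(t)\|_{L^p} \leq C_{t,p}(\rho^\star)^{(z-w)/p}$, which propagates through each time step via the telescoping identity (with $C_{t,p}$ growing at most exponentially in $t$); a union bound over the $O(\sqrt{|w|})$-sized window $[w_\star,y]$ then gives $\PP(\mathcal{E}_w^c) \lesssim (\rho^\star)^{|w|-\sqrt{|w|}} \to 0$, closing the induction.
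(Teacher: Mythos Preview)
Your proof is correct and follows the same skeleton as the paper's: a uniform bound $\EE[|\Delta(t,z)|\mid \FF(t)]\leq 1-\delta$ giving geometric decay of the products $\prod_z\Delta$, hence $L^p$-convergence of the series, followed by induction on $t$ for the approximation $N^w\to N$. Your Borel--Cantelli argument for almost sure convergence is in fact cleaner than the paper's one-line justification (the paper asserts that $L^p$-convergence of $\{0,1\}$-valued sequences implies a.s.\ convergence, which is not true in general; what actually works is exactly the summability of increments you invoke).

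The one place you work harder than necessary is the inductive step. You let the truncation level $w_\star$ tend to $-\infty$ together with $w$, which forces you to carry a quantitative geometric rate $\|\eta^w_z(t)-\eta_z(t)\|_{L^p}\leq C_{t,p}(\rho^\star)^{(z-w)/p}$ through the induction. The paper avoids this by decoupling the two limits: first fix a finite truncation level $M$ so that the tails $y'<y-M$ of both series contribute at most $\varepsilon$ in $L^p$ (uniformly in $w$, by the geometric bound), and then, with $M$ fixed, use only the qualitative inductive hypothesis $\eta^w_z(t)\to\eta_z(t)$ in probability for each of the finitely many $z\in[y-M,y]$ to conclude that the finite parts agree with probability tending to $1$. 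This needs no rate and no strengthened hypothesis. Your approach is not wrong---it yields more (an explicit rate)---but the propagation of the rate through time steps is the one place where your sketch is a bit loose: the exponent in $(\rho^\star)^{(z-w)/p}$ typically degrades by a constant factor at each step, so the effective base becomes $t$-dependent rather than a fixed $\rho^\star$ with only $C_{t,p}$ growing. This is harmless for fixed $t$, but worth stating precisely if you keep the quantitative route.
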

\begin{remark}
It is clear that via \eqref{eq:birecur1}, one can recover the recursion \eqref{eq:leftfrecur1} since
\begin{align*}
N(t, y) - N(t+1, y) &= \sum_{y' = -\infty}^y \prod_{z = y'+1}^y \Big(B'(t, z, \occupvr{z}{t}) - B(t, z, \occupvr{z}{t})\Big) B(t, z, \occupvr{z}{t})\\
&=B(t, y, \eta_y (t)) + \Big(B'(t, y, \eta_y (t)) - B'(t, y, \eta_y (t))\Big) \sum_{y' = -\infty}^{y-1} \prod_{z = y'+1}^{y-1}  \Big(B'(t, z, \occupvr{z}{t}) - B(t, z, \occupvr{z}{t})\Big)\\
&=B(t, y, \eta_{y} (t)) + \Big(B'(t, y, \eta_y (t)) - B'(t, y, \eta_y (t))\Big) \Big(N(t, y-1) - N(t, y)\Big).
\end{align*}
%\begin{equation}
%N(t, y) - N(t+1, y) = B(t, y, \eta_{y}(t)) + (B'(t, y, \eta_{y} (t)) - B(t, y, \eta_{y} (t))) (N(t, y-1) - N(t+1, y-1)).
%\end{equation}
%\begin{equation}
%\eta_{y+1} (0) - \eta_{y+1} (1) = K(0, y) \bigg(B'(y+1, \eta_{y+1}(0)) - B(y+1, \eta_{y+1}(0)) - 1\bigg) + B(y+1, \eta_{y+1} (0)).
%\end{equation} 
In particular,
%\begin{equation*}
%N(t, y) - N(t+1, y) =
%\begin{cases}
%B(t, y, \occupvr{y}{t})   &\qquad \text{ if } N(t, y-1) - N(t+1, y-1) = 0,\\
%B'(t, y, \occupvr{y}{t})  &\qquad  \text{ if } N(t, y-1) - N(t+1, y-1) = 1.
%\end{cases}
%\end{equation*}
%This is exactly the recursion  \eqref{eq:leftfrecur1}. 
if
%the initial state is left-finite, i.e. 
$\vec{\eta}(0) \in \GG$, the $\vec{\eta}(t)$ defined in Lemma \ref{lem:biinfinite} is a left-finite unfused \ac{SHS6V} model. Therefore, Lemma \ref{lem:biinfinite} truly extends the scope of Definition \ref{def:unfused}.
\end{remark}
\begin{proof}[Proof of Lemma \ref{lem:biinfinite}]
Define the canonical filtration 
\begin{align*}
\label{eq:filtration}
\FF(t) = \sigma\Big(\vec{\eta}(0), B(s, z, \eta), B'(s, z, \eta), 0 \leq s \leq t-1\Big). 
\end{align*}
It is not hard to see (via  \eqref{eq:birecur1} and \eqref{eq:birecur2})
that $N(t, y)$ and $\vec{\eta}(t)$ are adapted to this filtration.
\bigskip
\\
Let us first justify the convergence of the infinite summation \eqref{eq:birecur1}. To simplify notation, we denote by $\EE'\big[\cdot\big] = \EE\big[\cdot \vv \FF(t)\big]$. For $x < y \in \ZZ$, denote by 
\begin{align*}
K_{x, y} (t) := \sum_{y' = x}^{y} \prod_{z = y'+1}^y \Big(B'(t, z, \eta_z (t)) - B(t, z, \eta_z(t))\Big) B(t, y', \eta_{y'} (t))
\end{align*}
Observing that $K_{x, y} (t) \in \{0, 1\}$ for all realization of $B, B' \in \{0, 1\}$. Therefore, as $x \to -\infty$, the $L^p$ convergence of $K_{x, y} (t)$  implies the almost sure convergence.  Note that $B, B'$ are independent Bernoulli random variables with  mean given in \eqref{eq:randomenvir}. As a consequence, there exists constant $\delta > 0$ such that 
\begin{equation*}
\PP \big(B'(t, z, \eta)  - B(t, z, \eta) = 0\big) > \delta, \quad \forall\, (t, z, \eta) \in \ZZ_{\geq 0} \times \ZZ \times \iset.
\end{equation*}
Since $|B'(t, z, \eta) - B(t, z, \eta)| \leq 1$,
\begin{equation*}
%\label{eq:two:temp1}
\EE'\big[\big(B'(t, z, \eta_z (t)) - B(t, z, \eta_z (t))\big)^p\big] \leq 1 - \delta.
\end{equation*}
Furthermore, note that conditioning on $\FF(t)$, $B(t, z, \eta_z (t)), B'(t, z, \eta_z (t))$ are all independent, which yields
\begin{align*}
%\bignorm{\prod_{z = y'+1}^y \bigg(B'(t, z, \eta_z (t) ) - B(t, z, \eta_z (t))\bigg) B(t, y', \eta_{y'} (t))}_p^p  \leq (1-C)^{(y - y')/p}
&\EE'\bigg[\bigg(B(t, y', \eta_{y'} (t)) \prod_{z = y'+1}^y \Big(B'(t, z, \eta_z (t) ) - B(t, z, \eta_z (t))\Big) \bigg)^p \bigg]\\ \numberthis \label{eq:two:temp3} 
&=\EE'\big[B(t, y', \eta_{y'} (t))^p \big] \prod_{z = y'+1}^y \EE'\big[\big(B'(t, z, \eta_z (t) ) - B(t, z, \eta_z (t))\big)^p\big]
\leq (1 - \delta)^{y - y'}.
\end{align*}
Taking expectation on both sides of \eqref{eq:two:temp3}, by tower property,
\begin{equation*}
\bignorm{\bigg(\prod_{z = y'+1}^y \Big(B'(t, z, \eta_z (t) ) - B(t, z, \eta_z (t))\Big) B(t, y', \eta_{y'} (t))\bigg)}_p \leq (1 - \delta)^{\frac{y - y'}{p}},
\end{equation*}
which implies the convergence of $K_{x, y} (t)$ in $\LL^p$ as  $x \to -\infty$.  
\bigskip
\\
We proceed to justify
\begin{equation}\label{eq:two:temp2}
\lim_{w \to -\infty} N^w (t, y) = N(t, y) \quad \text{ in } \LL^p. 
\end{equation}
We prove this by applying induction on $t$. The $t = 0$ case is immediately checked. Assuming that we have a proof for $t = s$, we show that \eqref{eq:two:temp2} also holds for $t = s + 1$. 
Note that for all $y \in \ZZ$,
\begin{equation*}
\eta^w_y (s) = N^w(s, y) - N^w(s, y-1) \to N(s, y) - N(s, y-1) =  \eta_y (s) \text{ in } \LL^p \quad \text{ as } w \to -\infty.
\end{equation*} 
Since both $\eta^w_y(s), \eta_y (s)$ take value in  $\iset$, we obtain
\begin{equation*}
\lim_{w \to -\infty} \PP\big(\eta^w_y (s) = \eta_y (s)\big) = 1. 
\end{equation*}
Taking $w \to -\infty$, one achieves
\begin{align*}
N^w(s, y) - N^w(s+1, y) = \sum_{y' = -\infty}^y \prod_{z = y'+1}^y \bigg(B'(s, z,\eta^w_z(s)) - B(s, z, \eta^w_z(s))\bigg) B(s, z, \eta^w_z(s)).
\end{align*}
Therefore, $\lim_{w \to -\infty} N^w(s, y) - N^w(s+1, y)  = N(s, y) - N(s, y+1)$ in $\LL^p$. Since we have assumed \eqref{eq:two:temp2} for $t = s$, we have
\begin{equation*}
\ N^w (s+1, y) \to N(s+1, y) \quad \text{ in } \LL^p,
\end{equation*} 
which completes the induction.
\end{proof}
\begin{defin}\label{def:biinfinite}
We call the $\vec{\eta}(t) \in \iset^\ZZ$ defined in Lemma \ref{lem:biinfinite} \textbf{the bi-infinite unfused \ac{SHS6V} model} and associate it with the height function $N(t, x)$ defined in Lemma \ref{lem:biinfinite}. We simply define \textbf{the bi-infinite fused \ac{SHS6V} model} $\vec{g}(t) $ and its height function $\N(t, x)$ via $$\vec{g}(t) := \vec{\eta}(Jt),\qquad \N(t, x) := N(Jt, x).$$  
\end{defin}
% Hereafter, when we say bi-infinite HZR occupation process, we refer to the bi-infinite general $J$ HZR occupation process constructed in Lemma \ref{lem:biinfinite} unless otherwise mentioned, with canonical filtration canonical filtration $$\FF(t) = \sigma\Big(\vec{\eta}(0), B(s, z, \eta), B'(s, z, \eta), 0 \leq s \leq t-1\Big).$$
%=============================================
It is clear that to prove Theorem \ref{thm:main}, it suffices to work with the bi-infinite unfused \ac{SHS6V} model. Unless specified otherwise, the \ac{SHS6V} model now means the bi-infinite unfused \ac{SHS6V} model $\vec{\eta}(t)$. We associate it with the canonical filtration $\FF(t) = \sigma\Big(\vec{\eta}(0), B(s, z, \eta), B'(s, z, \eta), 0 \leq s \leq t-1\Big)$. 
\section{Markov duality}\label{sec:duality}
One main tool that we rely on to prove Theorem \ref{thm:main} is the Markov duality. This is a powerful property which has been found for different interacting particle systems including the contact process, voter model and symmetric simple exclusion process \cite{Lig12, Lig13}. Using Markov duality, Spitzer and Liggett showed that the only extreme translation invariant measures for the SSEP on $\ZZ^d$ are the Bernoulli product measure.
%measures and to identify the domain of attraction of them.
%enjoyed by the \ac{SHS6V} model. 
%In this section, we will introduce two self-dualities  for the \ac{SHS6V} model. The first one is taken directly from \cite[Theorem 2.21]{CP16}. 
%%Note this duality and has been used in \cite{CGST18} for proving the convergence stochastic six vertex model to the \ac{KPZ} equation. 
%The second duality is a certain degeneration from \cite[Theorem 4.10]{kuan18}. The second duality  is a general duality for the multi-species \ac{SHS6V} model, which is a multi-species generalization of \ac{SHS6V} model. Despite the generality, the duality functional given in \cite[Theorem 4.10]{kuan18} is hard to handle with due to its complicated form. However, we find that the duality functional simplifies significantly if we consider the two particle duality of SHS6V model, a special degeneration of the result. We will use this duality to prove the time decorrelation of quadratic variation(Lemma \ref{lem:timedecorr2}), as a 
%\bigskip
%\\ 
%The section is organized as follows. 
\bigskip
\\
In this section, we first state two Markov dualities for the $J=1$ version of left-finite \ac{SHS6V} model, which comes form \cite[Theorem 2.21]{CP16} and \cite[Theorem 4.10]{kuan18} respectively. The extension of them to the unfused left-finite SHS6V model is immediate since the transition operators of the model are commute. Finally we explain how to extend these dualities to the bi-infinite unfused  \ac{SHS6V} model constructed in the previous section. 
\bigskip\\
%using the approximation from the left-finite version.
Let us recall the definition of Markov duality in the first place. %\textcolor{blue}{note that here we extend a bit the usual definition to the time inhomogeneous Markov processes.}   
\begin{defin}\label{def:duality}
Given two discrete time  Markov processes $X(t) \in U$ and $Y(t) \in V$ (might be time inhomogeneous)  and a function $H: U \times V \to \RR$, we say that $X(t)$ and $Y(t)$ are dual with respect to $H$ if for any $x \in U, y\in V$ and $s \leq t \in \ZZ_{\geq 0}$, we have 
\begin{equation*}
%\label{eq:markovduality}
\EE\big[H(X(t), y) \vv X(s) = x\big] = \EE \big[H(x, Y(t)) \vv Y(s) = y\big]. 
\end{equation*}
%Here we use $\EE^{X(s) = x}$ to denote that we take the expectation conditioning on $X(s) = x$. Likewise, $\EE^{Y(s) = y}$ represents the expectation  conditioning on $Y(s) = y$. 
\end{defin}
The Markov dualities that we are going to present are between the unfused \ac{SHS6V} model and the $k$-particle reversed unfused \ac{SHS6V} model location process. To define the latter process, let us first introduce several state spaces.
\begin{defin}\label{def:statespace}
	Recall the space of left-finite particle configuration $\GG$ from \eqref{eq:leftfinitestate}.
	%We define the space of left-finite particle configuration
	%\begin{equation*}
	%\GG = \{\vec{g} = (\dots, g_{-1}, g_0, g_1 \dots): \text{ all } g_i \in \iset \text{ and there eixsts } x \in \ZZ \text{ such that } g_i = 0 \text{ for all } i < x.\}
	%\end{equation*}
	%where $g_x$ should be understood as the particles number of the configuration at position $x$. 
	We likewise define the space of right-finite particle configuration 
	\begin{equation*}
	\MM = \{\vec{m} = (\dots, m_{-1}, m_0, m_1, \dots): \text{ all } m_i \in \iset \text{ and there exists } x\in \ZZ \text{ such that } m_i = 0  \text{ for all } i > x\}.
	\end{equation*} 
	When there are finite number of $k$ particles, we restrict $\GG$ and $\MM$ to 
	\begin{equation*}
	\GG^k = \{\vec{g} \in \GG: \sum_{i} g_i = k\}, \qquad \MM^k = \{\vec{m} \in \MM: \sum_{i} m_i = k\}.
	\end{equation*}
	In terms of particle positions, the spaces $\GG^k$ and $\MM^k$ are in bijection with 
	\begin{equation*}
	\weyl{k} =  \left\{\vec{y} = (y_1 \leq \dots \leq y_k): \vec{y} \in \ZZ^k, \max_{1 \leq i \leq M(\vec{y})} c_i \leq I \right\},
	%\footnote{The restriction for the cluster number is owing to the condition that the value of particle numbers at one location could not exceed $I$.}
	\end{equation*}
	where $(c_1, \dots, c_{M(\vec{y})})$ denotes the cluster number in $\vec{y}$, i.e. $\vec{y} = (y_1 = \dots = y_{c_1} < y_{c_1 +1 } = \dots = y_{c_1 + c_2} < \dots )$. $(y_1 \leq \dots \leq y_k)$ should be understood as the location of $k$ particles in a non-decreasing order.
	%The bijection is given by associating to a state $\vec{g}$ or $\vec{m}$, the increasing ordered list of the $k$ particle locations.
	In particular, we denote by $\varphi: \weyl{k} \to \GG^k$
	and $\phi: \weyl{k} \to \MM^k$ to be 
	the bijective maps respectively.
\end{defin}
%Having defined the state space, we proceed to give the particle interpretation of the vertex configuration in Definition \ref{def:rmatrix}.
%We like to give a separate definition of the left-finite fused \ac{SHS6V} model in Definition \ref{def:fused} when $J = 1$.
\begin{defin}\label{def:lfhzr}
	%We define a discrete time Markov process $\vec{\eta}(t) \in \GG$ via specifying the sequential update rule from  $\vec{\eta}$ to  $\vec{\eta'} \in \GG$ as follows: Starting at the leftmost site $x \in \ZZ$ such that $\eta_x > 0$. Let $h_x = 0$ and randomly choose the value of $\eta'_x$ according to the probability $\rmatrix\big(\eta_x, h_x; \eta'_x, h_{x+1} \big)$, where $h_{x+1} = \eta_x  + h_x - \eta'_x$. Proceeding sequentially, given $\eta_{x+1}$ and  $h_{x+1}$, which we choose before, update $\eta_{x+1}$ to $\eta'_{x+1} $ according to the probability $\rmatrix \big(\eta_{x+1}, h_{x+1}; \eta'_{x+1}, h_{x+2}\big)$ where $h_{x+2}: = \eta_{x+1} + h_{x+1} - \eta_{x+1}$. Continue in this manner by augmenting $x, x+1, \dots$, we define the transition probability (update rule) from state $\vec{\eta} \in \GG$ to $\vec{\eta}' \in \GG$. We call
	%spacing
	When $J=1$, it is clear that Definition \ref{def:fused} and Definition \ref{def:unfused} define the same Markov process. We call it
	the left-finite $J=1$ \ac{SHS6V} model. 
	In addition, we call $\vec{\xi}(t) = (\xi_{x} (t))_{x \in \ZZ} \in \MM$ the reversed $J=1$  \ac{SHS6V} model if $\vec{\xi}'(t) = (\xi_{-x} (t))_{x \in \ZZ} \in \GG$ is a left-finite $J=1$ \ac{SHS6V} model.
\end{defin}
%\begin{remark}
%%Here we slightly generalize the definition of Markov duality to time inhomogeneous Markov process. 
%When the Markov process is time homogeneous, Definition \ref{def:duality} reduces to for all $x \in U$, $y \in V$ and $t \in \ZZ_{\geq 0}$,
%\begin{equation*}
%\EE^{X(0) = x} \big[H(X(t), y)\big] = \EE^{Y(0) = y} \big[H(x, Y(t))\big].
%\end{equation*}
%\end{remark}
% The aim of this section is to construct the following two self-dualities for bi-infinite HZR process. 
Since the \ac{SHS6V} model preserves the number of particles, we can consider \ac{SHS6V} model
with $k$ particles as a process on the particle locations. 
\begin{defin}\label{def:three:locationpr}
We define the $k$ particle $J=1$ \ac{SHS6V} model location process $\vec{x}(t) =  \big(x_1 (t) \leq \dots \leq x_k (t)\big) \in \weyl{k}$ if $\varphi({\vec{x}(t)})$ (recall the bijective map $\varphi: \weyl{k} \to \GG^k$ from Definition \ref{def:statespace}) is the $J=1$ left-finite \ac{SHS6V} model. 
%in Definition \ref{def:lfhzr}. 
We say that $\vec{y}(t) = (y_1 (t) \leq \dots \leq y_k (t)) \in \weyl{k}$ is a $k$-particle reversed $J=1$  \ac{SHS6V} model location process if $-\vec{y}(t) = (-y_{k}(t) \leq \dots \leq -y_1 (t) )$ is a $k$-particle $J=1$ \ac{SHS6V} model location process. In addition, for $\vec{y}, \vec{y}' \in \weyl{k}$, we denote by $\semigr_{\alpha}(\vec{y}, \vec{y}')$ to be the transition probability from $\vec{y}$ to $\vec{y}'$ of the $k$-particle reversed $J=1$ \ac{SHS6V} model location process. As a matter of convention, $\semigr_\alpha$ could be seen as an operator acting on function $f: \weyl{k} \to \RR$ in the manner that
\begin{equation*}
(\semigr_\alpha f)(\vec{y}) := \sum_{\vec{y}' \in \weyl{k}} \semigr_\alpha (\vec{y}, \vec{y}') f(\vec{y}').
\end{equation*} 
%Similarly, we denote by $\semigr_\alpha$ the transition matrix (operator) of the reversed k-particle $J=1$ HZR location process.
%which is also an operator acting on function $f: \weyl{k} \to \RR$.
\end{defin}
\begin{defin}\label{def:unfusedreverse}
We define the $k$-particle unfused \ac{SHS6V} model location process $\vec{x}(t) = (x_1 (t) \leq \dots \leq x_k (t))$ so that $\varphi(\vec{x}(t))$ is the left-finite unfused \ac{SHS6V} model. We say  $\vec{y}(t) = (y_1 (t) \leq \dots \leq y_k (t))$ is a $k$-particle reversed   unfused \ac{SHS6V} model location process if $-\vec{y}(t) = (-y_{k}(t) \leq \dots \leq -y_1 (t) )$ is a $k$-particle unfused \ac{SHS6V} model location process.
\end{defin}
Note that  
%the transition matrix of $\vec{x}(t)$ from time $s$ to $t$ is given by $\semig_{\alpha(s)} \cdots  \semig_{\alpha(t-1)}$.
%, here $'\circ'$ is understood as  the product of two matrices. 
%Similarly, 
for the reversed $k$-particle \ac{SHS6V} model $\vec{y}(t)$, we denote by $\rhzr\big(\vec{x}, \vec{y}, t, s\big)$ the transition probability from $\vec{y}(s) = \vec{x}$ to $\vec{y}(t) = \vec{y}$. Apparently, one has 
\begin{equation*}
\rhzr\big(\vec{x}, \vec{y}, t, s\big) = \big(\semigrst{\alpha(s)}{\alpha(t-1)}\big)(\vec{x}, \vec{y}).
\end{equation*}
%the transition probability of the reversed $k$-particle \ac{SHS6V} model location process from time $\vec{y}(s) = \vec{x}$ to $\vec{y}(t) = \vec{y}$ is simply given by $\semigr_{\alpha(s)}  \cdots \semigr_{\alpha(t-1)}$. We denote by $\rhzr\big(\vec{x}, \vec{y}, t, s\big)$ the transition probability from $\vec{y}(s) = \vec{x} \in \weyl{k}$ to $\vec{y}(t) = \vec{y} \in \weyl{{k}}$ of . Apparently,
%\begin{equation*}
%\rhzr\big(\vec{x}, \vec{y}, t, s\big) = \big(\semigrst{\alpha(s)}{\alpha(t-1)}\big)(\vec{x}, \vec{y}).
%\end{equation*}
It follows from \cite[Corollary 2.14]{CP16} (or the Yang-Baxter equation \cite[Section 3]{BP18}) that $\semigr_{\alpha(i)}$ commutes with itself for different values of $i$ (i.e. $\semigr_{\alpha(i)} \semigr_{\alpha(j)} = \semigr_{\alpha(j)}  \semigr_{\alpha(i)}$).  
%The same fact holds for $\semigr_{\alpha(i)}$.
Consequently, 
\begin{equation}\label{eq:three:temp7}
\rhzr\big(\vec{x}, \vec{y}, t, s\big) = \big(\semigrst{\alpha(t-1)}{\alpha(s)}\big)(\vec{x}, \vec{y}).
\end{equation}
Let us first state the $J=1$ version of Markov duality.
\begin{prop}[\cite{CP16}, Proposition 2.21]\label{prop:firstdual}
For all $k \in \NN$, the $J=1$ left-finite \ac{SHS6V} model $\occuppr \in \GG$ (Definition \ref{def:lfhzr}) and $k$-particle $J =1$ reversed   \ac{SHS6V} model location process $\rlocpr$ (Definition \ref{def:three:locationpr}) are dual with respect to the functional $H: \GG \times \YY^k \to \RR$ 
\begin{equation}\label{eq:three:fdual}
H(\occupc, \rlocc) = \prod_{i=1}^k q^{-N_{y_i} (\occupc)},
\end{equation}
recall $N_{y}(\vec{\eta}) = \sum_{i \leq y} \eta_i$.
\end{prop}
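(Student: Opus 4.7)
The plan is to proceed by induction on $t - s$, which reduces the statement to the one-step duality: for all $\vec{\eta} \in \GG$ and $\vec{y} \in \weyl{k}$,
$$\sum_{\vec{\eta}'} P(\vec{\eta}, \vec{\eta}') H(\vec{\eta}', \vec{y}) = \sum_{\vec{y}'} \tilde{P}(\vec{y}, \vec{y}') H(\vec{\eta}, \vec{y}'),$$
where $P$ is the one-step transition kernel of the forward $J=1$ \ac{SHS6V} and $\tilde{P}$ is that of the reversed $k$-particle location process. Once the one-step identity is secured, iterating it and using the Markov property yields the general statement.

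For the one-step identity, I would exploit the sequential update structure from Definition \ref{def:unfused}: a horizontal wave variable $h_x \in \{0,1\}$ sweeps the vertices from left to right, and at each vertex $x$ the local weight $\rmatrix(\eta_x, h_x; \eta_x', h_{x+1})$ produces both the new occupation $\eta_x'$ and the outgoing wave $h_{x+1}$. Rewriting the duality functional as $H(\vec{\eta}, \vec{y}) = \prod_z q^{-c_z(\vec{y}) \eta_z}$ with $c_z(\vec{y}) := |\{i : y_i \geq z\}|$ (a non-increasing step function in $z$ from $k$ down to $0$), the forward expectation is computed by threading the factor $q^{-c_z \eta_z'}$ through each vertex; since $c_z$ is locally constant except at the positions $z = y_i$, the computation factorizes into contributions that are trivial away from the jump points and a genuine local identity at each jump.

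The local identity at a jump point, after using the line-conservation law $\eta_z + h_z = \eta_z' + h_{z+1}$ implicit in \eqref{eq:rjmatrix}, becomes an algebraic relation among the four entries of $\rmatrix$ listed in \eqref{eq:temp33}. Concretely, for each fixed level $c = c_z$ and jump size $\Delta c$, one must verify that
$$\sum_{\eta_z', h_{z+1}} \rmatrix(\eta_z, h_z; \eta_z', h_{z+1}) \, q^{-(c - \Delta c) \eta_z'}$$
equals, up to an overall factor $q^{-c \eta_z}$, the corresponding expression produced by one step of the reversed dynamics. This is established by direct substitution of the explicit weights in \eqref{eq:temp33} and elementary $q$-deformed algebra.

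The main obstacle is the combinatorial bookkeeping: relating the left-to-right forward sweep that transports particles across the jumps $y_i$ to the right-to-left backward sweep of the reversed process, and correctly reading off the vertex weights of the reversed dynamics. A more conceptual alternative avoids this bookkeeping by appealing to the representation-theoretic origin of $\rmatrix$ as an intertwiner of $U_q(\widehat{\mathfrak{sl}_2})$-modules: the functional $q^{-N_y}$ arises as a natural pairing vector in the relevant tensor product, and the intertwining property of $\rmatrix$ combined with the Yang-Baxter/$RLL$ relation produces the duality automatically, which is the route taken in \cite{CP16}.
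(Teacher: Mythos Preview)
The paper does not supply its own proof of this proposition; it is quoted directly from \cite[Proposition 2.21]{CP16} (see the proposition heading and the opening of Section~\ref{sec:duality}). So there is no in-paper argument against which to compare your sketch.

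Your plan is the standard route for establishing such dualities by hand: reduce to the one-step identity via the Markov property, rewrite $H(\vec{\eta},\vec{y})=\prod_z q^{-c_z(\vec{y})\eta_z}$ with $c_z(\vec{y})=|\{i:y_i\geq z\}|$, and push the forward expectation through the sequential update vertex by vertex. The reduction and the rewriting are correct. The step that carries the actual content, and that you flag as ``bookkeeping'', is the local intertwining relation at a jump site, and what you have written there is too schematic to be verified as stated: you have not specified what ``the corresponding expression produced by one step of the reversed dynamics'' means at the level of a single vertex, nor how the forward wave variables $h_z,h_{z+1}$ correspond to the wave variables of the right-to-left reversed sweep. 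The clean formulation is that $\rmatrix$ acting on the pair $(\eta_z,h_z)$ intertwines, through the factor $q^{-c\eta_z}$, with $\rmatrix$ acting on a pair $(\xi_z,\tilde h_z)$ where $\xi_z$ counts the $y_i$'s at site $z$; this is precisely the $RLL$-type relation you allude to at the end, and it is the mechanism that \cite{CP16} makes rigorous. Your direct approach would succeed once that single-vertex identity is written out explicitly and checked against \eqref{eq:temp33}, but as it stands the proposal stops just short of the substantive computation.
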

In \cite{kuan18}, the author discovers a Markov duality for a multi-species version of the \ac{SHS6V} model.
For our application, we explain how to degenerate this result to a two particle \ac{SHS6V} model duality. Before stating the proposition, let us recall the notation of $q$-deformed quantity
\begin{equation*}
\qint{n} := \frac{q^n - q^{-n}}{q - q^{-1}}, \quad \qint{n}^! := \prod_{i=1}^n \qint{i},\quad \qbinom{n}{k}_q := \frac{\qint{n}^!}{\qint{k}^! \qint{n - k}^!}.
\end{equation*}
\begin{prop}\label{prop:secdual}
The $J =1$ left-finite  \ac{SHS6V} model $\occuppr$ and the two particle $J =1$ reversed   \ac{SHS6V} model location process $\vec{y}(t)$ are dual with respect to   
\begin{equation}\label{eq:three:sdual}
G(\occupc, (y_1, y_2)) = 
\begin{cases}
q^{-2N_{y_1} (\occupc)}    \qhalfint{I- \occupv{y_1}}  \qhalfint{I - 1 - \occupv{y_1}} q^{\occupv{y_1}} \qquad &\text{if}\  y_1 = y_2;\\
\frac{\qhalfint{I-1} }{\qhalfint{I}} q^{-N_{y_1}(\vec{\eta})} q^{-N_{y_2} (\vec{\eta})} \qhalfint{I- \occupv{y_1}}  \qhalfint{I- \occupv{y_2}}     q^{\frac{1}{2} \eta_{y_1}} q^{\frac{1}{2} \eta_{y_2}} \qquad  &\text{if}\  y_1 < y_2. 
\end{cases}
\end{equation}
%We recall that $\qhalfint{n}:= \frac{q^{\frac{n}{2}} - q^{-\frac{n}{2} }}{q^{\frac{1}{2}} - q^{-\frac{1}{2}}}$.
\end{prop}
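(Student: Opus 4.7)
The plan is to derive this duality as a specialization of the multi-species SHS6V duality from \cite[Theorem 4.10]{kuan18}. First, I would recall Kuan's setup, where the $J=1$ \ac{SHS6V} model is enhanced to allow particles of several distinct colors occupying each site, with a duality functional expressed in terms of certain $q$-symmetric products over the ordered color labels at each site. The duality in that reference takes the form of Definition \ref{def:duality} between the multi-species \ac{SHS6V} model and its spatially reversed version, established through an algebraic verification using the Yang--Baxter structure of the underlying $L$-matrices.

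The first reduction is to collapse all species down to a single color, which returns us to the usual $J=1$ \ac{SHS6V} model of Definition \ref{def:lfhzr}; simultaneously we fix the number of dual particles to be two. Under this degeneration Kuan's general duality functional simplifies to an expression depending only on the locations $(y_1, y_2)$ of the two dual particles and on the occupation numbers $\eta_{y_1}, \eta_{y_2}$ together with the cumulative counts $N_{y_1}(\vec{\eta}), N_{y_2}(\vec{\eta})$. Because Kuan's functional is symmetric in the two dual particle labels, the reduction naturally splits into two subcases according to whether the two particles sit at the same site or at different sites.

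The second step is this case analysis. When $y_1 < y_2$, the contributions from sites $y_1$ and $y_2$ factorize, giving a product of single-site factors of the form $q^{-N_{y}(\vec{\eta})} \qhalfint{I-\eta_{y}} q^{\frac{1}{2}\eta_{y}}$, up to an overall normalization of $\qhalfint{I-1}/\qhalfint{I}$ that arises from Kuan's symmetric normalization convention. When $y_1 = y_2$, the two dual particles sit at a common site and must be treated jointly: the $q$-Pochhammer-type pair $\qhalfint{I-\eta_{y_1}} \qhalfint{I-1-\eta_{y_1}}$ appears in place of the naive square, reflecting the correct $q$-deformed falling factorial for two dual particles placed on a site with capacity $I$. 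Matching these two expressions against \eqref{eq:three:sdual} is then a direct calculation, and the companion factor $q^{-2N_{y_1}(\vec{\eta})}$ in the coincident case as opposed to $q^{-N_{y_1}(\vec{\eta})-N_{y_2}(\vec{\eta})}$ in the separated case matches by continuity of the expression in $y_2$ as $y_2 \downarrow y_1$.

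Finally, by the Markov property and the one-step duality that Kuan's theorem yields after specialization, the identity in Definition \ref{def:duality} propagates to arbitrary times $s \leq t$ by iteration. The main obstacle I expect is the bookkeeping required to translate Kuan's multi-species notation and conventions into the single-species form used here; in particular, pinning down the correct normalization constant $\qhalfint{I-1}/\qhalfint{I}$ and verifying that the coincident case $y_1 = y_2$ carries no extra symmetry factor that would otherwise spoil the exchange between the boundary and bulk terms is the subtle part of the reduction. Once the degeneration is set up correctly, however, the result follows without further analytic input.
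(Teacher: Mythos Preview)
Your proposal is correct and follows essentially the same route as the paper: specialize \cite[Theorem 4.10]{kuan18} to a single species with two dual particles, simplify the resulting functional site-by-site according to whether the dual occupation is $0$, $1$, or $2$, and normalize. The paper makes two points you should be aware of when carrying this out: first, it explicitly performs a left/right swap (and notes a misstatement in Kuan's paper about which process jumps which way) so that $\vec{\eta}$ is the left-finite process jumping right and $\vec{\xi}$ is the reversed process; second, the normalization is achieved by dividing each factor in the infinite product by $\qhalfint{I}$, which is what produces the $\qhalfint{I-1}/\qhalfint{I}$ prefactor in the $y_1<y_2$ case and leaves the $y_1=y_2$ case without one. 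Your final step about propagating a one-step duality to all times via the Markov property is not needed here, since the $J=1$ model is time-homogeneous and Kuan's theorem already delivers the full duality of Definition~\ref{def:duality}.
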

%checkpoint %we remark that there is a mistatement
We remark that there is a misstatement in \cite[Theorem 4.10]{kuan18}. The particles in the process $\ZZZ$ and  $\ZZZ_{rev}$ were stated to jump to the left and to the right respectively. However, after discussing with the author, we realize that the right statement is that the particles in $\ZZZ$ jump to the right and those in $\ZZZ_{rev}$ jump to the left.
%In other words, the role of left and right should be swapped.
\begin{proof}
This is a degeneration from \cite[Theorem 4.10]{kuan18}. By taking the species number $n=1$, the spin parameter $m_x = I$ for all $x \in \ZZ$ as well as replacing $q$ by $q^{\frac{1}{2}}$, the multi-species \ac{SHS6V} model considered in \cite{kuan18} degenerates to the $J=1$ \ac{SHS6V} model (see Section 2.6.2 of \cite{kuan18} for detail). Then Theorem 4.10 of \cite{kuan18} reduces to: The $J=1$ left-finite \ac{SHS6V} model $\vec{\xi}(t)$ and the $J=1$ reversed  \ac{SHS6V} model $\vec{\eta}(t)$ are dual with respect to the functional 
\begin{equation*}
G_1 (\vec{\xi}, \vec{\eta}) = \prod_{x \in \ZZ} \qhalfint{\eta_x}^! \qhalfint{I - \eta_x}^! \qbinom{I-  \xi_x}{\eta_x}_{q^{\frac{1}{2}}} q^{-\frac{1}{2} \xi_x (\sum_{z > x} 2 \eta_z + \eta_x)}. 
\end{equation*}
Swapping the role of left and right, which makes the particles in $\vec{\xi}(t)$ jump to the left and those in $\vec{\eta}(t)$ jump to the right. Then $\vec{\eta}(t)$ becomes the $J=1$ left-finite \ac{SHS6V} model and $\vec{\xi}(t)$ becomes the $J=1$ reversed  \ac{SHS6V} model. They are dual with respect to the functional
\begin{align*}
G_2(\vec{\eta}, \vec{\xi}) &=  \prod_{x \in \ZZ} \qhalfint{\eta_x}^! \qhalfint{I - \eta_x}^! \qbinom{I-  \xi_x}{\eta_x}_{q^{\frac{1}{2}}} q^{-\frac{1}{2} \xi_x (\sum_{z < x} 2 \eta_z + \eta_x)},\\ 
\numberthis \label{eq:three:temp1}
&=\prod_{x \in \ZZ} \qhalfint{\eta_x}^! \qhalfint{I - \eta_x}^! \qbinom{I-  \xi_x}{\eta_x}_{q^{\frac{1}{2}}} q^{-\xi_x N_x (\vec{\eta}) + \frac{1}{2} \xi_x \eta_x}.
\end{align*}
Assuming $\vec{\xi} (t)$ has two particles, recall the bijective map $\phi: \weyl{2} \to \MM^2$ (take $k=2$) in Definition \ref{def:statespace}, then $\vec{y}(t) = \phi^{-1}(\vec{\xi} (t))$ is the  $J=1$  reversed two particle location process. The $J=1$ left-finite \ac{SHS6V} model $\vec{\eta}(t)$ and the  two particle $J=1$ reversed \ac{SHS6V} model location process $\vec{y}(t) = (y_1 (t) \leq y_2 (t))$ are dual with respect to  $G_2 (\vec{\eta}, \phi^{-1}(y_1, y_2))$, where $\vec{\xi} = (\xi
_x)_{x \in \mathbb{Z}} = \phi(y_1, y_2)$ is given by 
\begin{equation*}
\xi_x =
\begin{cases}
2 \idc_{\{x = y_1\}} &\text{ if } y_1 = y_2, \\
\idc_{\{x = y_1\}} + \idc_{\{x = y_2\}} &\text{ if } y_1 < y_2.
\end{cases}
\end{equation*}
%from Section (?) $\qhalfint{n}^! := \prod_{i=1}^n \qhalfint{i}$ and $\qhalfbin{n}{m}: = \frac{num}{den}$
In addition, note that
%\begin{equation}\label{eq:three:temp8}
%\frac{\qhalfint{\eta_x}^! \qhalfint{I - \eta_x}^!}{\qhalfint{I}^!} \binom{I - \xi_x}{\eta_x}_{q^{\frac{1}{2}}} = 
%\begin{cases}
% & \text{ if } \xi_x = 0,\\ 
%\frac{\qhalfint{I - \eta_x}}{\qhalfint{I}} & \text{ if } \xi_x = 1,\\ 
%\frac{\qhalfint{I - \eta_x}\qhalfint{I - 1 - \eta_x}}{\qhalfint{I} \qhalfint{I-1}} & \text{ if } \xi_x = 2.
%\end{cases}
%\end{equation}
\begin{equation}\label{eq:three:temp8}
\qhalfint{\eta_x}^! \qhalfint{I - \eta_x}^!\binom{I - \xi_x}{\eta_x}_{q^{\frac{1}{2}}} = 
\begin{cases}
\qhalfint{I} & \text{ if } \xi_x = 0,\\ 
\qhalfint{I - \eta_x} & \text{ if } \xi_x = 1,\\ 
\frac{\qhalfint{I - \eta_x}\qhalfint{I - 1 - \eta_x}}{ \qhalfint{I-1}} & \text{ if } \xi_x = 2.
\end{cases}
\end{equation}
When $\vec{\xi}  = \phi (y_1, y_2)$,  there are at most two values for $x \in \mathbb{Z}$ so that $\xi_x \neq 0$.
%\footnote{In fact, since $\vec{\xi}(t)$ has two particles, $\xi_x = 0$ except for one or two  index $x \in \ZZ$.} 
To make sense of the infinite product in \eqref{eq:three:temp1}, one needs to normalize $G_2(\vec{\eta}, \vec{\xi})$ by dividing each factor in the product \eqref{eq:three:temp1} by $\qhalfint{I}$. After such normalization, it is straightforward via \eqref{eq:three:temp8} that $G_2 (\vec{\eta}, \phi(y_1, y_2))$ equals the functional $G(\vec{\eta}, (y_1, y_2))$ in \eqref{eq:three:sdual} up to a constant factor.
\end{proof}
We note that the duality functionals in \eqref{eq:three:fdual} and \eqref{eq:three:sdual} do not depend on the parameter $\alpha$. By Markov property and the property that  $\semigr_{\alpha(i)}$ commutes for different value of $i$, it is clear that the same Markov dualities in Proposition \ref{prop:firstdual} and Proposition \ref{prop:secdual} apply for the left-finite unfused \ac{SHS6V} model.
\begin{cor}\label{cor:biduality1}
For all $k \in \ZZ_{\geq 1}$, the left-finite unfused  \ac{SHS6V} model $\vec{\eta}(t) \in \GG$ (Definition \ref{def:unfused}) and the reversed $k$-particle unfused \ac{SHS6V} model location process $\vec{y}(t) \in \weyl{k}$ (Definition \ref{def:unfusedreverse}) are dual with respect to the functional $H$ in \eqref{eq:three:fdual}. The left-finite \ac{SHS6V} model $\vec{\eta}(t)$ and the two particle reversed  unfused \ac{SHS6V} model location process $\vec{y}(t)$ are dual with respect to the functional $G$ in \eqref{eq:three:sdual}.
\end{cor}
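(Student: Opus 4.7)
The plan is to reduce to the single-time-step case and then iterate using the Markov property, exploiting commutativity of the reversed transition operators to reconcile the order in which parameters appear.

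By Definition \ref{def:unfused}, one step of the unfused \ac{SHS6V} model from time $t$ to $t+1$ is exactly one step of the $J=1$ \ac{SHS6V} model with (constant) parameter $\alpha(t) = \alpha q^{\mod(t)}$; similarly, by Definition \ref{def:unfusedreverse}, one step of the reversed $k$-particle unfused location process at time $t$ is governed by $\semigr_{\alpha(t)}$. Since the duality functionals $H$ and $G$ in \eqref{eq:three:fdual} and \eqref{eq:three:sdual} do not depend on $\alpha$, Proposition \ref{prop:firstdual} applied with $\alpha=\alpha(t)$ yields the single-step identity
\begin{equation*}
\EE\bigl[H(\vec{\eta}(t+1),\vec{y})\,\bigm|\,\vec{\eta}(t)=\vec{\eta}\bigr]=\EE\bigl[H(\vec{\eta},\vec{y}(t+1))\,\bigm|\,\vec{y}(t)=\vec{y}\bigr],
\end{equation*}
and Proposition \ref{prop:secdual} yields the corresponding identity for $G$ in the case $k=2$.

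To pass from one step to general $s\leq t$, I would denote by $T_r$ the one-step forward transition at time $r$, so that the transition of $\vec{\eta}(t)$ from $s$ to $t$ is $T_{s}T_{s+1}\cdots T_{t-1}$. Starting from the expression $\sum_{\vec{\eta}'}(T_sT_{s+1}\cdots T_{t-1})(\vec{\eta},\vec{\eta}')H(\vec{\eta}',\vec{y})$, I peel off the outermost factor $T_{t-1}$ and apply the single-step duality above to convert it into an action of $\semigr_{\alpha(t-1)}$ on the $\vec{y}$-variable through $H$. Iterating this procedure with $T_{t-2},T_{t-3},\ldots,T_{s}$ in reverse order, the expression becomes
\begin{equation*}
\sum_{\vec{y}'}\bigl(\semigr_{\alpha(t-1)}\semigr_{\alpha(t-2)}\cdots \semigr_{\alpha(s)}\bigr)(\vec{y},\vec{y}')\,H(\vec{\eta},\vec{y}').
\end{equation*}
The reversed unfused location process, however, evolves from $s$ to $t$ via $\semigr_{\alpha(s)}\semigr_{\alpha(s+1)}\cdots \semigr_{\alpha(t-1)}$, i.e.\ the reversed product. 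The two products coincide by the commutativity $\semigr_{\alpha(i)}\semigr_{\alpha(j)}=\semigr_{\alpha(j)}\semigr_{\alpha(i)}$ recorded in \eqref{eq:three:temp7}, which completes the multi-step duality with respect to $H$ and, by an identical argument using Proposition \ref{prop:secdual}, with respect to $G$ when $k=2$.

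I do not expect this proof to have a genuine obstacle; the only point that requires attention is verifying that the single-step identities from Propositions \ref{prop:firstdual} and \ref{prop:secdual} can be applied with \emph{any} value of $\alpha$ (here with the time-dependent value $\alpha(t)$), which is permissible because both dualities are stated pointwise in $\alpha$ and the functionals are $\alpha$-independent, and in invoking the commutativity \eqref{eq:three:temp7} to reconcile the reversed and forward orderings of $\{\semigr_{\alpha(r)}\}_{s\leq r\leq t-1}$.
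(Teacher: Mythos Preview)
Your proposal is correct and follows essentially the same approach as the paper's proof: apply the single-step dualities (Propositions \ref{prop:firstdual} and \ref{prop:secdual}) with time-varying parameter $\alpha(t)$, iterate via the Markov property, and then invoke commutativity of the $\semigr_{\alpha(i)}$ to match the resulting product with the transition kernel of the reversed unfused process. The only cosmetic difference is that the paper phrases the iteration directly in terms of conditional expectations rather than writing out the forward kernels $T_r$, but the substance is identical.
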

\begin{proof}
Due to Proposition \ref{prop:firstdual}, we see that for all $\vec{\eta} \in \GG$ and $\vec{y} \in \weyl{k}$,
\begin{equation}\label{eq:three:temp2}
\EE \big[ H(\vec{\eta}(t), \vec{y}) \vv \vec{\eta}(t-1) = \vec{\eta}\big] = \sum_{\vec{x} \in \weyl{k}} \semigr_{\alpha(t-1)}(\vec{y}, \vec{x}) H(\vec{\eta}, \vec{x}).
\end{equation}
Using Markov property and applying \eqref{eq:three:temp2}  repetitively, we see that 
\begin{align*}
\EE \big[ H(\vec{\eta}(t), \vec{y}) \vv \vec{\eta}(s) = \vec{\eta}\big] 
%&=
%\sum_{\vec{x} \in \weyl{k}} \big(\semigr_{\alpha(t-1)} \cdots  \semigr_{\alpha(s)}\big) (\vec{y}, \vec{x})  H(\vec{\eta}, \vec{x})\\
&= \sum_{\vec{x} \in \weyl{k}} \big(\semigr_{\alpha(s)} \cdots  \semigr_{\alpha(t-1)}\big) (\vec{y}, \vec{x})  H(\vec{\eta}, \vec{x})\\
%&= \sum_{\vec{x} \in \weyl{k}} \big(\semigr_{\alpha(s)} \circ \dots \circ \semigr_{\alpha(t-1)}\big) (\vec{y}, \vec{x})  H(\vec{\eta}, \vec{x})
&= \sum_{\vec{x} \in \weyl{k}} \rhzr\big(\vec{y}, \vec{x}, t, s\big) H(\vec{\eta}, \vec{x}) \\
&= \EE \big[H(\vec{\eta}, \vec{y}(t)) \vv \vec{y}(s) = \vec{y}\big].
\end{align*}
Here, the second equality follows from \eqref{eq:three:temp7}.
This proves the desired duality with respect to the functional $H$. The duality with respect to the functional $G$ follows by a similar argument. 
\end{proof}
For our application, we like to extend the dualities stated in Proposition \ref{prop:firstdual} and Proposition \ref{prop:secdual} to the  bi-infinite \ac{SHS6V} model. Denote by
\begin{equation}\label{eq:dual2}
\du(t, y_1, y_2) =
\begin{cases}
q^{-2 \height{t}{y_1}}    \qhalfint{I- \bivr{t}{y_1}}  \qhalfint{I - 1 -\bivr{t}{y_1}} q^{\bivr{t}{y_1}} \qquad &\text{if}\  y_1 = y_2;\\
\frac{\qhalfint{I-1} }{\qhalfint{I}} q^{-\height{t}{y_1}} q^{-\height{t}{y_2}} \qhalfint{I- \bivr{t}{y_1}}  \qhalfint{I- \bivr{t}{y_2}}     q^{\frac{1}{2} \bivr{t}{y_1}} q^{\frac{1}{2} \bivr{t}{y_2}} \qquad  &\text{if}\  y_1 < y_2. 
\end{cases} 
\end{equation}
Here $\vec{\eta}(t) = (\eta_x (t))_{x \in \ZZ}$ is the bi-infinite unfused \ac{SHS6V} model defined in Definition \ref{def:biinfinite} and $N(t, y)$ is the associated height function.
\begin{cor}\label{cor:biduality}
For the  bi-infinite unfused \ac{SHS6V} model $\occuppr$, for $\vec{y} = (y_1 \leq \dots \leq y_k) \in \weyl{k}$ one has
\begin{equation}\label{eq:fextenddual1}
\EE \big[\prod_{i=1}^k q^{-\height{t}{y_i}} \vv \FF(s) \big] = \sum_{\x \in \weyl{k}} \rhzr \big(\rlocc, \x, t, s\big) \prod_{i=1}^k q^{-\height{s}{x_i}}.
\end{equation}
For $y_1 \leq y_2 \in \ZZ$ (Since $I \geq 2$, this is equivalent to the condition $(y_1, y_2) \in \weyl{2}$)
\begin{equation}
\label{eq:sextenddual1}
\EE\big[\du(t, y_1, y_2)  \vv \FF(s)\big] = \sum_{x_1 \leq x_2 \in \ZZ^2} \rhzr \big((y_1, y_2), (x_1, x_2), t, s\big) \du(s, x_1, x_2).
\end{equation}
\end{cor}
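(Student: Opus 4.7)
My plan is to deduce both identities from Corollary \ref{cor:biduality1} by a truncation-and-limit argument built on the random Bernoulli environment of Lemma \ref{lem:biinfinite}. For any bi-infinite initial condition $\vec{\eta}(0) \in \iset^{\mathbb{Z}}$ and a cutoff $w \ll 0$, form the truncation $\vec{\eta}^w(0) := (\eta_i(0)\idc_{\{i \geq w\}})_{i \in \mathbb{Z}} \in \GG$ and evolve it under the \emph{same} Bernoulli variables $(B, B')$ that drive the bi-infinite model; the resulting $\vec{\eta}^w(t)$ is a left-finite unfused SHS6V model adapted to $\FF(t)$. Writing $N^w(t, y) := N_y(\vec{\eta}^w(t)) - N_0(\vec{\eta}^w(0))$, Lemma \ref{lem:biinfinite} gives $N^w(t, y) \to N(t, y)$, and by differencing $\eta^w_y(t) \to \eta_y(t)$, in $L^p$ for every $p \geq 1$ and every fixed $(t, y)$.

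For each $w$, Corollary \ref{cor:biduality1} applied to $\vec{\eta}^w$ together with the tower property (legitimate because $\vec{\eta}^w(s)$ is $\FF(s)$-measurable) gives the first duality for the uncentered height $N_{\cdot}(\vec{\eta}^w(\cdot))$. Both sides of that identity carry the common $\FF(0)$-measurable factor $q^{-k N_0(\vec{\eta}^w(0))}$; I cancel it to rewrite everything in terms of the centered height $N^w$, matching the form of \eqref{eq:fextenddual1}. The second duality is handled identically: both sides are homogeneous of degree two in $q^{-N_0(\vec{\eta}^w(0))}$, and the $q$-factorial factors in \eqref{eq:dual2} depend only on the occupations $\eta^w$, so the same factorization applies and produces the $N^w$-version of \eqref{eq:sextenddual1}.

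It remains to send $w \to -\infty$. The key structural point making this routine is that the right-hand sums are in fact \emph{finite}: the reversed unfused SHS6V particles jump only to the left and by at most one unit per time step, so $\rhzr(\vec{y}, \vec{x}, t, s) = 0$ unless $y_i - (t-s) \leq x_i \leq y_i$ for each $i$. Combined with the recursion \eqref{eq:birecur1}--\eqref{eq:birecur2}, which forces $|N^w(t+1, y) - N^w(t, y)| \leq 1$ almost surely and hence yields the $w$-independent deterministic bound $|N^w(t, y)| \leq I|y| + t$ (for $w$ sufficiently negative that truncation does not alter the profile near $y$), this shows that $q^{\pm N^w}$ and the $q$-factorial factors $\qhalfint{I - \eta^w_y}$ appearing in \eqref{eq:dual2} are uniformly bounded. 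The $L^p$ convergences of $N^w$ and $\eta^w$ thereby upgrade to $L^p$ convergence of each duality functional, and passing term by term in the finite sum on the right, together with $L^1$-contractivity of conditional expectation on the left, delivers \eqref{eq:fextenddual1} and \eqref{eq:sextenddual1}.

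The only step requiring genuine care is establishing the uniform-in-$w$ bound on $N^w$ together with the $L^p$ convergence; however, both reduce to transparent consequences of Lemma \ref{lem:biinfinite} and the $\{0,1\}$-valued nature of each time-step height increment, so no serious obstacle arises.
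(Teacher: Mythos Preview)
Your truncation-and-limit strategy is exactly the one the paper uses, and your treatment of the left-hand side (uniform bound $|N^w(t,y)|\le I|y|+t$, then conditional dominated convergence) matches the paper's argument. The gap is on the right-hand side.

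You assert that the sum $\sum_{\vec x\in\weyl{k}}\rhzr(\vec y,\vec x,t,s)\prod_i q^{-N^w(s,x_i)}$ is finite because ``the reversed unfused SHS6V particles jump only to the left and by at most one unit per time step.'' The first half is true, but the second is false: in the $J=1$ model a horizontal line can propagate through arbitrarily many vertices before being absorbed, so in a single time step a particle in the reversed location process jumps to the left by a \emph{geometric} random variable with parameter $\frac{\nu+\alpha(t)}{1+\alpha(t)}$ (see the discussion following Definition \ref{def:unfusedreverse}). Hence $\rhzr(\vec y,\vec x,t,s)$ is supported on all $\vec x$ with $x_i\le y_i$, an infinite set, and your term-by-term passage to the limit is not justified as written.

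What the paper does instead is prove the exponential decay bound
\[
\rhzr(\vec y,\vec x,t,s)\ \le\ C\prod_{i=1}^k\binom{|x_i-y_i|+t-s}{t-s}\,\theta^{|y_i-x_i|},
\qquad \theta:=\sup_t\frac{\nu+\alpha(t)}{1+\alpha(t)},
\]
and then observe that under Condition~\ref{cond:1} one has $q^I\theta<1$. Combining this with the uniform bound $q^{-N^w(s,x_i)}\le q^{I|x_i|+s}$ gives a summable, $w$-independent dominating sequence $C\prod_i\delta^{|y_i-x_i|}$ for some $\delta<1$, which is what legitimises dominated convergence for the infinite sum. Your proof can be repaired by inserting exactly this estimate; without it the argument does not close.
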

\begin{proof}
Let us prove \eqref{eq:fextenddual1} in the first place. 
Given initial condition of the bi-infinite unfused \ac{SHS6V} model $\vec{\eta}(0)$,
%Without loss of generosity, if suffices to prove \eqref{eq:fextenddual1} and \eqref{eq:sextenddual1} assuming $s = 0$. We need to show that for bi-infinite HZR process with initial state $\vec{\eta}(0) = (\eta_i (0))_{i \in \ZZ} \in \iset^\ZZ$, 
%%(and corresponding height function $N(0, \cdot)$), 
%\begin{align}\label{eq:fextenddual2}
%\EE \big[\prod_{i=1}^k q^{-N(t, y_i)} \big] = \sum_{\vec{x} \in \weyl{k}} \rhzr(\vec{y}, \vec{x}, t) \prod_{i=1}^k q^{-N(0, x_i)},\\
%\label{eq:sextenddual2}
%\EE \big[ \du(t, y_1, y_2)\big] = \sum_{x_1 \leq x_2 \in \ZZ^2} \rhzr\big((y_1, y_2), (x_1, x_2), t\big) \du(0, y_1, y_2).
%\end{align}
%We first verify \eqref{eq:fextenddual2}. 
we construct a sequence of left-finite \ac{SHS6V} model $\vec{\eta}^w (t)$  with initial condition $\vec{\eta}^w (0) := (\eta_i (0) \idc_{\{i \geq w\}})_{i \in \ZZ}$. We denote by $N^w (t, y)$ the associated height function. The first duality in Corollary \ref{cor:biduality1} implies that for any $w \in \ZZ$
\begin{equation}\label{eq:three:temp4}
\EE \big[\prod_{i=1}^k q^{-N^w (t, y_i)} \vv \FF(s)\big] = \sum_{\vec{x} \in \weyl{k}} \rhzr \big(\vec{y}, \vec{x}, t, s\big) \prod_{i=1}^k q^{-N^w (s, x_i)}.
\end{equation}
Let us show that the LHS and RHS of \eqref{eq:three:temp4} approximate those of \eqref{eq:fextenddual1} as $w \to -\infty$.
\bigskip
\\
For the approximation of the LHS, as $|\etapart{0}{x}| \leq I$ for all $x \in \ZZ$, we have $|N^w(0, y_i)| \leq I |y_i|$. Moreover, in a single time step, $N^w (t, y_i)$ may change by at most one, hence for all $w \in \ZZ$,
\begin{align*}\numberthis \label{eq:three:temp9}
|N^w (t, y_i)| &\leq |N^w (0, y_i)| +t \leq y_i I + t. 
\end{align*}
Therefore, for fixed $t \in \ZZn$ and $q > 1$, $\prod_{i=1}^k q^{-N^w (t, y_i)}$ is uniformly bounded. Via Lemma \ref{lem:biinfinite}, we know that $N^w (t, y_i) \to N(t, y_i)$ in probability, by conditional dominated convergence theorem, one has
\begin{equation*}
\lim_{w \to -\infty} \EE \big[\prod_{i=1}^k q^{-N^w (t, y_i)} \vv \FF(s)\big] = \EE \big[\prod_{i=1}^k q^{-N (t, y_i)} \vv \FF(s)\big].
\end{equation*}  
For the RHS approximation, according to Definition \ref{def:unfusedreverse}, when there is only one particle in the reversed \ac{SHS6V} model location process, it jumps to the left (at time $t$) as a geometric random variables with parameter $\frac{\nu + \alpha(t)}{1 + \alpha(t)}$. When there are $k$ particles, they jump to the left (at time $t$) as $k$ independent geometric random variables with parameter $\frac{\nu + \alpha(t)}{1 + \alpha(t)}$ except when one hits another. So  there exists constant $C$  such that for all $t, \vec{x}, \vec{y}$ 
\begin{equation*}
\rhzr\big(\vec{y}, \vec{x}, t+1, t\big) \leq C \prod_{i=1}^k \bigg(\frac{\nu + \alpha(t)}{1 + \alpha(t)}\bigg)^{|y_i - x_i|}.
\end{equation*}
Denote by $\theta = \sup_{t \in \ZZn} \frac{\nu + \alpha(t)}{1 + \alpha(t)}$,
%Under condition \ref{cond:1}, it is straightforward that 
%$$\sup_{t \in \ZZ_{\geq 0}} \bigg|\frac{\nu + \alpha(t)}{1 + \alpha(t)}\bigg| < 1.$$
one has
\begin{equation}\label{eq:three:temp5}
\rhzr\big(\vec{y}, \vec{x}, t+1, t\big) \leq C \prod_{i=1}^k \theta^{|y_i - x_i|}.
\end{equation} 
For fixed $s \leq t$, observing that $\rhzr\big(\vec{y}, \vec{x}, t, s\big)$ can be written as a $(t-s)$-fold convolution of one-step transition probability. The convolution can be expanded into a sum over all trajectories from $\vec{y} = (y_1, \dots, y_k)$ to $\vec{x} = (x_1, \dots, x_k)$. The contribution of each trajectories can be bounded by the product of $t  - s$ one-step transition probability, which is upper bounded by the RHS of \eqref{eq:three:temp5}. As the particles in the reversed \ac{SHS6V} model can only jump to the left, the number of the trajectories can be upper bounded by $\prod_{i = 1}^k \binom{|x_i - y_i| + t - s}{t - s}$. We obtain 
\begin{align}\label{eq:three:temp6}
\rhzr\big(\vec{y}, \vec{x}, t, s\big) &\leq C \prod_{i=1}^k \binom{|x_i - y_i| + t - s}{t - s} \theta^{|y_i - x_i|}.
\end{align} 
Furthermore, it is readily verified that under Condition \ref{cond:1},
$$q^I  \theta = \sup_{t \in \ZZ_{\geq 0}} \frac{1 + q^I \alpha(t)}{1 + \alpha(t)} < 1.$$ 
Using the bounds in \eqref{eq:three:temp9} and \eqref{eq:three:temp6}, fix $s \leq t \in \ZZ_{\geq 0}$ and $\vec{y} \in \weyl{k}$, we have for all $\vec{x} \in \weyl{k}$,
\begin{align*}
\rhzr \big(\vec{y}, \vec{x}, t, s\big) q^{-N^w (s, x_i)} &\leq C \prod_{i=1}^k \binom{|x_i - y_i| + t - s}{t - s} \theta^{|y_i - x_i|}  q^{I |x_i|},\\
& \leq C \prod_{i=1}^k \binom{|x_i - y_i| + t - s}{t - s} (q^I \theta)^{|y_i - x_i|},\\
& \leq C \prod_{i=1}^k {\delta}^{|y_i - x_i|},
\end{align*}
for some constant $0 < \delta < 1$. Since $N^w (s, x_i) \to N(s, x_i)$ in probability, we find that
%, as $w \to -\infty$,
\begin{equation*}
\sum_{x \in \weyl{I}} \rhzr \big(\vec{y}, \vec{x}, t, s\big) \prod_{i=1}^k q^{-N^w (s, x_i)} \longrightarrow \sum_{x \in \weyl{k}} \rhzr \big(\vec{y}, \vec{x}, t, s\big) \prod_{i=1}^k q^{-N (s, x_i)}  \quad \text{ in probability}.
\end{equation*}
Therefore, We conclude \eqref{eq:fextenddual1}. The proof of \eqref{eq:sextenddual1} is similar to \eqref{eq:fextenddual1}, where we consider instead
\begin{equation*}
\du^w (t, y_1, y_2) = 
\begin{cases}
q^{-2 N^w(t, y_1)}    \qhalfint{I- \eta^w_{y_1} (t)}  \qhalfint{I - 1 - \eta^w_{y_1} (t)} q^{\eta^w_{y_1}(t)} \qquad &\text{if}\  y_1 = y_2;\\
\frac{\qhalfint{I-1} }{\qhalfint{I}} q^{-N^w(t, y_1)} q^{-N^w(t, y_2)} \qhalfint{I- \eta_{y_1}^w (t)}  \qhalfint{I- \eta_{y_2}^w (t)}     q^{\frac{1}{2} \eta_{y_1}^w (t)} q^{\frac{1}{2} \eta_{y_2}^w (t)} \qquad  &\text{if}\  y_1 < y_2. 
\end{cases}
\end{equation*}
Applying the second duality in Corollary \ref{cor:biduality1}, we find that 
\begin{equation*}
\EE\big[\du^w (t, y_1, y_2) \vv \FF(s)\big] = \sum_{x_1 \leq x_2  \in \ZZ^2} \rhzr\big((y_1, y_2), (x_1, x_2), t, s\big) \du^w (s, x_1, x_2). 
\end{equation*}
By taking $w \to -\infty$ and using similar approximation, we conclude \eqref{eq:sextenddual1}.
\end{proof}
\section{Integral formula for the two particle transition probability}\label{sec:exactformula} 
In this section, we give an explicit integral formula for $\rhzr\big((x_1, x_2), (y_1, y_2), t, s\big)$ (note that for the rest of the paper, we prefer to swap the order of $(x_1, x_2)$ and $(y_1, y_2)$ in the notation compared with the RHS of \eqref{eq:sextenddual1}). Our approach is to utilize the generalized Fourier theory (Bethe ansatz) developed in \cite{BCPS15}. Let us review a few results obtained in \cite{BCPS15} and \cite{CP16} on which we rely  to derive the integral formula.
\begin{defin}\label{def:app:betheansatz}
For $\vec{y} \in (y_1 \leq \dots \leq y_k) \in \ZZ^{k}$, we define the left and right Bethe ansatz eigenfunction\footnote{Comparing with the original definition for Bethe ansatz function defined in (2.11) and (2.14) of \cite{BCPS15}, we reverse the order of components in the vector: We prefer to write $\vec{y} = (y_1 \leq \dots \leq y_k)$ instead of $\vec{y} = (y_1 \geq \dots \geq y_k)$.}
\begin{align*}
\leig(\vec{y}) &= \sum_{\sigma \in S_k} \prod_{1 \leq B < A \leq k} \frac{w_{\sigma(A)} - q w_{\sigma(B)}}{w_{\sigma(A)} - w_{\sigma(B)}} \prod_{i=1}^k \left(\frac{1-w_{\sigma(j)}}{1- \nu w_{\sigma(j)}}\right)^{-y_{k+1-j}},\\
\reig(\vec{y}) &= (-1)^k (1-q)^k q^{\frac{k(k-1)}{2}} \mqv{\vec{y}} \sum_{\sigma \in S_k}  \prod_{1 \leq B < A \leq k} \frac{w_{\sigma(A)} - q^{-1} w_{\sigma(B)}}{w_{\sigma(A)} - w_{\sigma(B)}} \prod_{i=1}^k \left(\frac{1-w_{\sigma(j)}}{1- \nu w_{\sigma(j)}}\right)^{y_{k+1-j}},
\end{align*}
where $S_k$ is the permutation group of $\{1,\dots, k\}$ and  
\begin{equation}\label{eq:mqv}
\mqv{\vec{y}}:= \prod_{i=1}^{M(\vec{y})} \frac{(\nu; q)_{c_i}}{(q; q)_{c_i}},
\end{equation}
where $(c_1, \dots, c_{M(\vec{y})})$ denotes the cluster number in $\vec{y}$, i.e. $\vec{y} = (y_1 = \dots = y_{c_1} < y_{c_1 +1 } = \dots = y_{c_1 + c_2} < \dots )$.
\end{defin}
It turns out that $\leig$ are the eigenfunctions of the operator $ \semigr_\alpha$ 
defined in Definition \ref{def:three:locationpr}. 
\begin{lemma}[Proposition 2.12 of \cite{CP16}]\label{lem:app:eigenfunc}
For all $k \in \NN$ and $\vec{w} = (w_1, \dots, w_k) \in \CC^k$ such that for all $i \in \{1, \dots, k\}$, $\big|\frac{1 - w_i}{1 - \nu w_i} \frac{\alpha + \nu}{1 + \alpha}\big| < 1$. Then,
\begin{equation*}
\big(\semigr_\alpha \Psi_{\vec{w}}^\ell\big)(\vec{y})  = \bigg(\prod_{i=1}^k \frac{1 + \alpha q w_i}{1+ \alpha w_i}\bigg) \leig(\vec{y}).
\end{equation*}
\end{lemma}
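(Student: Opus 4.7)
The plan is to follow the classical Bethe ansatz strategy: handle the one-particle case by direct computation, then bootstrap to general $k$ by a symmetrization argument whose key input is the two-body consistency coming from the Yang--Baxter equation for $\rj$ established in \cite{CP16}. I would begin with $k=1$: as noted in the proof of Corollary \ref{cor:biduality}, a single reversed particle jumps strictly to the left at each time step by a geometric random variable with parameter $\frac{\nu+\alpha}{1+\alpha}$. For $k=1$ the eigenfunction reduces to $\leig(y) = \bigl(\frac{1-w}{1-\nu w}\bigr)^{-y}$, and summing the resulting geometric series in $(\semigr_\alpha \leig)(y)$ produces exactly $\frac{1+\alpha q w}{1+\alpha w}\,\leig(y)$. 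The assumption $\big|\frac{1-w}{1-\nu w}\frac{\alpha+\nu}{1+\alpha}\big| < 1$ is precisely the requirement that this series converge absolutely, so the hypothesis encodes the domain in which the eigenvalue relation is meaningful.

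For general $k$, $\semigr_\alpha$ updates a $k$-particle location state by sequential application of $\rmatrix$. On configurations $\vec{y}$ whose coordinates are pairwise separated by at least two (the non-collision stratum), the update factorizes into $k$ independent copies of the one-particle update, so each term in the permutation sum defining $\leig$ contributes the product eigenvalue $\prod_i \frac{1+\alpha q w_i}{1+\alpha w_i}$ by the $k=1$ calculation. The remaining task is to show that the corrections coming from colliding configurations — two particles at the same site or at adjacent sites — cancel once one symmetrizes over $S_k$ with the Bethe weights $\frac{w_{\sigma(A)} - q w_{\sigma(B)}}{w_{\sigma(A)} - w_{\sigma(B)}}$. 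Concretely, this reduces to a two-body exchange identity: transposing adjacent labels $\sigma(i) \leftrightarrow \sigma(i+1)$ modifies the summand by a ratio dictated by the entries of $\rmatrix$, and the Bethe weight is the unique choice that makes all such pairs cancel pairwise.

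The main obstacle is precisely this two-body consistency check in the higher-spin regime, where clusters of size up to $I$ are allowed at a single site. The collision weights are the full $\rmatrix$ entries from \eqref{eq:temp33}, not simple exclusion weights, and the cluster factor $\mqv{\vec{y}}$ present in $\reig$ (but absent in $\leig$) must be tracked when relating the left and right eigenfunctions. The cleanest route, and the one effectively followed in \cite{CP16}, is algebraic: realize $\semigr_\alpha$ through a row-to-row transfer matrix built from the stochastic $R$-matrix at spectral parameter $\alpha$, use the $RLL$ form of the Yang--Baxter equation to commute this transfer matrix with the Bethe creation operators carrying spectral parameters $w_i$, and read off both the eigenfunction $\leig$ and its eigenvalue $\prod_i \frac{1+\alpha q w_i}{1+\alpha w_i}$ directly from that commutation relation. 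The analytic hypothesis on the $w_i$ then reappears as the region in which the resulting formal Bethe expression is absolutely convergent on $\weyl{k}$ and represents a genuine function to which $\semigr_\alpha$ can be applied termwise.
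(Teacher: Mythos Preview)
The paper does not supply its own proof of this lemma: it is quoted verbatim as Proposition~2.12 of \cite{CP16} and used as a black box. So there is nothing in the present paper to compare your argument against line by line.

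That said, your sketch is a faithful outline of the standard coordinate/algebraic Bethe ansatz proof that \cite{CP16} gives. The one-particle geometric-series computation is exactly right, including the role of the hypothesis $\bigl|\tfrac{1-w_i}{1-\nu w_i}\tfrac{\alpha+\nu}{1+\alpha}\bigr|<1$ as a convergence condition. For $k\ge 2$ your identification of the key point --- that the Bethe weights $\tfrac{w_{\sigma(A)}-qw_{\sigma(B)}}{w_{\sigma(A)}-w_{\sigma(B)}}$ are precisely what make the two-body boundary contributions cancel --- is the heart of the matter, and reducing to a two-body check is the correct strategy. Your closing remark that the cleanest route is the transfer-matrix/Yang--Baxter commutation argument from \cite{CP16} is accurate; that is how the cited proposition is established there. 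One small correction: the factor $\mqv{\vec{y}}$ appears only in $\reig$, not in $\leig$, so it plays no role in the eigenfunction relation you are proving and need not be tracked here.
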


\cite{BCPS15} shows that the left and right Bethe ansatz eigenfunctions enjoy the following bi-orthogonal relation. 
\begin{lemma}[Corollary 3.13 of \cite{BCPS15}]\label{lem:plancherel}
For $0 < q, \nu < 1$ and $k \in \NN$  $\vec{x} = (x_1 \leq \dots \leq x_k) \in \ZZ^k$   and $\vec{y} = (y_1 \leq \dots \leq y_k) \in \ZZ^k$,
\begin{align}\label{eq:plancherel}
\sum_{\lambda \vdash k} \oint_{\scontone} \dots \oint_{\scontone} \planm(\vec{w}) \prod_{i=1}^{\ell(\lambda)} \frac{1}{(w_i, q)_{\lambda_j} (\nu w_i, q)_{\lambda_j}} \Psi_{\vec{w} \circ \lambda}^\ell (\vec{x}) \Psi_{\vec{w} \circ \lambda}^r (\vec{y}) =\idc_{\left\{\vec{x} = \vec{y}\right\}} 
\end{align}
Some notations must be specified here:  $\gamma$ is a very small circular contour around $1$ so as to exclude all the poles of the integrand except $1$. The Plancherel measure is defined as 
\begin{equation}\label{eq:def:planmeasure}
\planm(\vec{w}) = \frac{(-1)^k (1-q)^k q^{-k(k-1)/2}}{m_1 ! m_2 ! \dots } \text{det} \left[\frac{1}{w_i q^{\lambda_i} - w_i}\right]_{i,j=1}^{\ell(\lambda)} \prod_{i=1}^{k} q^{\lambda_i (\lambda_i-1)/2} w_i^{\lambda_i} \frac{dw_i}{2\pi \im},
\end{equation}
where the sum in \eqref{eq:def:planmeasure} is taken over the partition $\lambda$ of $k$, that is to say, $\lambda = (\lambda_1 \geq \dots \geq \lambda_{s}) \in \ZZ_{\geq 1}^{s}$ with $\sum_{i=1}^{s} \lambda_i = k$ and $\ell(\lambda) = s$ is the \emph{length} of the partition $\lambda$. For instance, the partitions of $k = 3$ are given by $(2, 1)$ and $(1, 1, 1)$. We denote by $m_j$ to be number of components that equal $j$ in $\lambda$ so that $\lambda = 1^{m_1} 2^{m_2} \dots$. Furthermore, we define
\begin{equation*}
\vec{w} \circ \lambda := (w_1, \dots, q^{\lambda_1 - 1} w_1, w_2, \dots, q^{\lambda_2 - 1} w_2, \dots, w_{s}, \dots, q^{\lambda_s -1} w_{s}). 
\end{equation*}
\end{lemma}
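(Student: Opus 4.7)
\textbf{Proof plan for Lemma \ref{lem:plancherel}.} The identity to be proved is a completeness/bi-orthogonality relation for the Bethe-ansatz eigenfunctions $\leig, \reig$ of the reversed one-step transition operator $\semigr_\alpha$ (Lemma \ref{lem:app:eigenfunc}). My plan is to formulate it as the statement that a pair of integral transforms built from these eigenfunctions are mutual inverses, and then to verify this by residue calculus and contour deformation.

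\textbf{Step 1 (Reformulation as an inversion identity).} Define the direct transform acting on finitely supported $f:\weyl{k}\to\CC$ by $\widehat f(\vec w) := \sum_{\vec y}f(\vec y)\,\reig(\vec y)$, and the inverse transform by
\begin{equation*}
(\mathcal{I}\widehat f)(\vec x) := \sum_{\lambda\vdash k}\oint_{\scontone}\!\!\cdots\!\oint_{\scontone} \planmm{\lambda}(\vec w)\,\prod_{i=1}^{\ell(\lambda)}\frac{1}{(w_i;q)_{\lambda_i}(\nu w_i;q)_{\lambda_i}}\,\Psi^\ell_{\vec w\circ\lambda}(\vec x)\,\widehat f(\vec w\circ\lambda).
\end{equation*}
The Plancherel identity \eqref{eq:plancherel} is exactly the statement $\mathcal{I}\widehat{\delta_{\vec y}}=\delta_{\vec y}$ for every $\vec y\in\weyl{k}$. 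Thus it suffices to evaluate the integral in \eqref{eq:plancherel} directly and separately check the two cases $\vec x=\vec y$ and $\vec x\neq\vec y$.

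\textbf{Step 2 (The diagonal case $\vec x=\vec y$).} For a tiny contour $\scontone$ around $1$, the only relevant pole of the factor $\prod_i (w_i;q)_{\lambda_i}^{-1}(\nu w_i;q)_{\lambda_i}^{-1}$ inside $\scontone$ is a simple pole at $w_i=1$ (since the other zeros of $(w_i;q)_{\lambda_i}$ and $(\nu w_i;q)_{\lambda_i}$ lie at $q^{-j}$ and $\nu^{-1}q^{-j}$ for $j\geq 0$, which are outside $\scontone$ by Condition~\ref{cond:1}). Only the partition $\lambda=(1^k)$ can contribute the residue with correct size, and after expanding $\leig\reig$ as a double sum over $S_k\times S_k$, the only surviving permutations after taking residues at $w_i=1$ are those for which $\sigma_\ell=\sigma_r$; the resulting $k!$ identical residues combine with the $1/k!$ coming from $m_1!$ in $\planmm{(1^k)}$, and a direct computation using $\mqv{\vec y}$ gives $1$.

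\textbf{Step 3 (The off-diagonal case $\vec x\neq\vec y$).} Here the strategy is to rewrite the sum-over-partitions integral as a single $k$-fold contour integral on large contours where the interaction poles $w_A = q^{\pm 1}w_B$ have been resolved. Concretely, by expanding the cycle structure of $\lambda$ and interpreting $\vec w\circ\lambda$ as a $q$-string, one can rewrite
\begin{equation*}
\sum_{\lambda\vdash k}\oint_{\scontone^{\ell(\lambda)}}\planmm{\lambda}\,(\cdots)\;=\;\oint_{\mcont_k}\!\!\cdots\oint_{\mcont_1}\!(\text{integrand})\,\prod_i\frac{dw_i}{2\pi\im}
\end{equation*}
on a family of nested contours $\mcont_1,\dots,\mcont_k$ with $\mcont_j$ enclosing $q\mcont_{j+1}$ and the pole at $1$. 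This is the standard Hahn/Macdonald-type residue rearrangement; the partition sum in \eqref{eq:plancherel} is precisely the bookkeeping of the residues picked up while collapsing the nested contours $\mcont_j$ down to $\scontone$. On the nested contours one can use the symmetrization of $\leig(\vec x)\reig(\vec y)$ and a telescoping/skew-symmetry argument (an instance of the ``Hall--Littlewood Cauchy identity'' type collapse of the Bethe sum) to show the integrand equals the ratio $\prod_i \big((1-w_i)/(1-\nu w_i)\big)^{x_i-y_{k+1-i}}$ times a symmetric rational factor; deforming each $w_i$ contour to $\infty$ (possible since $|(1-w)/(1-\nu w)|\to \nu^{-1}$) one checks the integrand decays when $\vec x\neq\vec y$, yielding $0$.

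\textbf{Main obstacle.} The genuinely hard step is Step 3: turning the partition sum into nested large contours and then performing the symmetric collapse that reduces the $k!\times k!$ expansion of $\leig\reig$ into something whose $w$-integrals can be evaluated. The combinatorial identity underlying this collapse is essentially a $q$-deformed version of the Cauchy determinant/Hall--Littlewood identity, and its careful justification (together with tracking the exchange factors $(w_A-qw_B)/(w_A-w_B)$ and the weights $\mqv{\vec y}$) is where all the $q$-binomial bookkeeping of $\planmm{\lambda}$ must be shown to exactly match the multiplicities of poles picked up during contour deformation. Once this combinatorial identity is in hand, the vanishing at $\vec x\neq\vec y$ and the normalization at $\vec x=\vec y$ follow from the residue calculation sketched in Steps 2--3.
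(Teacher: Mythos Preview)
The paper does not prove this lemma: it is quoted verbatim as Corollary~3.13 of \cite{BCPS15} and used as a black box (the paper only invokes the $k=2$ case in the proof of Theorem~\ref{thm:intformulap}, and its sole additional work is the analytic continuation from $0<q,\nu<1$ to $q>1,\ \nu=q^{-I}$). So there is no ``paper's own proof'' to compare your proposal against.

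As for the proposal itself, the broad architecture --- recast \eqref{eq:plancherel} as an inversion identity, evaluate the diagonal by residue at $w_i=1$, and obtain the off-diagonal vanishing by reassembling the partition sum as the residue expansion of a single nested-contour integral --- is indeed the shape of the argument in \cite{BCPS15}. A few points would need to be corrected or tightened before this becomes a proof. First, your appeal to Condition~\ref{cond:1} in Step~2 is misplaced: the lemma is stated for $0<q,\nu<1$, and it is this hypothesis (not $q>1$) that pushes the poles $q^{-j}$ and $\nu^{-1}q^{-j}$ outside the tiny contour $\gamma$. Second, the assertion in Step~2 that only $\lambda=(1^k)$ contributes and that only the diagonal permutations $\sigma_\ell=\sigma_r$ survive is too quick when $\vec y$ has clusters; the weight $\mqv{\vec y}$ is there precisely to absorb the contributions from repeated coordinates, and the actual residue computation has to track this. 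Third, in Step~3 the claim that one can deform all $w_i$ to $\infty$ and get decay is not correct as stated: the factor $\big((1-w_i)/(1-\nu w_i)\big)^{x_i-y_{k+1-i}}$ tends to $\nu^{-(x_i-y_{k+1-i})}$, a nonzero constant, so vanishing at $\vec x\neq\vec y$ comes not from decay at infinity but from the absence of poles after an appropriate contour shift (or equivalently from a sign/ordering argument on the exponents). You correctly identify the main obstacle --- matching the $\planmm{\lambda}$ bookkeeping to the residues picked up in the nested-contour expansion --- and that is exactly the technical core carried out in \cite{BCPS15}.
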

We are in a position to present our formula.
\begin{theorem}\label{thm:intformulap}
Assume $I \geq 2$, for any $x_1 \leq x_2 \in \ZZ$ and $y_1 \leq y_2 \in \ZZ$, the two point transition probability of reversed \ac{SHS6V} model admits the following integral formula
\begin{align*}
&\rhzr \big((x_1, x_2), (y_1, y_2), t, s\big)\\  
&= c(y_1, y_2) \bigg[ \oint_{\lc} \oint_{\lc} \prod_{i=1}^2 \coret(z_i)^{\floor{\frac{t-s}{J}}} \remt(z_i, t, s) z_i^{x_i - y_i } \frac{dz_i}{2\pi \im z_i} -    \oint_{\lc} \oint_{\lc} \interactt(z_1, z_2) \prod_{i=1}^2\coret(z_i)^{\floor{\frac{t-s}{J}}} \remt(z_i, t, s) z_i^{x_{3-i} - y_i } \frac{dz_i}{2\pi \im z_i}\\
\numberthis \label{eq:intformulap}
&+ \res_{z_1 = \polt(z_2)} \oint_{\lc} \oint_{\lc} \interactt(z_1, z_2) \prod_{i=1}^2 \coret(z_i)^{\floor{\frac{t-s}{J}}} \remt(z_i, t, s) z_i^{x_{3-i} - y_i } \frac{dz_i}{2\pi \im z_i}\bigg],
\end{align*}
where $\lc$ is a circle centered at zero with large enough radius $R$ so as to include all the poles of all the integrands. In addition,
\begin{align*}
\numberthis \label{eq:cy1}
c(y_1, y_2) &:= \idc_{\{y_1 < y_2\}} + \frac{1 - q\nu}{(1+q)(1-\nu)} \idc_{\{y_1 = y_2\}},\\
\coret(z) &:= \frac{(1 + \alpha q^J) z - (\nu + \alpha q^J)}{(1 + \alpha) z - (\nu + \alpha) },\\
%\coret(z, t, s) &:= \prod_{k=s}^{t-1} \frac{(1 + \alpha(k) q)z - (\nu + \alpha(k) q) }{(1 + \alpha(k)) z - (\nu + \alpha(k)) },\\
\remt(z, t, s) &:= \prod_{k= s +J \floor{\frac{t-s}{J}}}^{t-1} \frac{(1 + \alpha(k) q)z - (\nu + \alpha(k) q) }{(1 + \alpha(k)) z - (\nu + \alpha(k)) },\\
\interactt(z_1 , z_2) &:= \frac{q\nu - \nu + (\nu-q) z_2 + (1-q\nu)z_1 + (q-1)z_1 z_2}{q\nu - \nu + (\nu-q) z_1 + (1-q\nu)z_2 + (q-1)z_1 z_2},\\
%\frac{(1-q)^2 \mqv{y}}{(1-\nu)^2},
\polt (z) &:= \frac{(1-q \nu) z - \nu (1-q)}{(q-\nu)  + (1-q)  z}.
\end{align*}
Note that $z_1 = \polt(z_2)$ corresponds to the pole produced by the denominator of $\interactt(z_1, z_2)$ and 
$$\res_{z_1 = \polt(z_2)} \oint_{\lc} \oint_{\lc} \interactt(z_1, z_2) \prod_{i=1}^2 \coret(z_i, t, s) z_i^{x_{3-i} - y_i } \frac{dz_i}{2\pi \im z_i}$$ denotes the residue of the double contour integral above at the pole $z_1 = \polt(z_2)$.
\end{theorem}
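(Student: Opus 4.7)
My plan is to derive the formula via the Bethe ansatz / generalized Fourier approach as in \cite[Theorem 3.6]{BCG16}, specialized to $k=2$ particles. The starting point is the Plancherel identity of Lemma \ref{lem:plancherel}, which at $k=2$ represents $\idc_{\{\vec{x}=\vec{y}\}}$ as a sum of two contour integrals indexed by the partitions $\lambda=(1,1)$ and $\lambda=(2)$. Combined with the eigenfunction property of Lemma \ref{lem:app:eigenfunc}, iterating the one-step evolution $\semigr_{\alpha(k)}$ for $k=s,s+1,\dots,t-1$ upgrades the Plancherel representation to a spectral expansion of $\rhzr\bigl((x_1,x_2),(y_1,y_2),t,s\bigr)$, with the eigenvalue product $\prod_i \prod_{k=s}^{t-1}\frac{1+\alpha(k)qw_i}{1+\alpha(k)w_i}$ inserted into the integrand. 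Because $\alpha(k)=\alpha q^{\mod(k)}$ is $J$-periodic, grouping $k=s,\dots,t-1$ into $\lfloor(t-s)/J\rfloor$ full $J$-cycles (each telescoping to $\frac{1+\alpha q^J w}{1+\alpha w}$) plus a remainder block is exactly what produces the factorization underlying $\coret^{\lfloor(t-s)/J\rfloor}\remt$ once the spectral variables are transformed.

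The next step is the change of variables $z=\frac{1-\nu w}{1-w}$, chosen so that the Bethe ansatz base $\bigl(\frac{1-w}{1-\nu w}\bigr)^{-y}$ becomes $z^{y}$; under this map the small contour $\scontone$ around $w=1$ transforms, after orientation reversal, into the large counterclockwise contour $\lc$ around the origin, and $\frac{1+\alpha q^J w}{1+\alpha w}$ and $\frac{1+\alpha(k)qw}{1+\alpha(k)w}$ become exactly $\coret(z)$ and the individual factors of $\remt(z,t,s)$. The generic partition $\lambda=(1,1)$ contributes a genuine double contour integral in independent parameters $(z_1,z_2)$, whereas the coincident partition $\lambda=(2)$, with $\vec{w}\circ\lambda=(w,qw)$, reduces to a single contour integral in $w$. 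Expanding $\leig(\vec{x})\reig(\vec{y})$ in the four $S_2\times S_2$ permutation products, the two diagonal (identity-identity and swap-swap) contributions coincide after relabeling $w_1\leftrightarrow w_2$ and assemble the first double contour integral of \eqref{eq:intformulap}, while the two off-diagonal (cross) contributions, weighted by the $S$-matrix ratios $\frac{w_{\sigma(A)}-qw_{\sigma(B)}}{w_{\sigma(A)}-w_{\sigma(B)}}$ and $\frac{w_{\sigma(A)}-q^{-1}w_{\sigma(B)}}{w_{\sigma(A)}-w_{\sigma(B)}}$, collapse under the change of variables into exactly $-\interactt(z_1,z_2)$ times $z_1^{x_2-y_1} z_2^{x_1-y_2}$. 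The $\vec{y}$-dependent combinatorial factor $\mqv(\vec{y})$ from $\reig$ equals $\bigl(\frac{1-\nu}{1-q}\bigr)^2$ on $\{y_1<y_2\}$ and $\frac{(1-\nu)(1-\nu q)}{(1-q)(1-q^2)}$ on $\{y_1=y_2\}$; their ratio is precisely $\frac{1-q\nu}{(1+q)(1-\nu)}$, which is absorbed into the prefactor $c(y_1,y_2)$.

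The identification of the $\lambda=(2)$ contribution with $\res_{z_1=\polt(z_2)}$ of the swap double integrand is the key algebraic bookkeeping. Setting $w_1=qw_2$ and applying $z_i=\frac{1-\nu w_i}{1-w_i}$ yields $z_1=\frac{(1-q\nu)z_2-\nu(1-q)}{(q-\nu)+(1-q)z_2}=\polt(z_2)$, which is precisely the zero of the denominator of $\interactt(z_1,z_2)$; a direct residue computation then matches the fused single-contour integrand with the residue of the swap double integrand at this pole, producing the third term in \eqref{eq:intformulap}. The main obstacle that I anticipate is the analytic continuation of Lemma \ref{lem:plancherel} from its stated regime $0<q,\nu<1$ to our regime $q>1$, $\nu=q^{-I}$. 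As noted in the footnote and \cite{CP19}, the general-$k$ analytic continuation encounters uncancelled residues from poles of $(w_i,q)_{\lambda_i}(\nu w_i,q)_{\lambda_i}$ that collide as parameters vary, so the naive continuation fails. For $k=2$ and $I\geq 2$, however, the relevant singular loci $w=q^{-j}$ and $w=q^{I-j}$ remain strictly separated, and the key technical task is to deform the $w$-contours of Lemma \ref{lem:plancherel} step by step, track all residues crossed during the deformation from $\scontone$ out to $\lc$, and verify that the condition $I\geq 2$ is exactly what is needed to ensure these residues either cancel between the $\lambda=(1,1)$ and $\lambda=(2)$ sectors or are accounted for by the residue at $z_1=\polt(z_2)$ already appearing in \eqref{eq:intformulap}.
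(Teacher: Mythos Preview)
Your overall architecture matches the paper's proof exactly: Plancherel identity at $k=2$, action by $\semigr_{\alpha(s)}\cdots\semigr_{\alpha(t-1)}$ via Lemma~\ref{lem:app:eigenfunc}, telescoping the $J$-periodic eigenvalue product, identifying the $\lambda=(2)$ term as a residue of the swap integrand, and the M\"obius change $z=\frac{1-\nu w}{1-w}$.

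The one place where you overshoot is the analytic continuation. You anticipate having to deform $\scontone$ out to $\lc$ while tracking residue crossings and looking for cancellations between the $\lambda=(1,1)$ and $\lambda=(2)$ sectors. The paper does something much simpler, and your plan slightly misidentifies where $I\geq 2$ enters. The analytic continuation in $(q,\nu)$ is performed \emph{before} any contour is moved, with the small circle $\scontone$ around $w=1$ held fixed. The only danger is that a pole of the integrand might run into $\scontone$ as $\nu\to q^{-I}$; the relevant pole comes from $1/(\nu w;q)_2$ at $w=\nu^{-1}q^{-1}=q^{I-1}$. For $I\geq 2$ this sits at $q^{I-1}\geq q>1$, safely outside $\scontone$, so the identity \eqref{eq:plidentityk2} survives verbatim (cf.\ Remark~\ref{rmk:ana} for why $I=1$ fails). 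No residue bookkeeping or inter-sector cancellation is needed.

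Separately, the passage from $\scontone$ to $\lc$ is not a contour deformation in the residue-crossing sense at all: $w\mapsto z=\frac{1-\nu w}{1-w}$ is a M\"obius map sending $w=1$ to $z=\infty$, so the small circle $\scontone$ is carried conformally to a large circle $\lc$. The residue term in \eqref{eq:intformulap} is simply $I_2$ (the $\lambda=(2)$ piece) after this change of variables; the sign flip relative to the $w$-picture occurs because the pole locus $\{w_1=qw_2\}$ sits outside $\scontone$ in $w_2$, while its image $\{z_1=\polt(z_2)\}$ sits inside $\lc$ in $z_2$.
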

\begin{proof}[Proof of Theorem \ref{thm:intformulap}]
The first step to prove Theorem \ref{thm:intformulap} is utilizing the bi-orthogonality of the Bethe ansatz function.
Taking $k=2$ in the previous lemma, since the possible partition is either $\lambda = (1, 1)$ or $\lambda = (2)$, we obtain 
\begin{align*}
\idc_{\left\{(x_1, x_2) = (y_1, y_2)\right\}} =& \oint_{\scontone}  \oint_{\scontone} \planmm{(1,1)}(w_1, w_2) \prod_{i=1}^{2} \frac{1}{(1-w_i) (1-\nu w_i)} \Psi_{(w_1, w_2)}^\ell (x_1, x_2) \Psi_{(w_1, w_2)}^r (y_1, y_2) \\  \numberthis \label{eq:plidentityk2}
&+\oint_{\scontone}  \planmm{(2)}(w) \frac{1}{(w, q)_{2} (\nu w, q)_{2}} \Psi_{(w, q w)}^\ell (x_1, x_2) \Psi_{(w, q w)}^r (y_1, y_2).  
\end{align*}
%The notation in the above integrand is explained in Definition \ref{def:app:betheansatz} and Lemma \ref{lem:plancherel}. 
Note that according to the previous lemma, \eqref{eq:plidentityk2} holds only for $0 < q, \nu <1$, we want to extend this identity to $q > 1$ and $\nu = q^{-I}$. This extension can be justified by analytic continuation. Note that  the RHS of \eqref{eq:plidentityk2} is an analytic function of $q, \nu$ in a suitable domain which connects $\{(q, \nu): (q, \nu) \in (0, 1)^2\}$ and $\{(q, \nu): q>1, \nu = q^{-I}\}$. The reason behind is that after plugging in $\nu = q^{-I}$, there is no new pole of integrand generated inside $\gamma$ (Here we use the assumption $I \geq 2$, this analytic continuation argument is not valid when $I=1$, see Remark \ref{rmk:ana}).
%We remark further things are different if $I = 1$, in which case after plugging in $\nu = q^{-1}$, the factor $\frac{1}{1 - q \nu z} = \frac{1}{1 - z}$ of the second integrand generates a pole at $1$, which is inside $\gamma$. Thus our procedure for $I \geq 2$ does not work for $I=1$.\footnote{However, by (?) , it is not hard to see that (?) also holds for $I =1$.}
%
%contour $\gamma$ is small circle surrounding $1$, the analytic continuation is valid since we can analytic continue the identity in a manner that no poles of the integrand in (?) cross the contour $\gamma$.
\bigskip
\\
Let us now fix $y_1 \leq y_2 \in \ZZ$ on both side of \eqref{eq:plidentityk2} and treat both sides as functions of $(x_1, x_2)$. We denote  by the operator $$\semigr_\alpha (s, t):= \semigr_\alpha (s) \cdots  \semigr_\alpha (t-1).$$ Applying the operator $\semigr_\alpha (s, t)$ on both side of \eqref{eq:plidentityk2}. For the LHS, it is clear that  $$\big(\semigr_\alpha (s, t) 1_{\{\cdot = (y_1, y_2)\}}\big)(x_1, x_2) = \rhzr\big((x_1, x_2), (y_1, y_2), t, s\big).$$ 
For the RHS, we move $\semigr_\alpha (s, t)$ inside the integrand, which yields
\begin{align*}
\rhzr\big((x_1, x_2), (y_1, y_2), t, s\big) = &\oint_{\scontone} \oint_{\scontone} \planmm{(1,1)}(w_1, w_2) \prod_{i=1}^{2} \frac{1}{(1-w_i) (1-\nu w_i)} \big(\semigr_\alpha (s, t) \Psi_{(w_1, w_2)}^\ell\big) (x_1, x_2) \Psi_{(w_1, w_2)}^r (y_1, y_2) \\ 
\numberthis \label{eq:intp}
&+\oint_{\scontone} \planmm{(2)}(w) \frac{1}{(w, q)_{2} (\nu w, q)_{2}} \big(\big(\semigr_\alpha (s, t) \Psi_{(w, qw)}^\ell\big) (x_1, x_2) \Psi_{(w, qw)}^r (y_1, y_2).
\end{align*}
%\footnote{Note that this analytic continuation does not work for general $k$ and $I$. For example, it does not work for any $k =3 $ and $I=1$ (see Remark ? of [BCPS] for more discussion).}
%For $k=2$, the partition in the sum of (?) has only two chose, either $\lambda' = (1, 1)$ or $\lambda'' = (2, 0)$. Consequently, we have 
%\begin{align*}
%\oint_{\scontone}  \oint_{\scontone} \planmm{\lambda'}(\vec{z}) \prod_{i=1}^{2} \frac{1}{(1-z_j) (1-\nu z_j)} \Psi_{\vec{z} \circ \lambda'}^\ell (\vec{n}) \Psi_{\vec{z} \circ \lambda'}^r (\vec{m}) + \oint_{\scontone}   \planmm{\lambda''}(\vec{z}) \frac{1}{(1-z_j) (1-\nu z_j)} =\idc_{\left\{\vec{n} = \vec{m}\right\}}
%\end{align*}
Due to Lemma \ref{lem:app:eigenfunc} (note that $\gamma$ is a small circle around $1$, hence $w_1, w_2$ satisfy the condition of Lemma \ref{lem:app:eigenfunc}),
\begin{align*}
\big(\semigr_\alpha (s, t) \Psi_{(w_1, w_2)}^\ell\big)(x_1, x_2) &= \prod_{i=1}^2 \bigg(\prod_{k=s}^{t-1} \frac{1 + \alpha(k) q w_i}{1 + \alpha(k) w_i}\bigg) \Psi_{(w_1, w_2)}^\ell (x_1, x_2),\\
\big(\semigr_\alpha (s, t) \Psi_{(w, qw)}^\ell\big)(x_1, x_2) &=  \prod_{k=s}^{t-1} \bigg(\frac{1 + \alpha(k) q w}{1 + \alpha(k) w} \cdot \frac{1 + \alpha(k) q^2 w}{1 + \alpha(k) q w}\bigg) \Psi_{(w_1, w_2)}^\ell (x_1, x_2),\\
& = \prod_{k=s}^{t-1} \bigg(\frac{1 + \alpha(k) q^2 w}{1 + \alpha(k) w}\bigg) \Psi_{(w_1, w_2)}^\ell (x_1, x_2).
\end{align*}
We name the first term on the RHS of \eqref{eq:intp} $I_1$ and the second term $I_2$,
\begin{align*}
\numberthis \label{eq:i1}
I_1 &= \oint_{\scontone} \oint_{\scontone} \planmm{(1,1)}(w_1, w_2) \prod_{i=1}^{2} \frac{1}{(1-w_i) (1-\nu w_i)}   \bigg(\prod_{k=s}^{t-1} \frac{1 + \alpha(k) q w_i}{1 + \alpha(k) w_i}\bigg)  \Psi_{(w_1, w_2)}^\ell (x_1, x_2) \Psi_{(w_1, w_2)}^r (y_1, y_2),\\
\numberthis \label{eq:i2}
I_2 &= \oint_{\scontone} \planmm{(2)}(w) \frac{1}{(w, q)_{2} (\nu w, q)_{2}} \prod_{k = s}^{t-1} \bigg(\frac{1 + \alpha(k) q^2 w}{1 + \alpha(k) w}\bigg)  \Psi_{(w, qw)}^\ell (x_1, x_2) \Psi_{(w, qw)}^r (y_1, y_2).
\end{align*}
We compute $I_1$ in the first place. In the integrand of \eqref{eq:i1}, the function $\Psi_{(w_1, w_2)}^\ell (x_1, x_2)$
%$\Psi_{\vec{z} \circ (1,1)}^\ell = \sum_{\sigma \in S_2} \prod_{1 \leq B < A \leq k} \frac{z_{\sigma(A)} - q z_{\sigma(B)}}{z_{\sigma(A)} - z_{\sigma(B)}} \prod_{i=1}^2 \left(\frac{1-z_{\sigma(j)}}{1- \nu z_{\sigma(j)}}\right)^{-x_{3-j}}$ 
is a symmetrization of $$\frac{w_2 - q w_1}{w_2 - w_1} \prod_{i=1}^2 \bigg(\frac{1 - w_i}{1 - \nu w_i}\bigg)^{-x_{3-i}}$$ 
%over $S_2$ (see Definition \ref{def:app:betheansatz}). 
Furthermore, all other terms of the integrand \eqref{eq:i1} are symmetric function of $w_1, w_2$. In addition,  we are integrating $w_1, w_2$ along the same contour, this allows us to desymmetrize the integrand 
\begin{align}\label{eq:i11}
I_1 = 2 \oint_{\scontone} \oint_{\scontone} \planmm{(1,1)}(w_1, w_2) \prod_{i=1}^{2} \bigg(\frac{1}{(1-w_i) (1-\nu w_i)}   \prod_{k=s}^{t-1} \frac{1 + \alpha(k) q w_i}{1 + \alpha(k) w_i}\bigg)  \frac{w_2 - q w_1}{w_2 - w_1} \prod_{i=1}^2 \bigg(\frac{1 - w_i}{1 - \nu w_i}\bigg)^{-x_{3-i}} \Psi_{(w_1, w_2)}^r (y_1, y_2).
\end{align}
We readily calculate
\begin{align*}\numberthis \label{eq:i1a}
dm_{(1, 1)}^q (w_1, w_2) &=\frac{ (1 - q)^2 q^{-1}}{2} \text{det} \left[\frac{1}{w_i q - w_j}\right]_{i,j=1}^{2} \prod_{i=1}^2  \frac{w_i dw_i}{2\pi \im} = \frac{(w_1 - w_2)^2}{2(w_2 - q w_1) (q w_2  - w_1)} \prod_{i=1}^2 \frac{d w_i}{2 \pi \im}\\
\Psi_{\vec{w}}^r (y_1, y_2) &= q (1-q)^2 \mqv{y} \sum_{\sigma \in S_2}  \prod_{1 \leq B < A \leq 2} \frac{w_{\sigma(A)} - q^{-1} w_{\sigma(B)}}{w_{\sigma(A)} - w_{\sigma(B)}} \prod_{i=1}^2 \bigg(\frac{1-w_{\sigma(i)}}{1- \nu w_{\sigma(i)}}\bigg)^{y_{3-i}}\\
\numberthis  \label{eq:i1b}
&= (1-q)^2 \mqv{y} \bigg( \frac{q w_2 -  w_1}{w_2 - w_1} \prod_{i=1}^2  \bigg(\frac{1-w_{i}}{1- \nu w_{i}}\bigg)^{y_{3-i}} + \frac{q w_1 - w_2}{w_1 - w_2} \prod_{i=1}^2 \bigg(\frac{1 -  w_i}{1 - \nu w_i}\bigg)^{y_i} \bigg)  
\end{align*}
Replacing the terms $dm_{(1, 1)}^q (w_1, w_2)$ and $\Psi_{\vec{w}}^r (y_1, y_2)$ in the integrand of \eqref{eq:i11} by the RHS of \eqref{eq:i1a} and \eqref{eq:i1b}, one sees that 
\begin{align*}
I_1 = &(1-q)^2 \mqv{y_1, y_2} \bigg[  \oint_{\scontone} \oint_{\scontone} \prod_{i=1}^2 \frac{1}{(1-w_i) (1-\nu w_i)}   \bigg(\prod_{k=s}^{t-1} \frac{1 + \alpha(k) q w_i}{1 + \alpha(k) w_i}\bigg) \bigg(\frac{1-w_i}{1 - \nu w_i}\bigg)^{y_{3- i} - x_{3-i}} \frac{dw_i}{2 \pi \im } \\ 
&-\oint_{\scontone}  \oint_{\scontone} \frac{q w_1 - w_2}{q w_2 - w_1} \prod_{i=1}^2 \frac{1}{(1-w_i) (1-\nu w_i)}   \bigg(\prod_{k=s}^{t-1} \frac{1 + \alpha(k) q w_i}{1 + \alpha(k) w_i}\bigg)\bigg(\frac{1-w_i}{1 - \nu w_i}\bigg)^{y_{i} - x_{3-i}} \frac{dw_i}{2 \pi \im }\bigg],\\
= &(1-q)^2 \mqv{y_1, y_2} \bigg[  \oint_{\scontone} \oint_{\scontone} \prod_{i=1}^2 \frac{1}{(1-w_i) (1-\nu w_i)}   \bigg(\prod_{k=s}^{t-1} \frac{1 + \alpha(k) q w_i}{1 + \alpha(k) w_i}\bigg) \bigg(\frac{1-w_i}{1 - \nu w_i}\bigg)^{y_{i} - x_{i}} \frac{dw_i}{2 \pi \im }\\ 
\numberthis \label{eq:eight:temp1}
&-\oint_{\scontone}  \oint_{\scontone} \frac{q w_1 - w_2}{q w_2 - w_1} \prod_{i=1}^2 \frac{1}{(1- w_i) (1 - \nu w_i)} \bigg(\prod_{k=s}^{t-1} \frac{1 + \alpha(k) q w_i}{1 + \alpha(k) w_i}\bigg) \bigg(\frac{1-w_i}{1 - \nu w_i}\bigg)^{y_{i} - x_{3-i}} \frac{dw_i}{2 \pi \im }\bigg].
\end{align*}
%where for the last equality we swapped the role of $w_1, w_2$ in the first term, as they are integrated along the same contour $\gamma$.
%\bigskip
%\\
For the second equality above, we changed $\big(\frac{1 - w_i}{1 - \nu w_i}\big)^{y_{3-i} - x_{3-i}}$ to $\big(\frac{1 - w_i}{1 - \nu w_i}\big)^{y_{i} - x_{i}}$, due to the symmetry of $w_1, w_2$.  
\bigskip
\\
We proceed to compute $I_2$, by a straightforward calculation
\begin{align*}
&m_{(2)}^q (w) = \frac{(q-1)w}{q+1} \frac{dw}{2\pi i}, \qquad
\Psi_{w, qw}^\ell (x_1, x_2) = (1+q)  \left(\frac{1 - w}{1 - \nu w}\right)^{-x_1}  \left(\frac{1 - q w}{1 - \nu q w}\right)^{-x_2},  \\
&\Psi_{w, qw}^r (y_1, y_2) = (1-q)^2 \mqv{y} (1+q)    \left(\frac{1 - w}{1 - \nu w}\right)^{y_2} \left(\frac{1 - q w}{1 - q \nu w}\right)^{y_1}.
\end{align*}
Inserting these expressions into the integrand of \eqref{eq:i2} gives 
\begin{equation*}%\label{eq:eight:temp4}
I_2 = (1-q)^2 \mqv{y_1, y_2}   \oint_{\scontone} \frac{(q^2-1)w}{(w, q)_{2} (\nu w, q)_{2}} \prod_{k = s}^{t-1} \bigg(\frac{1 + \alpha(k) q^2 w}{1 + \alpha(k) w}\bigg) \bigg(\frac{1 - w}{1 - \nu w}\bigg)^{y_2 - x_1} \bigg(\frac{1 - q w}{1 - q\nu w}\bigg)^{y_1 - x_2} \frac{dw}{2 \pi \im }.
\end{equation*}
A crucial observation is that one can verify directly
\begin{equation}\label{eq:eight:temp2}
I_2  = - (1-q)^2 \mqv{y_1, y_2} \res_{w_1 = q w_2} \oint_{\scontone}  \oint_{\scontone} \frac{q w_1 - w_2}{q w_2 - w_1} \prod_{i=1}^2 \frac{1}{(1- w_i) (1 - \nu w_i)} \bigg(\prod_{k=s}^{t-1} \frac{1 + \alpha(k) q w_i}{1 + \alpha(k) w_i}\bigg) \bigg(\frac{1-w_i}{1 - \nu w_i}\bigg)^{y_{i} - x_{3-i}} \frac{dw_i}{2 \pi \im },
\end{equation}
Note that $\rhzr\big((x_1, x_2), (y_1, y_2), t, s\big) = I_1 + I_2$, using \eqref{eq:eight:temp1} and \eqref{eq:eight:temp2} one has
\begin{align*}
&\rhzr\big((x_1, x_2), (y_1, y_2), t, s\big) \\ 
&=(1-q)^2 \mqv{y_1, y_2} \bigg[  \oint_{\scontone} \oint_{\scontone} \prod_{i=1}^2 \frac{1}{(1-w_i) (1-\nu w_i)}   \bigg(\prod_{k=s}^{t-1} \frac{1 + \alpha(k) q w_i}{1 + \alpha(k) w_i}\bigg) \bigg(\frac{1-w_i}{1 - \nu w_i}\bigg)^{y_{i} - x_{i}} \frac{dw_i}{2 \pi \im }\\ 
&\quad -\oint_{\scontone}  \oint_{\scontone} \frac{q w_1 - w_2}{q w_2 - w_1} \prod_{i=1}^2 \frac{1}{(1- w_i) (1 - \nu w_i)} \bigg(\prod_{k=s}^{t-1} \frac{1 + \alpha(k) q w_i}{1 + \alpha(k) w_i}\bigg) \bigg(\frac{1-w_i}{1 - \nu w_i}\bigg)^{y_{i} - x_{3-i}} \frac{dw_i}{2 \pi \im } \\ 
&\quad -\res_{w_1 = q w_2} \oint_{\scontone}  \oint_{\scontone} \frac{q w_1 - w_2}{q w_2 - w_1} \prod_{i=1}^2 \frac{1}{(1- w_i) (1 - \nu w_i)} \bigg(\prod_{k=s}^{t-1} \frac{1 + \alpha(k) q w_i}{1 + \alpha(k) w_i}\bigg) \bigg(\frac{1-w_i}{1 - \nu w_i}\bigg)^{y_{i} - x_{3-i}} \frac{dw_i}{2 \pi \im } \bigg].
\end{align*}
Recall that $\alpha(k) = \alpha q^{\mod(k)}$ for all $k$, we can simplify the telescoping product in the integrand via 
\begin{equation*}
\prod_{k=s}^{t-1} \frac{1 + \alpha(k) q w_i}{1 + \alpha(k) w_i} = \bigg(\frac{1 + \alpha q^J w_i}{1 + \alpha w_i}\bigg)^{\floor{\frac{t-s}{J}}}  \prod_{k = s + J \floor{\frac{t-s}{J}}}^{t-1} \frac{1 + \alpha(k) q w_i}{1 + \alpha(k) w_i}.
\end{equation*}
Furthermore, referring to the expression \eqref{eq:mqv} and \eqref{eq:cy1}, we notice that $(1-q)^2 \mqv{y_1, y_2} = c(y_1, y_2)$. Thereby, 
\begin{align*}
&\rhzr\big((x_1, x_2), (y_1, y_2), t, s\big) \\ 
&= c(y_1, y_2) \bigg[  \oint_{\scontone} \oint_{\scontone} \prod_{i=1}^2 \frac{1}{(1-w_i) (1-\nu w_i)}   \bigg(\frac{1 + \alpha q^J w_i}{1 + \alpha w_i}\bigg)^{\floor{\frac{t-s}{J}}}  \bigg(\prod_{k = s + J \floor{\frac{t-s}{J}}}^{t-1} \frac{1 + \alpha(k) q w_i}{1 + \alpha(k) w_i}\bigg) \bigg(\frac{1-w_i}{1 - \nu w_i}\bigg)^{y_{i} - x_{i}} \frac{dw_i}{2 \pi \im }\\ 
&\quad -\oint_{\scontone}  \oint_{\scontone} \frac{q w_1 - w_2}{q w_2 - w_1} \prod_{i=1}^2 \frac{1}{(1- w_i) (1 - \nu w_i)} \bigg(\frac{1 + \alpha q^J w_i}{1 + \alpha w_i}\bigg)^{\floor{\frac{t-s}{J}}}  \bigg(\prod_{k = s + J \floor{\frac{t-s}{J}}}^{t-1} \frac{1 + \alpha(k) q w_i}{1 + \alpha(k) w_i}\bigg) \bigg(\frac{1-w_i}{1 - \nu w_i}\bigg)^{y_{i} - x_{3-i}} \frac{dw_i}{2 \pi \im } \\ 
\numberthis \label{eq:eight:temp3}
&\quad -\res_{w_1 = q w_2} \oint_{\scontone}  \oint_{\scontone} \frac{q w_1 - w_2}{q w_2 - w_1} \prod_{i=1}^2 \frac{1}{(1- w_i) (1 - \nu w_i)} \bigg(\frac{1 + \alpha q^J w_i}{1 + \alpha w_i}\bigg)^{\floor{\frac{t-s}{J}}}  \bigg(\prod_{k = s + J \floor{\frac{t-s}{J}}}^{t-1} \frac{1 + \alpha(k) q w_i}{1 + \alpha(k) w_i}\bigg) \bigg(\frac{1-w_i}{1 - \nu w_i}\bigg)^{y_{i} - x_{3-i}} \frac{dw_i}{2 \pi \im } \bigg].
\end{align*}
Lastly, we transform the small circle $\gamma$ surrounding $1$  into the big circle $\lc$ via a change of variable 
\[w_i = \Xi(z_i) =  \frac{1 - z_i}{\nu - z_i} \quad (\text{equivalently } z_i = \frac{1 - \nu w_i}{1 - w_i}), \qquad i=1,2.\] 
By the following relations
\begin{align*}
&\frac{q \Xi(z_1)  - \Xi(z_2)}{q \Xi(z_2) - \Xi(z_1)} = \interactt(z_1, z_2), \hspace{10em} \frac{1 - \Xi(z_i)}{1 - \nu \Xi(z_i)} = z_i^{-1},
\\ 
&\frac{1 + \alpha q^J \Xi(z_i)}{1 + \alpha \Xi(z_i)} = \coret(z_i), 
\hspace{10em} \prod_{k=s + J \floor{\frac{t-s}{J}}}^{t-1} \frac{1 + \alpha(k) q  \Xi(z_i)}{1 + \alpha(k)  \Xi(z_i)} = \remt(z_i, t, s),
%\prod_{k=s}^{t-1} \frac{1 + \alpha(k) q - (\nu + \alpha(k) q) z_i^{-1}}{1 + \alpha(k) - (\nu + \alpha(k)) z_i^{-1}}.
%\frac{q\nu - \nu + (\nu-q) \xi_2 + (1-q\nu) \xi_1 + (q-1) \xi_1 \xi_2}{q\nu - \nu + (\nu-q) \xi_1 + (1-q\nu) \xi_2 + (q-1) \xi_1 \xi_2},
\\
&\frac{d \Xi (z_i)}{(1 - \Xi (z_i)) (1 - \nu \Xi (z_i))} = \frac{d z_i}{(1 - \nu) z_i},
\end{align*}
we obtain 
\begin{align*}
&\rhzr \big((x_1, x_2), (y_1, y_2), t, s\big)\\ &=  %\frac{(1 - q)^2 \mqv{y}}{(1 - \nu)^2}  
c(y_1, y_2) \bigg[ \oint_{\lc} \oint_{\lc} \prod_{i=1}^2 \coret(z_i)^{\floor{\frac{t-s}{J}}} \remt(z_i, t, s) z_i^{x_i - y_i } \frac{dz_i}{2\pi \im z_i} -  \oint_{\lc} \oint_{\lc} \interactt(z_1, z_2) \prod_{i=1}^2\coret(z_i)^{\floor{\frac{t-s}{J}}} \remt(z_i, t, s) z_i^{x_{3-i} - y_i } \frac{dz_i}{2\pi \im z_i}\\ 
\numberthis \label{eq:temp8}
&\quad + 
\res_{z_1 = \polt(z_2)} \oint_{\lc} \oint_{\lc} \interactt(z_1, z_2) \prod_{i=1}^2 \coret(z_i)^{\floor{\frac{t-s}{J}}} \remt(z_i, t, s) z_i^{x_{3-i} - y_i } \frac{dz_i}{2\pi \im z_i}\bigg].
\end{align*}
This concludes the proof of Theorem \ref{thm:intformulap}. Note that we change the sign in front of the residue from \eqref{eq:eight:temp3} to \eqref{eq:temp8}.
%is minus, while in above the sign is plus. 
This is due to the fact that, before employing the change of variable, the set of the poles $\{q w_1: w_1 \in \gamma\}$ lies outside the $w_2$-contour $\gamma$, while after the change of variable, the set of the pole $\{\polt(z_1): z_1 \in \C_R\}$ lies inside the $z_2$-contour $\C_R$, since $R$ is chosen to be sufficiently large.
\end{proof}
\begin{remark}\label{rmk:ana}
We remark that our argument in proving that  \eqref{eq:plidentityk2} holds for $q > 1$ and $\nu = q^{-I}$ does not work when $I= 1$. The reason is as follows: Note that the factor  $\frac{1}{(\nu z_1, q)_{2}}$
in the integrand of \eqref{eq:plidentityk2} gives a pole for the $z_1$-contour at  $z_1 = \nu^{-1} q$. Before the substitution of $\nu = q^{-1}$, this pole lies outside the contour $\gamma$. Yet after substituting $\nu = q^{-1}$, the pole becomes $z_1 = 1$, which runs inside the contour $\gamma$, hence the argument of analytic continuation fails. This issue is also addressed in \cite{BCPS19}, when the authors tried to reproduce the integral formula for the $k$ particle ASEP transition probability (which first appears in \cite[Theorem 2.1]{TW08}) via analytic continuation of \eqref{eq:plancherel}. For a similar reason, our method does not yield the general $k$ particle transition probability formula of the \ac{SHS6V} model.
\end{remark}
\section{Microscopic Hopf-Cole transform and SHE}\label{sec:microshe}
In this section, we first define the microscopic Hopf-Cole transform $Z(t, x)$, which is an exponential transform of the height function $N(t, x)$. Using $k =1$ version of duality of \eqref{eq:fextenddual1}, it turns out that $Z(t, x)$ satisfies a discrete version of \ac{SHE}. As the Hopf-Cole solution to the \ac{KPZ} equation is the logarithm of the mild solution of the \ac{SHE}, this reduces the proof of Theorem \ref{thm:main} to proving that $Z(t, x)$ converges to the solution of \ac{SHE}. We will derive two Markov dualities for $Z(t, x)$ in Lemma \ref{lem:dualt}, as a tilted version of \eqref{eq:fextenddual1}. This will be used in the proof of self-averaging property Proposition \ref{prop:timedecorr}. 
\subsection{Microscopic Hopf-Cole Transform}\label{sec:microhf}
We first study a one particle version of the unfused \ac{SHS6V} model location process (Definition \ref{def:unfusedreverse}). When there is only one particle,
it performs a random walk $X'(t) = \sum_{k=0}^{t-1} R'(k)$ where $R'(k)$ are independent (but not same distributed)  $\ZZ_{\geq 0}$-valued random variables with distribution 
\begin{equation*}\label{eq:originrw}
\PP\big(R'(k) =  n\big) = 
\begin{cases}
\frac{1 + q \alphat{\alpha}{k}}{1 + \alphat{\alpha}{k}} \qquad &\text{if } n=0;\\
\frac{\alpha(k)(1-q)}{1 + \alphat{\alpha}{k}} \big(1- \frac{\nu + \alphat{\alpha}{k}}{1 + \alphat{\alpha}{k}}\big) \big(\frac{\nu +\alphat{\alpha}{k}}{1 + \alphat{\alpha}{k}}\big)^{n-1} \qquad &\text{if } n \in \ZZ  \\
0 \qquad &\text{else}.
\end{cases}
\end{equation*}
By tilting and centering $R'(k)$  with respect to $\EE\big[q^{\rho R'(k)} \idc_{\left\{R'(k) = \cdot \right\}}\big]$, we define a tilted random walk $X(t) = \sum_{k=0}^{t-1} R(k)$, where $R(k)$ are independent $\ZZ_{\geq 0} - \mut{\mu}{k}$ valued with distribution\footnote{The tilted and centered random walk $X(t)$ provides the heat kernel $\mathsf{p}(t+1, t)$ for the discrete SHE \eqref{eq:microshe} satisfied by the microscopic Hopf-Cole transform \eqref{eq:microshe1}, which is an exponential transform of the LHS of \eqref{convergence}.}
\begin{equation}\label{eq:rwdistribution}
\PP\big(R(k) = n -\mut{\mu}{k} \big) = 
\begin{cases}
\lambdat{\lambda}{k} \frac{1 + q \alphat{\alpha}{k}}{1 + \alphat{\alpha}{k}} \qquad &\text{if } n=0;\\
\lambdat{\lambda}{k} \frac{\alpha(k)(1-q)}{1 + \alphat{\alpha}{k}} \big(1- \frac{\nu + \alphat{\alpha}{k}}{1 + \alphat{\alpha}{k}}\big) \big(\frac{\nu +\alphat{\alpha}{k}}{1 + \alphat{\alpha}{k}}\big)^{n-1}  q^{\rho n} \qquad &\text{if } n \in \NN \\
0 & \qquad \text{else}.
\end{cases}
\end{equation}
Here, $\lambdat{\lambda}{k} = \big(\EE\big[q^{\rho R(k)}\big]\big)^{-1}$ is the normalizing parameter and $\mut{\mu}{k}$ is the centering parameter which makes $\EE\big[R(k)] = 0$. Under straightforward calculation, we see that
\begin{align*}
\numberthis
\label{eq:lamexpan}
\lambdat{\lambda}{k} &= \frac{1 + \alphat{\alpha}{k}  - q^\rho (\alphat{\alpha}{k} + \nu)}{1 + a(k) q  - q^\rho (\alphat{\alpha}{k} q + \nu)}, \\
\numberthis \label{eq:muxpan}
\mut{\mu}{k} &= \frac{\alphat{\alpha}{k} (1 - q) (1 - \nu) q^\rho }{(1 + \alphat{\alpha}{k} q - q^\rho (\alphat{\alpha}{k} q + \nu)) (1 + \alphat{\alpha}{k} - q^\rho (\alphat{\alpha}{k} + \nu))}.
\end{align*}
We remark that $\lambda(k)$ (respectively $\mu(k)$) are $J$ periodic in the sense that $\lambda(k) = \lambda(J+k)$ (respectively $\mu(k) = \mu(J+k)$).
Denote by 
%\begin{align*}
%&\sigma(t) = \var\big[R(t)\big] = \EE\big[(R(t) + \mu(t))^2\big] - \mu(t)^2 \\
%&=\lambda(t) \sum_{n=1}^{\infty} \frac{\alpha(t)(1-q)}{1 + \alphat{\alpha}{t}} \big(1- \frac{\nu + \alphat{\alpha}{t}}{1 + \alphat{\alpha}{t}}\big) \big(\frac{\nu +\alphat{\alpha}{t}}{1 + \alphat{\alpha}{t}}\big)^{n-1}  q^{\rho n} n^2 - \bigg(\lambda(t) \sum_{n=1}^{\infty} \frac{\alpha(t)(1-q)}{1 + \alphat{\alpha}{t}} \big(1- \frac{\nu + \alphat{\alpha}{t}}{1 + \alphat{\alpha}{t}}\big) \big(\frac{\nu +\alphat{\alpha}{t}}{1 + \alphat{\alpha}{t}}\big)^{n-1}  q^{\rho n} n \bigg)^2\\
%&= \lambda(t) q^{\rho} \frac{\alpha(t)(1-q)}{1 + \alphat{\alpha}{t}} \big(1- \frac{\nu + \alphat{\alpha}{t}}{1 + \alphat{\alpha}{t}}\big) \frac{2}{\big(1 - q\frac{\nu +\alphat{\alpha}{t}}{1 + \alphat{\alpha}{t}}\big)^3} - \lambda(t) q^{\rho} \frac{\alpha(t)(1-q)}{1 + \alphat{\alpha}{t}} \big(1- \frac{\nu + \alphat{\alpha}{t}}{1 + \alphat{\alpha}{t}}\big) \frac{1}{\big(1 - q\frac{\nu +\alphat{\alpha}{t}}{1 + \alphat{\alpha}{t}}\big)^2} - \\
%\numberthis \label{eq:variance}
%&\quad \bigg(\lambda(t) q^{\rho} \frac{\alpha(t)(1-q)}{1 + \alphat{\alpha}{t}} \big(1- \frac{\nu + \alphat{\alpha}{t}}{1 + \alphat{\alpha}{t}}\big) \frac{1}{\big(1 - q\frac{\nu +\alphat{\alpha}{t}}{1 + \alphat{\alpha}{t}}\big)^2}\bigg)^2
%\end{align*}
%where in the last equality, we use the formula for $|x| < 1$
%\begin{align*}
%\sum_{n=1}^{\infty} n^2 x^{n-1} = \frac{2}{(1-x)^3} - \frac{1}{(1-x)^2},\qquad
%\sum_{n=1}^\infty n x^{n-1} = \frac{1}{(1-x)^2}.
%\end{align*}
%We set 
\begin{equation}\label{eq:temp11}
\hat{\lambda}(t) := \prod_{k=0}^{t-1} \lambda(k), \qquad \hat{\mu} (t) := \sum_{k=0}^{t-1} \mut{\mu}{k},  \qquad \Xi(t, s) := \ZZ - \hmu(t) + \hmu(s),\qquad  \Xi(t):= \Xi(t, 0).
\end{equation}
It can be verified that the parameter $\lambda$, $\mu$ defined in \eqref{eq:temp1} satisfies 
$$\lambda = \hlambda(J),\qquad \mu = \hmu(J),$$
hence, one has 
\begin{equation}\label{eq:four:temp5}
\hlambda(Jt) = \lambda^t, \qquad \hmu(Jt) = \mu t.
\end{equation}
We define the \emph{microscopic Hopf-Cole transform} for $x \in \Xi(t)$ as
\begin{equation}\label{eq:microshe1}
Z(t, x) := \hlambda(t) q^{- (N(t, x+ \hmu(t)) - \rho(x+\hmu(t))) }.
\end{equation}
%Define $\Xi(t, s) = \ZZ - \hmu(t) + \hmu(s)$, 
For $x \in \Xi(t, s)$, we set $\tonepart (t, s, x) := \PP\big(X(t) - X(s) = x\big)$. Denote by the convolution 
$$
(\tonepart(t, s) * f(s)) (x) := \sum_{y \in \Xi(s)} \tonepart(t, s, x-y) f(s, y).
$$
We set
\begin{equation*}
K(t, x) := N(t, x) - N(t+1, x), \qquad \overline{K}(t, x) :=  K(t, x) - \EE\big[K(t, x) \vv \FF(t)\big].
\end{equation*}
%With above notations explained, we have the following theorem.
%\begin{equation*}
%Z(t+1, x) = (\tonepart \star Z(t))(x) + M(t, x+\mu)
%\end{equation*}
%where the martingale increment $M(t, x+\mu) = Z(t+1, x) - \EE\big[Z(t+1, x) \big| \FF(t)\big]$ satisfies $\EE\big[M(t, x+\mu) \big| \FF(t)\big] = 0$.
We sometimes call $K(t, x)$ the \emph{flux}, since it records the number of particles (either zero or one) that move across the position $x$ between time $t$ and $t+1$. Now we present the discrete \ac{SHE} satisfied by the microscopic Hopf-Cole transform of the unfused \ac{SHS6V} model.
\begin{prop}\label{prop:microscopicshe}
For $t \in \ZZ_{\geq 0}$ and $x \in \Xi(t)$, $Z(t, x)$ satisfies the following discrete \ac{SHE}
\begin{equation}\label{eq:microshe}
Z(t+1, x-\mut{\mu}{t}) = (\tonepart(t+1, t) * Z(t))(x-\mut{\mu}{t}) + M(t, x), 
\end{equation}
where
\begin{equation}\label{eq:four1:temp1}
M(t, x) 
%:= Z(t+1, x - \mut{\mu}{t}) - (\tonepart(t+1, t) * Z(t)) (x-\mut{\mu}{t}) 
= \lambdat{\lambda}{t} (q - 1) Z(t, x + \hmu(t)) \overline{K}(t, x + \hmu(t)).
\end{equation}
Furthermore, $M(t,x)$ is a martingale increment, i.e. $\EE\big[M(t, x) \vv \FF(t) \big] = 0$. The conditional quadratic variation of $M(t, x)$ equals 
\begin{align}\label{eq:four1:quadratic}
\EE\big[M(t, x_1) M(t, x_2) \vv \FF(t)\big] = \bigg( q^{\rho} \frac{\nu + \alphat{\alpha}{t}}{1 + \alphat{\alpha}{t}}\bigg)^{|x_1 - x_2|} \Theta_1 (t, x_1 \wedge x_2) \Theta_2 (t, x_1 \wedge x_2), \quad x_1, x_2 \in \Xi(t),
\end{align}
where
\begin{align}\label{eq:Thetaone}
\Theta_1 (t, x) &:= q \lambdat{\lambda}{t} Z(t, x
) - \big(\convol \big) (x - \mu(t)),\\ \label{eq:Thetatwo}
\Theta_2 (t, x) &:= -\lambdat{\lambda}{t} Z(t, x) + \big(\convol\big)(x - \mu(t)).
\end{align}
\end{prop}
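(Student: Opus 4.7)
The plan is to unfold $Z$'s definition step by step. First use $\hmu(t+1) = \hmu(t) + \mu(t)$, $\hlambda(t+1) = \lambda(t)\hlambda(t)$ and the flux identity $N(t+1, y) = N(t, y) - K(t, y)$ to derive the local recursion
\[
Z(t+1, x - \mu(t)) = \lambda(t)\, q^{K(t, x + \hmu(t))}\, Z(t, x).
\]
Taking the conditional expectation with respect to $\FF(t)$ and invoking the $k=1$ duality \eqref{eq:fextenddual1}, together with the tilting/centering relation in \eqref{eq:rwdistribution} that connects the reversed one-particle step distribution to $R(t)$, yields $\EE[Z(t+1, x-\mu(t))\mid\FF(t)] = (\tonepart(t+1, t) * Z(t))(x-\mu(t))$, which simultaneously gives the discrete SHE \eqref{eq:microshe} and the martingale increment property of $M$. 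Since $K(t, y) \in \{0, 1\}$, the identity $q^{K} - \EE[q^K\mid\FF(t)] = (q-1)\overline{K}$ then produces the closed form \eqref{eq:four1:temp1}.

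For the quadratic variation, I factor out the $\FF(t)$-measurable quantities so that it suffices to compute $\EE[\overline{K}(t, y_1)\overline{K}(t, y_2)\mid\FF(t)]$ with $y_i = x_i + \hmu(t)$. The key structural observation is that, conditional on $\FF(t)$, the process $y \mapsto K(t, y)$ is a two-state Markov chain: the recursion \eqref{eq:two:recur1} says $K(t, y)$ is Bernoulli with parameter $a_y = \alpha(t)(1 - q^{\eta_y(t)})/(1+\alpha(t))$ if $K(t, y-1) = 0$ and $b_y = (\alpha(t) + \nu q^{\eta_y(t)})/(1+\alpha(t))$ if $K(t, y-1) = 1$, and its transition matrix has second eigenvalue $b_y - a_y = q^{\eta_y(t)}(\nu + \alpha(t))/(1+\alpha(t))$.

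When $x_1 = x_2$, the elementary identity $\mathrm{Var}(K\mid\FF(t)) = p(1-p)$ combined with $\EE[q^K\mid\FF(t)] = (\tonepart(t+1, t) * Z(t))(x-\mu(t))/(\lambda(t)Z(t, x))$ recasts $\EE[M(t, x)^2\mid\FF(t)]$ as $\Theta_1(t, x)\Theta_2(t, x)$ after direct algebra. When $x_1 < x_2$, a short induction on $y_2 - y_1$ (starting from the one-step calculation $\EE[\overline K(t,y)\overline K(t,y+1)\mid\FF(t)] = p_y(1-p_y)(b_{y+1} - a_{y+1})$) yields
\[
\EE[\overline{K}(t, y_1)\overline{K}(t, y_2)\mid\FF(t)] = \mathrm{Var}(K(t, y_1)\mid\FF(t)) \prod_{y=y_1+1}^{y_2}(b_y - a_y).
\]
Using the telescoping identity $\sum_{y=y_1+1}^{y_2} \eta_y(t) = N(t, y_2) - N(t, y_1)$, the eigenvalue product becomes $q^{N(t, y_2) - N(t, y_1)}((\nu+\alpha(t))/(1+\alpha(t)))^{y_2 - y_1}$. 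Pairing this with $Z(t, x_1)Z(t, x_2)$ and applying the definition of $Z$ collapses the $N$-dependent factors into $Z(t, x_1)^2 \cdot (q^{\rho}(\nu+\alpha(t))/(1+\alpha(t)))^{x_2 - x_1}$, and comparison with the diagonal formula produces \eqref{eq:four1:quadratic}.

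The main obstacle is the off-diagonal bookkeeping: engineering the cancellation so that the position-dependent eigenvalue $q^{\eta_y(t)}$ combines exactly with the $q^{-N}$ content of $Z$ to produce a clean geometric factor involving $q^{\rho}$ rather than $q^{\eta_y}$. This cancellation is delicate and relies critically on the telescoping identity for $N$ that links the spatial correlations of $K$ to the global definition of the Hopf-Cole transform.
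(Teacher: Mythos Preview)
Your proposal is correct and follows essentially the same route as the paper. The only cosmetic difference is in the off-diagonal covariance: you phrase the computation as a two-state Markov chain with second eigenvalue $b_y-a_y=q^{\eta_y(t)}\frac{\nu+\alpha(t)}{1+\alpha(t)}$ and induct on $y_2-y_1$, whereas the paper writes the explicit decomposition $K(t,y_2)=K_{y_1,y_2}(t)+L_{y_1,y_2}(t)K(t,y_1)$ with $L_{y_1,y_2}(t)=\prod_{z=y_1+1}^{y_2}\bigl(B'(t,z,\eta_z(t))-B(t,z,\eta_z(t))\bigr)$ conditionally independent of $K(t,y_1)$; since $\EE'[L_{y_1,y_2}(t)]$ is exactly your eigenvalue product, the two arguments coincide, as does the final telescoping via $Z(t,x_2)=q^{\rho(x_2-x_1)}Z(t,x_1)\prod_{z=y_1+1}^{y_2}q^{-\eta_z(t)}$.
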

\begin{proof}
We first show that $M(t, x)$ is a martingale increment. Note by \eqref{eq:microshe}, 
%$M(t, x)$ is defined as
\begin{equation*}
M(t, x) = Z(t+1, x-\mut{\mu}{t}) = (\tonepart(t+1, t) * Z(t))(x-\mut{\mu}{t}).
\end{equation*}
Taking $k=1$ in the duality \eqref{eq:fextenddual1}, one has
%and by straightforward computation 
%\begin{equation*}
%\EE\big[q^{-N(t+1, x-\mu)} \vv \FF(t)\big] = \sum_{y \in \Xi(t+1)} \onepart (x-\mu -y) q^{-N(t, y)} \sum_{y \in \Xi(t+1)} \lambda^{-1} \onepart(x-\mu - y)  q^{-\rho \mu (t+1)} Z(t+1, y).
%\end{equation*}
%\begin{align*}
%\EE\big[Z(t+1, x-\mu) \vv \FF(t)\big] &= \lambda^{t+1} q^{\rho(x + \mu(t+1))} \EE\big[q^{-N(t+1, x+\mu(t+1))} \vv \FF(t)\big] \\
%&= \lambda^{t+1} q^{\rho(x + \mu(t+1))} \sum
%\end{align*}
%where $\onepart(x) := \PP\big(X'(1) = x\big)$.
%in conjunction with (?),  we see for $x \in \Xi(t)$ 
\begin{equation*}
\EE\big[Z(t+1, x-\mut{\mu}{t}) \vv \FF(t) \big]  = (\convol) (x-\mut{\mu}{t}).
\end{equation*}
Hence, 
\begin{align*}
\numberthis \label{eq:four1:temp2}
M(t,x) 
%&= Z(t+1, x-\mut{\mu}{t}) - (\convol)(x - \mut{\mu}{t}),\\
&= Z(t+1, x-\mut{\mu}{t}) - \EE\big[Z(t+1, x-\mut{\mu}{t}) \vv \FF(t)\big],
\end{align*} 
which implies $\EE\big[M(t, x) \vv \FF(t)\big]  = 0$.
\bigskip
\\ 
We turn to justify \eqref{eq:four1:temp1}. Note that by  \eqref{eq:microshe1}
\begin{equation*}
Z(t+1, x-\mut{\mu}{t}) = \lambdat{\lambda}{t} Z(t, x) q^{N(t, x + \hmu(t)) - N(t+1, x + \hmu(t))} = \lambda(t) Z(t, x) q^{K(t, x+ \hmu(t))}. 
\end{equation*}
Since $K(t, x+\hmu(t)) \in \{0, 1\}$,
\begin{equation}\label{eq:four1:temp3}
Z(t+1, x-\mut{\mu}{t}) = \lambdat{\lambda}{t} Z(t, x) + \lambda(t) (q - 1) Z(t, x) K(t, x + \hmu(t)).
\end{equation}  
%$$q^{N(t, x) - N(t+1, x)} = 1 + (q-1)(N(t, x) - N(t+1, x)) = 1 + (q-1) K(t, x+ \hmu(t)).$$
%$K(t, x) = N(t, x) - N(t+1, x) \in \{0, 1\}$. 
Combining with \eqref{eq:four1:temp2} gives
\begin{align*}
M(t, x) 
%= Z(t+1, x-\mut{\mu}{t}) - \EE\big[Z(t+1, x-\mut{\mu}{t}) \vv \FF(t) \big]
&= \lambdat{\lambda}{t} (q- 1) Z(t, x) \big(K(t, x + \hmu(t) ) - \EE\big[K(t, x+ \hmu(t)) \big| \FF(t)\big]\big),\\
\numberthis \label{eq:four:temp1}
&=  \lambdat{\lambda}{t} (q- 1) Z(t, x) \overline{K}(t, x+ \hmu(t)),
\end{align*}
which gives the desired equality.
\bigskip
\\
We turn our attention to  \eqref{eq:four1:quadratic}.  Define the short notation  $E'\big[\cdot\big] := \EE \big[\cdot \big| \FF(t)\big]$ and write $\text{Var}'$, $\text{Cov}'$  to be the corresponding conditional variance and covariance. We assume without loss of generosity $x_1 \leq x_2$ and use shorthand notation $x'_i := x_i + \hmu(t) \in \ZZ, i=1, 2$. Owing to \eqref{eq:four:temp1},
\begin{align*}
\EE'\big[M(t, x'_1) M(t, x'_2) \big] &=  \lambdat{\lambda}{t}^2 (q - 1)^2 Z(t, x_1 ) Z(t, x_2) \EE'\big[\overline{K}(t, x'_1) \overline{K}(t, x'_2) \big],\\
\numberthis \label{eq:four1:tempa}
&= \lambdat{\lambda}{t}^2 (q - 1)^2 Z(t, x_1) Z(t, x_2) \text{Cov}'\big(K(t, x'_1), K(t, x'_2)\big).
\end{align*}
Define
\begin{align*}
\numberthis \label{eq:four:temp2}
L_{x'_1, x'_2} (t) &= \prod_{z=x'_1 +1}^{x'_2} \bigg(B'(t, z, \eta_z (t)) - B(t, z, \eta_z (t))\bigg),\\
%\numberthis \label{eq:four:temp3}
K_{x'_1, x'_2} (t) &= \sum_{y' = x'_1 +1}^{x'_2} \prod_{z = y'+1}^{x'_2} \bigg(B'(t, z, \eta_z (t)) - B(t, z, \eta_z (t)\bigg) B(t, z, \eta_z (t)),
\end{align*}
where $B, B'$ are defined in \eqref{eq:randomenvir}. Since $B, B'$ are all independent, due to the expression \eqref{eq:birecur1} of $K(t, x'_1) = N(t, x'_1) - N(t+1, x'_1)$ provided by \eqref{eq:birecur1}, it is straightforward that conditioning on $\FF(t)$, $(K_{x'_1, x'_2} (t), L_{x'_1, x'_2} (t))$ are independent with $K(t, x'_1)$. Furthermore, \eqref{eq:birecur1} implies
\begin{align*}
K(t, x'_2) 
%= N(t, x'_2) - N(t, x'_2 +1)  
%&= K_{x'_1, x'_2} (t) + L_{x'_1, x'_2} (t) \big(N(t, x'_1 ) - N(t, x'_1 +1)\big),\\
%\numberthis \label{eq:four1:temp4}
&= K_{x'_1, x'_2} (t) + L_{x'_1, x'_2} (t) K(t, x'_1).
\end{align*}
%Combining \eqref{eq:four1:temp4} and 
By the independence, 
%between $K(t, x'_1)$ and  $(K_{x'_1, x'_2} (t), L_{x'_1, x'_2} (t))$, 
we see that 
\begin{align}\label{eq:four1:temp5}
\text{Cov}'\big(K(t, x'_1), K(t, x'_2)\big) 
=\EE'\big[L_{x'_1, x'_2} (t)\big]\text{Var}'\big(K(t, x'_1)\big).
\end{align}
Referring to \eqref{eq:four:temp2},  
\begin{equation*}
%\label{eq:three:temp3}
\EE'\big[L_{x'_1 , x'_2} (t)\big] = \prod_{z = x'_1 +1}^{x'_2} \EE'\big[B'(t, z, \eta_z (t)) - B(t, z, \eta_z (t))\big] =  \bigg(\frac{\nu + \alphat{\alpha}{t}}{1 +\alphat{\alpha}{t}}\bigg)^{x'_2 - x'_1} \prod_{z = x'_1 +1}^{x'_2} q^{\eta_{z} (t)}.
\end{equation*} 
Inserting this into the RHS of \eqref{eq:four1:temp5}, we find that 
\begin{align*}
\text{Cov}'\big(K(t, x_1), K(t, x_2)\big) &= \bigg(\frac{\nu + \alphat{\alpha}{t}}{1 + \alphat{\alpha}{t}}\bigg)^{x'_2 - x'_1} \prod_{z = x'_1 + 1}^{x'_2} q^{\eta_z (t)} \big(\EE'\big[K^2(t, x'_1)\big] - \EE'\big[K(t, x'_1)\big]^2\big),  \\
\numberthis \label{eq:four1:temp6}
&= \bigg(\frac{\nu + \alphat{\alpha}{t}}{1 + \alphat{\alpha}{t}}\bigg)^{x_2 - x_1} \prod_{z = x'_1 + 1}^{x'_2} q^{\eta_z (t)} \EE'\big[K(t, x'_1)\big] \big(1 - \EE'\big[K(t, x'_1)\big]\big). 
\end{align*}
Here, the last equality follows from the fact $K(t, x'_1)^2 = K (t, x'_1) $. Furthermore, due to \eqref{eq:four1:temp3},
\begin{equation*}
 \EE'\big[K(t, x'_1) \big] =   
\frac{\EE \big[Z(t+1, x_1 - \mut{\mu}{t}) - \lambdat{\lambda}{t} Z(t, x_1) \vv \FF(t)\big]}{\lambdat{\lambda}{t} (q - 1) Z(t, x_1)} = \frac{(\convol) (x_1 - \mut{\mu}{t}) - \lambdat{\lambda}{t} Z(t, x_1)}{\lambdat{\lambda}{t} (q - 1) Z(t, x_1)}.
\end{equation*}
%which implies 
%\begin{align*}
%\EE'\big[K(t, x'_1) \big] &= \EE\big[\frac{M(t,\big] x_1)}{\lambdat{\lambda}{t} (q - 1) Z(t, x_1)}\\
%&= \frac{(\convol) (x_1 - \mut{\mu}{t}) - \lambdat{\lambda}{t} Z(t, x_1)}{\lambdat{\lambda}{t} (q - 1) Z(t, x_1)}.
%\end{align*}
Inserting this into the RHS of \eqref{eq:four1:temp6} yields 
\begin{align*}
\text{Cov}'\big(K(t, x_1), K(t, x_2)\big)  &=  \bigg(\frac{\nu + \alphat{\alpha}{t}}{1 + \alpha(t)}\bigg)^{x_2 - x_1} \frac{(\convol) (x_1 - \mut{\mu}{t}) - \lambdat{\lambda}{t} Z(t, x_1)}{\lambdat{\lambda}{t} (q - 1) Z(t, x_1)} \\
&\quad \times\bigg(1 - \frac{(\convol) (x_1 - \mut{\mu}{t}) - \lambdat{\lambda}{t} Z(t, x_1)}{\lambdat{\lambda}{t} (q - 1) Z(t, x_1)}\bigg) \prod_{z = x'_1 + 1}^{x'_2} q^{\eta_z (t)},\\
&= \bigg(\frac{\nu + \alphat{\alpha}{t}}{1 + \alpha(t)}\bigg)^{x_2 - x_1} \frac{\Theta_2 (t, x_1)}{\lambdat{\lambda}{t} (q - 1) Z(t, x_1)} \cdot \frac{\Theta_1 (t, x_1)}{\lambdat{\lambda}{t} (q - 1) Z(t, x_1)} \prod_{z = x'_1 + 1}^{x'_2} q^{\eta_z (t)}.
\end{align*}
Using the fact $Z(t, x_2) = q^{\rho (x_2 - x_1)} Z(t, x_1) \prod_{z=x'_1 +1}^{x'_2} q^{-\eta_z (t)} $, we obtain 
\begin{equation*}
\text{Cov}'\big(K(t, x_1), K(t, x_2)\big) = \bigg(q^\rho \frac{\nu + \alphat{\alpha}{t}}{1 + \alphat{\alpha}{t}}\bigg)^{x_2 - x_1} \frac{\Theta_1 (t, x_1)}{\lambda(t) (q - 1) Z(t, x_1)} \cdot \frac{\Theta_2 (t, x_1)}{\lambda(t) (q - 1) Z(t, x_2)}.
\end{equation*}
Combining with \eqref{eq:four1:tempa}, we arrive at the desired \eqref{eq:four1:quadratic}.
%&= \bigg(\frac{\alpha  + \nu}{1 + \alpha}\bigg)^{x_2 - x_1}  Z(t, x_1) Z(t, x_2) \EE'\big[K(t, x'_1)\big] \big(1 - \EE'\big[K(t, x'_1)\big]\big) \prod_{z = x'_1 + 1}^{x'_2} q^{\eta_z (t)} \\
%&=\bigg(q^\rho \frac{\alpha + \nu}{1 + \alpha}\bigg)^{x_2 - x_1} \EE'\big[\lambda (\tau^{-1} - 1) Z(t, x_1) K(t, x'_1)\big] \EE'\big[\lambda (\tau^{-1} - 1) \big(Z(t, x_1) - Z(t, x_1) K(t, x'_1)\big)\big]\\
\end{proof}
For $x \in \Xi(t)$, define $$\etapartt{t}{x} :=  \etapart{t}{x + \hmu(t)}.$$ 
We consider a tilted version of the duality functional $\du$ in \eqref{eq:dual2}, for $y_1 \leq y_2 \in \Xi(t)$, define
\begin{align}\label{eq:dfunc}
D(t, y_1, y_2) := 
\begin{cases}
Z(t,y_1)^2   \left[I - \etapartt{t}{y_1}\right]_{q^{\frac{1}{2}}}  \left[I - 1 - \etapartt{t}{y_1}\right]_{q^{\frac{1}{2}}} q^{\etapartt{t}{y_1}} \qquad &\text{if}\  y_1 = y_2,\\
\frac{\left[I-1\right]_{q^{\frac{1}{2}}} }{\left[I\right]_{q^{\frac{1}{2}}}} Z(t, y_1) Z(t, y_2) \qhalfint{I - \etapartt{t}{y_1}}  \qhalfint{I - \etapartt{t}{y_2}}    q^{\frac{1}{2} \etapartt{t}{y_1}} q^{\frac{1}{2} \etapartt{t}{y_2}} \qquad  &\text{if}\  y_1 < y_2. 
\end{cases}
\end{align}
We further define for $x_1, x_2 \in \Xi(t)$ and $y_1, y_2 \in \Xi(s)$,
\begin{align}\label{eq:four:tilttransition}
\rhzrt\big((x_1, x_2), (y_1, y_2), t, s\big) := \bigg(\frac{\hlambda(t)}{\hlambda(s)}\bigg)^2 q^{\rho \left(x_1 + x_2 - y_1 -y_2 + 2 (\hmu(t) - \hmu(s))\right)} \rhzr \big(x_1 + \hmu(t), x_2  + \hmu(t), y_1 +\hmu(s), y_2 + \hmu(s), t, s\big).
\end{align}
Observe that $Z(t, x)$ is a tilted version of $q^{-N(t, x)}$, thus it is clear that  it inherits the two dualities stated in Corollary \ref{cor:biduality}. 
%The following lemma is nothing but a tilted version of Corollary \ref{cor:biduality}.
\begin{lemma}\label{lem:dualt}
For $s \leq t \in \ZZ_{\geq 0}$ and $x_1 \leq x_2 \in \Xi(t)$,
\begin{align}\label{eq:fdualt}
&\EE\big[Z(t, x_1) Z(t, x_2) \vv \FF(s)\big] = \sum_{y_1 \leq y_2 \in \Xi(s)} \rhzrt\big((x_1, x_2), (y_1, y_2), t, s\big) Z(s, y_1) Z(s, y_2),\\
\label{eq:sdualt}
&\EE\big[D(t, x_1, x_2) \vv \FF(s)\big] = \sum_{y_1 \leq y_2 \in \Xi(s)} \rhzrt\big((x_1, x_2), (y_1, y_2), t, s\big) D(s, y_1, y_2).
\end{align}
\end{lemma}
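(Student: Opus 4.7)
The plan is to deduce both tilted identities directly from Corollary~\ref{cor:biduality} by tracking how the definitions $Z(t,x) = \hlambda(t) q^{-(N(t,x+\hmu(t)) - \rho(x+\hmu(t)))}$ and \eqref{eq:dfunc} for $D$ relate to the untilted quantities $q^{-N(t,y)}$ and $\du(t,y_1,y_2)$ from \eqref{eq:dual2}. The Lemma is essentially a cosmetic rewriting of Corollary~\ref{cor:biduality}; the content is that the definition \eqref{eq:four:tilttransition} of $\rhzrt$ was set up precisely so that the tilting factors and shifts assemble cleanly on both sides.

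First I would record the two elementary algebraic identities, valid for $y_i := x_i + \hmu(t) \in \ZZ$:
\begin{align*}
Z(t,x_1) Z(t,x_2) &= \hlambda(t)^2 \, q^{\rho(x_1+x_2+2\hmu(t))} \, q^{-N(t,y_1)-N(t,y_2)},\\
D(t,x_1,x_2) &= \hlambda(t)^2 \, q^{\rho(x_1+x_2+2\hmu(t))} \, \du(t,y_1,y_2).
\end{align*}
The first is immediate from \eqref{eq:microshe1}. The second requires checking the $y_1 = y_2$ and $y_1 < y_2$ cases separately in \eqref{eq:dfunc} versus \eqref{eq:dual2}, using $\etapartt{t}{x_i} = \eta_{y_i}(t)$; in both cases the $\qhalfint{\cdot}$ and $q^{\eta/2}$ factors already match, so the only work is to pull out $Z(t,x_i)^2$ or $Z(t,x_1)Z(t,x_2)$ and recover the $q^{-N}$ factors of $\du$.

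Second, for \eqref{eq:fdualt} I apply \eqref{eq:fextenddual1} with $k = 2$ and locations $(y_1,y_2) = (x_1+\hmu(t), x_2+\hmu(t)) \in \weyl{2}$ (which is automatic since $I \geq 2$, so the Weyl chamber is just the ordered lattice). This yields $\EE[q^{-N(t,y_1)-N(t,y_2)} \mid \FF(s)]$ as a sum over $(x'_1,x'_2) \in \ZZ^2$ with $x'_1 \leq x'_2$ of $\rhzr(\cdot)\, q^{-N(s,x'_1)-N(s,x'_2)}$. Multiplying through by the tilting factor $\hlambda(t)^2 q^{\rho(x_1+x_2+2\hmu(t))}$ and substituting $\tilde{y}_i := x'_i - \hmu(s) \in \Xi(s)$, I invert the relation between $q^{-N(s, \tilde{y}_i + \hmu(s))}$ and $Z(s,\tilde{y}_i)$ on the right. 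The accumulated prefactor collapses to
\[
\Big(\tfrac{\hlambda(t)}{\hlambda(s)}\Big)^2 q^{\rho(x_1 + x_2 - \tilde{y}_1 - \tilde{y}_2 + 2(\hmu(t)-\hmu(s)))},
\]
which is exactly the factor defining $\rhzrt$ in \eqref{eq:four:tilttransition}. The identity \eqref{eq:sdualt} is derived in the same way, starting from \eqref{eq:sextenddual1} applied to $\du$ and using the $D \leftrightarrow \du$ identity instead.

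I do not expect any genuine obstacle. The only thing to verify carefully is that under the shift $x'_i \mapsto \tilde{y}_i = x'_i - \hmu(s)$ the sum over ordered pairs in $\ZZ^2$ matches a sum over ordered pairs in $\Xi(s)^2$, and that the $I \geq 2$ hypothesis ensures no exclusion constraint distinguishes these ranges (this is precisely the point noted around \eqref{eq:sextenddual1}). Everything else is bookkeeping.
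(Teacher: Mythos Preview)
Your proposal is correct and follows essentially the same route as the paper: both express $Z(t,x_1)Z(t,x_2)$ (resp.\ $D(t,x_1,x_2)$) as a tilting prefactor times $q^{-N(t,\cdot)-N(t,\cdot)}$ (resp.\ $\du(t,\cdot,\cdot)$), apply the $k=2$ bi-infinite duality \eqref{eq:fextenddual1} (resp.\ \eqref{eq:sextenddual1}) from Corollary~\ref{cor:biduality}, shift the summation variables by $\hmu(s)$, and collect the resulting prefactor into the definition \eqref{eq:four:tilttransition} of $\rhzrt$. The paper's proof is essentially identical in structure and level of detail, and likewise dispatches \eqref{eq:sdualt} by analogy.
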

\begin{proof}
We use the shorthand notation $x'_i := x_i + \hmu(t)$. Referring to  \eqref{eq:microshe1},
\begin{align}\label{eq:four2:temp1}
\EE\big[Z(t, x_1) Z(t, x_2) \vv \FF(s)\big] &=\hlambda(t)^2 q^{\rho(x'_1 + x'_2)} \EE\big[ q^{-N(t, x'_1)} q^{-N(t, x'_2)}  \vv \FF(s)\big]
\end{align}
Using Corollary \ref{cor:biduality}, we have
%\footnote{Note that when $I \geq 2$, $\{(y_1, y_2) : (y_1 \leq y_2 \in \weyl{2} \} = \{(y_1, y_2) : y_1 \leq y_2 \in \ZZ^2 \}$.}
\begin{align*}
&\EE\big[ q^{-N(t, x'_1)} q^{-N(t, x'_2)}  \vv \FF(s)\big] = \sum_{y'_1 \leq y'_2 \in \ZZ^2} \rhzr \big((x'_1, x'_2), (y'_1, y'_2), t, s\big) q^{-N(s, y'_1)} q^{-N(s, y'_2)},\\
&=\sum_{y_1 \leq y_2 \in \Xi(s)^2} \rhzr\big((x_1 + \hmu(t), x_2 + \hmu(t), (y_1 + \hmu(s), y_2 + \hmu(s), t, s\big) q^{-N(s, y_1 + \hmu(s))}  q^{-N(s, y_2 + \hmu(s))}, \\
&=\sum_{y_1 \leq y_2 \in \Xi(s)^2} \rhzr\big((x_1 + \hmu(t), x_2 + \hmu(t), (y_1 + \hmu(s), y_2 + \hmu(s), t, s\big) \frac{Z(s, y_1) Z(s, y_2)}{\hlambda(s)^2} q^{-2 \hmu(s)}.
\end{align*}
Inserting this into the RHS of \eqref{eq:four2:temp1}, %along with \eqref{eq:four:tilttransition},
via a straightforward computation, we  conclude \eqref{eq:fdualt}. The second duality \eqref{eq:sdualt} follows from a similar argument, we do not repeat here.
\end{proof}
%As $\rhzrt\big((x_1, x_2), (y_1, y_2), t, s\big)$ is nothing but a tilted version of $\rhzr\big((x_1, x_2), (y_1, y_2), t, s\big)$, we obtain an integral formula  via applying change of variable to that of Theorem \ref{thm:intformulap}. 
The following corollary follows from Theorem \ref{thm:intformulap}. 
\begin{cor}\label{cor:intformulav}
For all $x_1 \leq x_2 \in \Xi(t)$ and  $y_1 \leq y_2 \in \Xi(s)$, we have
\begin{align*}
\rhzrt\big((x_1, x_2), (y_1, y_2), t, s\big) &= c(\vec{y}) \bigg[ \oint_{\lc} \oint_{\lc} \prod_{i=1}^2 \coreh(z_i, t, s) z_i^{x_i - y_i } \frac{dz_i}{2\pi \im z_i} -   \oint_{\lc} \oint_{\lc} \interact(z_1, z_2) \prod_{i=1}^2 \coreh(z_i, t, s) z_i^{x_{3-i} - y_i } \frac{dz_i}{2\pi \im z_i}  \\
\numberthis \label{eq:tilpr:intformula}
&\quad+\res_{z_1 = \pol(z_2)} \oint_{\lc} \oint_{\lc} \interact(z_1, z_2) \prod_{i=1}^2 \coreh(z_i, t, s) z_i^{x_{3-i} - y_i } \frac{dz_i}{2\pi \im z_i}\bigg].
%&\oint_{\lc} \frac{(q-1) q^{-2\rho} \pol(z) z + (1-q\nu) q^{-\rho} \pol(z) + (\nu - q) q^{-\rho} z + q\nu - \nu}{(\nu - q) q^{-\rho} + (q-1) q^{-2\rho} z}  \big(\core(z) \core(\pol(z))\big)^t z^{x_1 - y_2 -1 } \pol(z)^{x_2 - y_1 - 1} \frac{dz}{2 \pi \im}\bigg]
\end{align*}
where $\lc$ is a circle centered at zero with a large enough radius $R$ so as to include all the poles of the integrands, $c(\vec{y})$ is defined in \eqref{eq:cy1} and
\begin{align*}
\numberthis \label{eq:four2:temp2}
\corej (z) &:= \lambda z^{\mu}  \frac{(1 + \alpha q^J)q^{-\rho} z - (\nu + \alpha q^J)}{(1 + \alpha) q^{-\rho} z - (\nu + \alpha)},\\  
\numberthis \label{eq:four2:temp3}
\mathfrak{R} (z, t, s) &:= \prod_{k = s + J \floor{\frac{t-s}{J}}}^{t-1} \lambda(k) z^{\mu(k)}  \frac{(1 + \alpha(k) q) q^{-\rho} z - (\nu + \alpha(k) q) }{(1 + \alpha(k)) q^{-\rho} z - (\nu + \alpha(k))},\\
\numberthis \label{eq:interactexpression}
\interact(z_1 , z_2) &:= \frac{q\nu - \nu + (\nu-q) q^{-\rho} z_2 + (1-q\nu) q^{-\rho} z_1 + (q-1) q^{-2\rho} z_1 z_2}{q\nu - \nu + (\nu-q) q^{-\rho} z_1 + (1-q\nu) q^{-\rho} z_2 + (q-1) q^{-2\rho} z_1 z_2},\\
\numberthis \label{eq:pole1expression}
\pol (z) &:= \frac{(1-q \nu) q^{-\rho} z - \nu  (1-q)}{(q-\nu) q^{-\rho}  + (1-q) q^{-2\rho}  z}.
\end{align*}
%$\coreth(z, t, s)$
%Note that the expression of $\coreh(z, t, s)$ is defined in \eqref{eq:four:temp4}.
\end{cor}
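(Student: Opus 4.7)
\medskip
\noindent\emph{Proof proposal.} The plan is to derive the corollary directly from Theorem \ref{thm:intformulap} by a change of variables together with the book-keeping forced by the definition \eqref{eq:four:tilttransition}. Concretely, I would apply Theorem \ref{thm:intformulap} with the shifted arguments $x_i\mapsto x_i+\hmu(t)$ and $y_i\mapsto y_i+\hmu(s)$ to rewrite
\[
\rhzr\big((x_1+\hmu(t),x_2+\hmu(t)),(y_1+\hmu(s),y_2+\hmu(s)),t,s\big)
\]
as $c(y_1,y_2)$ times the sum of the three contour integrals of Theorem \ref{thm:intformulap}, where the $z_i$-powers carry exponents $x_i+\hmu(t)-y_i-\hmu(s)$ (with $x$'s possibly permuted). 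Observe that $c$ is invariant under the shift because it only depends on whether $y_1=y_2$ or $y_1<y_2$. I would then multiply the whole expression by the prefactor $(\hlambda(t)/\hlambda(s))^2 q^{\rho(x_1+x_2-y_1-y_2+2(\hmu(t)-\hmu(s)))}$ coming from \eqref{eq:four:tilttransition}.

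The central manipulation is the substitution $z_i=q^{-\rho}z_i'$ in each double integral (and correspondingly in the residue). Since the original contour $\C_R$ is taken with $R$ sufficiently large, its rescaled image $\C_{q^\rho R}$ can likewise be chosen as the large contour $\lc$ of the corollary. Under this substitution, $dz_i/z_i$ is invariant and
\[
z_i^{x_i+\hmu(t)-y_i-\hmu(s)} \;=\; q^{-\rho(x_i+\hmu(t)-y_i-\hmu(s))}\,(z_i')^{x_i+\hmu(t)-y_i-\hmu(s)}.
\]
Summing the $q^{-\rho}$ exponents over $i=1,2$ produces $q^{-\rho(x_1+x_2-y_1-y_2+2(\hmu(t)-\hmu(s)))}$, which exactly cancels the $q^\rho$-prefactor above.

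Next I would use the $J$-periodicity of $\lambda(k)$ and $\mu(k)$ to split
\[
\frac{\hlambda(t)}{\hlambda(s)}=\lambda^{\lfloor(t-s)/J\rfloor}\!\!\prod_{k=s+J\lfloor(t-s)/J\rfloor}^{t-1}\!\!\!\lambda(k),\qquad \hmu(t)-\hmu(s)=\mu\lfloor(t-s)/J\rfloor+\!\!\sum_{k=s+J\lfloor(t-s)/J\rfloor}^{t-1}\!\!\!\mu(k),
\]
and to distribute these pieces across the two integration variables. One factor of $\lambda\,(z_i')^{\mu}$ per complete period combines with $\coret(q^{-\rho}z_i')$ to produce exactly $\corej(z_i')$ as in \eqref{eq:four2:temp2}, while the partial-period factors combine with $\remt(q^{-\rho}z_i',t,s)$ to produce $\RRR(z_i',t,s)$ as in \eqref{eq:four2:temp3}. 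For the interaction and pole, a direct algebraic check gives the identities $\interact(z_1,z_2)=\interactt(q^{-\rho}z_1,q^{-\rho}z_2)$ and $\pol(z)=q^{\rho}\polt(q^{-\rho}z)$; the latter ensures that the residue pole $z_1=\polt(z_2)$ in the new variables becomes $z_1'=\pol(z_2')$, and the residue transforms correctly under the substitution since $dz_1/z_1$ is scale-invariant.

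The proof is therefore essentially bookkeeping, and I do not expect any genuine obstacle: the only places that require care are (i) confirming the two algebraic identities for $\interact$ and $\pol$, which can be done by clearing denominators, and (ii) tracking how $\hlambda$, $q^{\rho}$, and $z_i^{\hmu(t)-\hmu(s)}$ are redistributed between the complete-period factor $\coreh^{\lfloor(t-s)/J\rfloor}$ and the residual factor $\RRR(\,\cdot\,,t,s)$. Assembling the three transformed integrals (two doubles and one residue) in the same order as in Theorem \ref{thm:intformulap} yields the claimed \eqref{eq:tilpr:intformula}.
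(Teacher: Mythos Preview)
Your proposal is correct and follows essentially the same route as the paper: start from the definition \eqref{eq:four:tilttransition}, insert the integral formula of Theorem~\ref{thm:intformulap} with the shifted arguments, multiply by the tilting prefactor, and perform the change of variables $z_i\mapsto q^{-\rho}z_i$. The paper's proof is much terser (it simply says that multiplying by the constant and changing variables ``readily yield the desired formula''), whereas you spell out the periodicity splitting of $\hlambda$ and $\hmu$ and the identities $\interact(z_1,z_2)=\interactt(q^{-\rho}z_1,q^{-\rho}z_2)$ and $\pol(z)=q^{\rho}\polt(q^{-\rho}z)$, which is exactly the bookkeeping needed.
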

\begin{proof}
Note that the integral formula for $\rhzr$ is given by  \eqref{eq:intformulap}, referring to \eqref{eq:four:tilttransition}, we find that 
\begin{align*}
&\rhzrt\big((x_1, x_2), (y_1, y_2), t, s\big) =\bigg(\frac{ \hlambda(t)}{\hlambda(s)}\bigg)^2 q^{\rho (x_1 + x_2 - y_1 -y_2 + 2\hmu(t) - 2\hmu(s))} \rhzr \big(x_1 + \hmu(t), x_2  + \hmu(t), y_1 +\hmu(s), y_2 + \hmu(s), t, s\big),\\
&=c(\vec{y}) \cdot \bigg(\frac{ \hlambda(t)}{\hlambda(s)}\bigg)^2 q^{\rho (x_1 + x_2 - y_1 -y_2 + 2\hmu(t) - 2\hmu(s))} \bigg[ \oint_{\lc} \oint_{\lc} \prod_{i=1}^2 \coret(z_i)^{\floor{\frac{t-s}{J}}} \remt(z_i, t, s) z_i^{x_i - y_i } \frac{dz_i}{2\pi \im z_i} \\ 
&\quad- \oint_{\lc} \oint_{\lc} \interactt(z_1, z_2)\prod_{i=1}^2\coret(z_i)^{\floor{\frac{t-s}{J}}} \remt(z_i, t, s) z_i^{x_{3-i} - y_i } \frac{dz_i}{2\pi \im z_i}
\\
&\quad+ \res_{z_1 = \polt(z_2)} \oint_{\lc} \oint_{\lc} \interactt(z_1, z_2) \prod_{i=1}^2 \coret(z_i)^{\floor{\frac{t-s}{J}}} \remt(z_i, t, s) z_i^{x_{3-i} - y_i } \frac{dz_i}{2\pi \im z_i}\bigg].
\end{align*}
We refer to the context of Theorem \ref{thm:intformulap} for the notation.
Multiplying the constant $\big(\frac{ \hlambda(t)}{\hlambda(s)}\big)^2 q^{\rho \left(x_1 + x_2 - y_1 -y_2 + 2 \hmu(t) -2 \hmu(s)\right)}$ to each term inside the square bracket above and applying change of variable $z_i \to q^{-\rho} z_i$ readily yield the desired formula.
\end{proof}
%\begin{remark}
%Observing $\core(z, t, s)$ in \eqref{eq:coreexpression} is a telescoping product and $\alpha(k+J) = \alpha(k)$ for all $k$, one has
%\begin{equation}\label{eq:four:temp6}
%\core (z, t, s) = \big(\corej (z)\big)^{\floor{(t-s)/J}}  \rem(z, t, s)
%\end{equation}
%where 
%\begin{align}
%\label{eq:four2:temp2}
%\corej (z) &:= \hlambda(J) z^{\hmu(J)}  \frac{(1 + \alpha q^J)q^{-\rho} z - (\nu + \alpha q^J)}{(1 + \alpha) q^{-\rho} z - (\nu + \alpha)},\\  
%\label{eq:four2:temp3}
%\mathfrak{R} (z, t, s) &:= \prod_{k = s + J \floor{\frac{t-s}{J}}}^{t-1} \lambda(k) z^{\mu(k)}  \frac{(1 + \alpha(k) q) q^{-\rho} z - (\nu + \alpha(k) q) }{(1 + \alpha(k)) q^{-\rho} z - (\nu + \alpha(k))}.
%\end{align}
%Therefore, one alternatively has
%\begin{align*}
%&\rhzrt\big((x_1, x_2), (y_1, y_2), t, s\big) \\
%&=c(\vec{y}) \bigg[ \oint_{\lc} \oint_{\lc} \prod_{i=1}^2 \big(\corej (z_i)\big)^{\floor{\frac{t-s}{J}}} \rem (z_i, t, s) z_i^{x_i - y_i } \frac{dz_i}{2\pi \im z_i} - \oint_{\lc} \oint_{\lc} \interact(z_1, z_2) \prod_{i=1}^2 \big(\corej (z_i)\big)^{\floor{\frac{t-s}{J}}} \rem (z_i, t, s) z_i^{x_{3-i} - y_i } \frac{dz_i}{2\pi \im z_i}\\
%\numberthis \label{eq:four:temp5}
%&\quad +\res_{z_1 = \pol(z_2)} \oint_{\lc} \oint_{\lc} \interact(z_1, z_2) \big(\corej (z_i)\big)^{\floor{\frac{t-s}{J}}} \rem (z_i, t, s) z_i^{x_{3-i} - y_i } \frac{dz_i}{2\pi \im z_i}\bigg].
%\end{align*}
%\end{remark}
%The integral formula above allows us to apply steepest descent method in Section \ref{sec:exactformula} to obtain the spatial and temporal decay of $\rhzrt((x_1, x_2), (y_1, y_2), t)$.
\subsection{The SHE}\label{sec:she}
Consider the KPZ equation with parameter $V_*$ and $D_*$ given in \eqref{eq:vstar} and \eqref{eq:dstar},
\begin{equation}
\label{eq:KPZ equation1}
\partial_t \kpz(t, x) = \frac{V_*}{2} \pa_x^2 \kpz(t, x) - \frac{V_*}{2} \big(\pa_x \kpz(t, x) \big)^2 + \sqrt{D_*} \xi(t, x).
\end{equation}
As mentioned in Section \ref{sec:kpzequation}, via formally applying Hopf-Cole transform, 
we say that $\HH(t, x)$ is a Hopf-Cole solution of \eqref{eq:KPZ equation1} if  
\begin{equation*}
\kpz(t, x) = - \log \she(t, x),
\end{equation*}
where $\she(t, x)$ is a \emph{mild solution} of the \ac{SHE}
\begin{equation*}
\pa_t \she(t,x) = \frac{V_*}{2} \pa_x^2 \she(t,x) + \sqrt{D_*} \noise \she(t,x)
\end{equation*} 
in the sense that it satisfies the following Duhamel integral form
\begin{equation*}
\she(t,x) = \int_\RR \hk(V_* t, x-y) \she^{\text{ic}}(y) dy + \int_0^t \int_\RR \hk( V_* (t-s), x-y) \she(s, y) \sqrt{D_*} \xi(s, y) ds dy,
\end{equation*}
where $p(t, x) = \frac{1}{\sqrt{2 \pi t}} e^{-\frac{x^2}{2t}}$ is the heat kernel.
The stochastic heat equation has a unique mild solution $\she (t, x)$, see \cite{Cor12} and references therein. 
\bigskip
\\
We recall the \emph{weakly asymmetric scaling} for the \ac{SHS6V} model stated in Theorem \ref{thm:main}: \begin{equation}\label{eq:scaling}
\text{For } \ep > 0, \text{ fix } I \in \ZZ_{\geq 2}, J \in \ZZ_{\geq 1} \text{ and } b \in \bigg(\frac{I + J - 2}{I + J - 1}, 1\bigg), \text{ set } q = e^{\sqrt{\ep}} \text{ and define } \alpha \text{ via } b = \frac{1 + \alpha q}{1 + \alpha}.  
\end{equation}
Such scaling corresponds to taking $b=2, z = \frac{1}{2}, \kappa \to \sqrt{\ep} \kappa$ and keeping $\delta, D$ unchanged in \eqref{eq:scaledkpz}. Note that all parameters in the \ac{SHS6V} model rely on the generic parameters $q, b, I, J, \rho$, since under weakly asymmetry scaling, $b, I, J, \rho$ are all fixed and $q = e^{\sqrt{\ep}}$,  
%more precisely, 
%\begin{equation*}
%\alpha = \frac{1 - b}{b - q},\quad \nu = q^{-I},\quad  b_2 = \frac{1 - b + q^{-I} (b - q)}{1 - q}.
%\end{equation*}
the evolution of the entire model depends on $\ep$. As we will let $\ep$ go to zero, it suffices to consider all $\ep > 0$ \emph{small enough}, which means that we only consider $\ep \in (0, \ep_0)$ for some generic but fixed threshold $\ep_0 > 0$. %The reader can  check directly that the condition $b \in (\frac{I + J - 2}{I + J - 1}, 1)$ is indeed sufficient and necessary to ensure the stochasticity \eqref{eq:intro:temp1} for all $\ep$ small enough. 
\begin{lemma}\label{lem:wsc}
Under weakly asymmetric scaling \eqref{eq:scaling},  we have the following asymptotics near $\ep = 0$
\begin{align*}
%\numberthis \label{eq:expantemp1}
\frac{\nu + \alpha(t)}{1 + \alpha(t)} &= \frac{b (I + \mod(t)) - (I + \mod(t) - 1)}{ b  \mod(t) - (\mod(t) - 1)} + \OO(\ep^{\frac{1}{2}}),\\  
\frac{\nu + q \alpha(t)}{1 + \alpha(t)} &= \frac{b (I + 1 + \mod(t)) - (I + \mod(t))}{b  \mod(t) - (\mod(t) - 1)} + \OO(\ep^{\frac{1}{2}}),\\
%\numberthis \label{eq:expantemp2}
%\frac{1 + \alpha(t)}{1 + \alpha} &= b  \mod(t) - (\mod(t) - 1) + \OO(\ep^{\frac{1}{2}}),  
\frac{1 + q\alpha(t)}{1 + \alpha(t)} &= \frac{b (1 + \mod(t)) - \mod(t)}{b  \mod(t) - (\mod(t) - 1)} + \OO(\ep^{\frac{1}{2}}),\\
\mu(t) &= \frac{1}{I} + \OO(\ep^{\frac{1}{2}}), \quad
%\numberthis \label{eq:lambdaexpan} 
\quad \lambda(t) = 1 -  \frac{\rho \ep^{\frac{1}{2}}}{I} + \OO(\ep).
\end{align*}
As notational convention, we denote $\OO(a)$ to be a generic quantity such that  $\sup_{0 < a< 1} |\OO(a)| a^{-1} < \infty$.
\end{lemma}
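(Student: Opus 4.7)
The plan is a straightforward Taylor expansion in $\eph = \sqrt{\ep}$ of all the quantities, building up from asymptotics of $\alpha$. The only step requiring care is $\lambda(t)$, where the leading orders in numerator and denominator cancel, so one must track the $\ep$-correction to pin down the coefficient $-\rho/I$.

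First I would solve the defining relation $b = \frac{1+\alpha q}{1+\alpha}$ for $\alpha$, giving $\alpha = \frac{b-1}{q-b}$. Expanding $q = 1 + \eph + \tfrac{\ep}{2} + \OO(\ep^{3/2})$ yields $\alpha = -1 + \frac{\eph}{1-b} + \OO(\ep)$, and writing $m := \mod(t)$, multiplication by $q^m$ produces
\[
\alpha(t) = -1 + A\,\eph + \OO(\ep), \qquad A := \frac{bm - (m-1)}{1-b}.
\]
Consequently $q\alpha(t) = -1 + (A-1)\eph + \OO(\ep)$ (replacing $m$ by $m+1$ shifts $A$ by $-1$), and $\nu = q^{-I} = 1 - I\eph + \OO(\ep)$. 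All the quantities in the lemma are explicit rational functions of these three ingredients.

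For the three ratios I would just substitute: $\nu + \alpha(t) = (A - I)\eph + \OO(\ep) = \eph \cdot \frac{b(I+m) - (I+m-1)}{1-b} + \OO(\ep)$ and $1 + \alpha(t) = A\eph + \OO(\ep) = \eph \cdot \frac{bm - (m-1)}{1-b} + \OO(\ep)$; dividing gives the first identity. The second follows by the same manipulation with $\alpha(t)$ replaced by $q\alpha(t)$ (i.e.\ $A$ by $A-1$) in the numerator only, and the third is analogous.

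For $\mu(t)$ from \eqref{eq:muxpan}, the numerator $\alpha(t)(1-q)(1-\nu)q^\rho = (-1)(-\eph)(I\eph)(1) + \OO(\ep^{3/2}) = I\ep + \OO(\ep^{3/2})$, while each denominator factor equals $1 - q^{\rho-I} - \alpha(t)(q^\rho - 1) = I\eph + \OO(\ep)$, using $1 - q^{\rho-I} = (I-\rho)\eph + \OO(\ep)$ and $q^\rho - 1 = \rho\eph + \OO(\ep)$; dividing yields $\mu(t) = \frac{1}{I} + \OO(\eph)$. The main obstacle is $\lambda(t)$: writing the numerator of \eqref{eq:lamexpan} as $I\eph + C_1 \ep + \OO(\ep^{3/2})$ and the denominator as $I\eph + C_2 \ep + \OO(\ep^{3/2})$, one gets $\lambda(t) = 1 + \frac{C_1 - C_2}{I}\eph + \OO(\ep)$. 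Using the second-order expansions $q^\rho - 1 = \rho\eph + \frac{\rho^2}{2}\ep + \OO(\ep^{3/2})$ and $1 - q^{\rho-I} = (I-\rho)\eph - \frac{(I-\rho)^2}{2}\ep + \OO(\ep^{3/2})$, a direct computation yields $C_1 = \frac{(2\rho - I)I}{2} - A\rho$ and $C_2 = \frac{(2\rho - I)I}{2} - (A-1)\rho$; the $A$-independent pieces cancel, leaving $C_1 - C_2 = -\rho$ and hence $\lambda(t) = 1 - \frac{\rho \eph}{I} + \OO(\ep)$, as claimed.
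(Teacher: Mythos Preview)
Your proof is correct and follows the same approach as the paper: straightforward Taylor expansion in $\sqrt{\ep}$ after writing $\alpha(t) = \frac{1-b}{b-e^{\sqrt{\ep}}}\, e^{\sqrt{\ep}\,\mod(t)}$. The paper simply declares the verification ``straightforward'' without details, so your write-up is in fact more complete, and your handling of the cancellation in $\lambda(t)$ via $C_1 - C_2 = -\rho$ is exactly what is needed.
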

\begin{proof}
For every $\ep > 0$, we have $q  =e^{\sqrt{\ep}}$, $\nu = e^{-I \sqrt{\ep}}$ and $\alpha(t) = \alpha q^{\mod(t)} = \frac{1 - b}{b - e^{\sqrt{\ep}}} e^{\sqrt{\ep} \mod(t)}$, where $b, I, J, \rho$ are fixed. The relation of $\lambda(t)$ and $\mu(t)$ with $\ep$ is implied by \eqref{eq:lamexpan} and \eqref{eq:muxpan} 
%\begin{align*}
%\lambdat{\lambda}{t} &= \frac{1 + \alphat{\alpha}{t}  - q^\rho (\alphat{\alpha}{t} + \nu)}{1 + a(t) q  - q^\rho (\alphat{\alpha}{t} q + \nu)} = \frac{1 + \frac{1 - b}{b - e^{\sqrt{\ep}}} e^{\sqrt{\ep} \mod(t)} - e^{\rho \sqrt{\ep}} (\frac{1 - b}{b - e^{\sqrt{\ep}}} e^{\sqrt{\ep} \mod(t)} + e^{-I \sqrt{\ep}})}{1 + \frac{1 - b}{b - e^{\sqrt{\ep}}} e^{\sqrt{\ep} \mod(t)} e^{\sqrt{\ep}} - e^{\rho \sqrt{\ep}} (\frac{1 - b}{b - e^{\sqrt{\ep}}} e^{\sqrt{\ep} \mod(t)} + e^{-I \sqrt{\ep}})} \\
%\mut{\mu}{t} &= \frac{\alphat{\alpha}{t} (1 - q) (1 - \nu) q^\rho }{(1 + \alphat{\alpha}{t} q - q^\rho (\alphat{\alpha}{t} q + \nu)) (1 + \alphat{\alpha}{t} - q^\rho (\alphat{\alpha}{t} + \nu))} \\
%&= \frac{e^{\rho \sqrt{\ep}} (1 - e^{\sqrt{\ep}})(1 - e^{-I \sqrt{\ep}}) e^{\rho \sqrt{\ep}} \frac{1 - b}{b - e^{\sqrt{\ep}}} e^{\sqrt{\ep} \mod(t)}}{(1 + \frac{1 - b}{b - e^{\sqrt{\ep}}} e^{\sqrt{\ep} \mod(t)} e^{\sqrt{\ep}} - e^{\rho \sqrt{\ep}} (\frac{1 - b}{b - e^{\sqrt{\ep}}} e^{\sqrt{\ep} \mod(t)} e^{\sqrt{\ep}} + e^{-\sqrt{\ep} I}))(1 + \frac{1 - b}{b - e^{\sqrt{\ep}}} e^{\sqrt{\ep} \mod(t)}  - e^{\rho \sqrt{\ep}} (\frac{1 - b}{b - e^{\sqrt{\ep}}} e^{\sqrt{\ep} \mod(t)} + e^{-\sqrt{\ep} I}))}
%\end{align*}
The verification of the above asymptotic is  then   straightforward.
%\begin{equation}\label{eq:six5:temp8}
%\frac{\nu + \alpha(t)}{1 + \alpha(t)} = \frac{q^{-I} + \frac{1 - b}{b - q} q^{\mod(t)}}{1 + \frac{1 - b}{b - q} q^{\mod(t)}} = \frac{q^{\mod(t)} - q^{1 - I} + b (q^{-I} - q^{\mod(t)})}{q^{\mod(t)} - q + b (1 - q^{\mod(t)})}. 
%\end{equation}
%As $q_\ep  = e^{\sqrt{\ep}}$ (and $b$ is fixed), taylor expanding \eqref{eq:six5:temp8} at $\ep = 0$ implies 
%\begin{equation*}
%\frac{\nu + \alpha(t)}{1 + \alpha(t)} = \frac{b (I + \mod(t)) - (I + \mod(t) - 1)}{ b  \mod(t) - (\mod(t) - 1)} + \OO(\ep^{\frac{1}{2}}).
%\end{equation*}
%The asymptotic expansion for $\frac{\nu + q \alpha(t)}{1 + \alpha(t)}, \frac{1 + q\alpha(t)}{1 + \alpha(t)}$ follows by the similar argument. For the asymptotic expansion for $\lambda(t)$ and $\mu(t)$, we refer to the expression \eqref{eq:lamexpan}, \eqref{eq:muxpan} and use Taylor expansion.
\end{proof}
%Consequently, the variance $\sigma(t)$ in \eqref{eq:variance} has the expansion 
%\begin{align}\label{eq:varianceexpand}
%\sigma(t) =   \frac{2(1 + t)}{(I +t)^2 (1-b)} - \frac{1 + t}{I +t} - \big(\frac{1 + t}{I +t}\big)^2 + \OO(\ep^{\frac{1}{2}}). 
%\end{align}
%In particular, one can see that $b \in (\frac{I-1}{I}, 1)$ is necessary
%\begin{equation*}
%\lim_{\ep \downarrow 0} b_2^\ep = \lim_{\ep \downarrow 0} \frac{\alpha + \nu}{1 + \alpha }  = I b  - (I- 1).
%\end{equation*}
To highlight the dependence on $\ep$ under weakly asymmetric scaling, 
%we rewrite the parameter $\mu, \lambda$ as $\mu_\epsilon, \lambda_\epsilon$ and height function $N(t, x)$ as $N_\epsilon (t, x)$. 
we denote by the microscopic Hopf-Cole transform $Z_\epsilon (t,x) := Z(t, x)$. Note that presently $Z_\ep (t, x)$ is only defined for $t \in \ZZ_{\geq 0}$ and $x \in \Xi(t)$, we extend  $Z_\epsilon(t, x)$ to be a $C([0, \infty), C(\RR))$-valued process by first linearly interpolating in $x \in \ZZ$, then in $t \in \ZZ_{\geq 0}$. This is slightly different from exponentiating the interpolated height function $N(t, x)$. Nevertheless, under the weak asymmetric scaling $q = e^{\sqrt{\ep}}$, it is straightforward to see that the difference between these two interpolation schemes is negligible as $\ep \downarrow 0$.
\bigskip
\\
As a notational convention, we write $\norm{X}_p := (\EE |X|^p)^{\frac{1}{p}}$ for $p \geq 1$. 
Following the work of [BG97], we define the \emph{near stationary initial data} for the unfused/fused \ac{SHS6V} model.
\begin{defin}\label{def:nearstationary}
Fix $\rho \in (0, I)$, we call the initial data $N_\epsilon (0, x)$ (equivalently $\N_\ep (0, x)$) \textbf{near stationary with density $\rho$} if for any $n \in \NN$ and $a \in (0, \frac{1}{2})$, there exists constant $u:= u(n, a)$ and $C:= C(n, a)$ such that for all $x, x' \in \ZZ$
\begin{equation*}
%\label{eq:four:nearstat}
\norm{Z_\epsilon (0, x)}_n \leq C e^{u\epsilon |x|}, \qquad \norm{Z_\epsilon (0, x) - Z_\epsilon (0, x')}_n \leq C (\epsilon|x - x'|)^\a e^{u \epsilon (|x| + |x'|)},
\end{equation*}
holds for $\epsilon > 0$ small enough.
\end{defin}
\begin{theorem}
\label{thm:main1}
Under weakly asymmetric scaling, assuming that $N_\epsilon (0, x)$ is near stationary with density $\rho$ and for some $C(\RR)$-valued process $\she^{ic} (x)$ 
\begin{equation*}
Z_\epsilon (0, x) \Rightarrow \she^{ic}(0, x) \text{ in } C(\RR) \text{ as } \epsilon \downarrow 0,
\end{equation*}
then
\begin{equation*}
Z_\epsilon (\epsilon^{-2} t, \epsilon^{-1} x) \Rightarrow \she(t, x) \text{ in } C\big([0, \infty), C(\RR)\big) \text{ as } \epsilon \downarrow 0,
\end{equation*}
where $\she (t, x)$ is the  mild solution to the \ac{SHE}
\begin{equation}\label{she}
\pa_t \she(t,x) = \frac{V_*}{2} \pa_x^2 \she(t,x) + \sqrt{D_*} \noise \she(t,x),
\end{equation} 
with initial condition  $\she^{ic} (x)$.
\end{theorem}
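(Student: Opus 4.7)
The plan is to follow the now-standard Bertini--Giacomin martingale-problem strategy, adapted to the \ac{SHS6V} setting. The process $Z_\ep(t,x)$ already satisfies the discrete \ac{SHE} \eqref{eq:microshe}, namely $Z(t+1,x-\mu(t)) = (\tonepart(t+1,t) * Z(t))(x-\mu(t)) + M(t,x)$, with martingale increment $M$ whose conditional quadratic variation is given explicitly in \eqref{eq:four1:quadratic}. The proof then splits into: (i) proving tightness of the laws of $Z_\ep(\ep^{-2}t, \ep^{-1}x)$ in $C([0,\infty), C(\RR))$; and (ii) showing that any subsequential limit $\she$ solves an appropriate martingale problem characterizing the mild solution of \eqref{she} with initial data $\she^{ic}$. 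Since the mild solution to \eqref{she} is unique, (i) and (ii) together yield the desired weak convergence.

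For tightness I would first establish moment bounds of the form $\norm{Z_\ep(t,x)}_n \leq C e^{u\ep(1+|x|)}$ together with a Hölder-type modulus estimate $\norm{Z_\ep(t,x) - Z_\ep(t',x')}_n \leq C (\ep|x-x'| + \ep^2|t-t'|)^{a/2} e^{u\ep(|x|+|x'|)}$ for $a < 1/2$, uniformly on compact time intervals. These follow by iterating \eqref{eq:microshe}: the $\tonepart$ convolution is a heat-kernel-type contribution that can be controlled through the random-walk representation \eqref{eq:rwdistribution} (which under weak scaling has mean zero and $O(1)$ increment variance), while the martingale part is handled by an $L^n$ Burkholder--Davis--Gundy bound fed back into a Gronwall-type closure. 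The near-stationarity hypothesis propagates to later times through the first duality \eqref{eq:fdualt}: the pairwise second moment $\EE[Z_\ep(t,x_1)Z_\ep(t,x_2)]$ is expressed as $\rhzrt$ convolved against the initial pairwise product, and the sharp two-particle kernel estimates of Proposition \ref{prop:semiestimate} (from Section \ref{sec:asymptotic analysis}) close the induction; higher moments follow by standard chaining arguments.

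For the identification of the limit, I would test $Z_\ep$ against $\varphi \in \test$ and extract the Dynkin/Duhamel decomposition
\begin{equation*}
\langle Z_\ep(t), \varphi\rangle = \langle Z_\ep(0), \varphi\rangle + \int_0^t \langle Z_\ep(s), \mathcal{L}_\ep \varphi\rangle \, ds + \mathcal{M}_\ep^\varphi(t),
\end{equation*}
where $\mathcal{L}_\ep$ is the discrete generator built from $\tonepart - \mathrm{id}$ and $\mathcal{M}_\ep^\varphi$ is a martingale. From Lemma \ref{lem:wsc} the random walk $X(t)$ has zero mean and, after averaging the $J$-periodic step distribution, variance compatible with $\frac{V_*}{2}\partial_x^2$; consequently $\ep^{-2}\mathcal{L}_\ep \varphi \to \frac{V_*}{2}\partial_x^2 \varphi$ uniformly, identifying the linear part of the martingale problem. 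The delicate ingredient is the quadratic variation of $\mathcal{M}_\ep^\varphi$, which via \eqref{eq:four1:quadratic} splits into a diagonal piece $(\ep^{-1/2}\nabla Z)^2$ and an off-diagonal piece decaying geometrically in $|x_1-x_2|$; one must show this converges to $D_* \int_0^t \langle \she(s)^2, \varphi^2\rangle\, ds$.

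The main obstacle, as the author stresses in Section \ref{sec:method}, is the self-averaging assertions \A\ and \B\ of Proposition \ref{prop:timedecorr}: \A\ kills the off-diagonal contribution to the quadratic variation, while \B\ identifies the diagonal coefficient as $\frac{\rho(I-\rho)}{I}$, yielding the constant $D_*$ after combining with the overall normalization. Both are attacked through Lemma \ref{lem:dualt}: for \A, the first duality \eqref{eq:fdualt} lets one push gradients from $Z$ onto $\rhzrt$ and exploit the $(t-s+1)^{-1/2}$ gradient decay provided by Proposition \ref{prop:semiestimate}; for \B, the second duality \eqref{eq:sdualt} permits rewriting $(\ep^{-1/2}\nabla Z)^2 - \frac{\rho(I-\rho)}{I}Z^2$ as a combination of the functionals from \eqref{eq:fdualt} and \eqref{eq:sdualt} plus an $\ep^{-1/2}\nabla Z \cdot Z$ term, whose summation-by-parts cancellations feed back into the same kernel estimate. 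Granting Proposition \ref{prop:timedecorr}, identification of the limit as the mild solution of \eqref{she} is routine, and uniqueness closes the argument.
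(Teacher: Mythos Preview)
Your proposal is correct and follows essentially the same Bertini--Giacomin martingale-problem strategy as the paper: tightness via moment and H\"older bounds, then identification of the limit through the linear and quadratic martingale problems, with Proposition~\ref{prop:timedecorr} supplying the self-averaging that pins down $D_*$.

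One point of difference worth flagging is in the tightness step. The paper does \emph{not} invoke the two-particle duality \eqref{eq:fdualt} or the kernel estimates of Proposition~\ref{prop:semiestimate} there. Instead it proves a decorrelation bound for products of the centered flux $\overline{K}$ (Lemma~5.4 in the paper's numbering) and feeds this into a Burkholder-type inequality (Lemma~\ref{lem:burkholder}) that controls $\bignorm{\sum f(t,x)M(t,x)}_{2n}^2$ directly in terms of $\norm{Z}_{2n}^2$; iterating the Duhamel formula then gives a convergent Gamma-function series bounding all moments at once. Your alternative route through \eqref{eq:fdualt} and Proposition~\ref{prop:semiestimate} would indeed handle the pairwise second moment, but the sentence ``higher moments follow by standard chaining arguments'' is not right as stated: Kolmogorov--Chentsov chaining \emph{consumes} high-moment increment bounds rather than producing them from second moments. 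Since you already name the BDG/Gronwall mechanism earlier in the same paragraph, the cleanest fix is to drop the duality sentence from the tightness discussion and reserve the two-particle machinery for the quadratic-variation identification, exactly as the paper does.
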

As a consequence of the preceding theorem, we prove Theorem \ref{thm:main}.
\begin{proof}[Proof of Theorem \ref{thm:main}]
Via the discussion in Section \ref{sec:she}, $\kpz(t, x) = - \log \she(t, x)$ solves the \ac{KPZ} equation 
\begin{equation*}
\partial_t \kpz(t, x) = \frac{V_*}{2} \pa_x^2 \kpz(t, x) - \frac{V_*}{2} \big(\pa_x \kpz(t, x) \big)^2 + \sqrt{ D_*} \xi(t, x).
\end{equation*}
One has by \eqref{eq:microshe}, 
\begin{align*}
Z_{\ep}(\ep^{-2} t, \ep^{-1} x)  &= \hlambdae (t) e^{-\sqrt{\ep} \big(N_\ep (\ep^{-2} t, \ep^{-1} x + \ep^{-2} \hmue (t)) - \rho(\ep^{-1} x + \ep^{-2} \hmue (t) \big) }\\
&= e^{-\sqrt{\ep} \big(N_\ep (\ep^{-2} t, \ep^{-1} x + \ep^{-2} \hmue(t)) - \rho(\ep^{-1} x + \ep^{-2} \hmue(t)) \big) + \log \hlambdae (t)}.
\end{align*}
By Theorem \ref{thm:main1} and continuous mapping theorem, we obtain
\begin{equation*}
-\log Z_{\ep} (\ep^{-2} t, \ep^{-1} x) \Rightarrow \kpz(t, x) \text{ in } C([0, \infty), C(\RR)).
\end{equation*}
In other words,
\begin{equation}\label{eq:four:temp6}
\sqrt{\ep} \big(N_\ep (\ep^{-2} t, \ep^{-1} x + \ep^{-2} \hmue (t)) - \rho(\ep^{-1} x + \ep^{-2} \hmue (t))\big) - \log \hlambdae (t) \Rightarrow \kpz(t, x) \text{ in } C([0, \infty), C(\RR)).
\end{equation}
Note that we have 
$\N_\ep (t, x) = N_\ep (Jt, x)$ (although in fact, they only equal on the lattice due to different linear interpolation scheme, but it is obvious that the difference between them is negligible). Moreover, via \eqref{eq:four:temp5}
$$\qquad \hlambdae(J t) = \lambdae^t, \qquad \hmue(J t) = \mue^t.$$
%we find that 
%$$\sqrt{\ep} \big(\N_\ep (\ep^{-2} t, \ep^{-1} x + \ep^{-2} \mu_\ep t ) - \rho(\ep^{-1} x + \ep^{-2} \mu_\ep t)\big)  -t \log \lambda_\ep \Rightarrow -\log Z_\ep (\ep^{-2} J t, \ep^{-1} x)$$
%Therefore, 
Therefore, replacing the time variable $t$ with $Jt$ in \eqref{eq:four:temp6},
\begin{equation*}
\sqrt{\ep} \big(\N_\ep (\ep^{-2} t, \ep^{-1} x + \ep^{-2} \mu_\ep t  ) - \rho (\ep^{-1} x + \ep^{-2} \mu_\ep t)\big) - t \log \lambda_\ep\Rightarrow \kpzt(t, x) \text{ in } C([0, \infty), C(\RR)),
\end{equation*}
where $\kpzt(t, x) := \kpz (J t, x)$. It is straightforward to check that $\kpzt(t, x)$ satisfies the \ac{KPZ} equation 
\begin{equation*}
\partial_t \kpzt(t, x) = \frac{J V_*}{2} \pa_x^2 \kpzt(t, x) - \frac{J V_*}{2} \big(\pa_x \kpzt(t, x) \big)^2 + \sqrt{J D_*} \xi(t, x),
\end{equation*}
which concludes the proof of Theorem \ref{thm:main}.
\end{proof}
%\begin{cor}
%Under weakly asymmetric scaling, assuming that $N_\ep (0, x)$ is near stationary with density $\rho$ and for some $C(\RR)$-valued process $\mathcal{H}^{ic} (x)$
%\begin{equation*}
%\sqrt{\ep} (N_\ep (0, \ep^{-1} x) - \rho \ep^{-1} x) \wc \mathcal{H}^{ic} (x), 
%\end{equation*}
%then 
%\begin{equation*}
%\sqrt{\epsilon} \big(N_\epsilon (\epsilon^{-2} t, \epsilon^{-1} x + \epsilon^{-2} J t \mu_\epsilon) - \rho (\epsilon^{-1} x + \epsilon^{-2} \mu_\epsilon J t )  - J t \log \lambda_\epsilon \big) \wc \HH(t, x) \text{ in } C([0, \infty), C(\RR)).
%\end{equation*}
%where $\HH(t, x)$ is the Hopf-Cole solution of the  \ac{KPZ} equation 
%\begin{equation}\label{eq:KPZ equation}
%\kpz(t, x) = \frac{V_*}{2 J} \pa_x^2 \kpz(t, x) - \frac{V_*}{2 J} \big(\pa_x \kpz(t, x) \big)^2 + \sqrt{\frac{D_*}{J}} \xi(t, x),
%\end{equation}
%with initial condition $\HH^{ic} (x)$, where $V_*$ and $D_*$ is given by \eqref{eq:vstar} and \eqref{eq:dstar} respectively.
%In particular, using the relation $\N_\ep(t, x) = N_\ep(Jt, x)$, we conclude Theorem \ref{thm:main}.
%\end{cor}
%\begin{proof}
%It is straightforward that one has 
%$$Z_{\ep}(t, x)  = e^{-\sqrt{\ep} (N_\ep (\ep^{-2} t, \ep^{-1} x + \ep^{-2} t \mu_\ep))}
%$$
%\end{proof}
\section{Tightness and proof of Theorem \ref{thm:main1}}\label{sec:tightness}
 In this section, we prove Theorem \ref{thm:main1} assuming Proposition \ref{prop:timedecorr}, whose proof is postponed to Section \ref{sec:timedecorr}. First of all,  we prove the tightness of $\{Z_\ep (\ep^{-2} \cdot, \ep^{-1} \cdot)\}_{0 < \ep < 1}$, which indicates that as $\ep \downarrow 0 $, $Z_\ep (\ep^{-2} \cdot, \ep^{-1} \cdot)$ converges weakly along a subsequence. To identify the limit as well as proving the convergence of the entire sequence, we appeal to the martingale problem of \ac{SHE} that was first introduced in the work of \cite{BG97}. Using approximation from the microscopic \ac{SHE} \eqref{eq:microshe} to the \ac{SHE} in continuum, we show that  any subsequential limit of $Z_\ep (\ep^{-2} \cdot, \ep^{-1} \cdot)$ satisfies the same martingale problem, hence is the mild solution of \ac{SHE}.
\bigskip
\\
Hereafter, we always assume that we are under weakly asymmetric scaling \eqref{eq:scaling}. In general, we will not specify the dependence of parameters on $\ep$. %$q$,$\nu$, $\alphat{\alpha}{t}$, $\lambdat{\lambda}{t}$, $\mut{\mu}{t}$ etc.  
We will also write $q_\ep$,$\nu_\ep$,  etc. %$\alpha_\ep (t)$, etc. %$\lambda_\ep (t)$, $\mu_\ep (t)$ , 
when we do want to emphasize the dependence.
The dependence on $I \in \ZZ_{\geq 2} , J \in \ZZ_{\geq 1}$, $b = \frac{1 + \alpha q}{1 + \alpha} \in (\frac{I+J-2}{I + J-1}, 1)$, $\rho \in (0, I)$ 
will not be indicated as they are fixed. 
\bigskip
\\
For the ensuing discussion, we will usually write $C$ for constants. We might not generally specify when irrelevant terms are being absorbed into the constants. We might also write $C(T), C(\beta, T), \dots$ when we want to specify which parameters the constant depends on. We say ``for all $\ep > 0$ small
enough" if the referred statement holds for all $0 < \ep < \ep_0$ for some generic but fixed threshold $\ep_0 > 0$ that may change from line to line.
\subsection{Moment bounds and tightness}
The goal of this section is to prove the following Kolmogorov-Chentsov type bound for the microscopic Hopf-Cole transform.
\begin{prop}\label{prop:tightness}
Assume that we start the \ac{SHS6V} model with near stationary initial data with density $\rho \in (0, I)$. Given $n \in \NN$, $a \in (0, \frac{1}{2})$, $T > 0$.   There exists positive constants $C:= C(n, a, T)$, $u:= u(n, a)$ such that 
\begin{align}
\numberthis \label{eq:tightness1}
&\norm{Z(t, x)}_{2n} \leq C e^{u \epsilon |x| },\\
\numberthis \label{eq:tightness2}
&\norm{Z(t, x) - Z(t, x')}_{2n} \leq C |\epsilon(x - x')|^{a} e^{u\epsilon (|x| + |x'|)},\\
\numberthis \label{eq:tightness3}
&\norm{Z(t, x) - Z(t', x)}_{2n} \leq C |\epsilon^2 (t - t')|^{\frac{\a}{2}} e^{2u\epsilon |x|},
\end{align}
for all $t, t' \in [0, \epsilon^{-2} T]$ and $x, x' \in \RR$. 
\end{prop}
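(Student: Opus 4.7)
The plan is to follow the Burkholder--Davis--Gundy plus discrete Gronwall scheme pioneered in [BG97] and adapted to discrete vertex models in [CGST18, Section~4]. The key input is the discrete SHE \eqref{eq:microshe}. Iterating it produces the Duhamel-type representation
\begin{equation*}
Z(t,y) \;=\; (\tonepart(t,0) * Z(0,\cdot))(y) \;+\; \sum_{s=0}^{t-1} \bigl(\tonepart(t,s+1) * \widetilde{M}(s,\cdot)\bigr)(y),
\end{equation*}
where $\widetilde{M}(s,\cdot)$ is $M(s,\cdot)$ suitably shifted by $\hat\mu(\cdot)$ and $\tonepart(t,s)$ denotes the law of the tilted random walk \eqref{eq:rwdistribution} from time $s$ to $t$. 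A preliminary ingredient is a set of uniform-in-$\ep$ heat-kernel estimates for $\tonepart$, obtained from a local CLT applied to the $J$-periodic step distribution \eqref{eq:rwdistribution} (whose one-period increment has mean $0$ and variance of order $\ep^{-1}$, matching the diffusive scaling $(t,x)\sim (\ep^{-2},\ep^{-1})$). These give Gaussian pointwise decay, $\sum_x \tonepart(t,s;x)^2 \lesssim (t-s+1)^{-1/2}$, and corresponding gradient bounds $\sum_x(\nabla_x \tonepart(t,s;x))^2 \lesssim (t-s+1)^{-3/2}$.

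For \eqref{eq:tightness1}, I apply $\|\cdot\|_{2n}$ to the Duhamel representation. Minkowski controls the first term by the near-stationarity bound of Definition \ref{def:nearstationary} convolved against a Gaussian kernel, yielding $Ce^{u\ep|y|}$. For the second term, a discrete Burkholder inequality converts the $L^{2n}$-norm of the martingale sum into an $L^n$-norm of its conditional quadratic variation. The crucial observation, via \eqref{eq:four1:quadratic} and the identity $\Theta_1+\Theta_2=(q-1)\lambda Z$, is that the two-point correlation of $M$ factors as $\ep\cdot Z(s,x_1\wedge x_2)^2$ times a geometric factor $r^{|x_1-x_2|}$ with $r<1$ uniformly in $\ep$ (using Lemma~\ref{lem:wsc} and Condition~\ref{cond:1}). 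Summing the geometric series collapses the double spatial sum, producing the inequality
\begin{equation*}
\|Z(t,y)\|_{2n}^2 \;\leq\; Ce^{2u\ep|y|} \;+\; C\ep \sum_{s=0}^{t-1}\sum_x \tonepart(t,s+1;y-x)^2 \, \|Z(s,x)\|_{2n}^2,
\end{equation*}
on which a standard discrete Gronwall argument (using the $L^2$ heat-kernel decay $(t-s)^{-1/2}$ to absorb the factor $\ep$ via $\sum_s \ep(t-s+1)^{-1/2}\leq C\sqrt{T}$) closes to $Ce^{2u\ep|y|}$ uniformly for $t\in[0,\ep^{-2}T]$.

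For \eqref{eq:tightness2} I apply the same machinery to the difference $Z(t,y)-Z(t,y')$. The deterministic piece is estimated by the Hölder bound on $Z(0,\cdot)$ from Definition \ref{def:nearstationary} combined with $\sum_x|\tonepart(t,0;y-x)-\tonepart(t,0;y'-x)|\lesssim \bigl(|\ep(y-y')|^a\bigr)$; the noise piece uses gradient $L^2$-bounds, which produce a time integrand $(t-s+1)^{-3/2}\wedge(t-s+1)^{-1/2}|\ep(y-y')|^2$, and balancing the two regimes yields any Hölder exponent $a<\tfrac12$. For \eqref{eq:tightness3}, I split $Z(t,y)-Z(t',y)$ into (i) a heat-kernel time-continuity contribution on the initial data, and (ii) an "old noise" sum on $[0,t'-1]$ with kernel difference $\tonepart(t,s+1)-\tonepart(t',s+1)$ plus a "new noise" sum on $[t',t-1]$; standard kernel estimates of the form $\sum_x(\tonepart(t,s;x)-\tonepart(t',s;x))^2\lesssim \min\!\bigl(1,|t-t'|/(t'-s+1)\bigr)(t'-s+1)^{-1/2}$ together with the $L^{2n}$ bound from \eqref{eq:tightness1} then deliver the claimed $|\ep^2(t-t')|^{a/2}$ modulus after a brief interpolation.

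The main obstacle is the pair of preliminary technical inputs: the uniform-in-$\ep$ heat-kernel and kernel-gradient estimates for the time-inhomogeneous random walk \eqref{eq:rwdistribution}, and the verification that the two-point quadratic variation \eqref{eq:four1:quadratic} genuinely carries the gain of $\ep$. The former requires care because a single step of $\tonepart$ is supported on $\mathbb{Z}_{\geq 0}-\mu(k)$ (asymmetric) and only becomes recentered after aggregating $J$ consecutive steps via the fusion periodicity; one needs a Fourier/LCLT argument with $\ep$-uniform constants. The latter requires showing that $\Theta_1$ and $\Theta_2$ are both individually $O(\sqrt{\ep})\cdot Z$ (rather than $O(1)\cdot Z$), which comes from Taylor-expanding $\tonepart(t+1,t)\ast Z$ around $\lambda(t)Z$ using Lemma \ref{lem:wsc}. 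Once these two ingredients are in hand, the rest is routine and parallels [CGST18, Section~4].
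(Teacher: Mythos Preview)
Your strategy matches the paper's: Duhamel representation from iterating \eqref{eq:microshe}, heat-kernel bounds on $\mathsf p$, a Burkholder-type inequality producing the factor $\epsilon$, and then iteration/Gronwall. Two points of divergence are worth noting.

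First, the heat-kernel input. You propose a local CLT for the $J$-periodic walk; the paper instead writes $\mathsf p(t,s,x)$ as a contour integral \eqref{eq:intformulaonetpart} and extracts Gaussian-type bounds via steepest descent (Lemma~\ref{thm:onepartestimate}, with the actual contour analysis carried out in Section~\ref{sec:asymptotic analysis}). Either route works, but a small slip in your description: the single-step variance of $R_\epsilon(k)$ is $O(1)$, not $O(\epsilon^{-1})$ (see \eqref{eq:five:temp6}); this is exactly what makes the diffusive scaling $(t,x)\sim(\epsilon^{-2},\epsilon^{-1})$ consistent.

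Second, the $\epsilon$ gain in the martingale bound. Your route through $\Theta_1\Theta_2$ is valid (in fact $0\le\Theta_i\le(q-1)\lambda Z$ pointwise, since $K\in\{0,1\}$ in \eqref{eq:four1:temp3} forces $\lambda Z\le \mathsf p*Z\le q\lambda Z$), but it only controls the conditional quadratic variation. For the $L^{2n}$ Burkholder bound with $n>1$ one still needs higher moments of the single-time increment $\sum_x f(s,x)M(s,x)$, i.e.\ bounds on $\mathbb E[\prod_{i=1}^{m}\bar K(s,y_i)\mid\mathcal F(s)]$. The paper handles this by a separate combinatorial lemma (Lemma~6.5, the analogue of \cite[Lemma~5.2]{CGST18}) exploiting the product-of-Bernoulli structure of \eqref{eq:birecur1}, and then packages everything as Lemma~\ref{lem:burkholder}. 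Once that lemma is in place, the paper closes \eqref{eq:tightness1} by iterating to a convergent series (rather than Gronwall), and \eqref{eq:tightness2}--\eqref{eq:tightness3} exactly as you outline. The paper's route to the $\epsilon$ factor is arguably more direct: from $M=\lambda(q-1)Z\bar K$ one reads off $(q-1)^2\sim\epsilon$ immediately, without passing through $\Theta_1,\Theta_2$.
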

We immediately deduce the tightness of $Z_\ep (\ep^{-2} \cdot, \ep^{-1}  \cdot)$ once we have the moment bound above.
\begin{cor}
The law of $\funcspace$-valued process $\{Z_\epsilon (\epsilon^{-2} \cdot, \epsilon^{-1} \cdot)\}_{0 < \epsilon < 1}$ is tight. 
\end{cor}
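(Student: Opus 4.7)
The plan is to apply a standard Kolmogorov--Chentsov type argument directly to the rescaled process $Z_\ep(\ep^{-2}\cdot,\ep^{-1}\cdot)$, using the moment bounds of Proposition \ref{prop:tightness} as input. First I would reduce the tightness problem to a sequence of simpler ones: since $C([0,\infty),C(\RR))$ is equipped with the topology of uniform convergence on compact sets, it suffices (by the standard Polish space reduction) to show that for every $T>0$ and every $K>0$, the restriction of $Z_\ep(\ep^{-2}\cdot,\ep^{-1}\cdot)$ to $[0,T]\times[-K,K]$ is tight in $C([0,T]\times[-K,K])$.

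Next I would translate the bounds \eqref{eq:tightness1}--\eqref{eq:tightness3} of Proposition \ref{prop:tightness} into statements about the rescaled field $\widetilde Z_\ep(t,x):=Z_\ep(\ep^{-2}t,\ep^{-1}x)$. Substituting $t\mapsto \ep^{-2}t$, $x\mapsto \ep^{-1}x$ and noting that the $\ep|x|$ factors become $|x|$, one obtains for all $n\in\NN$ and $a\in(0,\tfrac12)$ the existence of $C=C(n,a,T)$ and $u=u(n,a)$ such that, for all $t,t'\in[0,T]$ and $x,x'\in[-K,K]$,
\begin{align*}
\norm{\widetilde Z_\ep(t,x)}_{2n}&\leq C\,e^{u|x|},\\
\norm{\widetilde Z_\ep(t,x)-\widetilde Z_\ep(t,x')}_{2n}&\leq C\,|x-x'|^{a}\,e^{u(|x|+|x'|)},\\
\norm{\widetilde Z_\ep(t,x)-\widetilde Z_\ep(t',x)}_{2n}&\leq C\,|t-t'|^{a/2}\,e^{2u|x|}.
\end{align*}
On the compact box $[0,T]\times[-K,K]$ the exponential prefactors are bounded by a constant $C(K)$, so the two increment bounds reduce to uniform Hölder estimates in $x$ with exponent $a$ and in $t$ with exponent $a/2$.

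Then I would apply the Kolmogorov--Chentsov criterion in two parameters. Writing $\|\widetilde Z_\ep(t,x)-\widetilde Z_\ep(t',x')\|_{2n}\lesssim (|t-t'|^{a/2}+|x-x'|^{a})\cdot C(K)$, one can verify the standard moment criterion of the form
\[
\EE\bigl[|\widetilde Z_\ep(t,x)-\widetilde Z_\ep(t',x')|^{2n}\bigr]\leq C(n,a,T,K)\,\bigl(|t-t'|^{2}+|x-x'|^{2}\bigr)^{\beta}
\]
provided one chooses $n$ large enough and $a$ close enough to $\tfrac12$ so that $2n\cdot(a/2)>2$, i.e.\ $na>2$. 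Combined with the pointwise bound at a fixed base point $(0,0)$, this yields tightness in $C([0,T]\times[-K,K])$ by the standard Kolmogorov--Chentsov tightness theorem (see e.g.\ Corollary 16.9 of Kallenberg). Letting $T,K\to\infty$ along a diagonal sequence and using a diagonal argument gives tightness in $C([0,\infty),C(\RR))$.

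There is no genuine obstacle here: the entire content of the corollary lies in Proposition \ref{prop:tightness}, and the argument above is routine once one chooses $n$ sufficiently large and $a$ sufficiently close to $\tfrac12$ to satisfy the Kolmogorov--Chentsov exponent condition. The only point requiring a little care is the passage from the one-variable bounds \eqref{eq:tightness2}--\eqref{eq:tightness3} to a joint two-variable bound, which is immediate from the triangle inequality $\|\widetilde Z_\ep(t,x)-\widetilde Z_\ep(t',x')\|_{2n}\leq \|\widetilde Z_\ep(t,x)-\widetilde Z_\ep(t,x')\|_{2n}+\|\widetilde Z_\ep(t,x')-\widetilde Z_\ep(t',x')\|_{2n}$.
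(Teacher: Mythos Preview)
Your proposal is correct and follows essentially the same approach as the paper: both use the moment bounds of Proposition~\ref{prop:tightness} to obtain uniform H\"older regularity of the rescaled field on compacts, from which tightness follows by the standard Kolmogorov--Chentsov/Arzel\`a--Ascoli/Prokhorov route. The paper's proof is simply a terser version of what you wrote, invoking Arzel\`a--Ascoli and Prokhorov directly rather than spelling out the exponent condition $na>2$.
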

\begin{proof}
\eqref{eq:tightness1}, \eqref{eq:tightness2} and \eqref{eq:tightness3} indicate that with large probability $\{Z_\epsilon (\epsilon^{-2} \cdot, \epsilon^{-1} \cdot)\}_{0 < \epsilon < 1}$ is uniformly bounded, uniformly spatially and uniformly temporally  H\"{o}lder continuous. Applying Arzela-Ascoli theorem together with Prokhorov's theorem \cite{Bil13} yields the desired result.  
%of $\{Z_\epsilon (\epsilon^{-2} \cdot, \epsilon^{-1} \cdot)\}_{0 < \epsilon < 1}$.   
\end{proof}
For the proof of Proposition \ref{prop:tightness}, we will basically follow the framework developed in \cite{CGST18}. Let us begin with a technical lemma which will be frequently used for the rest of the paper. 
\begin{lemma}\label{lem:five:usefullem}
Fix $T > 0$, for any $u > 0$, there exists $\beta_0 > 0$ such that for all $\beta > \beta_0$ and $C(\beta) > 0$,  there exists $\ep_0$ such that for all positive  $\ep < \ep_0$, $t \in [0, \ep^{-2} T] \cap \ZZ$ and $x \in \Xi(t)$, the following inequality holds\footnote{Here $C(\beta)$ can be any positive constant, though for application, the choice of it usually depends on the value of $\beta$.}
\begin{equation*}
\sum_{y \in \Xi(t)} e^{-\frac{\beta |x - y|}{\sqrt{t+1} + C(\beta)}} e^{u \ep |y|} \leq 2 \sqrt{t+1} e^{u \ep |x|}.
\end{equation*}
\end{lemma}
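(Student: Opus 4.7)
The plan is to decouple the $x$ and $y$ dependence using the triangle inequality and then reduce the remaining sum to a two-sided geometric series on a translate of $\ZZ$. First I would apply $|y| \leq |x| + |x-y|$ to write
$$\sum_{y \in \Xi(t)} e^{-\frac{\beta |x - y|}{\sqrt{t+1} + C(\beta)}} e^{u\ep|y|} \leq e^{u\ep|x|} \sum_{y \in \Xi(t)} e^{-\delta_{t,\ep}|x-y|},$$
where $\delta_{t,\ep} := \beta/(\sqrt{t+1} + C(\beta)) - u\ep$. Since both $x$ and $y$ lie in $\Xi(t) = \ZZ - \hmu(t)$, the difference $x-y$ ranges over $\ZZ$, so the inner sum equals $(1+e^{-\delta_{t,\ep}})/(1-e^{-\delta_{t,\ep}})$ whenever $\delta_{t,\ep} > 0$. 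The lemma therefore reduces to verifying
$$\frac{1+e^{-\delta_{t,\ep}}}{1-e^{-\delta_{t,\ep}}} \leq 2\sqrt{t+1}.$$

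To verify this inequality I would split into regimes according to the size of $\delta_{t,\ep}\sqrt{t+1}$. The minimum of $\beta\sqrt{t+1}/(\sqrt{t+1}+C(\beta))$ over $t \geq 0$ is $\beta/(1+C(\beta))$ (attained at $t=0$), while $u\ep\sqrt{t+1} \leq u\sqrt{T+\ep^2}$ uniformly for $t \in [0,\ep^{-2}T]$. Hence, taking $\beta_0$ large enough (in terms of $u$, $T$, and the constant $C(\beta)$) and $\ep_0$ small enough, one can guarantee $\delta_{t,\ep}\sqrt{t+1} \geq 2$ for every admissible $t$ and $\ep$. In the regime $\delta_{t,\ep} \leq 1$, the elementary bound $1 - e^{-\delta} \geq \delta/2$ yields $(1+e^{-\delta_{t,\ep}})/(1-e^{-\delta_{t,\ep}}) \leq 4/\delta_{t,\ep} \leq 2\sqrt{t+1}$. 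In the regime $\delta_{t,\ep} > 1$ (which forces $t$ to be small), the geometric sum is bounded by a fixed constant close to $2$, which is dominated by $2\sqrt{t+1}$ as soon as $t \geq 1$.

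The main obstacle I anticipate is the bookkeeping of the corner case $t = 0$: the bound $2\sqrt{t+1}$ is smallest there and equals $2$, leaving no slack, so the geometric series must be bounded strictly by $2$. This translates to the quantitative constraint $\delta_{0,\ep} \geq \ln 3$, i.e.\ $\beta/(1+C(\beta)) \geq \ln 3 + u\ep$, which must hold simultaneously with the large-$t$ constraint coming from $\delta_{t,\ep}\sqrt{t+1} \geq 2$ as $t \to \ep^{-2}T$. Once $\beta_0$ is chosen to meet both thresholds for the given $C(\beta)$, and $\ep_0$ is taken small enough so that the $O(\ep)$ corrections are absorbed into the slack in these inequalities, the argument is essentially the two-line computation above combined with the explicit geometric series identity; no subtler analytic input is needed.
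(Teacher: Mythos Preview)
Your approach is essentially identical to the paper's: apply the triangle inequality $|y|\le |x|+|x-y|$ to pull out $e^{u\ep|x|}$, then control the remaining two-sided geometric series on $\ZZ$. The paper absorbs the factor $e^{u\ep|x-y|}$ by halving the exponent $\frac{\beta}{\sqrt{t+1}+C(\beta)}$ (after first checking $\frac{\beta}{\sqrt{t+1}+C(\beta)}\ge 2u\ep$), whereas you work directly with the net rate $\delta_{t,\ep}=\frac{\beta}{\sqrt{t+1}+C(\beta)}-u\ep$; these are equivalent bookkeeping choices.

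You are more careful than the paper about the $t=0$ corner, and this deserves a comment. Your fix is to take $\beta_0$ large in terms of $C(\beta)$, but note that the stated order of quantifiers forbids this: $\beta_0$ is chosen \emph{before} $C(\beta)$. In fact, with those quantifiers the inequality as written is not attainable at $t=0$: taking $x=0$ and $C(\beta)$ huge, the sum is $\sum_{n\in\ZZ}e^{-(\frac{\beta}{1+C(\beta)}-u\ep)|n|}$, which for small $\ep$ is of order $2(1+C(\beta))/\beta$ and cannot be bounded by $2\sqrt{1}=2$. The paper's final step $\frac{2}{1-e^{-\beta/(2(\sqrt{t+1}+C(\beta)))}}\le 2\sqrt{t+1}$ has exactly the same defect at $t=0$; it simply does not flag it. The resolution is cosmetic: either replace the constant $2$ on the right by a constant $C'(\beta,C(\beta))$, or restrict to $t\ge t_0(\beta,C(\beta))$. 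Every use of the lemma in the paper only needs $\le C\sqrt{t+1}\,e^{u\ep|x|}$ for some constant, so this does not affect anything downstream. Aside from this shared cosmetic issue, your argument is correct and matches the paper's.
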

\begin{proof}
Take $\beta_0 = 4 \sqrt{T} u$, for $\beta > \beta_0$ and arbitrary $C(\beta) > 0$, due to $t \in [0, \ep^{-2} T]$, one has
$$\frac{\beta |x|}{\sqrt{t+1} + C(\beta)} \geq \frac{\beta \epsilon |x|}{\sqrt{T + \epsilon^2} + C(\beta) \epsilon} \geq 2 u \epsilon |x|$$
holds for $\ep < \ep_0$, where is $\ep_0$ is to be chosen small enough. Thereby,
\begin{align*}
\sum_{y \in \Xi(t)} e^{-\frac{\beta |x - y|}{\sqrt{t+1} + C(\beta)}} e^{u \ep |y|}
&\leq e^{u \ep |x|} \sum_{y \in \Xi(t)} e^{-\frac{\beta |x - y|}{\sqrt{t+1} + C(\beta)}} e^{u \ep |x - y|},\\
&\leq e^{u \ep |x|} \sum_{y \in \ZZ} e^{-\frac{\beta |y|}{\sqrt{t+1} + C(\beta)}} e^{u \ep |y|}\\
&\leq e^{u \ep |x|} \sum_{y \in \ZZ} e^{-\frac{\beta |y|}{2 (\sqrt{t+1} + C(\beta))}}
\\
&\leq  2 \sqrt{t+1} e^{u \ep |x|}.
%&\leq e^{u \ep |x|} \sum_{x \in \Xi(t)} e^{-\frac{\beta |x|}{2 (\sqrt{t+1} + C(\beta))}}  
\end{align*}
Here, the last inequality follows from
\begin{equation*}
\sum_{x \in \Xi(t)} e^{-\frac{\beta |y|}{2 (\sqrt{t+1} + C(\beta))}} \leq \frac{2}{1 - e^{-\frac{\beta}{2 (\sqrt{t+1} + C(\beta))}}} \leq 2\sqrt{t+1}.
\end{equation*}
Thus, we conclude the lemma.
\end{proof}
The following estimate for the one particle transition probability will be useful in proving Proposition \ref{prop:tightness}.
\begin{lemma}\label{thm:onepartestimate}
For any $u, T \in (0, \infty)$ and $a \in (0, 1)$, there exists constant $C$ (depending on $a, u, T$) such that 
\begin{align*}
&(i)\   \tonepart (t, s, x) \leq C (t - s + 1)^{-\frac{1}{2}}, \hspace{9.2em} (ii)\ \sum_{x \in \Xi(t, s)} \tonepart (t, s, x) e^{u \epsilon  |x|} \leq C, \\
&(iii)\ \sum_{x \in \Xi(t, s)} |x|^a \tonepart(t, s, x) e^{u \epsilon |x|} \leq C (t - s + 1)^{\frac{a}{2}}, \qquad
(iv)\  |\tonepart(t, s, x)  - \tonepart(t, s, x')| \leq C |x-x'|^{a} {(t-s+1)}^{-\frac{a+1}{2}}.
\end{align*}
for $\epsilon > 0$ small enough and $s \leq t \in [0, \epsilon^{-2} T] \cap \ZZ$.
\end{lemma}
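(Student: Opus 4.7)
The transition kernel $\tonepart(t,s,\cdot)$ is the probability mass function of the sum $X(t)-X(s)=\sum_{k=s}^{t-1}R(k)$ of independent mean-zero increments whose distributions are given by \eqref{eq:rwdistribution}, so the lemma is a quantitative heat-kernel estimate for an inhomogeneous random walk. The plan is Fourier inversion for (i) and (iv) and exponential Markov for (ii) and (iii). Under weakly asymmetric scaling, Lemma \ref{lem:wsc} shows that the parameters $\lambda(k)$, $\mu(k)$, $q^\rho$, $(\nu+\alpha(k))/(1+\alpha(k))$ converge to strictly positive, finite, $J$-periodic limits, so each $R(k)$ converges to a centered geometric-type random variable with uniform-in-$\epsilon$ positive variance $\sigma_k^2\geq c>0$, uniformly bounded moments of every order, and uniform exponential moments in some fixed neighborhood of $0$.

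For (i) and (iv) I would use the Fourier representation
\begin{equation*}
\tonepart(t,s,x)=\frac{1}{2\pi}\int_{-\pi}^{\pi}e^{-\im\xi(x+\hmu(t)-\hmu(s))}\prod_{k=s}^{t-1}\varphi_k(\xi)\,d\xi,
\end{equation*}
where $\varphi_k(\xi):=\EE\bigl[e^{\im\xi(R(k)+\mu(k))}\bigr]$ is the characteristic function of the integer-valued increment $R(k)+\mu(k)$. The goal is then to verify, uniformly in small $\epsilon$ and in $k$, that $|\varphi_k(\xi)|\leq e^{-c\xi^2}$ for $|\xi|\leq\delta$ (by Taylor expansion, using $\sigma_k^2\geq c$) and $|\varphi_k(\xi)|\leq 1-\delta'$ for $\delta\leq|\xi|\leq\pi$ (by explicit summation of the geometric tail together with Lemma \ref{lem:wsc}). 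The product then satisfies $\bigl|\prod_{k=s}^{t-1}\varphi_k(\xi)\bigr|\leq e^{-c(t-s)\xi^2}$ on all of $[-\pi,\pi]$, and the standard Gaussian integral bound yields (i). For (iv), insert $|e^{-\im\xi x}-e^{-\im\xi x'}|\leq 2^{1-a}(|\xi||x-x'|)^a$ into the Fourier representation of $\tonepart(t,s,x)-\tonepart(t,s,x')$; the extra factor $|\xi|^a|x-x'|^a$ against $e^{-c(t-s)\xi^2}$ integrates to $C|x-x'|^a(t-s+1)^{-(a+1)/2}$, giving (iv).

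For (ii), a one-sided Chernoff bound yields
\begin{equation*}
\sum_{x\in\Xi(t,s)}\tonepart(t,s,x)e^{u\epsilon|x|}\leq\prod_{k=s}^{t-1}\EE\bigl[e^{u\epsilon R(k)}\bigr]+\prod_{k=s}^{t-1}\EE\bigl[e^{-u\epsilon R(k)}\bigr].
\end{equation*}
Since each $R(k)$ is centered with uniform exponential moments near $0$, a second-order Taylor expansion gives $\EE\bigl[e^{\pm u\epsilon R(k)}\bigr]\leq 1+Cu^2\epsilon^2\leq e^{Cu^2\epsilon^2}$, and the constraint $t-s\leq\epsilon^{-2}T$ turns this into the claimed uniform bound $C=2e^{Cu^2T}$. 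For (iii), Cauchy--Schwarz together with Jensen reduces matters to a second-moment bound and (ii):
\begin{equation*}
\sum_{x\in\Xi(t,s)}|x|^a\tonepart(t,s,x)e^{u\epsilon|x|}\leq\bigl(\EE[|X(t)-X(s)|^{2a}]\bigr)^{1/2}\biggl(\sum_{x\in\Xi(t,s)}\tonepart(t,s,x)e^{2u\epsilon|x|}\biggr)^{1/2}\leq C(t-s+1)^{a/2}.
\end{equation*}

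The main technical obstacle is the uniform-in-$\epsilon$ control of $\varphi_k(\xi)$, specifically the uniform lower bound on the variance and the uniform gap $|\varphi_k(\xi)|\leq 1-\delta'$ on the compact set $\delta\leq|\xi|\leq\pi$, for all $k$ and all small $\epsilon$. Both follow from Lemma \ref{lem:wsc} combined with the explicit form of \eqref{eq:rwdistribution}: the limiting distribution of $R(k)+\mu(k)$ is a translated geometric whose variance is an explicit positive rational function of the residue class of $k$ modulo $J$, and whose characteristic function is a M\"obius transform of $e^{\im\xi}$ that is strictly less than $1$ in modulus off of $\xi=0$ and depends continuously on the parameters.
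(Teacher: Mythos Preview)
Your approach is correct and takes a genuinely different route from the paper's. The paper writes $\tonepart$ as a contour integral \eqref{eq:intformulaonetpart} and then forward-references the steepest-descent analysis of Section~\ref{sec:asymptotic analysis} (specifically the bounds \eqref{eq:six:onepartbound} and \eqref{eq:six:onepartgradbound}) to obtain the pointwise heat-kernel estimate $\tonepart(t,s,x)\leq C(t-s+1)^{-1/2}\exp\bigl(-\beta|x|/(\sqrt{t-s+1}+C(\beta))\bigr)$ and its discrete-gradient analogue; all four items (i)--(iv) then follow from this single bound together with Lemma~\ref{lem:five:usefullem} and elementary summation. You instead handle (i) and (iv) by real-variable characteristic-function estimates (Taylor expansion near $\xi=0$ using the uniform variance lower bound, and aperiodicity away from zero) and (ii), (iii) by Chernoff and Cauchy--Schwarz plus Jensen, bypassing the complex-contour machinery entirely. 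Your route is self-contained and does not depend on Section~\ref{sec:asymptotic analysis}; the paper's route has the advantage that the steepest-descent apparatus must be built anyway for the two-particle estimates in Proposition~\ref{prop:semiestimate}, so the one-particle case comes essentially for free and in addition yields the stronger pointwise spatial-decay statement (which is more than (i)--(iv) assert). One minor slip in your write-up: the inequality $\bigl|\prod_k\varphi_k(\xi)\bigr|\leq e^{-c(t-s)\xi^2}$ holds literally only for $|\xi|\leq\delta$, while on $\delta\leq|\xi|\leq\pi$ you have the exponentially small bound $(1-\delta')^{t-s}$; this does not affect the conclusions for (i) or (iv).
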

\begin{proof}
The proof is more or less analogous to \cite[Lemma 5.1]{CGST18}. 
We first claim that  $\tonepart(t, s, x)$ admits the following  integral formula
\begin{equation}\label{eq:intformulaonetpart}
\tonepart(t, s, x)  =  \oint_{\lc} \big(\corej (z)\big)^{\floor{\frac{t-s}{J}}} \rem (z, t, s)  z^{x} \frac{dz}{2 \pi \im z},
\end{equation}
where $\corej (z)$, $\rem (z, t, s)$ are defined in \eqref{eq:four2:temp2} and \eqref{eq:four2:temp3} respectively and $R$ is large enough so that the circle $\C_R$ includes all the singularities of the integrand.
%\begin{equation}\label{eq:four:temp4}
%	%\core(z, t) := \lambdat{\lambda}{t} z^{\mut{\mu}{t}} \frac{1 + \alphat{\alpha}{t} q - (\nu + \alphat{\alpha}{t} q) q^{\rho} z^{-1}}{1 + \alphat{\alpha}{t} - (\alphat{\alpha}{t} + \nu) q^{\rho}}, \qquad 
%	\coreh(z, t, s) := \prod_{k = s}^{t-1} \lambdat{\lambda}{k} z^{\mut{\mu}{k}} \frac{(1 + \alphat{\alpha}{k} q) q^{-\rho} z - (\nu + \alphat{\alpha}{k} q)}{(1 + \alphat{\alpha}{k}) q^{-\rho} z - (\nu + \alphat{\alpha}{k})}.
%	\end{equation}
This claim can be proved by observing 
\begin{align*}
\EE\big[z^{-R(k)}\big] &= \sum_{n=0}^{\infty} \PP\big(R(k) = n - \mut{\mu}{k}\big) z^{\mut{\mu}{k} - n}, \\
&= \lambdat{\lambda}{k} \frac{1 + q \alphat{\alpha}{k} }{1 + \alphat{\alpha}{k}} z^{\mut{\mu}{k}} + \sum_{n=1}^{\infty} \lambdat{\lambda}{k}\bigg(1- \frac{1 + q \alphat{\alpha}{k}}{1 + \alphat{\alpha}{k}}\bigg) \bigg(1- \frac{\nu + \alphat{\alpha}{k}}{1 + \alphat{\alpha}{k}}\bigg) \bigg(\frac{\nu + \alphat{\alpha}{k}}{1 + \alphat{\alpha}{k}}\bigg)^{n-1} q^{\rho n} z^{\mut{\mu}{k} - n}, \\
\numberthis \label{eq:mgf}
%&= \lambda z^\mu \frac{b + (1-b - b_2) q^\rho z^{-1}}{1 - b_2 q^\rho z^{-1}} 
&= \lambdat{\lambda}{k} z^{\mut{\mu}{k}} \frac{1 + \alphat{\alpha}{k} q - (\nu + \alphat{\alpha}{k} q) q^{\rho} z^{-1}}{1 + \alphat{\alpha}{k} - (\nu + \alphat{\alpha}{k}) q^{\rho} z^{-1}}.
\end{align*}
This implies 
\begin{equation*}
\EE\big[z^{-(X(t) - X(s))} \big] = \prod_{k=s}^{t-1} \EE\big[z^{-R(k)}\big] = \big(\corej (z)\big)^{\floor{\frac{t-s}{J}}} \rem (z, t, s).
\end{equation*}
Via Fourier inversion formula, we have 
\begin{equation*}
\tonepart(t, s, x) = \PP\big(X(t) - X(s) = x\big) \oint_{\C_R} \EE\big[z^{-(X(t) - X(s)}\big] z^x \frac{dz}{2\pi \im z} =  \oint_{\lc} \big(\corej (z)\big)^{\floor{\frac{t-s}{J}}} \rem (z, t, s) \frac{dz}{2 \pi \im z},
\end{equation*}
In Section \ref{sec:asymptotic analysis}, we will obtain an upper bound of $\tonepart(t, s, x)$ by applying steepest descent analysis to the integral formula above and we use this upper bound here in advance.
Referring to \eqref{eq:six:onepartbound}, by taking $x_i - y_i \to x$, we obtain for all $\beta, T > 0$, there exists positive constant $C(\beta), C(\beta, T)$ such that for $\ep > 0$ small enough
\begin{equation}\label{eq:five:temp1}
\tonepart (t, s, x) \leq \frac{C(\beta, T)}{\sqrt{t - s+ 1}} e^{-\frac{\beta |x|}{\sqrt{t-s+1} + C(\beta)}}, \qquad t \in [0, \ep^{-2} T] \cap \ZZ.
\end{equation} 
which gives (i). 
%Note that for $t \in [0, \epsilon^{-2} T]$, choose $\beta: = 4 \sqrt{T} u$, for $\epsilon$ small enough,
%\begin{equation*}
%\frac{\beta |x|}{\sqrt{t+1} + C(\beta)} \geq \frac{\beta \epsilon |x|}{\sqrt{T + \epsilon^2} + C(\beta) \epsilon} \geq 2 u \epsilon |x|
%\end{equation*} 
Using \eqref{eq:five:temp1} together with Lemma \ref{lem:five:usefullem} gives (ii)
\begin{equation*}
\sum_{x \in \Xi(t, s)} \tonepart (t, s, x) e^{u\ep |x|} \leq \sum_{x \in \Xi(t, s)} \frac{C(\beta, T)}{\sqrt{t - s +1}} e^{-\frac{\beta |x|}{\sqrt{t - s +1} + C(\beta)}} e^{u\ep |x|} \leq C.
\end{equation*}
%\begin{equation*}
%\sum_{x \in \Xi(t)} \tonepart(t, x) e^{u \epsilon |x|} \leq 
%\sum_{x \in \Xi(t)}  e^{-\frac{\beta |x|}{\sqrt{t+1} + C(\beta)}} \cdot e^{\epsilon u |x|} \leq \sum_{x \in \Xi(t)} e^{-\frac{\beta |x|}{2 (\sqrt{t+1} + C(\beta))}}  \leq  C \sqrt{t+1}
%\end{equation*}
For (iii), we see that 
\begin{align*}
\sum_{x \in \Xi(t, s)} |x|^a \tonepart (t, s, x) e^{u \epsilon |x|} \leq \sum_{x \in \Xi(t, s)} C(\beta, T) |x|^a e^{-\frac{\beta |x|}{2 (\sqrt{t - s +1} + C(\beta))}} \leq C \big(\sqrt{t - s +1} +C(\beta)\big)^{a+1} \leq C (t - s +1)^{\frac{a + 1 }{2}}.
\end{align*}
For the second inequality above, we used the inequality 
\begin{equation*}
\sum_{x \in \Xi(t, s)} |x|^a e^{-b |x|} \leq C \int_{0}^{\infty} x^a e^{-b x} dx \leq C b^{-a-1}. 
\end{equation*}
Finally, to prove (iv), one has by \eqref{eq:six:onepartgradbound} (taking $\beta = 1$) 
\begin{equation*}
|\na p(t, s, x)| = |\tonepart(t, s, x+1) - \tonepart(t, s, x)| \leq \frac{C(T)}{t - s +1} e^{-\frac{|x|}{\sqrt{t - s +1} + C}}.
\end{equation*}
Summing the above equation  over $[x, x' - 1]$ (assuming with out loss of generosity that  $x < x'$), we obtain 
\begin{equation*}
\big|\tonepart(t, s, x) - \tonepart(t, s, x')\big| \leq \frac{C(T)}{t - s +1} \sum_{y =x}^{x' - 1} e^{-\frac{|y|}{\sqrt{t -s +1} + C}}
\end{equation*}
If we bound each term in the geometric sum by $1$, we have
$\big|\tonepart(t, s, x) - \tonepart(t, s, x')\big| \leq \frac{C}{t-s+1} |x' - x|$. In addition, we can bound the geometric sum by 
$$\sum_{y= x}^{x' - 1} e^{-\frac{|y|}{\sqrt{t - s +1} + C}} \leq 2 \sum_{y= 0}^{\infty} e^{-\frac{|y|}{\sqrt{t - s +1} + C}} = \frac{2}{1 - e^{-\frac{1}{\sqrt{t -s +1} + C}}} \leq C \sqrt{t - s +1},$$
which implies 
\begin{equation*}
\big|\tonepart(t, s, x) - \tonepart(t, s, x')\big| \leq \frac{C}{\sqrt{t -s +1}}.
\end{equation*} 
Thereby, 
\begin{equation*}
\big|\tonepart(t, s, x)\ - \tonepart(t, s, x') \big| \leq \min\bigg(\frac{C}{t -s +1} |x - x'|, \frac{C}{\sqrt{t -s +1}}\bigg) \leq C |x - x'|^a (t -s +1)^{-\frac{a + 1}{2}},
\end{equation*}
which concludes the proof of (iv).
\end{proof}
Recall the discrete \ac{SHE} in Proposition \ref{prop:microscopicshe}
\begin{equation}\label{eq:five:temp}
Z(t, x) = (\tonepart(t, t-1) * Z(t-1)) (x) + M(t-1, x+ \mut{\mu}{t-1}).
\end{equation}
Iterating \eqref{eq:five:temp} for $t$ times yields 
\begin{equation}\label{eq:five:microSHE}
Z(t, x ) = (\tonepart(t, 0) * Z(0))(x) + Z_{mg}(t),
\end{equation}
where the martingale $Z_{mg}(t)$ equals
\begin{equation}\label{eq:five:temp8}
Z_{mg}(t) = \sum_{s = 0}^{t-1} \big(\tonepart(t, s+1) * M(s)\big) (x+\mu(s)).
\end{equation}
To estimate $Z(t, x)$, it suffices to estimate $(\tonepart(t, 0) * Z(0))(x)$ and $Z_{mg}(t)$ respectively. In general, the former one is easier to bound due to  Lemma \ref{thm:onepartestimate}, while controlling the latter one is much harder.
%, one needs to develop a Burkholder type of inequality, see Lemma \ref{lem:burkholder}.
Following the style of \cite{CGST18}, to  estimate $Z_{mg} (t)$, we need to establish the following two lemmas, which are in analogy with Lemma 5.2 and Lemma 5.3 of \cite{CGST18}. 
\bigskip
\\
Let $\pset(n)$ denote the set of the partitions into intervals of $2$ or $3$ elements. Here, the interval refers to the set of form $U = [a, b] := [a, b] \cap{\ZZ}$, $a \leq b \in \ZZ$. For example, 
\begin{equation*}
\pset(6) = \left\{\{[1,2], [3,4], [5, 6], \{[1, 2], [3, 6]\}, \{[1,4], [5, 6]\}, \{[1, 3], [4, 6]\}\right\}. 
\end{equation*}
For $\vec{y} = (y_1 \leq \dots \leq y_n)$ and $U = [a, b]$, we define $|\vec{y}|_U = y_b - y_a$. 
\begin{lemma}
Fix $n \in \ZZ_{> 0}$, for all $t \in \ZZ_{\geq 0}$ and $y_1 \leq \dots \leq y_n \in \ZZ$, we have  
\begin{equation*}
\bigg|\EE\bigg[\prod_{i=1}^n \overline{K}(t, y_i) \vvv \FF(t) \bigg] \bigg| \leq C(n) \sum_{\pi \in \pset(n)} \prod_{U \in \pi} e^{-\frac{1}{C(n)} |\vec{y}|_U}.
\end{equation*}
\end{lemma}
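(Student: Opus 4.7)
The plan is to prove the bound by induction on $n$, using a conditional independence decomposition of $\overline{K}(t, y)$ analogous to the one that surfaced in the proof of Proposition \ref{prop:microscopicshe}. Write $\EE'[\cdot] := \EE[\cdot \mid \FF(t)]$. First, I would record that for any $y' \leq y$, the identity $K(t, y) = K_{y', y}(t) + L_{y', y}(t)\, K(t, y')$ from \eqref{eq:four:temp2} gives, after centering,
\[
\overline{K}(t, y) = \overline{K_{y', y}}(t) + \overline{L_{y', y}}(t)\, K(t, y') + \EE'[L_{y', y}(t)]\, \overline{K}(t, y').
\]
The first two summands have conditional mean zero and depend only on the Bernoulli environment $\{B(t, z, \cdot), B'(t, z, \cdot)\}_{y' < z \leq y}$, hence are conditionally independent (given $\FF(t)$) of anything supported to the left of $y'$. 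Moreover, since $B'(t, z, \cdot) - B(t, z, \cdot) \in \{-1, 0, 1\}$ takes the value zero with probability bounded below by a uniform $\delta > 0$ (as used in the proof of Lemma \ref{lem:biinfinite}), one obtains $|\EE'[L_{y', y}(t)]| \leq (1 - \delta)^{y - y'} \leq C\, e^{-(y - y')/C}$, the requisite exponential decay.

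For the inductive step, I would apply the decomposition to $\overline{K}(t, y_n)$ with $y' = y_{n-1}$ and take $\EE'$ of $\prod_{i=1}^n \overline{K}(t, y_i)$. The first two terms vanish (by mean zero together with independence from $\prod_{i=1}^{n-1} \overline{K}(t, y_i)$), leaving
\[
\EE'\bigg[\prod_{i=1}^n \overline{K}(t, y_i)\bigg] = \EE'[L_{y_{n-1}, y_n}(t)]\, \EE'\bigg[\prod_{i=1}^{n-2} \overline{K}(t, y_i)\, \overline{K}(t, y_{n-1})^2\bigg].
\]
Using $K(t, y_{n-1}) \in \{0, 1\}$, the elementary linearization $\overline{K}(t, y_{n-1})^2 = \alpha\, \overline{K}(t, y_{n-1}) + \beta$, with $\FF(t)$-measurable $|\alpha|, |\beta| \leq C$, converts this into the recursive bound
\[
\bigg|\EE'\bigg[\prod_{i=1}^n \overline{K}(t, y_i)\bigg]\bigg| \leq C\, e^{-(y_n - y_{n-1})/C} \bigg(\bigg|\EE'\bigg[\prod_{i=1}^{n-1} \overline{K}(t, y_i)\bigg]\bigg| + \bigg|\EE'\bigg[\prod_{i=1}^{n-2} \overline{K}(t, y_i)\bigg]\bigg|\bigg),
\]
so the induction closes, with base cases $n = 2, 3$ following directly from this recursion together with $\EE'[\overline{K}(t, y_1)] = 0$.

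Finally, I must match each branch of the recursion with a partition in $\pset(n)$: the $(n-2)$-term adjoins the new size-$2$ interval $[n-1, n]$ to each $\pi \in \pset(n-2)$, while the $(n-1)$-term absorbs the factor $e^{-(y_n - y_{n-1})/C}$ into the rightmost interval of $\pi \in \pset(n-1)$ -- either fusing a size-$2$ rightmost interval $[n-2, n-1]$ into the size-$3$ interval $[n-2, n]$, or else (if one insists on intervals of size exactly $2$ or $3$) splitting a rightmost size-$3$ block $[n-3, n-1]$ by the trivial bound $e^{-(y_n - y_{n-3})/C} \leq e^{-(y_{n-2} - y_{n-3})/C}\, e^{-(y_n - y_{n-1})/C}$ into $\{[n-3, n-2], [n-1, n]\}$. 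The hard part will be this combinatorial bookkeeping: ensuring that every branch of the recursion can be reassigned cleanly to a partition in $\pset(n)$ of the allowed shape, which is exactly where the splitting inequality above is pressed into service.
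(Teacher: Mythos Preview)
Your induction is correct and gives a clean, self-contained argument that differs from the paper's route. The paper defers to \cite[Lemma~5.2]{CGST18}, whose proof expands each $K(t,y_i)$ as the infinite sum $\sum_{y'\le y_i} I(y',y_i)$ with $I(y',y)=B(t,y',\eta_{y'}(t))\prod_{z=y'+1}^{y}(B'-B)$, and then controls the resulting $n$-fold sum of moment products by grouping the summands according to how the supports $[y'_i,y_i]$ overlap, using the bound $|\EE'[I(y',y)^\ell]|\le Ce^{-|y-y'|/C}$. Your argument instead peels off the rightmost coordinate via the two-term decomposition $K(t,y)=K_{y',y}(t)+L_{y',y}(t)K(t,y')$, which sidesteps the full expansion and the overlap combinatorics; the trade-off is the partition bookkeeping at the end (including the splitting of would-be size-$4$ blocks), which you carry out correctly.

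One wording fix: the second summand $\overline{L_{y',y}}(t)\,K(t,y')$ does \emph{not} depend only on $\{B,B'\}_{y'<z\le y}$ --- it also carries $K(t,y')$. What actually makes that term vanish is that the factor $\overline{L_{y_{n-1},y_n}}(t)$ alone is measurable with respect to $\{B,B'\}_{y_{n-1}<z\le y_n}$ and has conditional mean zero, while the complementary factor $K(t,y_{n-1})\prod_{i=1}^{n-1}\overline{K}(t,y_i)$ is measurable with respect to $\{B,B'\}_{z\le y_{n-1}}$; conditional independence then yields the factorization. The conclusion and the recursion are unchanged.
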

\begin{proof}
\cite[Lemma 5.2]{CGST18} proved this inequality for $I =1$. When $I \geq 2$,  the proof is almost the same. Let us denote by $\EE'\big[\cdot\big] = \EE \big[\cdot \vv \FF(t)\big]$ and
\begin{equation*}
I(y', y) = \prod_{z = y' +1}^{y}\big( B'(t, z, \eta_z (t)) - B(t, z, \eta_z (t))\big) B(t, y', \eta_{y'} (t)).
\end{equation*} 
Due to \eqref{eq:two:temp3}, there exists $C > 0$ such that 
\begin{equation*}
\big|\EE'\big[I(y', y)^\ell\big]\big| \leq  C e^{-\frac{1}{C} |y - y'|}, \qquad \ell \in \NN.
\end{equation*}
This gives bound similar to (5.10) in \cite[Lemma 5.2]{CGST18}. The rest of the proof is the same as in \cite[Lemma 5.2]{CGST18}, we do not repeat it here.
\end{proof}
%Referring to \eqref{eq:four1:temp1}, the $\overline{K}(t, x)$  is closely related to the martingale increment $M(t, x)$. With the aid of previous lemma, we obtain  
\begin{lemma}\label{lem:burkholder}
Fix $n \in \NN$, recall the martingale increment $M(t, x)$ from \eqref{eq:microshe} and let $f(t, x)$ be a deterministic function defined on $t \in [t_1, t_2] \cap \ZZ$ and $x \in \Xi(t)$. Write $f_\infty (t):=\sup_{x \in \Xi(t)} |f(t, x)|$, we have 
\begin{equation*}
\bignorm{\sum_{t = t_1}^{t_2 -1} \sum_{x \in \Xi(t)} f(t, x) M(t, x)}_{2n}^2 \leq \ep C(n) \sum_{t = t_1}^{t_2 -1} \sum_{x \in \Xi(t)} \big|f_\infty (t) f(t, x)\big| \norm{Z(t, x)}_{2n}^2.
\end{equation*}
\end{lemma}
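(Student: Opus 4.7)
The plan is to apply a discrete Burkholder--Davis--Gundy inequality to the martingale-difference sequence $X_t := \sum_{x \in \Xi(t)} f(t,x) M(t,x)$ (each $M(t,\cdot)$ has conditional mean zero by Proposition \ref{prop:microscopicshe}), and then to exploit the explicit conditional-covariance formula \eqref{eq:four1:quadratic}. In its predictable-quadratic-variation form, BDG gives
$$\|S\|_{2n}^2 \;\leq\; C(n) \Bigl\| \sum_{t=t_1}^{t_2-1} \EE\bigl[X_t^2 \mid \FF(t)\bigr] \Bigr\|_n \;\leq\; C(n) \sum_{t=t_1}^{t_2-1} \bigl\|\EE\bigl[X_t^2 \mid \FF(t)\bigr]\bigr\|_n,$$
the second step being Minkowski.

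Next, I will expand $\EE[X_t^2\mid\FF(t)]$ via \eqref{eq:four1:quadratic} and use the factorizations
$$\Theta_1(t,x) = \lambda(t)(q-1)Z(t,x)(1-p_t(x)), \qquad \Theta_2(t,x) = \lambda(t)(q-1)Z(t,x)\,p_t(x),$$
with $p_t(x) := \EE[K(t,x+\hmu(t))\mid\FF(t)] \in [0,1]$, which are implicit in the proof of Proposition \ref{prop:microscopicshe}. Since $p(1-p) \leq \tfrac14$ and $(q-1)^2 = \OO(\epsilon)$ under weakly asymmetric scaling, one has $|\Theta_1(t,x)\Theta_2(t,x)| \leq C\epsilon\,Z(t,x)^2$. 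Writing $\theta(t) := q^\rho(\nu+\alpha(t))/(1+\alpha(t))$, Lemma \ref{lem:wsc} together with $b < 1$ and $\rho \in (0,I)$ guarantees $\theta(t) < 1$ uniformly in $t$ for $\epsilon$ small, so $\sum_{k \geq 0}\theta(t)^k = \OO(1)$. Combining this geometric decay with the pointwise bound $|f(t,x_1)f(t,x_2)| \leq f_\infty(t)\,|f(t,x_1 \wedge x_2)|$ and regrouping the double sum by $x := x_1 \wedge x_2$ gives
$$\bigl|\EE[X_t^2\mid\FF(t)]\bigr| \;\leq\; C\epsilon\,f_\infty(t) \sum_{x\in\Xi(t)} |f(t,x)|\,Z(t,x)^2.$$
Minkowski in $x$ then promotes this to the $L^n$-norm bound $\|\EE[X_t^2\mid\FF(t)]\|_n \leq C\epsilon\,f_\infty(t)\sum_x|f(t,x)|\,\|Z(t,x)\|_{2n}^2$, and summing over $t$ closes the argument.

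The main obstacle is justifying the predictable-quadratic-variation form of BDG for the exponent $2n > 2$: on a general discrete martingale the analogous inequality comes with an additional Burkholder--Rosenthal maximal-increment correction. In our setting, however, $|X_t| \leq C(q-1)\sum_x|f(t,x)|Z(t,x)$ carries an extra $(q-1) = \OO(\sqrt\epsilon)$ factor that absorbs any such correction into a lower-order term; alternatively, one may start from the standard pathwise BDG $\|S\|_{2n}^2 \leq C(n)\|\sum_t X_t^2\|_n$ and iterate the same analysis on the centered martingale $\sum_t X_t^2 - \sum_t \EE[X_t^2\mid\FF(t)]$. Every remaining step---the covariance expansion, the $(q-1)^2 = \OO(\epsilon)$ scaling, the collapse of the double sum via geometric decay, and Minkowski---is mechanical.
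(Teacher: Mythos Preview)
Your expansion of $\EE[X_t^2\mid\FF(t)]$ via \eqref{eq:four1:quadratic}, the bound $|\Theta_1\Theta_2|\le C\epsilon Z^2$, and the collapse of the double sum by the geometric factor $\theta(t)^{|x_1-x_2|}$ are all correct. The gap is exactly where you flag it: the ``predictable--quadratic--variation form of BDG'' you invoke, $\|S\|_{2n}^2\le C(n)\|\sum_t\EE[X_t^2\mid\FF(t)]\|_n$, is simply false for $n>1$, and neither of your proposed fixes closes. For the Rosenthal route, the correction term is $(\sum_t\|X_t\|_{2n}^{2n})^{1/n}$; with your pointwise bound $|X_t|\le C(q-1)\sum_x|f(t,x)|Z(t,x)$ this becomes, after $\ell^n\hookrightarrow\ell^1$ and Cauchy--Schwarz,
\[
C\epsilon\sum_t\Bigl(\sum_x|f(t,x)|\Bigr)\Bigl(\sum_x|f(t,x)|\,\|Z(t,x)\|_{2n}^2\Bigr),
\]
which exceeds the target by the factor $\sum_x|f(t,x)|/f_\infty(t)$. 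In the very application you need (Proposition~\ref{prop:tightness}, with $f=\tonepart(t,s+1,\cdot)$) this ratio is $\sim\sqrt{t-s}$, not $O(1)$. The iteration idea has the same defect one level up and does not terminate at a usable bound.

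The paper's route (deferred to \cite[Lemma~5.3]{CGST18}) avoids the predictable bracket entirely. One applies the standard BDG plus Minkowski to get $\|S\|_{2n}^2\le C(n)\sum_t\|X_t\|_{2n}^2$ and then controls $\|X_t\|_{2n}^{2n}$ \emph{directly}: expanding $X_t^{2n}$ via $M(t,x)=\lambda(t)(q-1)Z(t,x)\overline{K}(t,x+\hmu(t))$ produces the $2n$--point correlator $\EE\bigl[\prod_{i=1}^{2n}\overline{K}(t,y_i)\,\big|\,\FF(t)\bigr]$, and this is where the ``previous lemma'' (the $\pset$ partition bound) enters. The partition into blocks of size $2$ or $3$ with exponential decay inside each block localizes the $2n$--fold spatial sum into a product of roughly $n$ factors, each of size $f_\infty(t)\sum_x|f(t,x)|\,\|Z(t,x)\|_{2n}^2$; combined with $(q-1)^{2n}=O(\epsilon^n)$ this yields $\|X_t\|_{2n}^2\le C\epsilon f_\infty(t)\sum_x|f(t,x)|\|Z(t,x)\|_{2n}^2$ and hence the lemma. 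In short, the two--point covariance formula \eqref{eq:four1:quadratic} alone is not enough for $n>1$; the higher--order decorrelation encoded in the $\pset$ lemma is the missing ingredient.
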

\begin{proof}
Using the previous lemma, the proof is the same as the one appeared in \cite[Lemma 5.3]{CGST18}.
\end{proof}
 Have prepared the preceding lemmas, we proceed to prove Proposition \ref{prop:tightness}. Here we use a slightly different approach compared with the proof of the moment bounds in \cite[Proposition 5.4]{CGST18}.
\begin{proof}[Proof of Proposition \ref{prop:tightness}]
Recall that $Z(t, x)$ is defined on $[0, \infty) \times \RR$ through linear interpolation. It suffices to prove the theorem for the lattice $t \in \ZZn$ and $x, x' \in \Xi(t)$. Generalization to continuum $t, x$ follows easily.
\bigskip
\\
Let us begin with proving \eqref{eq:tightness1}. We have by \eqref{eq:five:microSHE} 
\begin{equation*}
\norm{Z(t, x)}_{2n} \leq  \norm{\big(\tonepart(t, 0) * Z(0)\big)(x)}_{2n} + \norm{Z_{mg} (t)}_{2n}.
\end{equation*}
Using $(x+y)^2 \leq 2(x^2 + y^2)$, we get
\begin{equation}\label{temp1}
\norm{Z(t, x)}_{2n}^2 \leq 2 \norm{\big(\tonepart(t, 0) * Z(0)\big)(x)}_{2n}^2 + 2\norm{Z_{mg} (t)}_{2n}^2.
\end{equation}
For the first term on RHS of \eqref{temp1}, by Cauchy-Schwarz inequality,
\begin{equation}\label{eq:tightness:temp1}
\norm{\big(\tonepart(t, 0) * Z(0)\big)(x)}_{2n}^2 \leq \big(\tonepart(t, 0) * \norm{Z(0)}_{2n}^2\big)(x).
\end{equation}
For the second term $\norm{Z_{mg}(t)}_{2n}^2$, by \eqref{eq:five:temp8}
\begin{align*}
Z_{mg}(t) &= \sum_{s = 0}^{t-1} \big(\tonepart(t,  s + 1) * M(s)\big)(x+\mut{\mu}{s}) = \sum_{s = 0}^{t-1} \sum_{y \in \Xi(s)} \tonepart\big(t, s+1, x + \mut{\mu}{s} - y\big) M(s, y).
\end{align*}
Applying Lemma \ref{lem:burkholder}, there exists a constant $C_*$ so that
\begin{align*}
\norm{Z_{mg}(t)}_{2n}^2 &\leq C_* \ep \sum_{s = 0}^{t-1} \sum_{y \in \Xi(s)} \Big(\sup_{y \in \Xi(s)} \tonepart(t, s+1, x+\mut{\mu}{s} - y)\Big) \tonepart(t, s+1, x+ \mut{\mu}{s} - y) \norm{Z(s, y)}_{2n}^2,\\
\numberthis \label{eq:tightness:temp2}
&\leq \sum_{s= 0}^{t-1} \sum_{y \in \Xi(s)}  \frac{C_* \ep}{\sqrt{t-s}} \tonepart\big(t, s+1, x+\mut{\mu}{s} -y\big) \norm{Z(s, y)}_{2n}^2,
\end{align*}
where the last inequality follows from Theorem \ref{thm:onepartestimate} (i).
\bigskip
\\
Replacing the RHS of \eqref{temp1} by upper bound obtained in \eqref{eq:tightness:temp1} and \eqref{eq:tightness:temp2}, we obtain 
\begin{equation}\label{eq:tightness:temp3}
\norm{Z(t, x)}_{2n}^2 \leq (\tonepart(t, 0) * \norm{Z(0)}_{2n}^2)(x)  +  \sum_{s = 0}^{t-1} \frac{C_* \ep}{\sqrt{t-s}} \big(\tonepart(t, s+1) * \norm{Z(s)}_{2n}^2\big)(x+\mut{\mu}{s}). 
\end{equation}
Define the set $\Delta_n^+  = \{(s_1, \dots, s_n) \in \ZZn^n: 0 \leq s_n < \dots < s_1 < t\}$ for $n \in \NN$. Iterating \eqref{eq:tightness:temp3} yields
\begin{align*}
&\norm{Z(t, x)}_{2n}^2 \leq (\tonepart(t, 0) * \norm{Z(0)}_{2n}^2)(x) \\
\numberthis \label{eq:tightness:temp5}
&+\sum_{n=1}^{\infty}  \sum_{(s_1, \dots s_n) \in \Delta_n^+} \frac{(C_* \ep)^n}{\sqrt{t- s_1} \sqrt{s_1 - s_2} \dots \sqrt{s_{n-1} - s_n}} (\tonepart(t, s_1, \dots, s_n) * \norm{Z(0)}_{2n}^2) (x + \sum_{i=1}^{n} \mu(s_i)).
\end{align*}
where $\tonepart(t, s_1, \dots, s_n) = \tonepart(t, s_1 + 1) * \tonepart(s_1, s_2 + 1)* \dots *\tonepart(s_{n-1} +1, s_n)$.
%Note that iterating \eqref{eq:tightness:temp3} for one time, the time "loses" by $1$ and the value of $x$ increases by $\mu$, this explains the reason that we have $\big(\tonepart(t, 0) * \norm{Z(0)}_{2n}^2\big)(x(n))$ \big(instead of $\big(\tonepart(t, 0) * \norm{Z(0)}_{2n}^2\big)(x)$\big).
Following Lemma \ref{thm:onepartestimate}, we bound 
\begin{align*}
&(\tonepart(t, 0) * \norm{Z(0)}_{2n}^2)(x)
%= \sum_{y \in \ZZ} \tonepart(t, 0, x - y)  \norm{Z(0, y)}_{2n}^2
%\leq \sum_{y \in \ZZ} \tonepart(t, 0, x - y)  e^{2 u\ep |y|} 
\leq C e^{2 u\ep |x|},\\
\numberthis \label{eq:tightness:temp6}
&(\tonepart(t, s_1, \dots, s_n) * \norm{Z(0)}_{2n}^2) (x + \sum_{i=1}^{n} \mu(s_i)) \leq C e^{2 u \ep (|x| + n)}.
\end{align*}
For the second term on the RHS of \eqref{eq:tightness:temp5}, note that via integral approximation, we readily see that
\begin{align*}
&\sum_{(s_1, \dots, s_n) \in \Delta_n^+} \frac{(C_* \ep)^n }{\sqrt{t-s} \sqrt{s_1 - s_2} \dots \sqrt{s_{n-1} - s_n}} \leq \int_{0 \leq s_1 \leq \dots \leq s_n \leq t} \frac{(C_* \ep)^n ds_1 \dots ds_n}{\sqrt{t-s_1} \sqrt{s_1 - s_2} \dots \sqrt{s_{n-1} - s_n}} \\
\numberthis \label{eq:tightness:temp7}
&= (C_* \ep t^{\frac{1}{2}} )^n \int_{\tau_1 + \dots + \tau_n \leq 1} \frac{1}{\sqrt{\tau_1} \dots \sqrt{\tau_n}} d\tau_1 \dots d\tau_n = \frac{(\Gamma(\frac{1}{2}) C_* \ep t^{\frac{1}{2}} )^n  }{\Gamma(n/2)}
\end{align*}
where $\Gamma(z)$ is the Gamma function. Combining \eqref{eq:tightness:temp6} and \eqref{eq:tightness:temp7} yields 
\begin{equation*}
\norm{Z(t, x)}_2^2 \leq C e^{2 u\ep |x|} + \sum_{n=1}^{\infty} \frac{(\Gamma(\frac{1}{2}) C_* \ep t^{\frac{1}{2}} )^n  }{\Gamma(n/2)} e^{2 u\ep (|x| + n)} = e^{2 u\ep |x|} \bigg(C + \sum_{n=1}^{\infty} \frac{(\Gamma(\frac{1}{2}) C_* \ep t^{\frac{1}{2}} e^{2 u\ep })^n  }{\Gamma(n/2)} \bigg)
\end{equation*}
Note that $\ep t^{\frac{1}{2}} \leq \sqrt{T}$ (since $t \in [0, \ep^{-2} T]$), as the growth rate of $\Gamma(\frac{n}{2})$ is much faster than that of $x^n$, the infinite series in the parentheses above converge, which concludes \eqref{eq:tightness1}. 
\bigskip
\\
The proof for \eqref{eq:tightness2} and \eqref{eq:tightness3} relies on  \eqref{eq:tightness1}. We proceed to prove \eqref{eq:tightness2},  denote by $$\zt(t, x, x'): = Z(t, x) - Z(t, x'), \qquad \tonepart^\na (t, s,  x, x') := \tonepart(t, s, x) - \tonepart(t, s, x').$$ 
Using \eqref{eq:five:microSHE} (subtract $Z(t, x')$ from $Z(t, x)$)
%applying (6.6) for Z(t, x), Z(t, x')
, we have
\begin{equation*}
\zt(t, x, x') = \sum_{y \in \Xi(t)} \tonepart(t,0, y) \zt(0, x-y, x'-y) + Z_{mg}^{\nabla}(t),
\end{equation*}
where
\begin{align*}
Z_{mg}^{\nabla}(t) 
%&= \sum_{s = 0}^{t-1} \sum_{y \in \Xi(s)} \big(\tonepart(t-s-1, x+ \mu - y) - \tonepart(t-s-1, x' + \mu - y)\big) M(s, y).\\
\numberthis \label{eq:five1:temp1}
=\sum_{s = 0}^{t-1} \sum_{y \in \Xi(s)} \tonepart^\na \big(t, s+1, x+ \mu(s) - y, x'+ \mu(s) -y\big) M(s, y).
\end{align*}
It is straightforward that 
\begin{equation*}
%\label{eq:tightness:temp4}
\norm{\zt(t, x, x')}_{2n}^2 \leq 2 \sum_{y \in \Xi(t)} \tonepart(t, 0, y) \norm{\zt(0, x-y, x'-y)}_{2n}^2 + 2 \norm{Z_{mg}^\nabla (t)}_{2n}^2.
\end{equation*}
By the definition of the near stationary initial data (Definition \ref{def:nearstationary}), for $a \in (0, \frac{1}{2})$, there exists $C$ such that 
\begin{align*}
\sum_{y \in \Xi(t)} \tonepart(t, 0, y) \norm{\zt(0, x-y, x'-y)}_{2n}^2 &\leq C \sum_{y \in \Xi(t)} \tonepart(t, 0, y) (\ep |x-x'|)^{2a} e^{2 u\ep (|x - y| + |x' - y|)}\\
&\leq  C (\ep |x - x'|)^{2a} e^{2u \ep (|x| +|x'|)}  \sum_{y \in \Xi(t)} \tonepart(t, 0, y) e^{4 u\ep |y|}
\end{align*}
Further applying Theorem \ref{thm:onepartestimate} (ii), one has 
\begin{align*}
\sum_{y \in \Xi(t)} \tonepart(t, 0, y) e^{4 u\ep |y|}
\leq C.
\end{align*} 
We conclude that 
\begin{align*}
\numberthis \label{eq:tightness:tempa}
\sum_{y \in \Xi(t)} \tonepart(t, 0, y) \norm{\zt(0, x-y, x'-y)}_{2n}^2 
\leq C (\ep |x - x'|)^{2a} e^{2 u\ep (|x| + |x'|)}.
\end{align*} 
To bound $\norm{Z_{mg}^\nabla (t)}_{2n} $, we appeal to Lemma \ref{lem:burkholder}. Note that due to  Lemma \ref{thm:onepartestimate} (iv),
\begin{equation*}
\sup_{y \in \Xi(s)} \big|\tonepart^\na (t, s+1, x + \mu(t-1) - y, x'+\mu(t-1) - y)\big| \leq C|x - x'|^{2a} (t-s)^{-\frac{2a +1}{2}},
\end{equation*}
Applying Lemma \ref{lem:burkholder} to \eqref{eq:five1:temp1} implies 
%\begin{equation*}
%\norm{Z_{mg}^{\na} (t)}_{2n}^2 \leq C \ep  \sum_{s=0}^{t-1}  \sum_{y \in \Xi(s)} \bigg(\sup_{y \in \Xi(s)} \big|\tonepart^\na (t-s-1, x + \mu(t-1) - y, x'+\mu(t-1) - y)\big|\bigg) \tonepart^\na (t-s-1, x+\mu(t-1) - y, x'+ \mu(t-1) -y) \norm{Z(s, y)}_{2n}^2.
%\end{equation*}
%whereby
\begin{equation*}
\norm{Z_{mg}^{\na} (t)}_{2n}^2 \leq C \ep |x - x'|^{2a}  \sum_{s=0}^{t-1} (t-s)^{-\frac{a+1}{2}} \sum_{y \in \Xi(s)} \tonepart^\na (t-s-1, x+\mu(s) - y, x'+ \mu(s) -y) \norm{Z(s, y)}_{2n}^2.
\end{equation*}
Owing to Theorem \ref{thm:onepartestimate} (i), we observe that  
\begin{align*}
&\sum_{y \in \Xi(s)} \tonepart^\na (t-s-1, x+\mu(s)-y, x'+\mu(s) - y) \norm{Z(s, y)}_2^2 \\
&\leq C \sum_{y \in \Xi(s)} \tonepart^\na (t-s-1, x+ \mu(s) - y, x'+\mu(s) -y) e^{2u\ep |y|} \leq C e^{2 u\ep (|x| + |x'|)}.
\end{align*}
Consequently, 
\begin{align*}
\norm{Z_{mg}^\na (t)}_{2n}^2 \leq C \ep |x'-x|^{2a} e^{2 u\ep (|x|+ |x'|)} \sum_{s = 0}^{t-1} (t-s)^{-\frac{2a+1}{2}} &\leq  C (\ep |x-x'|)^{2 a}  (\ep^2 t)^{\frac{1-2 a}{2}} e^{2 u\ep (|x| + |x'|)},\\
\numberthis \label{eq:tightness:tempb}
&\leq C (\ep|x-x'|)^{2 a} e^{2 u\ep (|x| + |x'|)}.
\end{align*}
We conclude \eqref{eq:tightness2} via combining \eqref{eq:tightness:tempa} and \eqref{eq:tightness:tempb}.
\bigskip
\\
Finally, we justify \eqref{eq:tightness3}, we have
\begin{equation*}
Z(t, x) - Z(t', x) = \sum_{y \in \Xi(t')} \tonepart(t, t', x-y) (Z(t', y) -  Z(t', x)) + Z_{mg}(t, t'),
\end{equation*}
where $Z_{mg} (t, t') = \sum_{s = t'}^{t-1} \sum_{y \in \Xi(s)} \tonepart(t-s-1, x+ \mu(s) - y) M(s, y)$.
Similar to the previous proof, we have 
\begin{equation}\label{eq:tightness:tempc}
\norm{Z(t, x) - Z(t', x)}_{2n}^2 \leq 2 \sum_{y \in \Xi(t')} \tonepart(t, t', x-y) \norm{Z(t', y) - Z(t', x)}_{2n}^2 + 2 \norm{Z_{mg}(t, t')}_{2n}^2.
\end{equation}
For the first term on the RHS of \eqref{eq:tightness:tempc}, we apply \eqref{eq:tightness2} and Lemma \ref{thm:onepartestimate} (iii), for any $a \in (0, \frac{1}{2})$,
\begin{align*}
\sum_{y \in \Xi(t')} \tonepart(t, t', x-y) \norm{Z(t', y) - Z(t', x)}_{2n}^2 &\leq  C \ep^{2a}  \sum_{y \in \Xi(t')} \tonepart(t, t', x-y) |x-y|^{2a} e^{u\ep(|x| + |y|)}\\
%\numberthis \label{eq:five1:temp2}
&\leq C\ep^{2a} (t -t' +1)^{a} e^{2u\ep |x|}.
\end{align*}
For the second term, invoking Lemma \ref{lem:burkholder} gives 
\begin{align*}
\norm{Z_{mg}(t, t')}_{2n}^2 &\leq C\ep \sum_{s = t'}^{t-1} \frac{1}{\sqrt{t-s}} \sum_{y \in \Xi(s)}  \tonepart(t-s-1, x+\mu(s) -y) \norm{Z(s, y)}_{2n}^2\\
\numberthis \label{eq:five1:temp3}
&\leq C \ep e^{2 u\ep |x|} \sum_{s=t'}^{t-1} \frac{1}{\sqrt{t-s}} \leq C (\ep^2 (t-t'))^{\frac{1}{2}} e^{2u\ep |x|}.
\end{align*}
Combining \eqref{eq:tightness:tempc}-\eqref{eq:five1:temp3}, we obtain $\norm{Z(t, x) - Z(t', x)}_{2n} \leq C (\ep^2 (t-t'))^{\frac{a}{2}} e^{u\ep |x|}.$ We complete the proof of Proposition \ref{prop:tightness}. 
\end{proof}
Having shown the tightness of $Z_\epsilon (\epsilon^{-2} \cdot, \epsilon^{-1} \cdot)$, to prove Theorem 5.6, it suffices to show that any limit point $\mathcal{Z}$ of $Z_\epsilon (\epsilon^{-2} \cdot, \epsilon^{-1} \cdot)$ is the mild solution to the SHE \eqref{she}. This is the goal of the Section \ref{sec:martingalep} and Section \ref{sec:proofthm}, where we will formulate the notion of ``solution to the martingale problem" (which is equivalent to the mild solution) and prove that any limit point of $Z_\epsilon (\epsilon^{-2} \cdot, \epsilon^{-1} \cdot)$ satisfies the martingale problem. 
\subsection{The martingale problem}\label{sec:martingalep}
We recall the martingale problem of the SHE from \cite{BG97}.
\begin{defin}\label{def:martingalep}
We say that a $\funcspace$-valued process $\she(t, x)$ is a solution of martingale problem of the \ac{SHE} \eqref{she}
\begin{equation*}
\pa_t \she(t,x) = \frac{V_*}{2} \pa_x^2 \she(t,x) + \sqrt{D_*} \noise \she(t,x)
\end{equation*}
with initial condition $\she^{ic} \in C(\RR)$ if $\she(0, x) = \she^{ic} (x)$ in distribution and
\bigskip
\\
(i) Given any $T > 0$, there exists $u < \infty$ such that 
\begin{equation*}\label{eq:main1init1}
\sup_{t \in  [0, T]} \sup_{x \in \RR} e^{-u |x|} \EE\big[\she (t, x)^2\big] < \infty.
\end{equation*} 
(ii) For any test function $\psi \in C^\infty_c (\RR)$,
\begin{equation*}
\M_\psi (t) = \int_\RR \she(t, x) \psi(x) dx - \int_{\RR} \she(0, x) \psi(x) dx - \frac{V_*}{2} \int_0^t \int_{\RR} \she(s, x) \psi''(x) dx ds 
\end{equation*}
is a local martingale.
\bigskip
\\
(iii) For any test function $\psi \in C^\infty_c (\RR)$,
\begin{equation*}
\Q_\psi (t) = \M_\psi (t)^2 - D_* \int_0^t \int_\RR \she(s, x)^2 \psi(x)^2 dx ds
\end{equation*}
is a local martingale.
\end{defin}
\cite[Proposition 4.11]{BG97} proves the the solution $\mathcal{Z}$ to the martingale problem is also the weak solution (equivalently, the mild solution) to the SHE. Moreover, they show that there is a unique such solution.  
\bigskip
\\
To prove Theorem \ref{thm:main1}, it suffices to prove that any limit point of $Z_\ep (\ep^{-2} \cdot, \ep^{-1} \cdot)$ satisfies (i), (ii), (iii). We will do it in the next section. The main difficulty arises for justifying the quadratic martingale problem  (iii), we need the following proposition, whose proof is postponed to Section \ref{sec:timedecorr}.
\begin{prop}\label{prop:timedecorr}
For $s \in \mathbb{Z}_{\geq 0}$, define 
\begin{equation}\label{eq:tau}
\cas = \frac{\rho(I-\rho)}{I^2} \cdot \frac{b (I +2 \mod(s) +1) - (I+2\mod(s) - 1)}{b(I + 2\mod(s)) - (I + 2\mod(s) - 2)}.
\end{equation}
Start the unfused \ac{SHS6V} model from near stationary initial condition, for given $T > 0$, there exists constant $C$ and $u$ such that (recall the expressions $\Theta_1$ and $\Theta_2$ from \eqref{eq:Thetaone})
\begin{equation}\label{eq:five:timedecorr}
\bignorm{ \epsilon^2 \sum_{s = 0}^{t} \bigg(\epsilon^{-1} \Theta_1 \Theta_2 - \cas  Z^2 \bigg)(s, \xstar - \hmu(s) + \floor{\hmu(s)})}_2 \leq C \epsilon^{\frac{1}{4}} e^{u \epsilon |\xstar|}
\end{equation}
for all $t \in [0, \epsilon^{-2} T] \cap \ZZ$, $\xstar \in \ZZ$ and $\epsilon > 0$ small enough.  
\end{prop}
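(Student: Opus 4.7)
The plan is to follow the self-averaging strategy outlined in Section \ref{sec:method}. The starting point is the clean factorization
\[
\Theta_1(t, x)\, \Theta_2(t, x) = \lambda(t)^2 (q-1)^2 Z(t, x+\hmu(t))^2 \cdot \mathrm{Var}\big[K(t, x+\hmu(t)) \,\big|\, \FF(t)\big],
\]
which follows from \eqref{eq:Thetaone}--\eqref{eq:Thetatwo}, the representation $M(t,x) = \lambda(t)(q-1) Z(t, x+\hmu(t)) \overline{K}(t, x+\hmu(t))$ from \eqref{eq:four1:temp1}, and $K \in \{0,1\}$. Since $(q-1)^2 = \ep + O(\ep^{3/2})$ and $\lambda(t)^2 = 1 + O(\eph)$ by Lemma \ref{lem:wsc}, one has $\ep^{-1}\Theta_1\Theta_2 = (1+O(\eph))\, Z^2 \mathrm{Var}[K|\FF(t)]$. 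Proposition \ref{prop:timedecorr} will therefore follow once the time-averaged discrepancy between $\mathrm{Var}[K(s,\cdot)|\FF(s)] Z(s,\cdot)^2$ and $\cas Z(s,\cdot)^2$ is bounded in $L^2$ by $O(\ep^{1/4})$.

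To analyse $\mathrm{Var}[K(s,y)|\FF(s)]$ I would expand $p := \EE[K(s,y)|\FF(s)]$ in the weakly asymmetric scaling via \eqref{eq:birecur1}--\eqref{eq:two:recur}, obtaining $p = \rho/I + c_1(\mathrm{mod}(s))(\etat_{y+1}(s)-\rho)\eph + O(\ep)$ for explicit $c_1$. Then $\mathrm{Var}[K|\FF(s)] = p(1-p) = \rho(I-\rho)/I^2 + (\text{linear in }\etat_{y+1}(s)-\rho)\cdot\eph + (\text{quadratic in }\etat_{y+1}(s)-\rho)\cdot\ep + O(\ep^{3/2})$. Via \eqref{eq:temp10} the linear piece is proportional to $\ep^{-1/2}\na Z(s,y)\cdot Z(s,y)/Z(s,y)^2$ times a deterministic factor, and self-averages through the mechanism of \A{}; the quadratic piece is proportional to $(\ep^{-1/2}\na Z(s,y))^2/Z(s,y)^2$, and matching its coefficient against the scaling constants in \eqref{eq:tau} identifies $\cas$. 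The problem is thereby reduced to claims \A{} and \B{} from Section \ref{sec:method}.

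Both \A{} and \B{} follow the same template. For \A, conditioning on $\FF(s)$ and invoking the first duality in Lemma \ref{lem:dualt} yields
\[
\EE\big[\ep^{-1} \na Z(t,x_1) \na Z(t,x_2) \,\big|\, \FF(s)\big] = \ep^{-1} \sum_{y_1 \le y_2 \in \Xi(s)} \na_{x_1}\na_{x_2}\rhzrt\big((x_1, x_2),(y_1, y_2),t,s\big) Z(s, y_1) Z(s, y_2),
\]
where summation by parts has shifted the two discrete gradients onto the tilted transition kernel $\rhzrt$. The precise estimate of Proposition \ref{prop:semiestimate} provides an extra factor of $(t-s+1)^{-1/2}$ per such gradient, producing a pointwise bound of order $\ep\cdot(t-s+1)^{-1}$ once combined with the moment bound of Proposition \ref{prop:tightness} and the heat-kernel summation of Lemma \ref{lem:five:usefullem}. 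Cauchy--Schwarz in $L^2$ against $s$ then converts this into the claimed $O(\ep^{1/4})$ rate. For \B, the algebraic reduction from Section \ref{sec:method} rewrites $(\etat_{y+1}(s)-\rho)^2$ as $\qhalfint{I-\etat_{y+1}}\qhalfint{I-1-\etat_{y+1}}q^{\etat_{y+1}} - \tfrac{(I-1)(I-\rho)^2}{I}$ plus a linear-in-$\etat_{y+1}(s)$ piece (handled by \A); the second duality in Lemma \ref{lem:dualt} then opens up the diagonal functional $D(s,y+1,y+1)$, and on the off-diagonal $y_1<y_2$ one expands $I-\etat_{y_i}(s) = (I-\rho) + (\rho-\etat_{y_i}(s))$ to convert $(I-\etat_{y_1})(I-\etat_{y_2})-(I-\rho)^2$ into two one-gradient terms via \eqref{eq:temp10}, which are then treated exactly as in \A.

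The main obstacle is the algebraic bookkeeping for \B. The functional $D$ has two branches in \eqref{eq:dfunc} depending on whether $y_1 = y_2$ (a one-dimensional slice, absorbed into an error of smaller order) or $y_1 < y_2$, and the expansion into a pair of one-gradient terms plus controllable remainders has to be carried out without producing uncontrolled cross-terms. A secondary subtlety is matching the $J$-periodic dependence of $\cas$ on $\mathrm{mod}(s)$: each of the scaling constants $c_1(\mathrm{mod}(s))$ and the asymptotic expansions of $\qhalfint{\cdot}$, $q^\eta$ carries its own $\mathrm{mod}(s)$-dependence through $\alpha(s) = \alpha q^{\mathrm{mod}(s)}$, and these must combine exactly to reproduce the explicit constant in \eqref{eq:tau}; accordingly, the summation in $s \in [0, \ep^{-2} T]\cap\ZZ$ must be organized by residues mod $J$ before invoking Proposition \ref{prop:semiestimate}.
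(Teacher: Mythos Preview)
Your factorization $\Theta_1 \Theta_2 = \lambda(t)^2 (q-1)^2 Z^2 \cdot p(1-p)$ with $p = \EE[K \mid \FF(t)]$ is correct, but your expansion of $p$ is wrong in both order and locality. Reading off from \eqref{eq:seven:Theta2} (equivalently, expanding $\EE'[K(t,y)]$ directly via \eqref{eq:birecur1}) gives
\[
p \;=\; \frac{\rho}{I} \;-\; \sum_{i \ge 1} u_\ep(t,i)\,\frac{\ep^{-1/2}\nabla Z(t,x-i)}{Z(t,x)} \;+\; O(\eph)
\;\approx\; \frac{\rho}{I} \;+\; \sum_{i \ge 1} u_\ep(t,i)\,\bigl(\etat_{x-i+1}(t) - \rho\bigr) \;+\; O(\eph),
\]
where the weights $u_\ep(t,i)$ are $O(1)$ with geometric decay in $i$ (see \eqref{eq:seven1:temp1}) and carry \emph{no} $\sqrt{\ep}$ prefactor. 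Thus $p - \rho/I$ is an $O(1)$ random variable depending on the whole half-line $\{\etat_{x-i+1}(t)\}_{i\ge 1}$, not an $O(\eph)$ quantity depending only on $\etat_{y+1}$. Your linear and quadratic pieces are each off by a factor $\ep^{-1/2}$; had your expansion been right, one would get $\ep^{-1}\Theta_1\Theta_2 - \tfrac{\rho(I-\rho)}{I^2}Z^2 = O(\eph)Z^2$ and the proposition would be trivial, with no need for \A{} or \B{} at all.

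The paper (Section~\ref{sec:timedecorr}) instead expands $\ep^{-1/2}\Theta_1$ and $\ep^{-1/2}\Theta_2$ separately via \eqref{eq:seven:Theta1}--\eqref{eq:seven:Theta2} and multiplies, obtaining Lemma~\ref{lem:seven:Thetadecomp}: $\ep^{-1}\Theta_1\Theta_2 - \tau(t)Z^2$ splits into $\onegradd$, $\twogradd$, and the diagonal piece $\widetilde{\mathcal{Y}}$. The $\mod(s)$-dependence of $\tau(s)$ does \emph{not} come from organizing the $s$-sum by residues; it enters through the closed evaluation of $\sum_{k\ge 1} u_\ep(s,k)^2$ in \eqref{eq:seven:temp5}, which shifts the constant $\rho(I-\rho)/I^2$ to $\tau(s)$. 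Once that decomposition is in place, your outline of \A{} and \B{} is essentially the paper's Lemmas~\ref{lem:timedecorr1} and~\ref{lem:timedecorr2}; note only that in \B{} the gradients produced on the off-diagonal of $D(s,y_1,y_2)$ live in the $y$-variables at time $s$, so one uses summation by parts \eqref{eq:sumbypart1} to move them onto $\nabla_{y_i}\rhzrte$ and then invokes Proposition~\ref{prop:semiestimate}(b), rather than the $\nabla_{x_1,x_2}$ estimate used in \A{}.
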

\begin{remark}
In \eqref{eq:five:timedecorr}, we compensate the space variable $\xstar \in \ZZ$ by $\hmu(s) - \floor{\hmu(s)} \in [0, 1)$ to ensure that $\xstar - \hmu(s) + \floor{\hmu(s)} \in \Xi(s)$. 
\end{remark}
\subsection{Proof of Theorem \ref{thm:main1}}\label{sec:proofthm} 
The entire section is devoted to the proof of Theorem \ref{thm:main1}. As we mentioned earlier, due to the tightness obtained in Proposition \ref{prop:tightness}, if suffices to prove that for any limit point $\ZZZ$ of $Z_\epsilon (\epsilon^{-2} \cdot, \epsilon^{-1} \cdot)$ satisfies the martingale problem. The proof is accomplished once we verify (i), (ii), (iii) for $\mathcal{Z}$. 
\bigskip
\\
For the ensuing discussion, we denote by $\err(t)$ to be a generic process (which may differ from line to line) satisfying for all fixed $T > 0$
$$\lim_{\ep \downarrow 0} \sup_{t \in [0, \ep^{-2} T] \cap \ZZ} \norm{\err(t)}_2 = 0.$$ 
We start by verifying (i). Due to  \eqref{eq:tightness1} and $Z_\ep (\ep^{-2} t, \ep^{-1} x) \wc \ZZZ(t, x)$, by Skorohod representation theorem and Fatou's lemma, (i) holds.
\bigskip
\\
We continue to prove (ii). To show that $\M_\psi (t)$ is a local martingale, we consider a discrete analogue. Define
\begin{equation}\label{eq:five:martingale}
M_\psi (t) := \epsilon \sum_{s = 0}^{t-1} \sum_{x \in \Xi(s)} M(s, x) \psi(\epsilon (x- \mu(s))).
\end{equation} 
Due to Proposition \ref{prop:microscopicshe}, $M(t, x)$ is a $\FF(t)$-martingale increment, which implies $M_\psi (t)$ is a $\FF(t)$-martingale. 
\bigskip
\\
Define $\langle Z(t), \psi \rangle_\epsilon := \sum_{x \in \Xi(t)} \epsilon \psi (\epsilon x) Z(t, x)$. By \eqref{eq:microshe},
\begin{equation*}
Z(t, x) = \sum_{y \in \Xi(t-1)} \tonepart_\epsilon (t, t-1, x - y) Z(t-1, y) + M(t-1, x+\mu(t-1)), \quad x \in \Xi(t),
\end{equation*}
we obtain  
\begin{align*}
&\anglebrack{Z(s), \psi}_\epsilon  - \anglebrack{Z(s-1), \psi}_\epsilon 
= \sum_{x \in \Xi(t)} \epsilon \psi(\epsilon x) Z(t, x) - \sum_{y \in \Xi(t-1)} \epsilon \psi(\epsilon y) Z(t-1, y) \\ 
&= \sum_{x \in \Xi(s)} \epsilon \psi(\epsilon x) \Big(\sum_{y \in \Xi(s-1)} \tonepart_\epsilon (s, s-1, x - y) Z(s-1, y) + M(s-1, x+\mu(s-1)  ) \Big) - \sum_{y \in \Xi(s-1)} \epsilon \psi(\epsilon y) Z(s-1, y)\\
\numberthis \label{eq:five:temp2}
&= \sum_{y \in \Xi(s-1)} \epsilon  Z(s-1, y) \Big(\sum_{x \in \Xi(s)} \tonepart_\epsilon (s, s-1, x  - y) \big(\psi(\epsilon x) - \psi(\epsilon y)\big)\Big) +  \sum_{x \in \Xi(s)} \epsilon \psi(\epsilon x) M(s-1, x+\mu(s-1))
\end{align*}
Summing \eqref{eq:five:temp2} over $s \in [1, t] \cap \ZZ$ yields
\begin{align}\label{eq:five:temp3}
M_\psi (t) = \langle Z(t), \psi \rangle_\epsilon - \langle Z(0), \psi \rangle_\epsilon - \sum_{s=0}^{t-1} \epsilon \sum_{y \in \Xi(s)} Z(s, y) \Big(\sum_{x \in \Xi(s+1)} \tonepart_\epsilon  (s+1, s, x-y) (\psi(\epsilon x) - \psi(\epsilon y)) \Big)
\end{align}
Recall that  $R_\ep (s)$ is the random variable  defined in \eqref{eq:rwdistribution}, as usual we put on the subscript $\ep$ to emphasize the dependence. Note that,  
\begin{equation*}
\EE\big[R_\ep (s)\big] = \sum_{x \in \Xi(1)} \toneparte(s +1, s, x) x = 0, \qquad %\sigma^2_\ep(s) = 
\var\big[R_\ep (s)\big] = \sum_{x \in \Xi(1)} \toneparte(s+1, s, x) x^2.
\end{equation*}
By Taylor expansion 
$$\psi(\ep x) = \psi(\ep y) + \ep \psi'(\ep y) (x- y) + \frac{1}{2} \ep^2 \psi''(\ep y) (x - y)^2 + \ep^3 \OO(|x- y|^3),$$
whereby  \eqref{eq:five:temp3} becomes 
\begin{align*}
M_\psi (t) =\langle Z(t), \psi \rangle_\epsilon - \langle Z(0), \psi \rangle_\epsilon - \frac{1}{2} \epsilon^2  \sum_{s = 0}^{t-1} \var\big[R_\ep (s)\big] \langle  Z(s), \psi''\rangle_\epsilon + \err(t).
\end{align*} 
Furthermore, we have 
\begin{align*}
\var\big[R_\ep (s)\big] %= \EE\big[(R_\ep (s) + \mu(s))^2\big] - \mu(s)^2 
&=\lambda(s) \sum_{n=1}^{\infty} \frac{\alpha(s)(1-q)}{1 + \alphat{\alpha}{s}} \bigg(1- \frac{\nu + \alphat{\alpha}{s}}{1 + \alphat{\alpha}{s}}\bigg) \bigg(\frac{\nu +\alphat{\alpha}{s}}{1 + \alphat{\alpha}{s}}\bigg)^{n-1}  q^{\rho n} n^2 \\
&\quad- \bigg(\lambda(s) \sum_{n=1}^{\infty} \frac{\alpha(s)(1-q)}{1 + \alphat{\alpha}{s}} \bigg(1- \frac{\nu + \alphat{\alpha}{s}}{1 + \alphat{\alpha}{s}}\bigg) \bigg(\frac{\nu +\alphat{\alpha}{s}}{1 + \alphat{\alpha}{s}}\bigg)^{n-1}  q^{\rho n} n \bigg)^2\\
%&= \lambda(s) q^{\rho} \frac{\alpha(s)(1-q)}{1 + \alphat{\alpha}{s}} \bigg(1- \frac{\nu + \alphat{\alpha}{s}}{1 + \alphat{\alpha}{s}}\bigg) \frac{2}{\big(1 - q\frac{\nu +\alphat{\alpha}{s}}{1 + \alphat{\alpha}{s}}\big)^3} - \lambda(s) q^{\rho} \frac{\alpha(s)(1-q)}{1 + \alphat{\alpha}{s}} \bigg(1- \frac{\nu + \alphat{\alpha}{s}}{1 + \alphat{\alpha}{s}}\bigg) \frac{1}{\big(1 - q\frac{\nu +\alphat{\alpha}{s}}{1 + \alphat{\alpha}{s}}\big)^2} - \\
%&\quad \bigg(\lambda(s) q^{\rho} \frac{\alpha(s)(1-q)}{1 + \alphat{\alpha}{s}} \bigg(1- \frac{\nu + \alphat{\alpha}{s}}{1 + \alphat{\alpha}{s}}\bigg) \frac{1}{\big(1 - q\frac{\nu +\alphat{\alpha}{s}}{1 + \alphat{\alpha}{s}}\big)^2}\bigg)^2\\
\numberthis \label{eq:five:temp6}
%&= \frac{2(1 + \mod(s))}{(I +\mod(s))^2 (1-b)} - \frac{1 + \mod(s)}{I + \mod(s)} - \bigg(\frac{1 + \mod(s)}{I + \mod(s)}\bigg)^2 + \OO(\ep^{\frac{1}{2}})
&= \frac{(I+1+2\mod(s)) b - (I + 2\mod(s) - 1)}{I^2 (1-b)} + \OO(\ep^{\frac{1}{2}}).
\end{align*}
In the last line, we used Lemma \ref{lem:wsc} to get asymptotics.
Denote by
\begin{equation*}
%\label{eq:five:temp7}
V(s) = \frac{(I+1+2\mod(s)) b - (I + 2\mod(s) - 1)}{I^2 (1-b)}
\end{equation*} 
Then 
%by \eqref{eq:five:temp6}
\begin{align*}
M_\psi (t) =\langle Z(t), \psi \rangle_\epsilon - \langle Z(0), \psi \rangle_\epsilon - \frac{1}{2} \epsilon^2  \sum_{s = 0}^{t-1} V(s) \langle  Z(s), \psi''\rangle_\epsilon + \err(t).
\end{align*} 
Note that $\{V(s)\}_{s=0}^\infty$ is a periodic sequence with period $J$, by the time regularity of $Z(t, x)$ in \eqref{eq:tightness3}, we can replace $V(s)$ by  $$V_* = \frac{1}{J} \sum_{s = 0}^{J-1} V (s) = \frac{(I +J) b - (I +J-2)}{I^2 (1-b)}$$ 
as defined in \eqref{eq:vstar}. Consequently,
\begin{equation*}
M_\psi (t) =\langle Z(t), \psi \rangle_\epsilon - \langle Z(0), \psi \rangle_\epsilon - \frac{1}{2} \epsilon^2 V_* \sum_{s = 0}^{t-1} \langle Z(s), \psi''\rangle_\epsilon + \err(t).
\end{equation*}
Since $\lim_{\ep \downarrow 0} \sup_{t \in [0, \ep^{-2} T] \cap \ZZ} \norm{\err(t)}_2 = 0$, by a standard discrete to continuous argument from the martingale $M_{\psi}(t)$ to $\M_{\psi} (t)$,
we conclude that $\M_\psi (t)$ is a local martingale.
\bigskip
\\
We finish the proof of (iii) based on  Proposition \ref{prop:timedecorr}. 
%We will need the following bound on (?).
%\begin{lemma}
%For any $n \in \NN$ there exists constant $C(n)$ and $u(n)$
%\begin{equation*}
%\norm{\epsilon^{-\frac{1}{2}}\Theta_1 (t, x)}_{2n} \leq C(n) e^{\epsilon u(n) |x|}, \qquad \norm{\epsilon^{-\frac{1}{2}}\Theta_1 (t, x)}_{2n} \leq C(n) e^{\epsilon u(n) |x|}.
%\end{equation*}
%\end{lemma}
%\begin{proof}
%**
%\end{proof}
Similar to what we did in proving (ii), we want to find a discrete approximation of $\Q_\psi (t)$. This is given by $M_\psi- \bracket{M_\psi }(t)$. Referring to \eqref{eq:five:martingale}, the martingale $M_\psi (t)$ possesses the quadratic variation 
\begin{align*}
\bracket{M_\psi }(t) &=\epsilon^2 \sum_{s = 0}^{t-1} \sum_{x, x' \in \Xi(s)} \psi(\epsilon (x-\mu(s))) \psi(\epsilon (x' - \mu(s))) \EE\big[M(s, x) M(s, x')\vv \FF(s)\big] \\
\numberthis \label{eq:five:temp4}
& = \epsilon^2 \sum_{s=0}^{t-1} \sum_{x, x' \in \Xi(s)}  \psi(\epsilon(x-\mu(s))) \psi(\epsilon(x' - \mu(s))) \bigg(\frac{\nu + \alpha(s)}{1 + \alpha(s)} q^{\rho}\bigg)^{|x-x'|} \Theta_1 (s, x\wedge x') \Theta_2 (s, x\wedge x')
\end{align*}
where the last equality follows from Proposition \ref{prop:microscopicshe}.
Since $\psi \in \test$, there exists a constant $C$ such that $$\big|\psi(\epsilon(x - \mu(s))) \psi(\epsilon (x' - \mu(s))) - \psi(\epsilon (x \wedge x'))^2 \big| \leq C \epsilon (|x - x'| +1) $$
Consequently, the expression \eqref{eq:five:temp4} is well-approximated with the corresponding term $\psi(\epsilon (x-\mu(s))) \psi(\epsilon (x'- \mu(s)))$ replaced by $\psi (\epsilon (x \wedge x')) \psi(\epsilon (x' \wedge x'))$, which yields
%, producing a error term $\err(t)$ converging to zero in $\LL^2$ as $\epsilon \downarrow 0$. Therefore, 
\begin{align*}
\bracket{M_\psi} (t) &= \epsilon^2 \sum_{s=0}^{t-1} \sum_{x, x' \in \Xi(s)} \psi(\epsilon (x \wedge x'))^2 \bigg(\frac{\nu + \alpha(s)}{1 + \alpha(s)} q^\rho\bigg)^{|x- x'|} \Theta_1 (s, x \wedge x') \Theta_2 (s, x \wedge x') + \err(t),\\
&= \epsilon^2 \sum_{s = 0}^{t-1} \sum_{x \in \Xi(s)}  \sum_{n=-\infty}^{\infty} \bigg(\frac{\nu + \alpha(s)}{1 + \alpha(s)} q^\rho\bigg)^{|n|} \psi(\epsilon x)^2 \Theta_{1} (s, x) \Theta_2 (s, x) + \err(t),
\\
&=\ep^2 \sum_{s=0}^{t-1} \sum_{x \in \Xi(s)} \frac{1 +\alpha(s) + (\nu + \alpha(s)) q^\rho}{1 + \alpha(s) - (\nu + \alpha(s)) q^\rho} \psi(\epsilon x)^2 \Theta_{1} (s, x) \Theta_2 (s, x) + \err(t),
\\
\numberthis
\label{eq:five:temp5}
&=\epsilon^2  \sum_{s = 0}^{t-1} \frac{b (I + 2 \mod(s))  -(I + 2 \mod(s)-2)}{I (1-b)} \sum_{x\in \Xi(s)}  \ep \psi(\epsilon x)^2 \big(\ep^{-1} \Theta_{1} (s, x) \Theta_2 (s, x)\big) + \err(t).
\end{align*}
Here, in the third equality we used $\sum_{n = -\infty}^{\infty} x^{-|n|} = \frac{1 + x}{1 - x}$. In the last equality, using Lemma \ref{lem:wsc} for asymptotics expansion of $\frac{\nu + \alpha(s)}{1 + \alpha(s)}$, one has
\begin{equation*}
\frac{1 + \alpha(s) + (\nu + \alpha(s)) q^{\rho}}{1 + \alpha(s) - (\nu+ \alpha(s)) q^{\rho} } =  \frac{1 + \frac{\nu + \alpha(s)}{1 + \alpha(s)} q^{\rho}}{1 - \frac{\nu + \alpha(s)}{1 + \alpha(s)} q^{\rho}} =  \frac{b (I + 2 \mod(s))  -(I + 2 \mod(s)-2)}{I (1-b)}  + \OO(\ep^{\frac{1}{2}}).
\end{equation*}
%$$\frac{\nu + \alpha(t)}{1 + \alpha(t)} q^{\rho} = b (I + \mod(t)) - (I + \mod(t) - 1) + \OO(\ep^{\frac{1}{2}}).$$
%where the error term is denoted by
%\begin{align*}
%\E_{\epsilon, 1} (t) &:= \epsilon^2 \sum_{s=0}^{t-1} \sum_{x, x'\in \Xi(s)} \big(\psi(\epsilon (x - \mu)) \psi(\epsilon (x' - \mu)) - \psi(\epsilon (x \wedge x'))^2\big) \big(\frac{\alpha + \nu}{1 + \alpha} \tau^{-\rho}\big)^{|x - x'|} \Theta_1 (s, x \wedge x') \Theta_2 (s, x \wedge x')\\
%\E_{\epsilon, 2} (t) &:= \epsilon^2 \bigg(\frac{1 + \frac{(\alpha + \nu)\tau^{-\rho}}{1 +\alpha}}{1 -\frac{(\alpha + \nu)\tau^{-\rho}}{1 +\alpha}} - \frac{I b - (I-2)}{I(1-b)}\bigg) \sum_{x \in \Xi(s)} \psi(\epsilon x) \Theta_1 (s, x) \Theta_2 (s, x)
%\end{align*}
Using Proposition \ref{prop:timedecorr}, we replace the term $\ep^{-1} \Theta_1 (s, x) \Theta_2 (s, x)$ in  \eqref{eq:five:temp5} with $ \cas Z(s, x)^2$, 
\begin{align*}
\bracket{M_\psi} (t) &= \epsilon^2  \sum_{s=0}^{t-1} \frac{b (I + 2 \mod(s))  -(I + 2 \mod(s)-2)}{I (1-b)}  \sum_{x \in \Xi(s)} \epsilon \psi(\epsilon x)^2 \cas Z (s, x)^2 + \err(t),\\
&=\ep^2 \sum_{s = 0}^{t-1} \frac{\rho(I-\rho)}{I^2} \cdot \frac{b (I +2 \mod(s) +1) - (I+2\mod(s) - 1)}{I (1-b)} \sum_{x \in \Xi(s)} \epsilon\psi(\epsilon x)^2 Z (s, x)^2 + \err(t).
\end{align*} 
%where 
%\begin{equation*}
%\E_{\epsilon, 3} (t) := \epsilon^2 \frac{Ib - (I-2)}{I(1-b)} \sum_{s = 0}^{t-1}  \sum_{x \in \Xi(s)} \epsilon \psi(\epsilon x) \bigg(\epsilon^{-1} \Theta_1 (s, x) \Theta_2 (s, x) - \frac{(I+1) b - (I-1)}{Ib - (I-2)} Z^2 (s, x)\bigg)  
%%+ \epsilon^2 \frac{Ib - (I-2)}{I(1-b)} \epsilon \sum_{s = 0}^{t-1} \sum_{x \in \Xi(s)} 
%\end{equation*}
Using again the time regularity of $Z(t, x)$ in \eqref{eq:tightness3}, we conclude that 
\begin{equation*}
\bracket{M_\psi} (t) = D_* \sum_{s= 0}^{t-1} \sum_{x \in \Xi(s)} \epsilon \psi(\epsilon x)^2 Z(s, x)^2 + \err(t),
\end{equation*}
where $$D_* = \frac{1}{J} \sum_{s= 0}^{J-1} \frac{\rho (I - \rho)}{I^2} \cdot \frac{b (I +2 \mod(s) +1) - (I+2\mod(s) - 1)}{I (1-b)}  = \frac{\rho (I - \rho)}{I} \frac{(I +J) b - (I +J-2)}{I^2 (1-b)}$$  
as defined in \eqref{eq:dstar}. Via a standard discrete to continuous argument from the martingale $M_\psi(t) - \langle M_\psi \rangle(t)$ to $\Q_\psi (t)$,
we conclude that $\Q_\psi (t)$ is a local martingale. Since we have proved that for any limit point $\mathcal{Z}$ of $Z_{\ep} (\ep^{-2} \cdot, \ep^{-1} \cdot)$, it satisfies (i), (ii), (iii) in Definition \ref{def:martingalep}, this concludes the proof of Theorem \ref{thm:main1}.
\section{Estimate of the two particle transition probability}\label{sec:asymptotic analysis}
In this section, we prove a space-time estimate for the (tilted) two particle transition probability $\rhzrte$, using the integral formula  provided in Corollary \ref{cor:intformulav}. This technical result is crucial to the proof of Proposition \ref{prop:timedecorr}.
\bigskip
\\
Recall from Corollary \ref{cor:intformulav} that
\begin{align*}
&\rhzrte\big((x_1, x_2), (y_1, y_2), t, s\big) \\
&= c(y_1, y_2) \bigg[ \oint_{\lc} \oint_{\lc} \prod_{i=1}^2 \big(\coreej (z_i)\big)^{\floor{\frac{t-s}{J}}} \reme (z_i, t, s) z_i^{x_i - y_i } \frac{dz_i}{2\pi \im z_i} - \oint_{\lc} \oint_{\lc} \interacte(z_1, z_2) \prod_{i=1}^2 \big(\coreej (z_i)\big)^{\floor{\frac{t-s}{J}}} \reme (z_i, t, s) z_i^{x_{3-i} - y_i } \frac{dz_i}{2\pi \im z_i} \\
\numberthis \label{eq:six:temp8}
&\quad +\res_{z_1 = \pole(z_2)} \oint_{\lc} \oint_{\lc} \interacte(z_1, z_2) \big(\coreej (z_i)\big)^{\floor{\frac{t-s}{J}}} \reme (z_i, t, s) z_i^{x_{3-i} - y_i } \frac{dz_i}{2\pi \im z_i}\bigg],
\end{align*}
where $\C_R$ is a circle centered at zero with a large enough radius $R$ so as to include all the poles of the integrand, $c(y_1, y_2)$ is defined in \eqref{eq:cy1} and the functions in the integrand above are defined respectively in \eqref{eq:four2:temp2} - \eqref{eq:pole1expression}.
We put $\ep$ in the notation of $\rhzrte$ and other functions to emphasize the dependence on $\ep$ under the weakly asymmetry scaling. 
\bigskip
\\
We define the discrete gradients $\na_{x_1}, \na_{x_2}, \na_{y_1}, \na_{y_2}$
\begin{align*}
\nabla_{x_1} \rhzrte \big((x_1, x_2), (y_1, y_2), t, s\big) &= \rhzrte\big((x_1 + 1, x_2), (y_1, y_2), t, s\big) - \rhzrte\big((x_1, x_2), (y_1, y_2), t, s\big),\\
\nabla_{x_2} \rhzrte\big((x_1, x_2), (y_1, y_2), t) &= \rhzrte\big((x_1, x_2 + 1), (y_1, y_2), t, s\big) - \rhzrte\big((x_1, x_2), (y_1, y_2), t, s\big),\\
\nabla_{y_1} \rhzrte\big((x_1, x_2), (y_1, y_2), t, s\big) &= \rhzrte\big((x_1, x_2), (y_1 + 1, y_2), t, s\big) - \rhzrte\big((x_1, x_2), (y_1, y_2), t, s\big),\\
\nabla_{y_2} \rhzrte\big((x_1, x_2), (y_1, y_2), t, s\big) &= \rhzrte\big((x_1, x_2), (y_1, y_2 + 1), t, s\big) - \rhzrte\big((x_1, x_2), (y_1, y_2), t, s\big).
\end{align*}
Furthermore, we define the mixed discrete gradient 
\begin{align*}
\na_{x_1, x_2} \rhzrte\big((x_1, x_2), (y_1, y_2), t, s\big) &= \na_{x_2} \Big(\nabla_{x_1} \rhzrte \big((x_1, x_2), (y_1, y_2), t, s\big)\Big)\\
%= \na_{x_2} \big(\na_{x_1} \rhzrte \big((x_1, x_2), (y_1, y_2), t, s\big)\big)  
&= \rhzrte\big((x_1 +1, x_2 +1), (y_1, y_2), t, s\big) - \rhzrte\big((x_1 +1, x_2), (y_1, y_2), t, s\big) \\ 
%\numberthis\label{eq:six1:temp1}
&\quad -\rhzrte\big((x_1, x_2 +1), (y_1, y_2), t, s\big) +\rhzrte\big((x_1, x_2), (y_1, y_2), t, s\big)
\end{align*}
We define the $\na$-Weyl chamber (which is understood with respect to whichever gradient is taken) to be 
\begin{equation}
\begin{aligned}
&\{(x_1, x_2, y_1, y_2): x_1 +1 \leq x_2 \in \Xi(t), y_1 \leq y_2 \in \Xi(s)\} \qquad &\text{if } \na = \na_{x_1}, \\
&\{(x_1, x_2, y_1, y_2): x_1  \leq x_2 \in \Xi(t), y_1 \leq y_2 \in \Xi(s)\} \qquad &\text{if } \na = \na_{x_2},\\
&\{(x_1, x_2, y_1, y_2): x_1 \leq x_2 \in \Xi(t), y_1 +1 < y_2 \in \Xi(s)\} \qquad &\text{if } \na = \na_{y_1},\\
&\{(x_1, x_2, y_1, y_2): x_1 \leq x_2 \in \Xi(t), y_1 \leq y_2 \in \Xi(s)\} \qquad &\text{if } \na = \na_{y_2}.
\label{eq:deltaweyl}
\end{aligned}
\end{equation}
We remark that $\rhzrte\big((x_1, x_2), (y_1, y_2), t, s\big)$ is defined only for $x_1 \leq x_2 \in \Xi(t)$ and $y_1 \leq y_2 \in \Xi(s)$. In the definition of $\na$-Weyl chamber, when
$\na = \na_{x_1}, \na_{x_2}, \na_{y_2}$, the corresponding $\na$-Weyl chamber is exactly where the quantities $\na_{x_1} \rhzrte$, $\na_{x_2} \rhzrte$ or $\na_{y_2} \rhzrte$ are well defined.  But for $\na = \na_{y_1}$, we require $y_1 + 1 < y_2$, which is stronger than $y_1 + 1 \leq y_2$  (where $\na_{y_1} \rhzrte$ is well defined). The motivation of this requirement is to ensure that \eqref{eq:six:temp4} holds.
%referring to \eqref{eq:intformula}, 
%$\rhzrt\big((x_1, x_2), (y_1, y_2), t\big) = c(\vec{y}) F(x_1, x_2, y_1, y_2, t)$, where $F$ denotes the square bracket term. When $y_1 <  y_2+1$, one has 
%\begin{equation*}
%\na_{y_1} \rhzrt\big((x_1, x_2), (y_1, y_2), t\big) = c(\vec{y}) \na_{y_1} F(x_1, x_2, y_1, y_2, t),
%\end{equation*}
%note that this relation does hold when $y_1 +1 = y_2$. In other words, this refinement allows us to move $\na_y$ from $\rhzrte$ to $F$, the importance of this will be clear in (?). 
%\bigskip\\
%In the following theorem, we refer $(x_1, x_2)$  is in the $\na$-Weyl chamber if the $x$-coordinate in each $\rhzrt$ is weakly ordered after taking the gradient $\na_{x_i}$, or $\na_{x_1, x_2}$. Further, we say that $(y_1, y_2)$ is in the strict $\na$-Weyl chamber if the $y$-coordinate in each $\rhzrt$ is strictly ordered after taking the gradient $\na_{y_i}$. Note the (strict) $\na$-Weyl chamber is understood with respect to whichever gradient is taken. For instance, the $\na$-Weyl chamber for  
%\begin{equation*}
%\nabla_{x_1} \rhzrte((x_1, x_2), (y_1, y_2), t) = \rhzrte((x_1 + 1, x_2), (y_1, y_2), t) - \rhzrte((x_1, x_2), (y_1, y_2), t)
%\end{equation*}
%is given by $x_1 +1 \leq x_2$ and the strict $\na$-Weyl chamber for 
%\begin{equation*}
%\nabla_{y_1} \rhzrte((x_1, x_2), (y_1, y_2), t) = \rhzrte((x_1, x_2), (y_1 +1, y_2), t) - \rhzrte((x_1, x_2), (y_1, y_2), t)
%\end{equation*}
%is given by $y_1 +1 < y_2$.
%The motivation to impose $\na$-Weyl chamber is to ensure that $\rhzrte$ is well-defined after applying the gradient. 
%\bigskip
%\\
\bigskip
\\
The following result is the main technical contribution of our paper.
\begin{prop}\label{prop:semiestimate}
For all fixed $\beta, T > 0$, there exists positive constant $C(\beta), C(\beta, T)$  such that for $\ep > 0$ small enough and $s \leq t  \in [0, \ep^{-2} T] \cap \ZZ$ \\
$(a)$ For all $x_1 \leq x_2 \in \Xi(t)$ and $y_1 \leq y_2 \in \Xi(s)$,
\begin{align}
\numberthis \label{eq:six1:temp2}
\big|\rhzrte\big((x_1, x_2), (y_1, y_2), t, s\big)\big| \leq \frac{C(\beta, T)}{t -s +1} e^{-\frac{\beta(|x_1 - y_1| + |x_2 - y_2|)}{\sqrt{t - s +1} + C(\beta)}}.
\end{align}
$(b)$ For all $(x_1, x_2, y_1, y_2)$ in the $\na$-Weyl chamber,
\begin{align*}
& \big|\nabla_{x_i} \rhzrte\big((x_1, x_2), (y_1, y_2), t, s\big)\big| \leq \frac{C(\beta, T)}{(t - s+1)^{\frac{3}{2}}} e^{-\frac{\beta(|x_1 - y_1| + |x_2 - y_2|)}{\sqrt{t - s +1} + C(\beta)}}, \quad i=1, 2,\\
&\big|\nabla_{y_i} \rhzrte\big((x_1, x_2), (y_1, y_2), t, s\big)\big| \leq \frac{C(\beta, T)}{(t - s + 1)^{\frac{3}{2}}} e^{-\frac{\beta(|x_1 - y_1| + |x_2 - y_2|)}{\sqrt{t - s +1} + C(\beta)}}. \quad i=1, 2.
\end{align*}
(c) For all $x_1 < x_2 \in \Xi(t)$ and $y_1 \leq y_2 \in \Xi(s)$,
\begin{align*}
&\big|\nabla_{x_1, x_2}\rhzrte\big((x_1, x_2), (y_1, y_2), t, s\big) \big| \leq \frac{C(\beta, T)}{(t - s + 1)^2} e^{-\frac{\beta(|x_1 - y_1| + |x_2 - y_2|)}{\sqrt{t-s+1} + C(\beta)}}.
\end{align*}
\end{prop}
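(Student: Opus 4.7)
The plan is to use the integral representation for $\rhzrte$ from Corollary~\ref{cor:intformulav} and apply a steepest descent analysis under the weakly asymmetric scaling $q = e^{\sqrt{\epsilon}}$. The dominant exponential factor in each integrand is $\big(\coreej(z_i)\big)^{\lfloor (t-s)/J\rfloor} z_i^{x_i - y_i}$, and a direct Taylor expansion near $\epsilon = 0$ shows that the phase has a critical point located near $z = 1$ (the precise location depending on the ratio $(x_i - y_i)/(t-s)$). My strategy is to deform the large circle $\C_R$ onto a contour that passes through this critical point along its direction of steepest descent, while avoiding the poles of $\reme$ and of the interaction factor $\interacte$.

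First I would reorganize the three terms in~\eqref{eq:tilpr:intformula}. The ``diagonal'' double integral factorizes as a product of two single integrals of exactly the shape analyzed for the one-particle transition probability in Lemma~\ref{thm:onepartestimate}, so for that piece the desired bounds follow by squaring the one-particle estimate and are already half-proven. For the ``crossed'' double integral and the residue term, the essential observation is that the residue is precisely the one produced by crossing the pole $z_1 = \pole(z_2)$ of $\interacte$. Therefore, deforming the $z_1$-contour in the crossed integral past this pole absorbs the residue term, reducing the analysis to a single double contour integral over deformed contours that sidestep the pole locus and allow simultaneous application of steepest descent in $z_1$ and $z_2$.

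Once the deformed contours are in place, part~(a) follows from standard steepest descent: along a contour of length $\sim 1/\sqrt{t-s+1}$ near the saddle, the real part of the phase has quadratic decay producing a factor $1/\sqrt{t-s+1}$ for each of the two variables, while the spatial factor $z_i^{x_i - y_i}$ produces the exponential decay $\exp(-\beta |x_i - y_i|/\sqrt{t-s+1})$ for an appropriate $\beta$ determined by the curvature at the saddle. For parts~(b) and~(c), the key observation is that each discrete gradient in $x_i$ inserts a factor $(z_i - 1)$ into the integrand, and each gradient in $y_i$ inserts a factor of comparable size through $z_i^{-1}-1$. Since the saddle sits at $z_i = 1$ and $|z_i - 1|$ is of order $1/\sqrt{t-s+1}$ along the steepest descent contour, each such factor contributes an extra multiplicative $1/\sqrt{t-s+1}$; the mixed gradient of~(c) supplies $(z_1-1)(z_2-1)$ and hence a full $1/(t-s+1)$ gain.

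The principal obstacle I anticipate is the explicit construction of the deformed contour. For generic values of $I, J, b, \rho$ the steepest descent curve is only defined implicitly, so one must exhibit a concrete admissible contour (for instance, a circular arc through $z = 1$ glued to a compatible completion), verify monotonicity of the real part of the phase along it, and check that the denominators in $\coreej$, $\reme$, $\interacte$, and $\pole$ stay bounded away from zero uniformly in $\epsilon$. A secondary complication is the short-time regime $t - s = O(1)$, where steepest descent produces no gain; there one must instead exploit compactness of $\C_R$ together with the fact that each gradient contributes an $O(1)$ factor of $(z_i - 1)$ on the original contour, so that the bound reduces to a direct triangle-inequality estimate of the integrand and its differences.
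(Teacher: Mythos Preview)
Your overall strategy matches the paper's: split into short- and large-time regimes, factorize the diagonal term as a product of one-particle probabilities, absorb the explicit residue into the crossed integral by deforming the $z_1$-contour, and gain a factor of $1/\sqrt{t-s+1}$ per gradient from $(z_i-1)$. However there is a genuine gap in the interaction-term analysis that your sketch does not resolve.

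The difficulty you flag as the ``principal obstacle'' is more serious than you indicate. After absorbing the residue, you must still deform the $z_2$-contour to a near-unit circle, and for some values of $z_1$ this crosses the pole $z_2=\ppole(z_1)$ \emph{again}, producing a single-integral piece $\rhzrr$ whose integrand carries the product phase $\pcoree(z_1)=\coreej(z_1)\coreej(\ppole(z_1))$. The limiting function is $\pcorelim(z)=\corelim(z)\corelim(\pstar(z))$ with $\pstar(z)=\frac{(I+1)z-1}{z+(I-1)}$, and the paper shows explicitly (Figure~\ref{fig:contline}) that the unit circle is \emph{not} a steepest-descent contour for $\pcorelim$: there are arcs of $\C_1$ on which $|\pcorelim|>1$. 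The decisive observation is that the circle $\mathcal{M}=\{|z-\tfrac{1}{I+1}|=\tfrac{I}{I+1}\}$ satisfies $|z\,\pstar(z)|\equiv 1$, and a direct computation (Lemma~\ref{lem:SDM}) gives both $|\corelim|<1$ and $|\pcorelim|<1$ on $\mathcal{M}\setminus\{1\}$. Nothing in your proposal hints at this particular circle, and without it the $\rhzrr$ term is uncontrolled.

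Even with $\mathcal{M}$ in hand, extracting the spatial decay from $z_1^{x_2-y_1}$ in the crossed integral requires $|z_1|$ to lie on the correct side of $1$, which forces a case split on the signs of $x_2-y_1$ and $x_1-y_2$; the paper handles the $(+-)$, $(--)$, $(++)$ cases with three different contours, the $(++)$ case using an implicitly defined curve $\{|z\,\ppole(z)|=\text{const}\}$. Finally, your proposed short-time argument via integrand bounds on $\C_R$ does not obviously yield exponential spatial decay; the paper instead bounds $\rhzr$ probabilistically through the geometric jump distribution (inequality~\eqref{eq:three:temp6}) and transfers to $\rhzrte$ via the tilt relation~\eqref{eq:four:tilttransition}.
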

% The reason that we need $y_1 \leq y_2$ to be in the strict $\na$-Weyl chamber is that the constant $c(\vec{y})$ in (?) depends on whether $y_1 = y_2$. By restricting to the strict Weyl chamber, we avoid the case $y_1 = y_2$ (thus $c(\vec{y})$ always equals $1$). This being the case,  we can move the gradient inside the integrand.
%\bigskip
%\\
 It is helpful to divide the proof of Proposition \ref{prop:semiestimate} depending on whether the time increment $t - s$ is large enough. More precisely, we use the phrase $t- s$ is \emph{large enough} if the referred statement holds for all $t- s \geq t_0$, where $t_0$ is some generic time threshold which may change from line to line (depend on $\beta$ and $T$, but does not depend on $\ep$). Note that this is not to be confused with the global assumption $0 \leq s \leq t \leq \ep^{-2} T$, which implies $t - s \leq \ep^{-2} T$. 
 \bigskip
 \\
 Given arbitrary fixed $t_0 > 0$, let us first prove the proposition for $t - s \leq t_0$. 
\begin{proof}[Proof of Proposition \ref{prop:semiestimate} for $t-s \leq t_0$]
According to Lemma \ref{lem:wsc}, 
\begin{equation}\label{eq:temp16}
\lim_{\ep \downarrow 0} \sup_{t \in \ZZ_{\geq 0}} \frac{\nu + \alpha(t)}{1 + \alpha(t)} = \sup_{t \in \ZZ_{\geq 0}} \frac{(I+ \mod(t)) b - (I + \mod(t) - 1)}{\mod(t) b - (\mod(t) - 1)} < 1,
\end{equation}
here we used the condition $\frac{I+J-2}{I+J-1} < b < 1$ in \eqref{eq:scaling}.
Taking $k = 2$ in \eqref{eq:three:temp6} yields
\begin{equation}
\rhzr\big((x_1, x_2), (y_1, y_2), t, s\big) \leq C \prod_{i=1}^2 \binom{|x_i - y_i| + t - s}{t - s} \theta^{|x_i  -y_i|}
\end{equation}
where $\theta = \sup_{t \in \ZZ_{\geq 0}} \frac{\nu + \alpha(t)}{1 + \alpha(t)}$. So there exists $0< \delta <1$ such that for $\ep$ small enough and all $s \leq t$ such that  $t-s \leq t_0$
\begin{equation}\label{eq:six:temp7}
\rhzr \big((x_1, x_2), (y_1, y_2), t, s\big) \leq C \delta^{|x_i - y_i|},
\end{equation}
Referring to the relation \eqref{eq:four:tilttransition} between $\rhzrt$ and $\rhzr$.
%we can find constant $M > 0$ such that $\big|\frac{\hlambda(t)}{\hlambda(s)}\big| \leq M$, $|\hmu(t) - \hmu(s)| \leq M$. 
By $\lim_{\ep \downarrow 0 } e^{\sqrt{\ep}}= 1$ along with \eqref{eq:six:temp7}, there exists $0 < \delta' < 1$ s.t.
%\begin{align*}
%\rhzrte\big((x_1, x_2), (y_1, y_2), t, s\big) := \bigg(\frac{\hlambda(t)}{\hlambda(s)}\bigg)^2 q^{\rho \left(x_1 + x_2 - y_1 -y_2 + 2 (\hmu(t) - \hmu(s))\right)} \rhzr \big((x_1 + \hmu(t), x_2  + \hmu(t)), (y_1 +\hmu(s), y_2 + \hmu(s)), t, s\big).
%\end{align*}
%Recall that $\rhzr((x_1, x_2), (y_1, y_2), t, s)$ is the transition probability of the general $J$ two particles reversed HZR location process $\vec{y}(t)$ from $\vec{y}(s) = (x_1, x_2)$ to $\vec{y}(t) = (y_1, y_2)$.
%Note that \eqref{}
%For general $t - s \leq t_0$, observing that $\rhzr\big(x_1, x_2, y_1, y_2, t, s\big)$ can be written as a $(t-s)$-fold convolution of one-step transition probabilities. The convolution can be expanded into a sum over all trajectories from $(x_1, x_2)$ to $(y_1, y_2)$. The contribution of each trajectories can be bounded by the product of $t$ one-step transition probability (?). As the particles can only jump to the left, with $t \leq t_0$, the number of trajectories can be bound by $\binom{|x_1 - y_1| +t_0}{t_0} \binom{|x_2  - y_2| + t_0}{t_0}$.  Consequently, by $\lim_{\ep \downarrow 0} b_2 = I %b - (I-1) < 1$
%\begin{equation*}
%\rhzr\big(x_1, x_2, y_1, y_2, t\big) \leq C \binom{|x_1 - y_1| +t_0}{t_0} \binom{|x_2  - y_2| + t_0}{t_0} b_2^{|x_1 - y_1|} b_2^{x_2 - y_2} \leq C c^{|x_1 - y_1| + |x_2 - y_2|} 
%\end{equation*}
%for positive constants $c < 1$ and $C$. Incorporating this bound into \eqref{eq:six:free:temp1}, together with $q_\ep = e^{\sqrt{\ep}}$ and $\lim_{\ep \downarrow 0} \lambdae = 1$, we find that when $\ep$ small enough, there exists $0 < c <  1$,
\begin{equation*}
\rhzrte \big((x_1, x_2), (y_1, y_2), t, s\big) \leq C {\delta'}^{|x_1 - y_1| + |x_2 - y_2|}.
\end{equation*}
Consequently, we can take $C(\beta, T)$ and $C(\beta)$ in \eqref{eq:six1:temp2} large enough such that for $t - s \leq t_0$,
\begin{align*}
\rhzrte \big((x_1, x_2), (y_1, y_2), t, s\big) \leq C {\delta'}^{|x_1 - y_1| + |x_2 - y_2|} &\leq \frac{C(\beta, T)}{t_0 +1} e^{-\frac{\beta (|x_1 - y_1| + |x_2 - y_2|)}{\sqrt{t_0 +1} + C(\beta)}}\\ &\leq \frac{C(\beta, T)}{t - s+1} e^{-\frac{\beta (|x_1 - y_1| + |x_2 - y_2|)}{\sqrt{t - s +1} + C(\beta)}}
\end{align*}
For the gradients, let us consider $\na_{x_1} \rhzrte$ for example. Note that  
\begin{equation*}
\na_{x_1} \rhzrte \big((x_1, x_2), (y_1, y_2), t, s\big) = \rhzrte \big((x_1 + 1, x_2), (y_1, y_2), t, s\big) - \rhzrte \big((x_1, x_2), (y_1, y_2), t, s\big)
\end{equation*}
Using the same argument as above, there exists constant $C(\beta, T)$ and $C(\beta)$ such that for all $s \leq t$ satisfying $t - s \leq t_0$,
\begin{equation*}
\rhzrte \big((x_1, x_2), (y_1, y_2), t, s\big), \rhzrte \big((x_1 + 1, x_2), (y_1, y_2), t, s\big) \leq \frac{C(\beta, T)}{(t - s + 1)^{\frac{3}{2}}} e^{-\frac{\beta (|x_1 - y_1| + |x_2 - y_2|)}{\sqrt{t - s +1} + C(\beta)}},
\end{equation*}
which gives the desired bound for $\na_{x_1} \rhzrte \big((x_1, x_2), (y_1, y_2), t, s\big)$. The argument for the gradient $\na_{x_2} \rhzrte, \na_{y_1} \rhzrte, \na_{y_2} \rhzrte$ and $\na_{x_1, x_2} \rhzrte$ is similar. 
\end{proof}
%when $t \leq t_0$ is similar. Note that in this case (?) is also true when $y_1 \leq y_2$ is in the $\na$-Weyl chamber (not the strict one), as we do not exploit the explicit formula (?) for our estimation. 
Having proved Proposition \ref{prop:semiestimate} for $t- s \leq t_0$, it suffices to prove the same proposition for $t - s$ large enough. In other words, we need to show that there exists $t_0 > 0$ such that the proposition holds for $t-s \geq t_0$. We decompose $\rhzrte$ \eqref{eq:six:temp8} by $$\rhzrte = c(y_1, y_2)  \big(\rhzrfr - \rhzrin\big),$$ where 
\begin{align*}
\numberthis
\label{eq:six:freepart}
\rhzrfr\big((x_1, x_2), (y_1, y_2), t, s\big) &:=  \oint_{\lc} \oint_{\lc} \prod_{i=1}^2 \big(\coreej (z_i)\big)^{\floor{\frac{t-s}{J}}} \reme (z_i, t, s) z_i^{x_i - y_i } \frac{dz_i}{2\pi \im z_i},\\  
\rhzrin\big((x_1, x_2), (y_1, y_2), t, s\big) &:=  \oint_{\lc} \oint_{\lc} \big(\coreej (z_i)\big)^{\floor{\frac{t-s}{J}}} \reme (z_i, t, s) z_i^{x_{3-i} - y_i } \frac{dz_i}{2\pi \im z_i} \\
\numberthis \label{eq:six:intpart}
&\quad-\res_{z_1 = \pole(z_2)} \oint_{\lc} \oint_{\lc} \interacte(z_1, z_2) \prod_{i=1}^2 \big(\coreej (z_i)\big)^{\floor{\frac{t-s}{J}}} \reme (z_i, t, s) z_i^{x_{3-i} - y_i } \frac{dz_i}{2\pi \im z_i}.
\end{align*}
Referring to \eqref{eq:cy1}, $c(y_1, y_2)$ equals $1$ as long as $y_1 < y_2$. It is straightforward that for $(x_1, x_2, y_1, y_2)$ in the $\na$-Weyl chamber \eqref{eq:deltaweyl}, 
\begin{align*}
\na_{x_i} \rhzrte &= c(y_1, y_2) \big(\na_{x_i} \rhzrfr - \na_{x_i} \rhzrin\big),\\ 
\numberthis \label{eq:six:temp4}
\na_{y_i} \rhzrte &= c(y_1, y_2) \big(\na_{y_i} \rhzrfr - \na_{y_i} \rhzrin\big). 
\end{align*}
In addition, for $x_1 + 1 \leq x_2 \in \Xi(t)$ and $y_1 \leq y_2 \in \Xi(s)$,
\begin{equation*}
\na_{x_1, x_2} \rhzrte = c(y_1, y_2) \big(\na_{x_1, x_2} \rhzrfr - \na_{x_1, x_2} \rhzrin\big).
\end{equation*}
Note that under weakly asymmetric scaling, 
$$\lim_{\ep \downarrow 0} c(y_1, y_2) = \idc_{\{y_1 < y_2\}} + \frac{I-1}{2I} \idc_{\{y_1 = y_2\}},$$
which implies that $c(y_1, y_2)$ is uniformly bounded for $\ep$ small enough,
%\footnote{Referring to \eqref{eq:pole1expression}, $c_\ep (y_1, y_2)$ = 1 if $y_1 \neq y_2$ and $\lim_{\ep \downarrow 0} c_\ep (y, y) = \frac{I-1}{2I}$.} 
This being the case, to prove Proposition \ref{prop:semiestimate} for $t-s$ large enough, it suffices to prove the same result for $\rhzrfr$ and $\rhzrin$ respectively.
\begin{prop}\label{prop:semiestimatefr}
For all $\beta, T > 0$, there exists positive constant $t_0:= t_0 (\beta, T)$ and $C(\beta, T)$  such that for $\ep > 0$ small enough  and $0 \leq s \leq t  \in [0, \ep^{-2} T] \cap \ZZ$ satisfying $|t - s| \geq t_0$
\\ 
$(a)$ for all $x_1 \leq x_2 \in \Xi(t)$, $y_1 \leq y_2 \in \Xi(s)$
\begin{align*}
&\big|\rhzrfr\big((x_1, x_2), (y_1, y_2), t, s\big)\big| \leq \frac{C(\beta, T)}{t - s +1} e^{-\frac{\beta(|x_1 - y_1| + |x_2 - y_2|)}{\sqrt{t - s +1} }}
\end{align*}
$(b)$ For all $(x_1, x_2, y_1, y_2)$ in the $\na$-Weyl chamber,
\begin{align*}
& \big|\nabla_{x_i} \rhzrfr\big((x_1, x_2), (y_1, y_2), t, s\big)\big| \leq \frac{C(\beta, T)}{(t - s + 1)^{\frac{3}{2}}} e^{-\frac{\beta(|x_1 - y_1| + |x_2 - y_2|)}{\sqrt{t - s +1}}}, \quad i=1, 2,\\
&\big|\nabla_{y_i} \rhzrfr\big((x_1, x_2), (y_1, y_2), t, s\big)\big| \leq \frac{C(\beta, T)}{(t - s + 1)^{\frac{3}{2}}} e^{-\frac{\beta(|x_1 - y_1| + |x_2 - y_2|)}{\sqrt{t - s +1}}}, \quad i=1, 2.
\end{align*}
(c) For all $x_1 + 1 \leq x_2 \in \Xi(t)$ and $y_1 \leq y_2 \in \Xi(s)$,
\begin{align*}
&\big|\nabla_{x_1, x_2}\rhzrfr\big((x_1, x_2), (y_1, y_2), t, s\big) \big| \leq \frac{C(\beta, T)}{(t - s +1)^2} e^{-\frac{\beta(|x_1 - y_1| + |x_2 - y_2|)}{\sqrt{t - s +1}}}.
\end{align*}
\end{prop}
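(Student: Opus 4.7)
The key structural observation is that the integral in \eqref{eq:six:freepart} factorizes, so
\begin{equation*}
\rhzrfr\big((x_1, x_2), (y_1, y_2), t, s\big) = \mathcal{I}(x_1 - y_1, t, s) \cdot \mathcal{I}(x_2 - y_2, t, s),
\end{equation*}
where
\begin{equation*}
\mathcal{I}(x, t, s) := \oint_{\lc} \big(\coreej(z)\big)^{\floor{\frac{t-s}{J}}} \reme(z, t, s)\, z^{x} \frac{dz}{2\pi \im z}.
\end{equation*}
Up to the change of variable $z \mapsto q^{\rho} z$, $\mathcal{I}(x, t, s)$ is exactly the single-contour integral formula for the tilted one-particle transition probability $\toneparte(t, s, x)$ from \eqref{eq:intformulaonetpart}, so the entire proof reduces to a careful saddle-point analysis of $\mathcal{I}$ together with its discrete differences.

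The plan for bounding $\mathcal{I}$ is to run steepest descent: write the integrand as $\exp\big[\floor{(t-s)/J}\,\Phi_\ep(z) + x \log z\big]$ multiplied by the slowly varying factor $\reme(z, t, s)$. Under weakly asymmetric scaling, $\coreej \to 1$ as $\ep \downarrow 0$, and Taylor expansion around $z = 1$ shows that the critical point $z_c = z_c(x, t-s, \ep)$ of the phase stays in a small neighborhood of $1$ in the relevant regime. Deforming $\lc$ to a steepest descent contour $\mathcal{C}_*$ passing through $z_c$ (which, as noted in the excerpt, may need to be defined implicitly for general $I, J$), Laplace's method yields, for any $\beta > 0$, a bound
\begin{equation*}
|\mathcal{I}(x, t, s)| \leq \frac{C(\beta, T)}{\sqrt{t-s+1}}\, \exp\!\bigg(\!{-}\frac{\beta |x|}{\sqrt{t-s+1} + C(\beta)}\bigg).
\end{equation*}
Choosing the time threshold $t_0(\beta, T)$ large enough so that $\sqrt{t-s+1} + C(\beta) \leq (1+\delta)\sqrt{t-s+1}$ for $t-s \geq t_0$, and then replacing $\beta$ by a slightly enlarged value, the additive $C(\beta)$ in the denominator of the exponent can be removed at the cost of a larger prefactor. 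Multiplying the bounds for the two factors $\mathcal{I}(x_i - y_i, t, s)$ yields part (a).

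For the gradient bounds in (b) and (c), I use the elementary identities $\nabla_{x_i} z_i^{x_i - y_i} = (z_i - 1) z_i^{x_i - y_i}$ and $\nabla_{y_i} z_i^{x_i - y_i - 1} = (z_i^{-1} - 1) z_i^{x_i - y_i}$ to rewrite each gradient as a contour integral with an extra factor $(z - 1)$ (or $(z^{-1} - 1)$) inserted into the integrand. On the steepest descent contour $\mathcal{C}_*$, which lies within a neighborhood of $z = 1$ of size $O((t-s+1)^{-1/2})$ (the Gaussian width dictated by $\Phi_\ep''(z_c)$), these factors are uniformly of order $(t-s+1)^{-1/2}$, so each gradient contributes one extra factor of $(t-s+1)^{-1/2}$ to the estimate in (a). The mixed gradient $\nabla_{x_1, x_2}$ produces two such factors on independent contours, yielding (c).

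The main obstacle will be justifying the choice of steepest descent contour uniformly in the parameters $I, J, \rho, b$ and uniformly in the slope $(x_i - y_i)/(t-s)$, and verifying that $\Re\,\Phi_\ep(z)$ indeed achieves its unique maximum on $\mathcal{C}_*$ at $z_c$. As highlighted in the excerpt, for general $I, J$ the contour is no longer a circle and may only be specified implicitly (in contrast to the $J=1$ case of \cite{CGST18}), so checking that it can be chosen to lie inside $\lc$ and to enclose the poles dictated by the convergence of the tail contributions is the main technical work. Once this is set up, the Gaussian Laplace estimates and the extraction of the $(z-1)$-type small factors are standard, and the resulting bounds combine via the factorization to give (a)--(c) for $\rhzrfr$ once $t - s \geq t_0$.
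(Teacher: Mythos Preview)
Your overall strategy is correct and matches the paper's: factorize $\rhzrfr$ into two one-particle integrals $\toneparte(t,s,x_i-y_i)$, run a steepest-descent-type argument on each to get the $\sqrt{t-s+1}$ and spatial-exponential bounds, and handle gradients via the extra $(z_i^{\pm 1}-1)$ factor in the integrand. The removal of the additive $C(\beta)$ in the exponent by choosing $t_0$ large is also exactly how the paper passes from \eqref{eq:five:temp1}-style bounds to the statement of the proposition.

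There is, however, a meaningful simplification you are missing, and a misreading of the paper. You propose locating a moving critical point $z_c(x,t-s,\ep)$ and deforming to an implicitly defined steepest-descent contour through it. The paper does something much more elementary for $\rhzrfr$: it deforms to the \emph{circle} $\C_{r_i}$ with $r_i=\exp\{-\sign(x_i-y_i)\beta/\sqrt{t-s+1}\}$ (see \eqref{eq:six:radius}). On this circle the factor $|z_i|^{x_i-y_i}$ already equals $e^{-\beta|x_i-y_i|/\sqrt{t-s+1}}$, which produces the spatial decay for free, and the remaining work is Lemma~\ref{lem:free:std}, a Gaussian bound $|\coreej(r_i e^{i\theta})|^{t-s}\le C(\beta,T)e^{-C(t-s+1)\theta^2}$ obtained by Taylor-expanding $\log\coreej$ near $z=1$ for small $\theta$ and invoking the explicit inequality \eqref{sd:rone} on the unit circle for large $\theta$. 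No saddle needs to be tracked, and the contour is always a circle. The implicitly defined contours you refer to from the method overview appear only in the analysis of $\rhzrin$ in the $(++)$ case (Section~\ref{sec:pp}), where one must simultaneously control $|z_1|$ and $|z_1\ppole(z_1)|$; they are irrelevant for the free part. So your plan would work, but it is more laborious than necessary and the ``main obstacle'' you anticipate does not actually arise here.
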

\begin{prop}\label{prop:semiestimateint}
For all $\beta, T > 0$, there exists positive constant $t_0:=t_0(\beta, T)$ and $C(\beta, T)$  such that for $\ep > 0$ small enough $0 \leq s \leq t  \in [0, \ep^{-2} T] \cap \ZZ$ such that $|t - s| \geq t_0$,\\
$(a)$ for all $x_1 \leq x_2 \in \Xi(t)$ and $y_1 \leq y_2 \in \Xi(s)$,
\begin{align*}
&\big|\rhzrin\big((x_1, x_2), (y_1, y_2), t, s\big)\big| \leq \frac{C(\beta, T)}{t - s +1} e^{-\frac{\beta(|x_2 - y_1| + |x_1 - y_2|)}{\sqrt{t - s +1}}}.
\end{align*}
$(b)$ For all $(x_1, x_2, y_1, y_2)$ in the $\na$-Weyl chamber, 
\begin{align*}
& \big|\nabla_{x_i} \rhzrin\big((x_1, x_2), (y_1, y_2), t, s\big)\big| \leq \frac{C(\beta, T)}{(t - s +1)^{\frac{3}{2}}} e^{-\frac{\beta(|x_2 - y_1| + |x_1 - y_2|)}{\sqrt{t - s +1}}}, \quad i=1, 2,\\
&\big|\nabla_{y_i} \rhzrin\big((x_1, x_2), (y_1, y_2), t, s\big)\big| \leq \frac{C(\beta, T)}{(t - s + 1)^{\frac{3}{2}}} e^{-\frac{\beta(|x_2 - y_1| + |x_1 - y_2|)}{\sqrt{t - s +1} }}, \quad i=1, 2.
\end{align*}
(c) For all $x_1 + 1 \leq x_2 \in \Xi(t)$ and $y_1 \leq y_2 \in \Xi(s)$, 
\begin{align*}
&\big|\nabla_{x_1, x_2}\rhzrin((x_1, x_2), (y_1, y_2), t) \big| \leq \frac{C(\beta, T)}{(t - s +1)^2} e^{-\frac{\beta(|x_2 - y_1| + |x_1 - y_2|)}{\sqrt{t+1}}}.
\end{align*}
\end{prop}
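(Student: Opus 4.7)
The plan splits into an algebraic simplification followed by a steepest descent analysis, in parallel with (but more delicate than) the argument for Proposition \ref{prop:semiestimatefr}.

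First, I would absorb the residue subtraction in \eqref{eq:six:intpart} into a contour deformation. Choosing $r < R$ so that the curve $\{\pole(z_2) : z_2 \in \C_R\}$ lies in the annulus $r < |z_1| < R$, and so that no other singularity of the integrand (as a function of $z_1$, for fixed $z_2 \in \C_R$) lies in that annulus, the standard residue identity yields
\[
\rhzrin\big((x_1, x_2), (y_1, y_2), t, s\big) = \oint_{\C_R} \oint_{\C_r} \interacte(z_1, z_2) \prod_{i=1}^2 \big(\coreej (z_i)\big)^{\floor{(t-s)/J}} \reme (z_i, t, s) z_i^{x_{3-i} - y_i} \frac{dz_1 \, dz_2}{(2\pi \im)^2 \, z_1 z_2}.
\]
This is the clean starting point on which to perform asymptotics: the two variables are separated except through the bounded factor $\interacte(z_1, z_2)$.

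Next, I would perform a steepest descent analysis. Under weakly asymmetric scaling $q = e^{\sqrt{\ep}}$, writing each integrand factor as $\exp\{(t-s) G_{\ep,i}(z_i)\}$ where $G_{\ep,i}$ packages $\log \coreej(z_i)$ together with the velocity term $\tfrac{x_{3-i}-y_i}{t-s} \log z_i$, the saddle point $z_{*,i}$ will lie near $z = 1$: indeed the shift $q^{-\rho}$ in \eqref{eq:four2:temp2} was engineered so that the logarithmic derivative of $\coreej$ vanishes to leading order at $z = 1$. I would then deform $\C_R$ and $\C_r$ (keeping them on opposite sides of the locus of $\pole(z_2)$) to pass through these saddles along directions of steepest descent. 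The Gaussian contribution at each saddle supplies one factor of $1/\sqrt{t-s+1}$; their product is the desired $1/(t-s+1)$, while the real parts of $G_{\ep,i}$ away from the saddles produce the decay $\exp\{-\beta(|x_2-y_1|+|x_1-y_2|)/\sqrt{t-s+1}\}$ on the tails.

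For the gradient bounds in (b) and (c), each discrete gradient in $x_i$ or $y_i$ inserts a factor $z_j - 1$ into the integrand (for the appropriate $j \in \{1,2\}$). At the saddle $z_{*,j} = 1 + \OO(\sqrt{\ep})$, this factor is $\OO(\sqrt{\ep}) = \OO\big((t-s+1)^{-1/2}\big)$ whenever $t - s \leq \ep^{-2} T$, so each gradient contributes one extra $(t-s+1)^{-1/2}$, accounting for the exponent $3/2$ in (b) and the exponent $2$ in (c). The main obstacle, and what distinguishes this proposition from the $\rhzrfr$ analysis, is the simultaneous choice of steepest descent contours satisfying three constraints: (i) they pass through the saddles along admissible descent directions; (ii) the locus $\{z_1 = \pole(z_2)\}$ remains trapped between the two contours throughout the deformation, so that the displayed identity stays valid; and (iii) they avoid the other singularities of $\reme$ and $\interacte$ uniformly in $\ep$ and in $b \in (\tfrac{I+J-2}{I+J-1}, 1)$. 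As noted in Section \ref{sec:method}, for certain parameter subranges the steepest descent contours cannot be chosen as circles and must be defined implicitly; verifying that the real part of $G_{\ep,i}$ attains its maximum only at the saddle along such a contour is the delicate part.
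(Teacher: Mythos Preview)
Your first step---absorbing the residue into a contour deformation---is exactly what the paper does, and your identification of the saddle at $z=1$ and the mechanism by which gradients produce extra factors of $(t-s+1)^{-1/2}$ is correct. The gap is in the phrase ``the two variables are separated except through the bounded factor $\interacte(z_1, z_2)$.'' The factor $\interacte$ is \emph{not} bounded on the contours you need. Since $\pole(1)=1$ (up to $\OO(\ep)$), when $z_2$ is within $\OO((t-s)^{-1/2})$ of the saddle the pole $z_1=\pole(z_2)$ is also within $\OO((t-s)^{-1/2})$ of it; any $z_1$-contour that passes through the saddle will therefore come within $\OO((t-s)^{-1/2})$ of this pole, so $|\interacte|$ blows up like $\sqrt{t-s}$ there. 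You cannot ``trap the locus $\{z_1=\pole(z_2)\}$ between the two contours'' uniformly while keeping both contours at the saddle.

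The paper resolves this by allowing the $z_2$-contour radius to depend on $z_1$ (choosing a small or large circle according to whether $|\ppole(z_1)|$ exceeds a threshold $r_2'$), which guarantees $|z_2-\ppole(z_1)|\ge \beta/\sqrt{t-s+1}$ and gives the controlled bound $|\interacte|\le C+C\sqrt{t-s+1}(|\theta_1|+|\theta_2|)$. This deformation crosses the pole for some $z_1$, producing a residue term $\rhzrr$ that is a \emph{single} integral involving $\pcoree(z_1)=\coreej(z_1)\coreej(\ppole(z_1))$. The crucial observation (illustrated in Figure~\ref{fig:contline}) is that the unit circle is \emph{not} a steepest descent contour for $\pcorelim$; the correct contour is the off-center circle $\mcont=\{|z-\tfrac{1}{I+1}|=\tfrac{I}{I+1}\}$ (Lemma~\ref{lem:SDM}), and in the $(++)$ case one needs the implicitly defined deformation $\{|z_1\ppole(z_1)|=\text{const}\}$. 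A further lemma shows that the indicator $\{|\ppole(z_1)|>r_2'\}$ in $\rhzrr$ forces $|\theta_1|\ge(t-s+1)^{-1/4}$, so the Gaussian tail gives the extra decay. The three sign cases $(+-),(--),(++)$ require separate contour choices to simultaneously extract spatial decay and satisfy both steepest descent conditions; this case analysis is where most of the work lies, and your proposal does not address it.
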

The reader might notice that in Proposition \ref{prop:semiestimateint}, we write $|x_2 - y_1| + |x_1 - y_2|$ on the RHS exponents (compared with $|x_1  - y_1| + |x_2  - y_2|$ in Proposition \ref{prop:semiestimate}). This in fact yields a stronger upper bound since by $x_1 \leq x_2$ and $y_1 \leq y_2$, one always has
\begin{equation*}
|x_1 - y_1| + |x_2 - y_2| \leq |x_2 - y_1| + |x_1 - y_2|.
\end{equation*}
%which implies 
%\begin{equation*}
%e^{-\frac{\beta(|x_2 - y_1| + |x_1- y_2|)}{\sqrt{t-s+1}}} \leq e^{-\frac{\beta(|x_1 - y_1| + |x_2 - y_2|)}{\sqrt{t-s+1}}}.
%\end{equation*}
Hence, combining Proposition \ref{prop:semiestimatefr} and Proposition \ref{prop:semiestimateint}, we conclude Proposition \ref{prop:semiestimate}.
%once we proved Proposition \ref{prop:semiestimateint}, the inequalities in Proposition \ref{prop:semiestimateint}  also holds when we replace $|x_2 - y_1| + |x_1 - y_2|$ by $|x_1 - y_1| + |x_2 - y_2|$ in the RHS exponent. Consequently, to prove Theorem \ref{prop:semiestimate}, it suffices to prove  Theorem \ref{prop:semiestimatefr} and Theorem \ref{prop:semiestimateint}.
\subsection{Estimate of $\rhzrfr$}\label{sec:estimatefree}
 In this section, we will prove Proposition \ref{prop:semiestimatefr}. 
%Before we move on to proving this result, we briefly explain our strategy.
Referring to \eqref{eq:intformulaonetpart}, 
\begin{align}\label{eq:six1:temp3}
\toneparte(t, s, x_i - y_i) 
%=  \oint_{\lc} \coree(z_i, t,s) z_i^{x_i - y_i} \frac{dz_i}{2 \pi \im z_i} 
=  \oint_{\lc} \big(\coreej (z_i)\big)^{\floor{(t-s)/J}} \reme(z_i, t, s) z_i^{x_i - y_i} \frac{dz_i}{2 \pi \im z_i}
\end{align}
where $R$ is large enough so that $\C_R$ encircles all the poles of the integrand. Therefore, from \eqref{eq:six:freepart} we have
\begin{align}\label{eq:six:temp3}
\rhzrfr\big((x_1, x_2), (y_1, y_2), t, s\big) = %\prod_{i=1}^2 \oint_{\lc} \big(\coreej (z_i)\big)^{\floor{\frac{t-s}{J}}} \reme (z_i, t, s) z_i^{x_i - y_i} \frac{dz_i}{2 \pi \im z_i} =
 \toneparte\big(t, s, x_1 - y_1\big) \toneparte\big(t, s, x_2 - y_2\big).
\end{align}
To estimate $\rhzrte\big((x_1, x_2), (y_1, y_2), t, s\big)$, it suffices to analyze $\toneparte(t, s, x_i - y_i)$. 
%$$\toneparte(t, s, x_i - y_i) =  \oint_{\lc} \coree(z_i, t,s) z_i^{x_i - y_i} \frac{dz_i}{2 \pi \im z_i}.$$
Referring to the expression \eqref{eq:four2:temp2} and \eqref{eq:four2:temp3}, 
\begin{align*}
\numberthis 
\label{eq:six:temp10}
\coreej (z) &:= \lambda z^{\mu}  \frac{(1 + \alpha q^J)q^{-\rho} z - (\nu + \alpha q^J)}{(1 + \alpha) q^{-\rho} z - (\nu + \alpha)},\\ 
\numberthis 
\label{eq:six:temp9}
\reme(z, t, s) &:= \prod_{k = s + J \floor{\frac{t-s}{J}}}^{t-1} \lambda(k) z^{\mu(k)}  \frac{(1 + \alpha(k) q) q^{-\rho} z - (\nu + \alpha(k) q) }{(1 + \alpha(k)) q^{-\rho} z - (\nu + \alpha(k))}.
\end{align*}
Define the set of poles of the integrand in \eqref{eq:six1:temp3} to be $\poleset$, it is clear that \[\mathcal{P} \subseteq \bigcup_{k=0}^{\infty} \{q^{\rho}\frac{\nu + \alpha(k)}{1 + \alpha(k)}\} \cup \{0\} = \bigcup_{k=0}^{J-1} \{q^{\rho}\frac{\nu + \alpha(k)}{1 + \alpha(k)}\} \cup \{0\}.\]
Due to Lemma \ref{lem:wsc},
\begin{equation*}
\lim_{\ep \downarrow 0} \frac{q^\rho (\alpha(k) + \nu)}{1 + \alpha(k)} %= \lim_{\ep \downarrow 0} \frac{q^\rho \frac{\alpha(t) + \nu}{1 + \alpha}}{\frac{1 + \alpha(k)}{1 + \alpha}} 
= \frac{(I + \mod(k)) b - (I + \mod(k) - 1)}{b  \mod(k) - (\mod(k) - 1)} \in (0, 1).
\end{equation*}
Therefore, there exists $0 < \Theta < 1$ such that for $\ep$ small enough 
\begin{equation}\label{eq:temp17}
\poleset \subseteq [0, \Theta].
\end{equation}
%\begin{equation}\label{eq:pole}
%\quad \text{s.t.} \quad \frac{(I + \mod(t)) b - (I + \mod(t) - 1)}{b  \mod(t) - (\mod(t) - 1)} \in (0, \delta_*).
%\end{equation}
% So for $\ep$ small enough, all the poles of the integrand \eqref{eq:six:freepart} belong to $(0, \delta_*)$.
To extract the spatial decay of $\toneparte(t, s, x_i - y_i)$, we deform the contour of $z_i$ from $\lc$ to $\C_{r_i}$ where 
\begin{equation}\label{eq:six:radius}
r_i  = \rfunc(t - s, -\sign(x_i  - y_i)\beta).
\end{equation}
Note that when $t - s$ is large enough, $r_i$ is close to $1$, thus  deforming the contour from $\lc$ to $\C_{r_i}$, we do not cross the poles in the integrand.
%$$\toneparte(t, s,  x_i - y_i) =  \oint_{\C_{r_i}} \big(\coreej (z_i)\big)^{\floor{(t-s)/J}} \reme(z_i, t, s) z_i^{x_i - y_i} \frac{dz_i}{2 \pi \im z_i} $$
We parametrize $\C_{r_i}$ by $z_i(\theta_i) = r_i e^{\im \theta_i}, \theta \in (-\pi, \pi]$ and get
\begin{equation*}
\toneparte(t, s, x_i - y_i) = \frac{1}{2 \pi} \oint_{\C_{r_i}} \big(\coreej (z_i (\theta_i))\big)^{\floor{(t-s)/J}} \reme(z_i (\theta_i), t, s) z_i(\theta_i)^{x_i - y_i} d\theta_i 
\end{equation*}
We want to bound each terms that appear in the integrand above. Note that by \eqref{eq:six:radius}, $|z_i (\theta_i)|^{x_i - y_i}  = e^{-\frac{\beta}{\sqrt{t - s +1}} |x_i  - y_i|}$. 
\bigskip
\\
To estimate $\reme(z_i, t, s)$, referring to \eqref{eq:six:temp9}, $\reme(z, t, s)$ is a product of up to $J$ terms (since $t - s -  J\floor{\frac{t-s}{J}}\leq J$). For each term, by Lemma \ref{lem:wsc}
\begin{align*}
&\lim_{\ep \downarrow 0} \bigg|\lambda(k) z^{\mu(k)} \frac{(1 + \alpha(k) q) q^{-\rho} z - (\nu + \alpha(k) q)}{(1 + \alpha(k)) q^{-\rho} z  - (\nu + \alpha(k))}\bigg| \\
\numberthis \label{eq:six3:temp5}
&= |z|^{\frac{1}{I}} \frac{(b(1 + \mod(k)) - \mod(k)) z - (b (I + \mod(k) +1) - (I + \mod(k)) }{(b \mod(k) - (\mod(k) - 1)) z  - ((I + \mod(k)) b - (I + \mod(k) - 1))}.
\end{align*}
The singularities in \eqref{eq:six3:temp5} lie strictly inside the unit disk. Since $r_i$ is close to $1$ when $t-s$ is large, for $\ep$ small enough and $t-s$ large enough, there exists constant $C$ such that for $z \in \C_{r_i}$ and $k \in \ZZn$
\begin{equation*}
%\label{eq:six1:temp4}
\bigg|\lambda(k) z^{\mu(k)} \frac{(1 + \alpha(k) q) q^{-\rho} z - (\nu + \alpha(k) q)}{(1 + \alpha(k)) q^{-\rho} z  - (\nu + \alpha(k))}\bigg| \leq C,
\end{equation*} 
%Referring to \eqref{eq:six1:temp3}, $\reme(z_i, t, s)$ is the product of $\lambda(k) z_i^{\mu(k)} \frac{(1 + \alpha(k) q) q^{-\rho} z_i - (\nu + \alpha(k) q)}{(1 + \alpha(k)) q^{-\rho} z_i  - (\nu + \alpha(k))}$ (with at most $J$ factors), we conclude
 which implies 
\begin{equation}\label{eq:temp24}
|\reme(z_i, t, s)| \leq C.
\end{equation}
Consequently, 
\begin{equation}\label{eq:six:temp2}
\toneparte(t, s, x_i - y_i) \leq \int_{-\pi}^\pi |\coreej (z_i)|^{\floor{(t-s)/J}} |\reme(z_i(\theta), t, s)| |z_i (\theta)|^{x_i - y_i} d\theta\leq C e^{-\frac{\beta}{\sqrt{t - s +1}} |x_i - y_i|} \int_{-\pi}^{\pi} \big|\coreej (z_i(\theta))\big|^{\floor{(t-s)/J}} d\theta 
\end{equation}
We expect to extract the temporal decay $\frac{1}{\sqrt{t - s +1}}$ from the integral above. %$\int_{-\pi}^\pi \big|\coreej (z_i(\theta))\big|^{^{\floor{(t-s)/J}}} d\theta $. 
To this end, we need to the following lemma. 
\begin{lemma}\label{lem:free:std}
There exists positive constants $C(\beta, T)$, $C$ such that for $\theta \in (-\pi, \pi]$
\begin{equation*}
\big|\coreej (z(\theta))\big|^{t-s} \leq C(\beta, T) e^{- C (t-s+1) \theta^2}, \qquad z(\theta) = \rfunc(t-s, \pm \beta) e^{\im \theta}
\end{equation*}
%\begin{equation*}
%\Re \log \coree (r_i e^{\im \theta}) \leq \frac{C_1}{t+1} - C_2 \theta^2
%\end{equation*}
holds for $\epsilon > 0$ small enough and large enough $t-s \leq \ep^{-2} T$ . %Consequently, $\big|\core(z_i (\theta))\big|^t \leq C e^{t \big(- C \theta^2\big)}$
\end{lemma}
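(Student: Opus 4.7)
The strategy is a standard saddle-point estimate anchored at $z = 1$. The first step is to recognize $\coreej$ as a moment generating function: telescoping the identity \eqref{eq:mgf} over $k = 0, \dots, J-1$ shows that $\coreej(z) = \EE\bigl[z^{-X_\ep(J)}\bigr]$ with $X_\ep(J) = \sum_{k=0}^{J-1} R_\ep(k)$. Setting $\psi_\ep(z) := \log \coreej(z)$, the centering of the increments built into \eqref{eq:rwdistribution} yields
\[
\psi_\ep(1) = 0, \qquad \psi_\ep'(1) = -\EE[X_\ep(J)] = 0, \qquad \psi_\ep''(1) = \mathrm{Var}(X_\ep(J)) = J V_* + \OO(\ep^{1/2}),
\]
the last equality by the variance computation \eqref{eq:five:temp6} combined with Lemma \ref{lem:wsc}. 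Analytic dependence on parameters then gives a Taylor expansion $\psi_\ep(z) = \tfrac{1}{2}\psi_\ep''(1)(z-1)^2 + \OO(|z-1|^3)$ with remainder bounded uniformly in $\ep$ small, on a fixed neighborhood of $z = 1$.

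Next, I split into a local range $|\theta| \leq \theta_0$ and a remainder $|\theta| \in [\theta_0, \pi]$, where $\theta_0 > 0$ is a small constant to be fixed. Writing $z(\theta) = r e^{\im \theta}$ with $r = \rfunc(t-s, \pm\beta) = 1 + \OO(\beta/\sqrt{t-s+1})$, a direct expansion of $\cos\theta, \cos 2\theta$ gives
\[
\Re (z(\theta)-1)^2 = (r-1)^2 - \theta^2 + \OO\bigl((r-1)\theta^2 + \theta^4\bigr).
\]
Inserting this into the Taylor expansion and choosing $\theta_0$ small (depending only on the universal constants in the Taylor remainder and on $V_*$) gives, for $t-s$ large and $\ep$ small,
\[
\Re \psi_\ep(z(\theta)) \leq -\tfrac{J V_*}{4}\theta^2 + \tfrac{C\beta^2}{t-s+1}, \qquad |\theta| \leq \theta_0.
\]
Raising to the power $t-s$ absorbs the bounded term $C\beta^2$ into a constant $C(\beta, T)$, producing the claimed Gaussian bound on the local range.

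For $|\theta| \in [\theta_0, \pi]$, the goal is a uniform gap $|\coreej(z(\theta))| \leq 1 - \delta$. Lemma \ref{lem:wsc} shows that the increment distributions in \eqref{eq:rwdistribution} converge termwise to a nondegenerate, aperiodic limit supported on $\ZZn - 1/I$, so the limit $\coreh_*(z) := \lim_{\ep \downarrow 0}\coreej(z)$ is the MGF of a centered, aperiodic random variable and in particular $|\coreh_*(re^{\im\theta})|$ attains its strict maximum $\coreh_*(r)$ at $\theta = 0$, with a uniform gap on $[\theta_0, \pi]$ for $r$ in a small neighborhood of $1$. Uniform convergence $\coreej \to \coreh_*$ on a fixed annulus around $|z| = 1$ transfers this gap to all $\ep$ small, so $|\coreej(z(\theta))|^{t-s} \leq (1-\delta)^{t-s} \leq e^{-\delta(t-s)}$; since $\theta^2 \leq \pi^2$ on this range, the RHS is dominated by $C e^{-C(t-s+1)\theta^2}$ after shrinking $C$. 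Combining the two regimes yields the lemma.

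The main obstacle, beyond routine bookkeeping of the Taylor remainder, is the uniform gap for $|\theta|$ bounded away from zero: it requires aperiodicity of the limit random walk together with uniform convergence of the MGFs in a neighborhood of the unit circle, rather than only the local saddle-point expansion near $z = 1$.
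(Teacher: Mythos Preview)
Your proof is correct and follows essentially the same route as the paper: both recognize $\coreej$ as the moment generating function of the centered random-walk increment $\sum_{k=0}^{J-1}R_\ep(k)$, use the resulting Taylor expansion at $z=1$ to control the small-$\theta$ regime, and establish a uniform gap $|\coreej(z(\theta))|\leq 1-\delta$ on $|\theta|\geq\theta_0$ by passing to the $\ep\downarrow 0$, $t-s\to\infty$ limit. The only cosmetic difference is that the paper verifies the gap via an explicit calculation of $|\corelim(e^{\im\theta})|^2$ (the steepest-descent condition \eqref{sd:rone}), whereas you invoke aperiodicity of the limiting lattice random walk together with $\corelim(r)\to 1$; the two arguments are equivalent.
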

As a remark, we see from \eqref{eq:six:temp10} that  the function $\coreej (z)$ is not globally analytic due to the factor $z^{\mu}$ ($\mu$ is not an integer), but it is analytic in a neighborhood of $1$. 
%(taking a principal branch for $z^{\hmu(J)}$). 
Furthermore, $\big|\coreej (z)\big|$ is a continuous function in a neighborhood of the unit circle.
%in fact we do not care whether $C_1$ is positive or not, but $C_2$ needs to be positive to ensure the quadratic decay in $\theta$.
\begin{proof}[Proof of Lemma \ref{lem:free:std}]
We only prove Lemma \ref{lem:free:std} for $z(\theta) = \rfunc(t-s, \beta) e^{\im \theta}$, the argument for $z(\theta) = \rfunc(t-s, -\beta) e^{\im \theta}$ is similar. 
By writing $\big|\coreej (z(\theta))\big|^{t-s} = e^{(t-s) \Re \log \coreej (z(\theta))}$, it suffices to show that there exists positive constants $C(\beta, T), C$ such that for $\epsilon > 0$ small enough and  $t-s \leq \ep^{-2} T$ large enough
\begin{equation*}
\Re \log \coreej (\rfunc(t-s, \beta) e^{\im \theta}) \leq \frac{C(\beta, T)}{t-s+1} - C \theta^2,
\end{equation*}
where $\Re z$ denotes the real part of a complex number $z$.
\bigskip
\\
We divide our proof into three cases. It suffices to show 
%for $\ep > 0$ small and $t \leq \ep^{-2} T$ large
\begin{itemize}
\item $(\theta = 0): \log \coreej (\rfunc(t-s, \beta)) \leq \frac{C(\beta, T)}{t-s+1} $ 
%for some constant $C(\beta, T)$.
\item ($\theta$ small): There exists $\zeta > 0$ s.t. 
%and  constants $C(\beta, T)$,  $C > 0$ such that 
\begin{equation*}
\Re \log \coreej (\rfunc(t-s, \beta) e^{\im \theta}) \leq \frac{C(\beta, T)}{t-s+1} - C \theta^2 \qquad \text{for }  |\theta| \leq \zeta.
\end{equation*}  
\item ($\theta$ large): There exists $\delta > 0$ such that $\big|\coree (\rfunc(t-s, \beta) e^{\im \theta})\big| < 1 -\delta $ for $|\theta| > \zeta$.
\end{itemize}
The proof for the first and second bullet point are done by using the local property of $\coreej (z)$ near $1$ (applying Taylor expansion). Let $O$ be a small neighborhood around $1$ such that $\coreej (z)$ is analytic inside $O$.
\bigskip
\\
\textbf{($\theta = 0$): } 
We write $\coreej(z)$ into terms of a telescoping product 
\begin{equation*}
\coreej (z) =  \prod_{k = 0}^{J-1} \lambdat{\lambda}{k} z^{\mut{\mu}{k}} \frac{1 + \alphat{\alpha}{k} q - (\nu + \alphat{\alpha}{k} q) q^{\rho} z^{-1}}{1 + \alphat{\alpha}{k} - (\alphat{\alpha}{k} + \nu) q^{\rho}}. 
\end{equation*}
By \eqref{eq:mgf}, we see that 
\begin{equation*}
\coreej (z)  = \prod_{k=0}^{J-1} \EE\big[z^{-R_\ep (k)}\big] = \EE\big[z^{-\sum_{k=0}^{J-1} R_\ep (k)}\big],
\end{equation*}
thus
\begin{equation*}
{\coreej}'(1) = -\EE\big[\sum_{k=0}^{J-1} R_\ep (k)\big] = 0, \qquad {\coreej}''(1) = \var\big[\sum_{k=0}^{J-1} R_\ep (k)\big] = \sum_{k=0}^{J-1} \var\big[R_\ep (k)\big].
\end{equation*}
Referring to \eqref{eq:five:temp6}, 
\begin{equation*}
\lim_{\ep \downarrow 0} \sum_{k=0}^{J-1} \var\big[R_\ep (k)\big] =  \sum_{k=0}^{J-1}  \frac{(I+1+2k) b - (I + 2k - 1)}{I^2 (1-b)} = J V_*,
\end{equation*}
where $V_*$ is given by \eqref{eq:vstar}.
The above discussion implies that  
\begin{equation*}
\log \coreej (1) = 0, \qquad (\log \coreej)' (1) = 0.
%\ \quad \big| (\log \coree)''' (z) \big| \leq C 
\end{equation*}  
Moreover, there exists constant $C$ such that uniformly for $z \in O$ and $\ep$ small enough,
\begin{equation*}
 \quad  |(\log \coreej)''(z)| \leq C.
\end{equation*}
Since $\lim_{t-s \to \infty} \rfunc(t-s, \beta) = 1$, we see that $\rfunc(t-s, \beta) \in O$ for $t-s$ large enough. Thus, we taylor expand $\coreej (z)$ around $z = 1$ and get
%By Taylor expansion of $\log \coree (z)$ at $z = 1$, there exists an open neighborhood $O$ of $1$,
%\begin{equation*}
%\log \coree (z) \leq  C |z-1|^2, \quad z \in O.
%\end{equation*} 
%As $r_i = \rfunc(t, -\sign(x_i - y_i) \beta)$, we see that $ z(0) = \rfunc (t, \pm\beta) $. For $t$ sufficiently large, $z(0) \in O$ and thus
\begin{equation}\label{eq:six1:temp5}
\log \coreej (\rfunc(t-s, \beta)) \leq C \big|\rfunc(t-s, \beta) - 1\big|^2 \leq \frac{C(\beta, T)}{t-s+1},
\end{equation}
which justifies the first bullet point.
\bigskip
\\
\textbf{($\theta$ small):} 
Consider the function $\coreej (z (\theta))$, we calculate for $z(\theta) \in O$
\begin{align*}
\pa_\theta (\log \coreej (z(\theta))) \big|_{\theta = 0} &\in \im \RR,\\ 
\lim_{\epsilon \downarrow 0, t-s \to \infty} \pa_\theta^2 (\log \coreej (z(\theta))) \big|_{\theta = 0} 
&= - J V_*,
\\
\big|\pa_\theta^3 (\log \coreej (z(\theta))) \big| &\leq C.
\end{align*}
Given these properties, we taylor expand $\log \coreej (z(\theta))$ at $\theta = 0$, there exists $\zeta > 0$ such that
\begin{equation*}
\Re \log \coreej (z (\theta)) \leq \Re \log \coree (z (0)) - \frac{J V_*}{2} \theta^2 \qquad |\theta| \leq \zeta
\end{equation*}
In conjunction with $\Re \log \coree (z (0)) \leq \frac{C(\beta, T)}{t-s+1}$ (which is shown by \eqref{eq:six1:temp5}), we conclude the second bullet point.
\bigskip
\\
\textbf{($\theta$ large):} 
We set 
\begin{align}\label{eq:temp22}
\corelim(z) := 
%\hlambda(J) z^{\hmu(J)}  \frac{(1 + \alpha q^J)q^{-\rho} z - (\nu + \alpha q^J) q^{\rho} z^{-1}}{(1 + \alpha) q^{-\rho} z - (\nu + \alpha)}\\
z^{\frac{J}{I}} \frac{(b J - (J-1)) z - ((I+J)b - (I + J - 1))}{z - (I b - (I-1)) }
\end{align}
Referring to the expression of $\coreej$ in \eqref{eq:six:temp10} and  using Lemma \ref{lem:wsc}, one has
%\begin{equation*}
%\coree(z) =\lambdae z^{\mue} \frac{\frac{1 + \alphae \qe }{1 + \alphae} z - \frac{\nue + \alphae \qe}{1 + \alphae} \taue^{-\rho} }{z - \frac{\alphae + \nue}{1 + \alphae} \taue^{-\rho}}
%\end{equation*}
%and $\lambdae \to 1$, $\mue \to \frac{1}{I}$, together with (?), one sees that 
\begin{equation*}
\lim_{\epsilon \downarrow 0} \big| \coreej (z) \big| = \big|  \corelim (z) \big|.
\end{equation*}
The convergence is uniform in an open neighborhood of unit circle.
Thereby,
\begin{equation*}
\lim_{\epsilon \downarrow 0, t-s \to \infty} \big|\coreej (\rfunc(t-s, \beta) e^{\im \theta})\big| = \big|\core_*(e^{\im \theta})\big| \quad \text{uniformly over } (-\pi, \pi]. 
\end{equation*}
As a result, we conclude the third bullet point as long as we verify the following \emph{steepest descent condition}
\begin{equation}\label{sd:rone}\tag{SD.$\C_1$}
\big|\corelim(z)\big| < 1  \quad \text{for } z \in \C_1 \backslash \{1\}.
\end{equation}
To prove \eqref{sd:rone}, we compute 
\begin{align*}
\big|\corelim(e^{\im \theta})\big|^2 &= \bigg| \frac{(b J - (J-1)) e^{\im \theta} - ((I+J)b - (I + J - 1))}{e^{\im \theta} - (I b - (I-1)) }\bigg|^2
\\
&= \frac{(b J - (J-1))^2 + ((I + J)b - (I + J - 1))^2 - 2 (b J - (J-1)) ((I+J) b - (I + J -1)) \cos \theta}{1 + (I b - (I-1))^2 - 2(I b - (I-1)) \cos \theta} \\
&= 1 -   \frac{2J(1-b) (1- \cos\theta) ((I+J)b  - (I+J-2))}{1 + (Ib - (I-1))^2 - 2(I b -(I-1)) \cos \theta}  < 1, \qquad \theta \in (-\pi, \pi] \backslash \{0\}.
\end{align*}
In the last step, we used the condition $\frac{I +  J -2}{I + J - 1} < b < 1$.
\end{proof}
% For our application, note that $z_1 (\theta_1) = \rfunc(t, \pm \beta) e^{\im \theta_1}$ and $z_2 (\theta_2) = \rad_2 (z_1) e^{\im \theta_2}$ where $\rad_2(z_1)$ equals either $\rfunc(t, \pm k_2 \beta)$ or $\rfunc(t, \pm 3 k_2 \beta)$, depending on the sign of $x_j - y_{3-j}$.  
Having proved Lemma \ref{lem:free:std}, we proceed to finish the proof of Theorem \ref{prop:semiestimatefr}.
\begin{proof}[Proof of Theorem \ref{prop:semiestimatefr}]
Due to Lemma \ref{lem:free:std}, 
\begin{align*}
\int_{-\pi}^{\pi} \big|\coreej (z_i(\theta))\big|^{\floor{\frac{t - s}{J}}} d\theta 
%= \int_{-\pi}^\pi e^{t \Re \log \core(r_i e^{\im \theta})} 
\leq \int_{-\pi}^{\pi} C(\beta, T) e^{- C (\floor{\frac{t - s}{J}}+1) \theta^2} d\theta \leq \frac{C(\beta, T)}{\sqrt{t - s +1}}.
\end{align*}
This being the case, by \eqref{eq:six:temp2} we readily see that 
\begin{equation}\label{eq:six:onepartbound}
\toneparte(t, s, x_i - y_i) \leq \frac{C(\beta, T)}{\sqrt{t - s + 1}} e^{-\frac{\beta}{\sqrt{t - s + 1}} |x_i - y_i|}.
\end{equation}
Incorporating this bound into \eqref{eq:six:temp3} concludes Theorem \ref{prop:semiestimatefr} part (a). 
\bigskip
\\
For the gradient, notice that one has 
\begin{align*}
\numberthis \label{eq:six2:temp1}
\na_{x_1} \rhzrfr\big((x_1, x_2), (y_1, y_2), t, s\big) &= \na \tonepart(t, s, x_1- y_1) \tonepart(t, s, x_2 - y_2),\\ 
\na_{y_1} \rhzrfr\big((x_1, x_2), (y_1, y_2), t, s\big) &= \tonepart(t, s, x_1 - y_1) \na \tonepart(t, s, x_2 - y_2 - 1),\\
\numberthis \label{eq:six2:temp3}
\na_{x_1, x_2} \rhzrfr\big((x_1, x_2), (y_1, y_2), t, s\big) &= \na \tonepart(t, s, x_1 - y_1) \na \tonepart(t, s,  x_2 - y_2).
\end{align*}
The proof for gradients $\na_{x_2}$, $\na_{y_2}$ is similar to that for $\na_{x_1}, \na_{y_1}$ by symmetry. It suffices to analyze 
\begin{equation*}
\na \tonepart(t, x_1 - y_1) = \frac{1}{2 \pi} \int_{-\pi}^{\pi} \core(z_1 (\theta_1))^{\floor{\frac{t-s}{J}}} \reme(z_1(\theta_1), t, s) z_1 (\theta_1)^{x_1 - y_1} (z_1(\theta_1) - 1) d \theta_1 
\end{equation*}
By the fact 
$\big|z_1 (\theta_1) - 1\big|  = \big|e^{\pm \frac{\beta}{\sqrt{t - s +1}} + \im \theta_1} - 1\big| \leq C (\frac{1}{\sqrt{t - s +1}} + |\theta_1|)$, we conclude 
\begin{align}\label{eq:six:onepartgradbound}
\big|\na\tonepart(t, x_i - y_i)\big| 
\leq C(\beta, T) e^{-\frac{\beta}{\sqrt{t - s +1} } |x_i - y_i|} \int_{-\pi}^{\pi} e^{- C \floor{\frac{t-s}{J}} \theta_1^2} (\frac{1}{\sqrt{t - s +1}} + |\theta_1|) d \theta_1 \leq \frac{C(\beta, T)}{t - s +1} e^{-\frac{\beta}{\sqrt{t - s +1}} |x_i - y_i|},
\end{align}
where the last inequality follows by a change of variable $\theta_1 \to \frac{\theta_1}{\sqrt{t - s +1}}$. Incorporating this bound into \eqref{eq:six2:temp1} and \eqref{eq:six2:temp3}, we conclude the Theorem \ref{prop:semiestimatefr} (b), (c).
\end{proof}
\subsection{Estimate of $\rhzrin$, an overview.}
Recall from \eqref{eq:six:intpart} that
\begin{align*}
\rhzrin\big((x_1, x_2), (y_1, y_2), t, s\big) = &\oint_{\lc} \oint_{\lc} \interacte(z_1, z_2)  \prod_{i=1}^2 \coreej(z_i)^{\floor{\frac{t-s}{J}}} \reme(z_i, t, s) z_i^{x_{3-i} - y_i } \frac{dz_i}{2\pi \im z_i}  \\  
\numberthis \label{eq:six2:temp4}
& - \res_{z_1 = \pole(z_2)} \bigg[\oint_{\lc} \oint_{\lc} \interacte(z_1, z_2)  \prod_{i=1}^2 \coreej(z_i)^{\floor{\frac{t-s}{J}}} \reme(z_i, t, s) z_i^{x_{3-i} - y_i } \frac{dz_i}{2\pi \im z_i}\bigg].
\end{align*}
We study the double contour integral in \eqref{eq:six2:temp4}. Recall from \eqref{eq:interactexpression} and \eqref{eq:pole1expression} that 
\begin{equation}\label{eq:temp21}
\interacte(z_1 , z_2) = \frac{q\nu - \nu + (\nu-q) q^{-\rho} z_2 + (1-q\nu) q^{-\rho} z_1 + (q-1) q^{-2\rho} z_1 z_2}{q\nu - \nu + (\nu-q) q^{-\rho} z_1 + (1-q\nu) q^{-\rho} z_2 + (q-1) q^{-2\rho} z_1 z_2},
\end{equation}
which produces a pole at $z_1 = \pole(z_2)$ where 
\begin{equation*}
\pole (z) = \frac{(1-q \nu) q^{-\rho} z - \nu  (1-q)}{(q-\nu) q^{-\rho}  + (1-q) q^{-2\rho}  z}.
\end{equation*}
Referring to \eqref{eq:temp17}, the other poles of the integrand belong to $[0, \Theta]$ for some $0 < \Theta < 1$. 
%\begin{equation}\label{eq:seven:temp4}
%\interacte(z_1, z_2) \prod_{i=1}^2 \coreej (z_i)^{\floor{\frac{t - s}{J}}} \reme(z_i, t, s) z_i^{x_{3 - i} - y_i} \frac{d z_i}{2 \pi \im z_i}, 
%\end{equation}
%viewed as a function of $z_1$, has all poles in the interval $[0, \delta_*]$, by \eqref{eq:pole}.
\bigskip
\\
We say the contour $\Gamma$ is \emph{admissible} if 
\begin{equation}
\label{con:admissible}
(1): \Gamma \text{ contains } [0, \Theta] \text{ but does not contain } 1-I, \quad (2): d(1 - I, \Gamma) > \frac{1}{2I},  
\end{equation}
where the distance between a point $z \in \CC$ and a set $A$ is define by $d(z, A) := \inf \{|z- y|: y\in A\}$. Figure \ref{fig:admissible} below gives several graphical examples of admissible and not admissible contours.
\bigskip
\\ 
Define 
\begin{align*}
\sstar(z) := \lim_{\ep \downarrow 0} \pole(z) = \frac{(I-1) z + 1}{I+1 - z}.
\end{align*}
Note that 
\begin{align*}
%\label{eq:seven2:temp1}
%\lim_{|z_2| \to \infty, \ep \downarrow 0} \pole(z_2) =  
\lim_{|z| \to \infty} \sstar(z) 
= 1-I.
\end{align*}
Note that $z_2 \in \C_R$, from above we have: For $R$ large enough  and $\ep$ small enough, if $\Gamma$ is admissible, deforming the $z_1$-contour from $\C_R$ to $\Gamma$ will cross the pole $s_\ep (z_2)$ for all $z_2 \in \C_R$. Moreover, such deformation does not cross any other poles in $\poleset$. Therefore, 
\begin{equation*}
\rhzrin\big((x_1, x_2), (y_1, y_2), t, s\big) = \oint_{\Gamma} \oint_{\lc} \interacte(z_1, z_2) \prod_{i=1}^2 \coreej (z_i)^{\floor{\frac{t-s}{J}}}  \reme(z_i, t, s) z_i^{x_{3 - i} - y_i} \frac{d z_i}{2 \pi \im z_i}.
\end{equation*}
%\begin{equation}
%\exists \text{ contour } \Gamma_* \text{ s.t. }
%\lim_{\ep \downarrow 0, t \to \infty} \dist \big(\Gamma(t, \ep),  \contonelim \big) = 0, \quad 1 - I \notin \contonelim, \quad
%\infty, 1 - I \text{ are on the same side of } \contonelim. 
%\end{equation}
%%We see that any contour $\contone$ for which $1 - I$ is between $\lc$ and $\contone$ is admissible.
In practice, we deform the $z_1$-contour to some contour $\contone$ which depends on both $t-s$ and $\ep$ so that it is admissible for $t-s$ large enough and $\ep$ small enough.
\begin{figure}[ht]
\includegraphics[width=.85\linewidth]{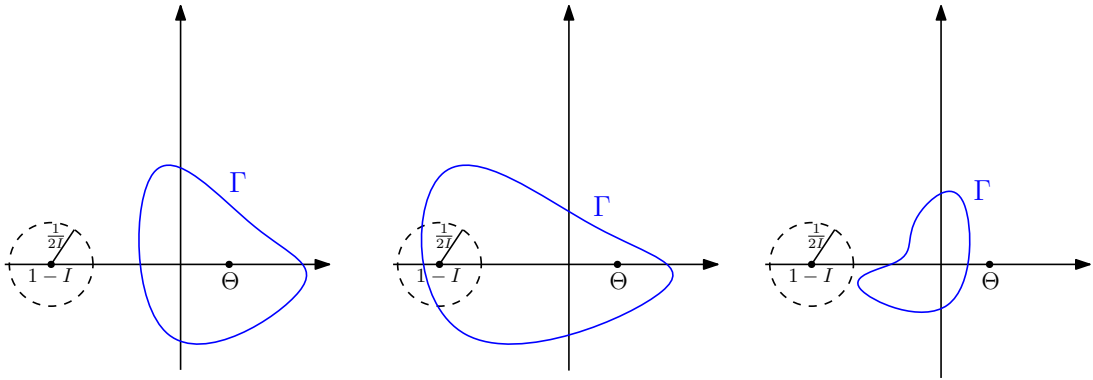}
\caption{Graphical examples of admissible and not admissible contour $\Gamma$.}
\label{fig:admissible}
\end{figure}
\bigskip
\\
Assuming that we have deformed $z_1$-contour to $\Gamma(t-s, \ep)$, which is admissible. The next step is to deform the $z_2$-contour. Note that given $z_1 \in \contone$, $\interacte(z_1, z_2)$ generates a pole at $z_2 = \ppole(z_1)$ ($\ppole$ is the inverse of $\pole$)
\begin{equation}\label{eq:six2:temp7}
\ppole(z_1) = \frac{(1-q) \nu + (q - \nu) q^{-\rho} z_1}{(q-1) q^{-2\rho} z_1 + (1-q\nu) q^{-\rho}}.
\end{equation}
We consider three potential radius
\begin{equation}\label{eq:temp26}
r_2 := \rfunc(t - s, \sign(x_1 - y_2) k_2 \beta),\quad r_2':= \rfunc(t - s, \sign(x_1 - y_2) 2 k_2 \beta),\quad r_2'':= \rfunc(t - s,  \sign(x_1 - y_2) 3 k_2 \beta),
\end{equation}
where $k_2 \geq 1$ is a constant which is irrelevant with the current discussion. We deform $z_2$-contour from $\lc$ to $\C_{\rad_2(z_1)}$, where 
\begin{equation*}
%\label{eq:k2}
\rad_2 (z_1) = r_2 1_{\{\ppole(z_1) > r'_2 \}} + r_2'' 1_{\{\ppole(z_1) \leq r'_2 \}}.
\end{equation*} 
In other words, if the pole $\ppole(z_1)$ lies outside $\C_{r'_2}$, we choose $z_2$-contour to be a circle with radius $r_2 < r'_2$. If the pole $\ppole(z_1)$ lies inside $\C_{r'_2}$, we choose $z_2$-contour to be circle with radius $r''_2 > r'_2$. It is clear we always have for $t - s$ large enough that
\begin{equation}\label{eq:six:temp5}
|\ppole(z_1) - z_2| \geq \frac{\beta}{\sqrt{t - s +1}},  \qquad \forall z_2 \in \C_{\rad_2(z_1)}.
\end{equation}
Referring to the expression of $\interacte(z_1 , z_2)$
%of $\interacte$ given by  
\eqref{eq:temp21}, we find that 
%\begin{align*}
%\interacte(z_1 , z_2) 
%%&= \frac{q\nu - \nu + (\nu-q) q^{-\rho} z_2 + (1-q\nu) q^{-\rho} z_1 + (q-1) q^{-2\rho} z_1 z_2}{q\nu - \nu + (\nu-q) q^{-\rho} z_1 + (1-q\nu) q^{-\rho} z_2 + (q-1) q^{-2\rho} z_1 z_2},\\
%&=\frac{q\nu - \nu + (\nu - q) q^{-\rho} z_2 + (1-q\nu) q^{-\rho}z_1 + (q-1) q^{-2\rho} z_1 z_2}{((q-1)q^{-2\rho} z_1 + (1-q\nu) q^{-\rho}) (z_2 - \ppole(z_1))},
%\end{align*} 
%introduces a pole at $z_2 = \ppole(z_1)$, where $\ppole(z) = \frac{(1-q) \nu + (q - \nu) q^{-\rho} z}{(q-1) q^{-2\rho} z + (1-q\nu) q^{-\rho}}.$ 
%which implies that
\begin{equation*}
\res_{z_2 = \ppole(z_1)} \interacte(z_1, z_2) = \frac{q\nu - \nu + (\nu - q) q^{-\rho} \ppole(z_1) + (1-q\nu) q^{-\rho} z_1 + (q-1) q^{-2\rho} z_1 \ppole(z_1)}{(q-1)q^{-2\rho} z_1 + (1-q\nu) q^{-\rho}}.
\end{equation*}
We set 
\begin{align*}
\pcoree (z_1) &= \coreej (z_1) \coreej (\ppole(z_1)), \\
\jprod(z_1)  &= \res_{z_2 = \ppole(z_1)} \interacte(z_1, z_2) z_1^{x_2 - y_1} \ppole(z_1)^{x_2 - y_1} \idc_{\{|\ppole(z_1)| > r'_2\}},\\
\numberthis \label{eq:jterm}
&= \frac{q\nu - \nu + (\nu - q) q^{-\rho} \ppole(z_1) + (1-q\nu) q^{-\rho} z_1 + (q-1) q^{-2\rho} z_1 \ppole(z_1)}{(q-1)q^{-2\rho} z_1 + (1-q\nu) q^{-\rho}} z_1^{x_2 - y_1} \ppole(z_1)^{x_2 - y_1} \idc_{\{|\ppole(z_1)| > r'_2\}}.
\end{align*}
From preceding discussion, we decompose $\rhzrin = \rhzrb + \rhzrr$, where 
\begin{align*}
&\rhzrb\big((x_1, x_2), (y_1, y_2), t, s\big) = \oint_{\contone}  \oint_{\C_{r_2 (z_1)}} \interacte(z_1, z_2) \prod_{i=1}^2 \coreej(z_i)^{\floor{\frac{t - s}{J}}} z_i^{x_{3 - i} - y_i} \frac{dz_i}{2 \pi \im z_i},\\
\numberthis \label{eq:six4:temp4}
&\rhzrr\big((x_1, x_2), (y_1, y_2), t, s\big) = \oint_{\contone} \idc_{\{|\ppole(z_1)| > r'_2\}} \jprod(z_1) \pcoree(z_1)^{\floor{\frac{t-s}{J}}} \frac{dz_1}{2 \pi \im z_1 \ppole(z_1)}.
\end{align*}
Note that we integrate under the indicator $1_{\{|\ppole(z_1)| > r'_2\}}$, which arises in the case that deforming the $z_2$-contour from $\C_R$ to $\C_{\rad_2 (z_1)}$ crosses the pole $\ppole(z_1)$. 
\bigskip
\\
We want to perform the steepest descent argument for $\rhzrb$ and $\rhzrr$, similar to what we have done in Section \ref{sec:estimatefree}. More precisely, as $t -s \to \infty$ and $\ep \downarrow 0$, $\contone$ converges to some fixed contour $\contonelim$.\footnote{We define the distance of two contours to be
$\dist \big(\Gamma_1, \Gamma_2\big) = \sup_{x \in \Gamma_1, y \in \Gamma_2}  \big(d(x, \Gamma_2) \vee d(y, \Gamma_1)\big).$
%where $d(x, A) := \inf_{y \in A} |x-y|.$ 
We say a sequence of contours $\Gamma_n$ converges to $\Gamma$ if 
$\lim_{n \to \infty} \dist(\Gamma_n, \Gamma) = 0.$
%We will use $\Gamma_n \to \Gamma$ to denote this convergence.
} We set 
\begin{equation}\label{eq:pstar}
\pstar(z) := \lim_{\ep \downarrow 0} \ppole (z) = \frac{(I+1) z - 1}{z + (I-1)}.
\end{equation}
Recall from \eqref{eq:temp22} that 
\begin{align*}
\corelim(z) %= \lim_{\ep \downarrow 0} \coreej (z) 
=  z^{\frac{J}{I}} \frac{(J b - (J-1)) z - ((I+J)b - (I + J - 1))}{z - (I b - (I-1)) }.
\end{align*}
and set
\begin{align*}
\pcorelim(z) = 
%\lim_{\ep \downarrow 0} \pcoree(z) = 
%\corelim(z) \corelim(\frac{(I+1) z - 1}{z + (I-1)}).
\corelim(z) \corelim(\pstar(z)).
\end{align*} 
Note that 
\begin{equation*}
|\corelim(z)| = \lim_{\ep \downarrow 0} |\coreej (z)|, \qquad |\pcorelim(z)| =
\lim_{\ep \downarrow 0} |\pcoree(z)|.   
\end{equation*}
We require the contour $\contonelim$ satisfying the steepest descent condition. 
\begin{align}\label{eq:six:std}
(i) \big|\corelim (z)\big| <1, \quad z \in \contonelim \backslash \{1\}; \qquad  (ii) \big|\pcorelim (z)\big| < 1, \quad z \in \contonelim \backslash \{1\}.
\end{align} 
As we see from \eqref{sd:rone} that if we take $\Gamma_* = \C_1$,  $(i)$ holds. However, $(ii)$ does not hold. In truth, Figure \ref{fig:contline} indicates the region where $|\corelim(z)| \leq 1$ and $|\pcorelim(z)| \leq 1$ for $I = 2$ and $b = 0.8$. We see that $\C_1$ lies fully inside $|\corelim(z)| \leq 1$, but partially outside  $|\pcorelim(z)| \leq 1$.
\begin{figure}[ht]
\includegraphics[scale=0.45]{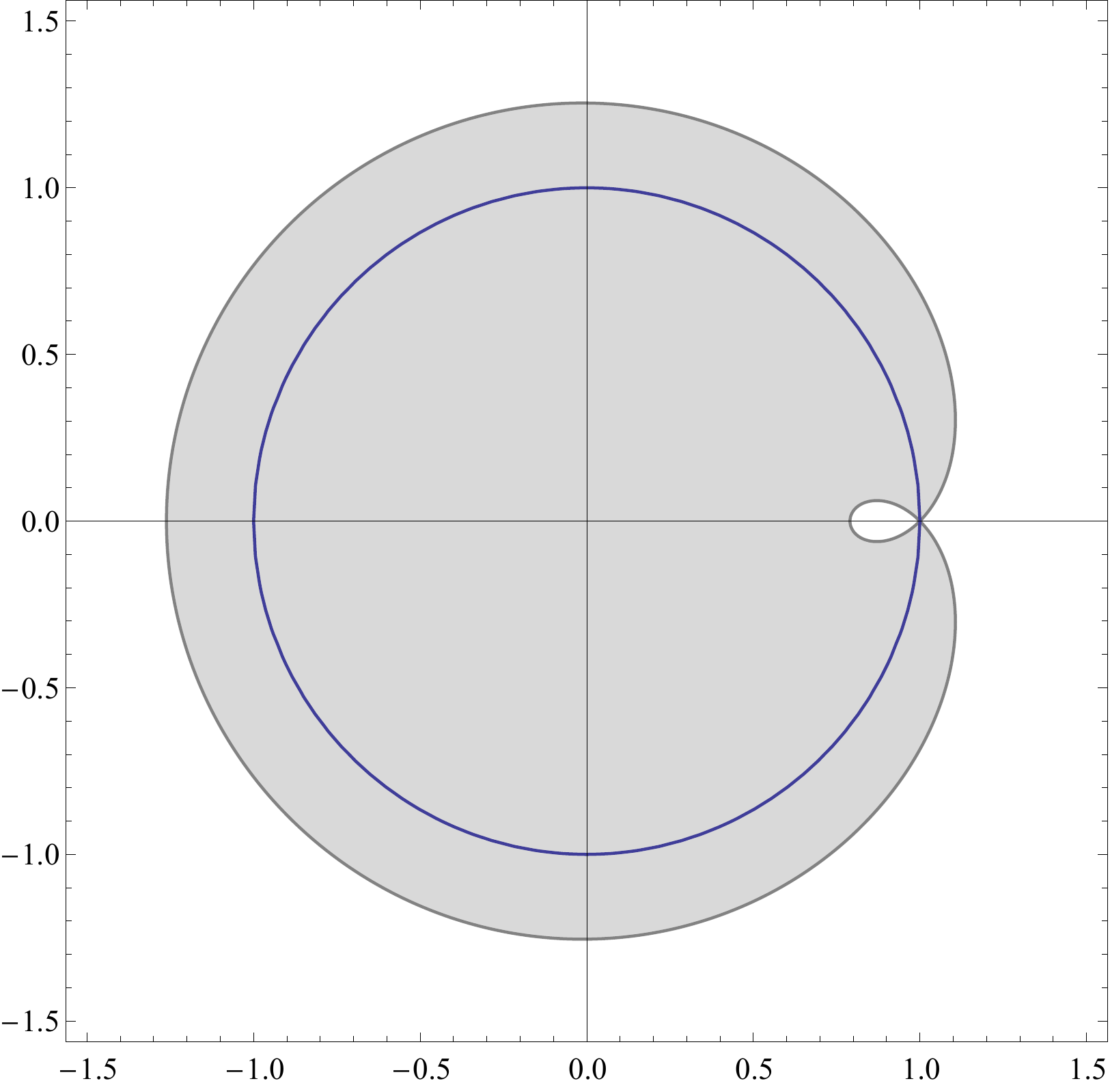}
\includegraphics[scale=0.45]{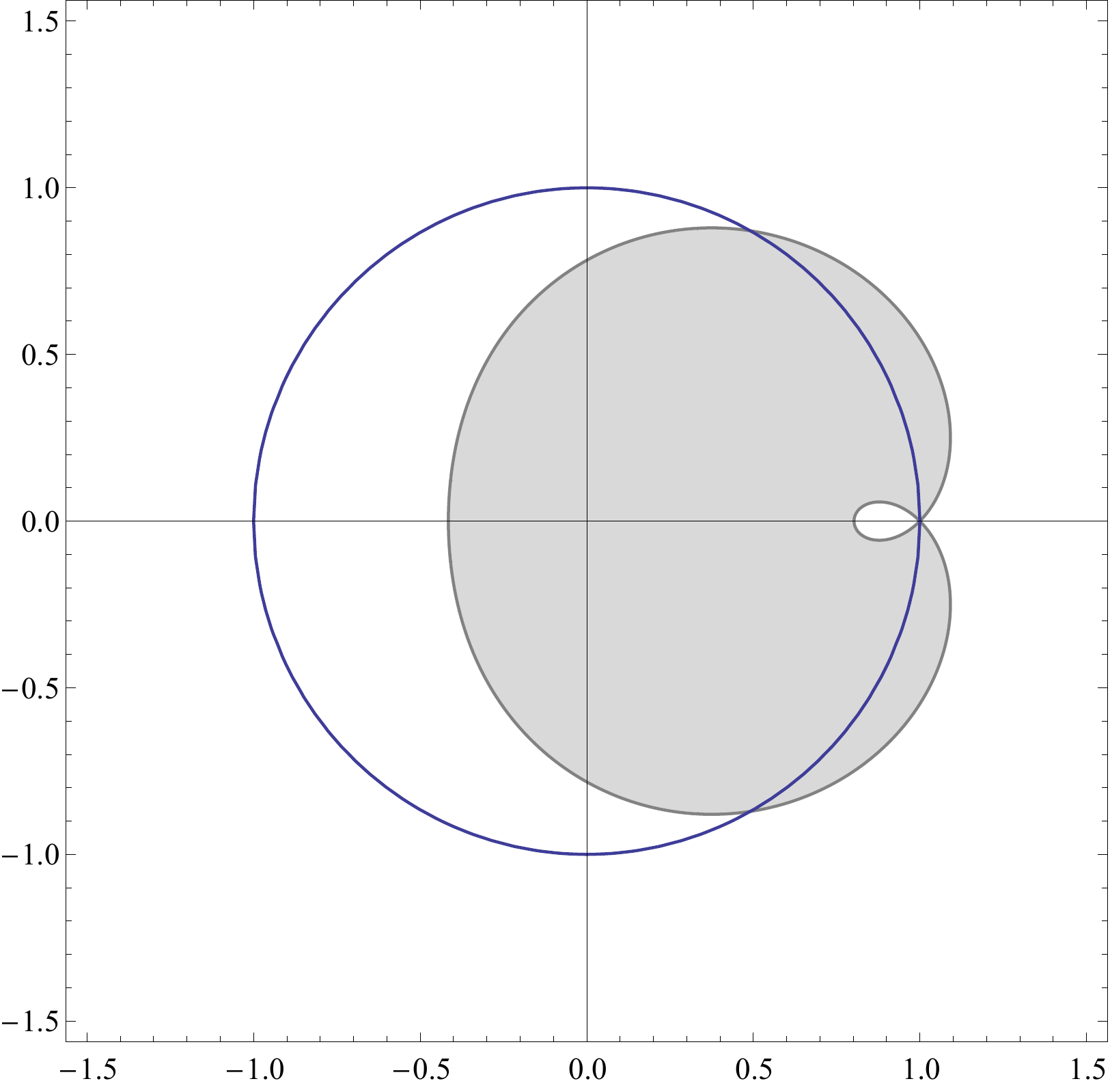}
\caption{We choose $b = 0.8$ and $I = 2$. The figures on the left and right show respectively the region where  $\big|\corelim(z)\big| \leq 1$ and $\big|\pcorelim(z)\big| \leq 1$, which is filled with gray color. The unit circle (with blue color) is drawn for comparison.}
\label{fig:contline}
\end{figure}
\bigskip
\\
Set 
$\mcont = \{\big|z - \frac{1}{I+1}\big| = \frac{I}{I+1}\}$, the following lemma says that $\mcont$ the satisfies the steepest descent condition \eqref{eq:six:std}. 
\begin{lemma}\label{lem:SDM}
We have 
\begin{equation}\tag{SD$\mcont$} \label{eq:SDM}
|\corelim(z)| < 1, z \in \mcont \backslash \{1\}, \qquad |\pcorelim(z)| < 1, z \in \mcont \backslash \{1\}.
\end{equation}
\end{lemma}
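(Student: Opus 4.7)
The plan is to verify the two inequalities separately. The starting point for both is the parametrization $z = z(\theta) := \frac{1 + I e^{i\theta}}{I+1}$ of $\mcont$, which gives $|z(\theta)|^2 = \frac{1 + I^2 + 2I\cos\theta}{(I+1)^2}$. A direct substitution into $\pstar(z) = \frac{(I+1)z-1}{z+(I-1)}$ yields the clean identity
\[ \pstar(z(\theta)) \;=\; \frac{(I+1)\,e^{i\theta}}{I + e^{i\theta}}, \qquad \text{hence} \qquad |z(\theta)|\cdot|\pstar(z(\theta))| \;=\; 1 . \]
This is the key observation: the non-rational prefactor $z^{J/I}\pstar(z)^{J/I}$ in $\pcorelim(z) = \corelim(z)\corelim(\pstar(z))$ has modulus one on $\mcont$, so (ii) reduces to a purely rational inequality.

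For part (i), with the shorthand $a = Jb-(J-1)$, $c = (I+J)b-(I+J-1)$, $d = Ib-(I-1)$, we write $\corelim(z) = z^{J/I}\cdot\frac{az-c}{z-d}$. The algebraic identities $a-c = I(1-b)$ and $1 - d = I(1-b)$ let us expand $|az-c|^2$ and $|z-d|^2$ on $\mcont$ as explicit quadratic polynomials in $\cos\theta$. Multiplying by $|z(\theta)|^{2J/I}$ and expanding to isolate the difference $|\corelim(z(\theta))|^2 - 1$, we obtain after cancellation a factor of $(1-\cos\theta)$ times a coefficient involving $b$ and $I,J$. The hypothesis $\frac{I+J-2}{I+J-1} < b < 1$ makes this coefficient strictly positive, giving $|\corelim(z)| < 1$ on $\mcont\setminus\{1\}$ with equality precisely at $\theta=0$.

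For part (ii), thanks to the modulus-one identity above, it suffices to show $|R(z)| < 1$ on $\mcont\setminus\{1\}$ where
\[ R(z) \;=\; \frac{(az-c)(a\pstar(z)-c)}{(z-d)(\pstar(z)-d)} . \]
Writing $u = z-1$ and using $az-c = au + I(1-b)$, $z-d = u+I(1-b)$, and analogous simplifications after clearing the Möbius denominator $z+(I-1)$, one finds
\[ R(z) \;=\; -\,\frac{(au + I(1-b))\bigl(d + (J-1)(1-b)\,u\bigr)}{(u + I(1-b))\bigl((2-b)u + I(1-b)\bigr)} . \]
On $\mcont$, the substitution $u = z(\theta)-1 = \frac{I(e^{i\theta}-1)}{I+1}$ reduces $|R(z(\theta))|^2$ to an explicit rational function of $\cos\theta$. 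One then verifies that $1 - |R(z(\theta))|^2$ factors as $(1-\cos\theta)$ times a polynomial in $\cos\theta$ whose strict positivity is again equivalent to $b \in \bigl(\tfrac{I+J-2}{I+J-1},1\bigr)$.

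The main obstacle is the algebraic bookkeeping in the last step. Unlike (i), where only $\corelim(z)$ enters, in (ii) one of the two factors of $R$ can exceed $1$ in modulus while the other compensates, so the strict bound only emerges after carefully combining the two fractions. I expect the cleanest route is to express everything in the variable $t = \cos\theta \in [-1,1]$, verify that both the numerator and denominator of $1 - |R|^2$ vanish to first order at $t=1$, and then check positivity of the residual polynomial using the precise range of $b$; the endpoint case $b = \frac{I+J-2}{I+J-1}$ marks the boundary where the positivity just fails, which is consistent with the role of this threshold throughout the paper.
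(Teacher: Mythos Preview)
Your strategy is essentially the paper's: the same parametrization of $\mcont$, and for part (ii) the same crucial identity $|z(\theta)\,\pstar(z(\theta))|=1$ on $\mcont$ that strips away the non-rational factor $(z\pstar(z))^{J/I}$ and leaves only a rational expression to bound. There is, however, a genuine gap in your treatment of part (i). You claim that after ``multiplying by $|z(\theta)|^{2J/I}$ and expanding'' the difference $|\corelim(z(\theta))|^2-1$ factors as $(1-\cos\theta)$ times a constant coefficient; but $|z(\theta)|^{2J/I}=\bigl(\tfrac{1+I^2+2I\cos\theta}{(I+1)^2}\bigr)^{J/I}$ is not a polynomial in $\cos\theta$ (since $J/I$ need not be an integer), so no such clean factorization exists. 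The paper fixes this in one line: since $|z(\theta)|\le 1$ on $\mcont$, one has $|\corelim(z(\theta))|^2\le\bigl|\tfrac{az-c}{z-d}\bigr|^2$, and the right-hand side is computed exactly as
\[
1-\frac{2I^2J(1-b)\bigl((I+J+1)b-(I+J-1)\bigr)(1-\cos\theta)}{(I+1)^2\,|z(\theta)-d|^2}.
\]
The sign condition is $b>\tfrac{I+J-1}{I+J+1}$, which for $I\ge 2$, $J\ge 1$ is implied by, but strictly weaker than, the standing assumption $b>\tfrac{I+J-2}{I+J-1}$; so your endpoint remark is also slightly off.

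For part (ii) your reduction is correct, but the displayed formula for $R(z)$ in $u=z-1$ contains an algebraic slip: clearing the denominator $u+I$ from $a\pstar(z)-c$ yields $(1-(J-1)(1-b))u+I(1-b)$, not $-(d+(J-1)(1-b)u)$; indeed your formula gives $R(1)\ne 1$. The paper sidesteps the $u$-variable manipulation and substitutes $z(\theta)$ directly, obtaining
\[
|\pcorelim(z(\theta))|^2 \;=\; 1 + \frac{4(1-b)J\bigl((I+J)b-(I+J-2)\bigr)(\cos\theta-1)(a_J-b_J\cos\theta)}{|(b-2)e^{i\theta}+d|^2\,|e^{i\theta}-(b+d-1)|^2}
\]
with explicit $a_J,b_J$, and closes by checking the two elementary identities $a_J-b_J=(b-1)^2(I+1)^2>0$ and $a_J+b_J=(J^2-1)(1-b)^2+((I+J)(b-1)+2)^2>0$, so that $a_J-b_J\cos\theta>0$ for all $\theta$. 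The relevant sign factor is $(I+J)b-(I+J-2)>0$, i.e.\ $b>\tfrac{I+J-2}{I+J}$, again weaker than the assumed threshold.
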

\begin{proof}
Parametrize $\mcont$ by $z(\theta) = \frac{1}{I+1} + \frac{I}{I+1} e^{\im \theta}, \theta \in (-\pi, \pi]$, we compute 
\begin{align*}
|\corelim(z(\theta))|^2 &\leq |z(\theta)|^{\frac{2J}{I}} \bigg|\frac{(J b - (J-1)) z(\theta) - ((I+J)b - (I + J - 1))}{z(\theta) - (I b - (I-1)) }\bigg|^2 \\
&\leq \bigg|\frac{(J b - (J-1)) z(\theta) - ((I+J)b - (I + J - 1))}{z(\theta) - (I b - (I-1)) }\bigg|^2\\
&= \bigg| \frac{(J b - (J-1)) (\frac{1}{I+1} + \frac{I}{I+1} e^{i\theta}) - ((I+J)b - (I + J - 1))}{\frac{I}{I+1} + \frac{I}{I+1} e^{\mathbf{i} \theta} - (Ib - (I-1))} \bigg|^2\\
&=1  - \frac{2 I^2 J (1 - b) ((I+J+1)b - (I+J-1))(1-\cos \theta)}{\big|\frac{1}{I+1} + \frac{I}{I+1} e^{\im \theta} - (I b - (I-1))\big|^2 (1+I)^2} < 1, \qquad \theta \in (-\pi, \pi] \backslash \{0\}. 
\end{align*}
where in the first line we used the fact $|z(\theta)| \leq 1$ and in the last line we used $\frac{I + J - 2}{I + J -1} < b < 1$, note that when $I \geq 2$ and $J \geq 1$, we have
\begin{equation*}
b \geq \frac{I + J - 2}{I + J -1} > \frac{I + J - 1}{I + J +1},
\end{equation*}
which concludes the last inequality.
\bigskip
\\
For $\pcorelim(z)$, note that 
\begin{align*}
\pcorelim(z) 
%= \corelim(z) \corelim(\pstar(z)) 
&=  z^{\frac{J}{I}} \frac{(b J - (J-1)) z - ((I+J)b - (I + J - 1))}{z - (I b - (I-1)) } \pstar(z)^{\frac{J}{I}}  \frac{(b J - (J-1)) \pstar(z) - ((I+J)b - (I + J - 1))}{\pstar(z) - (I b - (I-1)) }\\
&=  \big(z  \pstar(z)\big)^{\frac{J}{I}}  \frac{(b J - (J-1)) z - ((I+J)b - (I + J - 1))}{z - (I b - (I-1)) } \cdot \frac{(b J - (J-1)) \pstar(z) - ((I+J)b - (I + J - 1))}{\pstar(z) - (I b - (I-1)) }
\end{align*}
A crucial observation is that $\big|z - \frac{1}{I+1}\big| = \frac{I}{I+1}$ implies
\begin{equation*}
\big|z \pstar(z) \big| = \big|z \frac{(I+1) z - 1}{z + (I-1)}\big| = \big| \frac{Iz}{z + (I-1)} \big| = 1.
%\footnote{The fact $|z - \frac{1}{I+1}| = \frac{I}{I+1}$ implies $\big|\frac{z}{z + I-1}\big| = \frac{1}{I}$ tells us that $\M$ in \eqref{eq:mcont} is the Apollonius circle for the circle with diameter with endpoint $(0, 0)$ and $(1-I, 0)$.}
\end{equation*}
which can be verified by inserting $z(\theta) = \frac{1}{I+1} + \frac{I}{I+1} e^{\im \theta}$. Consequently, we see that 
\begin{align*}
\big|\pcorelim (z(\theta))\big|^2 &= \bigg|\frac{b z(\theta) - (I+1) b - 1}{z(\theta) - (Ib - (I-1))} \cdot \frac{b \pstar(z(\theta)) - ((I+1)b - I)}{\pstar(z(\theta)) - (Ib  - (I-1))}\bigg|^2\\
&= \big|\frac{I+J-(I+J+1) b + (J b - (J-1) ) e^{\im \theta}}{I-(I+1) b + e^{\im \theta}}
%simple
\cdot \frac{(I+J)b - (I+J-1)+ ((1-J)b + J-2) e^{\im \theta}}{I b - (I-1) + (b - 2) e^{\im \theta}}\bigg|^2
\\
\numberthis \label{eq:six2:temp5}
&= 1 + \frac{-4(b - 1) J (2- J - I + b (J+ I)) (\cos \theta - 1) (a_J - b_J \cos \theta)}{\big|(b - 2) e^{\im \theta} + (1 + (b - 1) I)\big|^2 \big|e^{\im \theta} - (b + (b -1) I)\big|^2}
%&=\bigg|\frac{b z(\theta) - ((I+1)b - I)}{(2-b) z(\theta) - I -b (I-1)} \bigg|^2\\
%&=\big| \frac{(I+1) - (I+2) b  + b e^{\mathbf{i} \theta}}{1-I + b I + e^{\mathbf{i} \theta} (b - 2)} \big|^2\\
%&=1 - \frac{4((I+1)b - (I-1)) (1-b) (1-\cos \theta)}{|(b-2) e^{\mathbf{i} \theta} + (b I + 1 -I )|^2} \leq 1.
\end{align*}
where 
\begin{align*}
a_J &= (J^2 + J I) (1-b)^2 + 2 + (2 b - 2) J + (b^2 - 1) I + (b - 1)^2 I^2\\
b_J &= (J^2 + J I)(1 - b)^2 + (2b -2) J + (1 + 2b - b^2) + (-3+ 4 b - b^2) I
\end{align*}
We claim that $|b_J| < a_J$, which implies $a_J - b_J \cos \theta > 0$. This claim is justified by showing 
\begin{align*}
a_J + b_J &=  (2J^2 + 2JI + I^2) (1-b)^2 + (4b - 4)(I+J)  + 3 +2b - b^2 = (J^2 - 1) (1-b)^2 + ((J+I)(b - 1) +2)^2 > 0,\\
a_J -  b_J &= (b - 1)^2 I^2 + 2(b - 1)^2 I + (b - 1)^2 = (b - 1)^2 (I + 1)^2 > 0.
\end{align*}
Therefore, by $\frac{I + J - 2}{I + J -1} < b < 1$ and \eqref{eq:six2:temp5}
\begin{equation*}
|\pcorelim(z(\theta))| < 1, \qquad \theta \in (-\pi, \pi] \backslash \{0\},
\end{equation*}
which concludes our proof.
\end{proof}
We need to consider the following modification of $\mcont$
\begin{equation*}
\mcontu := \pa \big(\{z: |z- \frac{1}{I+1}| = \frac{I}{I+1} + u\} \cap \{|z| \leq 1\}\big),
\end{equation*}
where $u$ is some positive real number. 
%The motivation of such modification will be clear in (?).
\begin{lemma}\label{lem:SDMu}
There exists $\delta > 0$ such that for all $0 < u < \delta$, one has
\begin{align*}\label{eq:SDMu} \tag{SD$\mcont(u)$}
\begin{split}
|\corelim(z)| < 1, \qquad z \in \mcontu \backslash \{1\},\\
|\pcorelim(z)| < 1, \qquad z \in \mcontu \backslash \{1\}.
\end{split}
\end{align*}
\end{lemma}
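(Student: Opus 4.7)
The plan is to view $\mcontu$ as a small perturbation of $\mcont$ clipped near the tangent point $z=1$, and to combine Lemma \ref{lem:SDM} on the ``bulk'' part with a local Taylor analysis near $z=1$. For $u>0$ small, $\mcontu = A_u \cup B_u$, where $A_u$ is the arc of the enlarged circle $\{|z - \tfrac{1}{I+1}| = \tfrac{I}{I+1} + u\}$ lying in the closed unit disk and $B_u$ is the short arc of $\C_1$ lying in the enlarged closed disk. Since $\mcont$ is tangent to $\C_1$ at $z=1$, the two arcs meet at two intersection points $P_1(u), P_2(u)$ that converge to $1$ as $u \downarrow 0$; a direct computation on $\C_1$ gives $|P_i(u) - 1| \asymp \sqrt{u}$.

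For any fixed small $r>0$, on the compact set $\mcont \cap \{|z-1| \geq r\}$ both $|\corelim|$ and $|\pcorelim|$ are bounded above by some $1 - \eta(r) < 1$ by Lemma \ref{lem:SDM}. By uniform continuity of $\corelim, \pcorelim$ in a neighborhood of this compact set, the same bound (with $\eta(r)/2$) persists on the piece of $A_u$ outside $\{|z-1|<r\}$ for $u$ small. For the piece of $A_u$ inside $\{|z-1|<r\}$, I Taylor-expand $g(z) := \log \pcorelim(z)$ at $z=1$. Writing $g(z) = \tfrac{g''(1)}{2}(z-1)^2 + O((z-1)^3)$ (using $g(1) = 0$ and the holomorphic $g'(1) = 0$, cf.\ the free-case computation for $\corelim$ in Section \ref{sec:estimatefree}), and parametrizing points of the enlarged circle near $z=1$ by $z - 1 \approx u + i \tfrac{I}{I+1} \phi$, the constraint $|z| \leq 1$ forces $|\phi| \geq c\sqrt{u}$ at the endpoints. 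Hence the negative quadratic $-a \tfrac{I^2}{(I+1)^2} \phi^2$ (with $a := \Re g''(1) > 0$, positivity established by Lemma \ref{lem:SDM} along $\mcont$) dominates the spurious positive term $+au^2$ from the radial perturbation. The identical argument handles $\corelim$ using the positivity in \eqref{sd:rone}.

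For $B_u \subset \C_1$, which shrinks to $\{1\}$, it suffices to check $|\corelim(e^{i\theta})|, |\pcorelim(e^{i\theta})| < 1$ for small $\theta \neq 0$. The bound for $\corelim$ is exactly \eqref{sd:rone}. For $\pcorelim$, a direct computation gives $\pstar'(1) = I^2/I^2 = 1$, whence by the chain rule
\begin{equation*}
(\log \pcorelim)''(1) = (\log \corelim)''(1)\bigl(1 + \pstar'(1)^2\bigr) + (\log \corelim)'(\pstar(1)) \pstar''(1) = 2 (\log \corelim)''(1),
\end{equation*}
since $(\log \corelim)'(1) = 0$ kills the $\pstar''(1)$ contribution. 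Taylor-expanding along $\C_1$ then yields $|\pcorelim(e^{i\theta})|^2 = 1 - 2 \Re((\log \corelim)''(1)) \theta^2 + O(\theta^3)$, and the positivity of $\Re((\log \corelim)''(1))$ is already encoded in the proof of \eqref{sd:rone} (equivalently, in the positivity of the random-walk variance $JV_*$ of Lemma \ref{lem:free:std}).

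The main technical obstacle is the local analysis on $A_u$ near $z=1$: there the radial perturbation of size $u$ competes with the imaginary displacement of size $\sqrt{u}$ forced by the clipping, so both scales must be tracked simultaneously together with the mixed cross-term of order $u^{3/2}$. The key is that the imaginary contribution is $\Theta(u)$ (dominating), the radial contribution is $\Theta(u^2)$, and the cross-term is $\Theta(u^{3/2})$, so the former strictly wins for $u$ sufficiently small. This is precisely what the clipping by $\C_1$ guarantees and explains why $\mcontu$, rather than the full enlarged circle (whose rightmost point $z=1+u$ has $|\pcorelim| > 1$ by the same Taylor argument applied in the bad radial direction), is the correct perturbation.
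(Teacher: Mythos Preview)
Your proposal is correct and follows essentially the same strategy as the paper: local Taylor analysis at $z=1$ combined with a compactness/perturbation argument from Lemma~\ref{lem:SDM} away from $z=1$. The paper packages the local step more concisely via an ``hourglass region'' $\AA = \{1 + v e^{i\phi}: |\phi - \tfrac{\pi}{2}| < \phi_0,\ |v| < v_0\}$ on which both $|\corelim|$ and $|\pcorelim|$ are $<1$ by the quadratic Taylor term, then observes $\dist(\mcontu \setminus \AA,\ \mcont \setminus \AA) \to 0$ as $u\downarrow 0$; your explicit $u$ versus $\sqrt{u}$ scale-tracking on $A_u$ and your separate treatment of $B_u \subset \C_1$ are precisely what verifies that the part of $\mcontu$ near $z=1$ lies inside such an hourglass.
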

\begin{proof}
The proof of this lemma uses similar techniques which appear in \cite[Lemma 6.4]{CGST18}. By straightforward computation, one finds that 
\begin{align*}
&\corelim(1) = 1; \qquad \corelim'(1) = 0; \qquad \corelim''(1) = JV_*.\\
&\pcorelim(1) =1; \qquad \pcorelim'(1) = 0; \qquad \pcorelim''(1) = 2 J V_*.
\end{align*}
Here, $V_*$ is given by \eqref{eq:vstar}. We taylor expand $\corelim(z)$ and $\pcorelim(z)$ around $z  =1$ and get %up to the second order
\begin{align*}
&\corelim(z) = 1 + \frac{1}{2} J V_* (z-1)^2 + \OO(|z-1|^3),\\
&\pcorelim(z) = 1 + J V_* (z-1)^2 + \OO(|z-1|^3).
\end{align*}
Notice that in the vertical direction where $z-1 \in \im \RR$, $\frac{1}{2} (z-1)^2$ is negative. This implies that
\begin{align}\label{eq:six2:temp6}
|\corelim(z)| < 1 \quad z \in \AA \backslash \{1\}; \qquad |\pcorelim(z)| < 1 \quad z \in \AA \backslash \{1\}.
\end{align}
where $\AA$ is a hourglass region centered at one, $\AA = \{z: z = 1+ v e^{\im \phi}, |\phi - \frac{\pi}{2}| < \phi_0, |\nu| <  \nu_0 \}$ with $\nu_0, \phi_0 > 0$ fixed. For $z \in \mcontu \backslash \AA$, due to
$\lim_{u \downarrow 0} \dist (\mcontu \backslash \AA,  \mcont \backslash \AA) = 0$ and Lemma \ref{lem:SDM}, we find that there exists a small $\delta$, such that for $0 < u < \delta$ 
\begin{equation*}
\sup_{z \in \mcontu \backslash \AA} |\corelim(z)| < 1,\qquad \sup_{z \in \mcontu \backslash \AA} |\pcorelim(z)| < 1.
\end{equation*}
Combining this with \eqref{eq:six2:temp6} concludes the proof of Lemma \ref{lem:SDMu}.
\end{proof}
We fix a constant $0 < u_* < \delta \wedge \frac{1}{4I},$ and set
$\mcontp  := \mcont(u_*)$. 
From our discussion above, $\mcontp$ is admissible and satisfies \eqref{eq:SDMu}.
\bigskip
\\
To prove Proposition \ref{prop:semiestimateint}, we need to choose our contour such that it controls both $\rhzrb$ and $\rhzrr$. The choice will 
depend on the sign of $x_2 - y_1$ and $x_1 - y_2$. We need to discuss separately for each of the following cases  
\bigskip
\\
\textbf{(i):} $\mathbf{(+-)}$ case: $x_2 - y_1 \geq 0$ and $x_1 - y_2 \leq 0$,\\
\textbf{(ii):} $\mathbf{(--)}$ case: $x_2 - y_1 \leq 0$ and $x_1 - y_2 \leq 0$,\\
\textbf{(iii):} $\mathbf{(++)}$ case: $x_2 - y_1 \geq 0$ and $x_1 - y_2 \geq 0$.
\bigskip
\\
Note that we don't need to consider the case where $x_2  -y_1 < 0$ and $x_1 - y_2 < 0$, since it contradicts our condition $x_1 \leq x_2$ and $y_1 \leq y_2$.
\subsection{Estimate of $\rhzrin$, the ($+-$) case}
In this case we shrink the $z_1$-contour from $\C_R$ to $$\mcont(t - s, -\beta) := \{z_1: \big|z_1 - \frac{1}{I+1}\big| = \frac{I}{I+1} - \frac{\beta}{\sqrt{t - s +1}}\}.$$ 
%(which is slightly smaller compared with $\M$). 
It is clear that for $t - s$ large enough, $\mcont(t - s, -\beta)$ is admissible. Consequently, we have
\begin{equation*}
\rhzrin\big((x_1, x_2), (y_1, y_2), t, s\big) = \rhzrb\big((x_1, x_2), (y_1, y_2), t, s\big) + \rhzrr\big((x_1, x_2), (y_1, y_2), t, s\big),
\end{equation*}
where 
\begin{align*}
\numberthis \label{eq:blk1}
&\rhzrb\big((x_1, x_2), (y_1, y_2), t, s\big) = \oint_{\C_{\rad_2 (z_1)}}  \oint_{\mcont(t-s, -\beta)}  \interacte(z_1, z_2) \prod_{i=1}^2 \coreej (z_i)^{\floor{\frac{t-s}{J}}} \reme(z_i, t, s) z_i^{x_{3 - i} - y_i} \frac{dz_i}{2 \pi \im z_i},\\
\numberthis \label{eq:res1}
&\rhzrr\big((x_1, x_2), (y_1, y_2), t, s\big) = \oint_{\mcont(t-s, -\beta)} \idc_{\{|\ppole(z_1)| > r'_2\}} \jprod(z_1) \pcoree(z_1)^{\floor{\frac{t-s}{J}}} \reme(z_1, t, s) \reme(\ppole(z_1), t, s)  \frac{dz_1}{2 \pi \im z_1 \ppole(z_1) }.
\end{align*}
Parametrizing $z_1 (\theta_1) = \frac{1}{I+1} + \big(\frac{I}{I+1} - \frac{\beta}{\sqrt{t - s +1}}\big) e^{\im \theta_1}$, we need the following lemma.
\begin{lemma}\label{lem:pm:std}%steepest descent
There exists positive  $C(\beta, T), C$ such that
\begin{equation*}
|\coreej (z(\theta))|^{t-s} \leq C(\beta, T) e^{-C (t-s + 1) \theta^2};  |\pcoree(z(\theta))|^{t-s} \leq C(\beta, T) e^{-C (t -s + 1) \theta^2} \text{ with } z(\theta) = \frac{1}{I+1} + \big(\frac{I}{I+1} - \frac{\beta}{\sqrt{t-s+1}}\big) e^{\im \theta}
\end{equation*}
for $\ep > 0$ small enough and $t - s \leq \ep^{-2} T$ large enough. 
\end{lemma}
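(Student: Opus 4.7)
The plan is to mirror the three-case steepest descent analysis used to prove Lemma \ref{lem:free:std}, now carried out on the circle $\mcont(t-s,-\beta)$ that sits just inside $\mcont$. Writing $|\coreej(z(\theta))|^{t-s} = e^{(t-s)\Re\log\coreej(z(\theta))}$ and likewise for $\pcoree$, it is enough to establish three bounds: (i) $\Re\log\coreej(z(0)) \le \frac{C(\beta,T)}{t-s+1}$ (and likewise for $\pcoree$); (ii) for some fixed $\zeta>0$, $\Re\log\coreej(z(\theta)) \le \frac{C(\beta,T)}{t-s+1} - C\theta^2$ for $|\theta|\le\zeta$; (iii) there exists $\delta>0$ with $|\coreej(z(\theta))|<1-\delta$ for $|\theta|>\zeta$. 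The same three bounds for $\pcoree$ are proved identically, using that $\ppole(z)$ is analytic near $z=1$ with $\ppole(1)=1$, so $\pcoree=\coreej\circ\mathrm{id}\cdot\coreej\circ\ppole$ is analytic near $z=1$ with $\pcoree(1)=1$, $\pcoree'(1)=0$ and $\pcorelim''(1)=2JV_*$ (the latter computed in the proof of Lemma \ref{lem:SDMu}).

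For (i), recall from the proof of Lemma \ref{lem:free:std} that $\log\coreej$ is analytic in a fixed neighborhood $O$ of $1$ with $\log\coreej(1)=0$, $(\log\coreej)'(1)=0$, and $|(\log\coreej)''|\le C$ uniformly in $\ep$. Since $z(0)=1-\frac{\beta}{\sqrt{t-s+1}}\in O$ for $t-s$ large, a second-order Taylor expansion yields $|\log\coreej(z(0))|\le C|z(0)-1|^2\le C(\beta,T)/(t-s+1)$. For (ii), I will Taylor expand $\log\coreej(z(\theta))$ at $\theta=0$. The key geometric observation is that the center of $\mcont(t-s,-\beta)$ is still $1/(I+1)$, so $\partial_\theta z(\theta)|_{\theta=0}=ir_0$ with $r_0=\frac{I}{I+1}-\frac{\beta}{\sqrt{t-s+1}}$ purely imaginary. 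Consequently $\partial_\theta\log\coreej(z(\theta))|_{\theta=0}=(\log\coreej)'(z(0))\cdot ir_0=O(1/\sqrt{t-s+1})$ (the $O(\cdot)$ coming from the simple zero of $(\log\coreej)'$ at $z=1$), whose contribution to $\Re\log\coreej$ over $|\theta|\le\zeta$ is absorbed into $C(\beta,T)/(t-s+1)$. The second $\theta$-derivative at $\theta=0$ converges as $\ep\downarrow 0$, $t-s\to\infty$ to $(\log\corelim)''(1)(ir_0)^2 = -JV_*\cdot(I/(I+1))^2<0$; choosing $\zeta$ small and $t-s$ large absorbs the remaining cubic error. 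For (iii), $\coreej\to\corelim$ uniformly on the compact set $\{z(\theta):|\theta|\ge\zeta\}$ (which lies in a uniform neighborhood of $\mcont\setminus\{1\}$), and Lemma \ref{lem:SDM} gives $\sup_{|\theta|\ge\zeta}|\corelim(\tfrac{1}{I+1}+\tfrac{I}{I+1}e^{i\theta})|<1$, so the bound persists for $\ep$ small and $t-s$ large.

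The main obstacle is (ii): we need $\Re\partial_\theta^2\log\coreej(z(\theta))|_{\theta=0}$ to be strictly negative \emph{uniformly} in $\ep$ and $t-s$, since only then does the $-C\theta^2$ term dominate. This uniform negativity is driven entirely by the imaginary tangent direction $ir_0$ to $\mcont$ at $z=1$ combined with the positivity of $(\log\corelim)''(1)=JV_*$, which follows from $V_*>0$ under the assumption $b>\frac{I+J-2}{I+J-1}$. Since $r_0\to I/(I+1)$ as $t-s\to\infty$, the leading coefficient stays bounded away from zero, and the $O(1/\sqrt{t-s+1})$ perturbation in both $r_0$ and in the derivatives of $\log\coreej-\log\corelim$ is controlled by the uniform bounds on derivatives of these functions on $O$. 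The case of $\pcoree$ is completely analogous, with $\corelim$ replaced by $\pcorelim$ and $JV_*$ replaced by $2JV_*$.
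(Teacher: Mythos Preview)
Your overall three-case strategy (\(\theta=0\), \(\theta\) small, \(\theta\) large) is exactly the paper's approach, and your treatment of \(\coreej\) is essentially correct. There is, however, a genuine gap in the \(\pcoree\) part: you assert that \(\ppole(1)=1\), hence \(\pcoree(1)=1\) and \(\pcoree'(1)=0\). This is false for \(\ep>0\). Referring to \eqref{eq:six2:temp7} with \(q=e^{\sqrt\ep}\), \(\nu=q^{-I}\), one finds (see \eqref{eq:taylorpe1})
\[
\ppole(1)=1+\tfrac{\rho(I-\rho)}{I}\,\ep+\OO(\ep^{3/2}),
\]
so \(\ppole(1)-1\) is of order \(\ep\), not zero. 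Consequently \(\pcoree(1)=\coreej(1)\coreej(\ppole(1))=1+\OO(\ep^2)\) (using \(\coreej'(1)=0\)) and \((\log\pcoree)'(1)=\OO(\ep)\); neither is exactly zero. To get the bullet point \((\theta=0)\) you must then Taylor expand \(\log\pcoree\) at \(z=1\) and bound each of the three terms \(\OO(\ep^2)\), \(\OO(\ep)\cdot(z(0)-1)\), \(\OO((z(0)-1)^2)\) by \(C(\beta,T)/(t-s+1)\). This is where the hypothesis \(t-s\le\ep^{-2}T\) enters: it gives \(\ep^2\le T/(t-s)\) and \(\ep/\sqrt{t-s+1}\le \sqrt{T}/(t-s)\). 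The paper carries this out explicitly in \eqref{eq:six3:temp2}--\eqref{eq:six3:temp4}. Note also that the absence of a \(\sqrt\ep\) term in \(\ppole(1)-1\) is essential here: if \(\ppole(1)-1\) were \(\OO(\sqrt\ep)\), one would only get \(\log\pcoree(1)=\OO(\ep)\), which is \emph{not} bounded by \(C/(t-s+1)\) when \(t-s\sim\ep^{-2}\).

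A second, smaller point: in part (ii) you write that the first \(\theta\)-derivative is \(O(1/\sqrt{t-s+1})\) and that its ``contribution to \(\Re\log\coreej\) over \(|\theta|\le\zeta\) is absorbed into \(C(\beta,T)/(t-s+1)\)''. Taken at face value this is wrong: a term of size \(\theta\cdot O(1/\sqrt{t-s+1})\) is of order \(\zeta/\sqrt{t-s+1}\) for \(|\theta|\sim\zeta\), far too large. The correct reason is simply that \(z(0)\) is real and \(\coreej\), \(\pcoree\) are real on the positive reals, so \((\log\coreej)'(z(0))\) is real and the product with \(ir_0\) is \emph{purely imaginary}; hence the first-order contribution to \(\Re\log\coreej(z(\theta))\) vanishes exactly. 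You already observed \(\partial_\theta z|_{\theta=0}=ir_0\in i\RR\), so just add the observation that \((\log\coreej)'(z(0))\in\RR\).
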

\begin{proof}
Similar to the proof of Lemma \ref{lem:free:std}, it suffices to show there exists positive constants $C(\beta, T), C$ such that
\begin{equation}\label{eq:pm:stdequiv}
\Re \log \coreej (z(\theta)) \leq \frac{C(\beta, T)}{t-s+1} - C \theta^2; \qquad \Re \log \pcoree (z(\theta)) \leq \frac{C(\beta, T)}{t-s+1} - C \theta^2.
\end{equation}
We prove the lemma for $(\theta = 0)$, ($\theta$ small) and ($\theta$ large) respectively
\begin{itemize}
\item $(\theta = 0): \Re \coreej (z(0)), \Re \pcoree(z(0)) \leq \frac{C(\beta, T)}{t-s+1}$.
\item ($\theta$ small): There exists $\zeta > 0$ and constants $C(\beta, T)$ and  $C > 0$ such that \eqref{eq:pm:stdequiv} holds for $|\theta| \leq \zeta$.
\item ($\theta$ large): There exists $\delta > 0$ such that $\big|\coreej (z(\theta))\big|, \big|\pcoree (z(\theta))\big| < 1 -\delta $ for $|\theta| > \zeta$.
\end{itemize}
We consider the first two bullet points $(\theta = 0)$ and ($\theta$ small). The analysis of $(\theta = 0)$ and ($\theta$ small) case for $\coreej$ is similar to Lemma \ref{lem:free:std}, we do not repeat here.
%We first study the local property of $\coree$ and $\pcoree$, via Taylor expansion. Referring to (?), we have
%\begin{equation*}
%(\log\coree)(1) = 0, \qquad (\log \coree)'(1) = 0, \qquad \lim_{\ep \downarrow 0}, (\log \coree)''(1) = \sigma^2.
%\end{equation*}
For $\pcoree(z) = \coreej (z) \coreej (\ppole(z))$, by straightforward calculation, 
%\begin{equation*}
%\lim_{\ep \downarrow 0} \ppole'(1) = \pstar'(1) = \frac{d}{dz } \frac{(I+1) z - 1}{z + (I-1)}\bigg|_{z=1} = 1
%\end{equation*}
%Thus by $\pcoree(z) = \coree(z) \coree(\ppole(z))$ and (?), 
\begin{align*}
&\pcoree (1) =  \coreej(\ppole(1)),\\
&\pcoree'(1) = 
%\coreej'(1) \coreej (\ppole(1)) + \coreej (1) \coreej'(\ppole(1)) \ppole'(1) 
\coreej'(\ppole(1)) \ppole'(1),\\
\numberthis \label{eq:six3:temp1}
&\lim_{\ep \downarrow 0} \pcoree''(1) = 2J V_*.
\end{align*}
For the first equation above,  we taylor expand $\coreej (z)$ at $z=1$ and according to \eqref{eq:taylorpe1},
\begin{equation}\label{eq:six3:temp2}
\pcoree(1) = 1 + \frac{1}{2} \coreej''(1)(\ppole(1) - 1)^2 + \OO\big((\ppole(1) - 1)^3\big) = 1 + \frac{J V_* (\rho I -\rho^2)^2 }{2 I^2} \ep^2  + \OO(\ep^{\frac{5}{2}}).
\end{equation}
For $\pcoree'(1) = \coreej'(\ppole(1)) \ppole'(1)$, taylor expanding $\coreej'(z)$ around $z = 1$, according to \eqref{eq:taylorpe1},
\begin{equation*}
\coreej'(\ppole(1)) = \coreej'(1) + \coreej''(1) (\ppole(1) - 1) + \OO(\ppole(1) - 1)^2 = \frac{J V_* (\rho I - \rho^2)}{2 I} \ep + \OO(\ep^\frac{3}{2}). 
\end{equation*}
Combining this with $\ppole'(1) = 1 + \OO(\ep^{\frac{1}{2}})$ yields
\begin{equation}\label{eq:temp23}
\pcoree'(1) = \frac{J V_* (\rho I - \rho^2) }{2 I} \ep + \OO(\ep^{\frac{3}{2}}).
\end{equation}
Using \eqref{eq:six3:temp2}, \eqref{eq:temp23} and  \eqref{eq:six3:temp1}, we get
\begin{align}\label{eq:six3:temp4}
(\log \pcoree)(1) = \frac{J V_* (\rho I -\rho^2)^2}{2 I^2} \ep^2 + \OO(\ep^{\frac{5}{2}}), \quad (\log \pcoree)'(1) = \frac{J V_*  (\rho I - \rho^2)}{2 I} \ep + \OO(\ep^{\frac{3}{2}}), \quad \lim_{\ep \downarrow 0}, (\log \pcoree)''(1) = 2 J V_*.
\end{align}
Moreover, straightforward calculation gives $| (\log\pcoree)'''(z)| \leq C$ for $z \in O$ (which is a small neighborhood of $1$). Thereby, by Taylor expansion we find that 
\begin{equation*}
\log\pcoree(z(0)) = \log \pcoree(1) + (\log \pcoree)'(1) (z(0) - 1) +  (\log \pcoree)''(1) (z(0) - 1)^2 + \OO((z(0) - 1)^3).
\end{equation*}
Using \eqref{eq:six3:temp4}, $z(0) = 1 - \frac{\beta}{\sqrt{t - s +1}} $ and  $\ep^2 (t-s) \leq T$, we see that there exists $C(\beta, T)$ such that for $t - s$ large and $\ep$ small,
\begin{equation*}
\log \pcoree(z(0)) \leq \frac{C(\beta, T)}{t - s +1},
\end{equation*}
which gives the first bullet point.
\bigskip
\\
For ($\theta$ small), we readily calculate
\begin{align*}
\pa_\theta (\log \pcoree (z(\theta))) \big|_{\theta = 0} &\in \im \RR,\\ 
\lim_{\epsilon \downarrow 0, t-s \to \infty} \pa_\theta^2 (\log \pcoree (z(\theta))) \big|_{\theta = 0} &= -\frac{2I^2 J V_*}{(I+1)^2},\\
\big|\pa_\theta^3 (\log \pcoree (z(\theta)))\big| &\leq C, \qquad \text{ for } |\theta| \leq \zeta.  
\end{align*}
Thus, via Taylor expansion, we find that for $|\theta| \leq  \zeta$,
\begin{equation*}
\Re \log \pcoree (z(\theta)) \leq \Re \log \pcoree (z(0)) - \frac{I^2 J V_*}{2 (I+1)^2} \theta^2 \leq \frac{C(\beta, T)}{t-s+1} - \frac{I^2 J V_*}{2 (I+1)^2} \theta^2,
\end{equation*}
which conclude the second bulletin point.
\bigskip
\\
For ($\theta$ large), recall $z(\theta) = \frac{1}{I+1} + \big(\frac{I}{I+1} - \frac{\beta}{\sqrt{t - s +1}}\big) e^{\im \theta}$, we notice that
\begin{align*}
&\lim_{\ep \downarrow 0, t - s\to \infty} \big|\coreej (z(\theta))\big| = \big|\corelim(\frac{1}{I+1} + \frac{I}{I+1} e^{\im \theta})\big|, \quad \text{uniformly for } \theta \in (-\pi, \pi].\\
&\lim_{\ep \downarrow 0, t -s \to \infty} \big|\pcoree(z(\theta))\big| = \big|\pcorelim(\frac{1}{I+1} + \frac{I}{I+1} e^{\im \theta})\big|, \quad\  \text{uniformly for } \theta \in (-\pi, \pi].
\end{align*}  
Thanks to Lemma \ref{lem:SDM}, there exists $\delta > 0$ such that for $t-s$ large enough and $\ep > 0$ small enough, 
$$\big|\coreej (z(\theta))\big|, \big|\pcoree(z(\theta))\big| < 1 - \delta \text{ for } |\theta| > \zeta,$$
which completes our proof.
\end{proof}
For $\rhzrr$ \eqref{eq:res1}, we show that the indicator $\idc_{\{\ppole(z) > r'_2\}}$ prohibits $\theta$ to be too small.
%note that as $x_1 - y_2 < 0$, $r_2$ equals either $\rfunc(t, k_2\beta)$ or $\rfunc(t, 3 k_2 \beta)$.
\begin{lemma}\label{lem:pm:thetalb}
We can choose  $k_2$ large enough such that if $\big|\ppole(z(\theta))\big| > r'_2$ with $z(\theta) = \frac{1}{I+1} + \big(\frac{I}{I+1} - \frac{\beta}{\sqrt{t  -s +1}}\big) e^{\im \theta}$, then $|\theta| \geq (t - s +1)^{-\frac{1}{4}}$.
\end{lemma}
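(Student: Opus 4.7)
I will prove the contrapositive: for $k_2$ chosen sufficiently large (depending on $\beta$ and $I$), whenever $|\theta| < (t-s+1)^{-1/4}$ and $t-s$ is large enough, one has $|\ppole(z(\theta))| \leq r_2'$. The first step is to reduce from $\ppole$ to its limit $\pstar$ from \eqref{eq:pstar}. By Lemma \ref{lem:wsc} and the expression \eqref{eq:six2:temp7} for $\ppole$, the convergence $\ppole(z) \to \pstar(z)$ is uniform on any compact set bounded away from the (unique, converging to $1-I$) denominator pole, so in particular on a neighborhood of $\mcont$, where our $z(\theta)$ lives for $t-s$ large. Moreover one has $|\ppole(z) - \pstar(z)| = \OO(\ep^{1/2})$ uniformly, and since $\ep^{-2} \geq t-s$, one has $\ep^{1/2} \leq (t-s+1)^{-1/4}$, which is even smaller than the scales we manipulate below. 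Thus it suffices to establish $|\pstar(z(\theta))| \leq r_2'$ with room to spare.

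The core of the argument is a Taylor expansion of $\pstar$ at $z=1$. A direct computation from $\pstar(z) = \frac{(I+1)z-1}{z+(I-1)}$ gives
\begin{equation*}
\pstar(1) = 1,\qquad \pstar'(1) = 1,\qquad \pstar''(1) = -\tfrac{2}{I}.
\end{equation*}
Writing $z(\theta) = z_0(\theta) - \frac{\beta}{\sqrt{t-s+1}} e^{\im\theta}$ with $z_0(\theta) = \frac{1}{I+1} + \frac{I}{I+1} e^{\im\theta} \in \mcont$, so that $z(0) = 1 - \frac{\beta}{\sqrt{t-s+1}}$, I will differentiate $|\pstar(z(\theta))|^2$ twice in $\theta$. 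Using the identity $\frac{d^2}{d\theta^2}|\pstar|^2 = 2|(\pstar\circ z)'|^2 + 2\Re(\overline{\pstar}(\pstar\circ z)'')$ at $\theta=0$, together with $z'(0) \approx \im\frac{I}{I+1}$ and $z''(0) \approx -\frac{I}{I+1}$, one obtains
\begin{equation*}
|\pstar(z(\theta))|^2 \;=\; 1 - \frac{2\beta}{\sqrt{t-s+1}} + \frac{I}{(I+1)^2}\,\theta^2 + \OO\!\left(\tfrac{1}{t-s+1} + \tfrac{|\theta|}{\sqrt{t-s+1}} + |\theta|^3\right),
\end{equation*}
where I track error terms carefully by absorbing the $\frac{\beta}{\sqrt{t-s+1}}$ correction coming from $z(0) \neq 1$ into the subleading contributions.

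The final step is a direct comparison. Since $r_2' = \rfunc(t-s,-2k_2\beta)$ satisfies $r_2'^2 = 1 + \frac{4k_2\beta}{\sqrt{t-s+1}} + \OO(\frac{1}{t-s+1})$, for $|\theta| < (t-s+1)^{-1/4}$ the bound above yields
\begin{equation*}
r_2'^2 - |\pstar(z(\theta))|^2 \;\geq\; \frac{1}{\sqrt{t-s+1}} \Big((4k_2+2)\beta - \tfrac{I}{(I+1)^2}\Big) + \text{(lower order)}.
\end{equation*}
Choosing $k_2$ so large that $(4k_2+2)\beta > \frac{I}{(I+1)^2}$ (note $\beta$ and $I$ are fixed, so this is a single explicit condition) forces the right-hand side to be strictly positive for $t-s$ large enough and $\ep$ small enough, which after the $\ppole\to\pstar$ reduction gives $|\ppole(z(\theta))| \leq r_2'$. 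The main technical wrinkle is the bookkeeping in the Taylor expansion: one needs the cross-term $\Re(\overline{\pstar(z(0))}\,(\pstar\circ z)'(0))$ to be of order $\OO(\frac{1}{\sqrt{t-s+1}})$ rather than contributing a genuine $\OO(\theta)$ term (this follows because $\pstar(z(0))$ is real to leading order and $(\pstar\circ z)'(0)$ is purely imaginary to leading order), so I will verify that cancellation carefully.
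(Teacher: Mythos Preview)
Your overall strategy---Taylor expand to second order in $\theta$ and compare against $r_2'$---is the same as the paper's, and your computation of the $\theta^2$ coefficient $\frac{I}{(I+1)^2}$ for $|\pstar(z(\theta))|^2$ is correct. The gap is in the reduction from $\ppole$ to $\pstar$. You claim $|\ppole(z)-\pstar(z)|=\OO(\ep^{1/2})$ and that $\ep^{1/2}\le (t-s+1)^{-1/4}$ is ``smaller than the scales we manipulate below.'' But the comparison you carry out is at scale $(t-s+1)^{-1/2}$: both $r_2'^2-1\sim \frac{k_2\beta}{\sqrt{t-s+1}}$ and the quadratic term $\frac{I}{(I+1)^2}\theta^2$ (for $|\theta|<(t-s+1)^{-1/4}$) live there. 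An error of size $(t-s+1)^{-1/4}$ dominates this. Concretely, at the endpoint $t-s\sim \ep^{-2}T$ you would need $\frac{(4k_2+2)\beta}{\sqrt{t-s+1}}$ to absorb an $\OO((t-s+1)^{-1/4})$ error, and no fixed $k_2$ can do that.

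The paper avoids this by working with $\ppole$ directly and invoking the cancellation in \eqref{eq:taylorpe1}: $\ppole(1)=1+\frac{\rho(I-\rho)}{I}\ep+\OO(\ep^{3/2})$, i.e.\ the $\sqrt{\ep}$ term vanishes (the paper explicitly flags this as essential). Combined with $\ep\le C(T)(t-s+1)^{-1/2}$ this gives $\ppole(z(0))\le 1+\frac{C}{\sqrt{t-s+1}}$, and then a second-order bound in $\theta$ yields $|\ppole(z(\theta))|\le 1+\frac{C}{\sqrt{t-s+1}}+C'\theta^2$, from which $k_2$ large forces $|\theta|\ge(t-s+1)^{-1/4}$. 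If you want to keep the $\pstar$-reduction you need the sharper local estimate $\ppole(z)-\pstar(z)=\OO(\ep)+\OO(\ep^{1/2})|z-1|$ near $z=1$; for $|z(\theta)-1|=\OO((t-s+1)^{-1/4})$ this is $\OO((t-s+1)^{-1/2})$ and can then be absorbed into $k_2$. (Minor aside: your expansion $r_2'^2=1+\frac{4k_2\beta}{\sqrt{t-s+1}}+\ldots$ is the one the paper uses, corresponding to $r_2'=\rfunc(t-s,2k_2\beta)>1$; the sign in the argument of $\rfunc$ in your text is inconsistent with this.)
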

\begin{proof}
Note that $r'_2 = \rfunc(t-s, 2k_2 \beta) \geq 1 + \frac{2 k_2 \beta}{\sqrt{t - s +1}}$, it suffices to show that $$\big|\ppole(z(\theta))\big| > 1 + \frac{2k_2 \beta}{\sqrt{t  - s +1}} \text{ implies } |\theta| > C (t - s +1)^{-\frac{1}{4}}.$$
Referring to \eqref{eq:six2:temp7}, we taylor expand  $\ppole(1)$ around $\ep = 0$ 
%(note that $q_\ep = e^{\sqrt{\ep}}, \nu_\ep = e^{-I\sqrt{\ep}}$),
\begin{equation}\label{eq:taylorpe1}
\ppole(1) = \frac{e^{-I \sqrt{\ep}}(1 - e^{\sqrt{\ep}}) + (e^{\sqrt{\ep}} - e^{-I\sqrt{\ep}}) e^{-\rho \sqrt{\ep}}}{(1-e^{(1-I)\sqrt{\ep}}) e^{-\rho \sqrt{\ep}} - (1- e^{\sqrt{\ep}})  e^{-2\rho \sqrt{\ep}}} = 1 + \frac{\rho I - \rho^2}{I} \ep + \OO(\ep^{\frac{3}{2}}).
\end{equation}
We highlight that there is no $\sqrt{\ep}$ term in the expansion, which is important for our proof. 
\bigskip
\\
We taylor expand $\ppole(z)$ at $z = 1$. Using \eqref{eq:taylorpe1}, $z(0) = 1 - \frac{\beta}{\sqrt{t - s +1}}$ and $\lim_{\ep \downarrow 0} \ppole'(1) = 1$, we find that for $t - s$ large enough and $\ep$ small enough, 
\begin{equation}\label{eq:taylorpe}
\ppole(z(0)) = \ppole(1) + \ppole'(1) (z(0) - 1) + \OO\big(z(0)- 1\big)^2 \leq 1 + \frac{ 2 (\rho I - \rho^2)}{I} \ep \leq 1 + \frac{C}{\sqrt{t - s +1}}. 
\end{equation}
In the last inequality, we used the condition $t -s \in [0, \ep^{-2} T]$. In addition, it is straightforward to see that $\frac{d}{d\theta} |\ppole(z(\theta))|\big|_{\theta = 0} = 0$ and there exists   $\zeta, C' > 0$ such that $\big|\frac{d^2}{d\theta^2} |\ppole(z(\theta))| \big| \leq C'$ for $|\theta| \leq \zeta$. Consequently, via Taylor expansion, for $|\theta| \leq \zeta$,
\begin{equation*}
\big|p_\ep (z(\theta)) \big| \leq \big|p_\ep (z(0)) \big| + \frac{C'\theta^2}{2}   \leq 1 + \frac{C}{\sqrt{t - s +1}}  + \frac{C' \theta^2}{2}.
\end{equation*}
Consequently, we have that when $|\theta| \leq \zeta$,
\begin{equation*}
\big|\ppole(z(\theta))\big| > 1 + \frac{2 k_2 \beta}{\sqrt{t - s +1}} \text{ implies } 1 + \frac{C}{\sqrt{t - s + 1}} + \frac{C' \theta^2}{2} \geq 1 + \frac{2  k_2 \beta}{\sqrt{t - s +1}}
\end{equation*}
By choosing $k_2$ large enough, we see that $|\theta| > (t - s +1)^{-1/4}$. 
%As we consider that $t$ large enough, so we can assume that $(t+1)^{-\frac{1}{4}} << \delta$, which concludes our proof. 
\end{proof}
We are ready to prove Theorem \ref{prop:semiestimateint} for $(+-)$ case. As $\rhzrin = \rhzrb + \rhzrr$, it is enough to bound respectively $\rhzrb$ and $\rhzrr$. We begin with $\rhzrb$ \eqref{eq:blk1}. The proof consists a sequence of bounds on terms appearing in the integrand \eqref{eq:blk1}.
We parametrize by $z_1(\theta_1) = \frac{1}{I+1} + \big(\frac{I}{I+1} -\frac{\beta}{\sqrt{t - s +1}}\big) e^{\im \theta_1}$ and $z_2 (\theta_2) =\rad(z_1) e^{\im \theta}$. 
%We begin with $\rhzrb$.
\bigskip
\\
($\rhzrb, z_1^{x_2 - y_1} z_2^{x_1 - y_2}$): \textbf{Show that} $|z_1^{x_2 - y_1} z_2^{x_1 - y_2}| \leq C e^{-\frac{\beta}{\sqrt{t - s +1}} (|x_1 - y_2| + |x_2 - y_1|)}$.\\
Observe that  $|z_1 (\theta_1)| = \big|\frac{1}{I+1} + \big(\frac{I}{I+1} -\frac{\beta}{\sqrt{t - s +1}}\big) e^{\im \theta_1}\big|$ reaches its maximum at $\theta_1 = 0$, hence $$|z_1 (\theta_1)| \leq |z_1 (0)| = 1 - \frac{\beta}{\sqrt{t - s +1}} \leq e^{-\frac{\beta}{\sqrt{t - s +1}}},$$ which gives  $|z_1|^{x_2 -  y_1} \leq e^{-\frac{\beta}{\sqrt{t - s +1}} |x_2 - y_1|}$. By $|z_2| \geq \rfunc(t - s , \beta) $, we deduce $|z_2|^{x_1 - y_2} \leq e^{-\frac{\beta}{\sqrt{t - s +1}} |x_1 - y_2|}$.
\bigskip
\\
%($\rhzrb, z_2^{x_1 - y_2}$): By $|z_2| \geq \rfunc(t, \beta) $, we deduce $|z_2|^{x_1 - y_2} \leq e^{-\frac{\beta}{\sqrt{t+1}} |x_1 - y_2|}$.\\
($\rhzrb$, $\frac{1}{z_i}$): \textbf{Show that} $|\frac{1}{z_i}| \leq C$.\\
Clearly, $\frac{1}{|z_i|}$ is bounded for $z_1 \in \mcont(t-s, -\beta)$ and $z_2 \in \C_{\rad(z_1)}$.
\bigskip
\\
($\rhzrb$, $\interacte(z_1, z_2)$): \textbf{Show that} $\big|\interacte(z_1, z_2)\big| \leq C+ C \sqrt{t - s +1} (|\theta_1| + |\theta_2|)$. \\
To justify this claim, write
\begin{align*}
\interacte(z_1, z_2) &= \frac{q\nu - \nu + (\nu - q) q^{-\rho} z_2 + (1-q\nu) q^{-\rho}z_1 + (q-1) q^{-2\rho} z_1 z_2}{((q-1)q^{-2\rho} z_1 + (1-q\nu) q^{-\rho}) (z_2 - \ppole(z_1))}\\
\numberthis \label{eq:sixa:temp1}
&= 1 + \frac{q^{-\rho}(1+q)(\nu - 1)}{(q-1)q^{-2\rho} z_1 + (1-q\nu) q^{-\rho} } \cdot (z_2 - z_1) \cdot \frac{1}{z_2 - \ppole(z_1)}.
\end{align*}
Let us bound each factor on the RHS of \eqref{eq:sixa:temp1}. Referring to \eqref{eq:six:temp5}, we know that $\frac{1}{|z_2 - \ppole(z_1)|} \leq C \sqrt{t - s +1}$. Furthermore, we note that
\begin{equation*}
z_2 - z_1 = e^{\im \rad_2(z_1) \theta_2} - \big(\frac{1}{I+1} + (\frac{I}{I+1} - \frac{\beta}{\sqrt{t - s +1}} )e^{\im \theta_1}\big) = e^{\im \rad_2(z_1) \theta_2} - 1 - \big(\frac{I}{I+1} - \frac{\beta}{\sqrt{t - s +1}}\big) (e^{\im \theta_1} - 1) + \frac{\beta}{\sqrt{t - s +1}},
\end{equation*}
which implies $|z_2 - z_1| \leq C\big(\frac{1}{\sqrt{t - s +1}} + |\theta_1| + |\theta_2|\big)$. 
\bigskip
\\
In addition, we observe that 
\begin{equation*}
\lim_{\ep \downarrow 0} \frac{q^{-\rho} (1+q) (\nu - 1)}{(q-1)q^{-2 \rho} z_1 + (1-q\nu) q^{-\rho}} = - \frac{2I}{z_1 + I-1}.
\end{equation*}
Thus, $|\frac{q^{-\rho} (1+q) (\nu - 1)}{(q-1) q^{-2\rho} z_1 + (1- q\nu) q^{-\rho}}|$ is uniformly bound over  $\mcont(t -s, -\beta)$. Incorporating the bound for each factor on the RHS of \eqref{eq:sixa:temp1} gives the desired bound.
\bigskip
\\
($\rhzrb$, $\reme(z_i, t, s)$): \textbf{Show that} $|\reme(z_i, t, s)| \leq C$.\\
This is proved using the same reasoning for \eqref{eq:temp24}.
\bigskip
\\
($\rhzrb$, $\coreej (z_i)^{\floor{\frac{t-s}{J}}}$): \textbf{Show that}  $|\coreej (z_i(\theta_i))|^{\floor{\frac{t-s}{J}}} \leq C(\beta, T) e^{-C (t- s +1) \theta_i^2}$.\\
The result $\coree(z_1(\theta_1))|^{\floor{\frac{t-s}{J}}} \leq C(\beta, T) e^{-C(t - s +1)\theta_1^2}$ directly follows from Lemma \ref{lem:pm:std}. For $|\coreej(z_2(\theta_2))|^{\floor{\frac{t-s}{J}}}$, note that either $z_2 (\theta_2) = \rfunc(t, k_2\beta) e^{\im \theta_2}$ or $\rfunc(t, 3 k_2\beta) e^{\im \theta_2}$ (depending on the choice of $z_1$). Lemma \ref{lem:free:std} implies $|\coreej(z_2(\theta_2))|^{\floor{\frac{t-s}{J}}} \leq C(\beta, T) e^{-C (t  - s + 1) \theta_2^2}$.
\bigskip
\\
Via change of variable $z_1 = z_1 (\theta_1)$ and $z_2 = z_2 (\theta_2)$ and incorporating the preceding bounds, we arrive at 
\begin{align*}
&\big|\rhzrb\big((x_1, x_2), (y_1, y_2), t, s\big)\big|  \\
&\leq C(\beta, T) e^{-\frac{\beta}{\sqrt{t  -s +1}} (|x_2 -y_1| + |x_1 -y_2|)} \int_{-\pi}^{\pi} \int_{-\pi}^{\pi} (1 + \sqrt{t -s  +1} (|\theta_1| + |\theta_2|)) e^{-C (t -s +1) (\theta_1^2 + \theta_2^2)} d\theta_1 d\theta_2.
\end{align*}
Applying change of variable $\theta_i \to \frac{1}{\sqrt{t -s  +1}} \theta_i$, we conclude 
\begin{equation}\label{eq:blkbound1}
|\rhzrb\big((x_1, x_2), (y_1, y_2), t, s\big)| \leq  \frac{C(\beta, T)}{t -s +1} e^{-\frac{\beta}{\sqrt{t - s +1}} (|x_2 -y_1| + |x_1 - y_2|)}. 
\end{equation}
We turn to study $\rhzrr$ in \eqref{eq:res1}. The proof consists of bounds on terms involved in the integral \eqref{eq:res1}. In the following we parametrize $z_1 (\theta_1) =  \frac{1}{I+1} + \big(\frac{I}{I+1} -\frac{\beta}{\sqrt{t - s +1}}\big) e^{\im \theta_1}$.
\bigskip
\\
($\rhzrr$, $\frac{1}{z_1 \ppole(z_1)}$) \textbf{Show that} $\frac{1}{|z_1 \ppole(z_1)|} \leq C$. \\
By $\lim_{\ep \downarrow 0} \ppole(z_1) = \frac{(I+1) z_1 - 1}{z_1 + (I-1)}$, we deduce that $\frac{1}{|z_1 \ppole(z_1)|} \leq C$ for $z_1 \in \mcont(t-s, -\beta).$ 
\bigskip
\\
($\rhzrr$, $\reme(z_1, t, s) \reme(\ppole(z_1), t, s)$): \textbf{Show that} $|\reme(z_1, t, s) \reme(\ppole(z_1), t, s)| \leq C$.\\
By ($\rhzrb$, $\reme(z_i, t, s)$), we see that $|\reme(z_1, t, s)| \leq C$ for $z_1 \in \mcont(t-s, -\beta)$. We are left to show for $t-s$ large and $\ep$ small,
\begin{equation}\label{eq:six5:temp7}
|\reme(\ppole(z_1), t, s)| \leq C, \qquad z_1 \in \mcont(t-s, -\beta).
\end{equation}
Recall from \eqref{eq:temp17} that when $\ep>0$ is small enough, all the singularity of $\reme(z, t, s)$ belongs to the interval $[0, \Theta]$ for some $\Theta < 1$. As $\lim_{\ep \downarrow 0} \ppole(z) = \pstar(z)$, it suffices to show that 
%Note that as $t - s \to \infty$ and $\ep \downarrow 0$, $\mcont(t-s, -\beta) \to \mcont$, 
%\begin{align*}
%&\lim_{\ep \downarrow 0} \bigg|\lambda(k) \ppole(z_1)^{\mu(k)} \frac{(1 + \alpha(k) q) q^{-\rho} \ppole(z_1) - (\nu + \alpha(k) q)}{(1 + \alpha(k)) q^{-\rho} \ppole(z_1)  - (\nu + \alpha(k))}\bigg|\\ =&|\pstar(z_1)|^{\frac{1}{I}} \cdot \bigg|\frac{(b(1 + \mod(k)) - \mod(k)) \pstar(z) - (b (I + \mod(k) +1) - (I + \mod(k)) }{(b \mod(k) - (\mod(k) - 1)) \pstar(z)  - ((I + \mod(k)) b - (I + \mod(k) - 1))}\bigg|\\
%\numberthis
%\label{eq:six5:temp6}
%=&\frac{|\pstar(z_1)|^{\frac{1}{I}}}{b \mod (k) - (\mod(k) - 1)} \cdot \bigg|\frac{(b(1 + \mod(k)) - \mod(k)) \pstar(z_1) - (b (I + \mod(k) +1) - (I + \mod(k)) }{\pstar(z_1)  - \frac{(I + \mod(k)) b - (I + \mod(k) - 1)}{b \mod(k) - (\mod(k) - 1)}}\bigg|
%\end{align*}
%Clearly $|\pstar(z_1)|$ is upper bounded for $z_1 \in \mcont$, we claim further 
\begin{equation*}
|\pstar(z_1)| \geq 1,\qquad z_1 \in \mcont. 
\end{equation*}
To justify this, we parametrize by $z_1(\theta) = \frac{1}{I+1} + \frac{I}{I+1} e^{\im \theta} \in \mcont$,
\begin{equation*}
|\pstar(z_1)|^2 = \frac{(I+1)^2}{I^2 +1 +2I \cos \theta} \geq 1.
\end{equation*}
%Combining  \eqref{eq:pole},  \eqref{eq:six5:temp6} and \eqref{eq:six5:temp5}, we bound 
%\begin{equation*}
%\bigg|\lambda(k) \ppole(z_1)^{\mu(k)} \frac{(1 + \alpha(k) q) q^{-\rho} \ppole(z_1) - (\nu + \alpha(k) q)}{(1 + \alpha(k)) q^{-\rho} \ppole(z_1)  - (\nu + \alpha(k))}\bigg| \leq C, \qquad z_1 \in \mcont(t-s, -\beta).
%\end{equation*}
%Since $\reme(\ppole(z_1), t, s)$ is product of $\lambda(k) \ppole(z_1)^{\mu(k)} \frac{(1 + \alpha(k) q) q^{-\rho} \ppole(z_1) - (\nu + \alpha(k) q)}{(1 + \alpha(k)) q^{-\rho} \ppole(z_1)  - (\nu + \alpha(k))}$ with at most $J$ factors, 
Hence, we conclude \eqref{eq:six5:temp7}.
\bigskip
\\
($\rhzrr$, $\jprod(z_1)$): \textbf{Show that} $|\jprod(z_1)| \leq C e^{-\frac{\beta}{\sqrt{t -s +1}} (|x_2 - y_1| + |x_1 - y_2|)}$.\\
Referring to \eqref{eq:jterm}, 
\begin{align*}
\jprod(z_1) = \frac{q\nu - \nu + (\nu - q) q^{-\rho} \ppole(z_1) + (1-q\nu) q^{-\rho} z_1 + (q-1) q^{-2\rho} z_1 \ppole(z_1)}{(q-1)q^{-2\rho} z_1 + (1-q\nu) q^{-\rho}} z_1^{x_2 - y_1} \ppole(z_1)^{x_2 - y_1} \idc_{\{|\ppole(z_1)| > r'_2\}}.
\end{align*}
Let us first bound $z_1^{x_2 - y_1} \ppole(z_1)^{x_1 - y_2}  \idc_{\{|\ppole(z_1)| > r'_2\}}$. We know from the discussion in ($\rhzrb, z_1^{x_2 - y_1} z_2^{x_1 - y_2}$) that $|z_1| \leq e^{-\frac{\beta}{\sqrt{t -s +1}}}$. It is straightforward that $\big|\ppole(z_1)^{x_1 - y_2}  \idc_{\{|\ppole(z_1)| > r'_2\}}\big| \leq e^{-\frac{\beta}{\sqrt{t -s +1}} |x_1 - y_2|}$, which implies 
\begin{equation}\label{eq:six4:temp1}
|z_1 ^{x_2 - y_1} \ppole(z_1)^{x_1 - y_2}| \leq e^{-\frac{\beta}{\sqrt{t -s +1}}(|x_2 - y_1| + |x_1 - y_2|)}.
\end{equation}
In addition, one can compute 
\begin{equation*}
\lim_{\ep \downarrow 0} \frac{q \nu - \nu + (\nu - q) q^{-\rho} \ppole(z_1) + (1-q\nu) q^{-\rho}z_1 + (q-1)q^{-2\rho} z_1 \ppole(z_1)}{(q-1)q^{-2\rho} z_1 + (1-q\nu) q^{-\rho}} = \frac{1 - (1+I) \pstar(z) + (I-1) z + z\pstar(z)}{z + I-1},
\end{equation*}
recall $\pstar(z_1) = \frac{(I+1) z_1 - 1}{z_1 + (I-1)}$. This implies that 
\begin{equation}\label{eq:six4:temp2}
\big|\frac{q\nu - \nu + (\nu - q) q^{-\rho} \ppole(z_1) + (1-q\nu) q^{-\rho} z_1 + (q-1) q^{-2\rho} z_1 \ppole(z_1)}{(q-1)q^{-2\rho} z_1 + (1-q\nu) q^{-\rho}}\big| \leq C, \quad  z_1 \in \mcont(t-s, -\beta). 
\end{equation}
Combining \eqref{eq:six4:temp1} and \eqref{eq:six4:temp2} yields
\begin{equation*}
|\jprod(z_1)| \leq C e^{-\frac{\beta}{\sqrt{t - s +1}} (|x_2 - y_1| + |x_1 - y_2|)}.
\end{equation*}
\\
($\rhzrr$, $\pcoree(z_1(\theta_1))^{\floor{\frac{t-s}{J}}}$): \textbf{Show that} $|\pcoree(z_1(\theta_1))|^{\floor{\frac{t-s}{J}}} \leq C(\beta, T) e^{-C(t - s +1) \theta_1^2}$.\\
This directly follows from Lemma \ref{lem:pm:std}. 
\bigskip
\\
Consequently, we find that 
\begin{align*}
|\rhzrr\big((x_1, x_2), (y_1, y_2), t, s\big)| &\leq C \oint_{\mcont(t-s, -\beta)} \idc_{\{|\ppole(z_1 (\theta_1))| > r'_2\}} |\jprod(z_1(\theta_1))| |\pcoree(z_1 (\theta_1))|^{\floor{\frac{t-s}{J}}} \frac{d \theta_1}{ |\ppole(z_1(\theta_1))|},\\ 
&\leq C(\beta, T) e^{-\frac{\beta}{\sqrt{t - s +1}} (|x_2 - y_1| + |x_1 - y_2|)} \int_{-\pi}^{\pi} \idc_{\{\ppole(z_1(\theta_1)) > r'_2\}} e^{-C (t - s+1) \theta_1^2} d\theta_1, \\
&\leq C(\beta, T) e^{-\frac{\beta}{\sqrt{t - s +1}} (|x_2 - y_1| + |x_1 - y_2|)} \int_{|\theta_1| > (t - s +1)^{-\frac{1}{4}}} e^{-C (t - s + 1) \theta_1^2} d\theta_1,
\end{align*}
where the last inequality is due to Lemma \ref{lem:pm:thetalb}. Via change of variable $\theta_1 \to \frac{\theta_1}{\sqrt{t - s +1}}$, we get 
\begin{equation*}
\int_{|\theta_1| > (t - s +1)^{-\frac{1}{4}}} e^{-C (t - s + 1) \theta_1^2} d\theta_1 \leq \int_{|\theta_1| > (t - s +1)^{\frac{1}{2}}} e^{-C \theta_1^2} d\theta_1  \leq \frac{e^{-C(t - s +1)}}{\sqrt{t -s +1}} \leq \frac{C}{t - s +1}.
\end{equation*}
For the second inequality above, we used the fact $\int_{b}^{\infty} {e^{-x^2}} dx \leq \frac{C}{b} e^{-b^2}.$ Thereby, 
\begin{equation*}
%\label{eq:resbound1}
|\rhzrr\big((x_1, x_2), (y_1, y_2), t, s\big)| \leq \frac{C(\beta, T)}{t - s  +1} e^{-\frac{\beta}{\sqrt{t - s +1}} (|x_2 - y_1| + |x_1 - y_2|)}.
\end{equation*}
Combining  this with the upper bound over $\rhzrb$ \eqref{eq:blkbound1} concludes Theorem \ref{prop:semiestimateint} part (a).
\bigskip
\\
% Since $x_1 - y_2 < 0$, referring to (?),
%We parametrize $z_1 (\theta) = \frac{1}{I+1} + \big(\frac{I}{I+1} - \frac{k_1 \beta}{\sqrt{t+1}}\big) e^{\im \theta_1}$, we will first show that there exists some constant $C$ \begin{equation*}
%r_2 (z_1) = \rfunc(t, k_2 \beta) 1_{\{\ppole(z_1) > z_2\}} + \rfunc(t, 3k_2 \beta) 1_{\{\ppole(z_1) < z_2\}}
%\end{equation*}
For the gradient, note that applying $\na_{x_i}$ or $\na_{y_i}$ to \eqref{eq:blk1} and \eqref{eq:res1} will gives an additional $z_i^\pm - 1$ in the integrand of $\rhzrb$ and $\rhzrr$, we bound $|z_i (\theta_i) - 1| \leq C(\frac{1}{\sqrt{t -s +1}} + |\theta_i|)$ and perform the change of variable $\theta_i \to \frac{1}{\sqrt{t - s +1}} \theta_i$ produces an extra factor of $\frac{1}{\sqrt{t - s +1}}$.
Similarly, applying $\na_{x_1, x_2}$ will produce an additional factor $(z_1(\theta_1) - 1)(z_2(\theta_2) - 1)$. We bound $$|z_1(\theta_1) - 1| \cdot |z_2(\theta_2) - 1| \leq C \big(\frac{1}{\sqrt{t -s +1}} + |\theta_1|\big)\cdot \big(\frac{1}{\sqrt{t - s +1}} + |\theta_2|\big),$$ performing change of variable $\theta_i \to \frac{1}{\sqrt{t - s +1}} \theta_i$ produces an extra factor of $\frac{1}{t - s +1}$. This completes the proof of Theorem \ref{prop:semiestimateint} (b), (c). 
\subsection{Estimate of $\rhzrin$, the $(--)$ case}
We turn to prove Theorem \ref{prop:semiestimate} when  $x_2 - y_1 \leq 0$ and $x_1 - y_2 \leq 0$. This case is more involved than the previous one. One stumbling block is that we prefer to deform the $z_1$-contour to be $\C_{\rfunc(t-s, \beta)}$ to extract the spatial exponential decay. On the other hand, as depicted in Figure \ref{fig:contline},  the unit circle does not satisfy the steepest descent condition for $\pcoree(z)$. We resolve this issue by first shrinking the $z_1$-contour to $\mcont'(t-s, \beta)$,
%, which is close to $\mcont'$ 
then for $\rhzrb$, we re-deform the $z_1$-contour from $\mcont'(t-s, \beta)$ to $\C_{\rfunc(t-s, \beta)}$.
\bigskip
\\
We define $$\mcont'(t-s, \beta) = \partial\left\{ \{|z - \frac{1}{I+1}| \leq \frac{I}{I+1} + u_*\} \cap \{|z| \leq \rfunc(t-s, \beta)\}\right\},$$ 
recall $u_*$ is some fix constant which belongs to $(0, \delta \wedge \frac{1}{4I})$. Since $\mcontp(t-s, \beta) \to \mcontp$ as $t-s \to \infty$, it is clear that for $t-s$ large enough, $\mcont'(t-s, \beta)$ is admissible. Note that the parametrization of $\mcont'(t-s, \beta)$ is given by the right part of Figure \ref{fig:mp}.
\begin{figure}[ht]
\includegraphics[width=.85\linewidth]{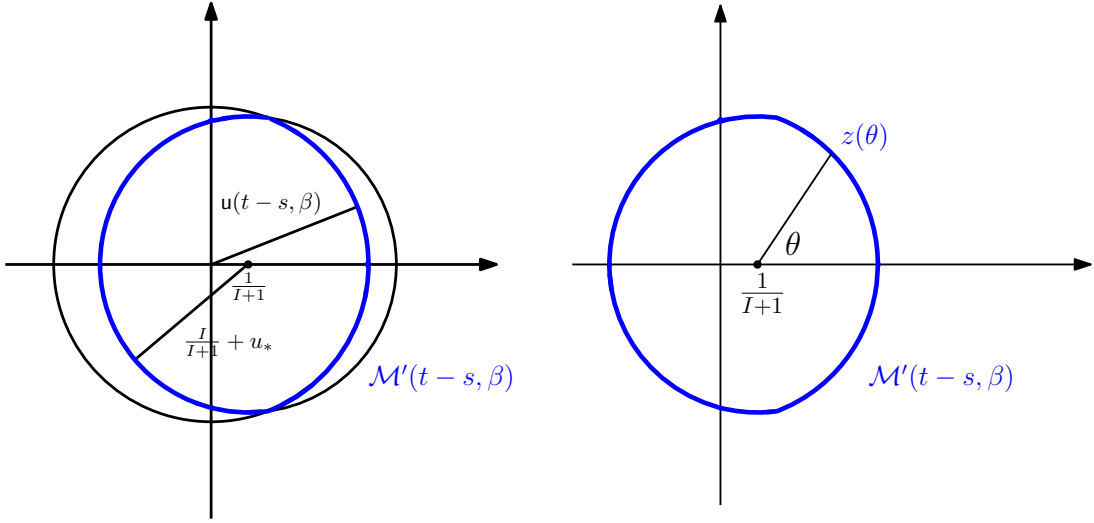}
\caption{The contour $\mcont'(t-s, \beta)$ and its parametrization}
\label{fig:mp}
\end{figure}
\bigskip
\\
We decompose $\rhzrin = \rhzrb + \rhzrr$,
\begin{align*}
\rhzrb\big((x_1, x_2), (y_1, y_2), t, s\big) &=  \oint_{\mcont'(t-s, \beta)} \oint_{C_{\rad_2(z_1)}} \interacte(z_1, z_2) \prod_{i=1}^2 \coreej (z_i)^{\floor{\frac{t-s}{J}}} \reme(z_i, t, s) z_i^{x_{3 - i} - y_i} \frac{dz_i}{2 \pi \im z_i},\\
\numberthis \label{eq:res2}
\rhzrr\big((x_1, x_2), (y_1, y_2), t, s\big) &= \oint_{\mcont'(t-s, \beta)} \idc_{\{|\ppole(z_1)| > r'_2\}} \jprod(z_1) \pcoree(z_1)^{\floor{\frac{t-s}{J}}} \reme(z_1, t, s) \reme(\ppole(z_1), t, s) \frac{dz_1}{2 \pi \im z_1 \ppole(z_1) }.
\end{align*}
Let us study $\rhzrb$ in the first place. As we mention at the beginning, when $x_2 -  y_1 \leq 0$,  $z_1$ does not favor the contour $\mcont'(t-s, \beta)$ to extract spatial decay. We prove in the following that we can re-deform the $z_1$-contour from $\mcont'(t-s, \beta)$ to $\C_{\rfunc(t -s, \beta)}$.
%, on which we rely to extract the spatial decay.
\begin{lemma}\label{lem:redeform}
For $t -s$ large enough and $\ep$ small enough, 
\begin{align*}
&\oint_{\mcont'(t-s, \beta)} \oint_{\C_{\rad_2(z_1)}} \interacte(z_1, z_2) \prod_{i=1}^2 \coreej (z_i)^{\floor{\frac{t-s}{J}}} \reme(z_i, t, s) z_i^{x_{3 - i} - y_i} \frac{dz_i}{2 \pi \im z_i} \\ 
%\numberthis \label{eq:six:temp1}
&=\oint_{\C_{\rfunc(t-s, \beta)}} \oint_{\C_{\rad_2(z_1)}} \interacte(z_1, z_2) \prod_{i=1}^2 \coreej (z_i)^{\floor{\frac{t-s}{J}}} \reme(z_i, t, s) z_i^{x_{3 - i} - y_i} \frac{dz_i}{2 \pi \im z_i}.
\end{align*}
\end{lemma}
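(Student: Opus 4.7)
The plan is a contour deformation argument based on Cauchy's theorem for the outer $z_1$-integral. The main subtlety is that the inner contour $\C_{\rad_2(z_1)}$ depends on $z_1$ through the indicator $\idc_{\{|\ppole(z_1)|>r_2'\}}$, so I would first reduce to a fixed inner contour using the residue theorem. For each $z_1$, writing
\begin{equation*}
\oint_{\C_{\rad_2(z_1)}}(\cdots)\frac{dz_2}{2\pi\im z_2} = \oint_{\lc}(\cdots)\frac{dz_2}{2\pi\im z_2} - \idc_{\{|\ppole(z_1)|>r_2'\}}\,\res_{z_2=\ppole(z_1)}\frac{(\cdots)}{z_2}
\end{equation*}
for $\lc$ a large reference contour, and then integrating over $\Gamma$, yields the decomposition $\rhzrb(\Gamma) = \mathsf{A}(\Gamma) - \rhzrr(\Gamma)$, where $\mathsf{A}(\Gamma)$ denotes the double integral with inner contour $\lc$ fixed and $\rhzrr(\Gamma)$ agrees with \eqref{eq:res2}. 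Thus the lemma reduces to proving the two identities $\mathsf{A}(\mcont'(t-s,\beta))=\mathsf{A}(\C_{\rfunc(t-s,\beta)})$ and $\rhzrr(\mcont'(t-s,\beta))=\rhzrr(\C_{\rfunc(t-s,\beta)})$ separately.

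The first identity follows directly from Cauchy's theorem. With $z_2\in\lc$ held fixed, the $z_1$-integrand is analytic in the open annular region $R$ between $\mcont'(t-s,\beta)$ and $\C_{\rfunc(t-s,\beta)}$: the singularities coming from $\coreej(z_1)^{\floor{(t-s)/J}}$, $\reme(z_1,t,s)$, and the Jacobian factor $1/z_1$ all lie in $[0,\Theta]\cup\{0\}$, interior to $\mcont'(t-s,\beta)$, while the singularities of $\interacte(z_1,z_2)$ at $z_1=\pole(z_2)$ for $z_2\in\lc$ cluster near $z_1=1-I$ for $R$ large, safely outside our region.

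For the second identity, the integrand of $\rhzrr$ modulo the indicator is the analytic function
\begin{equation*}
\tilde g(z_1):=\jprod(z_1)\,\pcoree(z_1)^{\floor{(t-s)/J}}\,\reme(z_1,t,s)\,\reme(\ppole(z_1),t,s)\big/(z_1\ppole(z_1)),
\end{equation*}
whose singularities also lie outside $R$. Decomposing $R$ along the internal curve $\gamma:=R\cap\{|\ppole(z_1)|=r_2'\}$ and applying Cauchy's theorem separately to the open subregions $R_+:=R\cap\{|\ppole(z_1)|>r_2'\}$ and $R_-:=R\cap\{|\ppole(z_1)|<r_2'\}$ gives
\begin{equation*}
\rhzrr(\mcont'(t-s,\beta))-\rhzrr(\C_{\rfunc(t-s,\beta)}) = \int_{\gamma}\tilde g(z_1)\,dz_1.
\end{equation*}
The main obstacle is showing this boundary correction vanishes. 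The geometric input comes from the limiting picture of Lemma \ref{lem:SDM}: as $\ep\downarrow 0$ and $t-s\to\infty$, the level curve $\{|\ppole(z_1)|=r_2'\}$ approaches the circle $\{|\pstar(z_1)|=1\}$, which lies inside the bigger disk defining $\mcont'$ and is tangent to $\mcont$ only at $z_1=1$. With $u_*\in(0,\delta\wedge\tfrac{1}{4I})$ chosen small enough, this level set remains disjoint from $R$ for all $\ep$ small and $t-s$ large, so that $\gamma=\emptyset$ and the identity follows trivially; verifying this disjointness quantitatively (uniformly in the relevant parameters) is the technical core of the proof.
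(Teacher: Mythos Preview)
Your approach is correct and parallel in spirit to the paper's, but organized via a different decomposition. The paper does not split $\rhzrb=\mathsf A-\rhzrr$; it works with $\rhzrb$ directly. Writing $\Lambda_1(t-s),\Lambda_2(t-s)$ for the arcs on which the two $z_1$-contours differ, the paper first shows that $|\ppole(z_1)|>r_2'$ for every $z_1\in\Lambda_1\cup\Lambda_2$ (this is exactly your level-curve disjointness restricted to the boundary of the crescent), so that $\rad_2(z_1)\equiv\rfunc(t-s,k_2\beta)$ is a \emph{constant} there. Fubini then reduces the claim to showing the $z_1$-integral over the closed boundary of the crescent $\G(t-s)$ vanishes, i.e., that $\pole(z_2)\notin\G(t-s)$ for $z_2$ on the \emph{small} inner circle $\C_{\rfunc(t-s,k_2\beta)}$; the paper checks this by the estimate $\inf_\theta\Re\sstar(e^{\im\theta})\ge 1>\sup_{z\in\G}\Re z$. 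Your route trades this last computation for the easier observation that $\pole(z_2)\approx 1-I$ when $z_2\in\lc$, at the price of treating $\rhzrr$ separately; the ``technical core'' you identify (that $\gamma=\emptyset$) is precisely the paper's constant-radius claim, proved there via the explicit bound $|\pstar(z_1)|>1+\delta$ on $\Lambda_1^*\cup\Lambda_2^*$.

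Two small corrections to your outline. First, the reference to Lemma~\ref{lem:SDM} is off: that lemma gives steepest-descent bounds on $|\corelim|,|\pcorelim|$ along $\mcont$, not the location of $\{|\pstar|=1\}$. Second, ``$u_*$ chosen small enough'' is backwards --- the level curve $\{|\pstar|=1\}$ lies \emph{inside} $\mcont$ (tangent only at $1$), so any $u_*>0$ already keeps it out of the crescent, and larger $u_*$ only helps; moreover $u_*$ is already fixed in the paper. You should also record that the singularities of $\tilde g$ coming from $\coreej(\ppole(z_1))$, $\reme(\ppole(z_1),t,s)$, and $1/\ppole(z_1)$ avoid the crescent: these correspond to $z_1\in\pstar^{-1}([0,\Theta]\cup\{0\})\subset[\tfrac{1}{I+1},1)$, interior to $\mcont$, hence outside $R$.
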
  
\begin{proof} 
The contours $\mcont'(t-s, \beta)$ and $\C_{\rfunc(t-s, \beta)}$ share a common part $\Lambda(t-s) := \mcont'(t-s, \beta) \cap \C_{\rfunc(t-s, \beta)}$. We denote by $\Lambda_1(t-s) :=  \M'(t-s, \beta) \backslash \Lambda(t-s)$ and $\Lambda_2 (t-s) := \C_{\rfunc(t-s, \beta)} \backslash \Lambda(t-s)$.  Decompose the contour $\M'(t-s, \beta) = \Lambda(t-s) \cup \Lambda_1(t-s)$, $\C_{\rfunc(t-s, \beta)} = \Lambda(t-s) \cup \Lambda_2 (t-s)$,
it suffices to prove  
\begin{align*}
&\oint_{\Lambda_1(t-s)} \oint_{\C_{\rad_2(z_1)}} \interacte(z_1, z_2) \prod_{i=1}^2 \coreej (z_i)^{\floor{\frac{t-s}{J}}} \reme(z_i, t, s) z_i^{x_{3-i} - y_i} \frac{dz_i}{2 \pi \im z_i}  \\ 
\numberthis \label{eq:six:claim1}
&=\oint_{\Lambda_2(t-s)} \oint_{\C_{\rad_2(z_1)}}  \interacte(z_1, z_2) \prod_{i=1}^2 \coreej (z_i)^{\floor{\frac{t-s}{J}}} \reme(z_i, t, s) z_i^{x_{3-i} - y_i} \frac{dz_i}{2 \pi \im z_i}.
\end{align*}
To prove the above equation, we first claim that for $\ep$ small enough and $t - s \leq \ep^{-2} T$ large enough, 
\begin{equation}\label{eq:six3:temp6} 
\rad_2 (z_1) = \rfunc(t-s, k_2 \beta), \quad \forall\, z_1 \in \Lambda_1 (t-s) \cup \Lambda_2(t-s)
\end{equation}
That is to say, the $z_2$-contour is always $\C_{\rfunc(t-s, k_2 \beta)}$, which does not depend on the choice of $z_1$.
\bigskip
\\
To justify this claim, we need to prove for $\ep$ small enough and $t-s$ large enough
\begin{equation*}
|\ppole(z_1)| > \rfunc(t-s, 2k_2 \beta).
\end{equation*}	
We denote by $\Lambda^* = \mcont' \cap \C_1$, $\Lambda_1^* = \mcont' \backslash \Lambda^*$ and $\Lambda_2^* = \C_1 \backslash \Lambda^*$.
Note that as $t-s \to \infty$ and $\ep \downarrow 0$, 
\begin{equation*}
\Lambda_1 (t-s, \beta) \to \Lambda_1^*,\quad \Lambda_2 (t-s, \beta) \to \Lambda_2^*,\quad \ppole(z_1) \to \pstar(z_1),\quad \rfunc(t-s, 2k_2 \beta) \to 1.
\end{equation*}
Therefore, it suffices to consider the limit case and show that there exists $\delta > 0$ s.t.
\begin{equation*}
|\pstar(z_1)| = \bigg|\frac{(I+1) z_1 - 1}{z_1 + (I-1)}\bigg|> 1+\delta, \qquad z_1 \in \Lambda_1^* \cup \Lambda_2^*.
\end{equation*}
If $z_1 \in \Lambda_1^*$, we parametrize $z_1 (\theta) = \frac{1}{I+1} + \frac{I}{I+1} e^{\im \theta}$, where $|\theta| \geq \zeta$ for some positive constant $\zeta$. %is bound away from zero when $z_1 \in \Lambda_1^*$. 
We readily compute 
\begin{equation*}
|\pstar(z_1(\theta))|^2 = \frac{(I+1)^2}{I^2 +1 +2I \cos \theta} \geq \frac{(I+1)^2}{I^2 +1 + 2I \cos \zeta} > 1. 
\end{equation*}
If $z_1 \in \Lambda_2^*$, we parametrize $z_1 (\theta) = e^{\im \theta}$ where $|\theta| \geq \zeta'$ for some positive constant $\zeta'$.
\begin{equation*}
|\pstar(z_1)|^2 
%= \bigg| \frac{(I+1) e^{\im \theta} - 1}{e^{\im  \theta} + I- 1} \bigg|^2 
= \frac{(I+1)^2 + 1 - 2(I+1) \cos \theta}{(I-1)^2 +1 +2(I-1) \cos \theta} \geq \frac{(I+1)^2 + 1 - 2(I+1) \cos \zeta'}{(I-1)^2 +1 +2(I-1) \cos \zeta'} > 1,
\end{equation*}
where the first inequality above is due to the fact that $\frac{(I+1)^2 + 1 - 2(I+1) \cos \theta}{(I-1)^2 +1 +2(I-1) \cos \theta}$ increases as $|\theta| \in [0, \pi]$ increases.
\bigskip
\\
Having shown \eqref{eq:six3:temp6}, by Fubini's theorem,  the desired identity \eqref{eq:six:claim1} turns into 
\begin{align*}
&\oint_{\C_{\rfunc(t-s, k_2 \beta)}} \oint_{\Lambda_1(t-s)} \interacte(z_1, z_2) \prod_{i=1}^2 \coreej (z_i)^{\floor{\frac{t-s}{J}}} \reme(z_i, t, s) z_i^{x_{3-i} - y_i} \frac{dz_i}{2 \pi \im z_i} \\
&=\oint_{\C_{\rfunc(t-s, k_2 \beta)}} \oint_{\Lambda_2(t-s)}  \interacte(z_1, z_2) \prod_{i=1}^2 \coreej (z_i)^{\floor{\frac{t-s}{J}}} \reme(z_i, t, s) z_i^{x_{3-i} - y_i} \frac{dz_i}{2 \pi \im z_i}.
\end{align*}
In order to justify the identity above, it is sufficient to show that for all $z_2 \in \C_{\rfunc(t -s, k_2 \beta)}$, 
\begin{equation*}
\oint_{\Lambda_1(t-s)} \interacte(z_1, z_2)  \prod_{i=1}^2 \coreej (z_i)^{\floor{\frac{t-s}{J}}} \reme(z_i, t, s) z_i^{x_{3-i} - y_i} \frac{dz_i}{2 \pi \im z_i} = \oint_{\Lambda_2(t-s)}  \interacte(z_1, z_2)  \prod_{i=1}^2 \coreej (z_i)^{\floor{\frac{t-s}{J}}} \reme(z_i, t, s) z_i^{x_{3-i} - y_i} \frac{dz_i}{2 \pi \im z_i},
\end{equation*}
which is equivalent to 
\begin{equation}\label{eq:six:temp6}
\oint_{\pa \G(t-s)} \interacte(z_1, z_2)  \coreej (z_1)^{\floor{\frac{t-s}{J}}} \reme(z_1, t, s) z_1^{x_{2} - y_1} \frac{dz_1}{2 \pi \im z_1 } =  0,
\end{equation} 
where  $\pa \G(t-s)$ is the boundary of the crescent $\G(t-s) = \{|z| \leq \rfunc(t-s, \beta)\} \backslash \{|z - \frac{1}{I+1}| = \frac{I}{I+1} + u_*\}$, which is depicted in Figure \ref{fig:crescent} (note that  $\pa \G(t-s) = \Lambda_1 (t-s) \cup \Lambda_2 (t-s)$). 
\begin{figure}[ht]
\includegraphics[width=.65\linewidth]{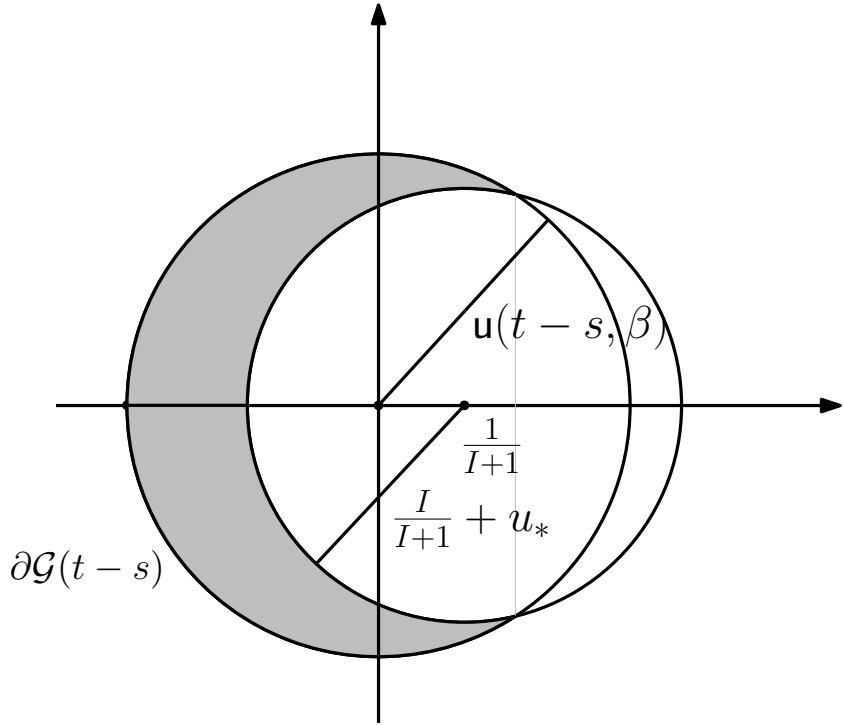}
\caption{The crescent $\mathcal{G}(t-s)$ and its boundary $\pa \G(t-s)$.}
\label{fig:crescent}
\end{figure}
\bigskip
\\
We set out proving \eqref{eq:six:temp6}. Since $\pa \G(t-s)$ is a closed curve, according to Cauchy's theorem, we only need to prove that no pole of the integrand \eqref{eq:six:temp6} lies inside of $\G(t-s)$. As we mentioned before, for $\ep$ small enough, the pole either equals $\pole(z_2)$ or belongs to $[0, \Theta]$. It is straightforward that  $[0, \Theta] \cap \G(t-s) = \emptyset$. Hence, we only need to show that $\pole(z_2) \notin \G(t - s)$ for all $z_2 \in \C_{\rfunc(t-s, k_2 \beta)}$. 
\bigskip
\\
We claim that for $t-s$ large enough and $\ep$ small enough,
$$\inf_{z_2 \in \C_{\rfunc(t-s, k_2 \beta)}}\Re \pole (z_2) > \sup_{z_1 \in \G(t-s)} \Re z_1.$$ 
Note that as $t- s \to \infty$ and $\ep \downarrow 0$,
\begin{equation*}
\C_{\rfunc(t-s, k_2 \beta)} \to \C_1, \quad \G(t-s) \to \G, \quad \pole(z) \to \sstar(z),
\end{equation*}
where  $\G := \{|z| \leq 1\} \backslash \{|z - \frac{1}{I+1}| = \frac{I}{I+1} + u_*\}$ and $\sstar(z) = \frac{(I-1) z +1}{I+1- z}$.
Therefore, it suffices to show that 
$$\inf_{z_2 \in \C_1}\Re \sstar (z_2) > \sup_{z_1 \in \G} \Re z_1.$$
To justify the inequality above, we first observe that $\sup_{z_1 \in \G} \Re z_1 < 1$. In addition, by setting $z_2 = e^{\im \theta}$, we see that 
\begin{equation*}
\Re \sstar(e^{\im  \theta})  = \Re \frac{(I-1) e^{\im \theta} + 1}{I+1 - e^{\im \theta}} = \frac{2 + (I^2 -2 ) \cos \theta}{(I+1)^2 + 1 - 2(I+1) \cos \theta} \geq 1.
\end{equation*}
%The last inequality is due to the function $\frac{2 + (I^2 -2 ) \cos \theta}{(I+1)^2 + 1 - 2(I+1) \cos \theta}$ reach its minimum at $\theta = 0$.
Consequently, we proved $\pole(z_2) \notin \G(t-s)$, %for $\ep$ small and $t-s$ large, 
which completes the proof for Lemma \ref{lem:redeform}.
\end{proof} 
In summary,  we can write $\rhzrin = \rhzrb + \rhzrr$, where 
\begin{align*}
\numberthis \label{eq:blk2}
\rhzrb\big((x_1, x_2), (y_1, y_2), t, s\big) =  \oint_{\C_{\rfunc(t-s, \beta)}} \oint_{\C_{\rad_2(z_1)}} \interacte(z_1, z_2) \prod_{i=1}^2 \coreej (z_i)^{\floor{\frac{t-s}{J}}} \reme(z_i, t, s) z_i^{x_{3-i} - y_i} \frac{dz_i}{2 \pi \im z_i}
\end{align*}
and $\rhzrr$ is given by \eqref{eq:res2}.
\begin{lemma}\label{lem:mm:std}
For the parametrization $z(\theta)$ given in Figure \ref{fig:mp}, we have for $t-s \leq \ep^{-2} T$ large enough and $\ep > 0$ small enough
\begin{equation*}
|\coreej (z(\theta))|^{t-s} \leq C(\beta, T) e^{-C(t-s+1) \theta^2}, \qquad |\coreej (z(\theta))|^{t-s} \leq C(\beta, T) e^{-C(t-s+1) \theta^2}, \qquad z(\theta) \in \mcont'(t-s, \beta).
\end{equation*}
\begin{proof}
Similar to Lemma \ref{lem:pm:std}, it suffices to show that  there exists $C(\beta, T), C> 0$ s.t.
\begin{equation*}
\Re \log \coreej (z(\theta)) \leq \frac{C(\beta, T)}{t-s+1} - C \theta^2; \qquad \Re \log \pcoree (z(\theta)) \leq \frac{C(\beta, T)}{t-s+1} - C \theta^2.
\end{equation*}
We split out proof for $(\theta = 0)$, for ($\theta$ small) and for ($\theta$ large).
\begin{itemize}
\item $(\theta = 0): \Re \coreej (z(0)), \Re \pcoree(z(0)) \leq \frac{C(\beta, T)}{t-s+1}$.
\item ($\theta$ small): There exists $\zeta > 0$ and constants $C(\beta, T)$ and  $C > 0$ such that \eqref{eq:pm:stdequiv} holds for $|\theta| \leq \zeta$.
\item ($\theta$ large): We can find $\delta > 0$ such that $\big|\coreej (z(\theta))\big|, \big|\pcoree (z(\theta))\big| < 1 -\delta $ for $|\theta| > \zeta$.
\end{itemize}
Recall that $\mcont'(t-s, \beta)$ is the same as $\C_{\rfunc(t-s, \beta)}$ in a neighborhood of $1$, hence $z(\theta) \in \C_{\rfunc(t-s, \beta)}$ when $\theta$ is small. This being the case, the proof for $(\theta = 0)$ and ($\theta$ small) is the same as in Lemma \ref{lem:pm:std}.
%For $(\theta = 0)$, since $z(0) = e^{\frac{\beta}{\sqrt{t+1}}\end{equation}}$, the proof is same as before so we do not repeat here. For ($\theta$ small), recall that $\mcont'(t, \beta)$ is given by $\C_{\rfunc(t, \beta)}$ near $1$, thus $z(\theta) \in \C_{\rfunc(t, \beta)}$. Therefore,  $z(\theta) = \frac{1}{I+1} + r(\theta) e^{\im \theta}, |\theta| \leq \theta_0$ for some $\theta_0 > 0$ and $r(\theta)$ solves 
%\begin{equation*}
%|z(\theta)| = \rfunc(t, \beta) \Rightarrow \sqrt{(\frac{1}{I+1} + r(\theta) \cos \theta)^2 + r(\theta) (\sin \theta)^2} = \rfunc(t, \beta)
%\end{equation*}
%which yields $r(\theta) = - \frac{1}{I+1} \cos \theta + \sqrt{\rfunc(t, \beta)^2 - \frac{1}{(I+1)^2} (\sin \theta)^2}$, one important observation is that $r'(0) = 0$, which implies the tangent at $z(0)$ is vertical. By straightforward calculation, 
%\begin{align*}
%&\pa_\theta (\log \coree (z(\theta))) \big|_{\theta = 0} \in \im \RR, \quad \pa_\theta (\log \pcoree (z(\theta))) \big|_{\theta = 0} \in \im \RR,\\ 
%&\lim_{\epsilon \downarrow 0, t \to \infty} \pa_\theta^2 (\log \coree (z(\theta))) \big|_{\theta = 0} = -\frac{2I^2 \sigma^2}{(I+1)^2},\\
%&\big|\pa_\theta^3 (\log \coree) (z(\theta))\big| \leq C, \qquad \text{ for } |\theta| \leq \theta_0.  
%\end{align*}
%which concludes \eqref{eq:pm:stdequiv} for ($\theta$ small).
For ($\theta$ large), since $\mcont'(t-s, \beta) \to \mcont'$ when $t-s \to \infty$ and $\mcont'$ satisfies the steepest descent condition, we find that for $t-s$ large and $\ep$ small,  
\begin{equation*}
|\coreej (z(\theta))| < 1 - \delta, \qquad |\pcoree (z(\theta))| < 1 - \delta, \qquad \text{ for } |\theta| \geq \zeta.
\end{equation*}
This completes our proof.
\end{proof}
\end{lemma}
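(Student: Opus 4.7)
The plan is to reduce the two inequalities to the real-part estimates
\[
\Re \log \coreej(z(\theta)) \leq \frac{C(\beta,T)}{t-s+1} - C\theta^2, \qquad \Re \log \pcoree(z(\theta)) \leq \frac{C(\beta,T)}{t-s+1} - C\theta^2,
\]
and then to handle these by splitting $\theta \in (-\pi, \pi]$ into the three regimes $(\theta = 0)$, $(\theta\ \text{small})$ and $(\theta\ \text{large})$, exactly in parallel with the scheme used in Lemma \ref{lem:free:std} and Lemma \ref{lem:pm:std}. The crucial structural observation is that, by construction of $\mcont'(t-s, \beta) = \partial(\{|z - \tfrac{1}{I+1}| \leq \tfrac{I}{I+1} + u_*\} \cap \{|z| \leq \rfunc(t-s,\beta)\})$, there is a neighborhood $O$ of $1$ in which $\mcont'(t-s,\beta)$ \emph{coincides} with the circle $\C_{\rfunc(t-s,\beta)}$. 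Consequently, for the parametrization $z(\theta)$ shown in Figure \ref{fig:mp}, there exists a fixed $\zeta > 0$ (independent of $\ep$ and $t-s$) such that $z(\theta)$ lies on the circular arc $\C_{\rfunc(t-s,\beta)}$ whenever $|\theta| \leq \zeta$.

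For the first two regimes $(\theta = 0)$ and $(\theta\ \text{small})$ I would therefore directly import the local Taylor-expansion analysis carried out in the proof of Lemma \ref{lem:pm:std}: the relations $\coreej(1) = 1$, $\coreej'(1) = 0$, $\lim_{\ep \downarrow 0}\coreej''(1) = JV_*$, and analogously $\pcoree(1) = 1 + O(\ep^2)$, $\pcoree'(1) = O(\ep)$, $\lim_{\ep \downarrow 0}\pcoree''(1) = 2JV_*$, give $\log\coreej(z(0)), \log\pcoree(z(0)) \leq C(\beta,T)/(t-s+1)$ since $|z(0) - 1| = O(1/\sqrt{t-s+1})$ (this uses $\ep^2(t-s) \leq T$ to absorb the $\OO(\ep)$ contributions). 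Expanding $\log\coreej(z(\theta))$ and $\log\pcoree(z(\theta))$ in $\theta$ at $\theta=0$ on the circular part then yields a second derivative converging respectively to $-JV_*$ and $-2JV_*$ (up to a positive geometric factor coming from the radius), producing the quadratic decay $-C\theta^2$ for $|\theta| \leq \zeta$.

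For $(\theta\ \text{large})$, i.e.\ $|\theta| > \zeta$, I would use the uniform convergence of the contour: as $t-s \to \infty$ and $\ep \downarrow 0$, $\mcont'(t-s, \beta)$ converges (in the Hausdorff sense) to $\mcont'$, and $|\coreej|, |\pcoree|$ converge uniformly on compact subsets of a neighborhood of the unit circle to $|\corelim|, |\pcorelim|$ respectively. Since $\mcont' = \mcont(u_*)$ with $u_* \in (0, \delta)$, the steepest descent condition \eqref{eq:SDMu} of Lemma \ref{lem:SDMu} gives $|\corelim(z)|, |\pcorelim(z)| < 1$ strictly on $\mcont' \setminus \{1\}$. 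Because $\{z(\theta) : |\theta| \geq \zeta\}$ stays at positive distance from $1$, compactness yields a uniform $\delta > 0$ with $|\coreej(z(\theta))|, |\pcoree(z(\theta))| < 1 - \delta$ for all such $\theta$, once $t-s$ is large and $\ep$ is small.

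The main obstacle I anticipate is not any single analytic estimate but the bookkeeping at the junction between the two regimes: I must choose $\zeta$ small enough that $z(\theta) \in \C_{\rfunc(t-s,\beta)}$ on $|\theta| \leq \zeta$ (so the Taylor expansion is legitimate and recycles the Lemma \ref{lem:pm:std} argument verbatim), yet large enough that on $|\theta| \geq \zeta$ the contour is in the region controlled by Lemma \ref{lem:SDMu}, with the constants $C(\beta,T), C, \delta$ all chosen uniformly in $\ep$ and $t-s$. Once $\zeta$ is fixed with this compatibility, combining the three estimates gives the desired bound on all of $(-\pi,\pi]$ and concludes the proof.
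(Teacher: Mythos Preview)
Your proposal is correct and follows essentially the same approach as the paper's proof: reduce to the real-part inequalities, split into the three regimes, invoke the coincidence of $\mcont'(t-s,\beta)$ with $\C_{\rfunc(t-s,\beta)}$ near $1$ to recycle the local Taylor analysis from Lemma~\ref{lem:pm:std} for $(\theta=0)$ and $(\theta\ \text{small})$, and use the convergence $\mcont'(t-s,\beta)\to\mcont'$ together with the steepest descent condition \eqref{eq:SDMu} from Lemma~\ref{lem:SDMu} for $(\theta\ \text{large})$. Your discussion of the junction between regimes is more explicit than the paper's, but the choice of a compatible $\zeta$ is indeed straightforward given the geometry, so there is no genuine obstacle there.
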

We begin to estimate $\rhzrb$ in \eqref{eq:blk2}. In what follows, we check a sequence of bounds on terms involved in the integral \eqref{eq:blk2}, we  parametrize $z_1 = \rfunc(t-s, \beta) e^{\im \theta_1}$ and $z_2 = \rad_2 (z_1) e^{\im \theta_2}$.
\bigskip
\\
($\rhzrb, z_1^{x_2 - y_1} z_2^{x_1 - y_2}$): \textbf{Show that} $|z_1^{x_2 - y_1} z_2^{x_1 - y_2}| \leq e^{-\frac{\beta}{\sqrt{t-s+1}} (|x_2 - y_1| + |x_1  -y_2|)}$.\\
Since $z_1 \in \C_{\rfunc(t-s, \beta)}$ and $z_2 \in \C_{\rad_2 (z_1)}$, we have $|z_i| \geq \rfunc(t-s, \beta)$. Along with the condition $x_{3-i } - y_i \leq 0$ for $i = 1, 2$, we obtain $|z_1|^{x_2 - y_1} |z_2|^{x_1 - y_2} \leq e^{-\frac{\beta}{\sqrt{t-s+1}} (|x_2 - y_1| + |x_1 - y_2|)}$.
\bigskip
\\
($\rhzrb, \interacte(z_1, z_2)$): \textbf{Show that} $\big|\interacte(z_1, z_2)\big| \leq C + C \sqrt{t -s +1} (|\theta_1| + |\theta_2|)$.\\
The argument for this part is the same as in the $(+-)$ case.
\bigskip
\\
$(\rhzrb, \reme(z_i, t, s))$: \textbf{Show that} $|\reme(z_i, t, s)| \leq C$.\\
The argument is the same as $(+-)$ case $(\rhzrb, \reme(z_i, t, s))$.
\bigskip
\\
($\rhzrb, \coreej (z_i)^{\floor{\frac{t-s}{J}}}$): \textbf{Show that} $|\coreej (z_i(\theta_i))|^{\floor{\frac{t-s}{J}}} \leq C(\beta, T) \exp(- C (t - s +1) \theta_i^2 )$. 
\\
This is the content of Lemma \ref{lem:free:std}.
\bigskip
\\
As a consequence, we perform the same procedure as in the $(+-)$ case and get
\begin{align*}
|\rhzrb\big((x_1, x_2), (y_1, y_2), t, s\big)| &\leq C(\beta, T) e^{-\frac{\beta}{\sqrt{t  -s +1}} (|x_2 -y_1| + |x_1 -y_2|)} \int_{-\pi}^{\pi} \int_{-\pi}^{\pi} (1 +\sqrt{t - s +1} (|\theta_1| + |\theta_2|)) e^{- C (t - s +1) (\theta_1^2 + \theta_2^2)} d\theta_1 d\theta_2\\
\numberthis \label{eq:mm:inbound}
&\leq \frac{C(\beta, T)}{t -s +1} e^{-\frac{\beta}{\sqrt{t - s +1}}(|x_2 - y_1| + |x_1 - y_2|)}.
\end{align*}
We turn our attention to study $\rhzrr$, the proof similarly consists of bounds on terms involved in the integral \eqref{eq:res2}. In the following we parametrize $z_1 = z_1(\theta) \in \mcont'(t-s, \beta)$ as depicted in Figure \ref{fig:mp}. 
\bigskip
\\
($\rhzrr$, $\frac{1}{z_1 \ppole(z_1)}$): \textbf{Show that} $|\frac{1}{z_1 \ppole(z_1)}| \leq C$.\\
This is by the same argument as in the $(+-)$ case.
\bigskip
\\
$(\rhzrr, \reme(z_1, t, s) \reme(\ppole(z_1), t, s))$: \textbf{Show that} $|\reme(z_1, t, s) \reme(\ppole(z_1), t, s)| \leq C$.\\
The argument for this part is the same as $(\rhzrr, \reme(z_1, t, s) \reme(\ppole(z_1), t, s))$ in the $(+-)$ case.
\bigskip
\\
($\rhzrr$, $\pcoree(z_1)^{\floor{\frac{t-s}{J}}}$): \textbf{Show that}  $|\pcoree(z_1)|^{\floor{\frac{t -s }{J}}} \leq C(\beta, T) e^{- C(t - s +1) \theta^2}$. \\
This is the content of Lemma \ref{lem:mm:std}. 
\bigskip
\\
($\rhzrr, \jprod(z_1)$): \textbf{Show that} $|\jprod(z_1)| \leq C e^{-\frac{\beta}{2 \sqrt{t-s+1}} (|x_2  -y_1| + |x_1 - y_2|)}$.\\
Similar to the discussion in ($\rhzrr, \jprod(z_1)$) for the $(+-)$ case, it is sufficient to show  
$$|z_1^{x_2 - y_1} \ppole(z_1)^{x_1 - y_2} \idc_{\{|\ppole(z_1)| > r'_2\}}| \leq e^{-\frac{\beta}{2 \sqrt{t - s +1}} (|x_1- y_2| + |x_2 - y_1|)}.$$
Since for $z_1 \in \mcont(t -s, \beta)$, $|z_1|$ could be much less than $1$, we can not bound $z_1$ and $\ppole(z_1) $ separately. Instead, we write
\begin{equation}\label{eq:six4:temp3}
|z_1^{x_2  - y_1} \ppole(z_1)^{x_1 - y_2} \idc_{\{|\ppole(z_1)| > r'_2\}}| = |z_1 \ppole(z_1)|^{x_2 - y_1} |\ppole(z_1)|^{x_1 - x_2 +y_1 - y_2} \idc_{\{|\ppole(z_1)| > r'_2\}}.
\end{equation} 
Note that $x_1 - x_2 + y_1 - y_2 \leq 0$ (since $x_1 \leq y_1$ and $x_2 \leq y_2$), hence $$|\ppole(z_1)|^{x_1 - x_2 + y_1 -y_2} \idc_{\{|\ppole(z_1)| > r'_2\}} \leq  \rfunc(t-s, \beta)^{x_2 - x_1 + y_2 - y_1}.$$
We claim that
\begin{equation}\label{eq:mm:prodlb}
|z_1 \ppole(z_1)| > \rfunc(t-s, \beta), \quad z_1 \in \mcont'(t-s, \beta).
\end{equation} 
Once this is proved, by \eqref{eq:six4:temp3}
\begin{equation*}
|z_1^{x_2 - y_1} \ppole(z_1)^{x_1 - y_2} \idc_{\{|\ppole(z_1)| > r'_2\}}| \leq \rfunc(t -s, \beta)^{x_2 - y_1}  \rfunc(t -s, \beta)^{x_1 - x_2 + y_1  -y_2}  \leq e^{-\frac{\beta}{2 \sqrt{t- s +1}} (|x_1 - y_2| + |x_2 - y_1|)}.
\end{equation*}
\\
Let us justify \eqref{eq:mm:prodlb}. We decompose $\mcont'(t-s, \beta) = \Lambda(t -s) \cup \Lambda_1(t -s)$, where $\Lambda(t-s) = \mcont'(t-s, \beta) \cap \C_{\rfunc(t-s, \beta)}$ and $\Lambda_1(t-s) =  \M'(t-s, \beta) \backslash \Lambda(t-s)$.
If $z_1 \in \Lambda(t-s) \subseteq \C_{\rfunc(t-s, \beta)}$, we reparametrize by $z_1(\theta_1) = \rfunc(t-s, \beta) e^{\im \theta_1}$. It suffices to show that $$|\ppole(\rfunc(t-s, \beta) e^{\im \theta_1})| \geq 1.$$ By straightforward computation, one sees that $|\ppole(\rfunc(t-s, \beta) e^{\im \theta_1})|$ reaches its minimum at $\theta_1 = 0$. Hence we only need to prove that 
\begin{equation*}
\ppole(\rfunc(t -s, \beta)) \geq 1.
\end{equation*}
By \eqref{eq:taylorpe1}, $\ppole(1) = 1 + \frac{\rho I - \rho^2}{I} \ep + \OO(\ep^{\frac{3}{2}})$. In addition, direct computation yields $\lim_{\ep \downarrow 0} \ppole'(1) = 1$ and $|\ppole''(z)|$ uniformly bounded in a small neighborhood of $1$. Consequently, we taylor expand $\ppole(z)$ at $1$, 
\begin{equation*}
\ppole(\rfunc(t-s, \beta)) = \ppole(1) + \ppole'(1) (\rfunc(t-s, \beta) - 1) + \OO((\rfunc(t-s, \beta) -1)^2) \geq 1.
\end{equation*}
for $t -s$ large and $\ep$ small.
\bigskip
\\
If $z_1 \in \Lambda_1(t-s)$, which means that $|z_1 - \frac{1}{I+1}| = \frac{I}{I+1} + u_*$. We see that 
\begin{equation}\label{eq:mm:temp}
\lim_{\ep \downarrow 0} |z_1 \ppole(z_1)| = |z_1 \pstar(z_1)| = |(I+1) z_1 - 1| \cdot \big|\frac{z_1}{z_1 + I-1}\big| = (I + (I+1) u_*) \cdot \big|\frac{z_1}{z_1 + I-1}\big|  
\end{equation}
We claim that for $z_1 \in \Lambda_1 (t -s)$, $\big|\frac{z_1}{z_1 + I-1}\big| > \frac{1}{I}$. This could verify by inserting $z_1 = \frac{1}{I+1} + (\frac{I}{I+1} + u_*) e^{\im \theta}$ into \eqref{eq:mm:temp}. A geometric way to prove this inequality is that one has $|\frac{z}{z + I-1}| = \frac{1}{I}$ for all $z$ satisfying $|z - \frac{1}{I+1}| = \frac{I}{I+1}$. If ones increase the radius of circle $|z - \frac{1}{I+1}| = \frac{I}{I+1}\}$ (by $u_*$), the value of $\big|\frac{z}{z+I-1}\big|$ will also increase.
%, hence be larger than $\frac{1}{I}$.
Thereby, 
\begin{equation*}
\lim_{\ep \downarrow 0} |z_1 \ppole(z_1)| \geq \frac{I + (I+1) u_*}{I} > 1. 
\end{equation*}
This implies when $z_1 \in \Lambda(t-s)$, $|z_1 \ppole(z_1)| > 1$ for $t-s$ large and $\ep$ small, which completes the proof of \eqref{eq:mm:prodlb}.
\bigskip
\\
Similar to the proof of Lemma \ref{lem:pm:thetalb} in the $(+-)$ case,  we find that $\{|\ppole(z_1(\theta))| > \rfunc(t-s, 2 k_2 \beta)\} \subseteq \{|\theta| > (t -s +1)^{-\frac{1}{4}}\}$, hence
\begin{align*}
&|\rhzrr\big((x_1, x_2), (y_1, y_2), t, s\big)| \leq C(\beta, T) e^{-\frac{\beta}{2 \sqrt{t -s +1}} (|x_2 - y_1| + |x_1 - y_2|)}\int_{-\pi}^{\pi} 1_{\{|\ppole(z_1(\theta))| \geq r'_2\}} e^{- C (t -s +1) \theta^2} d\theta\\ 
\numberthis \label{eq:mm:resbound}
&\leq C(\beta, T) e^{-\frac{\beta}{2\sqrt{t - s +1}} (|x_2 - y_1| + |x_1 - y_2|)} \int_{|\theta| > (t -s +1)^{-\frac{1}{4}}} e^{-C (t - s +1) \theta^2} d\theta  \leq \frac{C(\beta, T)}{t -s +1} e^{-\frac{\beta}{2\sqrt{t -s +1}} (|x_2 - y_1| + |x_1 - y_2|)} 
\end{align*}
Combining the bounds \eqref{eq:mm:inbound} and \eqref{eq:mm:resbound} implies Theorem \ref{prop:semiestimateint} (a).
\bigskip
\\
To estimate the gradient, the procedure is similar to in $(+-)$ case, note that applying $\na_{x_i}$ or $\na_{y_i}$ to \eqref{eq:blk2} and \eqref{eq:res2} gives an additional $z_i^\pm - 1$ factor, applying $\na_{x_1, x_2}$ produces an additional factor $(z_1 - 1)(z_2 - 1)$.  By  $|z_i (\theta_i) - 1| \leq C(\frac{1}{\sqrt{t -s +1}} + |\theta_i|)$, we conclude Theorem \ref{prop:semiestimateint} (b), (c). 

\subsection{Estimate of $\rhzrin$, the $(++)$ case}\label{sec:pp}
In this section, we fix $k_2 = 1$ in \eqref{eq:temp26}.
Note that $x_1 - y_2 \geq 0$, the difficulty for this case is to choose a suitable $z_1$-contour $\contone$ so as to extract the spatial decay from $z_1^{x_2 - y_1} \ppole(z_1)^{x_1 - y_2}$ in the integrand $\rhzrr$ \eqref{eq:six4:temp4}. 
Let us write 
\begin{equation*}
|z_1^{x_2  -y_1} \ppole(z_1)^{x_1 - y_2}| = |z_1 \ppole(z_1)|^{x_1 -y_2} |z_1|^{x_2 - x_1 + y_2 - y_1}.
\end{equation*}
We control respectively $|z_1 \ppole(z_1)|$ and $|z_1|$. We deform  the $z_1$-contour to 
\begin{equation*}
\mcont''(t-s, \ep, -k_1 \beta) = \{z_1: |z_1 \ppole(z_1)| = \rfunc(t-s, -k_1 \beta)\},
\end{equation*}
where $k_1$ is a positive constant that we will specify later. Note that when $I \geq 2$, this contour can only be implicitly defined (when $I = 1$ it is a circle). The following lemma provides a few properties of the contour.
\begin{lemma}\label{lem:pp:radexist}
For $t-s$ large enough and $\ep$ small enough, given $\theta \in (-\pi , \pi]$, there exists a unique positive $\pprad(\theta)$ such that 
\begin{equation}\label{eq:six4:temp7}
|z_1 \ppole(z_1)| = \rfunc(t-s, - k_1 \beta),  \quad z_1(\theta) = \frac{1}{I+1} + \pprad(\theta) e^{\im \theta}.
\end{equation}
$r_{\ep, t-s} (\theta)$ is infinitely differentiable with $\pprad' (0) = 0$. Moreover,  one has uniformly for $ \theta \in (-\pi, \pi]$, 
\begin{align*}
\lim_{\ep \downarrow 0, t-s \to \infty} \pprad (\theta) &= \frac{I}{I+1}, \\
\lim_{\ep \downarrow 0, t-s \to \infty} \pprad^{(n)} (\theta) &= 0, \quad \forall  n \in \NN. 
\end{align*}
where $f^{(n)}(\theta)$ represents the $n$-th derivative of $f(\theta)$. 
\end{lemma}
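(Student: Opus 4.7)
The plan is to apply the implicit function theorem to the real analytic function
\[
F_{\ep,t-s}(r,\theta) \;=\; \bigl|z_1(r,\theta)\,\ppole(z_1(r,\theta))\bigr|^2 \;-\; \rfunc(t-s, -k_1\beta)^2, \qquad z_1(r,\theta)=\tfrac{1}{I+1}+r\,e^{\im\theta},
\]
and then pass to the limit $\ep\downarrow 0$, $t-s\to\infty$ in a way that is uniform in $\theta$. In the limit the equation $F=0$ becomes $|z\,\pstar(z)|^2=1$, and a direct computation (already carried out in the proof of Lemma~\ref{lem:SDM}) shows that the circle $\mcont=\{|z-\tfrac{1}{I+1}|=\tfrac{I}{I+1}\}$ is exactly the locus where $|z\,\pstar(z)|=1$. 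Thus the candidate limiting solution is $r=\tfrac{I}{I+1}$; the task is to show that for $\ep$ small and $t-s$ large there is a unique nearby root in $r$ for every $\theta$, depending smoothly on $\theta$, and that all derivatives in $\theta$ are uniformly small.

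First I would verify \emph{existence and uniqueness}. Writing $z\,\pstar(z)=\frac{z((I+1)z-1)}{z+I-1}$, one checks by direct differentiation that $\partial_r\bigl|z_1(r,\theta)\,\pstar(z_1(r,\theta))\bigr|^2$ is strictly positive (away from zero) on a neighbourhood of $r=\tfrac{I}{I+1}$, uniformly in $\theta\in(-\pi,\pi]$. Since $\ppole\to\pstar$ and $\rfunc(t-s,-k_1\beta)\to 1$ uniformly on compacts as $\ep\downarrow 0$ and $t-s\to\infty$, the same strict monotonicity and a continuity-plus-intermediate-value argument give, for $\ep$ small enough and $t-s$ large enough, exactly one root $\pprad(\theta)$ in a small neighbourhood of $\tfrac{I}{I+1}$, simultaneously for all $\theta$.

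Next, since $F_{\ep,t-s}$ is smooth (even real analytic) in $(r,\theta)$ and its $r$-derivative is non-vanishing at the solution, the implicit function theorem yields smoothness of $\pprad$ in $\theta$ and the recursive formula $\pprad^{(n)}(\theta)=\Phi_n\bigl(\theta,\pprad(\theta),\ldots,\pprad^{(n-1)}(\theta);F_{\ep,t-s}\bigr)$ where $\Phi_n$ is a universal rational expression in the partial derivatives of $F_{\ep,t-s}$ divided by powers of $\partial_r F_{\ep,t-s}$. The property $\pprad'(0)=0$ follows from the symmetry $\theta\mapsto-\theta$: the coefficients appearing in $\ppole$ and in the definition of $|z_1\,\ppole(z_1)|^2$ are real, so $z_1(r,-\theta)=\overline{z_1(r,\theta)}$, whence $F_{\ep,t-s}(r,\theta)=F_{\ep,t-s}(r,-\theta)$ and by uniqueness $\pprad(\theta)=\pprad(-\theta)$.

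Finally, to obtain the uniform limits, I pass to the limit inside the implicit formulas above. In the limit $\ep\downarrow 0$, $t-s\to\infty$ the equation reads $|z_1\,\pstar(z_1)|^2=1$, whose unique solution near $\tfrac{I}{I+1}$ is the constant $\pprad_{*}(\theta)\equiv\tfrac{I}{I+1}$; hence all derivatives $\pprad_{*}^{(n)}$ vanish. Since $F_{\ep,t-s}$ converges to its limit in the $C^\infty$ topology uniformly on $(r,\theta)$ in a neighbourhood of $\{(\tfrac{I}{I+1},\theta):\theta\in(-\pi,\pi]\}$, the uniform non-degeneracy $\partial_r F_{\ep,t-s}\neq 0$ lets me conclude $\pprad^{(n)}\to \pprad_{*}^{(n)}=0$ uniformly in $\theta$ for every $n\in\mathbb{N}$. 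The main technical obstacle will be verifying that $\partial_r F$ stays bounded away from zero uniformly for all $\theta\in(-\pi,\pi]$, not merely near $\theta=0$; I expect to handle the region of $\theta$ bounded away from $0$ by appealing to the steepest descent bound $|\pcorelim(z)|<1$ on $\mcont\setminus\{1\}$ (Lemma~\ref{lem:SDM}), which prevents any spurious additional roots.
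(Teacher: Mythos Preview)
Your overall strategy---apply the implicit function theorem to $F_{\ep,t-s}(r,\theta)=|z_1\ppole(z_1)|^2-\rfunc(t-s,-k_1\beta)^2$, identify the limiting equation $|z\pstar(z)|=1$ with solution $r=I/(I+1)$, and pass derivatives to the limit---is exactly what the paper does. The symmetry argument for $\pprad'(0)=0$ is a clean alternative to the paper's direct implicit-differentiation computation.

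The gap is in global uniqueness and in your proposed fix for the ``technical obstacle''. Your monotonicity claim only rules out other positive roots \emph{near} $r=I/(I+1)$; the lemma asserts a unique positive root. Your suggestion to invoke $|\pcorelim(z)|<1$ on $\mcont\setminus\{1\}$ (Lemma~\ref{lem:SDM}) is misguided: $\pcorelim(z)=\corelim(z)\corelim(\pstar(z))$ is built from $\corelim$, a different function entirely, and that inequality says nothing about the zero set of $|z\ppole(z)|-\rfunc$. The paper resolves both issues at once by observing that, after clearing denominators, $|z_1\ppole(z_1)|^2=\rfunc^2$ with $z_1=\tfrac{1}{I+1}+re^{\im\theta}$ is a quartic in $r$ whose coefficients converge as $\ep\downarrow 0$, $t-s\to\infty$. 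In the limit this quartic factorizes explicitly as
\[
\bigl((I+1)^2 r^2-I^2\bigr)\bigl((I+1)^2 r^2+2(I+1)r\cos\theta+I^2\bigr)=0,
\]
so the four roots are $\pm I/(I+1)$ and a complex-conjugate pair (since $I\ge 2$ forces $\cos^2\theta-I^2<0$). Hence $I/(I+1)$ is the unique positive real root and is simple for every $\theta$. Continuity of roots under perturbation of the coefficients then delivers global uniqueness of the positive root of the perturbed quartic and the uniform non-degeneracy $\partial_r F\neq 0$ you need, for all $\theta\in(-\pi,\pi]$ simultaneously. This explicit factorization is the missing ingredient.
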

\begin{proof}
Let $w = t-s$, as $w \to \infty$ and $\ep \downarrow 0$, the equation $|z_1 \ppole(z_1)|  = \rfunc(w, -\beta)$ converges to  
\begin{equation}\label{eq:six4:temp5}
|z_1 \pstar (z_1)| = 
\bigg|\frac{z_1 ((I+1) z_1 - 1)}{z_1 + (I-1)}\bigg| = 1.
\end{equation} 
(note $\ppole(z_1) \to \pstar(z)$ and $\rfunc(w, \beta) \to 1$).
Setting $z_1  = \frac{1}{I+1} + r e^{\im \theta}$ in \eqref{eq:six4:temp5} yields
\begin{equation}\label{eq:six4:temp6}
(I+1)^4 r^4 + 2(I+1)^3  r^3 \cos \theta - 2 I^2 (I+1) r \cos \theta - I^4 = 0.
\end{equation}
Factorizing the LHS of \eqref{eq:six4:temp6} yields
\begin{equation*}
\big((I+1)^2 r^2 - I^2\big)\big((I+1)^2 r^2 + I^2 + 2(I+1) r \cos \theta\big) = 0.
\end{equation*}
Thus, \eqref{eq:six4:temp6} permits four root at 
\begin{equation}\label{eq:six:root}
r = \pm\frac{I}{I+1}, \frac{-1 \pm \im \sqrt{ \cos\theta^2 - I^2}  }{I+1}.
\end{equation}
We only care about positive root, thus the contour \eqref{eq:six4:temp5}
can be parametrized by $z_1 (\theta) = \frac{1}{I+1} + \frac{I}{I+1} e^{\im \theta}$.
\bigskip
\\
Similarly, inserting $z_1 = \frac{1}{I+1} + r e^{\im \theta}$ in \eqref{eq:six4:temp7} yields
\begin{equation*}
%\label{eq:six6:temp1}
a_0 (\ep, w) r^4 + 2 a_1(\ep, w) r^3 \cos \theta + a_2(\ep, w) r^2 + a_3(\ep, w) r \cos \theta + a_4(\ep, w) = 0,
\end{equation*}
where $\{a_i(\ep, w)\}_{i=0}^4$ are constants depending on $\ep, w$ that converge to the coefficient in \eqref{eq:six4:temp6}:
\begin{equation}\label{eq:temp25}
\lim_{\ep \downarrow 0, w \to \infty} \big(a_0(\ep, w), a_1(\ep, w), a_2(\ep, w), a_3(\ep, w), a_4(\ep, w)\big) = \big((I+1)^4, 2(I+1)^3, 0, -2I^2(I+1), -I^4\big).
\end{equation}
Denote by 
\begin{align*}
P(\theta, r)  &= (I+1)^4 r^4 + 2(I+1)^3  r^3 \cos \theta - 2 I^2 (I+1) r \cos \theta - I^4 \\
P_{\ep, w}(\theta, r) &= a_0 (\ep, w) r^4 + 2 a_1(\ep, w) r^3 \cos \theta + a_2(\ep, w) r^2 + a_3(\ep, w) r \cos \theta + a_4(\ep, w).
\end{align*}
By \eqref{eq:temp25}, when $\ep$ is small and $w$ is large, $P_{\ep, w}(\theta, 0) < 0$ and $P_{\ep, w} (\theta, +\infty) = +\infty$. By continuity, for each $\theta \in (-\pi, \pi]$, $P_{\ep, w}(\theta, r) = 0$ admits a positive root. Since $P_{\ep, w}(\theta, r)$ is a perturbation of $P(\theta, r)$, as $\ep \downarrow 0$ and $w \to \infty$, the roots of $P_{\ep, w}(\theta, r)$ converge to those in \eqref{eq:six:root}, which implies the the positive root of $P_{\ep, w}(\theta)$ is unique for $\ep$ small and $t$ large. We denote this unique positive root by $r_{\ep, w}(\theta)$. It is also clear that for $\theta \in (-\pi, \pi]$
\begin{equation}\label{eq:six4:temp9}
\lim_{\ep \downarrow 0, w \to \infty} r_{\ep, w}(\theta)  = \frac{I}{I+1}\  \text{ uniformly}.
\end{equation}
Moreover, for all $\theta \in [-\pi, \pi]$, $r = \frac{I}{I+1}$ is a simple root of $P(\theta, r) = 0$. Hence, $\frac{\partial}{\partial r} P(\theta, r)\big|_{r = \frac{I}{I+1}} \neq 0$, using
implicit function theorem shows that for $\ep$ small and $w$ large, $\pprad(\theta)$ is smooth over $(-\pi, \pi]$. Furthermore, 
\begin{equation*}
r_{\ep, w}' (0) = -\frac{\frac{\pa}{\pa \theta} P_{\ep, w}(\theta, r_{\ep, w} (0))\big|_{\theta = 0}}{\frac{\pa}{\pa r} P_{\ep, w}(0, r)\big|_{r = r_{\ep, w}(0)}} = -\frac{\big(-2 a_1 (\ep, w) r_{\ep, w} (0)^3 \sin \theta + 2 I^2 (I+1) r_{\ep, w} (0) \sin \theta\big) \big|_{\theta = 0}}{\frac{\pa}{\pa r} P_{\ep, w}(0, r)\big|_{r = r_{\ep, w}(0)}} = 0.
\end{equation*}
In addition, by \eqref{eq:six4:temp9} and implicit function theorem, uniformly over $\theta \in (-\pi, \pi]$
\begin{equation*}
 \lim_{\ep \downarrow 0, w \to \infty} \ppradw^{(n)} (\theta) = \Big(\frac{I}{I+1}\Big)^{(n)} =  0,
\end{equation*}
this completes our proof.
\end{proof}
We adopt the parametrization $z_1(\theta_1) =  \frac{1}{I+1} + r_{\ep, t-s}(\theta_1) e^{\im \theta_1} \in \mcont''(t-s, \ep, -k_1 \beta)$. From the preceding lemma, as $t-s \to \infty$ and $\ep \downarrow 0$, $\mcont''(t-s, \ep, -k_1 \beta) \to \mcont$, thus the contour $\mcont''(t-s, \ep, -k_1 \beta)$ is admissible for $\ep$ small and $t-s$ large. As before, we decompose  $\rhzrin = \rhzrb + \rhzrr$, where
\begin{align}\label{eq:blk3}
\rhzrb\big((x_1, x_2), (y_1, y_2), t, s\big) &=  \oint_{\mcont''(t-s, \ep, -k_1 \beta)} \oint_{\C_{\rad_2(z_1)}} \interacte(z_1, z_2) \prod_{i=1}^2 \coreej (z_i)^{\floor{\frac{t-s}{J}}} \reme(z_i, t, s ) z_i^{x_{3-i} - y_i} \frac{dz_i}{2 \pi \im z_i},\\
\label{eq:res3}
\rhzrr\big((x_1, x_2), (y_1, y_2), t, s\big) &= \oint_{\mcont''(t-s, \ep, -k_1 \beta)} \idc_{\{|\ppole(z_1)| > r'_2\}} \jprod(z_1) \pcoree(z_1)^{\floor{\frac{t-s}{J}}} \reme(z_1, t, s ) \reme(\ppole(z_1), t, s ) \frac{dz_1}{2 \pi \im z_1 \ppole(z_1) }.
\end{align}
\begin{lemma}\label{lem:pp:radprop}
There exists $K > 0$ (which depends on $k_1$) such that for $t-s \leq \ep^{-2} T$ large enough and $\ep > 0$ small enough, 
we have 
\begin{align*}
z_1 (0) &\geq 1 - \frac{K \beta}{\sqrt{t-s+1}},\\ 
%\numberthis \label{eq:six5:temp3}
|z_1 (\theta)| &\leq 1 - \frac{k_1\beta}{5\sqrt{t-s+1}}.
\end{align*}
\end{lemma}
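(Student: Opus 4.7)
The plan is to extract both estimates from the defining relation $|z_1 \ppole(z_1)| = \rfunc(t-s, -k_1\beta) = e^{-k_1\beta/\sqrt{t-s+1}}$ of the contour $\mcont''(t-s, \ep, -k_1\beta)$, combined with the local Taylor information for $\ppole$ near $z=1$ from \eqref{eq:taylorpe1}. Since $z_1(0) = \frac{1}{I+1} + \pprad(0)$ is real positive and $\ppole$ has real coefficients, specializing to $\theta=0$ yields the single real equation $z_1(0)\,\ppole(z_1(0)) = e^{-k_1\beta/\sqrt{t-s+1}}$. Writing $u := 1 - z_1(0)$ and Taylor expanding $h(z) := z\ppole(z)$ around $z=1$, using $\ppole(1) = 1 + \rho(I-\rho)\ep/I + O(\ep^{3/2})$ from \eqref{eq:taylorpe1} together with $\ppole(1)+\ppole'(1) \to 2$ from Lemma \ref{lem:wsc}, I would solve for
$$
u \;=\; \frac{(\ppole(1)-1) + k_1\beta/\sqrt{t-s+1}}{\ppole(1)+\ppole'(1)} + O(u^2) + O\Big(\tfrac{1}{t-s+1}\Big).
$$
Under the global constraint $\ep^2(t-s)\leq T$ every error term is $O(1/\sqrt{t-s+1})$, giving $u \leq K\beta/\sqrt{t-s+1}$ with $K$ depending on $k_1$ and $T$, which is the first inequality.

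For the second inequality, the very same Taylor expansion handles $\theta=0$: because $\ppole(1)-1 = \rho(I-\rho)\ep/I \geq 0$, the displayed equation gives $u \geq k_1\beta/(2.1\sqrt{t-s+1}) - O(1/(t-s+1)) \geq k_1\beta/(5\sqrt{t-s+1})$ once $t-s$ is sufficiently large. For general $\theta$, I would rearrange the defining relation to
$$
|z_1(\theta)| \;=\; \frac{e^{-k_1\beta/\sqrt{t-s+1}}}{|\ppole(z_1(\theta))|}.
$$
Lemma \ref{lem:pp:radexist} yields $z_1(\theta) \to \frac{1}{I+1}+\frac{I}{I+1}e^{\im\theta} \in \mcont$ uniformly in $\theta$, and $\ppole \to \pstar$ uniformly on this family; combined with the $(--)$-case identity $|\pstar(\frac{1}{I+1}+\frac{I}{I+1}e^{\im\theta})|^2 = (I+1)^2/(I^2+1+2I\cos\theta)\geq 1$, this shows $|\ppole(z_1(\theta))| \geq 1 - C\ep$. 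Therefore $|z_1(\theta)| \leq e^{-k_1\beta/\sqrt{t-s+1}}(1+C\ep)$, and the factor $1/5$ in the target bound provides room to swallow the $O(\ep)$ perturbation once $\ep$ is small enough.

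The main obstacle is the competition of scales: under $\ep^2(t-s) \leq T$ the parameter $\ep$ can itself be of order $1/\sqrt{t-s+1}$, so the $O(\ep)$ corrections are a priori comparable to the main $O(1/\sqrt{t-s+1})$ decay that we are trying to extract. The argument survives because both $O(\ep)$ inputs carry favorable signs—$\ppole(1) \geq 1$ in the Taylor expansion of $z\ppole(z)$ near $z=1$, and $|\pstar(z)| \geq 1$ on the limiting contour $\mcont$—and because the cushion $1/5$ in the right-hand side of the target is generous enough to absorb the residual perturbations once $k_1$ is fixed and $t-s$ is taken sufficiently large.
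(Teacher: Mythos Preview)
Your treatment of the first inequality and of the $\theta=0$ case of the second inequality is correct and essentially matches the paper: both proceed by Taylor expanding $z\ppole(z)$ at $z=1$, using $\ppole(1)=1+\rho(I-\rho)\ep/I+O(\ep^{3/2})$ and $(z\ppole(z))'|_{z=1}\to 2$, and then reading off two-sided bounds on $1-z_1(0)$.

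The gap is in your extension of the second inequality to general $\theta$. Your bound $|\ppole(z_1(\theta))|\geq 1-C\ep$ is not justified and, in fact, fails at the required precision. The $O(\ep)$ estimate in \eqref{eq:taylorpe1} is special to $z=1$: a direct expansion of $\ppole$ shows $\ppole(z)=\pstar(z)+O(\sqrt{\ep})$ for generic $z$, and the convergence $z_1(\theta)\to\mcont$ supplied by Lemma~\ref{lem:pp:radexist} comes with no quantitative rate. Under the constraint $t-s\leq \ep^{-2}T$ the scale $\sqrt{\ep}$ can be as large as $T^{1/4}(t-s+1)^{-1/4}$, which dominates the target $(t-s+1)^{-1/2}$; so an $O(\sqrt{\ep})$ perturbation swamps the factor $k_1\beta/(5\sqrt{t-s+1})$ you are trying to retain, regardless of the cushion $1/5$. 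Taking ``$\ep$ small enough'' does not help, because the worst case $t-s\sim\ep^{-2}T$ is still in range.

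The paper sidesteps this scale competition by a monotonicity argument that never leaves the exact $\ep$-dependent contour. Reparametrizing by $\widetilde z_1(\theta)=\widetilde r(\theta)e^{\im\theta}$, it observes that $\ppole$ is a M\"obius map with real coefficients, so $|F_\theta(r)|^2:=|r\,\ppole(re^{\im\theta})|^2$ is a ratio of the form $(c_1^2+c_2^2-2c_1c_2\cos\theta)/(d_1^2+d_2^2+2d_1d_2\cos\theta)$ with positive $c_i,d_i$ for $r$ near $1$, hence is minimized at $\theta=0$. Thus $F_\theta(\widetilde r(0))\geq F_0(\widetilde r(0))=e^{-k_1\beta/\sqrt{t-s+1}}$, and together with $F_\theta(0)=0$ and the uniqueness of the positive root (Lemma~\ref{lem:pp:radexist}), the intermediate value theorem gives $\widetilde r(\theta)\leq \widetilde r(0)$ exactly. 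The quantitative bound then follows from the $\theta=0$ case you already handled.
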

\begin{proof}
Consider an alternate parametrization $\widetilde{z}_1(\theta) = \ppradt(\theta) e^{\im \theta} \in \mcont''(t-s, \ep, -k_1 \beta)$, where the existence and uniqueness of $\ppradt(\theta)$ are confirmed by Lemma \ref{lem:pp:radexist}. It suffices  to show for $t-s \leq \ep^{-2} T$ large enough and $\ep > 0$ small enough, 
\begin{equation}\label{eq:six4:temp8}
\ppradt (0) \geq 1 - \frac{K \beta}{\sqrt{t-s+1}}; \qquad |\ppradt (\theta)| \leq 1 - \frac{k_1 \beta}{5\sqrt{t-s+1}}, \quad  \forall \theta \in (-\pi, \pi].
\end{equation}
We prove \eqref{eq:six4:temp8} in two steps. 
\begin{itemize}
\item First, $\frac{k_1\beta}{5\sqrt{t-s+1}} \leq 1 - \ppradt (0)  \leq \frac{K \beta}{ \sqrt{t-s+1}}$. 
\item Second, $|\ppradt(\theta)| \leq \ppradt (0)$ for $\theta \in (-\pi, \pi]$.
\end{itemize}
We verify the first bullet point. Note that uniformly in an neighborhood of $1$, 
\begin{equation*}
\lim_{\ep \downarrow 0} \ppole(z) = \pstar(z), \qquad \lim_{\ep \downarrow 0} \ppole'(z) = \pstar'(z).
\end{equation*}
%\begin{equation*}
%\lim_{\ep \downarrow 0} \ppole(z) = \pstar(z) \text{ uniformly in } O, \qquad \lim_{\ep \downarrow 0} \ppole'(z) = \pstar'(z) \text{ uniformly in } O.
%\end{equation*} 
Referring to \eqref{eq:pstar},  $\frac{d}{dz} z\pstar(z)\big|_{z=1}  = 2$. Thus, there exists $\delta > 0$ such that for $\ep$ small enough and $z \in (1-\delta, 1+ \delta)$, 
\begin{equation}\label{eq:six5:temp2}
|(z \ppole(z))' -2| < \frac{1}{2}.
\end{equation} 
We taylor expand  $z \ppole(z)$ around $z = 1$,  
\begin{equation}\label{eq:six5:temp1}
\rfunc(t-s, -k_1 \beta) =\ppradt(0) \ppole(\ppradt(0)) = \ppole(1) + \frac{d}{dz}(z \ppole(z))\bigg|_{z=x} \cdot 
(\ppradt(0)-1), \quad  x \in (1-\delta, 1+\delta).
\end{equation}
Referring to \eqref{eq:taylorpe1}, we see $\ppole(1) \geq 1$ for $\ep$ small enough, which implies  
\begin{equation*}
1 \geq \rfunc(t-s, -k_1 \beta) \geq 1 + \frac{d}{dz}(z \ppole(z))\big|_{z=x} \cdot 
(\ppradt(0)-1).
\end{equation*}
Hence, $\ppradt (0) \leq 1$. We have by \eqref{eq:six5:temp2} and \eqref{eq:six5:temp1}
\begin{align*}
\rfunc(t-s, -k_1 \beta) &\geq \ppole(1) + \frac{5}{2} (\ppradt(0) - 1),\\
\rfunc(t-s, -k_1 \beta) &\leq \ppole(1) + \frac{3}{2} (\ppradt(0) - 1).
\end{align*}
The first inequality yields 
\begin{equation*}
1 - \ppradt(0) \geq \frac{2}{5} \big(\ppole(1) - \rfunc(t-s, -k_1 \beta)\big) \geq \frac{2}{5} \big(1 - \rfunc(t-s, -k_1 \beta)\big) \geq \frac{k_1 \beta}{5\sqrt{t-s+1}}.
\end{equation*} 
which gives the lower bound. The second inequality indicates that (by \eqref{eq:taylorpe1})
\begin{equation*}
1 - \ppradt(0) \leq \frac{2}{3} \big(\ppole(1) - \rfunc(t-s, -k_1 \beta)\big) \leq \frac{2}{3}\big(1 - \rfunc(t-s, -k_1 \beta)\big) +  \frac{\rho I - \rho^2}{I} \ep.
\end{equation*}
Owing to $\ep \leq \sqrt{\frac{T}{t-s}}$, we see that $1 - \ppradt(0) \leq \frac{K \beta}{\sqrt{t-s+1}}$ for constant  $K$ large enough, which concludes the first bullet point.
\bigskip
\\
%
%Consequently, for $\ep$ small and $t$ large, one has 
%\begin{equation*}
%\rfunc(t, -4\beta) = \rad_{\ep, t} (0) \ppole(\rad_{\ep, t} (0)) \geq 1 +  (2-\delta) (z-1)
%\end{equation*}
%It is straightforward that for $t$ large, $\rfunc(t, -4\beta) = \exp(-\frac{4\beta}{\sqrt{t+1}}) \leq 1 -  \frac{3 \beta}{\sqrt{t+1}}$, together with (?) implies that 
%$\rad_{\ep, t}(0) \leq 1 - \frac{\beta}{\sqrt{t+1}}$. We conclude the first bullet point.
We move on proving the second bullet point. We set $F_{\theta} (r) = |r \ppole(re^{\im \theta})|$. When  $\ep$ small and $t-s$ large, we readily compute (note that $\ppradt(0)$ is nearly $\frac{I}{I+1}$ and $\ppole$ approximates $\pstar$)
\begin{equation*}
|F_{\theta }(\ppradt (0))|^2 =  \ppradt (0)^2 |\ppole(\ppradt (0) e^{\im \theta})|^2 = \frac{c_1^2 + c_2^2 - 2c_1 c_2 \cos \theta}{d_1^2 + d_2^2 + 2d_1 d_2 \cos \theta}, \qquad c_1, c_2, d_1, d_2 > 0,
\end{equation*}
which implies that $|F_{\theta }(\pprad (0))|$ reaches its minimum at $\theta = 0$. In other words, $F_\theta (\pprad(0)) \geq F_0 (\pprad(0)) = \rfunc(t-s, - k_1 \beta)$. In addition, $F_\theta (0) = 0 $. By intermediate value theorem, for each fixed $\theta \in (-\pi, \pi]$, the equation $F_\theta (r) = \rfunc(t-s, -k_1 \beta)$ admits a root $r \in (0, \ppradt(0)]$. By uniqueness, this root equals $\ppradt(\theta)$, thereby $\ppradt(\theta) \leq \ppradt(0)$ for all $\theta \in (-\pi, \pi]$. 
%\begin{equation*}
%z \ppole(z)  \geq \ppole(1) + (2-\delta) (z-1) \geq 1 + (2-\delta)(z-1)
%\end{equation*}
%Inserting $z = \rad_{\ep, t} (0)$ yields $$\rfunc(t, -\beta) = \rad_{\ep, t} (0) \ppole(\rad_{\ep, t} (0) \geq 1 +  (2-\delta) (z-1)$$ 
%By the fact, $1 - \rfunc(t, -\beta) \geq \frac{\beta}{2 \sqrt{t+1}}$ for large $t$.
\end{proof}

\begin{lemma}\label{lem:pp:thetalb}
For $k_1$ large enough, $t-s \leq \ep^{-2} T$ large enough and $\ep > 0$ small enough, the condition $|\ppole(z(\theta))| > r'_2$ with $z(\theta) = \frac{1}{I+1} + \pprad(\theta)e^{\im \theta} \in \mcont''(t-s, \ep, \beta)$ implies $|\theta| \geq (t-s+1)^{-\frac{1}{4}}$.
\end{lemma}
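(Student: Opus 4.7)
The plan is to prove the contrapositive: assuming $|\theta|<(t-s+1)^{-1/4}$, I want to show $|\ppole(z(\theta))|\le r'_2$. Because $z(\theta)$ lies on $\mcont''(t-s,\ep,-k_1\beta)$, it satisfies $|z(\theta)\ppole(z(\theta))|=\rfunc(t-s,-k_1\beta)$, so the claim rewrites as the lower bound $|z(\theta)|\ge \rfunc(t-s,-k_1\beta)/r'_2$. Since $k_2=1$ in this subsection, $r'_2=\rfunc(t-s,2\beta)$, and expanding $\rfunc(t-s,c)=1+c/\sqrt{t-s+1}+O(1/(t-s+1))$ shows the target threshold is approximately $1-(k_1+2)\beta/\sqrt{t-s+1}$.

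Next I would analyze
\[
|z(\theta)|^2=\tfrac{1}{(I+1)^2}+\pprad(\theta)^2+\tfrac{2\pprad(\theta)\cos\theta}{I+1}
\]
via second-order Taylor expansion around $\theta=0$. Lemma \ref{lem:pp:radexist} supplies both $\pprad'(0)=0$, which forces $\tfrac{d}{d\theta}|z(\theta)|^2\big|_{\theta=0}=0$, and the uniform boundedness (indeed smallness) of $\pprad^{(n)}(\theta)$ for $n\ge 1$. A direct computation then gives $\tfrac{d^2}{d\theta^2}|z(\theta)|^2\big|_{\theta=0}\to -\tfrac{2I}{(I+1)^2}$ as $\ep\downarrow 0$ and $t-s\to\infty$, so $|z(\theta)|^2\ge |z(0)|^2-C_I\theta^2$ for a universal constant $C_I$. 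Restricting to $|\theta|<(t-s+1)^{-1/4}$ converts the Taylor error into $C_I/\sqrt{t-s+1}$.

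To finish, I combine this with the lower bound $z(0)\ge 1-K\beta/\sqrt{t-s+1}$ from Lemma \ref{lem:pp:radprop}: using $(1-x)^2\ge 1-2x$ yields $|z(\theta)|^2\ge 1-(2K\beta+C_I)/\sqrt{t-s+1}$. The crucial point is that the proof of Lemma \ref{lem:pp:radprop} actually produces $K\beta\le \tfrac{2k_1\beta}{3}+C_T$ with $C_T$ independent of $k_1$ (coming from the $\tfrac{\rho I-\rho^2}{I}\ep$ correction together with $\ep\sqrt{t-s}\le\sqrt{T}$). Therefore $2K\beta+C_I\le \tfrac{4k_1\beta}{3}+C'_{T,\beta}$, and for $k_1$ sufficiently large (depending on $\beta,T$) this is dominated by $2(k_1+2)\beta$. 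The squared inequality $|z(\theta)|^2\ge (1-(k_1+2)\beta/\sqrt{t-s+1})^2$ follows after absorbing $O(1/(t-s+1))$ corrections from the expansion of $\rfunc$ into the threshold $t_0$, giving the desired lower bound.

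The main obstacle is tracking the dependence of the constant $K$ from Lemma \ref{lem:pp:radprop} on $k_1$ precisely enough to see that the factor $\tfrac{2}{3}$ there is what lets $k_1$ (which appears with coefficient $1$ in the target threshold $(k_1+2)\beta$) eventually beat $2K\beta\sim \tfrac{4k_1\beta}{3}$ once the extra $+2\beta$ slack coming from $r'_2$ is exploited; this is why we need $k_1$ \emph{large} rather than some particular moderate value. Beyond this, the arguments are routine manipulation of first-order expansions together with the uniform smallness bounds on $\pprad^{(n)}$ from Lemma \ref{lem:pp:radexist}.
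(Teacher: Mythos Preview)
Your approach is correct, but the paper argues along a slightly different line. Both proofs Taylor-expand near $\theta=0$ (using $\pprad'(0)=0$ from Lemma~\ref{lem:pp:radexist}) and invoke Lemma~\ref{lem:pp:radprop}; the difference is which quantity is expanded and which half of Lemma~\ref{lem:pp:radprop} is used. The paper bounds $|\ppole(z(\theta))|$ directly: it Taylor-expands $\ppole$ at $z=1$ and feeds in the \emph{upper} bound $z(0)-1\le-\tfrac{k_1\beta}{5\sqrt{t-s+1}}$ (the second conclusion of Lemma~\ref{lem:pp:radprop}) to force $\ppole(z(0))\le 1+\tfrac{C-k_1\beta/10}{\sqrt{t-s+1}}$, after which a bounded second derivative in $\theta$ and the choice of $k_1$ large keep $|\ppole(z(\theta))|$ below $r'_2$ for $|\theta|<(t-s+1)^{-1/4}$. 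You instead convert the target into a lower bound on $|z(\theta)|$ via the defining constraint $|z\,\ppole(z)|=\rfunc(t-s,-k_1\beta)$, then use the \emph{lower} bound $z(0)\ge 1-\tfrac{K\beta}{\sqrt{t-s+1}}$; this forces you to re-enter the proof of Lemma~\ref{lem:pp:radprop} to extract $K\beta\le\tfrac{2k_1\beta}{3}+C_T$ and then balance the slope-$\tfrac{4}{3}k_1$ obstruction against the slope-$2k_1$ target. The paper's route is a bit shorter because the $k_1$-dependence enters directly through the explicit coefficient $k_1/5$, so no constant needs to be tracked back through a previous proof; your route works but relies on a relation that Lemma~\ref{lem:pp:radprop} does not state explicitly. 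One side remark: the paper's own proof takes $r'_2=\rfunc(t-s,-2\beta)$ (there is a sign slip between \eqref{eq:temp26} and the case-by-case proofs), which would replace your $(k_1+2)$ by $(k_1-2)$ in the threshold; your argument is unaffected since only $k_1$ large matters.
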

\begin{proof}
The proof is similar to Lemma \ref{lem:pm:thetalb}. Since $k_2 = 1$, we have $r'_2 = \rfunc(t-s, -2\beta).$ Hence, $r'_2 \geq 1 - \frac{4 \beta}{ \sqrt{t-s+1}}$. It suffices to show that
\begin{equation*}
|\ppole(z(\theta))| \geq 1 - \frac{4 \beta}{ \sqrt{t-s+1}} \Rightarrow |\theta| \geq (t-s+1)^{-\frac{1}{4}}.
\end{equation*} 
Referring to \eqref{eq:taylorpe}, we see that 
\begin{equation*}
\ppole(z(0)) = \ppole(1) + \ppole'(1) (z(0) - 1) + \OO\big(z(0)- 1\big)^2. 
\end{equation*}
By \eqref{eq:taylorpe1}, we see $\ppole(1) \leq 1 + \frac{C}{\sqrt{t-s+1}}$ for some positive constant $C$, together with the fact %(by \eqref{eq:six5:temp3})
\begin{equation*}
z(0) - 1 \leq \frac{-k_1 \beta}{5 \sqrt{t-s+1}}, \qquad \lim_{\ep \downarrow 0} \ppole'(1) = 1,
\end{equation*} 
we obtain 
\begin{equation*}
\ppole(z(0)) \leq 1 + \frac{C}{\sqrt{t-s+1}} - \frac{k_1 \beta}{10 \sqrt{t-s+1}}.
\end{equation*}
In addition, by Lemma \ref{lem:pp:radexist}, $\pprad'(0) = 0$. Using this, it is straightforward to compute $\frac{d}{d\theta} |\ppole(z(\theta))|\big|_{\theta = 0} = 0$ and there exists   $\zeta, C' > 0$ such that $\big|\frac{d^2}{d\theta^2} |\ppole(z(\theta))| \big| \leq C'$ for $|\theta| < \zeta$. Consequently, one has by taylor expansion 
\begin{equation*}
|\ppole(z(\theta))| \leq \ppole(z(0)) + C' \theta^2 \leq 1 + \frac{10 C - k_1 \beta}{10 \sqrt{t-s+1}} + C' \theta^2.
\end{equation*}
Thereby, we can pick $k_1$ large enough s.t. $|\ppole(z(\theta))| \geq 1 - \frac{4 \beta}{\sqrt{t-s+1}}$ implies $|\theta| \geq (t-s+1)^{-\frac{1}{4}}$.
\end{proof}
%Having prepared Lemma (?), we prove the steepest descent result for $\coree(z)$ and $\coree(z)$ over $\mcont''(t, \ep, -\beta)$.
\begin{lemma}\label{lem:pp:std}
For $t-s$ large and $\ep$ small, there  exists positive constants $C(\beta, T), C$ such that  
\begin{equation*}
|\coreej(z(\theta))|^{t-s} \leq C(\beta, T) e^{-C(t-s+1) \theta^2}, \quad |\pcoree(z(\theta))|^{t-s} \leq C(\beta, T) e^{-C(t-s+1) \theta^2} \quad \text{with } z(\theta) = \frac{1}{I+1} + \pprad(\theta) e^{\im \theta}.
\end{equation*}
\end{lemma}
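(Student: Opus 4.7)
The plan is to follow the three-regime strategy already used in Lemma \ref{lem:free:std}, Lemma \ref{lem:pm:std}, and Lemma \ref{lem:mm:std}: establish the bound at $\theta=0$, Taylor expand in a small neighborhood of $\theta=0$, and use a compactness/uniform-convergence argument for $|\theta|$ large. As in those lemmas, it suffices to prove
\begin{equation*}
\Re \log \coreej(z(\theta)) \leq \frac{C(\beta,T)}{t-s+1} - C\theta^{2}, \qquad \Re \log \pcoree(z(\theta)) \leq \frac{C(\beta,T)}{t-s+1} - C\theta^{2},
\end{equation*}
which reduces to the three bullet points:
\begin{itemize}
\item ($\theta=0$): $\Re \log \coreej(z(0)), \Re \log \pcoree(z(0)) \leq C(\beta,T)/(t-s+1)$.
\item ($\theta$ small): there exists $\zeta > 0$ such that the Taylor expansion yields the desired quadratic bound for $|\theta| \leq \zeta$.
\item ($\theta$ large): there exists $\delta > 0$ such that $|\coreej(z(\theta))|, |\pcoree(z(\theta))| < 1-\delta$ for $|\theta| > \zeta$.
\end{itemize}

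For ($\theta = 0$), I will exploit Lemma \ref{lem:pp:radprop} which guarantees $z(0) = \frac{1}{I+1} + \pprad(0) \in [1 - K\beta/\sqrt{t-s+1},\, 1]$. Thus $z(0)$ lies in a shrinking neighborhood of $1$, and I can Taylor expand exactly as in Lemma \ref{lem:pm:std}, using the identities $\coreej(1)=1$, $\coreej'(1)=0$, $|\coreej''(z)|\leq C$ locally, and the analogous expansions \eqref{eq:six3:temp4} for $\pcoree$. Both give a bound of order $|z(0)-1|^2 \leq K^2\beta^2/(t-s+1)$, which is the desired $C(\beta,T)/(t-s+1)$ estimate.

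For ($\theta$ small), the key input is the second identity in Lemma \ref{lem:pp:radexist}: $\pprad'(0)=0$. Combined with the explicit form $z(\theta) = \frac{1}{I+1} + \pprad(\theta)e^{\im\theta}$, differentiating in $\theta$ at $\theta=0$ gives $z'(0) = \im \pprad(0)$, so $z'(0) \in \im \RR$, while $|z''(0)|$ is uniformly bounded (using again the uniform convergence of $\pprad^{(n)}$ from Lemma \ref{lem:pp:radexist}). Then the chain rule gives
$\pa_\theta \log \coreej(z(\theta))|_{\theta=0} \in \im \RR$ (since $\coreej'(1)$ is $O(1)$ and $z'(0)$ is purely imaginary, modulo the small shift of $z(0)$ away from $1$, whose contribution is absorbed into the $C(\beta,T)/(t-s+1)$ term), and
$\lim_{\ep\downarrow 0,\, t-s\to\infty} \pa_\theta^2 \log \coreej(z(\theta))|_{\theta=0}$ equals a strictly negative real constant coming from $\coreej''(1) \to JV_*$; the analogous computation for $\pcoree$ uses \eqref{eq:six3:temp4}. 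Third derivatives are uniformly bounded, so Taylor expansion to second order yields the claimed $-C\theta^2$ decay for $|\theta| \leq \zeta$.

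For ($\theta$ large) — and this is the step I expect to require the most care — I will use that $\mcont''(t-s,\ep,-k_1\beta) \to \mcont$ uniformly as $t-s\to\infty$ and $\ep\downarrow 0$, which follows from $\pprad(\theta)\to I/(I+1)$ uniformly in $\theta$ (Lemma \ref{lem:pp:radexist}). Combined with the uniform convergence $|\coreej|\to|\corelim|$ and $|\pcoree|\to|\pcorelim|$ on a neighborhood of the unit disk boundary, this gives
\begin{equation*}
\lim_{\ep\downarrow 0,\,t-s\to\infty} |\coreej(z(\theta))| = \big|\corelim\big(\tfrac{1}{I+1} + \tfrac{I}{I+1}e^{\im\theta}\big)\big|,
\end{equation*}
uniformly on $(-\pi,\pi]$, and analogously for $\pcoree$. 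The steepest-descent property \eqref{eq:SDM} from Lemma \ref{lem:SDM} then supplies a uniform gap $\sup_{|\theta|\geq \zeta} \max\bigl(|\corelim|,|\pcorelim|\bigr) < 1$, giving $\delta > 0$ as required. The main obstacle here is ensuring the uniformity of all these convergences on the implicitly-defined contour $\mcont''(t-s,\ep,-k_1\beta)$, since unlike the circles in Lemmas \ref{lem:free:std}--\ref{lem:mm:std}, the parametrization depends on $\ep$ and $t-s$ in a nontrivial way; this uniformity is however exactly what Lemma \ref{lem:pp:radexist} provides through the uniform convergence of $\pprad$ and all its derivatives.
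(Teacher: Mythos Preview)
Your proposal is correct and follows essentially the same three-regime approach as the paper: the $(\theta=0)$ case via Lemma \ref{lem:pp:radprop} and Taylor expansion at $1$, the $(\theta$ small$)$ case using $\pprad'(0)=0$ and the uniform convergence of the higher derivatives of $\pprad$ from Lemma \ref{lem:pp:radexist} to compute $\pa_\theta^2(\log\coreej)(z(0))\to -\tfrac{I^2 JV_*}{(I+1)^2}$ and $\pa_\theta^2(\log\pcoree)(z(0))\to -\tfrac{2I^2 JV_*}{(I+1)^2}$, and the $(\theta$ large$)$ case via uniform convergence $\mcont''(t-s,\ep,-k_1\beta)\to\mcont$ combined with \eqref{eq:SDM}. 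One minor sharpening: in your $(\theta$ small$)$ step, the reason $\pa_\theta\log\coreej(z(\theta))|_{\theta=0}\in\im\RR$ is simply that $z(0)$ is real and $\coreej$ has real coefficients, so $(\log\coreej)'(z(0))\in\RR$, whence its product with $z'(0)\in\im\RR$ is purely imaginary---no perturbative argument from $z=1$ is needed.
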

\begin{proof}
Similar to Lemma \ref{lem:pm:std}, it suffices to show that  there exists $C(\beta, T), C> 0$ s.t.
\begin{equation*}
\Re \log \coreej (z(\theta)) \leq \frac{C(\beta, T)}{t-s+1} - C \theta^2; \qquad \Re \log \pcoree (z(\theta)) \leq \frac{C(\beta, T)}{t-s+1} - C \theta^2.
\end{equation*}
We split out proof for $(\theta = 0)$, for ($\theta$ small) and for ($\theta$ large).
\begin{itemize}
\item $(\theta = 0): \Re \coreej (z(0)), \Re \pcoree(z(0)) \leq \frac{C(\beta, T)}{t-s+1}$.
\item ($\theta$ small): There exists $\zeta > 0$ and constants $C(\beta, T)$ and  $C > 0$ such that \eqref{eq:pm:stdequiv} holds for $|\theta| \leq \zeta$.
\item ($\theta$ large): There exists $\delta > 0$ such that $\big|\coreej (z(\theta))\big|, \big|\pcoree (z(\theta))\big| < 1 -\delta $ for $|\theta| > \zeta$.
\end{itemize}
Owing to Lemma \ref{lem:pp:radprop}, $\frac{K}{\sqrt{t-s+1}} \leq 1 - z(0) \leq \frac{k_1}{5\sqrt{t-s+1}}$, hence the argument for $(\theta = 0)$ is similar to  Lemma \ref{lem:mm:std}. 
\bigskip
\\
For ($\theta$ small), using Lemma \ref{lem:pp:radexist}, one has
\begin{equation*}
\pprad'(0) = 0, \qquad \lim_{\ep \downarrow 0, t-s \to \infty}  \pprad''(\theta) = 0, \qquad \lim_{\ep \downarrow 0, t-s \to \infty} \pprad'''(\theta) = 0.
\end{equation*}
Using this, after a tedious but straightforward calculation (recall $z(\theta) = \frac{1}{I+1} + \frac{I}{I+1} \pprad(\theta)$),
\begin{align*}
\pa_\theta (\log \coreej (z(\theta))) \big|_{\theta = 0} &\in \im \RR, \qquad \qquad \qquad \qquad \pa_\theta (\log \pcoree (z(\theta))) \big|_{\theta = 0} \in \im \RR\\ 
\lim_{\epsilon \downarrow 0, t-s \to \infty} \pa_\theta^2 (\log \coreej (z(\theta))) \big|_{\theta = 0} &= -\frac{I^2 J V_*}{(I+1)^2}, \quad \lim_{\epsilon \downarrow 0, t-s \to \infty} \pa_\theta^2 (\log \pcoree (z(\theta))) \big|_{\theta = 0} = -\frac{2I^2 J V_*}{(I+1)^2}\\
\big|\pa_\theta^3 (\log \coreej (z(\theta)))\big| &\leq C, \qquad \qquad \qquad \qquad \quad \  \big|\pa_\theta^3 (\log \pcoree (z(\theta)))\big| \leq C.
\end{align*}
The last line holds for all $|\theta| < \zeta$ where $\zeta > 0$ is a constant.
Hereafter, the argument is same as in Lemma \ref{lem:pm:std}, we do not repeat it here.
\bigskip
\\
For ($\theta$ large), since
$$\lim_{\ep \downarrow 0, t-s \to \infty} \pprad(\theta) = \frac{I}{I+1}, \qquad \text{uniformly for } \theta \in (-\pi, \pi],$$
we have
\begin{align*}
\lim_{\ep \downarrow 0, t-s \to \infty} \coreej(z(\theta)) &= \corelim(\frac{1}{I+1} + \frac{I}{I+1} e^{\im \theta}), \quad \text{ uniformly over } \theta \in (-\pi, \pi],\\
\lim_{\ep \downarrow 0, t-s \to \infty} \pcoree(z(\theta)) &= \pcorelim(\frac{1}{I+1} + \frac{I}{I+1} e^{\im \theta}), \quad \text{ uniformly over } \theta \in (-\pi, \pi].
\end{align*}
By the steepest descent condition \eqref{eq:SDM}, we conclude ($\theta$ large).
\end{proof}
Now we are ready to bound $\rhzrb$ and $\rhzrr$. We begin with $\rhzrb$ given by  \eqref{eq:blk3}. The proof consists of bounding each terms involved in the integrand \eqref{eq:blk3}. 
We parametrize $z_1 (\theta_1) = \pprads(\theta_1) e^{\im \theta_1}$, $z_2 (\theta_2) = \rad_2 (z_1) e^{\im \theta_2}$.
\bigskip
\\
($\rhzrb, z_1^{x_2 - y_1} z_2^{x_1 - y_2}$): \textbf{Show that} $|z_1^{x_2 - y_1} z_2^{x_1 - y_2}| \leq e^{-\frac{\beta}{\sqrt{t-s+1}} (|x_1 - y_2| + |x_2 - y_1|)}$.
\\
By Lemma \ref{lem:pp:radprop}, we see that $|z_1| \leq e^{-\frac{\beta}{\sqrt{t-s+1}}}$, since $\rad_2 (z_1)$ equals $\rfunc(t-s, -\beta)$ or $\rfunc(t-s, -3\beta)$, we find that $|z_2| \leq e^{-\frac{\beta}{\sqrt{t-s+1}}}$, which implies $|z_1^{x_2 - y_1} z_2^{x_1 - y_2}| \leq e^{-\frac{\beta}{\sqrt{t-s+1}} (|x_2 - y_1| + |x_1 - y_2|)}$.
\bigskip
\\
($\rhzrb, \interacte(z_1, z_2)$): \textbf{Show that} $\big|\interacte(z_1, z_2)\big| \leq C + C \sqrt{t - s +1} (|\theta_1| + |\theta_2|)$.\\
By the argument in ($\rhzrb, \interacte(z_1, z_2)$) in ($+-$) case. It suffices to show that $|z_2  - z_1| \leq C(\frac{1}{\sqrt{t - s +1}} + |\theta_1| + |\theta_2|)$. Note that
\begin{equation}\label{eq:temp27}
|z_2(\theta_2) - z_1(\theta_1)| \leq  |z_1 (\theta_1) - 1| + |z_2 (\theta_2) - 1|   \leq |\pprads(\theta_1)e^{\im \theta_1} - 1| + |\rad(z_1) e^{\im \theta_2} - 1|.
\end{equation} 
By Lemma \ref{lem:pp:radexist} and Lemma \ref{lem:pp:radprop}, we know that  $|\pprads(0) - 1| \leq \frac{C}{\sqrt{t-s+1}}$ and   $\lim_{\ep \downarrow 0, t-s \to \infty}\pprads'(\theta) = 0$ uniformly for $\theta \in (-\pi, \pi]$, we see that 
\begin{equation}\label{eq:temp28}
|\pprads(\theta_1) e^{\im \theta_1} - 1|  \leq |\pprads(\theta_1) - \pprads(0)| + |\pprads(0) -1| + |e^{-\im \theta_1} - 1| \leq C(\frac{1}{\sqrt{t-s+1}} + |\theta_1|)
\end{equation} 
Since $\rad(z_1) = \rfunc(t-s, \beta)$ or $\rad(z_1) = \rfunc(t-s, 3\beta)$, we have 
\begin{equation}\label{eq:temp29}
|\rad(z_1) e^{\im \theta_2} - 1| \leq C(\frac{1}{\sqrt{t-s+1}} + |\theta_2|)
\end{equation}
Incorporating the bound \eqref{eq:temp28} and \eqref{eq:temp29} into the RHS of \eqref{eq:temp27}, we conclude
$|z_2(\theta_2)  - z_1(\theta_1)| \leq C(\frac{1}{\sqrt{t-s+1}} + |\theta_1| + |\theta_2|)$.
\bigskip
\\
$(\rhzrb, \reme(z_i, t, s))$: \textbf{Show that} $|\reme(z_i, t, s)| \leq C$.\\
This is the same as $(+-)$ case $(\rhzrb, \reme(z_i, t, s))$.
\bigskip
\\
($\rhzrb, \coreej (z_i)^{\floor{\frac{t-s}{J}}}$): \textbf{Show that} $|\coreej (z_i)|^{\floor{\frac{t-s}{J}}} \leq C(\beta, T) \exp(- C (t-s+1)\theta_i^2 )$. \\
This is the content of Lemma \ref{lem:pp:std}.
\bigskip
\\
Consequently, we perform the same procedure as in the $(+-)$ case and get
\begin{align*}
%\numberthis \label{eq:pp:inbound}
|\rhzrb| &\leq C(\beta, T) \int_{-\pi}^{\pi} \int_{-\pi}^{\pi} (1 +\sqrt{t - s +1} (|\theta_1| + |\theta_2|)) e^{-C (t - s +1) (\theta_1^2 + \theta_2^2)} d\theta_1 d\theta_2\\
&\leq \frac{C(\beta, T)}{t - s +1} e^{-\frac{\beta}{\sqrt{t - s +1}}(|x_2 - y_1| + |x_1 - y_2|)}.
\end{align*}
Let us move on bounding $\rhzrr$ with integral expression \eqref{eq:res3}. We parametrize by $z_1(\theta) =  \pprads(\theta) e^{\im \theta} \in \mcont''(t-s, \ep, -k_1 \beta).$
\bigskip
\\
($\rhzrr$, $\frac{1}{z_1 \ppole(z_1)}$): \textbf{Show that} $|\frac{1}{z_1 \ppole(z_1)}| \leq C$.\\
This is by the same argument as in the $(+-)$ case.
\bigskip
\\
$(\rhzrr, \reme(z_1, t, s) \reme(\ppole(z_1), t, s))$: \textbf{Show that} $|\reme(z_1, t, s) \reme(\ppole(z_1), t, s)| \leq C$.\\
The argument for this is the same as $(\rhzrr, \reme(z_1, t, s) \reme(\ppole(z_1), t, s))$ in the $(+-)$ case.
\bigskip
\\
($\rhzrr$, $\pcoree(z_1)^{\floor{\frac{t-s}{J}}}$):\textbf{Show that}  $|\pcoree(z_1)|^{\floor{\frac{t-s}{J}}} \leq C(\beta, T) e^{-C (t-s+1) \theta^2}$. \\
This is the content of Lemma \ref{lem:pp:std}. 
\bigskip
\\
($\rhzrr, \jprod(z_1)$): \textbf{Show that} $|\jprod(z_1)| \leq C e^{-\frac{\beta}{2 \sqrt{t -s +1}} (|x_2  -y_1| + |x_1 - y_2|}$.\\
By the discussion in ($\rhzrr, \jprod(z_1)$), It is sufficient to show that $|z_1^{x_2 - y_1} \ppole(z_1)^{x_1 - y_2}| \leq e^{-\frac{\beta}{2 \sqrt{t -s +1}} (|x_1- y_2| + |x_2 - y_1|)}$. We write 
\begin{equation*}
|z_1^{x_2 - y_1} \ppole(z_1)^{x_1 - y_2}| = |z_1 \ppole(z_1)|^{x_1- y_2} |z_1|^{x_2 - x_1 + y_2  -y_1}
\end{equation*}
Since $z_1 \in \mcont''(t-s, \ep, -k_1 \beta)$, $|z_1 \ppole(z_1) | = \rfunc(t-s, -k_1 \beta) \leq e^{-\frac{\beta}{\sqrt{t-s+1}}}$. In addition, referring to Lemma \ref{lem:pp:radprop}, one has $|z_1| \leq e^{-\frac{\beta}{\sqrt{t -s +1}}}$. Consequently, 
\begin{equation*}
|z_1^{x_2 - y_1} \ppole(z_1)^{x_1 - y_2}| \leq e^{-\frac{\beta}{\sqrt{t-s+1}} (x_1 - y_2)} e^{-\frac{\beta}{\sqrt{t-s+1}} (x_2 - x_1 + y_2 - y_1)} = e^{-\frac{\beta}{\sqrt{t-s+1}} (x_2  -y_1)} \leq e^{-\frac{\beta}{2 \sqrt{t-s+1}} (|x_2 - y_1| + |x_1 - y_2|)}.
\end{equation*}
Thereby, using the same manner as $(+-)$ case, 
\begin{align*}
|\rhzrr| &\leq C(\beta, T) e^{-\frac{\beta}{2 \sqrt{t-s+1}} (|x_2 - y_1| + |x_1 - y_2|)}\int_{-\pi}^{\pi} 1_{\{|\ppole(z_1(\theta))| \geq r'_2\}} e^{- C (t-s+1) \theta^2} d\theta,\\ 
&\leq C(\beta, T) e^{-\frac{\beta}{2\sqrt{t-s+1}} (|x_2 - y_1| + |x_1 - y_2|)} \int_{|\theta| > (t-s+1)^{-\frac{1}{4}}} e^{-\frac{1}{C}(t-s+1) \theta^2} d\theta  \leq \frac{C(\beta, T)}{t -s +1} e^{-\frac{\beta}{2\sqrt{t-s+1}} (|x_2 - y_1| + |x_1 - y_2|)}.
\end{align*}  
We conclude Theorem \ref{prop:semiestimateint} (a). 
\bigskip
\\
To estimate the gradient, the procedure is similar to in $(+-)$ case, note that applying $\na_{x_i}$ or $\na_{y_i}$ will give an additional factor $z_i^\pm - 1$, while applying $\na_{x_1, x_2}$ will produce an additional factor $(z_1 - 1)(z_2 - 1)$.  By  $|z_i (\theta_i) - 1| \leq C(\frac{1}{\sqrt{t-s+1}} + |\theta_i|)$, we conclude Theorem \ref{prop:semiestimateint} (b), (c).

\section{Proof of Proposition \ref{prop:timedecorr} via self-averaging}\label{sec:timedecorr}
 In this section, we apply the two Markov dualities in Corollary \ref{cor:biduality} and the estimate of $\rhzrte$ in Theorem \ref{prop:semiestimate} to conclude Proposition \ref{prop:timedecorr}. The first step is to expand the term $\Theta_1(t, x)$ and $\Theta_2(t, x)$.
\subsection{Expanding $\Theta_1 (t, x)$ and $\Theta_2 (t, x)$}
 We use $\bb (t, x_1, \dots, x_n)$ to denote a generic uniformly bounded (random) process, which may differ from line to line. Define $$u_\epsilon (t, i) := \sum_{j=i}^{\infty} \toneparte (t+1, t, j-\mue(t)).$$
Referring to \eqref{eq:Thetaone} for the expression of $\Theta_1 (t, x)$
\begin{align*}
\epsilon^{-\frac{1}{2}} \Theta_1 (t, x) &= \epsilon^{-\frac{1}{2}} \qe \lambdae(t) Z(t, x) - \sum_{i=1}^{\infty} \epsilon^{-\frac{1}{2}} \tonepart_\epsilon (t+1, t, i- \mue(t))  Z(t, x-i),
\\
&= \epsilon^{-\frac{1}{2}} (q_\epsilon \lambdae(t) - 1) Z(t, x) + \sum_{i=1}^{\infty} \epsilon^{-\frac{1}{2}} \tonepart_\epsilon (t+1, t, i- \mue(t)) \big(Z(t, x) - Z(t, x-i)\big), \\
&= \epsilon^{-\frac{1}{2}} (\qe \lambdae(t) -1) Z(t, x) + \sum_{i=1}^{\infty} u_\epsilon (t, i) \big(\epsilon^{-\frac{1}{2}}\nabla Z(t, x-i)\big).
\end{align*}
Here, we used the relation $Z(t, x) - Z(t, x-i) = \sum_{j = 1}^{i} \na Z(t, x-j)$ and then changed the order of summation in the last equality.
\bigskip
\\
Likewise, by the expression \eqref{eq:Thetatwo} of $\Theta_2(t, x)$
\begin{align*}
\epsilon^{-\frac{1}{2}} \Theta_2 (t, x) 
%&= (1  -\lambdae) Z(t, x) - \sum_{i=1}^\infty \tonepart_\epsilon (i - \mu) (Z(t, x) - Z(t, x-i)) 
= \epsilon^{-\frac{1}{2}} (1-\lambdae(t)) Z(t, x) - \sum_{i=1}^\infty u_\epsilon (t, i) (\epsilon^{-\frac{1}{2}} \nabla Z(t, x-i)).
\end{align*}
Using Lemma \ref{lem:wsc}, one has  $\epsilon^{-\frac{1}{2}} (q_\epsilon \lambda_\epsilon (t)  -1) = 1 - \frac{\rho}{I} + \OO(\epsilon^{\frac{1}{2}})$ and $\epsilon^{-\frac{1}{2}} (1 -\lambda_\epsilon (t)) = \frac{\rho}{I} + \OO(\epsilon^{\frac{1}{2}})$. Consequently, 
\begin{align}\label{eq:seven:Theta1}
\epsilon^{-\frac{1}{2}} \Theta_1 (t, x) &= \bigg(1 - \frac{\rho}{I}\bigg) Z(t, x) + \sum_{i=1}^\infty \ue (t, i) (\epsilon^{-\frac{1}{2}} \nabla Z(t, x-i))  + \epsilon^{\frac{1}{2}} \bb(t, x) Z(t, x),\\
\label{eq:seven:Theta2}
\epsilon^{-\frac{1}{2}} \Theta_2 (t, x) &= \frac{\rho}{I} Z(t, x) - \sum_{i=1}^\infty u_\epsilon (t, i) (\epsilon^{-\frac{1}{2}} \nabla Z(t, x-i))  + \epsilon^{\frac{1}{2}} \bb(t, x) Z(t, x).
\end{align}
For $x_1 \leq x_2 \in \Xi(t)$ and $x \in \Xi(t)$, we denote by
\begin{align*}
\onegradz{t, x_1, x_2} &:= \epsilon^{-\frac{1}{2}} \nabla Z(t, x_1) Z(t, x_2),\\ 
\twogradz{t, x_1, x_2} &:= \epsilon^{-1} \nabla Z(t, x_1) \nabla Z(t, x_2), \\
\numberthis \label{eq:onegradydefin}
\onegrad{t, x} &:=  \sum_{i \in \NN}  \ue(t, i) \onegradz{t, x-i, x},\\  
%where $\gamma_{t, x_1, x_2, x}$ is some generic deterministic constant such that $\big|\gamma_\epsilon (t, x_1, x_2, x)\big| \leq C e^{-c (|x_1 - x| + |x_2 - x|)}$ for some positive constant $C, c > 0$, we remind the reader that $\gamma_{t, x_1, x_2, x}$ may differ from line to line.  
\numberthis \label{eq:twogradydefin}
\twograd{t, x} &:= \sum_{i > j \in \NN} \ue(t, i) \ue(t, j)  \twogradz{t, x-i, x-j},\\
\numberthis \label{eq:seven:temp8}
\gradsquare{t, x} &:= \sum_{i=1}^{\infty} \ue(t, i)^2 \Big(\twogradz{t, x-i, x-i} - \frac{\rho(I-\rho)}{I} Z(t, x-i)^2\Big).
\end{align*}
%Recall (?) $\cat = \frac{\rho}{I} (1 - \frac{\rho}{I}) \frac{(I+1) b - (I-1)}{I b - (I-2)}$.
\begin{lemma}\label{lem:seven:Thetadecomp}
Recall from \eqref{eq:tau} that
$$\cat = \frac{\rho(I-\rho)}{I^2} \cdot \frac{b (I +2 \mod(t) +1) - (I+2\mod(t) - 1)}{b(I + 2\mod(t)) - (I + 2\mod(t) - 2)},$$ 
we have 
\begin{align*}
&\epsilon^{-1} \Theta_1 (t, x) \Theta_2 (t, x) - \cat Z(t, x)^2\\ &=
\bigg(\frac{2\rho}{I} -1\bigg) \onegrad{t, x} + 2 \twograd{t, x } +  %\sum_{k=1}^{\infty} \ue (t, k)^2 \bigg(\twogradz{t, x-k, x-k} - \frac{\rho (I-\rho)}{I} Z(t, x-k)^2\bigg) 
\gradsquare{t, x} + \eph \bb(t, x) Z(t, x)^2.
\end{align*}
%where $F_\epsilon (t, x)$ is an error term with bound $\norm{F_\epsilon (t, x)}_2 \leq C \epsilon^{\frac{1}{2}} e^{u |x|}$ 
\end{lemma}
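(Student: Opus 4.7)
The plan is to prove the identity by direct algebraic expansion of the product $\Theta_1(t,x)\Theta_2(t,x)$, using the formulas \eqref{eq:seven:Theta1}--\eqref{eq:seven:Theta2}. Abbreviate $A = (1-\rho/I)\,Z(t,x)$, $B = (\rho/I)\,Z(t,x)$, and $S = \sum_{i \geq 1} \ue(t,i)\,\ep^{-1/2} \nabla Z(t,x-i)$, so that $\ep^{-1/2}\Theta_1 = A + S + \eph\,\bb(t,x)\,Z(t,x)$ and $\ep^{-1/2}\Theta_2 = B - S + \eph\,\bb(t,x)\,Z(t,x)$. First I would multiply these two expansions and, using the a priori bound $|S| \lesssim Z(t,x)$ (which follows from the gradient identity suggested in \eqref{eq:temp10}, the spatial H\"older regularity \eqref{eq:tightness2} used to compare $Z(t,x-i)$ with $Z(t,x)$, and $\sum_i \ue(t,i) = \mue(t) = O(1)$), absorb all $\eph$-cross terms into an $\eph\,\bb(t,x)\,Z(t,x)^2$ error, reducing the identity to
\begin{equation*}
\ep^{-1}\Theta_1(t,x)\Theta_2(t,x) = AB + (B-A)S - S^2 + \eph\,\bb(t,x)\,Z(t,x)^2.
\end{equation*}

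Next I would identify the three bulk pieces with the terms in the lemma. The cross term $(B-A)S = (2\rho/I-1)\,Z(t,x)\sum_i \ue(t,i)\,\ep^{-1/2}\nabla Z(t,x-i)$ matches $(2\rho/I-1)\,\onegradd(t,x)$ directly from \eqref{eq:onegradydefin}. For $S^2$, the plan is to expand the double sum and split along the diagonal: the off-diagonal part $\sum_{i \neq j}$, after symmetrization, will contribute $2\,\twogradd(t,x)$ by \eqref{eq:twogradydefin}; the diagonal sum $\sum_i \ue(t,i)^2\,\ep^{-1}(\nabla Z(t,x-i))^2$ will be rewritten, by adding and subtracting $\frac{\rho(I-\rho)}{I}\sum_i \ue(t,i)^2\,Z(t,x-i)^2$, as $\gradsquare{t,x}$ (see \eqref{eq:seven:temp8}) plus the residual $\frac{\rho(I-\rho)}{I}\sum_i \ue(t,i)^2\,Z(t,x-i)^2$.

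The final task is then to show that the remaining $Z^2$-type contributions---$AB = \frac{\rho(I-\rho)}{I^2}Z(t,x)^2$ together with the above residual---collapse to $\cat\,Z(t,x)^2$ modulo $\eph\,\bb\,Z^2$. The plan is to first replace $Z(t,x-i)^2$ by $Z(t,x)^2$ using $|Z(t,x-i) - Z(t,x)| \lesssim i\,\eph\,Z(t,x)$ together with the geometric decay of $\ue(t,i)$ (which makes $\sum_i i\,\ue(t,i)^2 < \infty$), at a cost of only $\eph\,\bb\,Z^2$. This reduces matters to computing the scalar $\sum_{i\geq 1}\ue(t,i)^2$. Since $R_\ep(t)+\mue(t)$ is a mixed geometric random variable by \eqref{eq:rwdistribution}, its tail $\ue(t,i)$ is itself geometric in $i$ with ratio $\frac{\nu+\alpha(t)}{1+\alpha(t)}q^\rho$, so the sum admits a closed rational expression in the model parameters. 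The hardest step will be the bookkeeping at this stage: one must verify by elementary but careful algebra, using Lemma~\ref{lem:wsc}, that the resulting combination of $\frac{\rho(I-\rho)}{I^2}$ and $\frac{\rho(I-\rho)}{I}\lim_{\ep \downarrow 0}\sum_i \ue(t,i)^2$ matches exactly the expression \eqref{eq:tau} for $\cat$, with the inevitable $\eph$ remainders being absorbed into the uniformly bounded process $\bb$.
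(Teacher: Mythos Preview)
Your approach is essentially the same as the paper's: expand the product, split $S^2$ into diagonal and off-diagonal parts, add and subtract $\tfrac{\rho(I-\rho)}{I}\sum_i u_\ep(t,i)^2 Z(t,x-i)^2$ to produce $\gradsquare{t,x}$, replace $Z(t,x-i)^2$ by $Z(t,x)^2$, and evaluate the geometric sum $\sum_i u_\ep(t,i)^2$ to identify $\cat$. One small correction: the bound $|S|\lesssim Z(t,x)$ must hold \emph{pathwise} (since $\bb$ is by definition a uniformly bounded random process), and the moment estimate \eqref{eq:tightness2} does not give that; instead use the elementary pathwise relation $Z(t,x-i)=e^{-\sqrt{\ep}\sum_{j=1}^i(\etat_{x-j+1}(t)-\rho)}Z(t,x)\le e^{\sqrt{\ep}\,iI}Z(t,x)$ together with the geometric decay $u_\ep(t,i)\le\delta^{i-1}$. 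The paper actually sidesteps this bound entirely by observing that the error cross terms regroup as $\eph\,\bb\,Z\cdot\big(\ep^{-1/2}\Theta_1+\ep^{-1/2}\Theta_2\big)$, and adding \eqref{eq:seven:Theta1} and \eqref{eq:seven:Theta2} makes the $S$ contributions cancel, leaving $Z+\eph\,\bb\,Z$.
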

\begin{proof}
We name the three terms on the RHS of \eqref{eq:seven:Theta1} (from left to right)  as $A_{1, Z}, A_{1, \nabla}, A_{1, \text{err}}$ respectively and those on the RHS of \eqref{eq:seven:Theta2} as $A_{2, Z}$, $A_{2, \nabla}$, $A_{2, \text{err}}$. Multiplying \eqref{eq:seven:Theta1} by \eqref{eq:seven:Theta2} gives
\begin{equation*}
\epsilon^{-1} \Theta_1 (t, x) \Theta_2 (t, x) = \big(A_{1, Z} + A_{1, \nabla} + A_{1, \text{err}}\big) \cdot \big(A_{2, Z} + A_{2, \nabla} + A_{2, \text{err}}\big).
\end{equation*}
Expanding this product, it is straightforward that
\begin{align*}  
&A_{1, Z} A_{2, Z} = \frac{\rho}{I} (1 - \frac{\rho}{I}) Z(t, x)^2, \qquad A_{1, \nabla} A_{2, Z} + A_{2, \nabla} A_{1, Z} = \bigg(\frac{2\rho}{I} - 1\bigg) \onegrad{t, x},\\
&A_{1, \nabla} A_{2, \nabla}  = -\twograd{t, x} -  \sum_{k=1}^{\infty} u_\epsilon (t, k)^2 \twogradz{t, x-k, x-k}.
%&= \twograd + \frac{(1 - b)\epsilon^{-1}}{I (Ib - (I-2))}  (\nabla Z(t, x))^2 + \text{Err}
\end{align*}
The sum of the rest of terms equals
\begin{align*}
&A_{1, Z} A_{2, \text{err}} + A_{1, \nabla} A_{2, \text{err}} + A_{1, \text{err}} A_{2, Z} + A_{1,\text{err}}  A_{2, \nabla} + A_{1, \text{err}} A_{2, \text{err}},\\ 
&= \epsilon^{\frac{1}{2}} \bb(t, x) Z(t, x) (\epsilon^{-\frac{1}{2}} \Theta_1 (t, x) + \epsilon^{-\frac{1}{2}} \Theta_2 (t, x)) - \epsilon \bb (t, x) Z (t, x)^2 = \ep^{\frac{1}{2}} \bb(t, x) Z(t, x)^2.
\end{align*}
%\textbf{It is straightforward by the bound over $\Theta_1 (t, x)$ and $\Theta_2 (t, x)$ in (?), together with (?).}
%\bigskip
%\\
Therefore, we find that 
\begin{align*}
\ep^{-1} \Theta_1 (t, x) \Theta_2 (t, x) = \frac{\rho}{I}(1- \frac{\rho}{I}) Z(t, x)^2 + \onegrad{t, x} - \twograd{t, x}  - \sum_{k=1}^\infty \ue(t, k)^2 \twogradz{t, x-k, x-k} + \eph \bb(t, x) Z(t, x)^2.
\end{align*} 
Thus,
\begin{align*}
\ep^{-1} \Theta_1 (t, x) \Theta_2 (t, x) - \frac{\rho}{I}(1- \frac{\rho}{I}) Z(t, x)^2=  \onegrad{t, x} - \twograd{t, x}  - \sum_{k=1}^\infty \ue(t, k)^2 \twogradz{t, x-k, x-k} + \eph \bb(t, x) Z(t, x)^2.
\end{align*}
Adding $\frac{\rho(I-\rho)}{I} \sum_{k=1}^{\infty} \ue(t, k)^2 Z(t, x-k)^2$ to both sides yields
\begin{align*}
&\ep^{-1} \Theta_1 (t, x) \Theta_2 (t, x) - \frac{\rho}{I}(1- \frac{\rho}{I}) Z(t, x)^2 + \frac{\rho(I-\rho)}{I} \sum_{k=1}^{\infty} \ue(t, k)^2 Z(t, x-k)^2  \\
&= \onegrad{t, x} - \twograd{t, x}  - \sum_{k=1}^\infty \ue(t, k)^2 \bigg(\twogradz{t, x-k, x-k} - \frac{\rho(I - \rho)}{I} Z(t, x-k)^2\bigg) + \eph \bb(t, x) Z(t, x)^2\\
\numberthis \label{eq:seven:temp3}
&= \onegrad{t, x} - \twograd{t, x}  - \gradsquare{t, x} + \eph \bb(t, x) Z(t, x)^2.
\end{align*} 
We claim that
\begin{equation}\label{eq:seven:temp1}
\sum_{k=1}^{\infty} \ue(t, k)^2 Z(t, x-k)^2 = \frac{1-b}{I(b(I + 2 \mod(t)) - (I + 2 \mod(t) - 2))} Z(t, x)^2 + \ep^{\frac{1}{2}} \bb(t, x) Z(t, x)^2.
\end{equation}
If \eqref{eq:seven:temp1} holds, note that
\begin{align*}
\cat = \frac{\rho}{I} (1-  \frac{\rho}{I}) - \frac{\rho(I-\rho)}{I} \frac{1-b}{I(b(I + 2\mod(t)) - (I + 2\mod(t) - 2))}.
%&= \frac{\rho(I-\rho)}{I} \bigg(\frac{1}{I} - \frac{1-b}{I(b(I + 2\mod(t)) - (I + 2\mod(t) - 2))}\bigg) \\
%&= \frac{\rho(I-\rho)}{I^2} \frac{b (I +2 \mod(t) +1) - (I+2\mod(t) - 1)}{b(I + 2\mod(t)) - (I + 2\mod(t) - 2)} = \.
\end{align*}
Replacing the term $\sum_{k=1}^{\infty} \ue(t, k)^2 Z(t, x-k)^2$ in the LHS  of \eqref{eq:seven:temp3} by the RHS of \eqref{eq:seven:temp1}, we prove Lemma \ref{lem:seven:Thetadecomp}. 
\bigskip
\\
To justify \eqref{eq:seven:temp1},  we write
\begin{equation}\label{eq:seven:temp2}
\sum_{k=1}^\infty \ue(t, k)^2 Z(t, x-k)^2 = \sum_{k=1}^\infty \ue(t, k)^2 \big(Z(t, x-k)^2 - Z(t, x)^2\big) + \sum_{k=1}^\infty \ue(t, k)^2 Z(t, x)^2.
\end{equation}
Let us analyze the first and second term on the RHS of \eqref{eq:seven:temp2} respectively. 
For the second term, we compute 
\begin{equation}\label{eq:seven1:temp1}
\ue(t, k) = \sum_{j=k}^\infty \toneparte(t+1, t, j) %= \sum_{j=k}^{\infty} \frac{\alpha(t)(1-q)}{1 + \alpha(t)} \bigg(1 - \frac{\nu + \alpha(t)}{1 + \alpha(t)}\bigg) \bigg(\frac{\nu + \alpha(t)}{1 + \alpha(t)}\bigg)^{j-1} 
= \frac{\alpha(t)(1-q)}{1 + \alpha(t)} \bigg(\frac{\nu + \alpha(t)}{1 + \alpha(t)}\bigg)^{k-1}.
\end{equation}
Here, we used $\ppole(t+1, t, j) = \PP(R(t) = j)$, the expression of which is given in \eqref{eq:rwdistribution}.
Using the preceding equation, we find that 
\begin{align*}
\sum_{k=1}^\infty \ue(t, k)^2 &=
%= \bigg(\frac{\alpha(t)(1-q)}{1 + \alpha(t)}\bigg)^2 / \bigg(1 - \big(\frac{\nu + \alpha(t)}{1 + \alpha(t)}\big)^2\bigg) \\
\frac{\big(1 - \frac{1 + q\alpha(t)}{1 + \alpha(t)}\big)^2}{1- \big(\frac{\nu + \alpha(t)}{1 + \alpha(t)}\big)^2}.
\end{align*}
Due to Lemma \ref{lem:wsc},
\begin{equation*}
\sum_{k=1}^{\infty} \ue(t, k)^2 = \frac{1-b}{I\big((I + 2\mod(t))b - (I + 2\mod(t) - 2)\big)} + \OO(\ep^{\frac{1}{2}}).
\end{equation*} 
Thereby, for the second term on the RHS of \eqref{eq:seven:temp2}, 
\begin{equation}\label{eq:seven:temp5}
\sum_{k=1}^{\infty} \ue(t, k)^2  Z(t, x)^2 = \frac{1-b}{I(b(I + 2\mod(t)) - (I + 2\mod(t) - 2))} Z(t, x)^2 + \eph \bb(t, x) Z(t, x)^2 
\end{equation}
For the first term on the RHS of \eqref{eq:seven:temp2}, noticing $Z(t, x-k) = e^{-\sqrt{\ep}\sum_{i=1}^k (\etat_{x-i+1} (t) - \rho)} Z(t, x)$ (recall $\widetilde{\eta}_x (t) = \eta_x (x + \hmu(t))$), hence
\begin{align*}
Z(t, x - k)^2  - Z(t, x)^2 &= Z(t, x)^2 \Big(e^{-2 \sqrt{\ep} \big((\etat_x (t) - \rho) + \dots + (\etat_{x- k +1}(t)  - \rho)\big)} - 1\Big) 
\end{align*}
Since $|\etat_x (t) - \rho| \leq I$, 
$$\bigg|\sum_{i=1}^k (\etat_{x-i+1} (t) - \rho) \bigg| \leq kI.$$
Note that for any $K > 0$, there exists a constant $C$ such that $$|e^{x} - 1| \leq C|x|, \text{ for } |x| \leq K.$$
Thus, if $k \leq \ep^{-\frac{1}{2}}$, one has
\begin{align*}
\big|e^{-2 \sqrt{\ep} \sum_{i=1}^k (\etat_{x-i+1} (t) - \rho)} - 1\big| \leq C \sqrt{\ep} k I.
\end{align*} 
If $k > \ep^{-\frac{1}{2}}$, one simply has
$$
\big|e^{-2 \sqrt{\ep} \sum_{i=1}^k (\etat_{x-i+1} (t) - \rho)} - 1\big| \leq  e^{2 k I \sqrt{\ep}}.
$$
Therefore,
%Since $|e^x - 1|$ is upper bounded by  $C\big(|x|\idc_{\{|x| \leq 2\}} + e^{|x|} \idc_{\{|x| > 2\}} \big) $ and  $|\etat(t, x) - \rho| \leq I$, we see that 
\begin{equation}\label{eq:71temp1}
\big|e^{-2 \sqrt{\ep} \sum_{i=1}^k (\etat_{x-i+1} (t) - \rho)} - 1\big| \leq C \big(\sqrt{\ep} k I \idc_{\{k \leq \ep^{-\frac{1}{2}}\}} + e^{2 k I \sqrt{\ep}} \idc_{\{k > \ep^{-\frac{1}{2}}\}}\big).
\end{equation}
Referring to \eqref{eq:seven1:temp1} for the expression of $\ue(t, k)$, using \eqref{eq:temp16}
%note that Lemma \ref{lem:wsc} implies
%$$\lim_{\ep \downarrow 0} \sup_{t \in \ZZn} \frac{\nu + \alpha(t)}{1 + \alpha(t)} = \sup_{t \in \ZZ_{\geq 0}} \frac{b (I + \mod(t)) - (I + \mod(t) - 1)}{ b  \mod(t) - (\mod(t) - 1)} < 1.$$
we see that  there exists $0< \delta < 1$ s.t. for $\ep$ small enough and for all $t, k$
\begin{equation}\label{eq:71expdecay}
\ue(t, k) \leq \delta^{k-1}.
\end{equation} 
Combining this with \eqref{eq:71temp1} gives 
\begin{align*}
\sum_{k=1}^{\infty} \ue(t, k)^2 \big(Z(t, x-k)^2 - Z(t,x)^2\big) &= Z(t, x)^2 \bigg(\sum_{k=1}^\infty \ue(t, k)^2 \big(e^{-2\sqrt{\ep} \sum_{i=1}^k (\etat_{x-i+1} (t) - \rho)} - 1\big)\bigg),\\
&\leq C Z(t, x)^2 \bigg(\sum_{k=1}^{\floor{\ep^{-\frac{1}{2}}}} \sqrt{\ep} k \delta^{k} + \sum_{k = \floor{\ep^{-\frac{1}{2}}} + 1}^\infty e^{2kI \sqrt{\ep}} \delta^k\bigg),
\\ 
%\numberthis \label{eq:seven:temp6}
&= \ep^{\frac{1}{2}} \bb(t, x) Z(t, x)^2.
\end{align*}
Combining this with \eqref{eq:seven:temp5}, we prove the desired claim \eqref{eq:seven:temp1}.
\end{proof}
%that $\norm{F_\epsilon (t, x)}_2 \leq C \epsilon^{\frac{1}{2}} e^{u \epsilon |x|}$
 By Lemma \ref{lem:seven:Thetadecomp}, we reduce the proof of Proposition \ref{prop:timedecorr} to the following lemmas. 
\begin{lemma}\label{lem:timedecorr1}
For any given $T >0$, there exists positive constants $C$ and $u$ such that for all $t \in [0, \epsilon^{-2} T] \cap \ZZ$, $\xstar \in \ZZ$
\begin{align}\label{eq:onegrady}
\bignorm{\epsilon^{2} \sum_{s=0}^{t} \onegrad{s, \xstar (s)}}_2 &\leq C \ep^{\frac{1}{4}} e^{2u \ep |\xstar|}, \\
\label{eq:twogrady}
\bignorm{\epsilon^{2} \sum_{s=0}^{t} \twograd{s, \xstar(s)}}_2 &\leq C \epsilon^{\frac{1}{4}} e^{2u\ep |\xstar|},
\end{align} 
where we used the shorthand notation $\xstar(s) : = \xstar -\hmu(s) + \floor{\hmu(s)}$.
\end{lemma}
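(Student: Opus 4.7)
The plan is to control $\EE\big[\big(\ep^2 \sum_{s=0}^t Y(s, \xstar(s))\big)^2\big]$ for $Y \in \{\onegradd, \twogradd\}$. After splitting off the diagonal and using symmetry of the double sum, this reduces to estimating $|\EE[Y(s_1, \xstar(s_1)) Y(s_2, \xstar(s_2))]|$ for $0 \leq s_1 < s_2 \leq t$. By the tower property and Cauchy--Schwarz, this is at most $\norm{Y(s_1, \xstar(s_1))}_2 \cdot \norm{\EE[Y(s_2, \xstar(s_2)) \vv \FF(s_1)]}_2$, and the whole game is to extract time decay from the second factor.

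\textbf{Conditional expectation via duality.} I will use the identities $\na Z(s_2, x_1) Z(s_2, x_2) = \na_{x_1}[Z(s_2, x_1) Z(s_2, x_2)]$ and $\na Z(s_2, x_1) \na Z(s_2, x_2) = \na_{x_1, x_2}[Z(s_2, x_1) Z(s_2, x_2)]$ (the second valid when $x_1 < x_2$). After inserting the Markov duality \eqref{eq:fdualt} inside the conditional expectation and transferring the gradients onto the $x$-arguments of $\rhzrt((x_1, x_2), (z_1, z_2), s_2, s_1)$, I will apply Proposition \ref{prop:semiestimate}(b), (c) respectively. For the relevant arguments $x_1 = \xstar(s_2) - i,\ x_2 = \xstar(s_2)$ (resp.\ $x_1 = \xstar(s_2) - i,\ x_2 = \xstar(s_2) - j$ with $i > j$), the required $\na_{x_1}$- (resp.\ $\na_{x_1, x_2}$-) Weyl chamber conditions are automatic since $i \in \NN$ (resp.\ $i > j \in \NN$). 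Combining with the moment bound $\norm{Z(s_1, z_1) Z(s_1, z_2)}_2 \leq C e^{u\ep(|z_1|+|z_2|)}$ from Proposition \ref{prop:tightness}, Lemma \ref{lem:five:usefullem} to evaluate the $(z_1, z_2)$ sums (contributing an overall factor of order $s_2-s_1+1$), and the geometric decay $\ue(s_2, i) \leq \delta^{i-1}$ from \eqref{eq:71expdecay} for the $i,j$ sums, I will arrive at
\begin{align*}
\norm{\EE[\onegrad{s_2, \xstar(s_2)} \vv \FF(s_1)]}_2 &\leq C \ep^{-\tfrac{1}{2}} (s_2-s_1+1)^{-\tfrac{1}{2}} e^{u\ep|\xstar|},\\
\norm{\EE[\twograd{s_2, \xstar(s_2)} \vv \FF(s_1)]}_2 &\leq C \ep^{-1} (s_2-s_1+1)^{-1} e^{u\ep|\xstar|}.
\end{align*}

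\textbf{First factor and time summation.} The factor $\norm{Y(s_1, \xstar(s_1))}_2$ provides no time decay, so I bound it pointwise: from $\ep^{-1/2} \na Z(s, x) = \ep^{-1/2}(e^{-\sqrt{\ep}(\etat_{x+1}(s) - \rho)} - 1) Z(s, x)$ and the uniform bound $|\etat_{x+1}(s) - \rho| \leq I$, I get $|\ep^{-1/2} \na Z(s, x)| \leq C Z(s, x)$ for $\ep$ small. Combined with Proposition \ref{prop:tightness} and geometric summation of $\ue$, this yields $\norm{Y(s_1, \xstar(s_1))}_2 \leq C e^{2u\ep|\xstar|}$. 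Summing over $0 \leq s_1 \leq s_2 \leq t \leq \ep^{-2} T$:
\begin{align*}
\ep^4 \sum_{s_1 \leq s_2 \leq t} \ep^{-\tfrac{1}{2}} (s_2-s_1+1)^{-\tfrac{1}{2}} &\leq C \ep^{\tfrac{7}{2}} (t+1)^{\tfrac{3}{2}} \leq C \ep^{\tfrac{1}{2}},\\
\ep^4 \sum_{s_1 \leq s_2 \leq t} \ep^{-1} (s_2-s_1+1)^{-1} &\leq C \ep^3 (t+1) \log(t+1) \leq C \ep |\log \ep|,
\end{align*}
both of which are dominated by $C\ep^{1/2}$ for $\ep$ small; the diagonal $s_1 = s_2$ contributes at most $\ep^4 \cdot t \cdot e^{4u\ep|\xstar|} \leq C \ep^2 e^{4u\ep|\xstar|}$, which is absorbed. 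Taking square roots yields the claimed $\ep^{1/4}$ bounds (after readjusting the constant $u$).

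\textbf{Main obstacle.} The crux is structural bookkeeping: one must check that the arguments lie in the appropriate $\na$-Weyl chamber so that the sharp gradient estimates Proposition \ref{prop:semiestimate}(b), (c)---rather than the diagonal bound (a)---apply, and verify that the $(s_2-s_1+1)^{-n/2}$ gain from $n$ gradients on $\rhzrt$ precisely matches the $\ep^{-n/2}$ normalization built into $Y$ after summing over $s_1 \leq s_2 \leq \ep^{-2} T$. Once this alignment is in place, the estimates follow by combining already-established ingredients (duality, transition-probability bounds, the moment estimate of $Z$, and the geometric decay of $\ue$).
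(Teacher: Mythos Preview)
Your proposal is correct and follows essentially the same approach as the paper: expand the $L^2$ norm squared, use the tower property and Cauchy--Schwarz on the off-diagonal terms, transfer the gradients onto $\rhzrte$ via the duality \eqref{eq:fdualt}, and invoke Proposition~\ref{prop:semiestimate}(b),(c) together with Lemma~\ref{lem:five:usefullem} and the geometric decay of $\ue$. The only organizational difference is that the paper first fixes $i$ (respectively $i>j$), proves the bound for $\ep^2\sum_s \ue(s,i)\,\onegradz{s,\xstar(s)-i,\xstar(s)}$ as a separate inequality (their \eqref{eq:72sts1}--\eqref{eq:72sts2}), and then sums over $i$, whereas you sum over $i$ inside $Y$ first; the paper's ordering has the incidental benefit that the $i=0$ case of \eqref{eq:72sts1} is reused later in the proof of Lemma~\ref{lem:timedecorr2}.
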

%\begin{lemma}
%For any given $T >0$, there exists a constant $C$ and $u$ such that for all $t \in [0, \epsilon^{-2} T] \cap \ZZ$, $\xstar \in \ZZ$
%\begin{equation*}
%\bignorm{\ep^2 \sum_{s = 0}^t \sum_{k=1}^{\infty} \ue (k)^2 \bigg((\epsilon^{-\frac{1}{2}} \nabla Z(t, x-k))^2 - \frac{\rho (I-\rho)}{I} Z(t, x-k)^2\bigg)}_2
%\end{equation*}	
%\end{lemma}
\begin{lemma}\label{lem:timedecorr2}
Fix $T > 0$, there exists positive constants $C$ and $u$ such that for all $t \in [0, \epsilon^{-2} T] \cap \ZZ$ and $\xstar \in \ZZ$, 
\begin{equation*}
%\label{eq:timedecorr2}
\bignorm{\epsilon^{2} \sum_{s=0}^{t} \gradsquare{s, \xstar(s)}}_2 \leq C \epsilon^{\frac{1}{4}} e^{2u\ep |\xstar|}
\end{equation*}
\end{lemma}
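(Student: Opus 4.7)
The plan is to mimic the structure of the proof of Lemma \ref{lem:timedecorr1} but with a crucial extra algebraic step that converts the $(\nabla Z)^2$ appearing at a single site into the duality functional $D$ plus a residual single-gradient term. The starting point is the identity (derived exactly as in Section \ref{sec:method})
\[
\tfrac{1}{\ep}(\nabla Z(s,y))^2 - \tfrac{\rho(I-\rho)}{I} Z(s,y)^2 = \Big(D(s,y{+}1,y{+}1) - \tfrac{(I-1)(I-\rho)^2}{I} Z(s,y{+}1)^2\Big) - (2\rho+1-2I)\,\eph^{-1}\nabla Z(s,y)\,Z(s,y) + \eph\,\bb(s,y)\,Z(s,y)^2,
\]
which I would plug into the definition \eqref{eq:seven:temp8} of $\gradsquare{s,x}$, multiplying by $\ue(s,i)^2$ and summing over $i$. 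The resulting decomposition of $\ep^{2}\sum_{s}\gradsquare{s,\xstar(s)}$ has three contributions: a weighted sum of single-gradient terms $\eph^{-1}\nabla Z \cdot Z$, which is controlled by \eqref{eq:onegrady} in Lemma \ref{lem:timedecorr1} after exchanging the $i$-sum (this is legitimate because $\ue(s,i)$ decays geometrically in $i$ by \eqref{eq:71expdecay}); a negligible $\eph\,\bb\,Z^2$ remainder bounded directly by the moment estimate \eqref{eq:tightness1}; and the main ``self-averaging'' piece
\[
W(s,x) := \sum_{i\geq 1} \ue(s,i)^2 \Big(D(s,x{-}i{+}1,x{-}i{+}1) - \tfrac{(I-1)(I-\rho)^2}{I} Z(s,x{-}i{+}1)^2\Big),
\]
which is the real object of study.

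To bound $\bignorm{\ep^{2}\sum_{s=0}^{t} W(s,\xstar(s))}_{2}$, I would expand the $L^2$ norm squared as a double sum over $s' \leq s$, condition on $\FF(s')$, and use the \emph{two} dualities \eqref{eq:fdualt} and \eqref{eq:sdualt} simultaneously to rewrite the conditional expectation, for each fixed $i$, as
\[
\sum_{y_1 \leq y_2 \in \Xi(s')} \rhzrte\big((x,x),(y_1,y_2),s,s'\big) \Big(D(s',y_1,y_2) - \tfrac{(I-1)(I-\rho)^2}{I}Z(s',y_1)Z(s',y_2)\Big),
\]
with $x = \xstar(s)-i+1$. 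The diagonal $y_1=y_2$ contribution is negligible, since \eqref{eq:six1:temp2} shows that restricting the sum to the diagonal costs an extra factor of $(s-s'+1)^{-1/2}$ relative to the full sum. On the off-diagonal $y_1<y_2$, the weakly asymmetric expansion of $D$ (where $[\cdot]_{q^{1/2}} \to (\cdot)$ and $q\to 1$) combined with the algebraic identity
\[
(I-\etapartt{s'}{y_1})(I-\etapartt{s'}{y_2}) - (I-\rho)^2 = (I-\etapartt{s'}{y_2})(\rho-\etapartt{s'}{y_1}) + (I-\rho)(\rho-\etapartt{s'}{y_2})
\]
allows me to rewrite the bracket as $(I-\etapartt{s'}{y_2})\,\eph^{-1}\nabla Z(s',y_1{-}1)\,Z(s',y_2) + (I-\rho)\,\eph^{-1}\nabla Z(s',y_2{-}1)\,Z(s',y_1) + \eph\,\bb\,Z\cdot Z$, after converting $(\rho - \etapartt{s'}{y})Z(s',y)$ into $\eph^{-1}\nabla Z(s',y-1)$ up to $\eph$-errors.

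The decisive step is now a discrete Abel summation by parts in the $y_k$-variable, which moves the gradient from $Z$ onto $\rhzrte$, producing the mixed kernel $\eph^{-1}\nabla_{y_k}\rhzrte$. By Proposition \ref{prop:semiestimate}(b), each such gradient yields an extra $(s-s'+1)^{-1/2}$ decay on top of the already-present $(s-s'+1)^{-1}$, so the effective kernel is of order $\eph^{-1}(s-s'+1)^{-3/2}$ with Gaussian spatial tails. Summing the spatial variables against $\norm{Z(s',y_1)Z(s',y_2)}_{2}$ (estimated via \eqref{eq:tightness1} and Lemma \ref{lem:five:usefullem}), applying Cauchy--Schwarz in $(s,s')$, and performing the time integral $\ep^{4}\sum_{s,s'}(s-s'+1)^{-3/2}$, which is $O(\ep^{3/2})\cdot\ep^{-2}\cdot\ep^{-1}=O(\eph)$, yields a bound of order $\eph$ on the squared $L^2$ norm, hence $\ep^{1/4}$ on the norm itself, as required.

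The main obstacle will be (a) systematically handling the random but uniformly bounded factor $(I-\etapartt{s'}{y_2})$ appearing in one of the two off-diagonal terms, which prevents a direct interchange of the $y$-sum with the $L^2$-norm and forces one to absorb it into a generic $\bb$-type coefficient before the summation by parts, and (b) justifying that the boundary terms in the Abel summation vanish, which relies on the fast spatial decay of $\rhzrte$ from Proposition \ref{prop:semiestimate}(a) together with the growth bound $\norm{Z(s,y)}_{n}\leq Ce^{u\ep|y|}$. Everything else is a direct analogue of the argument that would produce Lemma \ref{lem:timedecorr1}.
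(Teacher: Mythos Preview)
Your proposal follows the same strategy as the paper's proof and is correct in substance. Two minor bookkeeping corrections: the near-diagonal cut must be taken at $|y_1-y_2|\le 2$ (not merely $y_1=y_2$) so that after the shift $y\mapsto y-1$ and the summation by parts the resulting $\nabla_{y_1}\rhzrte$ lands inside its $\nabla$-Weyl chamber \eqref{eq:deltaweyl}; and after summing the spatial variables via Lemma~\ref{lem:five:usefullem} the effective time decay is $(s-s'+1)^{-1/2}$ rather than $(s-s'+1)^{-3/2}$, though your final $\ep^{1/4}$ conclusion is correct.
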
 
We will prove Lemma \ref{lem:timedecorr1}  and Lemma \ref{lem:timedecorr2} in the next two sections. Let us first conclude Proposition  \ref{prop:timedecorr} based on them.
\begin{proof}[Proof of Proposition \ref{prop:timedecorr}]
%Denote by 
%\begin{equation*}
%A(t, k) = \sum_{s = 0}^t \ue(s, k) \bigg(\twogradz{s, \xstar(s) - k, \xstar(s) - k} - \frac{\rho (I - \rho)}{I} Z(s, \xstar(s) - k)^2\bigg)
%\end{equation*}
%%Note that $\xstar(s) - k = (\xstar - k)(s)$, 
%By Lemma \ref{lem:timedecorr2}, we see that 
%\begin{equation}\label{eq:Abound}
%\norm{\ep^2  A(t, k)}_2 \leq C \ep^{\frac{1}{4}} t e^{u \ep |\xstar - k|}.
%\end{equation}
Referring to Lemma \ref{lem:seven:Thetadecomp}, we have 
\begin{align*}
\epsilon^2 \sum_{s = 0}^{t} \bigg(\epsilon^{-1} \Theta_1 \Theta_2 - \cas  Z^2 \bigg)(s, \xstar(s)) &= \epsilon^{2} \sum_{s=0}^{t} \onegrad{s, \xstar (s)} + \epsilon^{2} \sum_{s=0}^{t} \twograd{s, \xstar(s)} + \ep^2 \sum_{s = 0}^t \gradsquare{s, \xstar(s)}\\
&\quad + \ep^2 \sum_{s = 0}^t \ep^{\frac{1}{2}} \bb(s, x) Z(s, \xstar(s))^2.
\end{align*}
By Lemma \ref{lem:timedecorr1} and Lemma \ref{lem:timedecorr2}, together with the bound $\norm{Z(s, \xstar(s))}_2 \leq C e^{u\ep |\xstar|}$ (which follows from Proposition \ref{prop:tightness}), one has
\begin{align*}
&\bignorm{\epsilon^2 \sum_{s = 0}^{t} \Big(\epsilon^{-1} \Theta_1 \Theta_2 - \cas  Z^2 \Big)(s, \xstar(s))}_2 \\
&\quad \leq \bignorm{\epsilon^{2} \sum_{s=0}^{t} \onegrad{s, \xstar(s)}}_2  + \bignorm{\epsilon^{2} \sum_{s=0}^{t} \twograd{s, \xstar(s)}}_2 
+ \bignorm{\epsilon^{2} \sum_{s=0}^{t} \gradsquare{s, \xstar(s)}}_2 + \ep^2 \sum_{s=0}^t \ep^{\frac{1}{2}} \bb(s, x) \norm{Z(s, \xstar(s))}_2^2,
\\ 
&\quad\leq C \big(\ep^{\frac{1}{4}} e^{2u\ep |\xstar|} + \ep^{\frac{5}{2}} t e^{2u \ep |\xstar|}\big).
\end{align*}
Using $t \leq \ep^{-2} T$, we obtain
\begin{equation*}
\bignorm{\epsilon^2 \sum_{s = 0}^{t} \Big(\epsilon^{-1} \Theta_1 \Theta_2 - \cas  Z^2 \Big)(s, \xstar(s))}_2 \leq C \ep^{\frac{1}{4}} e^{2u\ep |\xstar|}
\end{equation*}
%for all  $t \leq \ep^{-2} T$.
This completes the proof of Proposition \ref{prop:timedecorr}.
\end{proof}
\subsection{Proof of Lemma \ref{lem:timedecorr1}}
Recall the notation $\etat_x (t) = \eta_{x + \hmu(t)} (t) 
$,  we see that by Taylor expansion
$$\nabla Z(t, x) =  Z(t, x) \big(e^{-\sqrt{\epsilon} (\etat_{x+1} (t) - \rho)} - 1\big) = \sqrt{\epsilon} Z(t, x)  (\rho - \etat_{x+1} (t)) + \epsilon \bb (t, x) Z(t, x).$$
Hence,
\begin{align*}
\numberthis \label{eq:seven1:temp2}
\epsilon^{-\frac{1}{2}} \nabla Z(t, x) &= (\rho - \etat_{x+1} (t)) Z(t, x)  + \epsilon^{\frac{1}{2}} \bb(t, x) Z(t, x),\\
\numberthis \label{eq:seven1:temp5}
Z(t, x+1) &= Z(t, x) + \na Z(t, x) =  Z(t, x) + \eph \bb(t, x) Z(t, x).
\end{align*}
We will use these elementary relations frequently in the sequel. 
\bigskip\\
The following lemma is crucial for the proof of Lemma \ref{lem:timedecorr1}.
\begin{lemma}\label{lem:conditionexa}
Given $T > 0$ and $n \in \NN$, there exists constant $C$ and  $u$ such that for all $s \leq t \in [0, \epsilon^{-2} T] \cap \ZZ$  such that for $x_1 \leq x_2 \in \Xi(t)$,
\begin{align}\label{eq:conditionex1}
\bignorm{\EE\big[\onegradz{t, x_1, x_2}\vv \FF(s)\big]}_n \leq \frac{C \epsilon^{-\frac{1}{2}}}{\sqrt{t - s + 1}} e^{u \epsilon (|x_1| + |x_2|)}.
\end{align}
For $x_1 < x_2 \in \Xi(t)$,
\begin{align}\label{eq:conditionex2}
\bignorm{\EE\big[\twogradz{t, x_1, x_2}  \vv \FF(s)\big]}_n \leq \frac{C \ep^{-1}}{t - s + 1} e^{u \epsilon (|x_1| + |x_2|)}. 
\end{align}
\end{lemma}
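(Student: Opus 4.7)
The plan is to estimate both conditional expectations by transferring the discrete gradients from $Z$ to the (tilted) two-particle transition probability $\rhzrte$ via the first duality in Lemma \ref{lem:dualt}, and then invoking the pointwise bounds from Proposition \ref{prop:semiestimate}. For \eqref{eq:conditionex1}, I will expand
\[
\EE[\nabla Z(t, x_1) Z(t, x_2) \mid \FF(s)] = \EE[Z(t, x_1+1) Z(t, x_2) \mid \FF(s)] - \EE[Z(t, x_1) Z(t, x_2) \mid \FF(s)],
\]
apply \eqref{eq:fdualt} to each term, and recognize the difference (when $x_1+1 \leq x_2$) as
\[
\sum_{y_1 \leq y_2 \in \Xi(s)} \nabla_{x_1} \rhzrte\big((x_1, x_2), (y_1, y_2), t, s\big)\, Z(s, y_1) Z(s, y_2).
\]
In the boundary case $x_1 = x_2$ (which is allowed since $I \geq 2$), the indices in the first expectation must be reordered as $(x_1, x_1+1)$ before applying \eqref{eq:fdualt}, and the computation instead produces $\sum \nabla_{x_2} \rhzrte((x_1, x_1), (y_1, y_2), t, s)\, Z(s, y_1) Z(s, y_2)$. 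In both cases the relevant base point lies in the appropriate $\nabla$-Weyl chamber \eqref{eq:deltaweyl}, so part (b) of Proposition \ref{prop:semiestimate} applies.

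For \eqref{eq:conditionex2}, the strict inequality $x_1 < x_2$ guarantees $x_1 + 1 \leq x_2$, so expanding $\nabla Z(t, x_1) \nabla Z(t, x_2)$ into four products and applying \eqref{eq:fdualt} to each is unproblematic, yielding
\[
\EE[\nabla Z(t, x_1) \nabla Z(t, x_2) \mid \FF(s)] = \sum_{y_1 \leq y_2 \in \Xi(s)} \nabla_{x_1, x_2} \rhzrte\big((x_1, x_2), (y_1, y_2), t, s\big)\, Z(s, y_1) Z(s, y_2),
\]
with $(x_1, x_2)$ inside the mixed-gradient Weyl chamber, so part (c) of Proposition \ref{prop:semiestimate} gives $|\nabla_{x_1, x_2} \rhzrte| \leq C(\beta, T)(t-s+1)^{-2} \exp(-\beta(|x_1-y_1|+|x_2-y_2|)/(\sqrt{t-s+1}+C(\beta)))$.

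The remaining step is routine. I will apply Minkowski's inequality to bring the $L^n$ norm inside the sum, bound $\|Z(s, y_1) Z(s, y_2)\|_n \leq \|Z(s, y_1)\|_{2n} \|Z(s, y_2)\|_{2n} \leq C e^{u\ep(|y_1|+|y_2|)}$ via Cauchy-Schwarz and Proposition \ref{prop:tightness}, then carry out the summations over $y_1$ and $y_2$ using Lemma \ref{lem:five:usefullem}, choosing $\beta$ large enough that its hypotheses are met. Each of the two one-dimensional summations contributes a factor $\sqrt{t-s+1}\, e^{u\ep|x_i|}$, which cancels the temporal prefactor down to $(t-s+1)^{-1/2}$ in the single-gradient case and $(t-s+1)^{-1}$ in the double-gradient case. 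Multiplying by $\ep^{-1/2}$ and $\ep^{-1}$ respectively produces the stated bounds.

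No genuine obstacle is expected beyond bookkeeping: the subtle points are making sure the duality is applied to properly ordered pairs (so that the boundary case $x_1 = x_2$ in (a) is handled by $\nabla_{x_2}$ rather than $\nabla_{x_1}$) and verifying the Weyl-chamber conditions required by Proposition \ref{prop:semiestimate}. The sharp kernel-gradient estimates supplied by Proposition \ref{prop:semiestimate}, obtained via the steepest-descent analysis of Section \ref{sec:asymptotic analysis}, do all of the real work.
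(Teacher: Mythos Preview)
Your proposal is correct and follows essentially the same approach as the paper: transfer the gradients to $\rhzrte$ via the duality \eqref{eq:fdualt}, handle the boundary case $x_1=x_2$ using $\nabla_{x_2}$ instead of $\nabla_{x_1}$, invoke Proposition \ref{prop:semiestimate} (b) and (c) for the kernel-gradient bounds, and finish with the moment bound \eqref{eq:tightness1} together with Lemma \ref{lem:five:usefullem}. The bookkeeping you outline (Weyl-chamber conditions, choosing $\beta$ large) matches the paper's treatment.
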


\begin{proof}
Let us first justify \eqref{eq:conditionex1}. Recall the two point duality \eqref{eq:fdualt},
\begin{equation*}
\EE\big[Z(t, x_1) Z(t, x_2) \vv \FF(s)\big] = \sum_{y_1 \leq y_2 \in \Xi(s)^2} \rhzrt\big((x_1, x_2), (y_1, y_2), t, s\big) Z(s, y_1) Z(s, y_2).
\end{equation*}
As $\onegradz{t, x_1, x_2} = \epsilon^{-\frac{1}{2}} \nabla Z(t, x_1) Z(t, x_2)$, it is straightforward that by this duality, if $x_1 < x_2 $,
\begin{align*} 
\EE\big[\onegradz{t, x_1, x_2} \vv \FF(s) \big] 
%= \EE\big[\epsilon^{-\frac{1}{2}} Z(t, x_1 +1) Z(t, x_2) \vv \FF(s) \big] - \EE\big[\epsilon^{-\frac{1}{2}} Z(t, x_1) Z(t, x_2) \vv \FF(s)\big],\\
%&=\epsilon^{-\frac{1}{2}} \bigg[ \sum_{y_1 \leq y_2 \in \Xi(s)} \rhzrte\big((x_1 +1, x_2), (y_1, y_2),t, s\big) Z(s, y_1) Z(s, y_2) - \sum_{y_1 \leq y_2 \in \Xi(s)} \rhzrte\big((x_1, x_2), (y_1, y_2), t, s\big) Z(s, y_1) Z(s, y_2)\bigg],\\
\numberthis \label{eq:condexpan1}
&= \epsilon^{-\frac{1}{2}} \sum_{y_1 \leq y_2 \in \Xi(s)} \na_{x_1}  \rhzrte\big((x_1, x_2), (y_1, y_2), t, s\big) Z(s, y_1) Z(s, y_2).
\end{align*}
If $x_1 = x_2$, 
\begin{align*}
\EE\big[\onegradz{t, x_1, x_2} \vv \FF(s) \big] %= \EE\big[\epsilon^{-\frac{1}{2}} Z(t, x_1 +1) Z(t, x_1) \vv \FF(s) \big] - \EE\big[\epsilon^{-\frac{1}{2}} Z(t, x_1) Z(t, x_1) \vv \FF(s)\big],\\
%&=\epsilon^{-\frac{1}{2}} \bigg[ \sum_{y_1 \leq y_2 \in \Xi(s)} \rhzrte\big((x_1, x_1 +1), (y_1, y_2), t, s\big) Z(s, y_1) Z(s, y_2) - \sum_{y_1 \leq y_2 \in \Xi(s)} \rhzrte\big((x_1, x_1), (y_1, y_2), t, s\big) Z(s, y_1) Z(s, y_2)\bigg],\\
&= \epsilon^{-\frac{1}{2}} \sum_{y_1 \leq y_2 \in \Xi(s)} \na_{x_2}  \rhzrte \big((x_1, x_1), (y_1, y_2), t, s\big) Z(s, y_1) Z(s, y_2).
\end{align*}
We assume $x_1 < x_2$ without loss of generosity, the proof of \eqref{eq:conditionex1} for $x_1 = x_2$ will be similar (one only needs to replicate the estimate of $\na_{x_1} \rhzrte$ to $\na_{x_2} \rhzrte$).
%since the proof of \eqref{eq:conditionex1} when $x_1 = x_2$ is similar. 
By the estimate of $\na_{x_1} \rhzrte$ provided in Theorem \ref{prop:semiestimate} (b), we see that $$\big|\na_{x_1}  \rhzrte\big((x_1, x_2), (y_1, y_2), t, s\big)\big| \leq \frac{C(\beta, T)}{(t - s + 1)^{\frac{3}{2}}} e^{- \frac{\beta (|x_1 - y_1| + |x_2 - y_2 |) }{\sqrt{t-s +1} + C(\beta)}}. $$ 
This, together with the moment bound of $Z(t, x)$ in \eqref{eq:tightness1} 
%and triangle inequality 
yields 
\begin{equation*}
\bignorm{\sum_{y_1 \leq y_2} \na_{x_1} \rhzrte \big((x_1, x_2), (y_1, y_2), t, s\big) Z(s, y_1) Z(s, y_2)}_n \leq \sum_{y_1 \leq y_2 \in \Xi(s)}  \frac{C(\beta, T)}{(t-s+1)^{\frac{3}{2}}} e^{-\frac{\beta (|x_1 - y_1 | + |x_2 - y_2|)}{\sqrt{t-s+1} + C(\beta)}} e^{u \epsilon |y_1| } e^{u \epsilon |y_2 |}
\end{equation*}
Due to Lemma \ref{lem:five:usefullem}, we see that we can choose $\beta$ large enough so that 
\begin{align*}
\sum_{y_1, y_2 \in \Xi(s)} e^{- \frac{\beta (|x_1 -y_1| + |x_2 - y_2|)}{\sqrt{t-s+1} + C(\beta)}} e^{u\epsilon (|y_1| + |y_2|)}
&\leq \bigg(\sum_{y_1 \in \Xi(s)} e^{- \frac{\beta |x_1 -y_1|}{\sqrt{t-s+1} + C(\beta)}} e^{u\ep(|y_1|)}\bigg) \bigg(\sum_{y_2 \in \Xi(s)} e^{- \frac{\beta |x_2 -y_2|}{\sqrt{t-s+1} + C(\beta)}} e^{u\ep(|y_2|)}\bigg),\\
%\numberthis \label{eq:72temp2}
&\leq C (t-s +1) e^{u \epsilon (|x_1| + |x_2|)}.
\end{align*}
Thus, 
\begin{equation*}
%\label{eq:72temp1}
\bignorm{\sum_{y_1 \leq y_2} \na_{x_1} \rhzrte \big((x_1, x_2), (y_1, y_2), t, s\big) Z(s, y_1) Z(s, y_2)}_n \leq \frac{C(\beta, T)}{\sqrt{t-s+1}} e^{u\ep (|x_1| + |x_2|)}.
\end{equation*}
%We remark that since the choce of $\beta$ depends on $u$ (which is the constant in (?)), then the constant $C$ above only depends on $T$ and $u$.
Referring to  \eqref{eq:condexpan1}, 
%and \eqref{eq:72temp1} 
we conclude \eqref{eq:conditionex1}.
%$\big|\EE\big[\onegradz{t, x_1, x_2} \vv \FF(s)\big]\big| \leq \frac{\epsilon^{-\frac{1}{2}} C}{t - s +1} e^{u \epsilon (|x_1| + |x_2|)}$.
\bigskip
\\
We turn our attention to prove \eqref{eq:conditionex2}. 
%Similar to in the previous case,  the first step is to move the gradient from $Z(t, x_1), Z(t, x_2)$ to $\rhzrt$. 
With the aid of \eqref{eq:fdualt}, one has for $x_1 < x_2 \in \Xi(t)$,
\begin{align*}
\EE\big[\twogradz{t, x_1, x_2} \vv \FF(s)\big] &= \epsilon^{-1} \EE\big[\na Z(t, x_1) \na Z(t, x_2) \vv \FF(s)\big],\\
\numberthis \label{eq:condexpan2}
&= \epsilon^{-1} \sum_{y_1 \leq y_2 \in \Xi(s)}
\na_{x_1, x_2} \rhzrte \big((x_1, x_2), (y_1, y_2), t, s\big) Z(s, y_1) Z(s, y_2).
\end{align*}
Note that \eqref{eq:condexpan2} does not hold when $x_1 = x_2$ (see Remark \ref{rmk:temp1} below). Theorem \ref{prop:semiestimate} $(c)$ implies $$\big|\na_{x_1, x_2} \rhzrte \big((x_1, x_2), (y_1, y_2), t, s\big)\big| \leq \frac{C(\beta, T)}{(t-s+1)^2} e^{-\frac{\beta(|x_1 - y_1| + |x_2 - y_2|)}{\sqrt{t-s+1} + C(\beta)}}.$$ 
By same argument used in proving \eqref{eq:conditionex1}, one has
\begin{align*}
\bignorm{\EE\big[\twogradz{t, x_1, x_2}  \vv \FF(s)\big]}_n \leq \frac{C \ep^{-1}}{t - s + 1} e^{u \epsilon (|x_1| + |x_2|)}. 
\end{align*}
This concludes the proof of the lemma.
\end{proof}
With the help of the preceding lemma, we proceed to prove Lemma \ref{lem:timedecorr1}.
\begin{proof}[Proof of Lemma \ref{lem:timedecorr1}]
Referring to \eqref{eq:onegradydefin}, \eqref{eq:twogradydefin} that 
%$\onegrad{s, x} =  (\frac{2\rho}{I} -1) \sum_{i \in \NN} \ue(s, i) \onegradz{s, x - i, x}$, thus
\begin{align*}
\sum_{s=0}^t  \onegrad{s, \xstar (s)} &=  \bigg(\frac{2\rho}{I} -1\bigg) \sum_{i\in \NN}  \sum_{s=0}^t \ue(s, i) \onegradz{s, \xstar(s) - i, \xstar(s)},\\
\sum_{s = 0}^t \twograd{s, \xstar (s)} &= \sum_{i > j \in \NN}  \sum_{s=0}^t \ue(s, i) \ue(s, j)  \twogradz{s, x-i, x-j}. 
\end{align*}  
By triangle inequality, one has 
\begin{align*}
\bignorm{\ep^2 \sum_{s = 0}^t \onegrad{s, \xstar(s)}}_2 &\leq \bigg(\frac{2\rho}{I} -1\bigg) \sum_{i \in \NN} \bignorm{\ep^2 \sum_{s = 0}^{t} \ue(s, i) \onegradz{s, \xstar(s)-i, \xstar(s)}}_2\\
\bignorm{\ep^2 \sum_{s = 0}^t \twograd{s, \xstar(s)}}_2 &\leq \sum_{i > j \in \NN} \bignorm{\ep^2 \sum_{s = 0}^{t} \ue(s, i) \ue(s, j) \twogradz{s, \xstar(s)-i, \xstar(s) - j}}_2.
\end{align*}
To prove Lemma \ref{lem:timedecorr1}, it is sufficient to show that there exists constant $C, u$ such that for all $t \in [0, \epsilon^{-2} T] \cap \ZZ$, $\xstar \in \ZZ$ and some constant $0 < \delta < 1$,
\begin{align}\label{eq:72sts1}
&\bignorm{\ep^2 \sum_{s = 0}^{t} \ue(s, i) \onegradz{s, \xstar(s)-i, \xstar(s)}}_2 \leq C \epsilon^{\frac{1}{4}} e^{u \epsilon (2 |\xstar| + i) } \delta^{i}, \qquad \forall i \in \ZZn,\\
\label{eq:72sts2}
&\bignorm{\ep^2 \sum_{s = 0}^{t} \ue(s, i) \ue(s, j) \twogradz{s, \xstar(s)-i, \xstar(s) - j}}_2 \leq C \epsilon^{\frac{1}{4}} e^{u \epsilon (2 |\xstar| + i + j)} \delta^{i+j}, \qquad \forall i > j \in \NN.
\end{align}
Note that here, we include $i =0$ in \eqref{eq:72sts1}, which is not needed to prove Lemma \ref{lem:timedecorr1}. We are going to use this in the proof of Lemma \ref{lem:timedecorr2}.
\bigskip
\\
%is the constant on the RHS of \eqref{eq:71expdecay}.
We begin with proving \eqref{eq:72sts1}, by writing
\begin{align*}
&\bignorm{\sum_{s = 0}^{t} \ue(s, i) \onegradz{s, \xstar(s) - i, \xstar(s)}}_2^2\\ 
&= 2 \sum_{0 \leq s_1 < s_2 \leq t}  \EE \bigg[\ue(s_1, i) \ue(s_2, i) \onegradz{s_1, \xstar(s_1) - i, \xstar(s_1)} 
 \onegradz{s_2, \xstar(s_2) - i, \xstar(s_2)}\bigg]\\ 
&\quad +\sum_{s=0}^t \EE\big[\ue(s, i)^2 \onegradz{s, \xstar(s)-i, \xstar(s)}^2\big]
\\
&= 2 \sum_{0 \leq s_1 < s_2 \leq t} \ue(s_1, i) \ue(s_2, i)  \EE \bigg[\onegradz{s_1, \xstar(s_1) - i, \xstar(s_1)} \EE\big[\onegradz{s_2, \xstar(s_2) - i, \xstar(s_2)} \vv \FF(s_1)\big]\bigg]   \\
&\quad+\sum_{s=0}^t \ue(s, i)^2 \EE\big[\onegradz{s, \xstar(s)-i, \xstar(s)}^2\big]
\end{align*}
Using \eqref{eq:71expdecay} to bound $\ue(s, i)$, one has 
\begin{align*}
\bignorm{\sum_{s = 0}^{t} \ue(s, i) \onegradz{s, \xstar(s) - i, \xstar(s)}}_2^2 &\leq C \delta^{2i} \sum_{0 \leq s_1 < s_2 \leq t} \bigg| \EE \bigg[\onegradz{s_1, \xstar(s_1) - i, \xstar(s_1)} \EE\big[\onegradz{s_2, \xstar(s_2) - i, \xstar(s_2)} \vv \FF(s_1)\big]\bigg] \bigg|   \\
\numberthis \label{eq:72expandl2a} 
&\quad + C \delta^{2i} \sum_{s=0}^t  \EE\big[\onegradz{s, \xstar(s)-i, \xstar(s)}^2\big]
\end{align*}
Let us analyze the two terms on the RHS of \eqref{eq:72expandl2a} respectively.  For the first term,
via Cauchy-Schwarz inequality $|\EE\big[XY\big]| \leq \norm{X}_2 \norm{Y}_2$, one has
\begin{align*}
&\bigg|\EE \bigg[\onegradz{s_1, \xstar(s_1) - i, \xstar(s_1)} \EE\big[\onegradz{s_2, \xstar(s_2) - i, \xstar(s_2)} \vv \FF(s_1)\big]\bigg]\bigg| \\
&\leq \norm{\onegradz{s_1, \xstar(s_1) - i, \xstar(s_1)}}_2 \norm{\EE\big[\onegradz{s_2, \xstar(s_2) - i, \xstar(s_2)} \vv \FF(s_1)\big]}_2
\end{align*}
By the moment bound in Proposition \ref{prop:tightness}, we have $\norm{\onegradz{s, x_1, x_2}}_2 \leq C e^{u \epsilon (|x_1| + |x_2|)} $. Combining this with \eqref{eq:conditionex1}, 
\begin{align*}
&\bigg|\EE \bigg[\onegradz{s_1, \xstar(s_1) - i, \xstar(s_1)} \EE\big[\onegradz{s_2, \xstar(s_2) - i, \xstar(s_2)} \vv \FF(s_1)\big]\bigg]\bigg|\\ 
&\leq   C e^{u \epsilon (|\xstar(s_1) - i| + |\xstar(s_1)|)}  \frac{\epsilon^{-\frac{1}{2}}}{\sqrt{s_2 -s_1 +1}} e^{u \epsilon (|\xstar(s_2) - i| + |\xstar(s_2)|) }\\ 
%\numberthis \label{eq:seven1:temp3}
&\leq \frac{C \epsilon^{-\frac{1}{2}}}{\sqrt{s_2 - s_1 +1 }}  e^{2 u \epsilon (|\xstar| + |\xstar - i|) }.
\end{align*}
Consequently, the first term in \eqref{eq:72expandl2a} is upper bounded by 
\begin{align*}
&\bigg|\sum_{0 \leq s_1 < s_2 \leq t} \EE \bigg[\onegradz{s_1, \xstar(s_1) - i, \xstar(s_1)} \EE\big[\onegradz{s_2, \xstar(s_2) - i, \xstar(s_2)} \vv \FF(s_1)\big]\bigg] \bigg| \leq \sum_{0 \leq s_1 < s_2 \leq t} \frac{C \epsilon^{-\frac{1}{2}}}{\sqrt{s_2 - s_1 + 1}} e^{2 u \epsilon (|\xstar| + |\xstar - i|)} \\
\numberthis \label{eq:72firstba}
&\leq C \epsilon^{-\frac{1}{2}} t^{\frac{3}{2}} e^{2 u \epsilon (2 |\xstar| + i)} \leq C \epsilon^{-\frac{7}{2}} e^{2u \epsilon (2 |\xstar| + i )}.
\end{align*}
where in the second inequality above we used the integral approximation $$\sum_{0 \leq s_1 < s_2 \leq t} \frac{1}{\sqrt{s_2 - s_1 +1}} \leq C \int_{0 \leq s_1 \leq s_2 \leq t} \frac{ds_1 ds_2}{\sqrt{s_2 - s_1}} = C t^{\frac{3}{2}}$$
and in the last inequality we used  $t \leq \epsilon^{-2} T$.
\bigskip
\\
Using again $\norm{\onegradz{s, x_1, x_2}}_2 \leq C e^{u \epsilon (|x_1| + |x_2|)} $, the second term in \eqref{eq:72expandl2a} is readily upper bounded by
\begin{align}\label{eq:72secondba}
\bigg|\sum_{s = 0}^t \EE\big[\onegradz{s, \xstar(s) - i, \xstar(s)}^2\big]\bigg| \leq  C \sum_{s = 0}^t e^{2u \epsilon (|\xstar| + |\xstar - i|)} \leq C \epsilon^{-2} e^{2u \epsilon (2 |\xstar| + i) }.
\end{align}
Incorporating the bounds \eqref{eq:72firstba} and \eqref{eq:72secondba} into the RHS of \eqref{eq:72expandl2a}, we get \eqref{eq:72sts1}.
\bigskip
\\
We proceed to justify \eqref{eq:72sts2}, the method is similar to the proof of \eqref{eq:72sts1}. Write
\begin{align*}
&\bignorm{\sum_{s = 0}^{t} \ue(s, i) \ue(s, j) \twogradz{s, \xstar(s) - i, \xstar(s) - j}}_2^2 \\
&= 2 \sum_{0 \leq s_1 < s_2 \leq t} \ue(s_1, i) \ue(s_1, j) \ue(s_2, i) \ue(s_2, j)  \EE \bigg[\twogradz{s_1, \xstar(s_1) - i, \xstar(s_1) - j} \EE\big[\twogradz{s_2, \xstar(s_2) - i, \xstar(s_2) - j} \vv \FF(s_1)\big]\bigg]\\  
&\quad+\sum_{s=0}^t \ue(s, i)^2 \ue(s, j)^2 \EE\big[\twogradz{s, \xstar(s)-i, \xstar(s) - j}^2\big].
\end{align*} 
Using again \eqref{eq:71expdecay}, one has 
\begin{align*}
&\bignorm{\sum_{s = 0}^{t} \ue(s, i) \ue(s, j) \twogradz{s, \xstar(s) - i, \xstar(s) - j}}_2^2\\ &\leq C \delta^{2(i+j)} \sum_{0 \leq s_1 < s_2 \leq t} \bigg|\EE\bigg[\twogradz{s_1, \xstar(s_1) - i, \xstar(s_1) - j} \EE\big[\twogradz{s_2, \xstar(s_2) - i, \xstar(s_2) - j} \vv \FF(s_1)\big]\bigg] \bigg|\\
\numberthis \label{eq:72expandl2b}
&\quad + C \delta^{2(i+j)} \sum_{s=0}^t \EE\big[\twogradz{s, \xstar(s)-i, \xstar(s) - j}^2\big].
\end{align*}
Let us analyze the two terms on the RHS of \eqref{eq:72expandl2b} respectively.
For the first term, by Cauchy Schwarz, 
\begin{align*}
&\bigg| \EE\bigg[\twogradz{s_1, \xstar(s_1) - i, \xstar(s_1) - j} \EE\big[\twogradz{s_2, \xstar(s_2) - i, \xstar(s_2) - j} \vv 
\FF(s_1)\big]\bigg]\bigg|\\
&\leq \norm{\twogradz{s_1, \xstar(s_1) - i, \xstar(s_1) - j}}_2 \norm{\EE\big[\twogradz{s_2, \xstar(s_2) - i, \xstar(s_2) - j} \vv 
	\FF(s_1)\big]}_2
\end{align*}
Using the bound $\norm{\onegradz{s, x_1, x_2}}_2 \leq C e^{u \epsilon (|x_1| + |x_2|)} $ and  \eqref{eq:conditionex2}, we have
\begin{align*}
&\bigg| \EE\bigg[\twogradz{s_1, \xstar(s_1) - i, \xstar(s_1) - j} \EE\big[\twogradz{s_2, \xstar(s_2) - i, \xstar(s_2) - j} \vv 
\FF(s_1)\big]\bigg]\bigg|\\
&\leq  e^{u \epsilon (|\xstar - i| + |\xstar - j|)} \frac{C \epsilon^{-1}}{s_2  -s_1 +1} e^{u \epsilon (|\xstar - i| + |\xstar - j|)} = \frac{C \epsilon^{-1}}{s_2  -s_1 +1} e^{2u \epsilon (|\xstar - i| + |\xstar - j|)}.
\end{align*}
Therefore,
\begin{align*}
&\sum_{0 \leq s_1 < s_2 \leq t} \bigg| \EE \bigg[\twogradz{s_1, \xstar(s_1) - i, \xstar(s_1) - j} \EE\big[\twogradz{s_2, \xstar(s_2) - i, \xstar(s_2) - j} \vv \FF(s_1)\big]\bigg] \bigg| 
\\  \numberthis \label{eq:seven:temp7}
&\leq \sum_{0 \leq s_1 < s_2 \leq t} \frac{C\ep^{-1}}{s_2 - s_1 + 1} e^{2u \epsilon (|\xstar - i| + |\xstar - j|)}\\
&\leq C \epsilon^{-1} (t+1) \log(t + 1) e^{2u \epsilon (|\xstar - i| + |\xstar - j|)} \leq C \epsilon^{-\frac{7}{2}} e^{2u \epsilon (2 |\xstar| + i + j)}.
\end{align*}
In the second inequality above, we used the integral approximation 
\begin{equation*}
\sum_{0 \leq s_1 < s_2 \leq t} \frac{1}{s_2 - s_1 + 1} \leq C \int_{0 \leq s_1 \leq s_2 \leq t} \frac{1}{s_2 - s_1 + 1} ds_1 ds_2 \leq C (t+1) \log(t+1). 
\end{equation*}
For the second term in \eqref{eq:72expandl2b}, it is clear that 
\begin{equation}\label{eq:72secondbb}
\sum_{s = 0}^t \EE\big[\twogradz{s, \xstar(s)- i, \xstar(s) - j}^2\big] \leq C t e^{2u \epsilon (2 |\xstar| + i + j)} \leq C \ep^{-2} e^{2u \epsilon (2 |\xstar| + i + j)}. 
\end{equation}
Incorporating the bounds \eqref{eq:seven:temp7} and \eqref{eq:72secondbb} into the RHS of \eqref{eq:72expandl2b}, we prove the desired \eqref{eq:72sts2}.
\end{proof}
\begin{remark}\label{rmk:temp1}
In the argument above, we showed $\twogradz{t, x_1, x_2} = (\ep^{-\frac{1}{2}}\na Z(t, x_1)) (\ep^{-\frac{1}{2}} \na Z(t, x_2))$ vanishes after averaging over a long time interval when $x_1 \neq x_2$. The readers might wonder whether the same holds for $x_1 = x_2$? The answer is negative. In the case $x_1 \neq x_2$, we used two particle duality \eqref{eq:fdualt} to move the gradient from $Z$ to $\rhzrte$
\begin{equation*}
\EE\big[\twogradz{t, x_1, x_2} \vv \FF(s)\big] = \epsilon^{-1} \sum_{y_1 \leq y_2 \in \Xi(s)} \nabla_{x_1, x_2} \rhzrte \big((x_1, x_2), (y_1, y_2), t, s\big) Z(s, y_1) Z(s, y_2).
\end{equation*}
However, if $x_1 = x_2$,  the same two particle duality gives instead
\begin{align*}
&\EE\big[\twogradz{t, x_1, x_2} \vv \FF(s)\big] \\
&=\ep^{-1} \sum_{y_1 \leq y_2 \in \Xi(s)} \Big(\rhzrte\big((x_1 +1, x_1 +1), (y_1, y_2), t, s\big) - 2\rhzrte\big((x_1, x_1 +1), (y_1, y_2), t, s\big)  + 1\Big) Z(s, y_1) Z(s, y_2).
\end{align*} 
The same argument fails because we do not have an effective estimate of $$\rhzrte\big((x_1 +1, x_1 +1), (y_1, y_2), t, s\big) - 2\rhzrte\big((x_1, x_1 +1), (y_1, y_2), t, s\big)  + 1.$$ In fact, when $x_1 = x_2$, $\twogradz{t, x_1, x_2}$ does not vanish after averaging. One quick way to see this is to use %$\twogradz{t, x_1, x_1}$  lower bounded by
\begin{align*}
\twogradz{t, x_1, x_1} &= \big(\epsilon^{-\frac{1}{2}} \nabla Z(t, x_1)\big)^2 \\
&= (\etat_{x_1 +1} (t) - \rho)^2 Z(t, x_1)^2 + \ep^{\frac{1}{2}} \bb Z(t, x_1)^2 \\
&\geq \min\big(1 - \{\rho\}, \{\rho\}\big)^2 Z(t, x_1)^2 + \ep^{\frac{1}{2}} \bb Z(t, x_1)^2
\end{align*}
where $\{\rho\}$ represents the fractional part of $\rho$. This implies that $\twogradz{t, x, x}$ is lower bounded by a constant times $Z(t, x)^2$, which does not vanish after averaging.
%hence $\twogradz{t, x, x}$ "produces" at least $\min(1 - \{\rho\}, \{\rho\}) Z(t, x)^2$.
\end{remark}
\subsection{Proof of Lemma \ref{lem:timedecorr2}}
\label{sec:timedecorr2}
The aim of this section is to justify Lemma \ref{lem:timedecorr2}, which indicates that $\twogradz{t, x, x} - \frac{\rho(I-\rho)}{I} Z(t, x)^2$ vanishes after averaging over a long time interval. This was proved for the stochastic six vertex model \cite{CGST18} (which corresponds to $I=1, J=1$). Note that when $I = 1$, for all $t, x$ one has $\etat_x (t) \in \{0, 1\}$, which yields $\etat_x (t)^2 = \etat_x (t)$. \cite{CGST18} utilizes this crucial observation to show that 
\begin{align*}
\twogradz{t, x, x} &= (\etat_{x+1} (t) - \rho)^2 Z(t, x)^2 + \ep^{\frac{1}{2}} \bb(t, x) Z(t, x)^2,\\ 
&= \rho^2 Z(t, x)^2 + (1 - 2\rho) \etat_{x+1} (t) Z(t, x)^2,
\\
&= \rho (1- \rho) Z(t, x)^2 + (2 \rho - 1) \onegradz{t, x, x} + \ep^{\frac{1}{2}} \bb(t, x) Z(t, x)^2, 
\end{align*} 
where in the last line above, we used \eqref{eq:seven1:temp2}. 
%Using duality \eqref{eq:fdualt}, one can prove that 
We have seen in the previous section that $Z_\nabla (t, x, x)$ vanishes after averaging, which implies that  $ \twogradz{t, x, x} - \rho(1-\rho) Z(t, x)^2$ will also vanish. 
\bigskip
\\
When $I \geq 2$, $\etat_{x} (t)$ can takes more than two values so the $\etat_x (t)^2 = \etat_x (t)$ relation no longer holds. Notice that in the proof of Lemma \ref{lem:timedecorr1}, we have only leveraged the first duality \eqref{eq:fdualt} in the Lemma \ref{lem:dualt}. To conclude Lemma \ref{lem:timedecorr2}, we will combine both of the dualities \eqref{eq:fdualt} and \eqref{eq:sdualt}.
\bigskip
\\
Before moving to the proof, we first offer a heuristic argument to explain why the $\lambda = \frac{\rho(I-\rho)}{I}$ is the value which makes  $\twogradz{t, x, x} - \lambda Z(t, x)^2$ vanish after averaging. 
\begin{proof}[Heuristic argument for Lemma \ref{lem:timedecorr2}]
Note that   
$$\twogradz{t, x, x} = (\etat_{x+1} (t) - \rho)^2 Z(t, x)^2 + \ep^{\frac{1}{2}} \bb(t, x) Z(t, x)^2.$$ 
In Theorem \ref{thm:appendix:stat}, we find that the stationary distribution of the (bi-infinite) \ac{SHS6V} model is given by $\stat $, where $\statmargin$ is defined in \eqref{eq:stat}. It is straightforward to verify that $\stat$ is near stationary with density $\rho$ (Definition \ref{def:nearstationary}). Start the \ac{SHS6V} model from $\vec{\eta}(0) \sim \stat$, by stationarity $\eta_x (t) \sim \statmargin$ for all $t \in \ZZ_{\geq 0}$ and $x \in \ZZ$. Heuristically, we can approximate $(\etat_{x+1} (t) - \rho)^2 Z(t, x)^2$ by $\EE_{\pi_\rho}\big[(\etat_{x+1} (t)- \rho)^2\big] Z(t, x)^2$. Note that
\begin{equation*}
%\twogradz{t, x, x} \approxeq  
\EE_{\pi_\rho}\big[(\etat_{x+1} (t) - \rho)^2\big] Z(t, x)^2 =  \var\big[\pi_\rho\big]  Z(t, x)^2 
\end{equation*}
where $\var\big[\pi_\rho\big]$ represents the variance of the probability distribution $\pi_\rho$.
Referring to Lemma \ref{lem:appendix:stat}, we have
\begin{equation*}
\text{Var}\big[\pi_\rho\big] = \rho - \sum_{i=1}^I \frac{\chi^2}{(q^i - \chi)^2}.
\end{equation*}
where $\chi$ is  the unique negative real number satisfying 
$\sum_{i=1}^{I} \frac{\chi}{\chi  - q^{i}} = \rho.$ It is straightforward that under weak asymmetric scaling \eqref{eq:scaling}, 
%$q = e^{\sqrt{\ep}}$ and  $\chi$ depends on $\ep$ in the way that 
one has $\lim_{\ep \downarrow 0} \chi_\ep = \frac{\rho}{\rho - I}$. Therefore,  
\begin{equation*}
\lim_{\ep \downarrow 0} \var\big[\pi_\rho\big] = \frac{\rho(I - \rho)}{I},
\end{equation*}
which explains $\lambda = \frac{\rho(I-\rho)}{I}$.
%By \eqref{eq:seven1:temp4}, we conclude  
%\begin{equation*}
%\twogradz{t, x, x} \approx \frac{\rho(I - \rho)}{I} Z(t, x)^2.
%\end{equation*}
\end{proof}
We proceed to prove Lemma \ref{lem:timedecorr2} rigorously.
%We start by writing $\twogradz{t, \xstar(t), \xstar(t)}$, which appears on the LHS of \eqref{eq:timedecorr2}, in terms of $D(t, \xstar(t), \xstar(t))$ (see \eqref{eq:dfunc} for definition of the functional $D$). To purpose of doing this is to leverage the second duality \eqref{eq:sdualt}. 
The first step is to express $\twogradz{t, x, x} - \frac{\rho(I - \rho)}{I} Z(t, x)^2$ in terms of the two duality functionals in Lemma \ref{lem:dualt},
%we have the identity
%\begin{equation}\label{eq:73temp2}
%(\etatp(t, x) - \rho)^2 - \frac{\rho(I - \rho)}{I} = (I - \etatp(t, x)) (I-1 -\etatp(t, x)) +  (2\rho + 1- 2I)  (\rho - \etatp(t, x)) - \frac{(I-1)(I-\rho)^2}{I},
%\end{equation}
%we obtain
%\begin{align}\label{eq:73temp1}
%\twogradz{t, x, x} = \big(\epsilon^{-\frac{1}{2}} \na Z(t, x)\big)^2 = (\etatp(t, x) - \rho)^2 Z(t, x)^2 + \epsilon^{\frac{1}{2}} \bb(t, x) Z(t, x)^2   
%\end{align}	
%and 
\begin{align*}
&\twogradz{t, x, x} - \frac{\rho(I - \rho)}{I} Z(t, x)^2\\ 
&= \bigg((\etat_{x+1} (t) - \rho)^2 - \frac{\rho(I - \rho)}{I}\bigg) Z(t, x)^2 + \epsilon^{\frac{1}{2}} \bb(t, x) Z(t, x)^2  \\ 
%&= (I - \etatp(t, x)) (I-1 -\etatp(t, x)) Z(t, x)^2 - (2\rho + 1- 2I)  (\rho - \etatp(t, x)) Z(t, x)^2 - \frac{(I-1)(I-\rho)^2}{I} Z(t, x)^2 + \epsilon^{\frac{1}{2}} \bb(t, x) Z(t, x)^2,\\
&= \bigg((I - \etat_{x+1}(t)) (I - 1- \etat_{x+1}(t)) - \frac{(I-1)(I-\rho)^2}{I}\bigg) Z(t, x)^2 - (2\rho +1 - 2I ) \onegradz{t, x,x} + \epsilon^{\frac{1}{2}} \bb(t, x) Z(t, x)^2\\
\numberthis \label{eq:73temp3}
&=  \bigg((I - \etat_{x+1}(t)) (I - 1- \etat_{x+1}(t)) - \frac{(I-1)(I-\rho)^2}{I}\bigg) Z(t, x+1)^2 - (2\rho +1 - 2I ) \onegradz{t, x,x} + \epsilon^{\frac{1}{2}} \bb(t, x) Z(t, x)^2
\end{align*}
%For the third equality, we used $\etatp(t, x) = \etat(t, x+1)$ and \eqref{eq:seven1:temp2}. 
In the last equality, we replaced $Z(t, x)$ by $Z(t, x+1)$, according to \eqref{eq:seven1:temp5}, this procedure produces an error term which can be absorbed in the $\epsilon^{\frac{1}{2}} \bb(t, x) Z(t, x)^2$.
%, using \eqref{eq:seven1:temp5}.
\bigskip
\\
Recall that $\qhalfint{n} = \frac{q^{\frac{n}{2}} - q^{-\frac{n}{2}}}{q^{\frac{1}{2}} - q^{-\frac{1}{2}}}$. Under weak asymmetric scaling, $q = e^{\sqrt{\epsilon}}$, one has 
\begin{equation}\label{eq:seven:temp9}
\qhalfint{n} = n + \OO(\epsilon^{\frac{1}{2}}), \qquad q^{\eta_x (t)} = 1 + \OO(\eph).
\end{equation}
These approximations imply that 
\begin{align*}
&(I - \etat_{x+1}(t)) (I - 1- \etat_{x+1}(t)) Z(t, x+1)^2 \\
&=\qhalfint{I - \etat_{x+1}(t)} \qhalfint{I - 1- \etat_{x+1}(t)} Z(t, x+1)^2 q^{\etat_{x+1}(t)} + \ep^{\frac{1}{2}} \bb(t, x) Z(t, x)^2,\\ 
\numberthis \label{eq:73temp4}
&= D(t, x+1, x+1) +  \ep^{\frac{1}{2}} \bb(t, x) Z(t, x)^2.
\end{align*}
where we recall the expression of the functional $D$ from \eqref{eq:dfunc}. Inserting \eqref{eq:73temp4} into the RHS of \eqref{eq:73temp3}
\begin{align*}
&\twogradz{t, x, x} - \frac{\rho (I- \rho)}{I} Z(t, x)^2\\ 
\numberthis \label{eq:73temp7}
&= D(t, x+1, x+1) - \frac{(I-1)(I-\rho)^2}{I} Z(t, x+1)^2 - (2\rho +1 - 2I) \onegradz{t, x, x} + \eph \bb(t, x) Z(t, x)^2.
\end{align*} 
Recall that our goal is to show 
$$\bignorm{\epsilon^{2} \sum_{s=0}^{t} \gradsquare{s, \xstar(s)}}_2 \leq C \epsilon^{\frac{1}{4}} e^{2u\ep |\xstar|}.$$
Referring to the expression of $\gradsquare{s, \xstar(s)}$ in \eqref{eq:seven:temp8}, we need to prove that there exists some $0 < \delta < 1$ such that for all $i \in \NN$,
\begin{equation*}
\bignorm{\ep^2 \sum_{s=0}^t \ue(s, i) \Big(\twogradz{s, \xstar(s)-i, \xstar(s)-i} - \frac{\rho(I-\rho)}{I} Z(s, \xstar(s)-i)^2\Big)}_2 \leq C \ep^{\frac{1}{4}} e^{2 u\ep |\xstar|} \delta^i.
\end{equation*}
Using \eqref{eq:73temp7}, it suffices to show that for all $i \in \NN$,
\begin{align}\label{eq:seven1:temp6}
\bignorm{\sum_{s = 0}^t \ue(s, i) \Big(D(s, \xstar(s) +1 -i, \xstar(s) + 1 -i) - \frac{(I-1)(I - \rho)^2}{I} Z(s, \xstar(s) +1 -i)^2\Big) }_2 \leq C \ep^{\frac{1}{4}} e^{2u\ep |\xstar|} \delta^i.
\end{align}
and
\begin{align}\label{eq:seven1:temp8}
\bignorm{\sum_{s = 0}^t \ue(s, i) \onegradz{s, \xstar(s), \xstar(s)}}_2 \leq C \ep^{\frac{1}{4}} e^{2u\ep |\xstar|} \delta^i.
\end{align}
Note that \eqref{eq:seven1:temp8} is proved by taking $i = 0$ in \eqref{eq:72sts1}. Therefore, we only need to prove \eqref{eq:seven1:temp6}.
Similar to the proof in Lemma \ref{lem:timedecorr1}, to conclude \eqref{eq:seven1:temp6}, it suffices to prove the following lemma for upper bounding the conditional expectation. We do not repeat the rest of the proof here. 
\begin{lemma}
For $T > 0$ and $n \in \NN$, there exists constant $C$ and $u$ such that for all $x \in \Xi(t)$ and $s \leq t \in [0, \ep^{-2} T] \cap \ZZ$, 
\begin{align}\label{eq:73conditionex}
\bignorm{\EE\bigg[D(t, x, x) - \frac{(I-1)(I-\rho)^2}{I} Z(t, x)^2 \vvv \FF(s)\bigg]}_n \leq \frac{C \ep^{-\frac{1}{2}}}{\sqrt{t - s + 1}} e^{2u \ep |x|}.
\end{align}
\end{lemma}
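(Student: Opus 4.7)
The strategy is to apply both dualities of Lemma~\ref{lem:dualt} in tandem: since both sides of \eqref{eq:73conditionex} can be expressed through $\rhzrte$, the difference $D(t,x,x)-\frac{(I-1)(I-\rho)^2}{I}Z(t,x)^2$ will be represented as a single sum against the two-particle transition kernel, and then spatial gradients on $Z$ will be transferred onto $\rhzrte$ by summation by parts so that the sharp gradient estimates of Proposition~\ref{prop:semiestimate} can be applied.

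Concretely, I would first combine \eqref{eq:fdualt} and \eqref{eq:sdualt} (taking $x_1=x_2=x$) to write
\begin{equation*}
\EE\!\left[D(t,x,x)-\tfrac{(I-1)(I-\rho)^2}{I}Z(t,x)^2\,\big|\,\FF(s)\right]
= \sum_{y_1\le y_2\in \Xi(s)} \rhzrte\big((x,x),(y_1,y_2),t,s\big)\,F_s(y_1,y_2),
\end{equation*}
with $F_s(y_1,y_2):=D(s,y_1,y_2)-\frac{(I-1)(I-\rho)^2}{I}Z(s,y_1)Z(s,y_2)$. Next I would split the sum into its diagonal part ($y_1=y_2$) and its off-diagonal part ($y_1<y_2$). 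On the diagonal, $F_s(y,y)$ is uniformly bounded by $CZ(s,y)^2$, so Proposition~\ref{prop:semiestimate}(a) together with \eqref{eq:tightness1} and Lemma~\ref{lem:five:usefullem} immediately yields a contribution of order $(t-s+1)^{-1/2}e^{2u\ep|x|}$, which is stronger than the required bound.

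For the off-diagonal piece I would use the $q$-asymptotics $\qhalfint{n}=n+\OO(\ep^{1/2})$ and $q^{\eta}=1+\OO(\ep^{1/2})$ from \eqref{eq:seven:temp9} together with $\frac{[I-1]_{q^{1/2}}}{[I]_{q^{1/2}}}=\frac{I-1}{I}+\OO(\ep)$ to expand
\begin{align*}
F_s(y_1,y_2)&=\tfrac{I-1}{I}\!\left[(I-\etat_{y_1}(s))(I-\etat_{y_2}(s))-(I-\rho)^2\right]Z(s,y_1)Z(s,y_2)+\ep^{1/2}\bb(s,y_1,y_2)Z(s,y_1)Z(s,y_2)\\
&=\tfrac{I-1}{I}\bigl[(I-\etat_{y_2}(s))(\rho-\etat_{y_1}(s))+(I-\rho)(\rho-\etat_{y_2}(s))\bigr]Z(s,y_1)Z(s,y_2)+\ep^{1/2}\bb\,Z Z.
\end{align*}
Then \eqref{eq:seven1:temp2} converts each factor $(\rho-\etat_{y_i}(s))Z(s,y_i-1)$ into $\ep^{-1/2}\na Z(s,y_i-1)$ (up to a $Z(s,y_i-1)\sim Z(s,y_i)$ swap that itself costs only $\ep^{1/2}\bb Z Z$), producing two terms of the form $(\text{coeff depending on }\etat_{y_2}\text{ or constant})\cdot\ep^{-1/2}\na Z\cdot Z$. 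The error term $\ep^{1/2}\bb Z Z$ summed against $\rhzrte$ is harmless: by Proposition~\ref{prop:semiestimate}(a) and Lemma~\ref{lem:five:usefullem} it contributes $\ep^{1/2}e^{2u\ep|x|}$, which is dominated by $\ep^{-1/2}(t-s+1)^{-1/2}e^{2u\ep|x|}$ throughout the regime $t-s\le\ep^{-2}T$.

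The decisive step is a summation by parts in the $y_1$-variable (for fixed $y_2$) on the first term and in $y_2$ (for fixed $y_1$) on the second term. This moves the gradient off $Z$ and onto $\rhzrte$, producing $\na_{y_1}\rhzrte$ and $\na_{y_2}\rhzrte$ respectively, with boundary terms at $y_1=y_2-1$ that are again of diagonal type and hence negligible by the first paragraph's argument. Proposition~\ref{prop:semiestimate}(b) then provides the $(t-s+1)^{-3/2}e^{-\beta(|x-y_1|+|x-y_2|)/\sqrt{t-s+1}}$ decay; combined with the moment bound $\|Z(s,y_i)\|_{2n}\le C e^{u\ep|y_i|}$ from Proposition~\ref{prop:tightness}, H\"older's inequality, and two applications of Lemma~\ref{lem:five:usefullem} summing over $y_1$ and $y_2$ (each giving a $\sqrt{t-s+1}$), the total bound is $\ep^{-1/2}(t-s+1)^{-1/2}e^{2u\ep|x|}$, as required.

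\textbf{Main obstacle.} The delicate point is the summation by parts on the first term, where the coefficient $(I-\etat_{y_2}(s))$ depends on the random environment at position $y_2$ but not at $y_1$; this is the reason for taking $\na_{y_1}$ rather than $\na_{y_2}$ there. One must also confirm that the index shifts incurred when replacing $(\rho-\etat_{y})Z(s,y)$ by $\ep^{-1/2}\na Z(s,y-1)$ do not push the summation outside the $\na_{y_1}$-Weyl chamber \eqref{eq:deltaweyl} where the gradient estimate of Proposition~\ref{prop:semiestimate}(b) is valid; the stray term along the boundary $y_1=y_2-1$ is a diagonal-type contribution handled as above, and the cosmetic difference between $Z(s,y_i-1)$ and $Z(s,y_i)$ produces only an $\ep^{1/2}\bb Z Z$ remainder.
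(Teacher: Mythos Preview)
Your proposal is correct and follows essentially the same route as the paper's proof. The only difference is cosmetic: the paper splits the sum at $|y_1-y_2|\le 2$ versus $|y_1-y_2|\ge 3$ (rather than at the strict diagonal), so that after summation by parts the resulting $\nabla_{y_1}\rhzrte$ automatically sits inside the $\nabla_{y_1}$-Weyl chamber $y_1+1<y_2$ required by Proposition~\ref{prop:semiestimate}(b); your version instead absorbs the boundary strip $y_1\in\{y_2-1,y_2-2\}$ into the ``diagonal-type'' contribution, which works equally well.
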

\begin{proof}
	Combining both of the dualities \eqref{eq:fdualt} and \eqref{eq:sdualt}, one has
\begin{align*}
&\EE\bigg[D(t, x, x) - \frac{(I-1)(I-\rho)^2}{I} Z(t, x)^2 \vvv \FF(s)\bigg] \\
&= \sum_{y_1 \leq y_2 \in \Xi(s)} \rhzrte \big((x, x), (y_1, y_2), t, s\big) \bigg(D(s, y_1, y_2) - \frac{(I-1)(I-\rho)^2}{I} Z(t, y_1) Z(t, y_2)\bigg)
\end{align*}
We split the summation above according to the range of the value of $|y_1 - y_2|$,
\begin{align*}
&\EE\bigg[D(t, x, x) - \frac{(I-1)(I-\rho)^2}{I} Z(t, x)^2 \vvv \FF(s)\bigg]\\ 
&= \sum_{\substack{y_1 < y_2 \in \Xi(s)\\ |y_1 - y_2| \geq 3}} \rhzrte\big((x, x), (y_1, y_2), t, s\big) \bigg(D(s, y_1, y_2) - \frac{(I-1)(I-\rho)^2}{I} Z(s, y_1) Z(s, y_2)\bigg) \\
\numberthis
\label{eq:73temp5}
&\quad+ \sum_{\substack{y_1 \leq y_2 \in \Xi(s) \\ 
|y_1 - y_2| \leq 2}} \rhzrte\big((x, x), (y_1, y_2), t, s\big) \bigg(D(s, y_1, y_2) - \frac{(I-1)(I-\rho)^2}{I} Z(s, y_1) Z(s, y_2)\bigg).
\end{align*}
We name the terms on the RHS of \eqref{eq:73temp5} $\mathbf{E_1}$ and $\mathbf{E_2}$ respectively  and we bound them separately. It follows from Proposition \ref{prop:tightness} that  
\begin{equation*}
\bignorm{D(s, y_1, y_2) - \frac{(I-1)(I-\rho)^2}{I} Z(s, y_1) Z(s, y_2)}_n \leq C e^{u \ep (|y_1| + |y_2|)}
\end{equation*}
Invoking Theorem \ref{prop:semiestimate} (a) and Lemma \ref{lem:five:usefullem}, we find that 
\begin{align}\label{eq:E2bound}
\norm{\mathbf{E_2}}_n \leq \sum_{\substack{y_1 \leq y_2 \in \Xi(s) \\ 
|y_1 - y_2| \leq 2}} \frac{C(\beta, T)}{t-s+1} e^{\frac{-\beta (|y_1-x| +|y_2 - x|)}{\sqrt{t-s+1} + C(\beta)}} e^{u \ep (|y_1| + |y_2|)}  \leq \frac{C}{\sqrt{t-s+1}} e^{2 u\ep |x|}.
\end{align}
We proceed to bound $\mathbf{E_1}$, recall that when $y_1 < y_2$,  
\begin{equation*}
D(s, y_1, y_2) = \frac{\qhalfint{I-1}}{\qhalfint{I}} Z(s, y_1) Z(s, y_2) \qhalfint{I - \etat_{y_1} (s)}  \qhalfint{I - \etat_{y_2}(s)}    q^{\frac{1}{2} \etat_{y_1}(s)} q^{\frac{1}{2} \etat_{y_2}(s)},
\end{equation*}
which could be rewritten as (using \eqref{eq:seven:temp9})
\begin{equation*}
D(s, y_1, y_2) = \frac{I-1}{I} (I - \etat_{y_1}(s)) (I - \etat_{y_2}(s)) Z(s, y_1) Z(s, y_2) + \eph \bb(s, y_1, y_2) Z(s, y_1) Z(s, y_2).
\end{equation*}
Consequently, we write
\begin{align*}
\mathbf{E_1} &= \frac{I-1}{I} \sum_{\substack{y_1 < y_2 \in \Xi(s) \\ |y_1 - y_2| \geq 3}} \rhzrte\big((x, x), (y_1, y_2), t, s\big) \Big((I - \etat_{y_1}(s)) (I - \etat_{y_2} (s)) - (I-\rho)^2\Big) Z(s, y_1) Z(s, y_2) \\
&\quad+\ep^{\frac{1}{2}} \sum_{\substack{y_1 < y_2 \in \Xi(s) \\ |y_1 - y_2| \geq 3}} \rhzrte\big((x, x), (y_1, y_2), t, s\big) \bb(s, y_1, y_2)  Z(s, y_1) Z(s, y_2)\\
&=\frac{I-1}{I} \sum_{\substack{y_1 < y_2 \in \Xi(s) \\ |y_1 - y_2| \geq 3}} \rhzrte\big((x, x), (y_1, y_2), t, s\big) \Big((\rho - \etat_{y_1}(s)) (I - \etat_{y_2}(s)) + (I - \rho)(\rho - \etat_{y_2}(s))\Big) Z(s, y_1) Z(s, y_2) \\
\numberthis \label{eq:73temp6}
&\quad +\ep^{\frac{1}{2}} \sum_{\substack{y_1 < y_2 \in \Xi(s) \\ |y_1 - y_2| \geq 3}} \rhzrte\big((x, x), (y_1, y_2), t, s\big) \bb(s, y_1, y_2)  Z(s, y_1) Z(s, y_2)
\end{align*}
It Is straightforward by \eqref{eq:seven1:temp2} and \eqref{eq:seven1:temp5} that
\begin{align*}
(\rho - \etat_{y_1}(s)) Z(s, y_1) &= (\rho - \etat_{y_1}(s)) Z(s, y_1 - 1) + \eph \bb(s, y_1) Z(s, y_1) = \ep^{-\frac{1}{2}} \nabla Z(s, y_1 - 1) + \eph \bb(s, y_1) Z(s, y_1), \\
(\rho - \etat_{y_2}(s)) Z(s, y_2) &= (\rho - \etat_{y_2}(s)) Z(s, y_2 - 1) + \eph \bb(s, y_2) Z(s, y_2) = \ep^{-\frac{1}{2}} \nabla Z(s, y_2 - 1) + \eph \bb(s, y_2) Z(s, y_2).
\end{align*}
Inserting these into the RHS of \eqref{eq:73temp6},
\begin{align*}
\mathbf{E_1} &= \frac{I-1}{I} \sum_{\substack{y_1 < y_2 \in \Xi(s)\\ |y_1 - y_2 | > 2}} \rhzrte\big((x, x), (y_1, y_2), t, s\big) (I - \etat_{y_2}(s)) (\ep^{-\frac{1}{2}} \nabla Z(s, y_1))  Z(s, y_2)
\\
&\quad+\frac{I-1}{I}  \sum_{\substack{y_1 < y_2 \in \Xi(s)\\ |y_1 - y_2 | > 2}} \rhzrte\big((x, x), (y_1, y_2), t, s\big) (I - \rho) (\ep^{-\frac{1}{2}} \nabla Z(s, y_2)) Z(s, y_1)  \\ &\quad +\sum_{\substack{y_1 < y_2 \in \Xi(s) \\ |y_1 - y_2| > 2}} \ep^{\frac{1}{2}} \rhzrte\big((x, x), (y_1, y_2), t, s\big) \bb(s, y_1, y_2)  Z(s, y_1) Z(s, y_2).
\end{align*}
Let us name respectively the three terms on the RHS above to be $\mathbf{J_1}$, $\mathbf{J_2}$, $\mathbf{J_3}$. Recall the summation by part formula (with notation $\nabla f(x)= f(x+1) - f(x) $)
\begin{align*}
\numberthis
\label{eq:sumbypart1}
&\sum_{x < y  } \na f(x) \cdot g(x) = f(y) g(y-1) - \sum_{x < y} f(x)  \cdot \na g(x-1),\\
%\label{eq:sumbypart2}
&\sum_{x > y} \na f(x) \cdot g(x) = -f(y+1) g(y+1) - \sum_{x > y} f(x+1) \na g(x).
\end{align*}
Note that 
\begin{equation*}
\mathbf{J_1} = \frac{I-1}{I} \sum_{\substack{y_1 < y_2 \in \Xi(s)\\ |y_1 - y_2 | > 2}} \rhzrte\big((x, x), (y_1, y_2), t, s\big) (I - \etat_{y_2}(s)) (\ep^{-\frac{1}{2}} \nabla Z(s, y_1))  Z(s, y_2),
\end{equation*} 
by \eqref{eq:sumbypart1}, we move the gradient from $\na Z(s, y_1)$ to $\rhzrte$, 
\begin{align*}
\mathbf{J_1} &= \frac{I-1}{I} \bigg[\sum_{y_2 \in \Xi(s)} \ep^{-\frac{1}{2}} \rhzrte\big((x, x), (y_2-3, y_2), t, s\big) (I -\etat_{y_2}(s)) Z(s, y_2-3) Z(s, y_2)\\ &\quad-\sum_{\substack{y_1 < y_2 \in \Xi(s)\\ |y_1 - y_2| > 2}} \ep^{-\frac{1}{2}} \na_{y_1} \rhzrte \big((x, x), (y_1, y_2), t, s\big) (I - \etat_{y_2}(s)) Z(s, y_1) Z(s, y_2)\bigg].
\end{align*} 
Using Theorem \ref{prop:semiestimate} part (a) and part (b) to control $\rhzrte$ and $\nabla \rhzrte$ respectively, we see that for $n \in \NN$,
\begin{align*}
%\label{eq:73temp7}
\norm{\mathbf{J_1}}_n &\leq C(\beta, T) \bigg(\sum_{y_2 \in \Xi(s)}  \frac{\ep^{-\frac{1}{2}}}{t - s +1} e^{\frac{-\beta (|y_2-x| + |y_2 - 3 - x|)}{\sqrt{t-s+1} + C(\beta)}} e^{u \ep (|y_2 - 3| + |y_2|)}\\ 
&\quad + \sum_{{\substack{y_1 \leq y_2} \in \Xi(s) }} \frac{\ep^{-\frac{1}{2}}}{(t-s+1)^{\frac{3}{2}}} e^{-\frac{\beta (|y_1 -x_1 |+ |y_2 - x_2|)}{\sqrt{t-s+1} + C(\beta)}} e^{u \ep (|y_1| + |y_2|)}\bigg).
\end{align*}
Applying Lemma \ref{lem:five:usefullem}
yields $\norm{\mathbf{J_1}}_n \leq \frac{C\ep^{-\frac{1}{2}}}{\sqrt{t-s+1}} e^{2 u\ep |x|}$.
Likewise,  we obtain $\norm{\mathbf{J_2}}_n \leq \frac{C\ep^{-\frac{1}{2}}}{\sqrt{t-s+1}} e^{2 u\ep |x|}$. 
\bigskip
\\
For $\mathbf{J_3}$, applying Theorem \ref{prop:semiestimate} part (a) and Lemma \ref{lem:five:usefullem} implies that
\begin{equation*}
\norm{\mathbf{J_3}}_n \leq \sum_{y_1 \leq y_2}  \frac{C(\beta, T) \ep^{\frac{1}{2}}}{t-s+1} e^{-\frac{\beta(|x  - y_1| + |x - y_2|)}{\sqrt{t-s+1} + C(\beta)}} e^{u \ep (|y_1| + |y_2|)} \leq C \eph  e^{2u \ep |x| } \leq  \frac{C \ep^{-\frac{1}{2}}}{\sqrt{t-s+1}} e^{2 u \ep |x|}. 
\end{equation*}
In the last inequality above, we used the fact $s \leq t \in [0, \ep^{-2} T]$, which implies $t-s \leq \ep^{-2} T$.
\bigskip
\\
Combining the bounds for $\norm{\mathbf{J_1}}_n$, $\norm{\mathbf{J_2}}_n$, $\norm{\mathbf{J_3}}_n$, we have
\begin{equation}\label{eq:seven1:temp7}
\norm{\mathbf{E_1}}_n \leq \frac{C \ep^{-\frac{1}{2}}}{\sqrt{t-s+1}} e^{2u \ep |x|}.
\end{equation}
Recall from \eqref{eq:73temp5} that
\begin{equation*}
\EE\bigg[D(t, x, x) - \frac{(I-1)(I-\rho)^2}{I} Z(t, x)^2 \vvv \FF(s)\bigg] = \mathbf{E_1} + \mathbf{E_2},
\end{equation*}
combining the bounds for $\mathbf{E_1}$ and $\mathbf{E_2}$ in \eqref{eq:seven1:temp7} and \eqref{eq:E2bound}, we conclude the desired \eqref{eq:73conditionex}.
\end{proof}

\appendix

\section{Stationary distribution of the SHS6V model}\label{sec:stat}
 In this section, we provide a one parameter family of stationary distribution for the unfused \ac{SHS6V} model. It is worth to remark that in the recent work of \cite{IMS19}, a translation-invariant Gibbs measure was obtained (using the idea from \cite{Agg16}) for the space-time inhomogeneous \ac{SHS6V} model on the full lattice, see Proposition 4.5 of \cite{IMS19}. However, It is not obvious that the dynamic of \ac{SHS6V} model under this Gibbs measure coincides with the one of the bi-infinite \ac{SHS6V} model specified in Lemma \ref{lem:biinfinite}. This being the case, we choose to proceed without relying on the result from \cite{IMS19}.
\bigskip
\\
We start with a well-known combinatoric lemma.
\begin{lemma}[q-binomial formula]\label{thm:qbinom}
Set $\nu  = q^{-I}$ as usual, the following identity holds for all $q \in \CC$,
\begin{equation*}\label{eq:qbinom}
\sum_{n=0}^I \frac{(\nu; q)_n}{(q; q)_n} z^n = \frac{(\nu z; q)_\infty}{(z; q)_\infty}.
\end{equation*}
\end{lemma}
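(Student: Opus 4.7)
The plan is to deduce this as the terminating specialization of the classical (non-terminating) $q$-binomial theorem. Recall that for $|q|<1$, $|z|<1$, and arbitrary $a \in \mathbb{C}$,
\begin{equation*}
\sum_{n=0}^\infty \frac{(a;q)_n}{(q;q)_n}\, z^n = \frac{(az;q)_\infty}{(z;q)_\infty}.
\end{equation*}
This identity can be proved in the standard way by checking that both sides, viewed as functions of $z$, satisfy the same functional equation $(1-z)\,f(z) = (1-az)\,f(qz)$ together with $f(0)=1$; expanding $f(z) = \sum c_n z^n$ produces the recursion $c_n = \frac{1-aq^{n-1}}{1-q^n}c_{n-1}$, which gives the claimed coefficient.

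Next I would specialize $a = \nu = q^{-I}$. The key observation is that $(q^{-I};q)_n = \prod_{k=0}^{n-1}(1-q^{k-I})$ contains the factor $1-q^0=0$ as soon as $n \geq I+1$, so every term with $n > I$ drops out of the left-hand side; this is precisely how the infinite sum collapses to the finite sum in the statement. For the right-hand side, a telescoping index shift gives
\begin{equation*}
\frac{(q^{-I} z; q)_\infty}{(z;q)_\infty} = \prod_{k=0}^\infty \frac{1 - q^{k-I} z}{1 - q^k z} = \prod_{m=1}^{I}(1 - q^{-m}z),
\end{equation*}
so both sides are in fact polynomials in $z$ of degree at most $I$, and the identity holds as a polynomial identity in $z$ whenever $|q|<1$.

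Finally, the coefficients of these two polynomials in $z$ are rational functions of $q$ (Laurent polynomials once one clears $(q;q)_n$). Agreement on the open set $\{|q|<1\}$ therefore forces agreement wherever both sides are defined, giving the identity for all $q\in\mathbb{C}$ as stated. I do not expect any genuine obstacle here; the only ingredient used from outside is the classical non-terminating $q$-binomial theorem, and if a fully self-contained argument is desired one can instead induct on $I$, using the two-term recursion $(1 - q^{-I-1}z)\prod_{m=1}^{I}(1-q^{-m}z)\big|_{z\mapsto qz} = \prod_{m=1}^{I+1}(1-q^{-m}z)$ on the product side together with the corresponding $q$-Pascal recursion $\frac{(q^{-I-1};q)_n}{(q;q)_n} = \frac{(q^{-I};q)_n}{(q;q)_n} + (\text{shift term})$ on the sum side to pass from $I$ to $I+1$.
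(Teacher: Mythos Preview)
Your proposal is correct and follows essentially the same route as the paper: cite the non-terminating $q$-binomial theorem and observe that the specialization $\nu=q^{-I}$ forces $(\nu;q)_n=0$ for $n>I$, truncating the sum. The paper's version is simply a terser statement of exactly this argument (without the extra telescoping and analytic-continuation remarks you include).
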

\begin{proof}
According to $q$-binomial theorem \cite{GR00}, 
\begin{equation*}
\sum_{n=0}^\infty \frac{(\nu; q)_n}{(q; q)_n} z^n = \frac{(\nu z; q)_\infty}{(z; q)_\infty}.
\end{equation*} 
When $\nu = q^{-I}$, $(\nu, q) _n= 0$ for $n > I$. Therefore, 
\begin{equation*}
\sum_{n=0}^I \frac{(\nu; q)_n}{(q; q)_n} z^n = \sum_{n=0}^\infty \frac{(\nu; q)_n}{(q; q)_n} z^n= \frac{(\nu z; q)_\infty}{(z; q)_\infty}.
\end{equation*}
\end{proof}
\begin{lemma}\label{lem:appendix:stat}
Fix $q > 1$, $\nu = q^{-I}$ and $\rho \in (0, I)$, define a probability measure $\statmargin$ on $\iset$:
\begin{equation}\label{eq:stat}
\statmargin (i) = \frac{(\chi, q)_\infty}{(\chi \nu, q)_\infty} \frac{(\nu, q)_i}{(q, q)_i}  \chi^i, \quad i \in \iset,
\end{equation}
where 
%the normalizing constant $Z_\rho =  \frac{(\chi \nu, q)_\infty}{(\chi, q)_\infty}$ and 
$\chi$ is  the unique negative real number satisfying 
\begin{equation}\label{eq:crhodefin}
\sum_{i=1}^{I} \frac{\chi}{\chi  - q^{i}} = \rho.
\end{equation} 
Furthermore, we have 
\begin{equation*}
\EE\big[\statmargin\big] = \rho, \qquad \text{Var}\big[\statmargin\big] = \rho - \sum_{i=1}^I \frac{\chi^2}{(q^i - \chi)^2}.
\end{equation*}
\end{lemma}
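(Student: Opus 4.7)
\textbf{Proof proposal for Lemma \ref{lem:appendix:stat}.} The plan is to package everything in a single generating function and then extract normalization, mean, and variance as successive logarithmic derivatives.

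\emph{Step 1 (generating function and positivity).} Define
\[
G(x) := \sum_{i=0}^{I} \frac{(\nu,q)_i}{(q,q)_i}\, x^i.
\]
By the $q$-binomial formula (Lemma \ref{thm:qbinom}) and the identity $\nu=q^{-I}$, the telescoping
\[
\frac{(\nu x,q)_\infty}{(x,q)_\infty} = \prod_{k=0}^\infty \frac{1-xq^{k-I}}{1-xq^k} = \prod_{i=1}^{I}(1 - xq^{-i})
\]
gives the closed form $G(x)=\prod_{i=1}^{I}(1-xq^{-i})$. Hence $\pi_\rho(i)=(\nu,q)_i\chi^i/[(q,q)_i G(\chi)]$ obviously sums to $1$. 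For positivity, note that with $q>1$, $(q,q)_i$ has sign $(-1)^i$ and so does $\chi^i$ (since $\chi<0$), while $(\nu,q)_i>0$ for $i\in\{0,\dots,I\}$; the denominator $G(\chi)=\prod_i(1-\chi q^{-i})>0$. Thus $\pi_\rho$ is a bona fide probability measure on $\{0,\dots,I\}$.

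\emph{Step 2 (existence and uniqueness of $\chi$).} Before computing moments, I would verify that the defining equation \eqref{eq:crhodefin} for $\chi$ has a unique negative solution. Set $f(\chi):=\sum_{i=1}^I \chi/(\chi-q^i)$. A direct derivative gives $f'(\chi)=-\sum_i q^i/(\chi-q^i)^2<0$ on $(-\infty,0)$, while $f(0)=0$ and $f(-\infty)=I$. So $f:(-\infty,0)\to(0,I)$ is a decreasing bijection, establishing the claim.

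\emph{Step 3 (mean).} From the definition, $\mathbb{E}[\pi_\rho] = \chi G'(\chi)/G(\chi) = \chi\,(\log G)'(\chi)$. Using $G(x)=\prod_{i=1}^I(1-xq^{-i})$,
\[
(\log G)'(\chi) = \sum_{i=1}^{I}\frac{-q^{-i}}{1-\chi q^{-i}} = \sum_{i=1}^{I}\frac{-1}{q^i-\chi},
\]
so $\chi(\log G)'(\chi)=\sum_{i=1}^I \chi/(\chi-q^i)=\rho$ by \eqref{eq:crhodefin}.

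\emph{Step 4 (variance).} Similarly, $\mathbb{E}[i(i-1)]=\chi^2 G''(\chi)/G(\chi)$, and from $G''/G=(\log G)''+((\log G)')^2$ one gets
\[
\mathbb{E}[i(i-1)] = \chi^2(\log G)''(\chi) + \rho^2.
\]
A second differentiation gives $(\log G)''(\chi)=-\sum_{i=1}^I q^{-2i}/(1-\chi q^{-i})^2=-\sum_{i=1}^I 1/(q^i-\chi)^2$. Therefore
\[
\mathrm{Var}[\pi_\rho] = \mathbb{E}[i(i-1)] + \mathbb{E}[i] - \mathbb{E}[i]^2 = \chi^2(\log G)''(\chi) + \rho = \rho - \sum_{i=1}^{I}\frac{\chi^2}{(q^i-\chi)^2},
\]
as claimed. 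There is no real obstacle here: the whole lemma reduces to the observation that the telescoped form of $G$ makes both $\log$-derivatives of $G$ into sums matching the right-hand sides, so the only mild care required is keeping track of signs when $q>1$ and $\chi<0$ in Step~1.
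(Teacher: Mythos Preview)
Your proof is correct and follows essentially the same approach as the paper: both identify the closed product form of the generating function via the $q$-binomial formula and then read off the mean and variance from its first two derivatives. Your use of logarithmic derivatives versus the paper's direct differentiation of the product is a cosmetic difference; the added value in your write-up is that you explicitly verify positivity of $\pi_\rho(i)$ and the existence/uniqueness of the negative root $\chi$, both of which the paper asserts without argument.
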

\begin{proof}
We first show that $\statmargin$ is indeed a probability measure. It is clear that
%$q > 1$ and $\chi < 0$, thus 
$\statmargin (i) \geq 0$ for all $i \in \iset$. 
By Lemma \ref{thm:qbinom},
\begin{equation*}
\sum_{i=0}^I \statmargin (i) = \frac{(\chi, q)_\infty}{(\chi \nu, q)_\infty} \sum_{i=0}^I  \frac{(\nu, q)_i}{(q, q)_i}  \chi^i = \frac{(\chi, q)_\infty}{(\chi \nu, q)_\infty} \frac{(\nu \chi, q)_\infty}{(\chi, q)_\infty} = 1.
\end{equation*}
Next, we compute the expectation and the variance of $\statmargin$. Using again Lemma \ref{thm:qbinom}, the moment generating function is given by 
\begin{equation}\label{eq:temp2}
\Lambda(z) = \frac{(\chi, q)_\infty}{(\chi \nu, q)_\infty} \sum_{i=0}^I \frac{(\nu, q)_i}{(q, q)_i} \chi^i z^i =  \frac{(\chi, q)_\infty}{(\chi \nu, q)_\infty} \frac{(\nu \chi z, q)_\infty}{(\chi z, q)_\infty} =  \frac{(\chi, q)_\infty}{(\chi \nu, q)_\infty} \prod_{i=1}^{I} (1 - \nu q^{i-1} \chi z).
\end{equation}
It is clear that
\begin{align*}
\EE\big[\statmargin\big] &= \Lambda'(1),\\
\var\big[\statmargin\big] &= \Lambda''(1) + \Lambda'(1) - \Lambda'(1)^2. 
\end{align*}
Via \eqref{eq:temp2}, one has
\begin{align*}
&\Lambda'(z) = \frac{(\chi, q)_\infty}{(\chi \nu, q)_\infty}  \bigg(\prod_{i=1}^{I} (1 - \nu q^{i-1} \chi z)\bigg)  \bigg(\sum_{i=1}^I \frac{-\nu q^{i-1} \chi}{1 - \nu q^{i-1} \chi  z}\bigg),\\
&\Lambda''(z) =\frac{(\chi, q)_\infty}{(\chi \nu, q)_\infty} \bigg(\prod_{i=1}^I (1 - \nu q^{i-1} \chi z)\bigg)\bigg[\bigg(\sum_{i=1}^I \frac{-\nu q^{i-1}  \chi}{1 - \nu q^{i-1} \chi z}\bigg)^2 - \sum_{i=1}^I \frac{(\nu q^{i-1} \chi)^2}{(1 - \nu q^{i-1} \chi z)^2}\bigg].
\end{align*}
Note that $$ \frac{(\chi, q)_\infty}{(\chi \nu, q)_\infty}  \prod_{i=1}^{I} (1 - \nu q^{i-1} \chi) = 1,$$
combining this with \eqref{eq:crhodefin} yields
\begin{equation*}
\Lambda'(1) = \rho, \qquad \Lambda''(1) = \rho^2 - \sum_{i=1}^I \frac{\chi^2}{(q^i - \chi)^2},
\end{equation*}
which concludes the lemma.
\end{proof}
\begin{theorem}\label{thm:appendix:stat}
For $\rho \in (0, I)$, the product measure $\stat$ is stationary for the unfused \ac{SHS6V} model $\vec{\eta}(t)$ (Definition \ref{def:biinfinite}). 
\end{theorem}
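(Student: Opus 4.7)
The plan is to locate a Bernoulli measure $\zeta$ on the horizontal-edge alphabet $\{0,1\}$ such that $\pi_\rho \otimes \zeta$ is invariant under the one-vertex stochastic matrix $L^{(1)}_\alpha$, and then propagate this local identity through the sequential update. Set
\[
\zeta(1) = \frac{\alpha\chi}{1+\alpha\chi}, \qquad \zeta(0) = \frac{1}{1+\alpha\chi},
\]
which is a genuine probability measure since $\alpha<0$ by Condition \ref{cond:1} and $\chi<0$ by Lemma \ref{lem:appendix:stat}. The one-vertex balance identity
\[
\sum_{i_1+j_1 = i_2+j_2} \pi_\rho(i_1)\,\zeta(j_1)\, L^{(1)}_\alpha(i_1,j_1;i_2,j_2) = \pi_\rho(i_2)\,\zeta(j_2), \qquad (i_2,j_2) \in \iset \times \{0,1\},
\]
is verified by direct substitution into \eqref{eq:temp33}: both nontrivial cases $(m,0)$ and $(m,1)$ reduce to
\[
\frac{\pi_\rho(m+1)}{\pi_\rho(m)} \cdot \frac{\zeta(0)}{\zeta(1)} = \frac{1-\nu q^m}{\alpha(1-q^{m+1})},
\]
which follows from the ratio $\pi_\rho(m+1)/\pi_\rho(m) = \chi(1-\nu q^m)/(1-q^{m+1})$ read off from \eqref{eq:stat} together with $\zeta(1)/\zeta(0)=\alpha\chi$; the boundary cases are handled by $L^{(1)}_\alpha(0,0;0,0)=1$ and $L^{(1)}_\alpha(I,1;I,1)=1$ (the latter using $\nu q^I=1$). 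For the time-inhomogeneous dynamics, one simply uses a $t$-dependent $\zeta_t$ with $\zeta_t(1)/\zeta_t(0) = \alpha(t)\,\chi$ and runs the same verification.

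Given the one-vertex balance, I would propagate the product structure by a site-by-site induction along the sequential update at fixed $t$. The core inductive step is that if the input pair $(\eta_x(t), h_x)$ has law $\pi_\rho \otimes \zeta_t$ with independent coordinates, then the output $(\eta_x(t+1), h_{x+1})$ generated by $L^{(1)}_{\alpha(t)}$ again has law $\pi_\rho \otimes \zeta_t$ independently. Combined with the fact that $\eta_{x+1}(t)$ is independent of everything to the left of $x+1$ --- and in particular of the freshly produced $h_{x+1}$, which is measurable with respect to $(\eta_z(t))_{z\leq x}$ and the Bernoullis at sites $\leq x$ --- this gives inductively that for any $y\in\mathbb{Z}$ and $n\geq 0$, the tuple $(\eta_y(t+1),\dots,\eta_{y+n}(t+1))$ is iid $\pi_\rho$ and the leftover flux $h_{y+n+1}$ is $\zeta_t$-distributed and independent of it. Letting $n\to\infty$ and $y\to-\infty$ then identifies the one-time-step marginal as $\bigotimes\pi_\rho$, and iterating in $t$ yields the theorem.

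The main obstacle is the base case of the induction: showing that the starting horizontal flux $h_y = h_y(t+1)$ at an arbitrary site $y$, defined via the bi-infinite recursion \eqref{eq:birecur1}, already has marginal $\zeta_t$ and is independent of $(\eta_z(t))_{z\geq y}$. Independence is automatic because $h_y$ is measurable with respect to $(\eta_z(t), \text{Bernoullis at } z)_{z<y}$, which is independent of $(\eta_z(t))_{z\geq y}$ under the product measure. For the marginal, I would introduce the auxiliary Markov chain $h \mapsto h'$ on $\{0,1\}$ with transition $P(h\to h')=\sum_{i,i'}\pi_\rho(i)\, L^{(1)}_{\alpha(t)}(i,h;i',h')$: the one-vertex balance implies $\zeta_t$ is invariant, and under Condition \ref{cond:1} both off-diagonal entries are strictly positive, giving exponential mixing. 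Applying this chain to the left-finite truncation $\eta^w_y(0) = \eta_y(0)\,\idc_{y\geq w}$ (which has $h_w = 0$ deterministically), the distribution of $h^w_y$ converges geometrically to $\zeta_t$ as $y-w \to \infty$; combined with the $L^p$-convergence $h^w_y \to h_y$ from Lemma \ref{lem:biinfinite}, this identifies the distribution of the bi-infinite $h_y$ as $\zeta_t$ and closes the loop.
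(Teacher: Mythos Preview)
Your proof is correct and follows essentially the same underlying idea as the paper, but packages it more cleanly. The key insight in both is the one-vertex Burke-type property: if the input pair (vertical occupation, incoming flux) at a vertex has law $\pi_\rho\otimes\zeta_t$, then so does the output pair, with the two output coordinates independent. You state this balance identity explicitly and verify it once; the paper instead verifies its three consequences separately --- first computing $\mathbb{E}[K(t,y)]$ directly from the infinite-sum representation \eqref{eq:birecur1} to identify the flux marginal, then checking $\mathbb{P}(\eta_y(t+1)=i)=\pi_\rho(i)$ by hand, and finally proving the independence $\eta_y(t+1)\perp K(t,y)$ as a standalone Lemma~\ref{lem:appendix:independent}. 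Your base-case argument (mixing of the averaged flux chain, then passing through the truncation of Lemma~\ref{lem:biinfinite}) is a dressed-up version of the paper's direct moment computation: the geometric decay in the paper's sum \eqref{eq:appendix:temp1} is exactly the mixing rate of your two-state chain. So the two proofs are equivalent in content; yours has the advantage of isolating the local balance as a reusable lemma, while the paper's hands-on computations make the bi-infinite limit slightly more transparent.
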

\begin{proof}
%Although the bi-infinite HZR process $\vec{\eta}(t)$ is a time inhomogeneous Markov process, since the law of $\statmargin$ only depends on the parameter $\rho, q$ and is $\alpha(t)$ independent, it suffices to show that given $\vec{\eta}(0) \sim \stat$, we have $\vec{\eta}(1) \sim \stat$. 
It suffices to show that if $\vec{\eta}(t) \sim \stat$, then $\vec{\eta}(t+1) \sim \stat$.
\bigskip
\\
Recall that $K(t, y) = N(t, y) - N(t+1, y)$ records the number of particles (either zero or one) that move across location $y$ at time $t$. We first show that  $K(t, y) \sim \ber(\frac{\alpha(t) \chi}{\alpha(t) \chi +1})$ (recall that $\alpha(t) = \alpha q^{\mod(t)}$). To this end, referring to \eqref{eq:two:recur},
\begin{equation}\label{eq:appendix:ktemp}
K(t, y)  =\sum_{y' = -\infty}^y \prod_{z = y'+1}^y \bigg(B'(t, z, \eta_z (t)) - B(t, z, \eta_z (t))\bigg) B(t, y', \eta_{y'}(t)).
\end{equation}
Recalling from \eqref{eq:randomenvir}, $B(t, z, \eta) \sim \ber\big(\frac{\alpha(t)(1 - q^\eta)}{1 +\alpha(t)}\big), B'(t, z, \eta) \sim \ber\big(\frac{\alpha(t) + \nu q^\eta}{1 +\alpha(t)}\big)$. Since the random variables $B, B'$ are all independent,
\begin{equation*}
\EE\bigg[\prod_{z = y'+1}^y \bigg(B'(t, z, \eta_z (t)) - B(t, z, \eta_z (t))\bigg) B(t, y', \eta_{y'}(t)) \vvv \FF(t)\bigg] = \frac{\alpha(t) (1-q^{\eta_{y'}(t)})}{1 + \alpha(t)} \prod_{z = y'+1}^y \frac{(\alpha(t) + \nu) q^{\eta_z  (t)}}{1 + \alpha(t)}. 
\end{equation*}
Therefore, by tower property
\begin{align*}
\EE\big[K(t, y)\big] &= \sum_{y' = -\infty}^y \EE \bigg[\prod_{z = y'+1}^y \frac{\alpha(t) (1-q^{\eta_{y'}(t)})}{1 + \alpha(t)} \prod_{z = y'+1}^y \frac{(\alpha(t) + \nu) q^{\eta_z  (t)}}{1 + \alpha(t)}\bigg], \\
\numberthis \label{eq:appendix:temp1}
&= \sum_{y' = -\infty}^y  \frac{\alpha(t)}{1 + \alpha(t)} \bigg(\frac{\alpha(t) + \nu}{1 + \alpha(t)}\bigg)^{y-  y'} \big(\EE\big[q^{\eta_y(t)}\big]\big)^{y-  y'} (1 - \EE\big[q^{\eta_y(t)}\big]).
\end{align*}
As $\eta_y (t) \sim \statmargin$, we obtain using Lemma \ref{thm:qbinom} 
\begin{equation*}
\EE\big[q^{\eta_y (t)}\big] =\frac{(\chi, q)_\infty}{(\chi \nu, q)_\infty} \sum_{i=0}^{\infty}  \frac{(\nu, q)_i}{(q, q)_i} (\chi q)^i = \frac{(\chi \nu q; q)_\infty}{(\chi q; q)_\infty} \frac{(\chi; q)_\infty}{(\chi \nu; q)_\infty} = \frac{1 - \chi}{1 - \chi \nu}. 
\end{equation*}
Inserting the value of $\EE\big[q^{\eta_y (t)}\big]$  into the RHS of \eqref{eq:appendix:temp1} yields that $$\EE\big[K(t, y)\big] = \sum_{y' = -\infty}^y  \frac{\alpha(t)}{1 + \alpha(t)} \bigg(\frac{(\alpha(t) + \nu)(1-\chi)}{(1 + \alpha(t))(1-\chi\nu)}\bigg)^{y-  y'} \bigg(1 - \frac{1 - \chi}{1 - \chi \nu}\bigg) = \frac{\alpha(t) \chi}{\alpha(t) \chi +1}.$$ 
Since $K(t, y) \in \{0, 1\}$, we conclude that 
\begin{equation}\label{eq:app:temp3}
K(t, y) \sim \ber(\frac{\alpha(t) \chi}{\alpha(t) \chi +1}).
\end{equation}
The next step is to show that the marginal of  $\vec{\eta}(t+1)$ is distributed as  $\statmargin$ for each coordinate. Referring to \eqref{eq:appendix:ktemp}, it is straightforward that the following recursion holds
\begin{align*}
K(t, y)
\numberthis \label{eq:appendix:temp2}
= &B(t, y, \eta_{y} (t)) + \Big(B'(t, y, \eta_{y} (t)) - B(t, y, \eta_{y} (t))\Big) K(t, y-1).
\end{align*}
Therefore,
\begin{align*}
\eta_{y} (t) - \eta_{y} (t+1) &= N(t, y) - N(t, y-1) + N(t+1, y-1) - N(t+1, y) , \\
&= K(t, y) - K(t, y-1), \\
&= K(t, y-1) \Big(B'(t, y, \eta_{y}(t)) - B(t, y, \eta_{y}(t)) - 1\Big) + B(t, y, \eta_{y} (t)).
\end{align*}
%%= N(t, y+1) - N(t, y) - (N(1, y+1) - N(1, y) 
%we obtain 
%\begin{equation}\label{eq:appendix:etarecur1}
%\eta_{y+1} (0) - \eta_{y+1} (1) = K(0, y) \bigg(B'(y+1, \eta_{y+1}(0)) - B(y+1, \eta_{y+1}(0)) - 1\bigg) + B(y+1, \eta_{y+1} (0)).
%\end{equation} 
For the second equality above, we used $K(t, y) = N(t, y) - N(t+1, y)$. Therefore,
\begin{equation}\label{eq:appendix:etarecur}
\eta_{y} (t+1)  = 
\begin{cases}
\eta_{y} (t) - B(t, y, \eta_{y} (t)), \qquad &K(t, y-1) = 0,\\
\eta_{y} (t) + 1 - B'(t, y, \eta_{y} (t)), \qquad &K(t, y-1) = 1.
\end{cases}
\end{equation}
%By, we see that  
%\begin{equation*}
%N(t, y) - N(t+1, y)  =\sum_{y' = -\infty}^y \prod_{z = y'+1}^y \bigg(B'(t, z, \eta_z (t)) - B(t, z, \eta_z (t)\bigg) B(t, y', \eta_{y'}(t)) 
%\end{equation*}
%%We see that 
%%\begin{align*}
%%\EE\big[N(t, y) - N(t+1, y)\big] &= \sum_{y' = -\infty}^y \EE\bigg[\EE\big[\prod_{z = y'+1}^y \bigg(B'(t, z, \eta_z (t)) - B(t, z, \eta_z (t)\bigg) B(t, y', \eta_{y'}(t))  \big| \FF(t)\big]\bigg]
%%\end{align*}
%Conditioning on $\FF(t)$, $\{B(y, \eta_y(t))\}, \{B'(y, \eta_y (t))\}$ are all independent, hence 
%\begin{equation*}
%\EE\bigg[\prod_{z = y'+1}^y \bigg(B'(z, \eta_z (t)) - B(z, \eta_z (t)\bigg) B(y', \eta_{y'}(t))\, \bigg|\, \FF(t)\bigg] = \frac{\alpha (1-q^{g_{y'}(t)})}{1 + \alpha} \prod_{z = y'+1}^y \frac{(\alpha + \nu) q^{\eta_z  (t)}}{1 + \alpha} 
%\end{equation*}
%The next step is to show the marginal distribution $\eta_{y+1} (t+1) \sim \statmargin$ for all $y \in \ZZ$. 
Due to \eqref{eq:appendix:ktemp}, we see that $K(t, y-1) \in \sigma\Big(B(t, z, \eta), B'(t, z, \eta), \eta_z(t): z \leq y-1, \eta \in \iset \Big)$. Note that we have assumed $\vec{\eta}(t) \sim \stat$, which implies the independence between $\eta_y (t)$ and $\eta_z(t)$ for $z \neq y$. Therefore,  
$\eta_{y} (t)$ and $K(t, y-1)$ are independent. Using \eqref{eq:appendix:etarecur} we get
\begin{align*}
\PP\big(\eta_{y} (t+1) = i\big) &= \PP\big(K(t, y-1) = 0\big) \PP\big(\eta_{y} (t) - B(t, y, \eta_{y} (t)) = i\big)\\ 
&\quad +\PP\big(K(t, y-1) = 1\big) \PP\big(\eta_{y} (t) - B'(t, y, \eta_{y} (t)) = i-1\big).
\end{align*} 
By $K(t, y-1) \sim \ber(\frac{\alpha(t) \chi}{\alpha(t) \chi +1})$ and $\eta_{y} (t) \sim \statmargin$, one readily has 
\begin{align*}
&\PP\big(\eta_{y} (t+1) = i\big)\\
&= \frac{1}{1 + \alpha(t) \chi} \bigg[\statmargin (i) \frac{1 + \alpha(t) q^i}{1 + \alpha(t)} + \statmargin (i+1) \frac{\alpha(t) (1 - q^{i+1})}{1 +\alpha(t)}\bigg] + \frac{\alpha(t) \chi}{1 + \alpha(t) \chi} \bigg[\statmargin (i) \frac{\alpha(t) + \nu q^i}{1 +\alpha(t)} + \statmargin (i-1) \frac{1 - \nu q^{i-1}}{1 + \alpha(t)}\bigg]\\ 
&= \statmargin (i).
\end{align*}
%We denote the independence between random variables $(X_1, \dots, X_n)$ and $(Y_1, \dots, Y_m)$  by 
%\begin{equation*}
%(X_1, \dots, X_n) \independent (Y_1, \dots, Y_m)
%\end{equation*} 
To conclude Theorem \ref{thm:appendix:stat}, it suffices to show the independence among $\eta_y (t+1)$ for different value of  $y$. It is enough to show that
\begin{equation}\label{eq:appendix:desiredind}
\eta_{y} (t+1) \independent \{\eta_{y+1} (t+1), \eta_{y+2}(t+1), \dots\} \text{ for all } y \in \ZZ.
\end{equation} 
We need the following lemma.
\begin{lemma}\label{lem:appendix:independent}
For all $y \in \ZZ$, $\eta_{y} (t+1)$ is independent with $K(t, y)$.
\end{lemma}
Let us first see how this lemma leads to  \eqref{eq:appendix:desiredind}. We have via \eqref{eq:appendix:ktemp},  
$$K(t, y) \in \sigma\Big(B(t, z, \eta), B'(t, z, \eta), \eta_{z} (t): z \leq y, \eta \in \iset\Big).$$ 
Combining this with \eqref{eq:appendix:etarecur},
$$\eta_{y}(t+1) \in \sigma\Big(B(t, z, \eta), B'(t, z, \eta), \eta_{z} (t): z \leq y, \eta \in \iset\Big).$$
Since $\eta_i (t)$ are all independent for different $i$, one has 
\begin{equation*}
\Big(B(t, z, \eta), B'(t, z, \eta), \eta_{z} (t): z \leq y, \eta \in \iset\Big) \independent (\eta_{y+1}(t), \eta_{y+2}(t), \dots).
\end{equation*}  
We achieve
\begin{equation*}
\big(K(t, y), \eta_{y} (t+1)\big) \independent \big(\eta_{y+1}(t), \eta_{y+2} (t), \dots \big).
\end{equation*}
Using Lemma \ref{lem:appendix:independent}, we conclude  
\begin{equation*}
\eta_{y}(t+1) \independent \big(K(t, y), \eta_{y+1}(t), \eta_{y+2}(t), \dots \big).
\end{equation*}
Therefore, 
\begin{equation}\label{eq:app:temp1}
\eta_{y}(t+1) \independent  \sigma\Big(K(t, y),  \eta_z(t), B(t, z, \eta), B'(t, z, \eta): z \geq y+1, \eta \in \iset\Big).
\end{equation}
On the other hand, by \eqref{eq:appendix:temp2} and \eqref{eq:appendix:etarecur}, we conclude for all $y \in \ZZ$
\begin{equation}\label{eq:app:temp2}
\big(\eta_{y+1}(t+1), \eta_{y+2}(t+1), \dots \big) \in \sigma\Big(K(t, y), B(t, z, \eta), B'(t, z, \eta), \eta_z(t): z \geq y+1, \eta \in \iset\Big).
\end{equation}
Combining \eqref{eq:app:temp1} and \eqref{eq:app:temp2}, we find that for all $y \in \ZZ$
\begin{equation*}
\eta_{y} (t+1) \independent \big(\eta_{y+1}(t+1), \eta_{y+2}(t+1), \dots \big),
\end{equation*}
which concludes \eqref{eq:appendix:desiredind}.
\end{proof}
\begin{proof}[Proof of Lemma \ref{lem:appendix:independent}]
As $K(t, y) \in \{0, 1\}$, it suffices to show that for all $j \in \iset$, one has
\begin{equation*}
\PP\big(\eta_{y}(t+1) = j, K(t, y) = 1\big) = \PP\big(\eta_{y}(t+1) = j\big) \PP\big(K(t, y) = 1 \big).
\end{equation*}
Due to \eqref{eq:appendix:temp2},
\begin{equation*}
K(t, y) = 
\begin{cases}
B(t, y, \eta_{y} (t)), \qquad &K(t, y-1) = 0,\\
B'(t, y, \eta_{y} (t)), \qquad &K(t, y-1) = 1.
\end{cases}
\end{equation*}
Together with \eqref{eq:appendix:etarecur},
%\begin{equation*}
%\eta_{y+1} (1)  = 
%\begin{cases}
%\eta_{y+1} (0) - B(y+1, \eta_{y+1} (0)), \qquad &K(0, y) = 0,\\
%\eta_{y+1} (0) + 1 - B'(y+1, \eta_{y+1} (0)), \qquad &K(0, y) = 1.
%\end{cases}
%\end{equation*}
we obtain that if $K(t, y-1) = 0$, 
\begin{equation*}
\big(\eta_{y} (t+1), K(t, y)\big) = (j, 1) \text{ is equivalent to } \big(\eta_{y} (t), B(t, y, \eta_{y} (t))\big) = (j+1, 1).
\end{equation*}
If $K(t, y-1) = 1$, 
\begin{equation*}
\big(\eta_{y} (t+1), K(t, y)\big) = (j, 1) \text{ is equivalent to } \big(\eta_{y} (t), B(t, y, \eta_{y} (t))\big) = (j, 1).
\end{equation*}
The discussion above yields (using the independence between  $\eta_{y} (t)$ and $K(t, y-1)$)
\begin{align*}
&\PP\big(\eta_{y} (t+1) = j, K(t, y) = 1\big), \\
&= \PP\big(K(t, y-1) = 0\big) \PP\big(\eta_{y} (t) = j+1, B(t, y, \eta_{y} (t)) = 1\big) + \PP\big(K(t, y-1) = 1\big) \PP\big(\eta_{y} (t) = j, B'(t, y, \eta_{y} (t)) = 1\big),\\
&= \frac{1}{1 + \alpha(t) \chi}  \frac{\alpha(t) (1-  q^{j+1})}{1 + \alpha(t)} \statmargin (j+1) + \frac{\alpha(t) \chi}{1 + \alpha(t) \chi}  \frac{\alpha(t) + \nu q^j}{1 + \alpha(t)} \statmargin (j), \\
%&= \frac{\alpha \chi}{1 + \alpha \chi} \statmargin (j) 
&= \frac{\alpha(t) \chi \statmargin(j) }{\alpha(t) \chi +1} = 
\PP\big(\eta_{y+1} (t+1) = j\big) \PP\big( K(t, y) = 1\big),
\end{align*}
which concludes Lemma \ref{lem:appendix:independent}.
\end{proof}
\begin{remark}\label{rmk:temp2}
Since $\vec{g}(t) = \vec{\eta}(Jt)$, it is clear that for all $\rho \in (0, I)$, $\stat$ is also stationary for the fused \ac{SHS6V} model $\vec{g}(t)$. 
\end{remark}
\section{KPZ scaling theory}\label{app:kpzscaling}
The \ac{KPZ} scaling  theory has been developed in a landmark contribution by \cite{krug92}. The scaling
theory is a physics approach which makes prediction for the non-universal coefficients of the \ac{KPZ} equation. 
In this appendix, we show how the coefficients of the \ac{KPZ} equation \eqref{eq:KPZ equation} arise from the microscopic observables of the fused \ac{SHS6V} model using the \ac{KPZ}  scaling theory. 
\bigskip
\\
Recall that Theorem \ref{thm:main} reads
\begin{align*}
\sqrt{\epsilon} \big(\N_\epsilon (\epsilon^{-2} t, \epsilon^{-1} x + \epsilon^{-2} \mu_\epsilon t ) - \rho (\epsilon^{-1} x + \epsilon^{-2} \mu_\epsilon  t )  - t \log \lambda_\epsilon \big) \wc \HH(t, x) \text{ in } C([0, \infty), C(\RR)) \text{ as } \ep \downarrow 0.
\end{align*}
Here, $\N_\ep (t, x)$ is the fused height function and $\kpz(t, x)$ solves the \ac{KPZ} equation
\begin{align*}
\partial_t \kpz(t, x) = \frac{\alpha_1}{2} \pa_x^2 \kpz(t, x) - \frac{\alpha_2}{2} \big(\pa_x \kpz(t, x) \big)^2 + \sqrt{\alpha_3} \xi(t, x),
\end{align*}
where
\begin{align*}
&\alpha_1 = \alpha_2 = JV_* = \frac{J\big((I +J)b - (I + J - 2)\big)}{I^2 (1-b)},\\
&\alpha_3 = J D_* = \frac{\rho(I-\rho)}{I} \cdot \frac{J \big((I + J) b - (I + J -2)\big)}{I^2 (1-b)}.
\end{align*}
%
%\begin{align*}
%V_* =   
%%\bigg( \frac{2(b k - (k-1))}{I^2 (1-b)} - \frac{1}{I} - \frac{1}{I^2}\bigg),\\
%\frac{(I +J)b - (I + J - 2)}{I^2 (1-b)}, \qquad
%D_* =  \frac{\rho(I-\rho)}{I} \cdot \frac{(I + J) b - (I + J -2)}{I^2 (1-b)}.
%\end{align*} 
The first step in the \ac{KPZ} scaling theory is to derive the stationary distribution of the fused \ac{SHS6V} model, which is exactly what we did in Appendix \ref{sec:stat} (see Remark \ref{rmk:temp2}). Under stationary distribution $\stat$, 
we proceed to define two natural quantities of the models:
\begin{itemize}
	\item The \emph{average steady state current} $j (\rho)$ is defined as  
\begin{equation}\label{app:temp1}
j(\rho) = \ep^{-\frac{1}{2}} \big(\big\langle \N(t, x) - \N(t, x+1) \big\rangle_\rho - \rho \mu_\ep\big),
\end{equation}
where $\langle \cdot \rangle_\rho$ means that we are taking the expectation under stationary distribution $\bigotimes\pi_\rho$ and $\mu$ is given in \eqref{eq:temp1}. Note that under stationary distribution, the average steady state current $j (\rho)$ depends neither on space or time. 
Let us explain the meaning of \eqref{app:temp1}. Note that $\N(t, x) - \N(t+1, x)$ records the number of particles in the fused \ac{SHS6V} model that move across location $x$ at time $t$, we subtract $\rho \mu_\ep$ here because we are in a reference frame that moves to right with speed $\rho \mu_\ep$.
\\
\item\emph{The integrated covariance} is defined as
\begin{align*}
&A(\rho):= \lim_{r \to \infty} \frac{1}{2 r} \bigg\langle \N (t, x+r) - \N (t, x-r) - \big\langle \N (t, x+r) - \N (t, x-r) \big\rangle_\rho  \bigg\rangle_{\!\rho}.
\end{align*}
\end{itemize}
The KPZ scaling theory (equation (12) and (15) of \cite{krug92}) predicts that 
$$(i)\  \alpha_2 = -\lim_{\ep \downarrow 0} j''_\ep (\rho), \qquad (ii)\  \frac{\alpha_3}{\alpha_1} = \lim_{\ep \downarrow 0} A_\ep (\rho), $$ 
$A_\ep (\rho)$ and $j_\ep (\rho)$ depend on $\ep$ under weakly asymmetry scaling \eqref{eq:scaling}.
\bigskip
\\
Let us first verify $(ii)$, note that under stationary distribution, $\N_\ep (t, x+ r) - \N_\ep (t, x-r)$ is the sum  of $2r$ i.i.d. random variables with the same distribution $\statmargin$, hence
$A_\ep (\rho) = \var\big[\statmargin\big].$
By Lemma \ref{lem:appendix:stat}, we know that 
\begin{equation*}
\text{Var}\big[\statmargin\big] = \rho - \sum_{i=1}^I \frac{\chi^2}{(q^i - \chi)^2},
\end{equation*}
where $\chi$ is the unique negative solution of 
\begin{equation}\label{eq:temp4}
\sum_{i=1}^{I} \frac{\chi}{\chi  - q^{i}} = \rho.
\end{equation} 
Under weakly asymmetric scaling, one has $q = e^{\sqrt{\ep}}$, which yields $\lim_{\ep \downarrow 0} \chi_\ep = \frac{\rho}{\rho - I}$. Therefore, 
\begin{equation*}
\lim_{\ep \downarrow 0} A_\ep (\rho) = \lim_{\ep \downarrow 0} \var\big[\statmargin\big] = \frac{\rho(I - \rho)}{I}.
\end{equation*}
This matches with the value of $\frac{\alpha_3}{\alpha_1}$.
\bigskip
\\
We proceed to verify $(i)$. First, note that by $\N(t, x) = N(Jt, x)$,
\begin{align*}
\N(t, x) - \N (t+1, x) = N(Jt, x) - N((J+1)t, x) = \sum_{s = Jt}^{(J+1)t - 1} K(s, x),
\end{align*} 
where $K(s, x) = N(s, x) - N(s+1, x)$.
We have shown in \eqref{eq:app:temp3} that $K(s, x) \sim \ber(\frac{\alpha(s) \chi}{1 + \alpha(s) \chi})$, where $\alpha(s) = \alpha q^{\mod(s)}$. Therefore,
$$\EE\big[\N(t, x) - \N (t+1, x)\big] = \EE\bigg[\sum_{s = Jt}^{(J+1)t - 1} K (s, x)\bigg] = \sum_{k=0}^{J-1} \frac{\alpha q^k \chi}{1 + \alpha q^k \chi},$$
which yields 
\begin{equation*}
j(\rho) = \ep^{-\frac{1}{2}}  \bigg(\sum_{k=0}^{J-1} \frac{\alpha q^k \chi}{1 + \alpha q^k \chi} - \rho \mu\bigg).
\end{equation*}
We proceed to taylor expand $j_\ep (\rho)$ around $\ep = 0$. Note that $\chi$ is implicitly defined through \eqref{eq:temp4}, we expand $\chi_\ep$ around $\ep = 0$ 
\begin{equation*}
\chi_\ep = \frac{\rho}{\rho - I} + \frac{(I+1)\rho}{2(\rho  - I)} \sqrt{\ep} + \OO(\ep).
\end{equation*} 
Note that $\alpha$ depends on $\ep$ through $\alpha_\ep  = \frac{1 - b}{b - e^{\sqrt{\ep}}}$. Via straightforward calculation, one has
\begin{equation*}
\frac{\alpha q^k \chi}{1 + \alpha q^{k} \chi} = \frac{\alphae e^{k\sqrt{\ep}} \chi_\ep}{1 + \alphae e^{k \sqrt{\ep}} \chi_\ep} = \frac{\rho}{I} + \frac{(I\rho - \rho^2)((2k+I+1)b + 1 -I - 2k)}{2(b-1) I^2} \sqrt{\ep} + \OO(\ep),
%\frac{\rho}{I} + \frac{(1 + b + I(b - 1))(I-\rho) \rho}{2(b- 1) I^2} \sqrt{\ep} + \OO(\ep).
\end{equation*}
which implies 
\begin{equation*}
\sum_{k=0}^{J-1} \frac{\alpha q^k \chi}{1 + \alpha q^k \chi} = \frac{J\rho}{I} + \frac{J(I\rho - \rho^2)\big((I + J) b - (I+J-2)\big)}{2(b-1) I^2} \sqrt{\ep} + \OO(\ep).
\end{equation*}
Referring to the expression of $\mu$ in \eqref{eq:temp1}, one has the asymptotic expansion
\begin{align*}
\mu_\ep %= \frac{\alpha (1 - q) (1 - \nu) q^\rho }{(1 + \alpha q - q^\rho (\alpha q + \nu)) (1 + \alpha - q^\rho (\alpha + \nu))} 
=\frac{J}{I} + \frac{J (I - 2\rho) (2 + (b-1)(I+J)) }{2(b -1) I^2} \sqrt{\ep} + \OO(\ep).
\end{align*}
Consequently, 
\begin{align*}
j_\ep(\rho) &= \ep^{-\frac{1}{2}}  \bigg(\sum_{k=0}^{J-1} \frac{\alpha q^k \chi}{1 + \alpha q^k \chi} - \rho \mu\bigg)
= 
\frac{\rho^2 J(b(I + J) - (I+J-2)}{2(b -1) I^2}  + \OO(\ep^{\frac{1}{2}}).
\end{align*}
We have
\begin{equation*}
\lim_{\ep \downarrow 0} -j_\ep'' (\rho) = \frac{J (b(I+J) - (I+J-2))}{(1-b) I^2},
\end{equation*}
which coincides with the value of $\alpha_2$.

\bibliographystyle{alpha}
\bibliography{higherspin}
\end{document}